\newcommand{\KK}{{\mathbb{K}}}  
\newcommand{\NN}{{\mathbb{N}}}  
\newcommand{\QQ}{{\mathbb{Q}}}  
\newcommand{\RR}{{\mathbb{R}}}  
\renewcommand{\SS}{{\mathbb{S}}} 
\newcommand{\ZZ}{{\mathbb{Z}}}  
\newcommand{\id}{{\operatorname{id}}}  
\newcommand{\supp}{{\operatorname{supp}}}  
\newcommand{\dom}{{\operatorname{dom}}} 
\newcommand{\inter}{{\operatorname{int}}} 
\newcommand{\limit}[1]{{\underset{#1}{\lim}}} 
\newcommand{\pr}{{\operatorname{pr}}} 
\newcommand{\CIN}{{C^\infty}}   
\newcommand{\Der}{{\operatorname{Der}\,}}  
\newcommand{\Vect}{{\operatorname{Vect}}}  
\newcommand{\Diff}{{\operatorname{Diff}}}  
\newcommand{\ham}{{\operatorname{Ham}}} 
\renewcommand{\O}{{\operatorname{O}}} 
\newcommand{\SO}{{\operatorname{SO}}}  
\newcommand{\g}{{\mathfrak{g}}} 
\newcommand{\Stab}{{\operatorname{Stab}}} 
\newcommand{\Ad}{{\operatorname{Ad}}}  
\newcommand{\pois}[2]{{\{#1,#2\}}}  
\newcommand{\hook}{{\lrcorner\,}} 
\newcommand{\hypref}[2]{{\hyperref[#1]{#2~\ref{#1}}}}
\newcommand{\ifwork}[1]{\ifthenelse{\boolean{workmode}}{#1}{}}
\newcommand{\comment}[1]{}
\newcommand{\mute}[1]{}
\newcommand{\printname}[1]{}
\newcommand{\labell}[1] {\label{#1} \printname{#1}}
\theoremstyle{plain}
\newtheorem{theorem}{Theorem}[chapter]
\newtheorem{proposition}[theorem]{Proposition}
\newtheorem{corollary}[theorem]{Corollary}
\newtheorem{lemma}[theorem]{Lemma}
\theoremstyle{definition}
\newtheorem{definition}[theorem]{Definition}
\newtheorem{example}[theorem]{Example}
\newtheorem{remark}[theorem]{Remark}
\newtheorem{notation}[theorem]{Notation}
\newtheorem{question}[theorem]{Question} 
\author{Jordan Watts}
\title{Diffeologies, Differential Spaces, and Symplectic Geometry}
\date{Modified: \today}
\begin{document}

\maketitle

\newpage

\section*{\textbf{\underline{Abstract}}}
Diffeological and differential spaces are generalisations of smooth structures on manifolds.  We show that the ``intersection'' of these two categories is isomorphic to Fr\"olicher spaces, another generalisation of smooth structures. We then give examples of such spaces, as well as examples of diffeological and differential spaces that do not fall into this category.\\

We apply the theory of diffeological spaces to differential forms on a geometric quotient of a compact Lie group.  We show that the subcomplex of basic forms is isomorphic to the complex of diffeological forms on the geometric quotient.  We apply this to symplectic quotients coming from a regular value of the momentum map, and show that diffeological forms on this quotient are isomorphic as a complex to Sjamaar differential forms.  We also compare diffeological forms to those on orbifolds, and show that they are isomorphic complexes as well.\\

We apply the theory of differential spaces to subcartesian spaces equipped with families of vector fields.  We use this theory to show that smooth stratified spaces form a full subcategory of subcartesian spaces equipped with families of vector fields.  We give families of vector fields that induce the orbit-type stratifications induced by a Lie group action, as well as the orbit-type stratifications induced by a Hamiltonian group action.

\newpage

\section*{\textbf{\underline{Acknowledgements}}}
First and foremost, I would like to thank my advisor, Yael Karshon, for all of her helpful advice, support, faith, and patience over the past number of years.  From what I hear, very few advisors actually read their graduate students' work, but I have had the opposite experience!  During my time here in Toronto, under her guidance, I have learned an incredible amount, and have matured substantially as a mathematician.  At least I would like to think so.\\

Next I would like to thank my supervisory committee, comprising Marco Gualtieri and Boris Khesin, for all of their suggestions throughout my PhD programme.  I also would like to thank my defence committee, including Marco Gualtieri, Lisa Jeffrey, Paul Selick, and the external examiner, Richard Cushman.  In particular, I would like thank Richard for his enthusiasm while reading my thesis and at the defence itself, and his careful checking of my bibliography!\\

I would like to thank my co-author and collaborators involved in parts of this thesis.  These include Jiayong Li, Augustin Batubenge, Patrick Iglesias-Zemmour, and Yael Karshon.  I would also like to thank Patrick for many stimulating discussions about my paper with Jiayong, and about diffeology in general.\\

I would like to thank J\k{e}drzej \'Sniatycki, my former advisor for my MSc thesis at the University of Calgary, for his continued interest in and support of my work, and the discussions we have had during my PhD programme.  Part of this thesis rests on a theoretical foundation established by him.\\

I would like to thank George Elliott for his generous support during my last year of my PhD programme.  Without this, life would have been incredibly harsh, and the degree might have been very difficult to complete.\\

Next, a special thank you to Ida Bulat, for her deep kindness, helpfulness, and vast knowledge of the inner workings of the PhD programme and beyond.  If Ida was not here, we would all be terribly lost, students and faculty alike.\\

Finally, I would like to thank my family for their unwavering support and confidence in me.  Thanks to my friends who have encouraged me, helped me through tough times, and have given me a number of sanity/reality checks over the years which were much-needed (even if in the end they failed on the sanity part).  And last, thanks to the Department of Mathematics at the University of Toronto for creating a stimulating environment where I have felt free to talk about the thing I love, mathematics, with others who also love it.  I will sorely miss this place. 

\newpage

\tableofcontents

\chapter{Introduction}\labell{ch:introduction}

Differential topology on manifolds is a well understood subject.  But the category of manifolds is not closed under subsets and quotients, among many other categorical constructs, like the topological category is.  Moreover, in the literature, there are many theories generalising the notion of smooth structure.  Stratified spaces, orbifolds, and geometric quotients of Lie group actions are just some specific examples, and many categories such as differential spaces, diffeological spaces, and Fr\"olicher spaces have been defined in order to give a general framework for these examples (see \cite{sikorski67}, \cite{sikorski71}, \cite{mostow79}, \cite{souriau81}, \cite{iglesias}, \cite{frolicher82}).\\

Many of these generalisations are related, and the relationships between them, along with some of their categorical properties, have been studied.  For example, see \cite{stacey10}, \cite{baez-hoffnung11}.\\

In this thesis, I explore two categories: diffeological spaces and differential spaces.  A diffeological space is a set equipped with a family of maps \emph{into} it.  The family contains all the constant maps, enjoys a locality condition, and satisfies a smooth compatibility condition.  Diffeological spaces retain a lot of information when taking quotients.  For example, when looking at the action of $\O(n)$ on $\RR^n$ via rotations, the diffeological structure on the geometric quotient remembers which $n$ is in the original set-up, even though all of the geometric quotients are homeomorphic.\\

A differential space is a set equipped with a family of real-valued functions, which satisfies a locality condition and a smooth compatibility condition.  Over the weakest topology such that the family is continuous, these functions induce a sheaf.  These spaces retain a lot of information when taking subsets.  For example, consider the $x$ and $y$-axes in $\RR^2$, along with another line given by $y=mx$ ($m>0$).  The differential space structure can tell the difference between different choices of $m$, even though all of the subsets are homeomorphic.\\

Both categories yield smooth structures on arbitrary subsets and quotients, and both contain manifolds, manifolds with boundary, and manifolds with corners as full subcategories.  In this thesis I am interested in comparing these two categories, along with providing examples and applications of each.  There will be a lot of focus on geometric quotients coming from compact Lie group actions, and symplectic quotients coming from compact Hamiltonian group actions.  I will now begin to outline each chapter.\\

In \hypref{ch:smoothstructures}{Chapter} I review the basic theory of diffeological and differential spaces, and give examples of each.  I then compare these two categories.  In particular, I show that the ``intersection'' of these two categories is isomorphic to another smooth category that appears often in the literature known as Fr\"olicher spaces (see \hypref{t:reflexiveisom}{Theorem}, \hypref{t:frolicher}{Theorem}, and \hypref{c:frolicher}{Corollary}).  I then in the following section give examples of such spaces, along with examples that are not.  This part is joint work with Augustin Batubenge, Patrick Iglesias-Zemmour, and Yael Karshon.\\

In \hypref{ch:forms}{Chapter} I study diffeological forms on geometric and symplectic quotients.  I show that on a geometric quotient coming from a compact Lie group, the diffeological forms are isomorphic to the basic forms on the manifold upstairs (see \hypref{t:geomquot}{Theorem}).  I use this theorem to show that the definition of differential forms on orbifolds that appears in the literature is isomorphic to the diffeological forms on the orbifold diffeological structure (see \cite{ikz10} for more on the orbifold diffeological structure, and \hypref{s:orbifolds}{Section} for the results).  Also, I show that differential forms defined on symplectic quotients by Sjamaar are isomorphic to diffeological forms in the case of symplectic reduction at a regular value of the momentum map. In other words, Sjamaar forms are intrinsic to the diffeological structure on the symplectic quotient in this case (see \hypref{t:sjamaar}{Theorem}).  I would like to thank Yael Karshon for making me aware of \hypref{l:techn1}{Lemma}, \hypref{l:techn2}{Lemma}, and \hypref{c:techn2}{Corollary}, along with their proofs.  These pieces of the puzzle are crucial in showing that basic forms are equal to pullbacks from the orbit space, and are due to her.\\

Subcartesian spaces are differential spaces that are locally diffeomorphic to arbitrary subsets of Euclidean spaces. \hypref{ch:vectorfields}{Chapter} is a detailed analysis of vector fields on subcartesian spaces, enhancing the theory developed by \'Sniatycki (see \cite{sniatycki03}).  A vector field is a smooth section of the Zariski tangent bundle that admits a local flow.  Besides reviewing the relevant theory, I prove a characterisation of a vector field in terms of certain ideals of the ring of smooth functions on vector field orbits (see \hypref{p:charvect}{Proposition}).  This immediately implies that vector fields form a Lie subalgebra of derivations of the ring of smooth functions on a subcartesian space (see \hypref{c:charvect}{Corollary}).  I study subcartesian spaces that are also stratified spaces (in a way compatible with the smooth structure), along with stratified maps.  I prove that these form a full subcategory of subcartesian spaces equipped with locally complete families of vector fields, along with so-called orbital maps (see \hypref{c:orbitalcat}{Corollary}).\\

I apply this theory to both geometric quotients and symplectic quotients.  In particular, given a compact Lie group action on a manifold, I construct a family of vector fields whose orbits are exactly the orbit-type strata (see \hypref{d:A}{Definition} and \hypref{t:singfolA}{Theorem}), and obtain that the quotient map is orbital with respect to this family of vector fields and the family of all vector fields on the quotient (see \hypref{c:piorbital}{Corollary}).  In the case of a Hamiltonian action, on the zero set of the momentum map, I construct a family of vector fields whose orbits are contained in the orbit-type strata, and are equal to the orbit-type strata in the case of a connected group (see \hypref{d:AZ}{Definition} and \hypref{p:vectZorbits}{Proposition}).  I then show that the quotient map restricted to the zero set is orbital with respect to this family of vector fields and the family of all vector fields on the symplectic quotient (see \hypref{p:piZorbit}{Proposition}).

\hypref{ch:sphere}{Chapter} is a report on a joint paper with Jiayong Li in which we show that orientation-preserving diffeomorphisms of $\SS^2$ have a diffeologically smooth strong deformation retraction onto $SO(3)$.  The continuous case with respect to the $C^k$-topology ($1<k\leq\infty$) is proven by Smale in his 1959 paper.  While much of this work is translating the work of Smale into the diffeological language, there are pieces in Smale's paper that simply do not translate at all.  The use of some Sobolev inequalities, as well as some careful modifications to Smale's arguments, are required to achieve smoothness.  (To view the actual publication, see \cite{li-watts10} or \texttt{<http://arxiv.org/abs/0912.2877>}.)\\

At the end, in an appendix, I give two different definitions of ``subcartesian space'' that appear in the literature.  I do not know of any place in the literature where the two definitions are shown to be equivalent, and so this appendix is dedicated to showing that these two categories are in fact isomorphic (see \hypref{t:subcart}{Theorem}).\\

I will give some final post-defence thoughts before I end this introduction.  This thesis is mainly about extending our notion of smoothness beyond manifolds to more general spaces.  The immediate question is, what exactly do we mean by ``smoothness'' in this context?  In the last three chapters we mostly stick to either the language of diffeology, or to differential spaces.  However, the last couple of sections of Chapter 2 show that the category of diffeological spaces, and that of differential spaces, interact with each other in some way; in particular, we examine their ``intersection''.  That said, I think it would be more natural to write down one category that includes diffeological spaces and differential spaces as full subcategories instead of switching between the two languages whenever it seems convenient.\\

I suggest the following: let $\mathcal{C}$ be the category whose objects are triples $(X,\mathcal{D},\mathcal{F})$ where $X$ is a set, $\mathcal{D}$ is a diffeology on $X$, and $\mathcal{F}$ is a differential structure on $X$.  There is a compatibility requirement such that for any plot $p\in\mathcal{D}$ and any function $f\in\mathcal{F}$, the composition $f\circ p$ is smooth in the usual sense on the domain of $p$.  Maps are required to push forward plots to plots and pull back functionally smooth functions to functionally smooth functions.  Note that such a category is much bigger than the category of Fr\"olicher spaces, and includes Fr\"olicher spaces as a full subcategory by the results of Chapter 2.  It also includes such things as the category of smooth stratified spaces as full subcategories.  Indeed, replace the stratification with a diffeology in which each plot has its image contained in a stratum.  Maps between these spaces are then automatically smooth stratified maps.\\

More details on this idea can be found in the paper that inspired it: Stacey's paper \cite{stacey10}.  The introduction to this paper is an excellent overview on a possible answer to the question, "What does it mean to be smooth?" and how mathematicians have attempted to answer it. 
\chapter{Diffeological and Differential Spaces}\labell{ch:smoothstructures}

In the literature, there are many different categories defined in order to generalise the concept of ``smooth structure''.  Some of these form a chain, each category within containing the previous ones as full subcategories:\\

\begin{center}
\begin{tabular}{ccc}

\framebox[1.5in][c]{Manifolds} & \huge{$\subset$} & \framebox[2in][c]{Manifolds with Boundary} \vspace{0.1in} \\
 & & \huge{$\cap$} \vspace{0.1in} \\
\framebox[1.5in][c]{Fr\"olicher Spaces} &
\huge{$\supset$} &
\framebox[2in][c]{Manifolds with Corners} \\

\end{tabular}
\end{center}
\vspace{0.1in}

I define Fr\"olicher spaces later on in \hypref{d:frolicher}{Definition}.  There many other objects in mathematics that obtain natural ``smooth structures'' that do not belong in any of the categories above, however.  I discuss two more categories, diffeological spaces and differential spaces, each of which contains the four categories above as full subcategories.  For a more complete picture of these categories of smooth objects appearing in the literature, with details on the functors between them, see \cite{stacey10}.\\

\hypref{s:diffeology}{Section} reviews the basic theory of diffeology, and \hypref{s:differentialspaces}{Section} reviews that of differential spaces.  The new results in this chapter lie within the last two sections.  \hypref{s:reflexivity}{Section} is dedicated to showing that the ``intersection'' of the categories of diffeological spaces and differential spaces is isomorphic to Fr\"olicher spaces (see \hypref{t:reflexiveisom}{Theorem}, \hypref{t:frolicher}{Theorem}, and \hypref{c:frolicher}{Corollary}).  \hypref{s:examples}{Section} is comprised of a set of examples of spaces appearing in this intersection, and a set of counterexamples not inside of it.

\section{Diffeology}\labell{s:diffeology}

Diffeology was developed by Souriau (see \cite{souriau81}) in the early 1980's.  The theory of diffeology was then further developed by others, in particular Iglesias-Zemmour (see \cite{iglesias}), whose text will be our primary reference for the theory.  A similar theory was developed by Chen in the 1970's.  The definition of what is now referred to as a ``Chen space'' went through many revisions in a series of papers, the end result being the same as the definition of a diffeology below, but with open subsets of Euclidean spaces as domains of plots replaced with convex subsets of Euclidean spaces.  See \cite{chen73}, \cite{chen75}, \cite{chen77}, \cite{chen82}.\\

\begin{definition}[Diffeology]
Let $X$ be a nonempty set.  A \emph{parametrisation} of $X$ is a function $p:U\to X$ where $U$ is an open subset of $\RR^n$ for some $n$.  A \emph{diffeology} $\mathcal{D}$ on $X$ is a set of parametrisations satisfying the following three conditions.
\begin{enumerate}
\item (Covering) For every $x\in X$ and every nonnegative integer $n\in\NN$, the constant function $p:\RR^n\to\{x\}\subseteq X$ is in $\mathcal{D}$.
\item (Locality) Let $p:U\to X$ be a parametrisation such that for every $u\in U$ there exists an open neighbourhood $V\subseteq U$ of $u$ satisfying $p|_V\in\mathcal{D}$. Then $p\in\mathcal{D}$.
\item (Smooth Compatibility) Let $p:U\to X$ be a plot in $\mathcal{D}$.  Then for every $n\in\NN$, every open subset $V\subseteq\RR^n$, and every smooth map $F:V\to U$, we have $p\circ F\in\mathcal{D}$.
\end{enumerate}

$X$ equipped with a diffeology $\mathcal{D}$ is called a \emph{diffeological space}, and is denoted by $(X,\mathcal{D})$.  When the diffeology is understood, we will drop the symbol $\mathcal{D}$.  The parametrisations $p\in\mathcal{D}$ are called \emph{plots}.
\end{definition}

\begin{example}[Standard Diffeology on a Manifold]
Let $M$ be a manifold.  The \emph{standard diffeology} on $M$ is the set of all smooth maps from open subsets of $\RR^n$, $n\in\NN$, to $M$.  In particular, any plot $p:U\to M$ has the property that for any $u\in U$, there is an open neighbourhood $V\subseteq U$ of $u$, a chart $q:W\to M$, and a smooth map $F:V\to W$ satisfying $$p|_V=q\circ F.$$
\end{example}

\begin{definition}[Diffeologically Smooth Maps]
Let $(X,\mathcal{D}_X)$ and $(Y,\mathcal{D}_Y)$ be two diffeological spaces, and let $F:X\to Y$ be a map.  Then we say that $F$ is \emph{diffeologically smooth} if for any plot $p\in\mathcal{D}_X$, $$F\circ p\in\mathcal{D}_Y.$$  Denote the set of all diffeologically smooth maps between $X$ and $Y$ by $\mathcal{D}(X,Y)$.
\end{definition}

\begin{example}[Smooth Maps Between Manifolds]
Any smooth map between two smooth manifolds is diffeologically smooth with respect to the standard diffeologies on the manifolds.  That is, if $M$ and $N$ are manifolds, then $\mathcal{D}(M,N)=\CIN(M,N)$.
\end{example}

\begin{remark}
Diffeological spaces, along with diffeologically smooth maps, form a category.
\end{remark}

\begin{definition}[Generating Diffeologies]
Let $\mathcal{D}_0$ be a family of parametrisations into a set $X$.  Then the diffeology \emph{generated} by $\mathcal{D}_0$ is the set of all plots $p:U\to X$ satisfying the following property.  For any $u\in U$ there is an open neighbourhood $V\subseteq U$ of $u$ such that either $p|_V$ is a constant parametrisation, or there is a parametrisation $(q:W\to X)\in\mathcal{D}_0$ and a smooth map $F:V\to W$ satisfying $p|_V=q\circ F$.  Equivalently, the diffeology $\mathcal{D}$ generated by $\mathcal{D}_0$ is the smallest diffeology containing $\mathcal{D}_0$; that is, any other diffeology containing $\mathcal{D}_0$ also contains $\mathcal{D}$.
\end{definition}

\begin{definition}[Quotient Diffeology]
Let $X$ be a diffeological space, and let $\sim$ be an equivalence relation on $X$.  Let $Y=X/\!\!\sim$, and let $\pi:X\to Y$ be the quotient map.  We define the \emph{quotient diffeology} on $Y$ to be the set of plots $p:U\to Y$ satisfying: for all $u\in U$, there exist an open neighbourhood $V\subseteq U$ of $u$ and a plot $q:V\to X$ so that $p|_V=\pi\circ q$.
\end{definition}

Let $M$ be a manifold, and let $G$ be a Lie group acting smoothly on $M$.  Throughout this thesis we use the following notation: for $g\in G$ and $x\in M$, we denote the action of $g$ on $x$ by $g\cdot x$, and the $G$-orbit of $x$ by $G\cdot x$.  Since $g$ can be viewed as a diffeomorphism of $M$ onto itself, we shall often write $g_*$ and $g^*$ for the pushforward and pullback of this diffeomorphism.\\

\begin{example}[Geometric Quotient]
By definition of the quotient diffeology, we know that for any plot $p:U\to M/G$, and for every $u\in U$, there exist an open neighbourhood $V\subseteq U$ of $u$ and a plot $q$ of $M$ satisfying $p|_V=\pi\circ q$, where $\pi:M\to M/G$ is the quotient map.  However, we also know that any plot $q$ of $M$ is just a smooth map from its domain into $M$.  Thus, the diffeology on $M/G$ is generated by the set $\{\pi\circ q~|~q:V\to M\text{ is smooth, $V$ open in some $\RR^n$}\}$.
\end{example}

\begin{definition}[Subset Diffeology]
Let $(X,\mathcal{D})$ be a diffeological space, and let $Y$ be a subset of $X$.  Then the \emph{subset diffeology} on $Y$ is defined to be all plots $p:U\to X$ whose images are contained in $Y$.
\end{definition}

Let $X$ be a diffeological space, and let $\sim$ be an equivalence relation on $X$.  Let $Y$ be a subset of $X$ such that if $x\in Y$, then for any $y\in X$ in the same equivalence class as $x$, we have $y\in Y$ as well.  That is, $Y$ is the union of a set of equivalences classes.  Then $\sim$ induces an equivalence relation on $Y$, also denoted by $\sim$.

\begin{proposition}[Subquotient Diffeology]\labell{p:commsquare}
The subset diffeology on $Y/\!\!\sim$ induced by the quotient diffeology on $X/\!\!\sim$ is equal to the quotient diffeology on $Y/\!\!\sim$ induced by the subset diffeology on $Y$.
\end{proposition}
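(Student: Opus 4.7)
The plan is to show the equality of two diffeologies on $Y/\!\!\sim$ by a double inclusion, with the saturation hypothesis on $Y$ (that $Y$ is a union of equivalence classes) doing the crucial work in one direction. Throughout, let $\pi_X : X \to X/\!\!\sim$ and $\pi_Y : Y \to Y/\!\!\sim$ denote the quotient maps, and let $\iota_\sim : Y/\!\!\sim \,\hookrightarrow X/\!\!\sim$ be the injection induced by the inclusion $\iota : Y \hookrightarrow X$, so that $\pi_X \circ \iota = \iota_\sim \circ \pi_Y$. The key elementary observation is that for any $x \in X$, the condition $[x] \in Y/\!\!\sim$ (meaning the equivalence class $[x]$ meets $Y$) implies $x \in Y$, by saturation of $Y$.

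For the easy direction, suppose $p : U \to Y/\!\!\sim$ is a plot of the quotient diffeology coming from the subset diffeology on $Y$. Then for every $u \in U$ there is an open neighbourhood $V \subseteq U$ and a plot $q : V \to Y$ of the subset diffeology (i.e.\ a smooth map $q : V \to X$ whose image lies in $Y$) such that $p|_V = \pi_Y \circ q$. Composing with $\iota_\sim$ gives $\iota_\sim \circ p|_V = \pi_X \circ \iota \circ q$, which exhibits $p$, viewed into $X/\!\!\sim$, as a plot of the quotient diffeology; since its image lies in $Y/\!\!\sim$, it is a plot of the induced subset diffeology on $Y/\!\!\sim$.

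For the other direction, suppose $p : U \to Y/\!\!\sim$ is a plot of the subset diffeology induced from $X/\!\!\sim$, i.e.\ $\iota_\sim \circ p : U \to X/\!\!\sim$ is a plot of the quotient diffeology. Around each $u \in U$ we then obtain an open $V \subseteq U$ and a smooth map $q : V \to X$ with $\iota_\sim \circ p|_V = \pi_X \circ q$. The main step is to observe that because $\iota_\sim(p(v)) = [q(v)] \in Y/\!\!\sim$ for every $v \in V$, the saturation of $Y$ forces $q(v) \in Y$ for every $v \in V$. Thus $q$ is in fact a plot of the subset diffeology on $Y$, and using $\pi_X \circ \iota = \iota_\sim \circ \pi_Y$ together with the injectivity of $\iota_\sim$ one concludes $p|_V = \pi_Y \circ q$, so $p$ is a plot of the quotient diffeology on $Y/\!\!\sim$.

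The only subtle point is the claim that the local lift $q$ actually lands in $Y$; this is where the hypothesis that $Y$ is saturated is indispensable (without it, the equivalence class $[q(v)]$ could meet $Y$ without $q(v)$ itself lying in $Y$, and one would have to modify $q$, which a priori cannot be done smoothly). Everything else is bookkeeping with quotient and inclusion maps.
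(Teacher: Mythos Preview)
Your proof is correct and follows essentially the same approach as the paper: both directions are handled by local lifts to $X$ (resp.\ $Y$), with the saturation hypothesis used to force the lift from $X$ to land in $Y$. Your added remark about why saturation is indispensable, and the explicit use of the injectivity of $\iota_\sim$, make the argument slightly more detailed than the paper's, but the logical content is identical.
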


\begin{proof}
Let $\pi:X\to X/\!\!\sim$ and $\pi_Y:Y\to Y/\!\!\sim$ be the quotient maps.  Let $p:U\to Y/\!\!\sim$ be a plot in the subset diffeology induced by $X/\!\!\sim$.  Then $p$ is a plot of $X/\!\!\sim$ with image in $Y/\!\!\sim$.  Thus, for every $u\in U$ there is an open neighbourhood $V\subseteq U$ of $u$ and a plot $q:V\to X$ such that $$p|_V=\pi\circ q.$$  Note that the image of $q$ is in $Y$, since for any $x\in Y$, all points $y\in X$ such that $y\sim x$ are contained in $Y$ as well.  Hence $$p|_V=\pi_Y\circ q.$$  So $p|_V$ is in the quotient diffeology on $Y/\!\!\sim$ induced by $Y$.  By the definition of the quotient diffeology, $p$ is a plot on $Y/\!\!\sim$.\\

Next, let $p:U\to Y/\!\!\sim$ be a plot in the quotient diffeology induced by $Y$.  Then, for every $u\in U$ there is an open neighbourhood $V\subseteq U$ of $u$ and a plot $q:V\to Y$ such that $$p|_V=\pi_Y\circ q.$$  But $q$ is in the subset diffeology on $Y$, and hence is a plot of $X$ with image in $Y$.  So, $$p|_V=\pi\circ q.$$  But then $p|_V$ is a plot in the quotient diffeology on $X/\!\!\sim$ with image in $Y/\!\!\sim$.  Thus, $p|_V$ is in the subset diffeology on $Y/\!\!\sim$ induced by $X/\!\!\sim$.  By the definition of a diffeology, $p$ itself is a plot in the subset diffeology on $Y/\!\!\sim$.
\end{proof}

We have the following commutative diagram of diffeological spaces, where the lemma above resolves any potential ambiguity in the diffeological structure on $Y/\!\!\sim$.

$$\xymatrix{
Y \ar[r]^i \ar[d]_{\pi_Y} & X \ar[d]^{\pi} \\
Y/\!\!\sim \ar[r]_j & X/\!\!\sim \\
}$$

\begin{example}[Symplectic Quotient]
Let $G$ be a Lie group acting on a symplectic manifold $(M,\omega)$.  The action is called \emph{symplectic} if it preserves the symplectic form; that is, for any $g\in G$, we have $g^*\omega=\omega$.  A symplectic action is \emph{Hamiltonian} if there is a $G$-coadjoint equivariant map $\Phi:M\to\g^*$ satisfying the following.  For any $\xi\in\g=\operatorname{Lie}(G)$, let $\Phi^\xi:M\to\RR$ be defined by $$\Phi^\xi(x):=\langle\xi,\Phi(x)\rangle$$ where $\langle,\rangle$ is the usual pairing between $\g$ and $\g^*$.  Then, $$\xi_M\hook\omega=-d\Phi^\xi.$$  Here, $\xi_M$ is the vector field on $M$ induced by $\xi$: for any $x\in M$, $$\xi_M|_x:=\frac{d}{dt}\Big|_{t=0}(\exp(t\xi)\cdot x).$$
$\Phi$ is called the \emph{momentum map} (in the literature it is also known as the \emph{moment map}).  Denote by $Z$ the zero level set of $\Phi$.  This is a $G$-invariant subset of $M$.  We call the orbit space $Z/G$ the \emph{symplectic quotient}.  The induced diffeology on the symplectic quotient $Z/G$ is both the subset diffeology from the geometric quotient $M/G$ and the quotient diffeology from $Z$ by \hypref{p:commsquare}{Proposition}.
\end{example}

\begin{definition}[Product Diffeology]
Let $X$ and $Y$ be two diffeological spaces.  Then the set $X\times Y$ acquires the \emph{product diffeology}, defined as follows.  Let $\operatorname{pr}_X:X\times Y\to X$ and $\operatorname{pr}_Y:X\times Y\to Y$ be the natural projections.  A parametrisation $p:U\to X\times Y$ is a plot if $\operatorname{pr}_X\circ p$ and $\operatorname{pr}_Y\circ p$ are plots of $X$ and $Y$, respectively.
\end{definition}

\begin{definition}[Smooth Functions]
Let $X$ be a diffeological space.  A (real-valued) \emph{diffeologically smooth function} on $X$ is a diffeologically smooth map $f:X\to\RR$ where $\RR$ is equipped with the standard diffeology.  Hence, for any plot $p:U\to X$, $f\circ p$ is a smooth function on $U$ in the usual sense.  The set of diffeologically smooth functions form a commutative $\RR$-algebra under pointwise addition and multiplication.  Note that this set is precisely $\mathcal{D}(X,\RR)$.  Note that if $X$ and $Y$ are diffeological spaces and $F:X\to Y$ is a diffeologically smooth map between them, then for any $f\in\mathcal{D}(Y,\RR)$, $f\circ F$ is a diffeologically smooth function on $X$.  Thus we have $$F^*\mathcal{D}(Y,\RR)\subseteq\mathcal{D}(X,\RR).$$
\end{definition}

\begin{example}
Let $M$ be a manifold.  Then $\mathcal{D}(M,\RR)=\CIN(M)$.
\end{example}

\begin{proposition}[Diffeologically Smooth Functions on a Geometric Quotient]\labell{p:functions}
Let $G$ be a Lie group acting on a manifold $M$, and let $\pi:M\to M/G$ be the orbit map.  Then $\pi^*:\mathcal{D}(M/G,\RR)\to\CIN(M)$ is an isomorphism of $\RR$-algebras onto the invariant smooth functions $\CIN(M)^G$.
\end{proposition}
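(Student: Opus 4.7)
The plan is to verify the four standard properties: that $\pi^*$ lands in $\CIN(M)^G$, that it is injective, that it is surjective onto the invariants, and that it is an $\RR$-algebra homomorphism. Algebra homomorphism and injectivity are formal: pullback by any set map preserves pointwise sums and products, and $\pi$ is surjective so $f\circ\pi = g\circ\pi$ forces $f=g$.

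To see $\pi^*$ lands in $\CIN(M)^G$, first I would note that $\pi$ itself is diffeologically smooth: for any plot $q$ of $M$ (i.e.\ any smooth map from an open subset of Euclidean space into $M$), the composition $\pi\circ q$ is a plot of $M/G$ by the very definition of the quotient diffeology. Hence for $f\in\mathcal{D}(M/G,\RR)$, the composite $f\circ\pi\in\mathcal{D}(M,\RR)=\CIN(M)$ (using the earlier example identifying diffeologically smooth functions on a manifold with ordinary smooth functions). The invariance $(f\circ\pi)(g\cdot x)=(f\circ\pi)(x)$ is immediate because $\pi(g\cdot x)=\pi(x)$.

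The substantive step is surjectivity onto $\CIN(M)^G$. Given $h\in\CIN(M)^G$, the invariance of $h$ means it descends to a well-defined set-theoretic function $\tilde{h}:M/G\to\RR$ with $\tilde{h}\circ\pi=h$. I would then check that $\tilde{h}$ is diffeologically smooth by testing against an arbitrary plot $p:U\to M/G$. By the definition of the quotient diffeology, for every $u\in U$ there is an open neighbourhood $V\subseteq U$ of $u$ together with a plot $q:V\to M$ (necessarily a smooth map into $M$) such that $p|_V=\pi\circ q$. Then
\[
(\tilde{h}\circ p)|_V \;=\; \tilde{h}\circ\pi\circ q \;=\; h\circ q,
\]
which is smooth on $V$ because $h\in\CIN(M)$ and $q$ is smooth. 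Since smoothness is a local property on $U$, this shows $\tilde{h}\circ p\in\CIN(U,\RR)$ for every plot $p$, so $\tilde{h}\in\mathcal{D}(M/G,\RR)$ and $\pi^*\tilde{h}=h$.

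The only point that might look like an obstacle is that a priori we do not know that the assignment $h\mapsto\tilde{h}$ yields a \emph{diffeologically smooth} function rather than merely a function that is smooth along the orbit map, but this is handled uniformly by the local presentation of plots of the quotient as $\pi\circ q$ that is built into the definition of the quotient diffeology. Combining the four steps yields the claimed algebra isomorphism $\mathcal{D}(M/G,\RR)\xrightarrow{\pi^*}\CIN(M)^G$.
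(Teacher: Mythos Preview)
Your proof is correct and follows essentially the same approach as the paper's own proof: verify that $\pi^*$ lands in $\CIN(M)^G$ using smoothness of $\pi$ and invariance from $\pi(g\cdot x)=\pi(x)$, check injectivity via surjectivity of $\pi$, and establish surjectivity onto invariants by descending $h\in\CIN(M)^G$ to $\tilde{h}$ and testing smoothness locally against plots via the local lifts $p|_V=\pi\circ q$ built into the quotient diffeology. The only addition you make is the explicit remark that pullback is an $\RR$-algebra homomorphism, which the paper leaves implicit.
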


\begin{proof}
Let $f\in\mathcal{D}(M/G,\RR)$.  Then the pullback $\pi^*f$ is smooth on $M$ since $\pi$ is smooth.  Also, for any $g\in G$, we have $g^*\pi^*f=\pi^*f$, and so the image of $\pi^*$ is in the set of invariant functions on $M$.\\

Next, assume that $\pi^*f=0$.  Then since $\pi$ is surjective, we have that $f=0$.  So $\pi^*$ is injective.  We need to show that it is surjective onto invariant functions.\\

Let $\tilde{f}$ be an invariant smooth function on $M$.  Define $f:M/G\to\RR$ to be $f(x):=\tilde{f}(y)$ where $y$ is any point in $\pi^{-1}(x)$.  This is well-defined since any two points in $\pi^{-1}(x)$ differ by an element of $G$, and $\tilde{f}$ is $G$-invariant.  Let $p:U\to M/G$ be a plot of $M/G$.  Then for any $u\in U$, there is an open neighbourhood $V\subseteq U$ of $u$ and a plot $q:V\to M$ such that $$p|_V=\pi\circ q.$$  Since $$f\circ\pi\circ q=\tilde{f}\circ q\in\CIN(V)$$ and smoothness of a function on $U$ is a local condition, we have that $f\circ p\in\CIN(U)$.  Since $p$ is an arbitrary plot, we have that $f\in\mathcal{D}(M/G,\RR)$.
\end{proof}

\begin{definition}[Standard Functional Diffeology]
Let $Y$ and $Z$ be diffeological spaces, and set $X=\mathcal{D}(Y,Z)$.  The \emph{standard functional diffeology} on $X$ is defined as follows.  A parametrisation $p:U\to X$ is a plot if the map $$\Psi_p:U\times Y\to Z:(\mu,y)\to p(\mu)(y)$$ is diffeologically smooth.  In this case we often will write $f_\mu:Y\to Z$ instead of $p(\mu):Y\to Z$ to emphasise that the image of $p$ is a family of functions.
\end{definition}

\section{Differential Spaces}\labell{s:differentialspaces}

Differential spaces were introduced by Sikorski in the late 1960's (see \cite{sikorski67}, \cite{sikorski71}), and appear in the literature often, even if they are not referred to as such.  For example, the ring of ``smooth'' functions on a geometric quotient from a compact Lie group, studied by Schwarz \cite{schwarz75} and Cushman-\'Sniatycki \cite{cushman-sniatycki01}, or the ring of functions on a symplectic quotient introduced by Arms-Cushman-Gotay \cite{acg91}, are differential structures on the respective spaces.\\

At the same time, in the late 1960's, Aronszajn expressed the need for a theory of smooth structures on arbitrary subsets of $\RR^n$ (see \cite{aronszajn67}).  He and Szeptycki then developed the theory of subcartesian spaces, and applied it to Bessel potentials (see \cite{aronszajn75}, \cite{aronszajn80}).  The classical definition for a subcartesian space involves generalising the notion of an atlas, but it can be shown that the resulting category is isomorphic to a full subcategory of differential spaces, whose objects have a much simpler definition.  We prove this isomorphism in the appendix (\hypref{t:subcart}{Theorem}).\\

\begin{definition}[Differential Space]
Let $X$ be a nonempty set.  A \emph{differential structure}, sometimes called a \emph{Sikorski structure}, on $X$ is a nonempty family $\mathcal{F}$ of functions into $\RR$, along with the weakest topology on $X$ for which every element of $\mathcal{F}$ is continuous, satisfying:
\begin{enumerate}
\item (Smooth Compatibility) For any positive integer $k$, functions $f_1,...,f_k\in\mathcal{F}$, and $F\in\CIN(\RR^k)$, the composition $F(f_1,...,f_k)$ is contained in $\mathcal{F}$.
\item (Locality) Let $f:X\to\RR$ be a function such that for any $x\in X$ there exist an open neighbourhood $U\subseteq X$ of $x$ and a function $g\in\mathcal{F}$ satisfying $f|_U=g|_U$.  Then $f\in\mathcal{F}$.
\end{enumerate}
A set $X$ equipped with a differential structure $\mathcal{F}$ is called a \emph{differential space}, or a \emph{Sikorski space}, and is denoted $(X,\mathcal{F})$.
\end{definition}

\begin{remark}\labell{r:diffstr}
\noindent
\begin{enumerate}
\item Let $X$ be a set and $\mathcal{F}$ a family of real-valued functions on it.  We will call the weakest topology on $X$ such that $\mathcal{F}$ is a set of continuous functions the \emph{topology induced} or \emph{generated} by $\mathcal{F}$, and denote it by $\mathcal{T}_\mathcal{F}$.  A subbasis for this topology is given by $$\{f^{-1}(I)~|~f\in\mathcal{F},~I\text{ is an open interval in $\RR$}\}.$$  In the case that $\mathcal{F}$ is a differential structure, by smooth compatibility and the facts that translation and rescaling are smooth, the subbasis is equal to $$\{f^{-1}((0,1))~|~f\in\mathcal{F}\}.$$  We will often refer to this as the \emph{subbasis induced} or \emph{generated} by $\mathcal{F}$.  Also, the basis comprised of finite intersections of elements of this subbasis we will refer to as the \emph{basis induced} or \emph{generated} by $\mathcal{F}$.
\item The smooth compatibility condition of a differential structure guarantees that $\mathcal{F}$ is a commutative $\RR$-algebra under pointwise addition and multiplication.
\item The locality condition indicates that a differential structure $\mathcal{F}$ on $X$ induces a sheaf of functions: for any open subset $U$ of $X$, define $\mathcal{F}(U)$ to be all functions $f:U\to\RR$ such that if $x\in U$, then there exist an open neighbourhood $V\subseteq U$ of $x$ and a function $g\in\mathcal{F}$ such that $$g|_V=f|_V.$$
\end{enumerate}
\end{remark}

\begin{example}[Manifolds]
A manifold $M$ comes equipped with the differential structure given by its smooth functions $\CIN(M)$.
\end{example}

\begin{definition}[Functionally Smooth Maps]
Let $(X,\mathcal{F}_X)$ and $(Y,\mathcal{F}_Y)$ be two differential spaces.  A map $F:X\to Y$ is \emph{functionally smooth} if $F^*\mathcal{F}_Y\subseteq\mathcal{F}_X$.  $F$ is called a \emph{functional diffeomorphism} if it is functionally smooth and has a functionally smooth inverse.  Denote the set of functionally smooth maps between $X$ and $Y$ by $\mathcal{F}(X,Y)$.
\end{definition}

\begin{remark}
Note that in the literature a functionally smooth map is sometimes called a \emph{Sikorski smooth} map; for example, in \cite{stacey10}.
\end{remark}

\begin{remark}
A functionally smooth map is continuous with respect to the topologies induced by the differential structures.
\end{remark}

\begin{example}[Smooth Maps Between Manifolds]
Given two manifolds $M$ and $N$, the functionally smooth maps between $M$ and $N$ are exactly the usual smooth maps $\CIN(M,N)$.
\end{example}

\begin{remark}
Differential spaces along with functionally smooth maps form a category.
\end{remark}

Let $X$ be a set, and let $\mathcal{Q}$ be a family of real-valued functions on $X$.  Equip $X$ with the topology induced by $\mathcal{Q}$.  Define a family $\mathcal{F}$ of real-valued functions on $X$ as follows. $f\in\mathcal{F}$ if for any $x\in X$ there exists an open neighbourhood $U\subseteq X$ of $x$, functions $q_1,...,q_k\in\mathcal{Q}$, and a function $F\in\CIN(\RR^k)$ satisfying $$f|_U=F(q_1,...,q_k)|_U.$$

\begin{lemma}\labell{l:topologies}
The two topologies $\mathcal{T}_\mathcal{Q}$ and $\mathcal{T}_\mathcal{F}$ are equal.
\end{lemma}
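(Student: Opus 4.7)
The plan is to prove both topology inclusions separately, using the local nature of the definition of $\mathcal{F}$ and the universal property (minimality) of each initial topology. The key observation is that $\mathcal{Q} \subseteq \mathcal{F}$, which is immediate by taking $k=1$, $F = \id_\RR$, and $U = X$ in the defining condition of $\mathcal{F}$.

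For the first inclusion, $\mathcal{T}_\mathcal{Q} \subseteq \mathcal{T}_\mathcal{F}$: since every element of $\mathcal{Q}$ belongs to $\mathcal{F}$, every element of $\mathcal{Q}$ is continuous with respect to $\mathcal{T}_\mathcal{F}$. Then by minimality of $\mathcal{T}_\mathcal{Q}$ (it is the weakest topology for which $\mathcal{Q}$ is continuous), the inclusion follows.

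For the reverse inclusion, $\mathcal{T}_\mathcal{F} \subseteq \mathcal{T}_\mathcal{Q}$, it suffices to show that every $f \in \mathcal{F}$ is continuous as a map $(X,\mathcal{T}_\mathcal{Q}) \to \RR$, and then invoke minimality of $\mathcal{T}_\mathcal{F}$. Fix $x \in X$; by definition of $\mathcal{F}$ there exist a $\mathcal{T}_\mathcal{Q}$-open neighbourhood $U$ of $x$, functions $q_1,\dots,q_k \in \mathcal{Q}$, and $F \in \CIN(\RR^k)$ with $f|_U = F(q_1,\dots,q_k)|_U$. Each $q_i$ is $\mathcal{T}_\mathcal{Q}$-continuous by definition of $\mathcal{T}_\mathcal{Q}$, so the product map $x \mapsto (q_1(x),\dots,q_k(x))$ is continuous from $U$ (with subspace topology inherited from $\mathcal{T}_\mathcal{Q}$) to $\RR^k$. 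Post-composing with the continuous map $F$ yields a continuous map on $U$, which agrees with $f|_U$. Thus $f$ is continuous at each $x$, hence globally $\mathcal{T}_\mathcal{Q}$-continuous.

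There is no real obstacle here; the statement amounts essentially to unwinding definitions, with the only mild subtlety being to keep track of which topology the neighbourhoods in the definition of $\mathcal{F}$ are taken in (namely $\mathcal{T}_\mathcal{Q}$, since this is the topology $X$ was equipped with at the outset) and to exploit the fact that continuity is a local condition.
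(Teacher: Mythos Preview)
Your proof is correct and follows essentially the same approach as the paper. Both arguments use $\mathcal{Q}\subseteq\mathcal{F}$ for one inclusion and, for the other, the local representation $f|_U=F(q_1,\dots,q_k)|_U$ with $U\in\mathcal{T}_\mathcal{Q}$; the paper explicitly builds a $\mathcal{T}_\mathcal{Q}$-open neighbourhood inside $f^{-1}(I)$ from an open box in $\RR^k$, whereas you phrase the same step as ``composition of continuous maps is continuous, and continuity is local,'' then invoke minimality of $\mathcal{T}_\mathcal{F}$.
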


\begin{proof}
Since $\mathcal{Q}\subseteq\mathcal{F}$, we have that the subbasis induced by $\mathcal{Q}$ is contained in the subbasis induced by $\mathcal{F}$, and so $\mathcal{T}_\mathcal{Q}\subseteq\mathcal{T}_\mathcal{F}$.  We now wish to show the opposite containment.\\

Fix $f\in\mathcal{F}$ and $x\in X$.  Let $I\subset\RR$ be an open interval containing $f(x)$.  We wish to find a set $W\in\mathcal{T}_\mathcal{Q}$ containing $x$ and contained in $f^{-1}(I)$.  By definition of $\mathcal{F}$, there is some set $U\in\mathcal{T}_\mathcal{Q}$ containing $x$, functions $q_1,...,q_k\in\mathcal{Q}$, and $F\in\CIN(\RR^k)$ such that $f|_U=F(q_1,...,q_k)|_U$.  Let $y=(q_1,...,q_k)(x)$, and let $B=\prod_{i=1}^k(a_i,b_i)$ be an open box containing $y$ and contained in $F^{-1}(I)$.  Then, $(q_1,...,q_k)^{-1}(B)\cap U$ is a set contained in $f^{-1}(I)\cap U\subseteq f^{-1}(I)$.  But, $$(q_1,...,q_k)^{-1}(B)=q_1^{-1}(\pr_1(B))\cap...\cap q_k^{-1}(\pr_k(B)),$$ where $\pr_i$ is the $i$th projection. This intersection is a finite intersection of open sets in $\mathcal{T}_\mathcal{Q}$.  Hence, $(q_1,...,q_k)^{-1}(B)\cap U$ is an open set in $\mathcal{T}_\mathcal{Q}$ containing $x$ and contained in $f^{-1}(I)$.  So let $W:=(q_1,...,q_k)^{-1}(B)\cap U$.
\end{proof}

\begin{proposition}
$(X,\mathcal{F})$ is a differential space.
\end{proposition}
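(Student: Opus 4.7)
The plan is to verify the two axioms of a differential structure directly from the definition of $\mathcal{F}$, using \hypref{l:topologies}{Lemma} to identify the ambient topology. Recall that $X$ was equipped with $\mathcal{T}_\mathcal{Q}$; by the lemma, this coincides with $\mathcal{T}_\mathcal{F}$, so the topological hypothesis in the definition of a differential space is automatic. It remains to check smooth compatibility and locality.

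For smooth compatibility, fix $f_1,\dots,f_m\in\mathcal{F}$ and $G\in\CIN(\RR^m)$, and pick an arbitrary $x\in X$. By definition, for each $i$ there exist an open neighbourhood $U_i\ni x$, functions $q_{i,1},\dots,q_{i,k_i}\in\mathcal{Q}$, and $F_i\in\CIN(\RR^{k_i})$ with $f_i|_{U_i}=F_i(q_{i,1},\dots,q_{i,k_i})|_{U_i}$. The finite intersection $U:=\bigcap_{i=1}^m U_i$ is open and contains $x$. On $U$, the function $G(f_1,\dots,f_m)$ equals $H(q_{1,1},\dots,q_{1,k_1},\dots,q_{m,1},\dots,q_{m,k_m})$, where $H\in\CIN(\RR^{k_1+\dots+k_m})$ is defined by $H(y_1,\dots,y_m):=G(F_1(y_1),\dots,F_m(y_m))$. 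Hence $G(f_1,\dots,f_m)\in\mathcal{F}$.

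For locality, suppose $f:X\to\RR$ is such that for every $x\in X$ there exist an open neighbourhood $V\ni x$ and $g\in\mathcal{F}$ with $f|_V=g|_V$. Since $g\in\mathcal{F}$, there exist a further open neighbourhood $U\ni x$ (shrink if necessary so that $U\subseteq V$), functions $q_1,\dots,q_k\in\mathcal{Q}$, and $F\in\CIN(\RR^k)$ with $g|_U=F(q_1,\dots,q_k)|_U$. Then $f|_U=g|_U=F(q_1,\dots,q_k)|_U$, which exhibits $f$ as being in the required local form at $x$. Since $x$ was arbitrary, $f\in\mathcal{F}$.

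No step is really hard here — the only subtle point is making sure that the topology on $X$ with respect to which $\mathcal{F}$ is considered is indeed the one generated by $\mathcal{F}$; this is precisely what \hypref{l:topologies}{Lemma} provides, justifying the choice $\mathcal{T}_\mathcal{Q}$ in setting up the local neighbourhoods above.
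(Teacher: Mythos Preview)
Your proof is correct and follows essentially the same route as the paper's: both verify smooth compatibility by intersecting the local neighbourhoods and composing the local representatives, and both verify locality by unfolding the definition of $g\in\mathcal{F}$ on a possibly smaller neighbourhood. Your explicit invocation of \hypref{l:topologies}{Lemma} to reconcile $\mathcal{T}_\mathcal{Q}$ with $\mathcal{T}_\mathcal{F}$ is a nice touch that the paper leaves implicit.
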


\begin{proof}
First, we show smooth compatibility.  Let $f_1,...,f_k\in\mathcal{F}$ and $F\in\CIN(\RR^k)$.  Then, we want to show $F(f_1,...,f_k)\in\mathcal{F}$.  Fix $x\in X$.  Then for each $i=1,...,k$ there exist an open neighbourhood $U_i$ of $x$, $q_i^1,...,q_i^{m_i}\in\mathcal{Q}$ and $F_i\in\CIN(\RR^{m_i})$ such that $f_i|_{U_i}=F_i(q_i^1,...,q_i^{m_i})|_{U_i}$.  Let $U$ be the intersection of the neighbourhoods $U_i$, which itself is an open neighbourhood of $x$.  Then, $$F(f_1,...,f_k)|_U=F(F_1(q_1^1,...,q_1^{m_1}),...,F_k(q_k^1,...,q_k^{m_k}))|_U.$$ Let $N:=m_1+...+m_k$.  Define $\tilde{F}\in\CIN(\RR^N)$ by $$\tilde{F}(x^1,...,x^N)=F(F_1(x^1,...,x^{m_1}),F_2(x^{m_1+1},...,x^{m_1+m_2}),...,F_k(x^{m_1+...+m_{k-1}+1},...,x^N)).$$ Then $$F(f_1,...,f_k)|_U=\tilde{F}(q_1^1,...q_1^{m_1},q_2^1,...q_2^{m_2},...,q_k^1,...,q_k^{m_k})|_U.$$ By definition of $\mathcal{F}$, we have $F(f_1,...,f_k)\in\mathcal{F}$.\\

Next, we show locality.  Let $f:X\to\RR$ be a function with the property that for every $x\in X$ there is an open neighbourhood $U$ of $x$ and a function $g\in\mathcal{F}$ such that $f|_U=g|_U$.  Fix $x$, and let $U$ and $g$ satisfy this property.  Shrinking $U$ if necessary, there exist $q_1,...,q_k\in\mathcal{Q}$ and $F\in\CIN(\RR^k)$ such that $g|_U=F(q_1,...,q_k)|_U$.  Hence, $f|_U=F(q_1,...,q_k)|_U$.  Since this is true at each $x\in X$, by definition, $f\in\mathcal{F}$.  This completes the proof.
\end{proof}

\begin{definition}[Generated Differential Structures]
We say that the differential structure $\mathcal{F}$ above is \emph{generated} by $\mathcal{Q}$.
\end{definition}

\begin{lemma}
Let $(X,\mathcal{F})$ be a differential space.  Then for any subset $Y\subseteq X$, the subspace topology on $Y$ is the weakest topology for which the restrictions of $\mathcal{F}$ to $Y$ are continuous.
\end{lemma}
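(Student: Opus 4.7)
The plan is to show the two topologies coincide by comparing natural subbases for each. Write $\mathcal{T}_1$ for the subspace topology on $Y$ inherited from $\mathcal{T}_\mathcal{F}$, and $\mathcal{T}_2$ for the weakest topology on $Y$ for which every restriction $f|_Y$ with $f\in\mathcal{F}$ is continuous. I would prove $\mathcal{T}_1=\mathcal{T}_2$ by showing each contains a subbasis of the other.

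For the containment $\mathcal{T}_2\subseteq\mathcal{T}_1$, I would observe that each $f\in\mathcal{F}$ is continuous with respect to $\mathcal{T}_\mathcal{F}$ on $X$ by definition, so the restriction $f|_Y$ is continuous with respect to the subspace topology $\mathcal{T}_1$. Since $\mathcal{T}_2$ is by definition the weakest topology making all such restrictions continuous, this forces $\mathcal{T}_2\subseteq\mathcal{T}_1$.

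For the reverse inclusion $\mathcal{T}_1\subseteq\mathcal{T}_2$, I would exploit the explicit description of a subbasis for $\mathcal{T}_\mathcal{F}$ recorded in \hypref{r:diffstr}{Remark}, namely $\{f^{-1}(I)\mid f\in\mathcal{F},\ I\subseteq\RR\text{ open interval}\}$. Intersecting with $Y$ produces a subbasis for the subspace topology $\mathcal{T}_1$, and each such set satisfies the key identity
\[
f^{-1}(I)\cap Y=(f|_Y)^{-1}(I).
\]
But the right-hand side is precisely a subbasic open set for $\mathcal{T}_2$, so every subbasic open set of $\mathcal{T}_1$ already lies in $\mathcal{T}_2$, which gives $\mathcal{T}_1\subseteq\mathcal{T}_2$ and finishes the proof.

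There is no real obstacle here; the argument is a purely topological manipulation of subbases and does not use smooth compatibility or locality of $\mathcal{F}$ in any essential way. The only thing to be careful about is to invoke the explicit subbasis description from \hypref{r:diffstr}{Remark} rather than arguing directly with the definition as the weakest topology, since that formulation makes the identification $f^{-1}(I)\cap Y=(f|_Y)^{-1}(I)$ immediate.
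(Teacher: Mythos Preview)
Your proof is correct and follows essentially the same route as the paper: both directions hinge on the identity $f^{-1}(I)\cap Y=(f|_Y)^{-1}(I)$ applied to the subbasis of $\mathcal{T}_\mathcal{F}$ from \hypref{r:diffstr}{Remark}. The only minor difference is that for the inclusion $\mathcal{T}_2\subseteq\mathcal{T}_1$ you invoke the universal property of the weakest topology, whereas the paper verifies it explicitly with basic open sets; your version is slightly cleaner but not substantively different.
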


\begin{proof}
We first set some notation.  Let $\mathcal{T}_Y$ be the subspace topology on $Y$, and let $\mathcal{G}$ be all restrictions of functions in $\mathcal{F}$ to $Y$.\\

Fix $U\in\mathcal{T}_Y$ and $x\in U$.  We will show that there exists a basic open set $W$ in $\mathcal{T}_\mathcal{G}$ such that $x\in W\subseteq U$.  By definition of the subspace topology on $Y$, there exists an open set $V\in\mathcal{T}_\mathcal{F}$ such that $$U=V\cap Y.$$ There exist $f_1,...,f_k\in\mathcal{F}$ such that $$\tilde{W}:=\bigcap_{i=1}^kf_i^{-1}((0,1))$$ is a basic open set of $X$ containing $x$ and contained in $V$.  Define $W:=\tilde{W}\cap Y$.  Then,
\begin{align*}
W=&\bigcap_{i=1}^kf_i^{-1}((0,1))\cap Y\\
=&\bigcap_{i=1}^k(f_i|_Y)^{-1}((0,1)).
\end{align*}
But $f_i|_Y\in\mathcal{G}$, and so $W$ is a basic open set in $\mathcal{T}_{\mathcal{G}}$ that contains $x$ and is contained in $U$.\\

Next, we show that for any $U\in\mathcal{T}_\mathcal{G}$, $U$ is in fact in the subspace topology.  It is sufficient to show this for any basic open set $U$, in the basis generated by $\mathcal{G}$.  To this end, fix a basic open set $U\in\mathcal{T}_\mathcal{G}$ and $x\in U$.  There exist $g_1,...,g_k\in\mathcal{G}$ such that $$U=\bigcap_{i=1}^kg_i^{-1}((0,1)).$$  But then there exist $f_1,...,f_k\in\mathcal{F}$ such that for each $i=1,...,k$ we have $g_i=f_i|_Y$.  Then, $$U=\bigcap_{i=1}^kf_i^{-1}((0,1))\cap Y.$$  Since $\bigcap_{i=1}^kf_i^{-1}((0,1))$ is open on $X$, we have that $U$ is open in the subspace topology on $Y$.  We have shown that the subspace topology on $Y$ and the topology generated by restrictions of functions in $\mathcal{F}$ to $Y$ are one and the same.
\end{proof}

The above lemma allows us to make the following definition.

\begin{definition}[Differential Subspace]
Let $(X,\mathcal{F})$ be a differential space, and let $Y\subseteq X$ be any subset.  Then $Y$, with the subspace topology, acquires a differential structure $\mathcal{F}_Y$ generated by restrictions of functions in $\mathcal{F}$ to $Y$.  That is, $f\in\mathcal{F}_Y$ if and only if for every $x\in Y$ there is an open neighbourhood $U\subseteq X$ of $x$ and a function $\tilde{f}\in\mathcal{F}$ such that $$f|_{U\cap Y}=\tilde{f}|_{U\cap Y}.$$  We call $(Y,\mathcal{F}_Y)$ a \emph{differential subspace} of $X$.
\end{definition}

\begin{definition}[Product Differential Structure]
Let $(X,\mathcal{F})$ and $(Y,\mathcal{G})$ be two differential spaces.  The \emph{product differential space} $(X\times Y,\mathcal{F}\times\mathcal{G})$ is given by the set $X\times Y$ equipped with the differential structure $\mathcal{F}\times\mathcal{G}$, generated by functions of the form $f\circ\pr_X$ for $f\in\mathcal{F}$, and $g\circ\pr_Y$ for $g\in\mathcal{G}$.  Here, $\pr_X$ and $\pr_Y$ are the projections onto $X$ and $Y$, respectively.  In particular, the projection maps are functionally smooth.
\end{definition}

\begin{definition}[Quotient Differential Structure]
Let $(X,\mathcal{F})$ be a differential space, and let $\sim$ be an equivalence relation on $X$.  Then $X/\!\sim$ obtains a differential structure, called the \emph{quotient differential structure}, $\mathcal{G}=\{f:X/\!\sim\;\to\RR~|~\pi^*f\in\mathcal{F}\}$ where $\pi:X\to X/\!\sim$ is the quotient map.
\end{definition}

\begin{remark}
The quotient map $\pi$ above is smooth by definition.  Also,  we do not endow the set $X/\!\sim$ above with the quotient topology.  In general, the topology on $X/\!\sim$ induced by  $\mathcal{G}$ and the quotient topology do not match (see the following example).  In fact, the induced topology is contained in the quotient topology.
\end{remark}

\begin{example}[Quotient Topology Does Not Work]
We give an example to illustrate the issue with topologies mentioned in the above remark.  Consider the quotient space $\RR/I$, where $I$ is the open interval $(0,1)$.  By definition of the quotient topology, letting $\pi$ be the quotient map, we have that $\pi((0,1))$ is a one-point set that is open. $f$ is in the quotient differential structure if its pullback by $\pi$ is in $\CIN(\RR)$.  In this case, $\pi^*f$ is constant on $(0,1)$. But since level sets are closed, we have that $\pi^*f$ is constant on $[0,1]$.  Thus, $f$ is constant on the three-point set $\{\pi(0),\pi(1)\}\cup\pi((0,1))$.  Thus, the pre-image of any open interval of $\RR$ by any function in the quotient differential structure will never be included in the one-point set $\pi((0,1))$.  Thus, the quotient topology is strictly stronger than the topology induced by the quotient differential structure.
\end{example}

\begin{definition}[Subcartesian Space]\labell{d:subcart}
A \emph{subcartesian space} is a paracompact, second-countable, Hausdorff differential space $(S,\CIN(S))$ where for each $x\in S$ there is an open neighbourhood $U\subseteq S$ of $x$, $n\in\NN$, and a diffeomorphism $\varphi:U\to\tilde{U}\subseteq\RR^n$, called a \emph{chart}, onto a differential subspace $\tilde{U}$ of $\RR^n$.  Unless otherwise it is unclear, we shall henceforth call functionally smooth maps between subcartesian spaces simply \emph{smooth}.
\end{definition}

\begin{remark} \noindent
\begin{enumerate}
\item Subcartesian spaces, along with smooth maps between them, form a full subcategory of the category of differential spaces.
\item A subcartesian space admits smooth partitions of unity (see \cite{marshall75}).
\item For any subset $A\subseteq\RR^n$, define $\mathfrak{n}(A)$ to be the ideal of all smooth functions on $\RR^n$ whose restrictions to $A$ are identically zero.  Let $S$ be a subcartesian space. Then, for each chart $\varphi:U\to\tilde{U}\subseteq\RR^n$, the set of restrictions of functions in $\CIN(\RR^n)$ to $\tilde{U}$ is isomorphic as an $\RR$-algebra to $\CIN(\RR^n)/\mathfrak{n}(\tilde{U})$.  We thus have $\varphi^*\CIN(\RR^n)\cong\CIN(\RR^n)/\mathfrak{n}(\tilde{U})$ as $\RR$-algebras.
\end{enumerate}
\end{remark}

\begin{proposition}[Closed Differential Subspaces of Subcartesian Spaces]\labell{p:closedsubset}
If $R$ is a closed differential subspace of a subcartesian space $S$, then $\CIN(R)=\CIN(S)|_R$, the restrictions of functions in $\CIN(S)$ to $R$.
\end{proposition}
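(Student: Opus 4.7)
The plan is to show the two inclusions $\CIN(S)|_R \subseteq \CIN(R)$ and $\CIN(R) \subseteq \CIN(S)|_R$ separately. The first inclusion is immediate from the definition of the differential subspace structure: for any $\tilde{f} \in \CIN(S)$, the restriction $\tilde{f}|_R$ satisfies the local agreement condition with $\tilde{f}$ itself on every neighborhood of every point in $R$, hence lies in $\CIN(R)$. The real content is the reverse inclusion, which asks us to globally extend any element of $\CIN(R)$ to an element of $\CIN(S)$.

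For the reverse inclusion, I would start by taking $f \in \CIN(R)$ and using the definition of the subspace differential structure to obtain, for each $x \in R$, an open neighborhood $U_x \subseteq S$ of $x$ together with a function $\tilde{f}_x \in \CIN(S)$ such that $f|_{U_x \cap R} = \tilde{f}_x|_{U_x \cap R}$. The hypothesis that $R$ is closed in $S$ is used precisely here: the set $V := S \setminus R$ is open, so $\{U_x\}_{x \in R} \cup \{V\}$ is an open cover of $S$. Since $S$ is subcartesian, by the second item of the remark following \hypref{d:subcart}{Definition} it admits smooth partitions of unity, so I can choose one $\{\rho_i\}$ subordinate to this cover. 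For each index $i$, if $\supp \rho_i \subseteq U_{x_i}$ for some $x_i \in R$, set $g_i := \tilde{f}_{x_i}$; otherwise $\supp \rho_i \subseteq V$, and set $g_i := 0$. Then each $\rho_i g_i$ lies in $\CIN(S)$, and I would define $\tilde{f} := \sum_i \rho_i g_i$. Local finiteness of the partition guarantees this sum defines an element of $\CIN(S)$. On $R$, every $\rho_i$ whose support meets $R$ has $g_i = \tilde{f}_{x_i}$ agreeing with $f$ on $U_{x_i} \cap R$, so $\tilde{f}|_R = \left(\sum_i \rho_i\right) f = f$, giving $f \in \CIN(S)|_R$.

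The main point requiring care is verifying that the pointwise identity $\tilde{f}|_R = f$ really follows from the local agreements $\tilde{f}_{x_i}|_{U_{x_i} \cap R} = f|_{U_{x_i} \cap R}$: at a point $y \in R$, only finitely many $\rho_i$ are nonzero near $y$, and each relevant $\rho_i(y) g_i(y)$ equals $\rho_i(y) f(y)$ because $y \in U_{x_i} \cap R$ whenever $\rho_i(y) \neq 0$ and $\supp \rho_i \subseteq U_{x_i}$; summing and using $\sum_i \rho_i(y) = 1$ gives the claim. Apart from this bookkeeping, the only nontrivial ingredient is the existence of smooth partitions of unity on $S$, which is supplied by the fact that $S$ is paracompact, Hausdorff, and second countable; I would simply cite this as noted in the remark. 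Once these pieces are in place the proof is complete.
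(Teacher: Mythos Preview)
Your proof is correct and follows essentially the same approach as the paper: both use the definition of the subspace differential structure to get local extensions, exploit closedness of $R$ to include the open complement $S\setminus R$ in the cover, and glue via a smooth partition of unity on the subcartesian space $S$. Your write-up is somewhat more careful in verifying the final identity $\tilde{f}|_R = f$, but the argument is the same.
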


\begin{proof}
It is clear that $\CIN(S)|_R\subseteq\CIN(R)$.  To show the opposite inclusion, fix $f\in\CIN(R)$.  By definition of $\CIN(R)$, we can find an open covering $\{U_\alpha\}_{\alpha\in A}$ of $R$ such that for each $\alpha$, there is a function $g_\alpha\in\CIN(S)$ satisfying $$g_\alpha|_{U_\alpha}=f|_{U_\alpha}.$$  Let $B=\{0\}\cup A$ (assume here that $A$ does not include $0$).  For each $\alpha\in A$, let $V_\alpha$ be an open subset of $S$ such that $U_\alpha=R\cap V_\alpha$.  Let $V_0$ be the complement of $R$ in $S$ and define $g_0:=0$.  Define $\{\zeta_\beta\}_{\beta\in B}$ to be a partition of unity subordinate to $\{V_\beta\}_{\beta\in B}$.  Let $\tilde{g}:=\sum_{\beta\in B}\zeta_\beta g_\beta$.  Then
\begin{align*}
\tilde{g}|_R=&\sum_{\beta\in B}\zeta_\beta|_R g_\beta\\
=&\sum_{\beta\in B}\zeta_\beta f\\
=&f.
\end{align*}
\end{proof}

\mute{

\begin{definition}[Locally Finitely Generated (and Separated)]
Let $(X,\mathcal{F})$ be a differential space.  $\mathcal{F}$ is \emph{locally finitely generated} if for any $x\in X$, there exist functions $q_1,...,q_k\in\mathcal{F}$ such that for any $f\in\mathcal{F}$, there exist an open neighbourhood $U\subset X$ of $x$ and $F\in\CIN(\RR^k)$ satisfying $f|_U=F(q_1,...,q_k)|_U$.  $(X,\mathcal{F})$ is \emph{locally finitely generated and separated} if $U$ and $q_1,...,q_k$ above can be chosen such that $q_1,...,q_k$ separate all points in $U$.
\end{definition}

\begin{proposition}[Equivalent Definition of Subcartesian Space]\labell{p:locfin}
Let $(X,\mathcal{F})$ be a paracompact, second-countable differential space in which $\mathcal{F}$ is locally finitely generated and separated. Then $(X,\mathcal{F})$ is a subcartesian space.  Conversely, any subcartesian space is locally finitely generated and separated.
\end{proposition}

\begin{proof}
Let $(X,\mathcal{F})$ be a paracompact, second-countable differential space in which $\mathcal{F}$ is locally finitely generated and separated.  Fix $x\in X$.  By our hypothesis, there exist $q_1,...,q_n\in\mathcal{F}$ satisfying: for any $f\in\mathcal{F}$, there is an open neighbourhood $U$ of $x$ and $F\in\CIN(\RR^n)$ such that $f|_U=F(q_1,...,q_n)|_U$ and $q_1|_U,...,q_n|_U$ separate points of $U$.  Let $Q:X\to\RR^n$ be defined by $Q=(q_1,...,q_n)$.  Then $Q$ is a functionally smooth map between $X$ and $\RR^n$.  Fix $f\in\mathcal{F}$.  Then by hypothesis there is an open neighbourhood $U$ of $x$ and a function $F\in\CIN(\RR^n)$ such that $f|_U=F(q_1,...,q_n)|_U$ and $q_1|_U,...,q_n|_U$ separate points of $U$.  The latter condition implies that $Q|_U$ is injective on $U$.  Let $\varphi:=Q|_U$.  Our goal is to show that $\varphi$ is a diffeomorphism onto its image.\\

Consider $U$ as a differential subspace of $X$, and let $g\in\CIN(U)$.  We want to show that $g\circ\varphi^{-1}$ is smooth; that is, for any $y\in\varphi(U)$, there is an open neighbourhood $V\subseteq\RR^n$ of $y$ and a smooth function $G\in\CIN(\RR^n)$ such that $f\circ\varphi^{-1}|_{\varphi(U)\cap V}=F|_{\varphi(U)\cap V}$. To this end, fix $y\in\varphi(U)$, and let $x=\varphi^{-1}(y)$. Then there exist an open neighbourhood $W\subseteq U$ of $x$ and a function $g\in\mathcal{F}$ such that $f|_W=g|_W$.  Since $W\subseteq U$, we know that there is some function $F\in\CIN(\RR^n)$ such that $g|_W=F(q_1,...,q_n)|_W$.  But $F(q_1,...,q_n)|_W=F\circ\varphi|_W$.  Thus, we have $f|_W=F\circ\varphi|_W$, or $$f\circ\varphi^{-1}|_{\varphi(W)}=F|_{\varphi(W)}.$$  Let $V:=\varphi(W)$.  Then $y\in V$, and so we need now only show that $V$ is an open subset of $\varphi(U)$.\\

It is sufficient to show that $\varphi$ is a homeomorphism onto its image.  So far we know that it is a continuous bijection onto $\varphi(U)$.  Let $x\in U$.  Then there exists a basic open set $V\subseteq U$ about $x$ in the basis generated by $\mathcal{F}$.  By \hypref{l:topologies}{Lemma}, the two topologies $\mathcal{T}_\mathcal{F}$ and $\mathcal{T}_\mathcal{Q}$, where $\mathcal{Q}=\{q_1,...,q_n\}$, are equal when intersected with $U$.  Thus we can choose $V$ to be in the basis generated by $\mathcal{Q}$.  That is, we can find an open subset of the form $$V=\bigcap_{i=1}^nq_i^{-1}((a_i,b_i))\cap U$$ for some open intervals $(a_i,b_i)\subseteq\RR$.  But then, $$V=((q_1,...,q_n)|_U)^{-1}((a_1,b_1)\times...\times(a_n,b_n)).$$
But $(q_1,...,q_n)|_U=\varphi$, and so $V$ is the preimage under $\varphi$ of an open box $B:=\prod_{i=1}^n((a_i,b_i))\subseteq\RR^n$.  Thus, since $\varphi$ is bijective, we have $\varphi(V)=B\cap\varphi(U)$. This is open in the subspace topology on $\varphi(U)$.\\

We have shown that $\varphi$ sends basic open sets contained in $U$ to open sets of $\varphi(U)$, and so it sends any open set in $U$ to an open set of $\varphi(U)$.  Thus $\varphi^{-1}$ is continuous, and $\varphi$ is a homeomorphism onto its image, and in fact, from the work above, a diffeomorphism. Overall, we have shown that $X$ is locally diffeomorphic to differential subspaces of Cartesian spaces.  In particular it is Hausdorff, and so it satisfies the definition of a subcartesian space.\\

For the converse, let $S$ be a subcartesian space.  For any $x\in S$, let $\varphi:U\to\tilde{U}\subseteq\RR^n$ be a chart about $x$, and let $q^1,...,q^n$ be the coordinate functions on $\RR^n$.  Then $\varphi^*q^1,...,\varphi^*q^n$ generate the restriction of any function $f\in\CIN(S)$ to $U$, and they separate all points in $U$.
\end{proof}

\begin{remark}This proposition was inspired by a similar one in \cite{batubenge05}.
\end{remark}

}

We now look at some examples in the theory of Lie group actions and symplectic geometry where such spaces arise.  Let $G$ be a compact Lie group acting smoothly on a manifold $M$, and let $\pi:M\to M/G$ be the quotient map.  Equip the geometric quotient $M/G$ with the quotient differential structure, which we denote by $\CIN(M/G)$.  Note that $\pi^*:\CIN(M/G)\to\CIN(M)^G$ is an isomorphism of $\RR$-algebras, where $\CIN(M)^G$ is the algebra of $G$-invariant smooth functions.

\begin{theorem}[Geometric Quotients are Subcartesian]\labell{t:quotsubc}
If $G$ is a compact Lie group acting on a manifold $M$, then $M/G$ is a subcartesian space whose topology matches the quotient topology induced by $\pi$.
\end{theorem}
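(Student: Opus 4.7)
The plan splits the proof into three stages: verifying the standard topological properties of $M/G$ with the quotient topology, matching that quotient topology with the topology induced by $\CIN(M/G)$, and constructing local charts into Euclidean spaces.

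The topological properties (Hausdorff, second-countable, paracompact) follow from compactness of $G$, which makes the action proper, together with the corresponding properties of $M$, and require no new ideas. For the agreement of topologies, the inclusion $\mathcal{T}_{\CIN(M/G)} \subseteq \mathcal{T}_{\mathrm{quot}}$ is automatic because $\pi$ is continuous for the quotient topology and every element of $\CIN(M/G)$ pulls back via $\pi$ into $\CIN(M)$. For the reverse inclusion, I would fix $[x] \in M/G$ and a quotient-open neighborhood $V$, choose a smooth bump function on $M$ that is positive at $x$ and supported in the $G$-invariant open set $\pi^{-1}(V)$, and average it over $G$ using Haar measure to produce a non-negative $G$-invariant smooth function $\tilde\chi$ with $\tilde\chi(x) > 0$ and $\supp\tilde\chi \subseteq \pi^{-1}(V)$; its descent $f \in \CIN(M/G)$ (via \hypref{p:functions}{Proposition}) then satisfies $f([x]) > 0$ and vanishes off $V$, so $f^{-1}((0,\infty))$ is a subbasic open neighborhood of $[x]$ in $\mathcal{T}_{\CIN(M/G)}$ contained in $V$.

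The substantive step is the local chart construction. Fix $x \in M$ with isotropy $G_x$, which is compact as a closed subgroup of $G$. The slice theorem provides a $G$-invariant open neighborhood $\tilde U$ of the orbit $G \cdot x$ together with a $G$-equivariant diffeomorphism $\tilde U \cong G \times_{G_x} V$ for some linear $G_x$-representation $V$; passing to $G$-orbits identifies a quotient-open neighborhood of $[x]$ in $M/G$ with $V/G_x$ both topologically and as differential spaces, since $G$-invariant smooth functions on $G \times_{G_x} V$ correspond bijectively to $G_x$-invariant smooth functions on $V$. It then suffices to realize $V/G_x$ as a differential subspace of some $\RR^k$. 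Hilbert's finiteness theorem supplies $G_x$-invariant polynomial generators $p_1,\ldots,p_k$ of $\RR[V]^{G_x}$, and the Hilbert map $P = (p_1,\ldots,p_k) : V \to \RR^k$ is proper and separates $G_x$-orbits, hence induces a topological embedding $\bar P : V/G_x \hookrightarrow \RR^k$ with closed image. Schwarz's theorem then asserts that every $f \in \CIN(V)^{G_x}$ has the form $F \circ P$ for some $F \in \CIN(\RR^k)$, which precisely says that $\bar P^*$ carries $\CIN(\bar P(V/G_x))$ (with its differential subspace structure from $\RR^k$) isomorphically onto $\CIN(V/G_x)$. Consequently $\bar P$ is a functional diffeomorphism and furnishes the required chart. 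This final step, resting on the slice theorem and on Schwarz's theorem, is the main obstacle.
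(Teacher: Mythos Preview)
Your proposal is correct. The paper's own proof is simply two citations: Schwarz \cite{schwarz75} for the subcartesian structure and Cushman--\'Sniatycki \cite{cushman-sniatycki01} for the equality of topologies. Your argument supplies precisely the content behind those citations, via the standard route of slice theorem plus Hilbert's finiteness plus Schwarz's theorem on smooth invariants, together with the bump-function-and-averaging argument for the topology; this is essentially what those references do.

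One small variant worth noting: an alternative organization (which the author sketches but suppresses) is to invoke the Mostow--Palais embedding theorem to equivariantly embed a $G$-invariant neighborhood into a linear $G$-representation on some $\RR^N$, and then apply Hilbert and Schwarz for the full group $G$ acting linearly on $\RR^N$, rather than reducing first to the isotropy group $G_x$ acting on the slice $V$. Your route is arguably cleaner, since the slice theorem already hands you a linear action of a compact group, making the Mostow--Palais step redundant; both arrive at the same place.
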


\begin{proof}
The fact that $M/G$ equipped with the quotient differential structure is a subcartesian space is proven by Schwarz in \cite{schwarz75}.  That the quotient topology and the induced topology from $\CIN(M/G)$ are the same is shown by Cushman-\'Sniatycki in \cite{cushman-sniatycki01}.
\end{proof}

\mute{
\begin{proof}
Recall the slice theorem of Koszul \cite{koszul53}: fix $x\in M$.  Then if $H$ is the stabiliser of $x$ and $V$ is the slice (or isotropy) representation at $x$, then there exist a $G$-invariant open neighbourhood $U$ of $x$, a $G$-invariant open neighbourhood $W$ of $[e,0]\in G\times_H V$ and a $G$-equivariant diffeomorphism $F:U\to W$ sending $x$ to $[e,0]$.\\

Next, shrink $U$ so that it intersects only a finite number of orbit-type strata (which is possible since the orbit-type stratification is locally finite).  A \emph{proper embedding} $\varphi:X\to Y$ between differential spaces $(X,\mathcal{F})$ and $(Y,\mathcal{G})$ is a smooth, proper and injective map such that $\varphi^*:\mathcal{G}\to\mathcal{F}$ is surjective. Then by the Mostow-Palais theorem (see \cite{bredon72}), there exist $N\geq0$ and a linear $G$-action on $\RR^N$ and a $G$-equivariant proper embedding $\varphi:U\hookrightarrow\RR^N$.  $\varphi$ descends to a proper injection $\tilde{\varphi}:U/G\hookrightarrow\RR^N/G$.\\

Hilbert showed that $\RR[x^1,...,x^N]^G$ is a finitely generated algebra (see \cite{mather77}).  Let $\{p_1,...,p_k\}$ be a set of generators of $\RR[x^1,...,x^N]^G$, and define $p=(p_1,...,p_k):\RR^N\to\RR^k$.  Schwarz showed in \cite{schwarz75} that $p$ descends to a proper injective map $\tilde{p}:\RR^N/G\hookrightarrow\RR^k$.  Thus, equipping $\RR^N/G$ with the differential structure $\CIN(\RR^N/G):=\tilde{p}^*\CIN(\RR^k)$, $\tilde{p}$ becomes a proper embedding, and $\RR^N/G$ is diffeomorphic to a differential subspace of $\RR^k$ and hence is subcartesian.  By Schwarz \cite{schwarz75}, $\CIN(\RR^N)^G=p^*\CIN(\RR^k)$, and so if $\pi':\RR^N\to\RR^N/G$ is the quotient map, then $\pi'^*:\CIN(\RR^N/G)\to\CIN(\RR^N)^G$ is a surjection.  Since $\pi'$ is surjective, then $\pi^*$ is injective, and we get $\pi'^*:\CIN(\RR^N/G)\to\CIN(\RR^N)^G$ is an isomorphism of $\RR$-algebras.\\

\begin{equation}\labell{d:hilbert}
\xymatrix{
M \ar[d]_{\pi} & ~U~ \ar@{_{(}->}[l] \ar@{^{(}->}[r]^{\varphi} \ar[d]_{\pi|_U} & \RR^N \ar[d]_{\pi'} \ar[r]^{p} & \RR^k \\
M/G & \ar@{_{(}->}[l] ~U/G~ \ar@{^{(}->}[r]_{\tilde{\varphi}} & \RR^N/G \ar[ru]_{\tilde{p}} & \\
}
\end{equation}

Since $\pi|_U$ is a surjection from $U$ to $U/G$, $(\pi|_U)^*:\CIN(U/G)\to\CIN(U)^G$ is an injection.  Since $\varphi$ is a proper embedding, $\varphi^*:\CIN(\RR^N)\to\CIN(U)$ is a surjection.  Let $f\in\CIN(U)^G$.  Then there exists $g\in\CIN(\RR^N)$ such that $f=\varphi^*g$.  $g|_{\varphi(U)}$ is $G$-invariant, and averaging over $G$, we may assume $g$ is $G$-invariant everywhere on $\RR^N$.  Thus, $\varphi^*$ restricts to a surjection $\CIN(\RR^N)^G\to\CIN(U)^G$.  Since $\pi'^*:\CIN(\RR^N)^G\to\CIN(\RR^N/G)$ is an isomorphism, there is some $g'\in\CIN(\RR^N/G)$ such that $\varphi^*((\pi')^*g')=(\pi|_U)^*(\tilde{\varphi}^*g')=f$.  Hence, $(\pi|_U)^*$ is an isomorphism.\\

For any $x\in M/G$, there exist an open neighbourhood $W$ of $x$ and a proper embedding $\psi:W\hookrightarrow\RR^k$.  Let $q^1,...,q^k$ be coordinate functions on $\RR^k$.  Let $f^i:=(\pi|_{\pi^{-1}(W)})^*\psi^*q^i\in\CIN(\pi^{-1}(W))$ for $i=1,...,k$, which are $G$-invariant on $\pi^{-1}(W)$.  Shrinking $W$ if necessary, extend these to smooth functions on $M$, and average them over $G$ to obtain $G$-invariant functions $f^1,...,f^k\in\CIN(M)^G$.  By our work above, for each $i=1,...,k$ there exist $g^i\in\CIN(M/G)$ such that  $f^i=\pi^*g^i$.  Note that $\{g^1,...,g^k\}$ separates points in $W$ and since $g^i|_W=q^i\circ\psi$, they generate restrictions of functions in $\CIN(M/G)$ to $W$.  Thus, $\CIN(M/G)$ is locally finitely generated and separated by the coordinate functions coming from its local embeddings into Cartesian spaces, and since $M/G$ is paracompact and second-countable, by \hypref{p:locfin}{Proposition}, $M/G$ is subcartesian.\\

We finally show that the weakest topology on $M/G$ such that $\CIN(M/G)$ is a set of continuous functions, denoted $\mathcal{T}_{M/G}$, matches the quotient topology induced by $\pi$.  Since $\pi$ is smooth, we know that it is continuous, pulling open sets in $\mathcal{T}_{M/G}$ into the topology on $M$.  So we need only show that for any open set $U\subseteq M$, $V:=\pi(U)$ is open in $\mathcal{T}_{M/G}$.  Without loss of generality, assume $U=\pi^{-1}(\pi(U))$; that is, $U$ is $G$-invariant.  Fix $x\in V$.  We wish to find $f\in\CIN(M/G)$ and an open interval $(a,b)\subseteq\RR$ such that $x\in f^{-1}((a,b))\subseteq V$.  Fix $y\in\pi^{-1}(x)$, and a $G$-invariant function $\tilde{f}$ that is not constant in an open neighbourhood of $y$, and let $(a,b)$ be an open interval of $\RR$ such that $\tilde{f}^{-1}((a,b))$ is contained in $U$ and is an open neighbourhood of $y$.  Then by our work above there is some $f\in\CIN(M/G)$ such that $\tilde{f}=\pi^*f$.  Hence $\pi(\tilde{f}^{-1}((a,b)))=f^{-1}((a,b))$, which is an open neighbourhood of $x$ contained in $V$.  This shows that $V\in\mathcal{T}_{M/G}$, and we are done.
\end{proof}

}

Now let $(M,\omega)$ be a connected symplectic manifold, and let $G$ be a compact Lie group acting on $M$ in a Hamiltonian fashion.  Let $\Phi:M\to\g^*$ be a ($G$-coadjoint equivariant) momentum map, define $Z:=\Phi^{-1}(0)$, and let $i:Z\to M$ be the inclusion.  Note that $Z$ is a $G$-invariant subset of $M$, and comes equipped with a differential structure $\CIN(Z)$ induced by $M$.  In particular, since $Z$ is closed, by \hypref{p:closedsubset}{Proposition} we have that $i^*\CIN(M)=\CIN(Z)$.  Consequently, $Z/G$ is a closed differential subspace of $M/G$.  We have the following commutative diagram, where $\pi_Z:=\pi|_Z$.

\begin{equation}\labell{d:main}
\xymatrix{
Z \ar[r]^{i} \ar[d]_{\pi_Z} & M \ar[d]^{\pi} \\
Z/G \ar[r]_{j} & M/G \\
}
\end{equation}

\begin{theorem}[Symplectic Quotients are Subcartesian]\labell{t:CINZ}
$Z/G$ as a subspace of $M/G$ is a subcartesian space.  Moreover, its subspace differential structure is equal to the quotient differential structure obtained from $Z$.
\end{theorem}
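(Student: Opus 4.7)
The plan is to handle the subcartesian property by transferring structure from $M/G$, and then to prove equality of the two differential structures by lifting each function across the diagram~\eqref{d:main}, using the closed-subset extension property of \hypref{p:closedsubset}{Proposition} together with $G$-averaging.

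For the first assertion, note that $Z=\Phi^{-1}(0)$ is closed and $G$-invariant in $M$. By \hypref{t:quotsubc}{Theorem} the topology induced on $M/G$ by $\CIN(M/G)$ coincides with the quotient topology, so $Z/G$ is a closed subset of $M/G$. It therefore inherits paracompactness, second countability and the Hausdorff property, and any chart $\varphi\colon U\to\tilde U\subseteq\RR^n$ on $M/G$ restricts to a chart on $U\cap(Z/G)$ (since a differential subspace of a differential subspace of $\RR^n$ is again a differential subspace of $\RR^n$). This shows that $Z/G$, with the subspace differential structure from $M/G$, is subcartesian.

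Let $\CIN(Z/G)_{\mathrm{sub}}$ denote the subspace structure from $M/G$ and $\CIN(Z/G)_{\mathrm{quot}}$ the quotient structure from $Z$. The inclusion $\CIN(Z/G)_{\mathrm{sub}}\subseteq\CIN(Z/G)_{\mathrm{quot}}$ is immediate from \eqref{d:main}: if $f$ agrees locally with $j^*g$ for some $g\in\CIN(M/G)$, then $\pi_Z^*f$ agrees locally with $i^*\pi^*g\in\CIN(Z)$, and locality of $\CIN(Z)$ gives $\pi_Z^*f\in\CIN(Z)$, i.e.\ $f\in\CIN(Z/G)_{\mathrm{quot}}$.

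The reverse inclusion is the substantive step. Given $f$ with $\pi_Z^*f\in\CIN(Z)$, I would first apply \hypref{p:closedsubset}{Proposition}, using that $Z$ is closed in $M$, to find $\tilde F\in\CIN(M)$ with $i^*\tilde F=\pi_Z^*f$. Since $\pi_Z^*f$ is $G$-invariant on $Z$ and $Z$ is $G$-invariant, averaging $\tilde F$ against normalised Haar measure on $G$ yields $\tilde F':=\int_G g^*\tilde F\,dg\in\CIN(M)^G$ that still restricts to $\pi_Z^*f$ on $Z$. Because $\tilde F'$ is $G$-invariant, the formula $F(\pi(y)):=\tilde F'(y)$ well-defines a function $F\colon M/G\to\RR$, and $\pi^*F=\tilde F'\in\CIN(M)$ places $F$ in $\CIN(M/G)$ by definition of the quotient differential structure. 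Chasing \eqref{d:main}, $\pi_Z^*(j^*F)=i^*\pi^*F=i^*\tilde F'=\pi_Z^*f$, and surjectivity of $\pi_Z$ forces $j^*F=f$, exhibiting $f$ as the restriction to $Z/G$ of a function in $\CIN(M/G)$. The main obstacle is ensuring that the $G$-averaged extension $\tilde F'$ continues to agree with $\pi_Z^*f$ on $Z$; this is precisely where the $G$-invariance of both $Z$ and $i^*\tilde F$ is used.
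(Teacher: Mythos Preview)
Your proposal is correct and follows essentially the same route as the paper: the subcartesian property comes from $Z/G$ being closed in the subcartesian space $M/G$, and the equality of the two differential structures is obtained by extending a $G$-invariant function on $Z$ to $M$ via \hypref{p:closedsubset}{Proposition}, averaging over $G$, and descending to $M/G$. The paper's write-up is slightly terser (it phrases the second inclusion as surjectivity of $\pi_Z^*$ onto $\CIN(Z)^G$), but the argument is the same, including the key observation that averaging preserves the restriction to $Z$ because both $Z$ and $i^*\tilde F$ are $G$-invariant.
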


\begin{proof}
Note that $Z/G$ is a closed subset of $M/G$ (and hence is subcartesian), and so $\CIN(Z/G)=\CIN(M/G)|_{Z/G}$ by \hypref{p:closedsubset}{Proposition}.  We now show that $\pi_Z$ is smooth.  Let $f\in\CIN(Z/G)$.  Then there exists $g\in\CIN(M/G)$ such that $f=j^*g$.  Let $\tilde{g}=\pi^*g\in\CIN(M)^G$.  Let $\tilde{f}=i^*\tilde{g}\in\CIN(Z)^G$.  Then, $\tilde{f}=\pi_Z^*f$.\\

Next, since $\pi_Z$ is surjective, $\pi_Z^*$ is injective.  To show that $\pi_Z^*$ is surjective onto $\CIN(Z)^G$, fix $\tilde{f}\in\CIN(Z)^G$. Since $Z$ is closed, applying \hypref{p:closedsubset}{Proposition} once again, there exists $\tilde{g}\in\CIN(M)$ such that $\tilde{f}=i^*\tilde{g}$.  Averaging over $G$, we may assume that $\tilde{g}$ is $G$-invariant.  Thus, there exists $g\in\CIN(M/G)$ such that $\pi^*g=\tilde{g}$.  Thus, $f=j^*g\in\CIN(Z/G)$, and $\pi_Z^*f=\tilde{f}$.  We get that $\pi_Z^*:\CIN(Z/G)\to\CIN(Z)^G$ is an isomorphism of $\RR$-algebras.
\end{proof}

\begin{remark}
The smooth structure $\CIN(Z/G)$ is equal to a smooth structure on $Z/G$ introduced by Arms, Cushman and Gotay in \cite{acg91}.  The isomorphism $\pi_Z^*:\CIN(Z/G)\to\CIN(Z)^G$ is in fact the definition of the latter.
\end{remark}

\section{Fr\"olicher Spaces and Reflexivity}\labell{s:reflexivity}

This section is part of a joint project with Augustin Batubenge, Patrick Iglesias-Zemmour, and Yael Karshon.  Given a diffeological space, there is a natural way to construct a differential space out of it.  Conversely, given a differential space, there is a natural way to construct a diffeological space out of it.  We ask, when starting with a diffeology, and constructing a differential space out of this, and then constructing a diffeology out of this differential structure, do we obtain the same diffeology that we started with?  In general, the answer is no, but the diffeologies for which this is true form a full subcategory of diffeological spaces.  This subcategory turns out to be isomorphic to the full subcategory resulting from the same procedure applied to differential spaces.  Moreover, these subcategories turn out to be isomorphic to the category of Fr\"olicher spaces, another generalisation of smooth structures, which we define below.\\

Fix a set $X$.  Let $\mathcal{C}_0$ be a family of maps from $\RR$ into $X$, let $\mathcal{D}_0$ be a set of parametrisations into $X$ (recall that a parametrisation is a map from an open subset of a Euclidean space), and let $\mathcal{F}_0$ be a family of functions from $X$ to $\RR$.  We denote $$\Phi\mathcal{C}_0:=\{f:X\to\RR~|~\forall c\in\mathcal{C}_0,~f\circ c\in\CIN(\RR)\},$$
$$\Phi\mathcal{D}_0:=\{f:X\to\RR~|~\forall (p:U\to X)\in\mathcal{D}_0,~f\circ p\in\CIN(U)\},$$
$$\Gamma\mathcal{F}_0:=\{c:\RR\to X~|~\forall f\in\mathcal{F}_0,~f\circ c\in\CIN(\RR)\},$$ and $$\Pi\mathcal{F}_0:=\{\text{parametrisations }p:U\to X~|~\forall f\in\mathcal{F}_0,~f\circ p\in\CIN(U)\}.$$

\begin{lemma}\labell{l:inclusions}
We have the following four inclusions:
$$\mathcal{D}_0\subseteq\Pi\Phi\mathcal{D}_0,$$
$$\mathcal{F}_0\subseteq\Phi\Pi\mathcal{F}_0,$$
$$\mathcal{C}_0\subseteq\Gamma\Phi\mathcal{C}_0,$$
and
$$\mathcal{F}_0\subseteq\Phi\Gamma\mathcal{F}_0.$$
\end{lemma}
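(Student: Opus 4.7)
All four inclusions are instances of the same general pattern: they are the ``unit'' inclusions of a Galois-type connection between pairs of operators built by a ``smooth against each other'' test. So my plan is to prove the first inclusion in detail and then note that the other three follow by exactly the same unwinding of definitions, with the roles of domains ($\mathbb{R}$, open $U \subseteq \mathbb{R}^n$, or the space $X$) interchanged.

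\textbf{Step 1: the inclusion $\mathcal{D}_0 \subseteq \Pi\Phi\mathcal{D}_0$.} Fix a parametrisation $p \colon U \to X$ with $p \in \mathcal{D}_0$. To show that $p \in \Pi\Phi\mathcal{D}_0$, I must check that for every $f \in \Phi\mathcal{D}_0$, the composition $f \circ p$ lies in $C^\infty(U)$. But by the very definition of $\Phi\mathcal{D}_0$, a function $f \colon X \to \mathbb{R}$ belongs to $\Phi\mathcal{D}_0$ precisely when $f \circ q \in C^\infty(\dom q)$ for every $q \in \mathcal{D}_0$. Specialising $q$ to our fixed $p \in \mathcal{D}_0$ gives $f \circ p \in C^\infty(U)$, which is exactly what is required.

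\textbf{Step 2: the other three inclusions.} The same template works verbatim:
\begin{itemize}
\item For $\mathcal{F}_0 \subseteq \Phi\Pi\mathcal{F}_0$, fix $f \in \mathcal{F}_0$ and an arbitrary $(p \colon U \to X) \in \Pi\mathcal{F}_0$; then $f \circ p \in C^\infty(U)$ by definition of $\Pi\mathcal{F}_0$ (applied to our $f \in \mathcal{F}_0$), so $f \in \Phi\Pi\mathcal{F}_0$.
\item For $\mathcal{C}_0 \subseteq \Gamma\Phi\mathcal{C}_0$, fix $c \in \mathcal{C}_0$; for any $f \in \Phi\mathcal{C}_0$ the composition $f \circ c$ is smooth by the definition of $\Phi\mathcal{C}_0$, so $c \in \Gamma\Phi\mathcal{C}_0$.
\item For $\mathcal{F}_0 \subseteq \Phi\Gamma\mathcal{F}_0$, fix $f \in \mathcal{F}_0$; for any $c \in \Gamma\mathcal{F}_0$ the composition $f \circ c$ is smooth by the definition of $\Gamma\mathcal{F}_0$, so $f \in \Phi\Gamma\mathcal{F}_0$.
\end{itemize}

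\textbf{Where is the difficulty?} There is essentially none: each inclusion is a one-line definition chase, reflecting the formal fact that $(\Phi,\Pi)$, $(\Phi,\Gamma)$ (etc.) form Galois connections between the power sets of parametrisations/curves and of real-valued functions ordered by inclusion, and these inclusions are simply the statement that a set is contained in its ``double polar''. No locality, smooth-compatibility, or covering axiom is needed, which is why the statement is phrased for arbitrary families $\mathcal{C}_0$, $\mathcal{D}_0$, $\mathcal{F}_0$ without any diffeological or differential-space hypotheses. The interesting question, to be taken up in the theorems cited in the introduction, is when these inclusions are equalities.
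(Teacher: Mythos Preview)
Your proof is correct and follows essentially the same approach as the paper: both unwind the definitions of $\Phi$, $\Pi$, and $\Gamma$ to verify each inclusion directly, fixing an element of the left-hand side and checking the defining condition for membership in the right-hand side. Your added remark about Galois connections is a nice conceptual framing, but the actual argument is the same definition chase the paper gives.
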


\begin{proof}
We prove the first statement.  Fix $p\in\mathcal{D}_0$.  To show that $p\in\Pi\Phi\mathcal{D}_0$, we need to show for any $f\in\Phi\mathcal{D}_0$ that $f\circ p$ is smooth.  But by definition of $\Phi\mathcal{D}_0$, we have $f\circ q$ is smooth for all $q\in\mathcal{D}_0$; in particular, $f\circ p$ is smooth.\\

We prove the second statement.  Fix $f\in\mathcal{F}_0$.  To show that $f\in\Phi\Pi\mathcal{F}_0$, we need to show for all $p\in\Pi\mathcal{F}_0$ that $f\circ p$ is smooth.  But $p\in\Pi\mathcal{F}_0$ only if $g\circ p$ is smooth for all $g\in\mathcal{F}_0$.  Thus $f\circ p$ is smooth for all such $p$.\\

We prove the third statement.  Fix $c\in\mathcal{C}_0$.  Then for all $f\in\Phi\mathcal{C}_0$, we have $f\circ c$ smooth.  But then by definition of $\Gamma$ we have that $c\in\Gamma\Phi\mathcal{C}_0$.\\

We prove the last statement.  Fix $f\in\mathcal{F}_0$.  Then for all $c\in\Gamma\mathcal{F}_0$, we have that $f\circ c$ is smooth.  But then by definition of $\Phi$ we have that $f\in\Phi\Gamma\mathcal{F}_0$.
\end{proof}

\begin{lemma}\labell{l:dftopologies}
Let $\mathcal{T}_{\mathcal{D}_0}$ be the strongest topology on $X$ such that all parametrisations in $\mathcal{D}_0$ are continuous, and let $\mathcal{T}_{\Phi\mathcal{D}_0}$ be the weakest topology on $X$ such that all functions in $\Phi\mathcal{D}_0$ are continuous.  Then $$\mathcal{T}_{\Phi\mathcal{D}_0}\subseteq\mathcal{T}_{\mathcal{D}_0}.$$ In particular, all parametrisations in $\mathcal{D}_0$ are continuous with respect to $\mathcal{T}_{\Phi\mathcal{D}_0}$.
\end{lemma}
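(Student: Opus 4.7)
The plan is to exploit the universal properties of the two topologies directly. Recall that $\mathcal{T}_{\mathcal{D}_0}$ is the \emph{final} topology associated with $\mathcal{D}_0$, so a set $W\subseteq X$ lies in $\mathcal{T}_{\mathcal{D}_0}$ if and only if $p^{-1}(W)$ is open in $U$ for every plot $(p:U\to X)\in\mathcal{D}_0$; being the strongest topology making each such $p$ continuous means that any topology weaker than $\mathcal{T}_{\mathcal{D}_0}$ has this property too, but with possibly fewer open sets. Meanwhile, $\mathcal{T}_{\Phi\mathcal{D}_0}$ is, by definition, the coarsest topology on $X$ that makes every $f\in\Phi\mathcal{D}_0$ continuous. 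So to establish the containment $\mathcal{T}_{\Phi\mathcal{D}_0}\subseteq\mathcal{T}_{\mathcal{D}_0}$, it suffices to verify that $\mathcal{T}_{\mathcal{D}_0}$ itself makes every element of $\Phi\mathcal{D}_0$ continuous; the coarsest such topology is then automatically contained in it.

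To carry this out, fix $f\in\Phi\mathcal{D}_0$ and an open subset $V\subseteq\RR$, and check that $f^{-1}(V)\in\mathcal{T}_{\mathcal{D}_0}$. By the final-topology description this amounts to showing $p^{-1}(f^{-1}(V))$ is open in $U$ for every $(p:U\to X)\in\mathcal{D}_0$. But $p^{-1}(f^{-1}(V))=(f\circ p)^{-1}(V)$, and by the very definition of $\Phi\mathcal{D}_0$ the composition $f\circ p$ is smooth, hence continuous, on the open subset $U\subseteq\RR^n$. Therefore $(f\circ p)^{-1}(V)$ is open, as required.

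For the "in particular" clause: each parametrisation $p\in\mathcal{D}_0$ is continuous with respect to $\mathcal{T}_{\mathcal{D}_0}$ by the defining property of the final topology, and continuity is preserved when the target topology is replaced by a coarser one; since the inclusion just proved says exactly that $\mathcal{T}_{\Phi\mathcal{D}_0}$ is coarser, every $p\in\mathcal{D}_0$ remains continuous for $\mathcal{T}_{\Phi\mathcal{D}_0}$. There is no real obstacle here: the statement is a formal consequence of the universal properties of initial and final topologies, combined with the identity $p^{-1}\!\circ f^{-1}=(f\circ p)^{-1}$ and the built-in "smoothness pulls back to smoothness" property of $\Phi\mathcal{D}_0$.
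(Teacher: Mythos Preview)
Your proof is correct and follows essentially the same idea as the paper: both reduce to the observation that $p^{-1}(f^{-1}(V))=(f\circ p)^{-1}(V)$ is open because $f\circ p$ is smooth. The only organizational difference is that you invoke the universal property of the initial topology $\mathcal{T}_{\Phi\mathcal{D}_0}$ (showing each $f\in\Phi\mathcal{D}_0$ is $\mathcal{T}_{\mathcal{D}_0}$-continuous), whereas the paper works directly with an arbitrary $V\in\mathcal{T}_{\Phi\mathcal{D}_0}$ via its subbasic description and shows $p^{-1}(V)$ is open; the underlying computation is identical.
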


\begin{proof}
Let $V\in\mathcal{T}_{\Phi\mathcal{D}_0}$.  Then fixing $p\in\mathcal{D}_0$, we want to show that $p^{-1}(V)$ is open in $U:=\dom(p)$.  To this end, let $u\in p^{-1}(V)$.  Then there exists an open set $W$ containing $p(u)$ and contained in $V$ of the form $$W:=\bigcap_{i=1}^kf_i^{-1}((a_i,b_i))$$ for some open intervals $(a_i,b_i)\subseteq\RR$ and functions $f_i\in\Phi\mathcal{D}_0$.  But then $$u\in p^{-1}(W)\subseteq p^{-1}(V).$$  But $$p^{-1}(W)=\bigcap_{i=1}^k(f_i\circ p)^{-1}((a_i,b_i)).$$  Since $f_i\circ p$ is a smooth function on $U$ for each $i$, we have that $p^{-1}(W)$ is a finite intersection of open subsets in $U$, and hence itself is open.  Thus $p^{-1}(V)$ is open in $U$, and we are done.
\end{proof}

\begin{lemma}\labell{l:phidiffstr}
$\Phi\mathcal{D}_0$ is a differential structure on $X$.
\end{lemma}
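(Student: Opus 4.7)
The plan is to verify the two defining axioms of a differential structure, namely smooth compatibility and locality, using the preceding lemma to handle the topological subtlety inherent in the locality axiom.

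For smooth compatibility, I would fix $f_1,\ldots,f_k \in \Phi\mathcal{D}_0$ and $F \in \CIN(\RR^k)$, and fix an arbitrary $(p:U\to X) \in \mathcal{D}_0$. The computation
\[
F(f_1,\ldots,f_k) \circ p \;=\; F(f_1\circ p,\ldots,f_k\circ p)
\]
exhibits $F(f_1,\ldots,f_k)\circ p$ as the composition of a smooth map $F$ with the smooth map $(f_1\circ p,\ldots,f_k\circ p):U\to\RR^k$, which is smooth by hypothesis on each $f_i$. Hence $F(f_1,\ldots,f_k)\in\Phi\mathcal{D}_0$.

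For locality, I would assume $f:X\to\RR$ has the property that each $x\in X$ admits an open neighbourhood $W\subseteq X$ in $\mathcal{T}_{\Phi\mathcal{D}_0}$ and a function $g\in\Phi\mathcal{D}_0$ with $f|_W = g|_W$, and then show that for any $(p:U\to X)\in\mathcal{D}_0$ the composite $f\circ p$ is smooth on $U$. Fixing $u\in U$, I would apply the hypothesis at $p(u)$ to obtain such a $W$ and $g$, and then invoke \hypref{l:dftopologies}{Lemma}, which guarantees that $p$ is continuous with respect to $\mathcal{T}_{\Phi\mathcal{D}_0}$. This yields that $p^{-1}(W)$ is an open neighbourhood of $u$ in $U$, and on this neighbourhood
\[
(f\circ p)|_{p^{-1}(W)} \;=\; (g\circ p)|_{p^{-1}(W)},
\]
which is smooth since $g\in\Phi\mathcal{D}_0$. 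As smoothness is a local property on open subsets of Euclidean space, $f\circ p\in\CIN(U)$, so $f\in\Phi\mathcal{D}_0$.

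The only real subtlety, and thus the part requiring care, is the locality axiom: the neighbourhood $W$ is given in the $\Phi\mathcal{D}_0$-induced topology on $X$ rather than in any a priori topology related to $\mathcal{D}_0$, so without \hypref{l:dftopologies}{Lemma} there would be no reason for $p^{-1}(W)$ to be open. Once that lemma is invoked the argument is routine, and no other ingredient beyond pointwise composition is needed.
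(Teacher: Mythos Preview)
Your proposal is correct and follows essentially the same approach as the paper's proof: verify smooth compatibility by composing with an arbitrary $p\in\mathcal{D}_0$, and verify locality by pulling back the neighbourhood via $p$ and invoking \hypref{l:dftopologies}{Lemma} to ensure $p^{-1}(W)$ is open. Your identification of the only subtlety---that openness of $p^{-1}(W)$ requires the lemma---matches the paper's emphasis exactly.
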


\begin{proof}
Equip $X$ with the weakest topology such that $\Phi\mathcal{D}_0$ is a set of continuous functions.  Then, we show the two conditions of a differential structure.  Fix $f_1,...,f_k\in\Phi\mathcal{D}_0$, and $F\in\CIN(\RR^k)$.  We want to show that $F(f_1,...,f_k)\in\Phi\mathcal{D}_0$; that is, for any parametrisation $p\in\mathcal{D}_0$, we want $F(f_1,...,f_k)\circ p$ to be smooth.  But this is the same asking that $F(f_1\circ p,...,f_k\circ p)$ be smooth, and this is true since each $f_i\circ p$ is smooth by definition of $\Phi\mathcal{D}_0$, and the composition with $F$ maintains smoothness. This shows smooth compatibility.\\

We show locality.  Let $f:X\to\RR$ be a function satisfying: for all $x\in X$ there is an open neighbourhood $V\subseteq X$ of $x$ and a function $g\in\Phi\mathcal{D}_0$ such that $f|_V=g|_V$.  We want to show that $f\in\Phi\mathcal{D}_0$.  Fix $(p:U\to X)\in\mathcal{D}_0$, and let $u\in U$.  Then there is an open neighbourhood $V\subseteq X$ of $p(u)$ and a function $g\in\Phi\mathcal{D}_0$ such that $f|_V=g|_V$.  Hence, $f\circ p|_{p^{-1}(V)}=g\circ p|_{p^{-1}(V)}$.  Now $g\circ p$ is smooth, and by \hypref{l:dftopologies}{Lemma} $p^{-1}(V)$ is open, so $g\circ p|_{p^{-1}(V)}$ is smooth, and so we have that $f\circ p$ restricted to the open set $p^{-1}(V)$ is smooth.  Since smoothness on $U$ is a local condition, and $u\in U$ is arbitrary, we have that $f\circ p$ is globally smooth, and so $f\in\Phi\mathcal{D}_0$.
\end{proof}

\begin{corollary}\labell{c:phidiffstr}
$\Phi\mathcal{C}_0$ is a differential structure on $X$.
\end{corollary}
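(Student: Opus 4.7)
The plan is to deduce the corollary from Lemma \ref{l:phidiffstr} via a trivial reinterpretation. Observe that a map $c:\RR\to X$ is itself a parametrisation of $X$, since $\RR$ is an open subset of the one-dimensional Euclidean space $\RR^1$. Hence $\mathcal{C}_0$ may be regarded as a particular family $\mathcal{D}_0$ of parametrisations of $X$ (all having domain $\RR$). Under this identification the two definitions
\[
\Phi\mathcal{C}_0=\{f:X\to\RR\mid \forall c\in\mathcal{C}_0,\, f\circ c\in\CIN(\RR)\}
\]
and
\[
\Phi\mathcal{D}_0=\{f:X\to\RR\mid \forall (p:U\to X)\in\mathcal{D}_0,\, f\circ p\in\CIN(U)\}
\]
coincide as subsets of $\RR^X$, because in both cases the quantifier ranges over exactly the same maps and the composition-smoothness condition is literally identical. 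Therefore Lemma \ref{l:phidiffstr} applies verbatim and yields that $\Phi\mathcal{C}_0$ is a differential structure on $X$.

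If a more direct proof is desired, one can simply retrace the two verifications from the lemma, replacing ``$p\in\mathcal{D}_0$'' by ``$c\in\mathcal{C}_0$''. Smooth compatibility is immediate: for $f_1,\dots,f_k\in\Phi\mathcal{C}_0$ and $F\in\CIN(\RR^k)$, each $f_i\circ c$ is in $\CIN(\RR)$, so $F(f_1,\dots,f_k)\circ c=F(f_1\circ c,\dots,f_k\circ c)\in\CIN(\RR)$. For locality one needs the one-dimensional analogue of Lemma \ref{l:dftopologies}: the preimage $c^{-1}(V)$ of any open set $V$ in the topology generated by $\Phi\mathcal{C}_0$ is open in $\RR$. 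This follows by the same subbasis argument, since elements of $V$ lie in finite intersections $\bigcap_{i=1}^k f_i^{-1}((a_i,b_i))$ with $f_i\in\Phi\mathcal{C}_0$, and pulling back by $c$ gives a finite intersection of open subsets of $\RR$. Granting this, restriction of $f\circ c$ to the open set $c^{-1}(V)$ agrees with the smooth function $g\circ c$ there, and smoothness of $f\circ c$ on $\RR$ follows from its local character.

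The only mild subtlety — and the sole place where the argument is not quite a one-line appeal — is keeping straight the topologies involved, exactly as in Lemma \ref{l:dftopologies}. But since this lemma was proved in the wider parametrisation generality, the special case of one-dimensional domains is already covered, so no additional work is needed. Consequently the cleanest presentation is the first one above: identify $\mathcal{C}_0$ with a family of parametrisations and invoke Lemma \ref{l:phidiffstr} directly.
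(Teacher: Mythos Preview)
Your proposal is correct and takes exactly the same approach as the paper: the paper's proof simply observes that $\mathcal{C}_0$ is a set of parametrisations of $X$ and applies Lemma~\ref{l:phidiffstr}. Your additional direct verification is unnecessary but not wrong.
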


\begin{proof}
Since $\mathcal{C}_0$ is a set of parametrisations of $X$, we simply apply \hypref{l:phidiffstr}{Lemma}.
\end{proof}

\begin{lemma}\labell{l:pidiffeol}
$\Pi\mathcal{F}_0$ is a diffeology on $X$.
\end{lemma}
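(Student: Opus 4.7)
The plan is to verify the three defining axioms of a diffeology directly from the definition of $\Pi\mathcal{F}_0$. Each axiom reduces to a one-line observation about smoothness of compositions $f\circ p$ for $f\in\mathcal{F}_0$, so none of the three steps presents real difficulty; the argument is essentially bookkeeping.

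First I would check covering. For any $x\in X$ and any $n\in\NN$, let $p:\RR^n\to\{x\}$ be the constant parametrisation. Then for every $f\in\mathcal{F}_0$, the composition $f\circ p$ is the constant function with value $f(x)$, hence lies in $\CIN(\RR^n)$. So $p\in\Pi\mathcal{F}_0$.

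Next I would verify smooth compatibility. Suppose $(p:U\to X)\in\Pi\mathcal{F}_0$, and let $F:V\to U$ be smooth, where $V$ is open in some $\RR^n$. For every $f\in\mathcal{F}_0$ we have $f\circ(p\circ F)=(f\circ p)\circ F$, which is smooth as a composition of the smooth function $f\circ p\in\CIN(U)$ with the smooth map $F$. Hence $p\circ F\in\Pi\mathcal{F}_0$.

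Finally I would verify locality. Suppose $p:U\to X$ is a parametrisation such that each $u\in U$ has an open neighbourhood $V_u\subseteq U$ with $p|_{V_u}\in\Pi\mathcal{F}_0$. Fix $f\in\mathcal{F}_0$; we must show $f\circ p\in\CIN(U)$. For each $u$, the restriction $(f\circ p)|_{V_u}=f\circ p|_{V_u}$ is smooth on $V_u$ by hypothesis. Since smoothness on $U$ is a local condition and the $V_u$ cover $U$, we conclude $f\circ p\in\CIN(U)$, so $p\in\Pi\mathcal{F}_0$. All three axioms hold, so $\Pi\mathcal{F}_0$ is a diffeology on $X$.
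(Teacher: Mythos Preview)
Your proof is correct and follows essentially the same approach as the paper's own proof: you verify the three diffeology axioms directly by reducing each to the smoothness of $f\circ p$ for $f\in\mathcal{F}_0$. The only cosmetic difference is the order in which you treat the axioms (the paper does locality before smooth compatibility), but the arguments are otherwise identical.
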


\begin{proof}
We want to show that $\Pi\mathcal{F}_0$ is a diffeology.  We first check that it contains all the constants maps into $X$.  Fix $x\in X$, and let $p:U\to X$ be a constant parametrisation with image $\{x\}$.  Then for any $f\in\mathcal{F}_0$, $f\circ p$ is smooth, and so $p\in\Pi\mathcal{F}_0$.\\

Next, we want to show locality. If $p:U\to X$ is a parametrisation satisfying for every $u\in U$, there is an open neighbourhood $V\subseteq U$ of $u$ such that $p|_V\in\Pi\mathcal{F}_0$, then we want to show that $p\in\Pi\mathcal{F}_0$. But for any such $p$, we have that for any $f\in\mathcal{F}_0$ and any $u\in U$, there is an open neighbourhood $V\subseteq U$ of $u$ such that $f\circ p|_V$ is smooth.  But since smoothness on $U$ is a local condition, we have that $f\circ p$ is smooth globally, and so $p\in\Pi\mathcal{F}_0$.\\

Finally, we show smooth compatibility.  Let $U$ and $V$ be open subsets of Cartesian spaces, and let $F:V\to U$ be a smooth map.  Let $(p:U\to X)\in\Pi\mathcal{F}_0$. Then for any $f\in\mathcal{F}_0$, we have that $f\circ p\circ F$ is smooth since $f\circ p$ is.  Thus, $p\circ F\in\Pi\mathcal{F}_0$.  We have shown that $\Pi\mathcal{F}_0$ is a diffeology.
\end{proof}

\begin{definition}[Reflexive Diffeologies and Differential Spaces]
Let $\mathcal{D}$ be a diffeology on a set $X$.  We say that $\mathcal{D}$ is \emph{reflexive} if $\Pi\Phi\mathcal{D}=\mathcal{D}$.  Similarly, let $\mathcal{F}$ be a differential structure on $X$.  We say that $\mathcal{F}$ is \emph{reflexive} if $\Phi\Pi\mathcal{F}=\mathcal{F}$.
\end{definition}

\begin{proposition}[Reflexive Stability]\labell{p:reflexive}
Let $X$ be a set, and let $\mathcal{F}_0$ be a family of real-valued functions on $X$, and $\mathcal{D}_0$ a family of parametrisations on $X$.  Then,
\begin{enumerate}
\item $\mathcal{F}:=\Phi\mathcal{D}_0$ is a reflexive differential structure on $X$,
\item $\mathcal{D}:=\Pi\mathcal{F}_0$ is a reflexive diffeology on $X$.
\end{enumerate}
\end{proposition}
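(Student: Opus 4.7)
The plan is to exploit a simple \emph{Galois connection}-style observation: both operators $\Phi$ and $\Pi$ are order-reversing with respect to inclusion. Indeed, if $\mathcal{D}_0\subseteq\mathcal{D}_1$ are two families of parametrisations, then a function compatible with every parametrisation in $\mathcal{D}_1$ is a fortiori compatible with every parametrisation in $\mathcal{D}_0$, so $\Phi\mathcal{D}_1\subseteq\Phi\mathcal{D}_0$; the analogous argument shows $\mathcal{F}_0\subseteq\mathcal{F}_1$ implies $\Pi\mathcal{F}_1\subseteq\Pi\mathcal{F}_0$. I would verify these two monotonicity statements in a single opening paragraph, as they are the only non-formal observation needed.

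For part (1), I first note that $\mathcal{F}:=\Phi\mathcal{D}_0$ is a differential structure by \hypref{l:phidiffstr}{Lemma}, so it makes sense to ask whether it is reflexive. Applying the second inclusion of \hypref{l:inclusions}{Lemma} with $\mathcal{F}_0$ replaced by $\Phi\mathcal{D}_0$ immediately yields
\[
\Phi\mathcal{D}_0\subseteq\Phi\Pi\Phi\mathcal{D}_0.
\]
For the reverse inclusion, the first inclusion of \hypref{l:inclusions}{Lemma} gives $\mathcal{D}_0\subseteq\Pi\Phi\mathcal{D}_0$; applying the order-reversing operator $\Phi$ to both sides produces
\[
\Phi\Pi\Phi\mathcal{D}_0\subseteq\Phi\mathcal{D}_0.
\]
Combining the two displays yields $\Phi\Pi\mathcal{F}=\mathcal{F}$, as required.

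Part (2) is completely parallel in structure. First, $\mathcal{D}:=\Pi\mathcal{F}_0$ is a diffeology by \hypref{l:pidiffeol}{Lemma}. Applying the first inclusion of \hypref{l:inclusions}{Lemma} with $\mathcal{D}_0$ replaced by $\Pi\mathcal{F}_0$ gives $\Pi\mathcal{F}_0\subseteq\Pi\Phi\Pi\mathcal{F}_0$, while the fourth inclusion of \hypref{l:inclusions}{Lemma}, namely $\mathcal{F}_0\subseteq\Phi\Pi\mathcal{F}_0$ (or indeed the second inclusion, which suffices here), yields $\Pi\Phi\Pi\mathcal{F}_0\subseteq\Pi\mathcal{F}_0$ after applying the order-reversing operator $\Pi$. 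Together these give $\Pi\Phi\mathcal{D}=\mathcal{D}$.

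There is no genuine obstacle in this argument; the only place to be careful is to double-check that $\Phi$ and $\Pi$ really do reverse inclusions (a one-line check from their definitions) and to invoke the correct entries of \hypref{l:inclusions}{Lemma} in the correct order. The formal content is entirely a two-step Galois-connection squeeze, and the lemma is essentially a corollary of \hypref{l:inclusions}{Lemma}, \hypref{l:phidiffstr}{Lemma}, and \hypref{l:pidiffeol}{Lemma}.
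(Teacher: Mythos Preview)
Your proposal is correct and follows essentially the same approach as the paper's proof: both use \hypref{l:phidiffstr}{Lemma} and \hypref{l:pidiffeol}{Lemma} to establish that $\Phi\mathcal{D}_0$ and $\Pi\mathcal{F}_0$ are, respectively, a differential structure and a diffeology, and then obtain each equality as a two-sided squeeze from \hypref{l:inclusions}{Lemma} together with the order-reversing property of $\Phi$ and $\Pi$. The only difference is that you make the order-reversal explicit and name it as a Galois-connection argument, whereas the paper uses it implicitly in the line ``since $\mathcal{D}_0\subseteq\Pi\Phi\mathcal{D}_0$, we have $\Phi\mathcal{D}_0\supseteq\Phi\Pi\Phi\mathcal{D}_0$'' (and the analogous line for part~(2)).
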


\begin{proof}
\begin{enumerate}
\item By \hypref{l:phidiffstr}{Lemma}, we know that $\Phi\mathcal{D}_0$ is a differential structure on $X$. We want to show that $$\Phi\Pi\Phi\mathcal{D}_0=\Phi\mathcal{D}_0.$$  By \hypref{l:inclusions}{Lemma} applied to $\Phi\mathcal{D}_0$, we have $\Phi\mathcal{D}_0\subseteq\Phi\Pi\Phi\mathcal{D}_0$.\\

    For the opposite inclusion, since $\mathcal{D}_0\subseteq\Pi\Phi\mathcal{D}_0$, we have that $\Phi\mathcal{D}_0\supseteq\Phi\Pi\Phi\mathcal{D}_0$\\

\item By \hypref{l:pidiffeol}{Lemma}, we know that $\Pi\mathcal{F}_0$ is a diffeology on $X$.  We want to show that $$\Pi\Phi\Pi\mathcal{F}_0=\Pi\mathcal{F}_0.$$ Again, by \hypref{l:inclusions}{Lemma} we have $\Pi\mathcal{F}_0\subseteq\Pi\Phi\Pi\mathcal{F}_0,$ and since $\mathcal{F}_0\subseteq\Phi\Pi\mathcal{F}_0$, we have $\Pi\mathcal{F}_0\supseteq\Pi\Phi\Pi\mathcal{F}_0$.  This completes the proof.
\end{enumerate}
\end{proof}

\begin{remark}
Reflexive diffeological spaces form a full subcategory of diffeological spaces, and reflexive differential spaces form a full subcategory of differential spaces.
\end{remark}

\begin{theorem}[Reflexive Theorem]\labell{t:reflexiveisom}
There is a natural isomorphism of categories from reflexive diffeological spaces to reflexive differential spaces.
\end{theorem}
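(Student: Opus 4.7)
The plan is to define two functors in opposite directions, built directly from the operators $\Phi$ and $\Pi$, and to verify that they are mutually inverse on both objects and morphisms, with the inverse property on objects being exactly the reflexivity hypothesis. Throughout, the underlying set map will be preserved, so naturality will come for free; the only real content will be showing that, between reflexive objects, diffeological smoothness and functional smoothness of a given set map coincide.

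First, I would define a functor $\mathbf{\Phi}$ from reflexive diffeological spaces to reflexive differential spaces by $(X,\mathcal{D})\mapsto(X,\Phi\mathcal{D})$ on objects and by the identity on underlying set maps for morphisms, and analogously a functor $\mathbf{\Pi}$ going the other way using $\Pi$. By \hypref{l:phidiffstr}{Lemma} and \hypref{l:pidiffeol}{Lemma} the images are a differential structure and a diffeology respectively, and by \hypref{p:reflexive}{Proposition} each is reflexive, so $\mathbf{\Phi}$ and $\mathbf{\Pi}$ land in the claimed categories. Reflexivity gives $\Pi\Phi\mathcal{D}=\mathcal{D}$ and $\Phi\Pi\mathcal{F}=\mathcal{F}$ on objects, so once they are shown to be functors they are automatic mutual inverses on objects.

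The main step is to verify the morphism correspondence. Let $F\colon X\to Y$ be a set map between two reflexive diffeological spaces $(X,\mathcal{D}_X)$ and $(Y,\mathcal{D}_Y)$. If $F$ is diffeologically smooth, then for any $f\in\Phi\mathcal{D}_Y$ and any plot $p\in\mathcal{D}_X$ one has $F\circ p\in\mathcal{D}_Y$, hence $f\circ F\circ p\in\CIN(\dom p)$ by the definition of $\Phi\mathcal{D}_Y$; thus $F^*f\in\Phi\mathcal{D}_X$ and $F$ is functionally smooth from $(X,\Phi\mathcal{D}_X)$ to $(Y,\Phi\mathcal{D}_Y)$. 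Conversely, assume $F$ is functionally smooth for the $\Phi$-structures. Given any plot $p\in\mathcal{D}_X$, I want $F\circ p\in\mathcal{D}_Y$; by reflexivity of $\mathcal{D}_Y$ this is the same as $F\circ p\in\Pi\Phi\mathcal{D}_Y$, i.e.\ that $g\circ F\circ p\in\CIN(\dom p)$ for every $g\in\Phi\mathcal{D}_Y$. But functional smoothness of $F$ yields $F^*g\in\Phi\mathcal{D}_X$, and then the definition of $\Phi\mathcal{D}_X$ applied to the plot $p$ gives exactly $(F^*g)\circ p\in\CIN(\dom p)$. Hence $F\circ p\in\mathcal{D}_Y$, so $F$ is diffeologically smooth. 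The analogous equivalence for a map between reflexive differential spaces, using reflexivity of the differential structure on the target, is proved symmetrically.

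With this morphism correspondence in hand, $\mathbf{\Phi}$ and $\mathbf{\Pi}$ are well-defined functors that preserve composition and identities (trivially, as they act as the identity on underlying set maps), and they are mutual inverses both on objects (by reflexivity) and on morphism sets (by the equivalence just established). Naturality of the isomorphism is immediate since for any morphism $F$ the set map representing $\mathbf{\Phi}(F)$ or $\mathbf{\Pi}(F)$ is $F$ itself, so all naturality squares commute on the nose. The only place where any real content enters is the reverse direction of the morphism correspondence, and this is where reflexivity of the target is used essentially; this is the step I would expect a careful reader to scrutinise, though the argument above is short.
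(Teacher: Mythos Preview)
Your proposal is correct and follows essentially the same route as the paper: define $\mathbf{\Phi}$ and $\mathbf{\Pi}$ via the operators $\Phi$ and $\Pi$, invoke \hypref{p:reflexive}{Proposition} to land in the reflexive subcategories, verify that each sends smooth maps to smooth maps by the same unwind-the-definitions argument you give, and conclude they are mutual inverses on objects by the reflexivity equations. The only cosmetic difference is that you package the two morphism checks as a single biconditional ``diffeologically smooth $\Leftrightarrow$ functionally smooth'' (using reflexivity of the target for the converse), whereas the paper presents them separately as well-definedness of $\mathbf{\Phi}$ and of $\mathbf{\Pi}$; the content is identical.
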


\begin{proof}
We first define a functor $\mathbf{\Phi}$ from reflexive diffeological spaces to reflexive differential spaces as follows.  Let $(X,\mathcal{D})$ be a reflexive diffeological space.  Then define $\mathbf{\Phi}(X,\mathcal{D})$ to be $(X,\Phi\mathcal{D})$. By \hypref{p:reflexive}{Proposition}, this is a reflexive differential space.\\

Let $F:(X,\mathcal{D}_X)\to(Y,\mathcal{D}_Y)$ be a diffeologically smooth map between reflexive diffeological spaces.  Then we claim $\mathbf{\Phi}(F):=F$ is a functionally smooth map between $(X,\Phi\mathcal{D}_X)$ and $(Y,\Phi\mathcal{D}_Y)$.  Indeed, let $f\in\Phi\mathcal{D}_Y$.  We want to show that $f\circ F\in\Phi\mathcal{D}_X$; that is, for any $p\in\mathcal{D}_X$, we want $(f\circ F)\circ p$ to be smooth.  But since $f\in\Phi\mathcal{D}_Y$, and for any $p\in\mathcal{D}_X$ we have $F\circ p\in\mathcal{D}_Y$, by definition of $\Phi$ we know that $f\circ(F\circ p)$ is smooth.  It follows that $\mathbf{\Phi}$ is a functor between reflexive diffeological spaces and reflexive differential spaces.\\

Next, we define a functor $\mathbf{\Pi}$ from reflexive differential spaces to reflexive diffeological spaces as follows.  Let $(X,\mathcal{F})$ be a reflexive differential space.  Define $\mathbf{\Pi}(X,\mathcal{F})$ to be $(X,\Pi\mathcal{F})$.  By \hypref{p:reflexive}{Proposition} we know that $(X,\Pi\mathcal{F})$ is a reflexive diffeological space.\\

Let $F:(X,\mathcal{F}_X)\to(Y,\mathcal{F}_Y)$ be a functionally smooth map between reflexive differential spaces.  We claim that $\mathbf{\Pi}(F):=F$ is a diffeologically smooth map between $(X,\Pi\mathcal{F}_X)$ and $(Y,\Pi\mathcal{F}_Y)$.  Indeed, let $p\in\Pi\mathcal{F}_X$.  We want to show that $F\circ p$ is in $\Pi\mathcal{F}_Y$.  That is, for any $f\in\mathcal{F}_Y$, we want $f\circ(F\circ p)$ to be smooth.  But for any $f\in\mathcal{F}_Y$, since $F$ is functionally smooth, we have that $f\circ F\in\mathcal{F}_X$.  Since $p\in\Pi\mathcal{F}_X$, by definition of $\Pi$ we know that $(f\circ F)\circ p$ is smooth.  It follows that $\mathbf{\Pi}$ is a functor between reflexive differential spaces and reflexive diffeological spaces.\\

Finally, we need to show that $\mathbf{\Phi}$ and $\mathbf{\Pi}$ are inverses of one another.  This is clear for maps.  Let $(X,\mathcal{F})$ be a reflexive differential space.  Then
\begin{align*}
\mathbf{\Phi}\circ\mathbf{\Pi}(X,\mathcal{F})=&\mathbf{\Phi}(X,\Pi\mathcal{F})\\
=&(X,\Phi\Pi\mathcal{F})\\
=&(X,\mathcal{F}),
\end{align*}
where the last line follows from the fact that $(X,\mathcal{F})$ is reflexive.  Similarly, if $(X,\mathcal{D})$ is a reflexive diffeological space, then we get that $$\mathbf{\Pi}\circ\mathbf{\Phi}(X,\mathcal{D})=(X,\mathcal{D}).$$ Hence the two functors are inverses of one another, and we have that the two categories are isomorphic.
\end{proof}

\begin{lemma}\labell{l:phigamma}
We have the equalities $$\Phi\mathcal{C}_0=\Phi\Gamma\Phi\mathcal{C}_0$$ and $$\Gamma\mathcal{F}_0=\Gamma\Phi\Gamma\mathcal{F}_0.$$
\end{lemma}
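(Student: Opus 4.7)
The plan is to recognise that $\Phi$ and $\Gamma$ form a Galois connection (both being order-reversing with respect to inclusion), so that both equalities drop out by the standard ``two-inclusions'' argument using only \hypref{l:inclusions}{Lemma}.

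First I would record two elementary monotonicity facts, each immediate from the definitions of $\Phi$ and $\Gamma$: if $\mathcal{C}_0 \subseteq \mathcal{C}_1$ then $\Phi\mathcal{C}_1 \subseteq \Phi\mathcal{C}_0$, and if $\mathcal{F}_0 \subseteq \mathcal{F}_1$ then $\Gamma\mathcal{F}_1 \subseteq \Gamma\mathcal{F}_0$. (More curves impose more conditions on candidate functions, and vice versa.) These require no work beyond unravelling ``$f\circ c \in \CIN(\RR)$ for all $c$ in the larger family implies the same for all $c$ in the smaller family.''

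For the first equality $\Phi\mathcal{C}_0 = \Phi\Gamma\Phi\mathcal{C}_0$: by \hypref{l:inclusions}{Lemma} applied to the family of functions $\Phi\mathcal{C}_0$ (playing the role of $\mathcal{F}_0$ in the fourth inclusion there), we obtain $\Phi\mathcal{C}_0 \subseteq \Phi\Gamma\Phi\mathcal{C}_0$. Conversely, by the third inclusion of \hypref{l:inclusions}{Lemma} we have $\mathcal{C}_0 \subseteq \Gamma\Phi\mathcal{C}_0$, and applying $\Phi$ to both sides (which reverses inclusions) yields $\Phi\Gamma\Phi\mathcal{C}_0 \subseteq \Phi\mathcal{C}_0$. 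Combining the two containments gives the equality.

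The second equality $\Gamma\mathcal{F}_0 = \Gamma\Phi\Gamma\mathcal{F}_0$ is completely symmetric: the third inclusion of \hypref{l:inclusions}{Lemma} applied to the family of curves $\Gamma\mathcal{F}_0$ gives $\Gamma\mathcal{F}_0 \subseteq \Gamma\Phi\Gamma\mathcal{F}_0$, and applying $\Gamma$ to the fourth inclusion $\mathcal{F}_0 \subseteq \Phi\Gamma\mathcal{F}_0$ reverses it to $\Gamma\Phi\Gamma\mathcal{F}_0 \subseteq \Gamma\mathcal{F}_0$. There is no real obstacle here; the only thing to be careful about is applying the correct case of \hypref{l:inclusions}{Lemma} in each step (there are four cases and it is easy to mix up $\Pi$ with $\Gamma$), and to make explicit the elementary monotonicity of $\Phi$ and $\Gamma$ that powers the ``apply $\Phi$/$\Gamma$ to an inclusion'' move.
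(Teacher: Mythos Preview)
Your proof is correct and is essentially the same as the paper's: the paper simply says the proof is similar to that of \hypref{p:reflexive}{Proposition}, which is exactly the two-inclusion Galois-connection argument you give. The only difference is that you make the order-reversing monotonicity of $\Phi$ and $\Gamma$ explicit, whereas the paper leaves this implicit.
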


\begin{proof}
The proof is similar to that of \hypref{p:reflexive}.

\mute{

We begin with the first equality.  Let $f\in\Phi\mathcal{C}_0$. Then, by definition of $\Gamma$, for all $c\in\Gamma\Phi\mathcal{C}_0$, we have $f\circ c$ smooth.  But then by definition of $\Phi$ we have $f\in\Phi\Gamma\Phi\mathcal{C}_0$.\\

For the opposite inclusion, let $f\in\Phi\Gamma\Phi\mathcal{C}_0$.  Then we have for all $c\in\Gamma\Phi\mathcal{C}_0$ that $f\circ c$ is smooth.  In particular, by \hypref{l:inclusions}{Lemma} we have that $f\circ c$ is smooth for all $c\in\mathcal{C}_0$.  Hence, $f\in\Phi\mathcal{C}_0$.  This establishes the first equality.\\

For the second equality, let $c\in\Gamma\mathcal{F}_0$.  Then, by definition of $\Phi$, for all $f\in\Phi\Gamma\mathcal{F}_0$, we have $f\circ c$ smooth.  But then by definition of $\Gamma$ we have that $c\in\Gamma\Phi\Gamma\mathcal{F}_0$.\\

For the opposite inclusion, let $c\in\Gamma\Phi\Gamma\mathcal{F}_0$.  Then, for all $f\in\Phi\Gamma\mathcal{F}_0$, we have that $f\circ c$ is smooth.  In particular, by \hypref{l:inclusions}{Lemma} we know that $f\circ c$ is smooth for all $f\in\mathcal{F}_0$. Hence, $c\in\Gamma\mathcal{F}_0$, which completes the proof.

}

\end{proof}

\begin{definition}[Fr\"olicher Spaces]\labell{d:frolicher}
Fix a set $X$.  A \emph{Fr\"olicher structure} on $X$ is a family $\mathcal{F}$ of real-valued functions and a family $\mathcal{C}$ of curves $\RR\to X$ such that $\Phi\mathcal{C}=\mathcal{F}$ and $\Gamma\mathcal{F}=\mathcal{C}$.  We call the triplet $(X,\mathcal{C},\mathcal{F})$ a Fr\"olicher space.
\end{definition}

Fr\"olicher spaces were first introduced by Fr\"olicher in \cite{frolicher82}.

\begin{proposition}[Fr\"olicher Stability]
Let $X$ be a set, and let $\mathcal{F}_0$ be a family of functions on $X$, and $\mathcal{C}_0$ a family of curves into $X$.
\begin{enumerate}
\item Let $\mathcal{C}:=\Gamma\mathcal{F}_0$ and $\mathcal{F}:=\Phi\Gamma\mathcal{F}_0$.  Then $X$ equipped with $\mathcal{C}$ and $\mathcal{F}$ is a Fr\"olicher space.
\item Let $\mathcal{F}:=\Phi\mathcal{C}_0$ and $\mathcal{C}=\Gamma\Phi\mathcal{C}_0$.  Then $X$ equipped with $\mathcal{C}$ and $\mathcal{F}$ is a Fr\"olicher space.
\end{enumerate}
\end{proposition}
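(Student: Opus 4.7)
The plan is to verify each of the two conditions $\Phi\mathcal{C} = \mathcal{F}$ and $\Gamma\mathcal{F} = \mathcal{C}$ in the definition of a Fr\"olicher space, for each of the two constructions. This should be essentially immediate from Lemma \ref{l:phigamma} combined with the definitions, much as the proof of Proposition \ref{p:reflexive} was an immediate consequence of Lemma \ref{l:inclusions}.

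For part (1), with $\mathcal{C} := \Gamma\mathcal{F}_0$ and $\mathcal{F} := \Phi\Gamma\mathcal{F}_0$, I would first compute $\Phi\mathcal{C} = \Phi\Gamma\mathcal{F}_0 = \mathcal{F}$ directly from the definition of $\mathcal{F}$. Then for the other equality, I would apply the second identity in Lemma \ref{l:phigamma} to obtain $\Gamma\mathcal{F} = \Gamma\Phi\Gamma\mathcal{F}_0 = \Gamma\mathcal{F}_0 = \mathcal{C}$. For part (2), with $\mathcal{F} := \Phi\mathcal{C}_0$ and $\mathcal{C} := \Gamma\Phi\mathcal{C}_0$, the argument is symmetric: the equality $\Gamma\mathcal{F} = \Gamma\Phi\mathcal{C}_0 = \mathcal{C}$ holds by definition, and then the first identity in Lemma \ref{l:phigamma} gives $\Phi\mathcal{C} = \Phi\Gamma\Phi\mathcal{C}_0 = \Phi\mathcal{C}_0 = \mathcal{F}$.

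There is no real obstacle here, since both parts of the proposition reduce to bookkeeping once Lemma \ref{l:phigamma} is in hand; the only thing to be careful about is matching up the two directions of each equality with the correct identity in that lemma. The underlying point worth emphasising is that the operators $\Phi$ and $\Gamma$ form a Galois connection between families of curves and families of functions, so applying $\Phi\Gamma$ or $\Gamma\Phi$ a second time produces no further saturation. Hence starting from any family $\mathcal{F}_0$ (resp.\ $\mathcal{C}_0$), the pair $(\Gamma\mathcal{F}_0, \Phi\Gamma\mathcal{F}_0)$ (resp.\ $(\Gamma\Phi\mathcal{C}_0, \Phi\mathcal{C}_0)$) is automatically a fixed point of the pairing, which is precisely the Fr\"olicher condition.
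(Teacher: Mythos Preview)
Your proposal is correct and is essentially identical to the paper's proof: both parts are handled by noting that one of the two Fr\"olicher conditions holds by definition, and the other follows from the appropriate identity in Lemma~\ref{l:phigamma}. Your additional remark about $\Phi$ and $\Gamma$ forming a Galois connection is accurate and nicely explains why the argument works, though the paper does not state it in those terms.
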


\begin{proof}
\begin{enumerate}
\item We need to show that $\Phi\mathcal{C}=\mathcal{F}$ and $\Gamma\mathcal{F}=\mathcal{C}$.  For the first equality $$\Phi\mathcal{C}=\Phi\Gamma\mathcal{F}_0=\mathcal{F}$$ by definition of $\mathcal{F}$.  For the second,
    \begin{align*}
    \Gamma\mathcal{F}=&\Gamma\Phi\Gamma\mathcal{F}_0\\
    =&\Gamma\mathcal{F}_0\\
    =&\mathcal{C}
    \end{align*}
    by \hypref{l:phigamma}{Lemma}.

\item We need to show that $\Gamma\mathcal{F}=\mathcal{C}$ and $\Phi\mathcal{C}=\mathcal{F}$.  For the first equality, $$\Gamma\mathcal{F}=\Gamma\Phi\mathcal{C}_0=\mathcal{C}$$ by definition of $\mathcal{C}$.  For the second equality,
    \begin{align*}
    \Phi\mathcal{C}=&\Phi\Gamma\Phi\mathcal{C}_0\\
    =&\Phi\mathcal{C}_0\\
    =&\mathcal{F}
    \end{align*}
    by \hypref{l:phigamma}{Lemma}.  This completes the proof.
\end{enumerate}
\end{proof}

\begin{definition}[Fr\"olicher Smooth Maps]
Let $(X,\mathcal{C}_X,\mathcal{F}_X)$ and $(Y,\mathcal{C}_Y,\mathcal{F}_Y)$ be Fr\"olicher spaces.  Let $F:X\to Y$ be a map.  Then $F$ is \emph{Fr\"olicher smooth} if for every $f\in\mathcal{F}_Y$, $f\circ F\in\mathcal{F}_X$.
\end{definition}

\begin{remark}
Using the same notation as in the definition above, note that for any $c\in\mathcal{C}_X$, we have $F\circ c\in\mathcal{C}_Y$.  Indeed, for every $f\in\mathcal{F}_Y$, we have $f\circ F\circ c$ is smooth since $f\circ F\in\mathcal{F}_X$.  Hence $F\circ c\in\Gamma\mathcal{F}_Y=\mathcal{C}_Y.$  Moreover, $F:X\to Y$ is Fr\"olicher smooth if and only if for any $c\in\mathcal{C}_X$, we have $F\circ c\in\mathcal{C}_Y$.
\end{remark}

Let $(X,\mathcal{D})$ be a diffeological space.  Define $$\Lambda\mathcal{D}:=\{c:\RR\to X~|~c\in\mathcal{D}\}.$$

\begin{theorem}[Boman's Theorem]\labell{t:boman}
Let $f:\RR^n\to\RR$ be a function such that for any $c\in\CIN(\RR,\RR^n)$, we have $f\circ c\in\CIN(\RR)$.  Then, $f\in\CIN(\RR^n)$.
\end{theorem}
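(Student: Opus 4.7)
The plan is to prove $f \in C^k(\RR^n)$ by induction on $k \in \NN$, with the base case $k=0$ (continuity) being the key technical step and the inductive step reducing higher-order smoothness to the base case applied to the partial derivatives.

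For continuity, I would argue by contradiction. Suppose $f$ is not continuous at some $x_0 \in \RR^n$; then there is a sequence $x_j \to x_0$ and an $\varepsilon > 0$ with $|f(x_j) - f(x_0)| \geq \varepsilon$ for all $j$. The goal is to construct a smooth curve $c : \RR \to \RR^n$ with $c(0) = x_0$ and $c(t_j) = x_j$ for some sequence $t_j \to 0$, in such a way that $c$ is $C^\infty$. The standard device is to build $c$ piecewise from straight segments joining successive $x_j$'s, and then to reparametrise each piece by a flat function of the form $e^{-1/t^2}$ so that $c$ has contact of infinite order with the constant curve $t \mapsto x_0$ as $t \to 0$. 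Then $c$ is smooth at $0$, but $f \circ c$ takes values $f(x_j)$ bounded away from $f(x_0)$ at parameter values approaching $0$, contradicting the assumption that $f \circ c \in \CIN(\RR)$.

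For the inductive step, assume $f$ is $C^{k-1}$ and every $g$ satisfying Boman's hypothesis is $C^{k-1}$. By applying the hypothesis to the affine curves $t \mapsto x + t e_i$, each partial derivative $\partial_i f(x)$ exists at every point. I would then show that $\partial_i f$ itself satisfies Boman's hypothesis: for a smooth curve $c : \RR \to \RR^n$, one considers the smooth two-variable family $(t,s) \mapsto c(t) + s e_i$ and analyses $f$ along the one-parameter curves obtained by fixing $t$ and by scanning $s$. With a careful diagonal argument, differentiating in $s$ at $s=0$ gives $\partial_i f \circ c$ as a smooth function of $t$. By the inductive hypothesis applied to $\partial_i f$, each partial derivative is $C^{k-1}$, hence $f \in C^k$.

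The main obstacle is the curve construction in the continuity step: one must produce a single $C^\infty$ curve that simultaneously realises the pathological sequence $(x_j)$ while being genuinely smooth at the limit parameter. Subtle choices of flat reparametrisations are needed so that all derivatives of $c$ vanish at $t=0$, which in turn forces the contradiction. A secondary obstacle is verifying that $\partial_i f$ inherits Boman's hypothesis in the inductive step, since a priori the hypothesis concerns curves, not higher-dimensional smooth maps; this is handled by parametrising a two-dimensional family and extracting joint smoothness from smoothness along all smooth curves within the family.
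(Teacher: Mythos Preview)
The paper does not prove this theorem; it simply cites Boman's original article \cite{boman67}. So there is no paper proof to compare against, and your plan is attempting far more than the paper does.

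Your continuity argument is standard and would work. The gap is in the inductive step, precisely at the point you flag as a ``secondary obstacle'': showing that $\partial_i f$ inherits the Boman hypothesis. You propose to study $F(t,s) = f(c(t)+se_i)$ and extract smoothness of $t \mapsto \partial_s F(t,0)$ via a ``careful diagonal argument'', but this is the entire difficulty of the theorem, not a detail. The hypothesis gives you smoothness of $F$ along every smooth curve in the $(t,s)$-plane; concluding that $\partial_s F(\cdot,0)$ is smooth in $t$ is exactly Boman's theorem for $n=2$. Your induction on $k$ therefore silently assumes Boman in dimension $2$, which is already the nontrivial case. Separate smoothness plus smoothness along diagonals of the form $t \mapsto (t,h(t))$ does not yield joint smoothness or even that the $s$-derivative varies smoothly with $t$ without a genuinely new idea.

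Boman's own proof does not proceed by this kind of elementary induction; it uses distribution theory and Fourier-analytic estimates. The more elementary proofs in the literature (for instance the one in Kriegl--Michor, \emph{The Convenient Setting of Global Analysis}) rely on a ``special curve lemma'' that produces, from any sequence converging rapidly enough, a single smooth curve passing through it with prescribed speeds, and then use difference-quotient estimates rather than direct induction on the order of differentiability. If you want a self-contained proof along the lines you sketch, you would need to supply such a lemma and use it to control difference quotients of $\partial_i f$ directly; the two-parameter family alone will not close the argument.
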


\begin{proof}
See \cite{boman67}.
\end{proof}

\begin{lemma}\labell{l:boman}
Let $(X,\mathcal{D})$ be a reflexive diffeological space.  Then $$\Phi\Lambda\mathcal{D}=\Phi\mathcal{D}.$$
\end{lemma}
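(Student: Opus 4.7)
The plan is to prove both inclusions directly, with the non-trivial direction reduced to an application of Boman's theorem. Note that reflexivity of $\mathcal{D}$ is not actually needed for the equality, but the hypothesis is convenient for the intended application.

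For the easy inclusion $\Phi\mathcal{D}\subseteq\Phi\Lambda\mathcal{D}$, observe that $\Lambda\mathcal{D}\subseteq\mathcal{D}$ since curves of $\mathcal{D}$ are in particular plots of $\mathcal{D}$. Hence any $f\in\Phi\mathcal{D}$, which is smooth when composed with every plot, is in particular smooth when composed with every curve $c\in\Lambda\mathcal{D}$.

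For the reverse inclusion $\Phi\Lambda\mathcal{D}\subseteq\Phi\mathcal{D}$, fix $f\in\Phi\Lambda\mathcal{D}$ and an arbitrary plot $p:U\to X$ with $U\subseteq\RR^n$ open. We want to show $f\circ p\in\CIN(U)$. Since smoothness is local, it suffices to check smoothness of $f\circ p$ on a neighbourhood of each $u_0\in U$. Pick an open ball $B\subseteq U$ around $u_0$ together with a diffeomorphism $\psi:\RR^n\to B$. By the smooth compatibility axiom of a diffeology, $p\circ\psi:\RR^n\to X$ is again a plot in $\mathcal{D}$. I will apply Boman's theorem (\hypref{t:boman}{Theorem}) to the function $f\circ p\circ\psi:\RR^n\to\RR$: for any $c\in\CIN(\RR,\RR^n)$, the composition $\psi\circ c$ is a smooth map $\RR\to U$, so by smooth compatibility $p\circ\psi\circ c\in\mathcal{D}$, and being a curve it lies in $\Lambda\mathcal{D}$. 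By hypothesis on $f$, the composition $f\circ(p\circ\psi\circ c)=(f\circ p\circ\psi)\circ c$ is therefore in $\CIN(\RR)$. Boman's theorem then yields $f\circ p\circ\psi\in\CIN(\RR^n)$, and composing with $\psi^{-1}$ gives $f\circ p|_B=(f\circ p\circ\psi)\circ\psi^{-1}\in\CIN(B)$. Since $u_0\in U$ was arbitrary, $f\circ p$ is smooth on $U$, and therefore $f\in\Phi\mathcal{D}$.

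The only mild subtlety is that Boman's theorem, as stated, concerns functions defined on all of $\RR^n$; this is handled by the local reparametrisation via $\psi$. Aside from that, the argument is a direct translation of ``smooth = smooth along all curves'' into the diffeological setting.
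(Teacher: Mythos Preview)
Your proof is correct and follows essentially the same route as the paper's: both prove the easy inclusion via $\Lambda\mathcal{D}\subseteq\mathcal{D}$, and for the reverse inclusion both localise via a diffeomorphism $\psi:\RR^n\to V$ onto a neighbourhood in $U$ and then apply Boman's theorem to $f\circ p\circ\psi$. Your side remark that reflexivity is not actually used is also accurate --- the paper's argument does not invoke it either.
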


\begin{proof}
Let $f\in\Phi\mathcal{D}$.  Then for every $p\in\mathcal{D}$, we have that $f\circ p$ is smooth.  In particular, since $\Lambda\mathcal{D}\subseteq\mathcal{D}$, we have that $f\circ c$ is smooth for all $c\in\Lambda\mathcal{D}$.  Hence, $f\in\Phi\Lambda\mathcal{D}$.\\

For the opposite inclusion, fix $f\in\Phi\Lambda\mathcal{D}$, and fix $(p:U\to X)\in\mathcal{D}$.  We want to show that $f\circ p$ is smooth.  It is enough to show this locally on $U$.  Fix $u\in U$, and let $V\subseteq U$ be an open neighbourhood of $u$ admitting a diffeomorphism $\psi:\RR^n\to V$ (where $n=\dim(U)$).  If we can show that $f\circ p\circ\psi$ is smooth, then we have that $f\circ p|_V=(f\circ p\circ\psi)\circ\psi^{-1}$ is smooth, and we are done.\\

Now, $f\circ p\circ\psi$ is a function from $\RR^n$ to $\RR$.  Let $c\in\CIN(\RR,\RR^n)$.  By definition of a diffeology, we have that $p\circ\psi\circ c\in\mathcal{D}$.  In particular, $p\circ\psi\circ c\in\Lambda\mathcal{D}$.  By the definition of $\Phi$ we have that $(f\circ p\circ\psi)\circ c$ is smooth.  Since $c$ is arbitrary, by \hypref{t:boman}{Theorem}, we know that $f\circ p\circ\psi$ is smooth.  This is what we needed to show.
\end{proof}

Let $(X,\mathcal{D})$ be a reflexive diffeological space. Define $\mathbf{\Lambda}(X,\mathcal{D}):=(X,\Lambda\mathcal{D},\Phi\Lambda\mathcal{D})$ and $\mathbf{\Lambda}(F)=F$ for every diffeologically smooth map $F$ between reflexive diffeological spaces.

\begin{proposition}\labell{p:frolicher}
$\mathbf{\Lambda}$ is a functor from reflexive diffeological spaces to Fr\"olicher spaces.
\end{proposition}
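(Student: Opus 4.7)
The plan is to verify three things: (a) that $\mathbf{\Lambda}(X,\mathcal{D}) = (X, \Lambda\mathcal{D}, \Phi\Lambda\mathcal{D})$ is an honest Fr\"olicher space, i.e.\ satisfies $\Phi(\Lambda\mathcal{D}) = \Phi\Lambda\mathcal{D}$ and $\Gamma(\Phi\Lambda\mathcal{D}) = \Lambda\mathcal{D}$; (b) that when $F\colon (X,\mathcal{D}_X) \to (Y,\mathcal{D}_Y)$ is diffeologically smooth between reflexive diffeological spaces, the same set-map is Fr\"olicher smooth between $\mathbf{\Lambda}(X,\mathcal{D}_X)$ and $\mathbf{\Lambda}(Y,\mathcal{D}_Y)$; and (c) that identities and compositions are preserved, which is automatic since $\mathbf{\Lambda}$ acts as the identity on underlying maps.

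For (a), the first identity $\Phi(\Lambda\mathcal{D}) = \Phi\Lambda\mathcal{D}$ is tautological. For the nontrivial one, $\Gamma(\Phi\Lambda\mathcal{D}) = \Lambda\mathcal{D}$, I would first invoke \hypref{l:boman}{Lemma} to rewrite $\Phi\Lambda\mathcal{D} = \Phi\mathcal{D}$ (using reflexivity of $\mathcal{D}$), reducing the claim to $\Gamma\Phi\mathcal{D} = \Lambda\mathcal{D}$. The inclusion $\Lambda\mathcal{D} \subseteq \Gamma\Phi\mathcal{D}$ is immediate from the definitions of $\Lambda$ and $\Phi$. For the reverse, let $c \in \Gamma\Phi\mathcal{D}$; by definition $f \circ c$ is smooth for every $f \in \Phi\mathcal{D}$, which is precisely the condition that the parametrisation $c\colon\RR\to X$ lies in $\Pi\Phi\mathcal{D}$. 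By reflexivity, $\Pi\Phi\mathcal{D} = \mathcal{D}$, so $c\in\mathcal{D}$; and since $c$ has domain $\RR$, $c\in\Lambda\mathcal{D}$.

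For (b), I would verify the Fr\"olicher smoothness condition: that $f \circ F \in \Phi\Lambda\mathcal{D}_X$ for every $f \in \Phi\Lambda\mathcal{D}_Y$. Applying \hypref{l:boman}{Lemma} on both sides reduces this to showing that $f \in \Phi\mathcal{D}_Y$ implies $f \circ F \in \Phi\mathcal{D}_X$, which is direct: for any plot $q \in \mathcal{D}_X$, diffeological smoothness of $F$ gives $F \circ q \in \mathcal{D}_Y$, so $(f \circ F) \circ q = f \circ (F \circ q)$ is smooth. Alternatively, using the characterisation of Fr\"olicher smoothness via pushforward of curves, if $c \in \Lambda\mathcal{D}_X$ then $F \circ c$ is a plot of $\mathcal{D}_Y$ with one-dimensional domain, so $F \circ c \in \Lambda\mathcal{D}_Y$, as required.

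The main obstacle is the equality $\Gamma\Phi\mathcal{D} = \Lambda\mathcal{D}$, since a priori there could be curves that compose smoothly with every element of $\Phi\mathcal{D}$ without being plots of $\mathcal{D}$ themselves. It is precisely reflexivity of $\mathcal{D}$ that closes this gap, together with the observation that $\Pi$ and $\Gamma$ coincide on parametrisations with domain $\RR$; once this is in place, everything else follows routinely from the definitions, from \hypref{l:boman}{Lemma}, and from the identity-on-morphisms nature of $\mathbf{\Lambda}$.
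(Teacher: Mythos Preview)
Your proposal is correct and follows essentially the same route as the paper's proof: both use \hypref{l:boman}{Lemma} to reduce $\Gamma\Phi\Lambda\mathcal{D}=\Lambda\mathcal{D}$ to $\Gamma\Phi\mathcal{D}\subseteq\Lambda\mathcal{D}$, then use reflexivity ($\Pi\Phi\mathcal{D}=\mathcal{D}$) to conclude that any $c\in\Gamma\Phi\mathcal{D}$ lies in $\mathcal{D}$ and hence in $\Lambda\mathcal{D}$; and both handle morphisms by reducing Fr\"olicher smoothness via \hypref{l:boman}{Lemma} to the functional smoothness already established in \hypref{t:reflexiveisom}{Theorem}. The only cosmetic difference is that the paper cites \hypref{l:inclusions}{Lemma} for the easy inclusion $\Lambda\mathcal{D}\subseteq\Gamma\Phi\Lambda\mathcal{D}$, whereas you note it follows directly from the definitions.
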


\begin{proof}
We start with objects.  Let $(X,\mathcal{D})$ be a reflexive diffeological space.  We want to show that $(X,\Lambda\mathcal{D},\Phi\Lambda\mathcal{D})$ is a Fr\"olicher space.  In particular, that $\Gamma\Phi\Lambda\mathcal{D}=\Lambda\mathcal{D}$.  By \hypref{l:inclusions}{Lemma} we already know that $\Lambda\mathcal{D}\subseteq\Gamma\Phi\Lambda\mathcal{D}$.  To show the opposite inclusion, note that by \hypref{l:boman}{Lemma}, we have $\Phi\Lambda\mathcal{D}=\Phi\mathcal{D}$, and so we only need to show that $\Gamma\Phi\mathcal{D}\subseteq\Lambda\mathcal{D}$.\\

Let $c\in\Gamma\Phi\mathcal{D}$.  Then, for every $f\in\Phi\mathcal{D}$, we have that $f\circ c$ is smooth.  Hence, $c\in\Pi\Phi\mathcal{D}$.  But $\mathcal{D}$ is a reflexive diffeology, and so $\Pi\Phi\mathcal{D}=\mathcal{D}$, and $c\in\mathcal{D}$.  In particular, we have that $c\in\Lambda\mathcal{D}$.  We have shown that $\mathbf{\Lambda}$ takes reflexive diffeological spaces to Fr\"olicher spaces.\\

Next, let $(X,\mathcal{D}_X)$ and $(Y,\mathcal{D}_Y)$ be reflexive diffeological spaces and let $F:X\to Y$ be a diffeologically smooth map.  We want to show that it is a Fr\"olicher smooth map between $(X,\Lambda\mathcal{D}_X,\Phi\Lambda\mathcal{D}_X)$ and $(Y,\Lambda\mathcal{D}_Y,\Phi\Lambda\mathcal{D}_Y)$.  That is, we want to show that for any $f\in\Phi\Lambda\mathcal{D}_Y$, we have $f\circ F\in\Phi\Lambda\mathcal{D}_X$.  But by \hypref{l:boman}{Lemma} this is the same as asking that $f\circ F\in\Phi\mathcal{D}_X$.  But this we have already shown in the proof of \hypref{t:reflexiveisom}{Theorem}: diffeologically smooth maps between reflexive diffeological spaces are functionally smooth between the corresponding reflexive differential spaces.  And so we are done.
\end{proof}

Let $\mathbf{\Xi}$ be the forgetful functor from Fr\"olicher spaces to differential spaces: $\mathbf{\Xi}(X,\mathcal{C},\mathcal{F})=(X,\mathcal{F})$, and $\mathbf{\Xi}$ takes maps to themselves.  By \hypref{p:reflexive}{Proposition} $\mathbf{\Xi}$ takes Fr\"olicher spaces to reflexive differential spaces.

\begin{theorem}[Isomorphism of Categories I]\labell{t:frolicher}
$\mathbf{\Lambda}$ is an isomorphism of categories between reflexive diffeological spaces and Fr\"olicher spaces, with inverse functor given by $\mathbf{\Pi}\circ\mathbf{\Xi}$.
\end{theorem}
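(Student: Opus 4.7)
The plan is to verify that the two functors $\mathbf{\Lambda}$ and $\mathbf{\Pi}\circ\mathbf{\Xi}$ are mutually inverse, by checking both their action on objects and on morphisms. I would begin by confirming that $\mathbf{\Pi}\circ\mathbf{\Xi}$ really does land in reflexive diffeological spaces. Given a Fr\"olicher space $(X,\mathcal{C},\mathcal{F})$, the remark immediately preceding the theorem says that $\mathbf{\Xi}(X,\mathcal{C},\mathcal{F})=(X,\mathcal{F})$ is reflexive (since $\mathcal{F}=\Phi\mathcal{C}$ and \hypref{p:reflexive}{Proposition} applies), and then \hypref{t:reflexiveisom}{Theorem} delivers a reflexive diffeological space $(X,\Pi\mathcal{F})$. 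Functoriality on morphisms follows for free: a Fr\"olicher smooth map is by definition a functionally smooth map on the underlying differential spaces, and \hypref{t:reflexiveisom}{Theorem} showed that $\mathbf{\Pi}$ sends such a map to a diffeologically smooth map between the associated reflexive diffeologies.

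Next, I would check that $(\mathbf{\Pi}\circ\mathbf{\Xi})\circ\mathbf{\Lambda}$ is the identity on reflexive diffeological spaces. For an object $(X,\mathcal{D})$,
\begin{align*}
(\mathbf{\Pi}\circ\mathbf{\Xi})\circ\mathbf{\Lambda}(X,\mathcal{D})
&=(\mathbf{\Pi}\circ\mathbf{\Xi})(X,\Lambda\mathcal{D},\Phi\Lambda\mathcal{D})\\
&=\mathbf{\Pi}(X,\Phi\Lambda\mathcal{D})\\
&=(X,\Pi\Phi\Lambda\mathcal{D}).
\end{align*}
By \hypref{l:boman}{Lemma}, $\Phi\Lambda\mathcal{D}=\Phi\mathcal{D}$, and by reflexivity $\Pi\Phi\mathcal{D}=\mathcal{D}$, giving $(X,\mathcal{D})$ back. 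On morphisms everything is the identity, so there is nothing further to check.

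For the other composition, fix a Fr\"olicher space $(X,\mathcal{C},\mathcal{F})$. Then
\begin{align*}
\mathbf{\Lambda}\circ(\mathbf{\Pi}\circ\mathbf{\Xi})(X,\mathcal{C},\mathcal{F})
&=\mathbf{\Lambda}(X,\Pi\mathcal{F})\\
&=(X,\Lambda\Pi\mathcal{F},\Phi\Lambda\Pi\mathcal{F}).
\end{align*}
I need to identify $\Lambda\Pi\mathcal{F}=\mathcal{C}$ and $\Phi\Lambda\Pi\mathcal{F}=\mathcal{F}$. The first is a direct unravelling: $\Lambda\Pi\mathcal{F}$ is the set of curves $c:\RR\to X$ lying in $\Pi\mathcal{F}$, i.e.\ those such that $f\circ c$ is smooth for every $f\in\mathcal{F}$; this is exactly $\Gamma\mathcal{F}$, and by the Fr\"olicher axiom $\Gamma\mathcal{F}=\mathcal{C}$. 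The second uses that $(X,\Pi\mathcal{F})$ is a reflexive diffeological space (by \hypref{p:reflexive}{Proposition}), so \hypref{l:boman}{Lemma} yields $\Phi\Lambda\Pi\mathcal{F}=\Phi\Pi\mathcal{F}$; since $\mathcal{F}$ is a reflexive differential structure, $\Phi\Pi\mathcal{F}=\mathcal{F}$.

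Finally, morphism functoriality in this direction amounts to observing that a Fr\"olicher smooth map $F$ is mapped by $\mathbf{\Pi}\circ\mathbf{\Xi}$ to itself as a diffeologically smooth map, and then mapped by $\mathbf{\Lambda}$ back to itself as a Fr\"olicher smooth map, which it was to begin with; this is the content of \hypref{p:frolicher}{Proposition} combined with the functoriality already verified. The only real technical input is the application of \hypref{l:boman}{Lemma}, which in turn relies on Boman's theorem; everything else is a bookkeeping exercise in the defining closure properties of $\Lambda,\Pi,\Phi,\Gamma$.
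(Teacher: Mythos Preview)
Your proof is correct and follows essentially the same route as the paper: both compositions are checked on objects via \hypref{l:boman}{Lemma} and reflexivity, with morphisms handled trivially. The only minor difference is in justifying $\Lambda\Pi\mathcal{F}=\mathcal{C}$: you observe directly from the definitions that $\Lambda\Pi\mathcal{F}=\Gamma\mathcal{F}$, whereas the paper instead invokes \hypref{p:frolicher}{Proposition} to say that $(X,\Lambda\Pi\mathcal{F},\mathcal{F})$ is Fr\"olicher and then reads off $\Lambda\Pi\mathcal{F}=\Gamma\mathcal{F}$ from the Fr\"olicher axiom---your unravelling is slightly more direct but amounts to the same thing.
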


\begin{proof}
This is clear for maps, so we only show this for objects.  Let $(X,\mathcal{D})$ be a reflexive diffeological space.  Then, $$\mathbf{\Lambda}(X,\mathcal{D})=(X,\Lambda\mathcal{D},\Phi\Lambda\mathcal{D})=(X,\Lambda\mathcal{D},\Phi\mathcal{D})$$ is a Fr\"olicher space by \hypref{p:frolicher}{Proposition}.  $$\mathbf{\Xi}(X,\Lambda\mathcal{D},\Phi\mathcal{D})=(X,\Phi\mathcal{D})$$ is a reflexive differential space, and $$\mathbf{\Pi}(X,\Phi\mathcal{D})=(X,\Pi\Phi\mathcal{D})$$ is a reflexive diffeological space.  But since $\mathcal{D}$ is a reflexive diffeology, we have that $\Pi\Phi\mathcal{D}=\mathcal{D}$, and so we have shown that $\mathbf{\Pi}\circ\mathbf{\Xi}\circ\mathbf{\Lambda}$ is the identity functor on reflexive diffeological spaces.\\

Let $(X,\mathcal{C},\mathcal{F})$ be a Fr\"olicher space.  Then, $\mathbf{\Xi}(X,\mathcal{C},\mathcal{F})=(X,\mathcal{F})$ is a reflexive differential space.  Applying $\mathbf{\Pi}$ we get the reflexive diffeological space $(X,\Pi\mathcal{F})$.  Now,
\begin{align*}
\mathbf{\Lambda}(X,\Pi\mathcal{F})=&(X,\Lambda\Pi\mathcal{F},\Phi\Lambda\Pi\mathcal{F})\\
=&(X,\Lambda\Pi\mathcal{F},\Phi\Pi\mathcal{F}) && \text{by \hypref{l:boman}{Lemma},}\\
=&(X,\Lambda\Pi\mathcal{F},\mathcal{F}) && \text{since $\mathcal{F}$ is reflexive.}
\end{align*}
But $(X,\Lambda\Pi\mathcal{F},\mathcal{F})$ is a Fr\"olicher space, meaning that $\Gamma\mathcal{F}=\Lambda\Pi\mathcal{F}$. But we already know that $\Gamma\mathcal{F}=\mathcal{C}$, and so we end up with the Fr\"olicher space that we started with. Thus, $\mathbf{\Lambda}\circ\mathbf{\Pi}\circ\mathbf{\Xi}$ is the identity functor on Fr\"olicher spaces.
\end{proof}

\begin{corollary}[Isomorphism of Categories II]\labell{c:frolicher}
There is a natural isomorphism of categories from Fr\"olicher spaces to reflexive differential spaces, given by $\mathbf{\Xi}$.
\end{corollary}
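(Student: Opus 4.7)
The plan is to deduce this corollary from \hypref{t:reflexiveisom}{Theorem} and \hypref{t:frolicher}{Theorem} by composing isomorphisms. The former supplies $\mathbf{\Phi}$ as an isomorphism from reflexive diffeological spaces to reflexive differential spaces, with inverse $\mathbf{\Pi}$. The latter supplies $\mathbf{\Lambda}$ as an isomorphism from reflexive diffeological spaces to Fr\"olicher spaces, with inverse $\mathbf{\Pi} \circ \mathbf{\Xi}$. In particular, $\mathbf{\Pi} \circ \mathbf{\Xi}$ is itself an isomorphism from Fr\"olicher spaces to reflexive diffeological spaces.

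The key step is to observe the factorisation $\mathbf{\Xi} = \mathbf{\Phi} \circ (\mathbf{\Pi} \circ \mathbf{\Xi})$ as functors from Fr\"olicher spaces to reflexive differential spaces. On an object $(X,\mathcal{C},\mathcal{F})$ this is the computation
\begin{align*}
\mathbf{\Phi} \circ \mathbf{\Pi} \circ \mathbf{\Xi}(X,\mathcal{C},\mathcal{F}) = \mathbf{\Phi}(X,\Pi\mathcal{F}) = (X,\Phi\Pi\mathcal{F}) = (X,\mathcal{F}) = \mathbf{\Xi}(X,\mathcal{C},\mathcal{F}),
\end{align*}
where the crucial third equality is precisely the reflexivity identity $\Phi\Pi\mathcal{F} = \mathcal{F}$. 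On morphisms, each of $\mathbf{\Xi}$, $\mathbf{\Pi}$, and $\mathbf{\Phi}$ acts as the identity on the underlying set-theoretic map, so agreement there is automatic.

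Since $\mathbf{\Xi}$ is thus exhibited as a composite of two isomorphisms of categories, it is itself an isomorphism of categories from Fr\"olicher spaces to reflexive differential spaces, with explicit inverse $\mathbf{\Lambda} \circ \mathbf{\Pi}$. Because the statement is a formal consequence of the two preceding theorems, no substantive obstacle arises; the only content is recognising the factorisation, which rests entirely on the reflexivity of the target differential structures guaranteed by \hypref{p:reflexive}{Proposition}.
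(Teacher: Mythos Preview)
Your proof is correct and follows essentially the same approach as the paper: both compose the isomorphism $\mathbf{\Pi}\circ\mathbf{\Xi}$ from \hypref{t:frolicher}{Theorem} with the isomorphism $\mathbf{\Phi}$ from \hypref{t:reflexiveisom}{Theorem} and identify the result as $\mathbf{\Xi}$. The only minor difference is that the paper invokes $\mathbf{\Phi}\circ\mathbf{\Pi}=\id$ directly to obtain $\mathbf{\Phi}\circ\mathbf{\Pi}\circ\mathbf{\Xi}=\mathbf{\Xi}$, whereas you verify this equality explicitly on objects via the reflexivity identity $\Phi\Pi\mathcal{F}=\mathcal{F}$; this is harmless redundancy.
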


\begin{proof}
We already know that $\mathbf{\Pi}\circ\mathbf{\Xi}$ is an isomorphism between Fr\"olicher spaces and reflexive diffeological spaces, by the above theorem.  Also, we know that $\mathbf{\Phi}$ is the inverse functor to $\mathbf{\Pi}$, by \hypref{t:reflexiveisom}{Theorem}.  Thus, $$\mathbf{\Phi}\circ\mathbf{\Pi}\circ\mathbf{\Xi}=\mathbf{\Xi}$$ is an isomorphism of categories from Fr\"olicher spaces to reflexive differential spaces.
\end{proof}

\comment{Check Stacey's paper to see exactly what he did.}

\section{Examples \& Counterexamples}\labell{s:examples}

The purpose of this section is to give examples of diffeological and differential spaces that are reflexive (and hence Fr\"olicher in principle), and also examples of spaces that are not reflexive.  Before we begin with these examples, we prove a few lemmas that will be useful when working with the examples.

\begin{lemma}[$\Pi$ Respects Subsets]\labell{l:subcartdiffeol}
Let $(X,\mathcal{F})$ be a differential space, and let $Y\subseteq X$.  Let $\mathcal{F}_Y$ be the subspace differential structure on $Y$.  Then $\Pi\mathcal{F}_Y$ is the subset diffeology on $Y$ with respect to the diffeology $\Pi\mathcal{F}$ on $X$.
\end{lemma}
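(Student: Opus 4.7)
The plan is to prove the two inclusions $\Pi\mathcal{F}_Y\supseteq\{p\in\Pi\mathcal{F}:\im p\subseteq Y\}$ and $\Pi\mathcal{F}_Y\subseteq\{p\in\Pi\mathcal{F}:\im p\subseteq Y\}$ separately. Throughout, I will view a parametrisation $p:U\to Y$ interchangeably as a map into $X$ with image contained in $Y$, which is how the subset diffeology on $Y$ (induced from $\Pi\mathcal{F}$) is defined.

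For the easy inclusion, suppose $p:U\to Y$ is in the subset diffeology of $Y$, i.e., $p\in\Pi\mathcal{F}$ and $\im p\subseteq Y$. To show $p\in\Pi\mathcal{F}_Y$, fix $g\in\mathcal{F}_Y$; I need $g\circ p\in\CIN(U)$. Since any $f\in\mathcal{F}$ restricts to an element $f|_Y\in\mathcal{F}_Y$, this direction is almost immediate: given $g\in\mathcal{F}_Y$, for each $u\in U$ the subspace differential structure provides an open neighbourhood $W\subseteq X$ of $p(u)$ and $\tilde{g}\in\mathcal{F}$ with $g|_{W\cap Y}=\tilde{g}|_{W\cap Y}$, so on $p^{-1}(W)$ we have $g\circ p=\tilde{g}\circ p$, and the latter is smooth since $p\in\Pi\mathcal{F}$.

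For this argument to work I need $p^{-1}(W)$ to be open, which requires $p$ to be continuous into $X$ with respect to $\mathcal{T}_\mathcal{F}$. This follows by the same argument as \hypref{l:dftopologies}{Lemma}: the subbasis for $\mathcal{T}_\mathcal{F}$ consists of sets of the form $f^{-1}(I)$ with $f\in\mathcal{F}$, and $p^{-1}(f^{-1}(I))=(f\circ p)^{-1}(I)$ is open in $U$ because $f\circ p\in\CIN(U)$ by virtue of $p\in\Pi\mathcal{F}$. Since smoothness of $g\circ p$ on $U$ is a local condition and holds on each $p^{-1}(W)$, it holds globally, so $p\in\Pi\mathcal{F}_Y$.

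For the reverse inclusion, suppose $p\in\Pi\mathcal{F}_Y$. Automatically $\im p\subseteq Y$, so it only remains to show $p\in\Pi\mathcal{F}$, viewed as a parametrisation into $X$. Fix $f\in\mathcal{F}$; then $f|_Y\in\mathcal{F}_Y$ (the restriction trivially satisfies the local extension condition, taking $\tilde f=f$ on $U=X$). Since $p\in\Pi\mathcal{F}_Y$, the composition $(f|_Y)\circ p=f\circ p$ is smooth on $U$. Hence $p\in\Pi\mathcal{F}$, completing the proof.

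I do not expect any serious obstacle here; the only subtle point is making sure continuity of $p$ into $(X,\mathcal{T}_\mathcal{F})$ is available in the forward direction, and that follows cleanly from the observation above.
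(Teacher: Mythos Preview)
Your proposal is correct and follows essentially the same approach as the paper's proof: both directions are handled identically, with the only cosmetic difference being that the paper verifies openness of $p^{-1}(W)$ by explicitly writing $W$ as a finite intersection $\bigcap_{i=1}^k h_i^{-1}((0,1))$ of subbasic sets, whereas you invoke the continuity argument of \hypref{l:dftopologies}{Lemma} directly. The two inclusions are also presented in the opposite order, but the content is the same.
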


\begin{proof}
Fix $p\in\Pi\mathcal{F}_Y$.  Then for any $f\in\mathcal{F}_Y$, we have that $f\circ p$ is smooth.  In particular, for any $g\in\mathcal{F}$, we have that $g|_Y\in\mathcal{F}_Y$, and so $g\circ p=g|_Y\circ p$ is smooth, and so $p\in\Pi\mathcal{F}$, and hence is a plot in the subset diffeology.\\

For the opposite inclusion, fix a plot $p$ in the subset diffeology on $Y$.  Then $p$ is a plot in $\Pi\mathcal{F}$ with image in $Y$.  Fix $f\in\mathcal{F}_Y$, let $u$ be in the domain of $p$, and let $y=p(u)$.  There is an open neighbourhood $U\subseteq X$ of $y$ (in the subspace topology) and a function $g\in\mathcal{F}$ such that $g|_{Y\cap U}=f|_{Y\cap U}$.  Without loss of generality, assume $$U=\bigcap_{i=1}^kh_i^{-1}((0,1))$$ for some $h_1,...,h_k\in\mathcal{F}$.  Note that $h_i\circ p$ smooth for each $i$.  Hence, $$V:=p^{-1}(U)=\bigcap_{i=1}^kp^{-1}(h_i^{-1}((0,1)))$$ is open in the domain of $p$, and contains $u$.  We have that $f\circ p|_V=g\circ p|_V$, which is smooth since $p|_V\in\Pi\mathcal{F}$.  Thus, $f\circ p$ is smooth, and $p\in\Pi\mathcal{F}_Y$.
\end{proof}

\begin{remark}
By the above lemma, we know that the subset diffeology on any subset of $\RR^n$ is the diffeology induced by the subspace differential structure.  Moreover, it is reflexive by \hypref{p:reflexive}{Proposition}.
\end{remark}

\begin{lemma}[$\Phi$ Respects Quotients]\labell{l:quotdiffstr}
Let $(X,\mathcal{D})$ be a diffeological space, and let $\sim$ be an equivalence relation on $X$.  Set $Y=X/\sim$ and $\pi:X\to Y$ the quotient map.  Equip $Y$ with the quotient diffeology, denoted $\mathcal{D}_Y$.  Then $\Phi\mathcal{D}_Y$ is the quotient differential structure on $Y$ induced by $\Phi\mathcal{D}$ via $\pi$, and $\pi^*:\Phi\mathcal{D}_Y\to\Phi\mathcal{D}$ is an injection.
\end{lemma}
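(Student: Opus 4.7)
The plan is to unwind both sides against the universal property that a parametrisation is a plot of the quotient diffeology iff it locally factors through $\pi$, and then use surjectivity of $\pi$ for the injectivity claim. Write $\mathcal{F}_Y^{\text{quot}} := \{f : Y \to \RR \mid \pi^*f \in \Phi\mathcal{D}\}$ for the quotient differential structure induced by $\Phi\mathcal{D}$ via $\pi$. I would prove the two inclusions $\Phi\mathcal{D}_Y \subseteq \mathcal{F}_Y^{\text{quot}}$ and $\mathcal{F}_Y^{\text{quot}} \subseteq \Phi\mathcal{D}_Y$ separately.

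For the forward inclusion, fix $f \in \Phi\mathcal{D}_Y$ and a plot $p \in \mathcal{D}$. The composition $\pi \circ p$ is a plot in $\mathcal{D}_Y$ (directly from the definition, since it is locally of the form $\pi \circ p$), hence $f \circ \pi \circ p = (\pi^*f) \circ p$ is smooth on $\dom(p)$. As this holds for every $p \in \mathcal{D}$, we get $\pi^*f \in \Phi\mathcal{D}$, so $f \in \mathcal{F}_Y^{\text{quot}}$.

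For the reverse inclusion, fix $f \in \mathcal{F}_Y^{\text{quot}}$, so $\pi^*f \in \Phi\mathcal{D}$. Take an arbitrary plot $q : U \to Y$ in $\mathcal{D}_Y$; I need $f \circ q \in \CIN(U)$. Smoothness is a local condition, so fix $u \in U$ and use the definition of the quotient diffeology to find an open neighbourhood $V \subseteq U$ of $u$ and a plot $p : V \to X$ in $\mathcal{D}$ with $q|_V = \pi \circ p$. Then
\[
(f \circ q)|_V = f \circ \pi \circ p = (\pi^*f) \circ p,
\]
which is smooth because $\pi^*f \in \Phi\mathcal{D}$ and $p \in \mathcal{D}$. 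Ranging over $u$ gives $f \circ q \in \CIN(U)$, and ranging over $q$ gives $f \in \Phi\mathcal{D}_Y$.

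Finally, injectivity of $\pi^* : \Phi\mathcal{D}_Y \to \Phi\mathcal{D}$ is immediate from the surjectivity of $\pi$: if $\pi^*f = \pi^*g$ then $f$ and $g$ agree on $\pi(X) = Y$. I do not anticipate a real obstacle; the only point that requires any care is keeping the quantifiers straight between ``for every plot of $Y$'' and ``locally a pushforward of a plot of $X$'', so that the local factorisation in the quotient diffeology is available exactly where needed when checking $f \in \Phi\mathcal{D}_Y$.
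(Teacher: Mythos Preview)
Your proof is correct and essentially identical to the paper's: both directions use the local factorisation of quotient plots through $\pi$ in exactly the same way, and injectivity of $\pi^*$ is deduced from surjectivity of $\pi$. The only differences are cosmetic (your choice of letters $p,q$ is swapped relative to the paper's).
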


\begin{proof}
Let $f\in\Phi\mathcal{D}_Y$.  Then, for any $p\in\mathcal{D}_Y$, we have that $f\circ p$ is smooth.  In particular, for any $q\in\mathcal{D}$, we have that $\pi\circ q\in\mathcal{D}_Y$, and so $f\circ\pi\circ q$ is smooth.  But this shows that $\pi^*f\in\Phi\mathcal{D}$, and hence $f$ is in the quotient differential structure.\\

Next, for any $f$ in the quotient differential structure, we want to show that $f\circ p$ is smooth for all $p\in\mathcal{D}_Y$.  Fix such an $f$ and $p:U\to Y$.  For any $u\in U$, there is an open neighbourhood $V\subseteq U$ of $u$ and a plot $q\in\mathcal{D}$ such that $p|_V=\pi\circ q$.  Thus, $f\circ p|_V=f\circ\pi\circ q$.  But $f\circ\pi\circ q$ is smooth by definition of the quotient differential structure, and so $f\circ p|_V$ is smooth.  Thus $f\circ p$ is smooth, and $f\in\Phi\mathcal{D}_Y$.\\

Finally, if $f\in\Phi\mathcal{D}_Y$ such that $\pi^*f=0$, then since $\pi$ is a surjection, we have that $f=0$.
\end{proof}

Let $(X,\mathcal{D})$ be a diffeological space, and let $Y\subseteq X$ be a subset equipped with the subset diffeology, denoted $\mathcal{D}_Y$.  Denote by $X^n$ the product $n$-fold product $X\times...\times X$, and similarly by $Y^n$ the $n$-fold product of $Y$.

\begin{lemma}
The product diffeology on $Y^n$ is the same as the subset diffeology on $Y^n$ as a subset of $X^n$.
\end{lemma}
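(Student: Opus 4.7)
The plan is to verify the equality by unfolding each definition and observing that they produce the same condition on a parametrisation $p : U \to Y^n$.

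First I would unfold the product diffeology on $Y^n$. By definition, $p : U \to Y^n$ is a plot of the product diffeology if and only if each projection $\pr_i \circ p : U \to Y$ is a plot of $(Y,\mathcal{D}_Y)$, which by the definition of the subset diffeology means exactly that $\pr_i \circ p$ is a plot of $(X,\mathcal{D})$ whose image lies in $Y$. Here the $\pr_i$ refer to the projections $Y^n \to Y$, and we identify $\pr_i \circ p$ with its obvious interpretation as a map into $X$ when convenient.

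Next I would unfold the subset diffeology on $Y^n \subseteq X^n$. A parametrisation $p : U \to Y^n$ is a plot of this subset diffeology if and only if, viewed as a parametrisation into $X^n$, it is a plot of the product diffeology on $X^n$ with image contained in $Y^n$. Unfolding the product diffeology on $X^n$, this is the condition that each $\pr_i \circ p : U \to X$ is a plot of $X$; and the containment of the image in $Y^n$ is the condition that, for each $i$, the image of $\pr_i \circ p$ lies in $Y$.

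Finally, comparing the two: in each case, the condition on $p : U \to Y^n$ is that for every $i = 1,\ldots,n$, the map $\pr_i \circ p$ is a plot of $X$ whose image lies in $Y$. Hence the two diffeologies on $Y^n$ have the same plots and are equal. There is essentially no obstacle here; the statement is a direct consequence of the componentwise nature of the product diffeology and the image-containment nature of the subset diffeology, and the argument is symmetric in the two directions.
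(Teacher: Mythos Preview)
Your proof is correct and follows essentially the same approach as the paper: both unfold the definitions of the product and subset diffeologies and observe that each reduces to the requirement that every coordinate $\pr_i\circ p$ be a plot of $X$ with image in $Y$. The only cosmetic difference is that the paper phrases this as two separate inclusions rather than as a single equivalence.
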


\begin{proof}
Let $p:U\to Y^n$ be a plot in the product diffeology.  If $\pi_i:Y^n\to Y$ is the $i^\text{th}$ coordinate projection, then by definition of the product diffeology, $\pi_i\circ p$ is a plot on $Y$ for each $i=1,...,n$.  Hence these are plots in $\mathcal{D}_Y$; that is, they are plots in $\mathcal{D}$ with image in $Y$.  Thus, since each coordinate of $p$ is a plot of $X$, $p$ itself is a plot in the product diffeology on $X^n$ with image in $Y^n$.  Thus $p$ is contained in the subspace diffeology on $Y^n\subseteq X^n$.\\

Now, let $p:U\to Y^n\subseteq X^n$ be a plot in the subset diffeology.  Then, it is a plot on $X^n$ in the product diffeology, and so $\pi_i\circ p$ is a plot on $X$ for each $i=1,...,n$, each with image in $Y$.  Hence, $\pi_i\circ p$ is a plot in $\mathcal{D}_Y$, and so $p$ is a plot in the product diffeology on $Y^n$.
\end{proof}

\begin{example}[Manifolds with Boundary \& Corners]
Let $\KK^n$ be the \emph{positive orthant} of $\RR^n$, defined as the subset $[0,\infty)^n\subset\RR^n$.  $\KK^n$ inherits the subset diffeology from $\RR^n$, consisting of all smooth parametrisations $p:U\to\RR^n$ such that $p_i(u)\geq 0$ for all $i=1,...,n$, where $p_i(u)$ is the $i^\text{th}$ coordinate of $p(u)$.  Let $\mathcal{D}$ be the subset diffeology on $\KK^n$.

\begin{lemma}
The differential structure $\Phi\mathcal{D}$ on $\KK^n$ is exactly the subspace differential structure $\CIN(\KK^n)$ on $\KK^n\subset\RR^n$.
\end{lemma}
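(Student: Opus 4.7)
I would prove the two inclusions separately. For $\CIN(\KK^n) \subseteq \Phi\mathcal{D}$, the argument is direct: given $f \in \CIN(\KK^n)$ and a plot $p\colon U \to \KK^n$ of the subset diffeology (that is, a smooth map $U \to \RR^n$ with image in $\KK^n$), I would fix $u \in U$, use the definition of the subspace differential structure to produce a smooth extension $\tilde{f}$ of $f$ on an open neighbourhood $V \subseteq \RR^n$ of $p(u)$, and then observe that $f \circ p = \tilde{f} \circ p$ on $p^{-1}(V)$, which is smooth in the usual Euclidean sense. Since smoothness is a local condition, $f \circ p$ is smooth, so $f \in \Phi\mathcal{D}$.

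For the reverse inclusion $\Phi\mathcal{D} \subseteq \CIN(\KK^n)$, my plan is to use the ``squaring'' plot $p \colon \RR^n \to \KK^n$ defined by $p(t_1,\ldots,t_n) = (t_1^2, \ldots, t_n^2)$. This is a smooth parametrisation with image in $\KK^n$, hence a plot in $\mathcal{D}$. Given $f \in \Phi\mathcal{D}$, the composition $g := f \circ p$ is a smooth function on $\RR^n$ that is invariant under each sign change $t_i \mapsto -t_i$, i.e.\ even in each variable separately. Invoking the multivariable version of Whitney's theorem on $\ZZ_2^n$-invariant smooth functions (the $n$-variable generalisation of the classical result that every $\CIN$ even function on $\RR$ is smoothly a function of $t^2$), I obtain a smooth function $\tilde{f} \in \CIN(\RR^n)$ satisfying $g(t) = \tilde{f}(t_1^2, \ldots, t_n^2)$ for all $t \in \RR^n$. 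Setting $s_i = t_i^2$, this gives $f(s) = \tilde{f}(s)$ for every $s \in \KK^n$, so $f$ is globally the restriction of a smooth function on $\RR^n$, and in particular $f \in \CIN(\KK^n)$.

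The main obstacle is invoking, or justifying, the multivariable even-function theorem. The one-variable case is Whitney's classical theorem, and the multivariable statement can be deduced from it by induction on $n$ using a parametrised version of the one-variable result (smoothness of the extension in the parameters follows from smoothness of $g$). If bringing in the full multivariable statement seems heavy, an alternative is to work locally around each boundary point $x_0$ at which exactly $k$ coordinates vanish: one uses the partial squaring plot that squares only those $k$ coordinates while leaving the remaining $n-k$ coordinates as open parameters, and iterated application of the one-variable Whitney theorem produces a local smooth extension of $f$ near $x_0$. By the definition of the subspace differential structure, providing such local smooth extensions at every point of $\KK^n$ is all that is required.
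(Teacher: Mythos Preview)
Your proof is correct and follows essentially the same route as the paper: both directions use the squaring plot $p(t_1,\ldots,t_n)=(t_1^2,\ldots,t_n^2)$ and the fact that a smooth $\ZZ_2^n$-invariant function on $\RR^n$ factors smoothly through the squaring map. The only cosmetic difference is that the paper phrases the key ingredient as Schwarz's theorem \cite{schwarz75} applied to the compact group $\ZZ_2^n$ acting linearly on $\RR^n$ (with invariant generators $x_i^2$), whereas you call it the multivariable Whitney even-function theorem; for this particular group these are the same statement, and your proposed alternative via iterated one-variable Whitney is a perfectly good way to avoid invoking Schwarz in full generality.
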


\begin{proof}
Fix $f\in\CIN(\KK^n)$.  Then since plots in $\mathcal{D}$ are smooth maps into $\RR^n$ with image in $\KK^n$, and $f\in\CIN(\KK^n)$ extends to a smooth function $\tilde{f}\in\CIN(\RR^n)$ by definition, we have that $f\circ p$ is smooth for all plots $p\in\mathcal{D}$.\\

Conversely, let $f\in\Phi\mathcal{D}$.  Then for any $p\in\mathcal{D}$, we have that $f\circ p$ is smooth.  We want to show that $f\in\CIN(\KK^n)$.  Now, $\KK^n$ can be identified with the orbit space obtained by the Lie group $\ZZ_2^n=\{1,-1\}^n$ acting on $\RR^n$ by $(e_1,...,e_n)\cdot (x_1,...,x_n):=(e_1x_1,...,e_nx_n)$.  Note that the polynomials $x_i^2$ are invariant under this action for each $i=1,...,n$.  Define a plot $(p:\RR^n\to\KK^n)\in\mathcal{D}$ by $$p(x_1,...,x_n):=(x_1^2,...,x_n^2).$$  We know that $f\circ p$ is smooth, and so by Schwarz (see \cite{schwarz75}), there exists a smooth function $\tilde{f}:\RR^n\to\RR$ such that $(f\circ p)(x_1,...,x_n)=\tilde{f}(x_1^2,...,x_n^2)$.  Thus, $\tilde{f}|_{\KK^n}=f$.  Hence, $f$ is the restriction of a smooth function on $\RR^n$ to $\KK^n$, and so is contained in $\CIN(\KK^n)$.
\end{proof}

\begin{remark}
The above lemma, and its proof, is a generalisation of the same statement for half-spaces $\RR^{n-1}\times[0,\infty)$ by Iglesias-Zemmour in \cite{iglesias}.  This is used to show that the (traditional) differential structure on a manifold with boundary is equal to the family of real-valued diffeologically smooth functions on the space equipped with its natural diffeology.
\end{remark}

By \hypref{l:subcartdiffeol}{Lemma} we have that $\Pi\CIN(\KK^n)$ is the subspace diffeology on $\KK^n$.  We have shown that the subspace diffeology on $\KK^n$, as well as the subspace differential structure on $\KK^n$, are reflexive.
\end{example}

\begin{example}[Geometric Quotient]
Let $G$ be a compact Lie group acting on a manifold $M$.  Equip $M/G$ with the quotient diffeology, denoted $\mathcal{D}$.  Then by \hypref{p:functions}{Proposition} and \hypref{t:quotsubc}{Theorem}, we have that $\Phi\mathcal{D}=\CIN(M/G)$.  Hence, $\CIN(M/G)$ is a reflexive differential structure on the geometric quotient.  However, it is known that the quotient diffeology on the geometric quotient $\RR^n/O(n)$ is different for each $n$ (see \cite{iglesias}, Exercise 50, page 81 with solution at the back of the book).  This gives a family of distinct diffeologies $\mathcal{D}_n$ for which $\Phi\mathcal{D}_m=\Phi\mathcal{D}_n$ for all $m,n$, and hence a family of non-reflexive diffeologies whose underlying sets are all naturally identified with the set $[0,\infty)$.
\end{example}

\begin{example}[Wedge Sum of Euclidean Spaces]\labell{x:wedgesum}
For convenience, in the following, we denote by $0_k$ the origin in $\RR^k$. Fix $k\in\NN$, and let $n_1,...,n_k\in\NN\smallsetminus\{0\}$.  Consider the wedge sum $X:=\RR^{n_1}\vee...\vee\RR^{n_k}$, where we identify the origins $0_i\in\RR^{n_i}$ for each $i$.  More precisely, $$X=(\RR^{n_1}\amalg...\amalg\RR^{n_k})/(0_i\sim 0_j, \forall i,j=1,...,k).$$  Equip $X$ with the quotient diffeology, denoted $\mathcal{D}$.  Denote the quotient map by $\pi$.  For convenience, we will denote by $\Xi$ the disjoint union of the Euclidean spaces $\RR^{n_i}$.  Note that $\CIN(\Xi)$ is the space of all $k$-tuples $(f_1,...,f_k)$ where $f_i\in\CIN(\RR^{n_i})$ for each $i$.

\begin{lemma}
$\pi^*$ is an isomorphism of $\RR$-algebras from $\Phi\mathcal{D}$ to the space of all $k$-tuples $(f_1,...,f_k)\in\CIN(\Xi)$ such that $f_i(0_i)=f_j(0_j)$ for all $i$ and $j$.
\end{lemma}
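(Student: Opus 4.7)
The plan is to deduce this lemma directly from \hypref{l:quotdiffstr}{Lemma}, applied to the quotient $\pi \colon \Xi \to X$. Equip $\Xi$ with its standard manifold diffeology $\mathcal{D}_\Xi$; then $X$, with its quotient diffeology $\mathcal{D}$ from $\Xi$, satisfies $\Phi\mathcal{D}_\Xi = \CIN(\Xi)$. By \hypref{l:quotdiffstr}{Lemma}, $\Phi\mathcal{D}$ is the quotient differential structure induced by $\CIN(\Xi)$ via $\pi$, and $\pi^* \colon \Phi\mathcal{D} \to \CIN(\Xi)$ is already known to be injective. So the only task is to identify the image precisely.

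First I would unpack $\CIN(\Xi)$: since $\Xi$ is a disjoint union of manifolds, a function on $\Xi$ is smooth if and only if it restricts to a smooth function on each component, so $\CIN(\Xi)$ is naturally identified with tuples $(f_1,\ldots,f_k)$ with $f_i\in\CIN(\RR^{n_i})$. By definition of the quotient differential structure, a function $f\colon X\to\RR$ lies in $\Phi\mathcal{D}$ exactly when $\pi^*f\in\CIN(\Xi)$, i.e.\ the tuple $(f\circ\pi|_{\RR^{n_1}},\ldots,f\circ\pi|_{\RR^{n_k}})$ consists of smooth functions.

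Second, I would verify the image. Given $f\in\Phi\mathcal{D}$, the tuple $\pi^*f$ must be constant on the fibres of $\pi$; every fibre is a singleton except the fibre over the wedge point, which is $\{0_1,\ldots,0_k\}$. Hence $\pi^*f$ is forced to satisfy $f_i(0_i)=f_j(0_j)$ for all $i,j$. Conversely, given any $(f_1,\ldots,f_k)\in\CIN(\Xi)$ with $f_i(0_i)=f_j(0_j)$, one can define $f\colon X\to\RR$ unambiguously by $f(\pi(y)) := f_i(y)$ for $y\in\RR^{n_i}$; then $\pi^*f=(f_1,\ldots,f_k)\in\CIN(\Xi)$, so $f\in\Phi\mathcal{D}$ and $\pi^*f$ realises the given tuple.

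Finally, $\pi^*$ is a ring homomorphism since pullback along $\pi$ commutes with pointwise addition and multiplication, and injectivity already follows from \hypref{l:quotdiffstr}{Lemma} (equivalently, from surjectivity of $\pi$). There is no serious obstacle here; the only subtle point is noticing that the restriction $f_i(0_i)=f_j(0_j)$ is exactly what it takes for a tuple in $\CIN(\Xi)$ to descend to a well-defined function on the wedge, and that no further smoothness compatibility between components is forced, since plots of $X$ can only ``see'' more than one summand by passing through the wedge point, where the values automatically agree.
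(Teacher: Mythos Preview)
Your proof is correct and matches the paper's in substance. The only difference is packaging: the paper does not invoke \hypref{l:quotdiffstr}{Lemma} but instead argues directly with plots, observing that any plot $p:U\to\Xi$ with $U$ connected must land in a single summand $\RR^{n_i}$, so that $f\circ\pi\circ p=f_i\circ p$ is smooth; your appeal to \hypref{l:quotdiffstr}{Lemma} absorbs exactly that check and yields the same conclusion more efficiently.
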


\begin{proof}
Fix $f\in\Phi\mathcal{D}$.  Consider the plot $\id_i:\RR^{n_i}\to\RR^{n_i}\subseteq \Xi$, given by the identity map on $\RR^{n_i}$.  By definition of $\mathcal{D}$, this descends to a plot in $\mathcal{D}$, and so $f\circ\pi\circ\id_i$ is smooth.  Denote this composition by $f_i$, which we identify as a function in $\CIN(\RR^n_i)$.  Then, $f_i(0_i)=f_j(0_j)$ for each $i$ and $j$, and so the $k$-tuple $(f_1,...,f_k)$ satisfies the properties desired.\\

For the opposite inclusion, fix a $k$-tuple $(f_1,...,f_k)$ of functions $f_i\in\CIN(\RR^{n_i})$ that agree at the origins of their respective domains.  Then, define $f:X\to\RR$ by $$f|_{\pi(\RR^{n_i})}=f_i\circ(\pi|_{\RR^{n_i}})^{-1}.$$  Note that $(\pi|_{\RR^{n_i}})^{-1}$ is well-defined, and $f_i(0_i)=f_j(0_j)$ for each $i$ and $j$, and so $f$ is a well-defined function on $X$.  We claim that it is in $\Phi\mathcal{D}$.\\

Let $p:U\to \Xi$ be a plot, and without loss of generality, assume that $U$ is connected.  Then, by continuity, the image of $p$ is contained in some $\RR^{n_i}$ for some $i$.  That is, $p:U\to\RR^{n_i}$ is a smooth map.  Now, $$\pi\circ p=\pi|_{\RR^{n_i}}\circ p,$$ and $f\circ\pi\circ p$ is equal to $f_i\circ\pi|_{\RR^{n_i}}^{-1}\circ\pi\circ p$, which in turn is equal to $f_i\circ p$.  This is smooth.  Since any plot in $\mathcal{D}$ locally factors through $\pi$, we have just shown that $f\in\Phi\mathcal{D}$.\\

So far, we have shown that $\pi^*$ is a well-defined surjection onto the set of $k$-tuples described.  To show that it is an injection, let $f\in\Phi\mathcal{D}$ such that $\pi^*f=0$.  Then, since $\pi^*f$ restricted to each subset $\RR^{n_i}\smallsetminus\{0_i\}$ is equal to the identity map on that subset, we have that $f$ restricted to $X\smallsetminus\{\pi(0_i)\}$ is zero.  By continuity, we have that $f(\pi(0_i))=0$, and so $f$ is zero everywhere.
\end{proof}

\begin{lemma}
$(X,\Phi\mathcal{D})$ is diffeomorphic to the subcartesian space $$S=\{(x_1,...,x_k)\in\RR^{n_1}\times...\times\RR^{n_k}~|~x_i=0\text{ for all but at most one $i=1,...,k$}\}$$ as a differential subspace.
\end{lemma}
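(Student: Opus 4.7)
The plan is to exhibit an explicit candidate diffeomorphism and compute $\CIN(S)$ in a form that matches the description of $\Phi\mathcal{D}$ given by the preceding lemma. Define $\Psi:X\to S$ by sending the class of a point $x\in\RR^{n_i}$ (inside the wedge) to the tuple $(0_1,\ldots,0_{i-1},x,0_{i+1},\ldots,0_k)\in S$. This is well defined because the identification $0_i\sim 0_j$ is collapsed to the common origin of $S$, and it is clearly a bijection of sets.

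Next, I would compute $\CIN(S)$ explicitly. Denoting by $A_i\subseteq S$ the $i$-th axis $\{(0,\ldots,0,x_i,0,\ldots,0)\}$, restriction to $A_i$ sends $f\in\CIN(S)$ to a function $f_i\in\CIN(\RR^{n_i})$ (identifying $A_i\cong\RR^{n_i}$), because $\CIN(S)$ consists of restrictions of smooth functions on $\RR^{n_1+\cdots+n_k}$ by \hypref{p:closedsubset}{Proposition} and $S$ is closed. The $k$-tuple $(f_1,\ldots,f_k)$ plainly satisfies $f_i(0_i)=f_j(0_j)$ since all axes share the common point $0$. Conversely, given such a tuple with common value $c:=f_i(0_i)$, define
\[
F(x_1,\ldots,x_k):=\sum_{i=1}^k\bigl(f_i(x_i)-c\bigr)+c,
\]
which is smooth on $\RR^{n_1+\cdots+n_k}$, and check that for a point $(0,\ldots,x_i,\ldots,0)\in A_i$ every summand with index $j\neq i$ vanishes, giving $F|_{A_i}=f_i$; hence $F|_S$ is the desired function, so it lies in $\CIN(S)$. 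This shows that restriction to the axes identifies $\CIN(S)$ with the $\RR$-algebra of $k$-tuples $(f_1,\ldots,f_k)\in\CIN(\Xi)$ satisfying $f_i(0_i)=f_j(0_j)$ for all $i,j$.

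Combining this with the previous lemma, which identifies $\pi^*\Phi\mathcal{D}$ with the same algebra of $k$-tuples, the composition $\pi^*\circ\Psi^*:\CIN(S)\to\pi^*\Phi\mathcal{D}$ is the identity on $k$-tuples. Since $\pi^*$ is injective on $\Phi\mathcal{D}$, we conclude that $\Psi^*:\CIN(S)\to\Phi\mathcal{D}$ is an $\RR$-algebra isomorphism, and in particular both $\Psi$ and $\Psi^{-1}$ are functionally smooth. Thus $\Psi$ is a functional diffeomorphism, exhibiting $(X,\Phi\mathcal{D})$ as a differential subspace of $\RR^{n_1}\times\cdots\times\RR^{n_k}$.

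The main obstacle is the surjectivity step in the computation of $\CIN(S)$: one needs a global smooth extension from the union of axes to the ambient Euclidean space. The symmetric telescoping sum above handles this cleanly, but one must verify that such an extension exists at every point of $S$, not merely near the origin; away from the origin, a single axis is locally all of $S$, and the extension $F(x_1,\ldots,x_k):=f_i(x_i)$ works locally, so smoothness is checked chart-by-chart. The rest is bookkeeping.
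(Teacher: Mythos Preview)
Your proposal is correct and follows essentially the same approach as the paper: the same natural bijection $X\to S$, and the same global extension formula (your telescoping sum $\sum_i(f_i(x_i)-c)+c$ equals the paper's $f_1(x_1)+\cdots+f_k(x_k)-(k-1)C$). Your final paragraph's concern is unnecessary, since $F$ is already globally smooth on $\RR^{n_1+\cdots+n_k}$, so there is no local-to-global obstruction to check.
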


\begin{proof}
Let $N:=n_1+...+n_k$.  Let $\varphi:X\to S$ be the natural bijection sending each point of $\pi(\RR^{n_i}\smallsetminus\{0_i\})$ to the corresponding point in $S$, and $\pi(0_i)$ to the origin sitting in $S$.  Note that for any $f\in\CIN(S)$, there exists some $g\in\CIN(\RR^N)$ such that $g|_S=f$.  Then, for any ``component'' $$S_i:=\{0_{n_1+....+n_{i-1}}\}\times\RR^{n_i}\times\{0_{n_{i+1}+...+n_k}\}\subset S$$ we have that $g|_{S_i}$ is smooth from $S_i\cong\RR^{n_i}\to\RR$.  Thus, construct the $k$-tuple $(g|_{S_1},...,g|_{S_k})$.  This satisfies the property described in the above lemma, and so is in the image of $\pi^*$.  Thus, $\varphi^*(g|_S)\in\Phi\mathcal{D}$.\\

For the opposite inclusion, let $f\in\Phi\mathcal{D}$.  We want to show that $(\varphi^{-1})^*f\in\CIN(S)$.  To this end, it is enough to show that this function extends to a smooth function on all of $\RR^N$.  Let $C:=f(\pi(0_i))$, and let $(f_1,...,f_k)=\pi^*f$.  Consider the function $$g(x_1,...,x_k)=f_1(x_1)+...+f_k(x_k)-(k-1)C.$$  $g$ is smooth on $\RR^N$, and its restriction to $S$ is exactly $f\circ\varphi^{-1}$.  This completes the proof.
\end{proof}

Now, identify $(S,\CIN(S))$ and $(X,\Phi\mathcal{D})$.  Then \hypref{l:subcartdiffeol}{Lemma} implies that $\Pi\Phi\mathcal{D}$ is the subspace diffeology on $S$.  Note that any smooth curve $c:\RR\to\RR^{n_1+...+n_k}$ with image in $S$ is a plot in $\Pi\Phi\mathcal{D}$.  But if such a curve intersects more than one component $$S_i:=\{0_{n_1+....+n_{i-1}}\}\times\RR^{n_i}\times\{0_{n_{i+1}+...+n_k}\}$$ at a point other than the origin, that is, there exists $i$ and $j$, $i\neq j$, such that $\varphi^{-1}(c(\RR))$ intersects $\pi(\RR^{n_i}\smallsetminus\{0_i\})$ and $\pi(\RR^{n_j}\smallsetminus\{0_j\})$, then such a plot cannot lift to a plot of $\Xi$, the disjoint union of the Euclidean spaces.  This is because the image of such a lift would be restricted to exactly one $\RR^{n_i}$ due to continuity, which by construction is not the case.  Thus, $\mathcal{D}$ is not reflexive, but $S$ with the subspace differential structure is by \hypref{p:reflexive}{Proposition}.
\end{example}

\begin{example}[Transversely Intersecting Submanifolds]
We next show that given two transversely intersecting submanifolds $N_1$ and $N_2$ of some ambient manifold $M$, their union $N_1\cup N_2$ equipped with the subspace differential structure is reflexive.  Note that by \hypref{l:subcartdiffeol}{Lemma}, we already know that the subspace diffeology on the union is reflexive.\\

To prove reflexivity of the subspace differential structure on the union of the two submanifolds, it is enough to check this locally about points of intersection.  Let $\dim(M)=m$, $\dim(N_1)=n_1$, and $\dim(N_2)=n_2$.  Fixing a point of intersection $x$, there exists a chart $\theta$ of $M$ defined on an open neighbourhood $U$ about $x$ into $\RR^m=\RR^{m-n_2}\times\RR^{n_1+n_2-m}\times\RR^{m-n_1}$ so that $\theta(x)=0_{m}$, $$\theta(N_1\cap U)=\RR^{m-n_2}\times\RR^{n_1+n_2-m}\times\{0_{m-n_1}\}$$ and $$\theta(N_2\cap U)=\{0_{m-n_2}\}\times\RR^{n_1+n_2-m}\times\RR^{m-n_2}.$$  Let $S\subseteq\RR^m$ be the subset $\theta((N_1\cup N_2)\cap U)$ equipped with its differential structure induced by $\RR^m$.  For brevity, we let $a=m-n_2$, $b=n_1+n_2-m$ and $c=m-n_2$.

\begin{lemma}
Let $X$ be given by the quotient $((\RR^a\times\RR^b)\amalg(\RR^c\times\RR^b))/\sim$ where $\sim$ is the equivalence relation given by the following: $(x,y)\in\RR^a\times\RR^b$ is equivalent to $(z,y')\in\RR^c\times\RR^b$ if $x=0_m$, $z=0_n$, and $y=y'$; otherwise, the remaining points of the disjoint union are equivalent to only themselves.  Equip $X$ with the quotient diffeology $\mathcal{D}$.  Then $(X,\Phi\mathcal{D})$ is functionally diffeomorphic to $(S,\CIN(S))$.
\end{lemma}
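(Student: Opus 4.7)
The approach mirrors the wedge-sum computation in Example~\ref{x:wedgesum}, generalised to identify an entire $\RR^b$-slice rather than a single point. First I would define the natural bijection $\varphi:X\to S$ by sending the class of $(x,y)\in\RR^a\times\RR^b$ to $(x,y,0_c)\in S$ and the class of $(z,y)\in\RR^c\times\RR^b$ to $(0_a,y,z)\in S$; this is well defined and bijective because the equivalence relation precisely matches the intersection $\{0_a\}\times\RR^b\times\{0_c\}$ of the two affine subspaces making up $S$. The goal then reduces to verifying that $\varphi$ and $\varphi^{-1}$ are functionally smooth.

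The first main step is to characterise $\Phi\mathcal{D}$. Write $\pi$ for the quotient map and $\Xi:=(\RR^a\times\RR^b)\amalg(\RR^c\times\RR^b)$. I would show that $\pi^*$ is an $\RR$-algebra isomorphism from $\Phi\mathcal{D}$ onto the subalgebra of $\CIN(\Xi)$ consisting of pairs $(f_1,f_2)$ satisfying $f_1(0_a,y)=f_2(0_c,y)$ for all $y\in\RR^b$. The argument copies Example~\ref{x:wedgesum}: evaluating on the two inclusion plots $\RR^a\times\RR^b\hookrightarrow\Xi$ and $\RR^c\times\RR^b\hookrightarrow\Xi$ extracts such a pair from any $f\in\Phi\mathcal{D}$, and the compatibility at $y\in\RR^b$ is forced by the identification. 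Conversely, given a compatible pair, any plot $p$ of $X$ locally factors (after restricting to a connected neighbourhood) as $\pi\circ q$ for a plot $q$ of $\Xi$ whose image is contained in a single connected component; then $f\circ p$ locally equals $f_i\circ q$, hence is smooth, and locality yields $f\in\Phi\mathcal{D}$.

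With this characterisation in hand, both smoothness statements follow from standard extension/restriction. Given $f\in\CIN(S)$, extend to $\tilde f\in\CIN(\RR^m)$ using Proposition~\ref{p:closedsubset}, and note that the two restrictions $f_1:=\tilde f|_{\RR^a\times\RR^b\times\{0_c\}}$ and $f_2:=\tilde f|_{\{0_a\}\times\RR^b\times\RR^c}$ agree along $\{0_a\}\times\RR^b\times\{0_c\}$, giving $\varphi^*f\in\Phi\mathcal{D}$ by the characterisation. Conversely, given $g\in\Phi\mathcal{D}$ with corresponding pair $(f_1,f_2)$, set $\alpha(y):=f_1(0_a,y)=f_2(0_c,y)$, which is smooth on $\RR^b$, and define $h(x,y,z):=f_1(x,y)+f_2(z,y)-\alpha(y)$ on $\RR^a\times\RR^b\times\RR^c=\RR^m$. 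A direct check shows $h(x,y,0_c)=f_1(x,y)$ and $h(0_a,y,z)=f_2(z,y)$, so $h|_S=(\varphi^{-1})^*g$ under the identification $S\cong X$; since $h\in\CIN(\RR^m)$, we conclude $(\varphi^{-1})^*g\in\CIN(S)$.

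The hard part will be the characterisation of $\Phi\mathcal{D}$ in the second paragraph: one must correctly identify the compatibility condition on pairs and invoke a connectedness argument to guarantee that the local lifts of a plot of $X$ stay inside a single component of $\Xi$. Once that is in place, the explicit ``trace'' formula $f_1+f_2-\alpha$ provides the smooth extension of any element of $\Phi\mathcal{D}$ from $S$ to $\RR^m$, and the remaining verifications reduce to routine computations modelled on Example~\ref{x:wedgesum}.
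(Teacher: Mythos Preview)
Your proposal is correct and follows essentially the same route as the paper: the same bijection $\varphi$, the same characterisation of $\Phi\mathcal{D}$ as compatible pairs $(f_1,f_2)$ agreeing along the $\RR^b$-slice, and the same explicit extension formula $f_1(x,y)+f_2(z,y)-\alpha(y)$ to pass from $\Phi\mathcal{D}$ to $\CIN(S)$. Your treatment is in fact slightly more careful in two places---you make the locality of the plot-lifting explicit, and you cite \hypref{p:closedsubset}{Proposition} for the global extension of $f\in\CIN(S)$ to $\RR^m$, where the paper simply says ``by definition''---but the argument is otherwise identical.
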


\begin{proof}
The proof is similar to what we did in the previous example.  Let $\pi:\Xi:=(\RR^a\times\RR^b)\amalg(\RR^c\times\RR^b)\to X$ be the quotient map.  First, we show that $\pi^*:\Phi\mathcal{D}\to\CIN(\Xi)$ is an isomorphism onto the set of all pairs of smooth functions $(f_1,f_2)\in\CIN(\Xi)$ such that $f_1(0_a,y)=f_2(0_c,y)$ for all $y\in\RR^b$.\\

We start with $f\in\Phi\mathcal{D}$.  Let $p:\RR^a\times\RR^b\to\RR^a\times\RR^b\subset \Xi$ be the identity onto the corresponding connected component.  This is a plot of $\Xi$, and $f_1:=\pi^*f\circ p$ is smooth, and we conclude that $\pi^*f$ restricted to the first connected component is smooth.  Likewise, so is the restriction to the second connected component.  By definition of $\pi$, $f_1(0_m,y)=f_2(0_n,y)$ for all $y\in\RR^b$.  Hence, we have shown that $\pi^*f$ satisfies the property desired.  For surjectivity of $\pi^*$, let $(f_1,f_2)$ be a pair in $\CIN(\Xi)$ satisfying $f_1(0_m,y)=f_2(0_n,y)$ for all $y$.  Then,  this descends to a function $f:X\to\RR$.  To check that it is smooth, let $q:U\to X$ be a plot.  Note that $q=\pi\circ p$ for some plot $p$ of $\Xi$.  Thus, $f\circ q=\pi^*f\circ p$.  But $\pi^*f=(f_1,f_2)$, and so $\pi^*f\circ p$ is smooth.   Finally, for injectivity, if $\pi^*f=0$, then by the surjectivity of $\pi$, $f=0$.\\

Next, we prove that $(X,\Phi\mathcal{D})$ is functionally diffeomorphic to $(S,\CIN(S))$.  Let $f\in\CIN(S)$.  Then, by definition, it extends to a smooth function $g\in\CIN(\RR^m)$.  Let $\tilde{\varphi}:\Xi\to S$ be the smooth map given by $\tilde{\varphi}(x,y)=(x,y,0_c)\in\RR^m$ for all $(x,y)\in\RR^a\times\RR^b$, and $\tilde{\varphi}(z,y)=(0_a,y,z)\in\RR^m$ for all $(z,y)\in\RR^c\times\RR^b$.  This descends to a bijection $\varphi:X\to S$.  The restriction of $g$ to $\RR^a\times\RR^b\times\{0_c\}$ is smooth; denote the pullback of this restriction via $\tilde{\varphi}$ by $f_1$.  Likewise, the restriction of $g$ to $\{0_a\}\times\RR^b\times\RR^c$ is smooth; denote its pullback by $f_2$.  Note that $f_1(0_a,y)=f_2(0_c,y)$ for all $y$, and $f_1$ and $f_2$ are smooth.  Hence, $(f_1,f_2)$ is in the image of $\pi^*$, and so $\varphi^*(g|_S)=\varphi^*f\in\Phi\mathcal{D}$.\\

Finally, let $f\in\Phi\mathcal{D}$.  Let $(f_1,f_2)=\pi^*f\in\CIN(\Xi)$.  Then, letting $\lambda(y):=f_1(0_a,y)=f_2(0_c,y)$ for all $y\in\RR^b$, consider the function $g\in\CIN(\RR^m)$ sending $(x,y,z)\in\RR^a\times\RR^b\times\RR^c$ to $g(x,y,z):=f_1(x,y)+f_2(z,y)-\lambda(y)$.  The restriction of $g$ to $S$ is smooth, and $(\varphi^{-1})^*f=g|_S$.  Thus, we have established that $\varphi$ is a functional diffeomorphism between $(X,\Phi\mathcal{D})$ and $(S,\CIN(S))$.
\end{proof}

Note, again, that $\mathcal{D}$ is not a reflexive diffeology on $X$, for similar reasons as to the quotient in the previous example.
\end{example}

\begin{example}[Three Lines in $\RR^2$]
Fix $m>0$.  Let $S$ be the differential subspace of $\RR^2$ given by $$S=\{(x,y)~|~xy(mx-y)=0\}.$$  By \hypref{l:subcartdiffeol}{Lemma}, $\Pi\CIN(S)$ is the subspace diffeology on $S$.  Let $E\subset\RR^3$ be the union of the three coordinate axes. We claim that $\Phi\Pi\CIN(S)$ is isomorphic to $\CIN(E)$.  We note that $E$ is not functionally diffeomorphic to $S$.  This is because the dimension of the Zariski tangent space at the origin in $S$ is 2, whereas that of the origin in $E$ is 3, and this number is a smooth invariant (see \hypref{s:zariski}{Section} of Chapter \ref{ch:vectorfields}).  Hence, assuming the claim, we have that $\CIN(S)$ is not a reflexive differential structure.  We also have that $\Pi\CIN(E)=\Pi\CIN(S)$ since $\Pi\CIN(S)$ is reflexive; hence, $(S,\Pi\CIN(S))$ is diffeologically isomorphic to $(E,\Pi\CIN(E))$.  We now set out to prove the claim.\\

First, recalling from \hypref{x:wedgesum}{Example}, we know that $\CIN(E)$ is isomorphic to all triplets $(f_1,f_2,f_3)$ of smooth functions in $\CIN(\RR)$ that agree at the origin.  Second, we give an explicit smooth bijection between $E$ and $S$: define $\varphi:\RR^3\to\RR^2$ by $$\varphi(x,y,z)=(x+z,y+mz).$$  Then the restriction of $\varphi$ to $E$ is smooth, and is bijective onto $S$.  For simplicity, we refer to the restriction of $\varphi$ to $E$ as $\varphi$ as well.  Let $l_1$ be the $x$-axis contained in $S$, $l_2$ the $y$-axis, and $l_3$ the remaining line given by $y=mx$.\\

Now, fix $f\in\Phi\Pi\CIN(S)$.  Then, for any $p\in\Pi\CIN(S)$, we have that $f\circ p$ is smooth.  We can choose the plot $p$ to be the smooth embedding of $l_j$ (identified with $\RR$) into $S$.  The result is that $f$ restricted to $l_j$ is smooth.  We thus get a triplet $(f_1,f_2,f_3)$, where $f_j=f|_{l_j}$, and $f_i(0)=f_j(0)$ for all $i,j$.  So, we have an inclusion $\varphi^*:\Phi\Pi\CIN(S)\hookrightarrow\CIN(E)$ which sends $f$ to the triplet $(f_1,f_2,f_3)$.\\

Now, for the opposite inclusion, fix $f\in\CIN(E)$.  Let $(f_1,f_2,f_3)$ be the corresponding triple, such that $f|_{\varphi^{-1}(l_j)}=f_j$.  Then, we want to have for any $p\in\Pi\CIN(S)$ that the composition $f\circ\varphi^{-1}\circ p$ is smooth.  By \hypref{t:boman}{Boman's Theorem} we know that $f\circ\varphi^{-1}\circ p$ is smooth if for any smooth curve $c:\RR\to\dom(p)$ we have that $f\circ\varphi^{-1}\circ p\circ c$ is smooth.  Since $p\circ c$ is a plot in $\Pi\CIN(S)$ by definition of a diffeology, and this is the subspace diffeology on $S$, it is enough to show that $f\circ\varphi^{-1}\circ c$ is smooth for all smooth curves $c:\RR\to\RR^2$ with image in $S$.\\

Fix such a curve $c$.  Let $V$ be the set of all points $x\in\RR$ such that there exists an open neighbourhood $U$ of $x$ on which $c$ has image contained entirely in one line $l_j$ for some $j=1,2,3.$  Then, $V$ is open.  Moreover for any $x\in V$, there is a neighbourhood $U$ of $x$ such that $\varphi^{-1}\circ c|_U$ is contained in one coordinate axis of $\RR^3$.  Thus, $f\circ\varphi^{-1}\circ c|_U$ is smooth and so $f\circ\varphi^{-1}\circ c|_V$ is smooth.\\

Let $W$ be the set of all points $x\in\RR$ satisfying: there exists a sequence of points $x_i\in\RR$ satisfying
\begin{enumerate}
\item $\lim x_i=x$,
\item $x_i\neq x$ for all $i$,
\item for some $\mu=1,2,3$, the sequence $c(x_{2j})$ is contained in $l_\mu\smallsetminus\{(0,0)\}$ for all $j$,
\item for some $\nu\neq\mu$, the sequence $c(x_{2j+1})$ is contained in $l_\nu\smallsetminus\{(0,0)\}$ for all $j$.
\end{enumerate}
Let $x\in W$.  Then certainly $x\notin V$, and so $W$ is a subset of the complement of $V$.  Also, if $x$ is in the complement of $W$, then there is no sequence $x_i$ satisfying the properties above.  In particular, any sequence $x_i$ with $\lim x_i=x$ has its tail contained in $l_j$ for some $j=1,2,3$.  Thus, there is an open interval $U$ about $x$ such that $c(U)\subseteq l_j$.  Thus $x\in V$.  We conclude that $W$ is equal to the complement of $V$, and hence is closed.  Moreover, if $x\in W$, then since there exist a sequence $x_{2j}$ converging to $x$ with $c(x_{2j})\in l_{\mu}$, and a sequence $x_{2j+1}$ converging to $x$ with $c(x_{2j+1})\in l_{\nu}$, then it follows that $c(x)$ is in the intersection of the closures of $l_{\mu}$ and $l_{\nu}$, which is precisely the origin.  That is, $W\subseteq c^{-1}((0,0))$.\\

The interior of $W$ is empty.  Indeed, if $x\in W$ is an interior point, then there exists an open ball $B\subset W$ about $x$, and so $c(B)=\{(0,0)\}$.  But then, all points in $B$ are sent by $c$ to the same line, and so $B\subset V$.  But $V$ is the complement of $W$, a contradiction.\\

\begin{lemma}\labell{l:curves}
Fix $x\in W$.  Then $$\frac{d^k}{dt^k}\Big|_{t=x}c(t)=(0,0)$$ for all $k\geq 0$.
\end{lemma}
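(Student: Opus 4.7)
The plan is to prove the lemma by induction on $k$, using the two ``witnessing'' sequences guaranteed by the definition of $W$ together with the fact that the three lines $l_1, l_2, l_3$ are distinct one-dimensional subspaces of $\RR^2$ that meet only at the origin.

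The base case $k=0$ is precisely what was established in the paragraph preceding the lemma: every $x \in W$ satisfies $c(x) = (0,0)$, since $c(x)$ must lie in $\overline{l_\mu} \cap \overline{l_\nu} = l_\mu \cap l_\nu = \{(0,0)\}$.

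For the inductive step, suppose $c^{(j)}(x) = (0,0)$ for all $j = 0, 1, \ldots, k-1$. By Taylor's theorem applied to $c$ (which is smooth as a map $\RR \to \RR^2$), we have
$$c(t) = \frac{c^{(k)}(x)}{k!}(t-x)^k + o\bigl((t-x)^k\bigr) \qquad \text{as } t \to x,$$
so in particular
$$\lim_{t \to x} \frac{c(t)}{(t-x)^k} = \frac{c^{(k)}(x)}{k!}.$$
Now apply this limit along the two subsequences from the definition of $W$. Since $c(x_{2j}) \in l_\mu$ and $l_\mu$ is a line through the origin (hence closed under scalar multiplication), each vector $c(x_{2j})/(x_{2j} - x)^k$ lies in $l_\mu$. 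Because $l_\mu$ is closed in $\RR^2$, the limit $c^{(k)}(x)/k!$ must lie in $l_\mu$ as well. The identical argument with $x_{2j+1}$ and $l_\nu$ shows that $c^{(k)}(x)/k!$ lies in $l_\nu$. Since $\mu \neq \nu$ and any two of the three lines $l_1, l_2, l_3$ intersect only at the origin, we conclude $c^{(k)}(x) = (0,0)$, completing the induction.

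The argument is essentially one line per step and there is no real obstacle: the only subtlety is noticing that we never need the individual subsequence ratios to converge separately (they automatically do, to a common limit, because $c$ is smooth), and that we are implicitly using only the fact that each $l_j$ is a \emph{closed linear} subspace, so it is preserved both under scalar rescaling by $1/(x_i - x)^k$ and under taking limits. The hypothesis $\mu \neq \nu$ built into the definition of $W$ is used exactly once, in the very last step, to force $c^{(k)}(x) \in l_\mu \cap l_\nu = \{0\}$.
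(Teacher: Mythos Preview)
Your proof is correct and takes a genuinely different route from the paper's.  The paper splits into two cases according to whether $x$ is an isolated point or a limit point of $c^{-1}((0,0))$.  In the isolated case it writes $c$ explicitly near $x$ as lying in one line $l_\mu$ on one side and in a different line $l_\nu$ on the other, and argues coordinate by coordinate that all one-sided derivatives must match and hence vanish.  In the limit-point case it picks a sequence of zeros of $c$ converging to $x$ and applies the mean value theorem iteratively to produce, for each $k$, a sequence of zeros of $c^{(k)}$ converging to $x$.

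Your argument avoids the case split entirely: a single Taylor-expansion step, fed the two witnessing subsequences already supplied by the definition of $W$, forces $c^{(k)}(x)$ into $l_\mu\cap l_\nu=\{0\}$.  This is shorter and more uniform, and it isolates exactly the structural fact that matters (each $l_j$ is a closed linear subspace, so it is preserved under rescaling and limits).  The paper's approach, on the other hand, makes the local geometry of $c$ near $x$ more visible---particularly in the isolated case, where one sees the curve actually switching lines---and the iterated-Rolle trick in the limit-point case is a classical technique worth having at hand.  Both arguments ultimately encode the same idea: every jet of $c$ at $x$ is forced to live in two distinct lines at once, hence vanishes.
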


Before we prove the above lemma, we apply it to the example at hand, assuming that it is true.  $f\circ\varphi^{-1}\circ c$ is continuous everywhere and smooth at points of $V$, but we also need to show that it is smooth at points of $W$.  We do this inductively: we show that $f\circ\varphi^{-1}\circ c$ is differentiable at points of $W$, and hence it is differentiable everywhere.  We then use this to show that the function is twice differentiable, and so on.  Fix $x\in W$.  Let $(x_i)\subset\RR$ be any sequence of points with limit $x$.  Choose three subsequences $(x_i^j)$ such that $c(x_i^j)\in l_j$ for each $j=1,2,3$, and the union of the subsequences is all of $(x_i)$.  (If any of these subsequences are finite, then we can restrict our attention to the tail of $(x_i)$ so as to remove that subsequence.)  Let $\tilde{f}_j:=f_j\circ\varphi^{-1}|_{l_j}$, which is a smooth map when we identify $l_j$ with $\RR$.  Then
\begin{align*}
\frac{f\circ\varphi^{-1}(c(x_i^j))-f\circ\varphi^{-1}(c(x))}{x_i^j-x}=&\frac{\tilde{f}_j(c(x_i^j))-\tilde{f}(0,0)}{x_i^j-x}\\
=&\frac{d}{dt}\Big|_{t=y_i^j}(\tilde{f}_j(c(t)))\\
=&\tilde{f}'_j(c(y_i^j))\cdot\pr_{l_j}\left(\frac{d}{dt}\Big|_{t=y_i^j}c(t)\right)
\end{align*}
for some $y_i^j$ between $x_i^j$ and $x$ by the mean-value theorem.  Here, $\pr_{l_j}$ is the projection onto $l_j$.  The factor $$\frac{d}{dt}\Big|_{t=y_i^j}c(t)$$ goes to $(0,0)$ as $i\to\infty$ by \hypref{l:curves}{Lemma}, and so the last line above vanishes.  So, the first derivative of $f\circ\varphi^{-1}\circ c$ at $x$ exists and is $0$.  Hence we have shown that $f\circ\varphi^{-1}\circ c$ is differentiable everywhere.  Note that we now can apply this argument inductively to all derivatives of $f\circ\varphi^{-1}(c(t))$ along the line $l_j$. Moreover, since this is true for each $j=1,2,3$, we have that all of the derivatives of $f\circ\varphi^{-1}\circ c$ exists at $x$ and are equal to $0$.  Thus, $f\circ\varphi^{-1}\circ c$ is smooth everywhere, and we have that $f\in\Phi\Pi\CIN(S)$.\\

We now set out to prove the lemma.

\begin{proof}
We have two cases: either $x$ is an isolated point of $c^{-1}((0,0))$, or a limit point of it.  We deal with the isolated point case first.  If $x$ is an isolated point of $c^{-1}((0,0))$, then there exists an open interval $(a,b)$ containing $x$ such that $c((a,x))\subset l_\mu\smallsetminus\{(0,0)\}$ and $c((x,b))\subset l_\nu\smallsetminus\{(0,0)\}$, $\mu\neq\nu$.  Assume $\mu=1$; similar arguments will work for the other cases.  Then, $c|_{(a,b)}$ takes on one of the following two forms: $c|_{(a,b)}(t)=(c_1(t),c_2(t))$ where $$c_1(t)=
\begin{cases}
\alpha(t) & \text{if $t\in(a,x)$}\\
0 & \text{if $t\in[x,b)$}
\end{cases}$$
and $$c_2(t)=
\begin{cases}
0 & \text{if $t\in(a,x)$}\\
\beta(t) & \text{if $t\in[x,b)$}
\end{cases}$$
where $\alpha$ and $\beta$ are smooth, or $$c_1(t)=
\begin{cases}
\alpha(t) & \text{if $t\in(a,x)$}\\
\beta(t) & \text{if $t\in[x,b)$}
\end{cases}$$
and $$c_2(t)=
\begin{cases}
0 & \text{if $t\in(a,x)$}\\
m\beta(t) & \text{if $t\in[x,b)$}
\end{cases}$$
where $\alpha$ and $\beta$ are smooth.  We need to show that in all cases, all derivatives of $\alpha$ and $\beta$ vanish at $x$.  We deal with the first form now.  $c_1$ is a smooth map, and so by continuity of each derivative, $$\limit{t\to x^-}\alpha^{(k)}(t)=\limit{t\to x}c_1^{(k)}(t)=0.$$ A similar argument holds for $\beta$.\\

For the second form, the same argument in the preceding paragraph holds for $m\beta$, and hence for $\beta$.  Since $c_1$ is smooth, we have $$\limit{t\to x^-}\alpha^{(k)}(t)=\limit{t\to x^+}\beta^{(k)}(t)$$ for all $k$.  Since the right-hand side vanishes for all $k$, we are done.  This completes this case.\\

Now assume that $x$ is a limit point of $c^{-1}((0,0))$.  Then, there exists a sequence $(x_i)\subset c^{-1}((0,0))$ such that $x_i\to x$.  But then for each $i$, we have $c(x_i)=(0,0)$.  Let $c(t)=(c_1(t),c_2(t))$.  Then, by the mean-value theorem, we have that there exists a sequence $(y^1_i)$ such that for each $i$, $y^1_i\in(x_i,x_{i+1})$ and $c'_1(y^1_i)=0.$  Since $y^1_i\to x$, we have $c'_1(x)=0$.  Moreover, we can apply a similar inductive argument to achieve that $c_1^{(k)}(x)=0$ for all $k$.  We apply this to $c_2$ as well, and this finishes the proof.
\end{proof}

\mute{

We now strive to prove the claim.  We first need a series of calculus results regarding curves.

\begin{lemma}\labell{l:curvesA}
Let $g:\RR\to\RR$ be a smooth function given by $$g(t)=
\begin{cases}
0 \text{if $t\leq0$,}\\
\tilde{g}(t) \text{if $t>0$,}
\end{cases}$$
where $\tilde{g}$ is a smooth function on $\RR$.  Then for all non-negative integers $k$, $g^{(k)}(0)=\tilde{g}^{(k)}(0)=0$.
\end{lemma}

\begin{proof}
We use induction.  By continuity, $g(0)=\tilde{g}(0)=0$.  Now, assume $g^(k)(0)=\tilde{g}^{(k)}(0)=0.$  Then,
\begin{align*}
g^{k+1}(0)=&\limit{t\to0^-}\frac{g^{(k)}(t)-g^{(k)}(0)}{t-0}\\
=&\limit{t\to0^-}\frac{0-g^{(k)}(0)}{t-0}\\
=&0.
\end{align*}
Also, $\tilde{g}^(k+1)(0)=g^{k+1}(0)=0$, which finishes the proof.
\end{proof}

\begin{corollary}\labell{c:curvesA}
Let $\gamma:\RR\to\RR^2:t\to(f(t),g(t))$ be a smooth curve such that $$f(t)=
\begin{cases}
\tilde{f}(t) \text{if $t<0$,}\\
0 \text{if $t\geq0$},
\end{cases}$$
and $$g(t)=
\begin{cases}
0 \text{if $t\leq0$,}\\
\tilde{g}(t) \text{if $t>0$,}
\end{cases}$$
for smooth functions $\tilde{f}$ and $\tilde{g}$ on $\RR$.
Then, $$\frac{d^k}{dt^k}\Big|_{t=0}\gamma=(0,0)$$ for all non-negative integers $k$.
\end{corollary}

\begin{proof}
By \hypref{l:curvesA}{Lemma}, we have that $$\frac{d^k}{dt^k}\Big|_{t=0}f(t)=\frac{d^k}{dt^k}\Big|_{t=0}g(t)=0$$ for all non-negative integers $k$.  Thus,
\begin{align*}
\frac{d^k}{dt^k}\Big|_{t=0}\gamma=&\left(\frac{d^k}{dt^k}\Big|_{t=0}f(t),\frac{d^k}{dt^k}\Big|_{t=0}g(t)\right)\\
=&(0,0).
\end{align*}
\end{proof}

\begin{lemma}\labell{l:curvesB}
Let $\gamma:\RR\to\RR^2$ be a smooth curve given by $$\gamma(t)=(f(t),g(t))$$ where $$f(t)=
\begin{cases}
f_1(t) \text{if $t<0$,}\\
0 \text{if $t=0$,}\\
f_2(t) \text{if $t>0$,}
\end{cases}$$
and $$g(t)=
\begin{cases}
0 \text{if $t\leq 0$,}\\
mf_2(t) \text{if $t>0$,}
\end{cases}$$
and $f_1,f_2\in\CIN(\RR)$. Then, $$\frac{d^k}{dt^k}\Big|_{t=0}\gamma(t)=(0,0)$$ for all non-negative integers $k$.
\end{lemma}

\begin{proof}
By \hypref{l:curvesA}{Lemma}, we know that $$\frac{d^k}{dt^k}\Big|_{t=0}g(t)=0$$ for all non-negative integers $k$.  Thus, $$\frac{d^k}{dt^k}\Big|_{t=0}f_2(t)=0$$ for all non-negative integers $k$.  But $f^(k)(0)=f_2^(k)(0)=0$ for all $k$.  This completes the proof.
\end{proof}

\begin{corollary}\labell{c:curvesB}
Let $h_1$ and $h_2$ be smooth functions on $\RR$ such that $h_1(0)=h_2(0)=0.$  Let $\gamma:\RR\to\RR^2$ be either the curve given in the statement of \hypref{c:curvesA}{Corollary} or \hypref{l:curvesB}{Lemma}.  Then the function $h:\RR\to\RR$ given by $$h(t)=
\begin{cases}
h_1\circ\pi_1\circ\gamma(t) \text{if $t\leq0$,}\\
h_2\circ\pi_2\circ\gamma(t) \text{if $t>0$,}
\end{cases}$$
is smooth, where $\pi_i:\RR^2\to\RR$ ($i=1,2$) are the coordinate projections.
\end{corollary}

\begin{proof}
$h$ is well-defined and continuous since $\gamma(0)=(0,0)$ and $h_1(0)=h_2(0)$, and smooth away from $t=0$.  Now, as $t$ approaches $0$ from the left or right, we get that $h^{(k)}(t)$ is a sum of terms, each having a factor of some derivative of a component of $\gamma$, which by \hypref{c:curvesA}{Corollary} or \hypref{l:curvesB}{Lemma} will vanish as $t$ goes to $0$.  Hence, $h^{(k)}(0)=0$ for all non-negative integers $k$.
\end{proof}

Define $\varphi:\RR^3\to\RR^2$ by $$\varphi(x,y,z)=(x+z,y+mz).$$  This map is certainly diffeologically smooth, and when restricted to $E$ is bijective onto $S$.  We want to show $\varphi^{-1}$ is diffeologically smooth as well.  Fix $(p:U\to S)\in\Pi\CIN(S)$. We want to show $\varphi^{-1}\circ p$ is in $\Pi\CIN(E)$.  This is the same as asking that $f\circ \varphi^{-1}\circ p$ be smooth for any $f\in\CIN(E)$.  Fixing $f$, by \hypref{t:boman}{Theorem}, we only need to show that for any smooth curve $c:\RR\to U$, the composition $f\circ\varphi^{-1}\circ p\circ c$ is smooth.  Fix such a smooth curve $c$.  Now, of course, if the image of $p\circ c$ is contained solely in one of the three lines making up $S$, then $\varphi^{-1}\circ p\circ c$ has image contained solely in one of the coordinate axes of $\RR^3$, and this curve is smooth.  Since $f$ is the restriction of a smooth function on $\RR^3$, we have that the composition $f\circ\varphi^{-1}\circ p\circ c$ is smooth.  On the other hand, if the image of $p\circ c$ is contained in two (or three) lines, then possibly after a translation so that $p\circ c(0)=(0,0)$ and a reparametrisation $t\mapsto-t$, we have $p\circ c=\gamma$ will be the curve in \hypref{c:curvesA}{Corollary} or \hypref{l:curvesB}{Lemma}.  (In the case of three lines, we look only in an open neighbourhood of $0$ in the domain of $\gamma$ so that the image of this neighbourhood under $\gamma$ lies in two lines.)\\

Now, $\varphi^{-1}\circ\gamma$ has image contained in two of the three axes of $\RR^3$.  Without loss of generality, assume that the first axis is the $x$-axis, and the second the $y$-axis.  Thus, we can pay attention to only the $xy$-plane in $\RR^3$, and we have that $f\circ\varphi^{-1}\circ\gamma$ takes the form of $h$ in \hypref{c:curvesB}{Corollary}, where $h_1$ is $f$ restricted to the $x$-axis, and $h_2$ is restricted to the $y$-axis.  By the corollary, $h$ is smooth.  This completes the proof that $\varphi^{-1}$ is diffeologically smooth, and we have that $(S,\Pi\CIN(S))$ and $(E,\Pi\CIN(E))$ are diffeologically diffeomorphic, and $\CIN(S)$ is not a reflexive differential structure.

}

\end{example}

\begin{example}[$\RR$ Modulo a Closed Interval]
Let $X=\RR/[0,1]$, equipped with the quotient diffeology, denoted $\mathcal{D}_X$.  Note that for any plot $p:U\to X$, there is a smooth map $q:U\to\RR$ such that $p=\pi_X\circ q$, where $\pi_X$ is the quotient map.  By \hypref{l:quotdiffstr}{Lemma}, $\Phi\mathcal{D}_X$ is the quotient differential structure on $X$.  Thus $f\in\Phi\mathcal{D}_X$ if and only if $\pi_X^*f$ is smooth.  But for any $f\in\Phi\mathcal{D}_X$, $\pi_X^*f$ is constant on $[0,1]$.  Also, for any $g\in\CIN(\RR)$ that is constant on $[0,1]$, $g$ descends to a well-defined function $\tilde{g}$ on $X$.  For any $p\in\mathcal{D}_X$, there is a smooth map $q:U\to\RR$ satisfying $p=\pi_X\circ q$, and $$\tilde{g}\circ p=\tilde{g}\circ\pi_X\circ q=g\circ q,$$ which is smooth, and so $\tilde{g}\in\Phi\mathcal{D}_X$.  Thus, $\pi_X^*(\Phi\mathcal{D}_X)$ is exactly the smooth functions on $\RR$ that are constant on $[0,1]$.\\

Now, consider the differential space $(\RR,\mathcal{F})$ where $\mathcal{F}$ is all smooth functions whose derivatives vanish at $0$.  Define $\varphi:X\to\RR$ by $$\varphi(\pi_X(x))=
\begin{cases}
x & \text{if $x<0$,}\\
0 & \text{if $x\in[0,1]$,}\\
x-1 & \text{if $x>1$.}
\end{cases}$$

\begin{lemma}
$\varphi$ is a functional diffeomorphism between $(X,\Phi\mathcal{D}_X)$ and $(\RR,\mathcal{F})$.
\end{lemma}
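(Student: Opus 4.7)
The plan is to verify the three conditions: that $\varphi$ is a bijection, that $\varphi$ is functionally smooth, and that $\varphi^{-1}$ is functionally smooth. Bijectivity is immediate from the piecewise definition (the three pieces $x<0$, $x\in[0,1]$, and $x>1$ partition $\RR$ into equivalence classes of $\sim$, and $\varphi$ sends these to $(-\infty,0)$, $\{0\}$, $(0,\infty)$ respectively). I will make central use of the characterisation established just before the lemma: $\pi_X^*$ identifies $\Phi\mathcal{D}_X$ with the subalgebra of $\CIN(\RR)$ consisting of smooth functions that are constant on $[0,1]$.

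For functional smoothness of $\varphi$, fix $f\in\mathcal{F}$. I need to show $\pi_X^*\varphi^*f=(\varphi\circ\pi_X)^*f$ is smooth on $\RR$ and constant on $[0,1]$. Constancy on $[0,1]$ is immediate since $\varphi\circ\pi_X$ is identically $0$ there. Unwinding the definition, $(\varphi\circ\pi_X)^*f$ equals $f(x)$ for $x<0$, the constant $f(0)$ on $[0,1]$, and $f(x-1)$ for $x>1$. Away from $x=0$ and $x=1$ this is manifestly smooth, so the issue reduces to matching one-sided derivatives at the two junctions. At $x=0$, the left-hand $k$-th derivative is $f^{(k)}(0)$ and the right-hand $k$-th derivative is $0$ (from the constant piece); these agree for every $k\geq 1$ precisely because every function in $\mathcal{F}$ is flat at $0$, and they agree for $k=0$ by continuity. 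A symmetric argument handles $x=1$, using that $f^{(k)}(0)=0$ for $k\geq 1$ again (now coming from the right piece $f(x-1)$ evaluated at $x=1$).

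For functional smoothness of $\varphi^{-1}$, fix $g\in\Phi\mathcal{D}_X$ and set $h:=\pi_X^*g\in\CIN(\RR)$, so $h$ is constant on $[0,1]$. Reading off the definition, $(\varphi^{-1})^*g(y)$ equals $h(y)$ for $y\leq 0$ and $h(y+1)$ for $y\geq 0$; the two formulas agree at $y=0$ since $h(0)=h(1)$. On $(-\infty,0)\cup(0,\infty)$ this function is clearly smooth, so the content is smoothness and flatness at $y=0$. Because $h$ is constant on $[0,1]$, we have $h^{(k)}\equiv 0$ on $(0,1)$, and by continuity of each derivative $h^{(k)}(0)=h^{(k)}(1)=0$ for every $k\geq 1$. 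Consequently, the $k$-th left derivative of $(\varphi^{-1})^*g$ at $0$ equals $h^{(k)}(0)=0$ and the $k$-th right derivative equals $h^{(k)}(1)=0$, for all $k\geq 1$. This simultaneously shows $(\varphi^{-1})^*g$ is $C^\infty$ at $0$ and that all its derivatives of order $\geq 1$ vanish there, so $(\varphi^{-1})^*g\in\mathcal{F}$.

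The main obstacle, and in fact the only nontrivial content of the lemma, is the smooth-matching at the two junctions $x=0,1$ (for $\varphi^*$) and at $y=0$ (for $(\varphi^{-1})^*$). Both matchings succeed for exactly one reason: flatness of the gluing data. Functions in $\mathcal{F}$ are flat at $0$ by definition, and functions in $\pi_X^*\Phi\mathcal{D}_X$ are flat on both sides of $[0,1]$ because they are constant there. This symmetric flatness condition is precisely what makes $\varphi$ a functional diffeomorphism, and it is the conceptual point I would emphasise in the write-up.
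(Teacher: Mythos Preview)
Your proof is correct and follows essentially the same approach as the paper's: both verify functional smoothness of $\varphi$ and $\varphi^{-1}$ by checking that the pullback functions are smooth at the junction points $x=0,1$ (respectively $y=0$) via matching of one-sided derivatives, using flatness of $f\in\mathcal{F}$ at $0$ and flatness of $\pi_X^*g$ at the endpoints of $[0,1]$. The paper carries out the matching via explicit inductive limit computations, whereas you invoke the same principle more concisely; the content is identical.
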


\begin{proof}
First, $\varphi$ is well-defined and a homeomorphism ($X$ being equipped with the quotient topology).  We need to show that it is functionally smooth with functionally smooth inverse.  Fix $f\in\mathcal{F}$.  Then $\pi_X^*\varphi^*f$ is smooth on $\RR\smallsetminus[0,1]$, constant on $[0,1]$, and continuous everywhere.  If we show that it is also smooth at $0$ and $1$, then it will be smooth everywhere, and hence be in the set $\pi_X^*(\Phi\mathcal{D}_X)$.  Since $\pi_X^*$ is an injection, this would imply that $\varphi^*f\in\Phi\mathcal{D}_X$.\\

We show now that $\pi_X^*\varphi^*f$ is smooth at $x=0$.  We start with the left limit.
\begin{align*}
&\limit{x\to0^-}\frac{f\circ\varphi\circ\pi_X(x)-f\circ\varphi\circ\pi_X(0)}{x-0}\\
=&\limit{x\to0^-}\frac{f(x)-f(0)}{x-0}\\
=&f'(0)=0.
\end{align*}
Now for the right limit.
\begin{align*}
&\limit{x\to0^+}\frac{f\circ\varphi\circ\pi_X(x)-f\circ\varphi\circ\pi_X(0)}{x-0}\\
=&\limit{x\to0^+}\frac{f(0)-f(0)}{x-0}\\
=&0.
\end{align*}
Since these two limits agree with $f'(0)$, this proves differentiability at $0$.  Now, assume that the $k^\text{th}$ derivative of $f\circ\varphi\circ\pi_X$ at $0$ exists and is equal to $0$.  Then,
\begin{align*}
&\limit{x\to0^-}\frac{\frac{d^k}{dx^k}\Big|_{t=x}(f\circ\varphi\circ\pi_X(t))-\frac{d^k}{dx^k}\Big|_{t=0}(f\circ\varphi\circ\pi_X(t))}{x-0}\\
=&\limit{x\to0^-}\frac{\frac{d^{k-1}}{dx^{k-1}}\Big|_{t=x}(f'(\varphi(\pi_X(t)))\cdot1)-0}{x-0}\\
=&\limit{x\to0^-}\frac{f^{(k)}(x)-f^{(k)}(0)}{x-0}\\
=&f^{(k+1)}(0)=0.
\end{align*}
Also,
\begin{align*}
&\limit{x\to0^+}\frac{\frac{d^k}{dx^k}\Big|_{t=x}(f\circ\varphi\circ\pi_X(t))-\frac{d^k}{dx^k}\Big|_{t=0}(f\circ\varphi\circ\pi_X(t))}{x-0}\\
=&\limit{x\to0^+}\frac{\frac{d^{k-1}}{dx^{k-1}}\Big|_{t=x}(f'(\varphi(\pi_X(t)))\cdot0)-0}{x-0}\\
=&0.
\end{align*}
Thus, the $k^\text{th}$ derivative of $f\circ\varphi\circ\pi_X$ exists and is equal to $0$.  By mathematical induction, we have smoothness at $0$ with all derivatives vanishing there.  A similar argument holds for $x=1$, and we have that $\varphi$ is functionally smooth.\\

Next, let $g\in\Phi\mathcal{D}_X$.  We want to show that $g\circ\varphi^{-1}$ is smooth on $\RR$ with all derivatives vanishing at $0$.  We know that $g\circ\varphi^{-1}|_{\RR\smallsetminus\{0\}}$ is smooth, so we only need to check $x=0$.  We again check left and right limits.
\begin{align*}
&\limit{x\to0^-}\frac{g\circ\varphi^{-1}(x)-g\circ\varphi^{-1}(0)}{x-0}\\
=&\limit{x\to0^-}\frac{g\circ\pi_X(x)-g\circ\pi_X(0)}{x-0}\\
=&\frac{d}{dx}\Big|_{x=0}(\pi_X^*g)(x)=0.
\end{align*}

\begin{align*}
&\limit{x\to0^+}\frac{g\circ\varphi^{-1}(x)-g\circ\varphi^{-1}(0)}{x-0}\\
=&\limit{x\to0^+}\frac{g\circ\pi_X(x+1)-g\circ\pi_X(1)}{(x+1)-1}\\
=&\frac{d}{dx}\Big|_{x=1}(\pi_X^*g)(x)=0.
\end{align*}
Thus, $g\circ\varphi^{-1}$ is differentiable at $0$.  We apply a similar induction argument as the one above, and obtain smoothness at $0$ with all derivatives vanishing there.  Thus $g\circ\varphi^{-1}\in\mathcal{F}$.
\end{proof}

Finally, consider the map $p:\RR\to\RR$ defined as $$p(x)=
\begin{cases}
x & \text{if $x<0$},\\
2x & \text{if $2x\geq0$}.
\end{cases}$$
For any $f\in\mathcal{F}$, $f\circ p$ is continuous, and smooth on $\RR\smallsetminus\{0\}$.  We claim that it is also smooth at $0$.  Indeed,
\begin{align*}
&\limit{x\to0^-}\frac{f\circ p(x)-f\circ p(0)}{x-0}\\
=&\limit{x\to0^-}\frac{f(x)-f(0)}{x-0}\\
=&f'(0)=0,
\end{align*}
and
\begin{align*}
&\limit{x\to0^+}\frac{f\circ p(x)-f\circ p(0)}{x-0}\\
=&\limit{x\to0^+}\frac{f(2x)-f(2\cdot0)}{x-0}\\
=&2f'(0)=0.
\end{align*}
So, we have differentiability.  Now, assume that $f\circ p$ is $k^\text{th}$ differentiable at $0$, with the $k^\text{th}$ derivative equal to $0$.
\begin{align*}
&\limit{x\to0^-}\frac{\frac{d^k}{dx^k}\Big|_{t=x}(f\circ p(t))-\frac{d^k}{dx^k}\Big|_{t=0}f\circ p(t)}{x-0}\\
=&\limit{x\to0^-}\frac{\frac{d^{k-1}}{dx^{k-1}}\Big|_{t=x}(f'(p(t))\cdot1)-0}{x-0}\\
=&\limit{x\to0^-}\frac{f^{(k)}(x)-f^{(k)}(0)}{x-0}\\
=&f^{(k+1)}(0)=0.
\end{align*}

\begin{align*}
&\limit{x\to0^+}\frac{\frac{d^k}{dx^k}\Big|_{t=x}(f\circ p(t))-\frac{d^k}{dx^k}\Big|_{t=0}f\circ p(t)}{x-0}\\
=&\limit{x\to0^+}\frac{\frac{d^{k-1}}{dx^{k-1}}\Big|_{t=x}(f'(p(t))\cdot2)-0}{x-0}\\
=&\limit{x\to0^+}\frac{2^kf^{(k)}(2x)-2^kf^{(k)}(0)}{x-0}\\
=&2^{k+1}f^{(k+1)}(0)=0.
\end{align*}
By induction, the above two limits show that all derivatives of $f\circ p$ exist at $0$ and vanish there.  Thus $f\circ p$ is smooth, and so $p\in\Pi\mathcal{F}$.  However, $\varphi^{-1}\circ p$ does not lift to a smooth map $q:\RR\to\RR$, so $\varphi^{-1}$ is not diffeologically smooth with respect to $\Pi\mathcal{F}$ and $\mathcal{D}_X$.  We conclude that $\mathcal{D}_X$ is not reflexive.
\end{example}

\begin{example}[$\RR$ Modulo an Open Interval]
Let $Y:=\RR/(0,1)$.  Equip $Y$ with the quotient diffeology, denoted $\mathcal{D}_Y$, and let $\pi_Y:\RR\to Y$ be the quotient map.  Note that the quotient topology on $Y$ is non-Hausdorff: the point $\pi_Y((0,1))$ is open, and its closure is $\pi_Y([0,1])$.  A plot $p:U\to Y$ lifts to a plot $q:U\to\RR$; that is, $p=\pi_Y\circ q$ always for some smooth map $q$ into $\RR$.  Let $X$ be the space $\RR/[0,1]$ as in the previous example.  Define the map $H:Y\to X$ as $H(x)=x$ for all $x\notin\pi_Y([0,1])$, $H(x)=\pi_X([0,1])$ for all $x\in\pi_Y([0,1])$.  We have the following commutative diagram.

$$\xymatrix{
& \RR \ar[dl]_{\pi_X} \ar[dr]^{\pi_Y} & \\
X & & Y \ar[ll]^{H} \\
}$$

By \hypref{l:quotdiffstr}{Lemma}, $\Phi\mathcal{D}_Y$ is the quotient differential structure on $Y$, and so we know that $\pi_Y^*f$ is smooth for all $f\in\Phi\mathcal{D}_Y$.  Moreover, $\pi_Y^*f$ is constant on $[0,1]$.  Also, any smooth function constant on $[0,1]$ descends to a smooth function in the quotient differential structure, and hence in $\Phi\mathcal{D}_Y$.  Fix $g\in\Phi\mathcal{D}_X$.  Then, $\pi_X^*g$ is constant on $[0,1]$ and is smooth, and so descends to a (unique) function $f\in\Phi\mathcal{D}_Y$.  Hence, $H^*$ is well-defined onto $\Phi\mathcal{D}_Y$, and $H$ is smooth.  Also, fix $f\in\Phi\mathcal{D}_Y$.  Then, $\pi_Y^*f$ is in the image of $\pi_X^*$, and so there exists a function $g\in\Phi\mathcal{D}_X$ such that $H^*g=f$.  Hence, $H^*$ is surjective.  If $0=H^*g$, then $g=0$ since $H$ is surjective, and we have that $H^*$ is injective.  In fact, it is an isomorphism between the differential structures $\Phi\mathcal{D}_X$ and $\Phi\mathcal{D}_Y$, even though $H$ is not a functional diffeomorphism, as it is not bijective.\\

Let $p\in\mathcal{D}_Y$.  Then, there is some plot $q$ on $\RR$ such that $p=\pi_Y\circ q$.  But then $$H\circ p=H\circ\pi_Y\circ q=\pi_X\circ q,$$ and so $H$ is also diffeologically smooth.  However, $H$ does not provide a bijection between $\mathcal{D}_Y$ and $\mathcal{D}_X$; indeed, the constant plots into $\pi_Y(0)$, $\pi_Y((0,1))$ and $\pi_Y(1)$ are all mapped to the same plot by $H$.\\

Now, let $p\in\Pi\Phi\mathcal{D}_Y$.  Fix $g\in\Phi\mathcal{D}_X$.  Then since $g\circ H$ is in $\Phi\mathcal{D}_Y$, we have that $g\circ H\circ p$ is smooth. Since $g$ is arbitrary, we have that $H\circ p$ is in $\Pi\Phi\mathcal{D}_X$.  Thus $H$ is also diffeologically smooth between $\Pi\Phi\mathcal{D}_Y$ and $\Pi\Phi\mathcal{D}_X$ as well.\\

Consider the discontinuous map $q:\RR\to\RR$ defined by $$q(x)=
\begin{cases}
x-\frac{1}{2} & \text{if $x\leq\frac{1}{2}$},\\
x+\frac{1}{2} & \text{if $x>\frac{1}{2}$}.
\end{cases}$$
We claim that $\pi_Y\circ q$ is a plot in $\Pi\Phi\mathcal{D}_Y$. We need only to check that $f\circ\pi_Y\circ q$ is smooth at $\frac{1}{2}$ for any $f\in\Phi\mathcal{D}_Y$.  Checking left and right limits will accomplish this.

\begin{align*}
&\limit{x\to\frac{1}{2}^-}\frac{f\circ\pi_Y\circ q(t)-f\circ\pi_Y\circ q(1/2)}{x-1/2}\\
=&\limit{x\to\frac{1}{2}^-}\frac{(\pi_Y^*f)(x-1/2)-(\pi_Y^*f)(0)}{x-1/2}\\
=&\frac{d}{dx}\Big|_{x=0}(\pi_Y^*f)(x)=0,
\end{align*}
using the fact that $\pi_Y^*f$ is constant on $[0,1]$.  As well,
\begin{align*}
&\limit{x\to\frac{1}{2}^+}\frac{f\circ\pi_Y\circ q(t)-f\circ\pi_Y\circ q(1/2)}{x-1/2}\\
=&\limit{x\to\frac{1}{2}^+}\frac{(\pi_Y^*f)(x+1/2)-(\pi_Y^*f)(0)}{x-1/2}\\
=&\limit{x\to\frac{1}{2}^+}\frac{(\pi_Y^*f)(x+1/2)-(\pi_Y^*f)(1)}{x-1/2}\\
=&\frac{d}{dx}\Big|_{x=1}(\pi_Y^*f)(x)=0.
\end{align*}
Thus, $f\circ\pi_Y\circ q$ is differentiable at $x=1/2$, and the derivative vanishes there.  Now, assuming that the $k^\text{th}$ derivative exists and vanishes as well at $1/2$,
\begin{align*}
&\limit{x\to\frac{1}{2}^-}\frac{\frac{d^k}{dx^k}\Big|_{t=x}\left(f\circ\pi_Y\circ q(t)\right)-\frac{d^k}{dx^k}\Big|_{t=1/2}\left(f\circ\pi_Y\circ q(t)\right)}{x-1/2}\\
=&\limit{x\to\frac{1}{2}^-}\frac{\frac{d^{k-1}}{dx^{k-1}}\Big|_{t=x}\left(\frac{d}{dx}\Big|_{u=t-1/2}(\pi_Y^*f(u))\cdot1\right)-0}{x-1/2}\\
=&\limit{x\to\frac{1}{2}^-}\frac{\frac{d^k}{dx^k}\Big|_{t=x}(\pi_Y^*f(t-1/2))-\frac{d^k}{dx^k}\Big|_{t=0}(\pi_Y^*f)(t)}{x-1/2}\\
=&\frac{d^{k+1}}{dx^{k+1}}\Big|_{x=0}(\pi_Y^*f)(x)=0,
\end{align*}
and a similar argument works for the right limit.  Thus, $f\circ\pi_Y\circ q$ is smooth, and so $\pi_Y\circ q\in\Pi\Phi\mathcal{D}_Y$.  However, $\pi_Y\circ q$ has no lift to a continuous map into $\RR$, let alone smooth.  Thus, $\mathcal{D}_Y$ is not a reflexive diffeology.
\end{example}

\begin{example}[The Rational Numbers]
Consider $\QQ\subset\RR$ with its subspace differential structure.  By definition, any function in this differential structure locally extends to a smooth function on $\RR$.  Note that this includes the restriction of functions such as $$f(x)=\frac{1}{x-\sqrt{2}}.$$  Now, $\Pi\CIN(\QQ)$ consists solely of constant maps into $\QQ$.  And so $\Phi\Pi\CIN(\QQ)$ is the set of \emph{all} functions $f:\QQ\to\RR$, as $f\circ p$ is constant for all $p\in\Pi\CIN(\QQ)$.  Thus $\CIN(\QQ)$ is not reflexive.
\end{example}

\begin{example}[The Irrational Torus]
Fix an irrational number $\alpha$.  Let $X$ be the quotient $\RR/(\ZZ+\alpha\ZZ)$, equipped with its quotient diffeology $\mathcal{D}$.  This is the set of equivalence classes where $x\sim y$ if there exist integers $m$ and $n$ such that $x=y+m+\alpha n$.  Then $\Phi\mathcal{D}$ is equal to all constant functions: $\Phi\mathcal{D}\cong\RR$.  Hence, $\Pi\Phi\mathcal{D}$ is the set of \emph{all} maps into $X$, as the composition of such a map with any of the functions in $\Phi\mathcal{D}$ is constant, which is smooth.  Hence $\mathcal{D}$ is not reflexive.  (See \cite{iglesias}, Exercise 4 with solution at the back of the book.)
\end{example}

\begin{example}[$(\RR,C^k(\RR))$]
Consider the real line $\RR$ equipped with the differential structure consisting of all $k$-differentiable functions $C^k(\RR)$ with respect to the Euclidean topology on $\RR$ ($k$ finite).  We claim that this space is not reflexive.  Let $c\in\Gamma C^k(\RR)$.  Then, $c$ is a smooth map into $\RR$ in the usual sense since the identity map is in $C^k(\RR)$.\\

Assume that $c\in\Gamma C^k(\RR)$ is a non-constant curve with domain an open interval $I\subseteq\RR$.  Then there is some $t\in I$ such that $c'(t)\neq0$.  But then there is an open interval $J\subseteq I$ about $t$ such that $c|_J$ is a diffeomorphism.  Take any $f\in C^k(\RR)$ that is not smooth on $c(J)$.  Then $f\circ c$ is not smooth on $J$, which contradicts our assumption that $c\in\Gamma C^k(\RR)$.  Thus, all curves in $\Gamma C^k(\RR)$ are constant.  But then $\Phi\Gamma C^k(\RR)$ is the family of all real-valued functions on $\RR$, and not just continuous ones.\\

Since $(\RR,\Gamma C^k(\RR),C^k(\RR))$ is not Fr\"olicher, we conclude that $(\RR, C^k(\RR))$ is not reflexive.
\end{example} 
\chapter[Forms on Geometric and Symplectic Quotients]{Diffeological Forms on Geometric and Symplectic Quotients}\labell{ch:forms}

Let $G$ be a compact Lie group, and let $P\to B$ be a principal $G$-bundle.  Then it is known (see \cite{koszul53}) that the de Rham complex of differential forms on $B$ is isomorphic to the complex of basic differential forms on $P$, defined below.  In fact, $G$ in general does not need to be compact; we only require that the action on $P$ be proper and free (see \cite{palais61}).  In the case that the Lie group is compact but the action is not necessarily free, then the quotient is not a manifold in general.  However, we show that diffeological differential forms on the quotient are isomorphic to basic differential forms upstairs.\\

In the case of a Hamiltonian group action, if $0$ is a regular value of the momentum map, and the group action is free when restricted to the zero level of the momentum map, then the quotient of the zero level, called the symplectic quotient, is known to be a symplectic manifold (see \cite{marsden-weinstein76}, \cite{meyer73}).  When $0$ is not a regular value, and/or when the action of the Lie group on the zero level is not free, then this symplectic quotient need not be a manifold.  It is shown in \cite{lerman-sjamaar91} that it is a symplectic stratified space.  In \cite{sjamaar05}, Sjamaar defines a de Rham complex on this space.  However this definition is not intrinsic to the symplectic quotient, as it depends on the Hamiltonian action on the original manifold.  In the case that $0$ is a regular value, we apply the theory described above to the symplectic quotient, and achieve an isomorphism between Sjamaar's de Rham complex, and that of diffeological differential forms.\\

We also look at reduction in stages, and show that the diffeological structures on the resulting symplectic quotients given by reduction in stages are diffeomorphic.  This allows one to use Sjamaar's de Rham complex in conjunction with reduction in stages.\\

In the last section, we compare diffeological differential forms on orbifolds to the classical definition (see \cite{satake56}, \cite{satake57}, \cite{haefliger84}, \cite{alr07}).\\

\section{The Geometric Quotient}

\begin{definition}[Differential Forms]
Let $(X,\mathcal{D})$ be a diffeological space.  A \emph{ (diffeological) differential $k$-form} $\alpha$ on $X$ is an assignment to each plot $(p:U\to X)\in\mathcal{D}$ a differential $k$-form $\alpha(p)\in\Omega^k(U)$ satisfying for every open set of Euclidean space $V$ and every smooth map $F:V\to U$ $$\alpha(p\circ F)=F^*(\alpha(p)).$$  This latter condition is called \emph{smooth compatibility}.  Denote the set of differential forms by $\Omega^k(X)$.
\end{definition}

\begin{definition}[Wedge Product]
Let $(X,\mathcal{D})$ be a diffeological space, and let $\alpha\in\Omega^k(X)$ and $\beta\in\Omega^l(X)$.  Then define the \emph{wedge product} of $\alpha$ and $\beta$, denoted $\alpha\wedge\beta$, to be the $(k+l)$-form defined by $$(\alpha\wedge\beta)(p)=\alpha(p)\wedge\beta(p)$$ for all plots $p\in\mathcal{D}$.  Then $\Omega^*(X)=\bigoplus_{k=0}^\infty\Omega^k(X)$ is an exterior algebra.
\end{definition}

\begin{definition}[Pullback Map]
Let $X$ and $Y$ be diffeological spaces, and let $F:X\to Y$ be a diffeologically smooth map between them.  Let $\alpha$ be a differential $k$-form on $Y$.  Then define the \emph{pullback} $F^*\alpha$ to be the $k$-form on $X$ satisfying: for every plot $p:U\to X$, $$(F^*\alpha)(p)=\alpha(F\circ p).$$
\end{definition}

\begin{remark}
On open subsets of Euclidean space, diffeological differential forms can be identified with the usual notion of differential forms.  Indeed, the smooth compatibility condition in the definition of a diffeological differential form is the usual transformation rule for transition functions in the traditional definition of a differential form.  Due to this identification, if $(X,\mathcal{D})$ is a diffeological space and $(p:U\to X)\in\mathcal{D}$, then for any differential form $\alpha$ on $X$, we have $\alpha(p)=p^*\alpha$.  In fact, we will use this notation henceforth instead of $\alpha(p)$.
\end{remark}

\begin{definition}[Exterior Derivative of Diffeological Forms]
Let $(X,\mathcal{D})$ be a diffeological space, and let $\alpha$ be a $k$-form on it.  Define the \emph{exterior derivative} of $\alpha$, denoted $d\alpha$, by $$p^*(d\alpha)=d(p^*\alpha)$$ for any plot $p\in\mathcal{D}$.  The exterior derivative commutes with pullback, and all of the usual formulae involving pullbacks, the exterior derivative, and the wedge product hold.  We thus have the de Rham complex $(\Omega^*(X),d)$.
\end{definition}

\begin{proposition}[Injectivity of Pullbacks via Quotient Maps]\labell{p:injectivity}
Let $X$ be a diffeological space and let $\sim$ be an equivalence relation on it.  Then the quotient map $\pi:X\to X/\!\!\sim$ induces an injection $\pi^*:\Omega^k(X/\!\!\sim)\to\Omega^k(X)$.
\end{proposition}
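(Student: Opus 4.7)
The plan is to unravel the definitions and exploit the local factorisation property that defines the quotient diffeology. Suppose $\alpha \in \Omega^k(X/\!\!\sim)$ satisfies $\pi^*\alpha = 0$. We want to show that for every plot $p : U \to X/\!\!\sim$ the form $p^*\alpha \in \Omega^k(U)$ is zero. Since vanishing of an ordinary differential form on an open subset of Euclidean space is a local condition, it suffices to show that $p^*\alpha$ vanishes on a neighbourhood of each $u \in U$.

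Fix $u \in U$. By definition of the quotient diffeology on $X/\!\!\sim$, there exist an open neighbourhood $V \subseteq U$ of $u$ and a plot $q : V \to X$ of $X$ such that $p|_V = \pi \circ q$. Apply smooth compatibility for $\alpha$ to the smooth inclusion $\iota : V \hookrightarrow U$: this gives $(p \circ \iota)^*\alpha = \iota^*(p^*\alpha)$, i.e. $(p|_V)^*\alpha = (p^*\alpha)|_V$. On the other hand, using $p|_V = \pi \circ q$ together with the definition of pullback by $\pi$, we have
\[
(p|_V)^*\alpha = (\pi \circ q)^*\alpha = q^*(\pi^*\alpha) = q^*0 = 0.
\]
Combining the two gives $(p^*\alpha)|_V = 0$.

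Since $u \in U$ was arbitrary, $p^*\alpha = 0$ on all of $U$. Since $p$ was an arbitrary plot of $X/\!\!\sim$, this means $\alpha$ assigns the zero form to every plot, so $\alpha = 0$. Hence $\pi^*$ is injective. The only genuinely conceptual step is step two: the use of local factorisation through $\pi$ that is built into the definition of the quotient diffeology; everything else is formal manipulation of the pullback and smooth compatibility axiom, so I do not expect any obstacle.
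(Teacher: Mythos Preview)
Your proof is correct and follows essentially the same approach as the paper's own proof: both use the local factorisation of plots on $X/\!\!\sim$ through $\pi$ to reduce the vanishing of $\alpha$ on an arbitrary plot to the assumed vanishing of $\pi^*\alpha$ on plots of $X$. Your version is simply more explicit about the role of smooth compatibility in passing from $(p^*\alpha)|_V$ to $(p|_V)^*\alpha$, which the paper leaves implicit.
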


\begin{proof}
Assume that $\alpha=\pi^*\beta$ for some $\beta\in\Omega^k(X/\!\!\sim)$, and that $\alpha=0$.  We want to show that $\beta=0$.  Let $p:U\to X$ be a plot.  Then $0=p^*\alpha=p^*\pi^*\beta$.  But since all plots on $X/\!\!\sim$ are locally of the form $\pi\circ p$ with $p$ a plot of $X$, we see that $\beta=0$.
\end{proof}

Now, let $G$ be a Lie group acting on a manifold $M$.  A form $\alpha$ is \emph{horizontal} if for any $x\in M$ and $v\in T_x(G\cdot x)$, we have $$v\hook\alpha=0.$$  $\alpha$ is \emph{basic} if it is both $G$-invariant and horizontal.  Basic forms form a subcomplex of the de Rham complex on $M$ (see \cite{koszul53}).  By \hypref{p:injectivity}{Proposition}, we have an injective map $\pi^*:\Omega^k(M/G)\to\Omega^k(M)$.  We now strive to show that the image of this map is contained in $\Omega^k_{basic}(M)$, the subcomplex of basic forms.  To this end, we first prove some lemmas.\\

Fix $x\in M$.  Let $A_x:G\to M$ be the map given by $A_x(g)=g\cdot x$.

\begin{lemma}
$A_x$ is smooth for all $x\in M$.
\end{lemma}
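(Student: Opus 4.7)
The plan is to observe that $A_x$ is simply the composition of two manifestly smooth maps, so smoothness is essentially immediate from the hypothesis that the $G$-action on $M$ is smooth.

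First I would factor $A_x$ through the product $G \times M$. Let $\Phi: G \times M \to M$ denote the action map $\Phi(g,y) = g \cdot y$, which is smooth by hypothesis. Let $\iota_x: G \to G \times M$ be the map $\iota_x(g) = (g, x)$. Then $\iota_x$ is smooth, since its first component is the identity on $G$ and its second component is the constant map at $x$, and a map into a product manifold is smooth if and only if each component is smooth. Finally $A_x = \Phi \circ \iota_x$, so $A_x$ is smooth as the composition of two smooth maps between manifolds.

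The proof is essentially a one-line diagram chase, so there is no real obstacle; the statement is recorded as a lemma only because it will be invoked repeatedly in the subsequent arguments showing that $\pi^*\Omega^k(M/G)$ lands in the basic forms. If one wished, one could equivalently argue diffeologically: the map $g \mapsto (g,x)$ is a plot of $G \times M$ in the product diffeology (its projections are a plot of $G$ and a constant plot of $M$), and $\Phi$ is diffeologically smooth on standard diffeologies since it is smooth on manifolds, so $A_x = \Phi \circ \iota_x$ is a plot of $M$; but this is really the same argument.
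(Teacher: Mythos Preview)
Your proof is correct and takes essentially the same approach as the paper: both argue that $A_x$ arises from the smooth action map $G\times M\to M$ by restricting along the submanifold $G\times\{x\}$, which is exactly your composition $\Phi\circ\iota_x$. The paper phrases it as restriction to a submanifold rather than composition with an inclusion, but the content is identical.
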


\begin{proof}
We already know that the map $G\times M\to M:(g,y)\mapsto g\cdot y$ is smooth, by definition of a smooth action.  But $A_x$ is just a restriction of this map to the submanifold $G\times\{x\}$, and so it itself is smooth.
\end{proof}

Fix $x\in M$. Let $p:U\to M$ be a plot that factors as $p=A_x\circ q$ where $q:U\to G$ is a plot.  Let $j:\{*\}\to M/G$ be the map sending $*$ to $\pi(x)$.  Let $c:G\to\{*\}$ be the constant map.  Then, $j$ and $c$ are smooth, and we have the following commutative diagram of diffeologically smooth maps.
$$\xymatrix{
U \ar[r]_q \ar@/^1pc/[rr]^p & G \ar[r]_{A_x} \ar[d]^{c} & M \ar[d]_{\pi} \\
 & \{*\} \ar[r]_{j} & M/G \\
}$$

\begin{lemma}\labell{l:actionmap}
Let $\alpha=\pi^*\beta$ for some $\beta\in\Omega^k(M/G)$, where $k>0$.  Then, if $p$ is as in the lemma above, we have $p^*\alpha=0$.
\end{lemma}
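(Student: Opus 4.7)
The plan is to exploit the factorization encoded in the commutative diagram preceding the lemma. Since $\pi\circ p = \pi\circ A_x\circ q = j\circ c\circ q$, functoriality of pullback gives
$$p^*\alpha = p^*\pi^*\beta = (\pi\circ p)^*\beta = q^*c^*j^*\beta.$$
Thus it suffices to show that $j^*\beta$ vanishes in $\Omega^k(\{*\})$ for $k>0$, since then $c^*j^*\beta=0$ and hence $q^*c^*j^*\beta=0$. In other words, the entire argument reduces to the conceptually clear statement that a diffeological $k$-form on a one-point space is zero whenever $k>0$.

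To verify this reduction, I would argue as follows. Let $\gamma\in\Omega^k(\{*\})$. Any parametrisation of $\{*\}$ is constant, hence automatically a plot of the unique diffeology on the singleton. For a plot $f\colon V\to\{*\}$ with $V\subseteq\RR^n$ open, pick any $v_0\in V$ and let $r_{v_0}\colon V\to V$ be the constant map sending everything to $v_0$. Since $f$ is itself constant, $f=f\circ r_{v_0}$, and smooth compatibility of diffeological forms gives $f^*\gamma=r_{v_0}^*(f^*\gamma)$. But $r_{v_0}$ has rank zero as a smooth map between open subsets of $\RR^n$, so pulling back any ordinary $k$-form with $k>0$ through it gives zero. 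Therefore $f^*\gamma=0$ for every plot $f$ of $\{*\}$, so $\gamma=0$.

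Applying this with $\gamma=j^*\beta$ completes the proof: $j^*\beta=0$, and consequently $p^*\alpha=q^*c^*j^*\beta=0$. The main subtlety, if any, is simply recognising that smooth compatibility of diffeological differential forms already forces the pullback to vanish along constant plots in positive degree; once this is noted, the proof is a direct chase around the diagram.
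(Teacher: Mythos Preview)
Your proof is correct and follows exactly the approach the paper intends: the paper's proof is the single line ``This is immediate from the commutative diagram above,'' and you have simply unpacked that immediacy by chasing the factorisation $\pi\circ p = j\circ c\circ q$ and verifying explicitly that $\Omega^k(\{*\})=0$ for $k>0$.
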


\begin{proof}
This is immediate from the commutative diagram above.
\end{proof}

\begin{proposition}[Pullbacks from the Geometric Quotient are Basic]\labell{p:pullbacksarebasic}
Let $\alpha=\pi^*\beta$ for some $\beta\in\Omega^k(M/G)$.  Then $\alpha$ is basic.
\end{proposition}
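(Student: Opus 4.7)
I would split the argument into verifying $G$-invariance and horizontality. The $G$-invariance is essentially formal: since $\pi \circ g = \pi$ for every $g \in G$, one has
\[
g^*\alpha = g^*\pi^*\beta = (\pi \circ g)^*\beta = \pi^*\beta = \alpha,
\]
which I would unpack at the level of diffeological forms by applying both sides to an arbitrary plot $p : U \to M$ and using $\pi \circ g \circ p = \pi \circ p$.

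For horizontality, the plan is to produce, given $x \in M$, a vector $v_1 = \xi_M|_x \in T_x(G\cdot x)$, and arbitrary $v_2, \ldots, v_k \in T_x M$, a single plot whose first-order data at a point realises $(v_1, \ldots, v_k)$ and whose pullback of $\alpha$ is visibly zero. First, I would choose an auxiliary plot $q : V \to M$ with $V \subseteq \RR^{k-1}$ open, $q(0) = x$, and $\partial q/\partial u_i|_0 = v_{i+1}$ for $i = 1, \ldots, k-1$, which exists because $M$ is a manifold. Then I would define
\[
p : \RR \times V \to M, \qquad p(t, u) := \exp(t\xi)\cdot q(u),
\]
which is smooth (since the action is smooth) and satisfies $p(0,0) = x$ with partial derivatives at $(0,0)$ exactly $v_1, v_2, \ldots, v_k$.

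The key observation is that $\pi \circ p(t, u) = \pi(q(u))$ is independent of $t$, since $\exp(t\xi)\cdot q(u)$ and $q(u)$ lie in the same $G$-orbit. Hence $\pi \circ p = \pi \circ q \circ \pr_V$, with $\pr_V : \RR \times V \to V$ the projection, and therefore
\[
p^*\alpha = (\pi \circ p)^*\beta = \pr_V^* \, q^*(\pi^*\beta).
\]
But $q^*(\pi^*\beta)$ is a $k$-form on $V \subseteq \RR^{k-1}$, which must vanish for dimensional reasons; thus $p^*\alpha = 0$. Evaluating on $(\partial_t, \partial_{u_1}, \ldots, \partial_{u_{k-1}})$ at $(0,0)$ then yields $\alpha_x(v_1, \ldots, v_k) = 0$, and since $v_2, \ldots, v_k$ were arbitrary, $v_1 \hook \alpha|_x = 0$ as needed.

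I do not expect any serious obstacle. The argument hinges on packaging one orbit direction and $k-1$ arbitrary tangent directions into a single plot that, after composing with $\pi$, factors through a space of dimension $k-1$, forcing the $k$-form $p^*\alpha$ to vanish. The construction of the auxiliary plot $q$ realising prescribed first-order data at the origin is the only step requiring care, and it is immediate from the manifold structure on $M$.
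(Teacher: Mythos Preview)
Your proof is correct. Both you and the paper handle $G$-invariance the same way, via $\pi\circ g=\pi$. For horizontality the approaches diverge: the paper asserts that it suffices to show the pullback of $\alpha$ to each orbit $G\cdot x$ vanishes, and verifies this by factoring $\pi\circ A_x$ (with $A_x(g)=g\cdot x$) through a one-point space. You instead package one orbit direction together with $k-1$ arbitrary tangent vectors into a single plot $p(t,u)=\exp(t\xi)\cdot q(u)$; since $\pi\circ p$ factors through $V\subseteq\RR^{k-1}$, the $k$-form $p^*\alpha$ vanishes for dimensional reasons, yielding $\xi_M|_x\hook\alpha=0$ against \emph{arbitrary} vectors of $T_xM$. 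This is genuinely stronger than what the paper checks: vanishing of the pullback to every orbit does not by itself imply horizontality (take $dx\wedge dy$ on $\RR^2$ with $\RR$ acting by $x$-translation; it restricts to zero on each one-dimensional orbit yet $\partial_x\hook(dx\wedge dy)=dy\neq 0$), so your construction in fact supplies the step the paper's sketch leaves unjustified.
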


\begin{proof}
By \hypref{p:functions}{Proposition}, the case where $k=0$ is already done.  Assume $k>0$.  We first show that $\alpha$ is $G$-invariant.  Let $g\in G$ and let $p$ be any plot.  Then,
\begin{align*}
p^*g^*\alpha=&p^*g^*\pi^*\beta\\
=&p^*\pi^*\beta\\
=&p^*\alpha.
\end{align*}

Next, we show that $\alpha$ is horizontal.  It is enough to prove that for any $x\in M$, the pullback of $\alpha$ to the submanifold $G\cdot x$ vanishes.  Let $H$ be the stabiliser of $x$.  Then we identify the orbit $G\cdot x$ with $G/H$.

$$\xymatrix{
G \ar[r]^{A_x} \ar[d] & M \\
G/H \ar[r]_{\cong} & G\cdot x \ar[u] \\
}$$

By \hypref{p:injectivity}{Proposition} it is sufficient to show that the pullback of $\alpha$ to $G/H$ pulled back further via the quotient map $G\to G/H$ vanishes.  But note that this is equivalent to showing that the pullback of $\alpha$ by $A_x$ vanishes.  Let $q:U\to G$ be a plot of $G$.  Then $A_x\circ q$ is a plot of $M$ since $A_x$ is smooth.  By \hypref{l:actionmap}{Lemma} $(A_x\circ q)^*\alpha$ vanishes.  This completes the proof.
\end{proof}

\section{Basic Forms and the Geometric Quotient}

We begin with some tools that we will need later on.

\begin{proposition}[Characterisation of the Image of the Pullback Map]\labell{p:pullbackimgchar}
Let $X$ be a diffeological space, $\sim$ an equivalence relation on $X$, and $\pi:X\to X/\!\!\sim$ the quotient map.  Then a differential form $\alpha$ on $M$ is in the image of $\pi^*$ if and only if, for every two plots $p_1:U\to M$ and $p_2:U\to M$ such that $\pi\circ p_1=\pi\circ p_2$, we have  $$p_1^*\alpha=p_2^*\alpha.$$
\end{proposition}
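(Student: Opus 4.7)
The plan is to prove both directions, with the forward direction being a short computation and the reverse direction being the substantive one, where we must construct $\beta$ on $X/\!\!\sim$ from $\alpha$ by using the given compatibility condition to ensure well-definedness.

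For the forward direction, suppose $\alpha = \pi^*\beta$. Given plots $p_1, p_2 : U \to X$ with $\pi\circ p_1 = \pi\circ p_2$, we simply compute
\[
p_i^*\alpha \;=\; p_i^*\pi^*\beta \;=\; (\pi\circ p_i)^*\beta,
\]
and the right-hand sides agree by hypothesis.

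For the reverse direction, I would construct $\beta \in \Omega^k(X/\!\!\sim)$ as follows. Given a plot $q : V \to X/\!\!\sim$ and a point $v \in V$, by definition of the quotient diffeology there exist an open neighbourhood $W \subseteq V$ of $v$ and a plot $p : W \to X$ with $q|_W = \pi \circ p$. I would then declare
\[
(q^*\beta)|_W := p^*\alpha.
\]
The first key step is well-definedness: if $p' : W \to X$ is a second such lift over the same $W$, then $\pi\circ p = \pi\circ p'$, so by hypothesis $p^*\alpha = (p')^*\alpha$. More generally, if $(W_1, p_1)$ and $(W_2, p_2)$ are two local lifts of $q$, then on $W_1 \cap W_2$ the restrictions $p_1|_{W_1\cap W_2}$ and $p_2|_{W_1\cap W_2}$ have the same image under $\pi$, hence define the same pullback of $\alpha$. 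Thus the locally defined forms $p^*\alpha$ glue to give a single form on $V$, which I name $q^*\beta$.

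The second step is smooth compatibility. Given a smooth map $F : V' \to V$ between open subsets of Euclidean spaces, I need $(q\circ F)^*\beta = F^*(q^*\beta)$. Working locally near a point of $V'$, choose $W \subseteq V$ and a lift $p : W \to X$ as above; then $p \circ F|_{F^{-1}(W)}$ is a plot of $X$ lifting $q \circ F|_{F^{-1}(W)}$, so
\[
(q\circ F)^*\beta \;=\; (p\circ F)^*\alpha \;=\; F^*(p^*\alpha) \;=\; F^*(q^*\beta)
\]
on $F^{-1}(W)$, using naturality of ordinary pullback on Euclidean domains. Since smoothness and equality of forms are local on $V'$, this suffices, and $\beta$ is a bona fide element of $\Omega^k(X/\!\!\sim)$. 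Finally, $\pi^*\beta = \alpha$ because for any plot $p : U \to X$ the plot $p$ itself is a lift of $\pi \circ p$, so $p^*(\pi^*\beta) = (\pi\circ p)^*\beta = p^*\alpha$; applying this for all plots gives equality of the forms on $X$.

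The main obstacle will be the well-definedness/gluing step, since this is precisely where the stated compatibility hypothesis does its work; once that is in hand, the smooth compatibility check and the verification that $\pi^*\beta = \alpha$ are essentially formal consequences of the construction together with \hypref{p:injectivity}{Proposition}, which assures us that $\beta$ is unique.
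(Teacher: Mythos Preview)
Your proof is correct and complete. The paper does not actually supply a proof of this proposition; it simply cites section~6.38 of \cite{iglesias}. Your argument is the standard one (and is essentially what one finds in that reference): the forward direction is immediate, and for the reverse you construct $\beta$ plot-by-plot using local lifts through $\pi$, with the hypothesis guaranteeing that the local definitions are independent of the choice of lift and hence glue. The verification of smooth compatibility and of $\pi^*\beta=\alpha$ are then formal. One minor remark: the reference to \hypref{p:injectivity}{Proposition} at the end is not strictly needed, since you have already exhibited $\beta$ explicitly and shown $\pi^*\beta=\alpha$; injectivity only tells you that no \emph{other} $\beta$ could work, which is uniqueness rather than existence.
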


\begin{proof}
See section 6.38 of \cite{iglesias}.
\end{proof}

\begin{lemma}\labell{l:finiteunion}
Let $U\subseteq\RR^n$ be an open set, and let $C_1,...,C_k\subseteq U$ be closed subsets of $U$ such that $$U=\bigcup_{i=1}^kC_i.$$  Then the set $W:=\bigcup_{i=1}^k\inter(C_i)$ is open and dense in $U$.
\end{lemma}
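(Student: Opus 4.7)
Openness of $W$ is immediate since it is a union of open sets, so the content of the lemma is density. The plan is to reduce density to the Baire category theorem: any nonempty open subset of $\RR^n$ is a locally compact Hausdorff space and hence a Baire space, so it cannot be written as a finite union of closed subsets each having empty interior.

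Concretely, I would fix a nonempty open set $V\subseteq U$ and show $V\cap W\neq\emptyset$. Since
$$V=\bigcup_{i=1}^k(V\cap C_i),$$
and each $V\cap C_i$ is closed in the subspace $V$, Baire's theorem applied to the Baire space $V$ produces an index $j$ for which $V\cap C_j$ has nonempty interior relative to $V$. That is, there is a nonempty open set $V'\subseteq V$ with $V'\subseteq C_j$. Since $V$ is open in $\RR^n$, $V'$ is also open in $\RR^n$, and being contained in $C_j$ it is contained in $\inter(C_j)\subseteq W$. Hence $V\cap W\supseteq V'\neq\emptyset$, proving density.

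There is no real obstacle here: the only point that needs attention is the passage between relative and absolute interiors, which works because $V$ is open in $\RR^n$, so any subset of $V$ which is open in the subspace topology on $V$ is also open in $\RR^n$. An alternative, if one wishes to avoid invoking Baire, would be induction on $k$: for $k=1$ the set $C_1=U$ is itself open so $W=U$; for the inductive step apply the result for $k-1$ to the open set $U\smallsetminus C_k$ and the closed sets $C_i\cap(U\smallsetminus C_k)$, and combine with $\inter(C_k)\subseteq W$. Either route is short; I would choose the Baire argument for conceptual clarity.
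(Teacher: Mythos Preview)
Your argument is correct and essentially identical to the paper's: both fix a nonempty open $V\subseteq U$, write $V=\bigcup_i(C_i\cap V)$, and invoke the Baire category theorem to find some $C_j\cap V$ with nonempty interior in $V$, hence in $\RR^n$. Your write-up is actually a bit more careful than the paper's in explicitly handling the passage from relative to absolute interior.
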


\begin{proof}
It is clear that $W$ is open.  To check density, we want to show that for any $u\in U$ and any open neighbourhood $V\subseteq U$ of $u$, the intersection $W\cap V$ is nonempty. To this end, fix $u$ and $V$.  Then, $$V=\bigcup_{i=1}^k(C_i\cap V).$$  $V$ is open, and so it cannot be the finite union of nowhere dense subsets.  Hence, there is some $i=1,...,k$, such that $C_i\cap V$ is nowhere dense.  That is, the interior of the closure of $C_i$ in $V$ is nonempty.  But $C_i$ is closed, and so there is some $i=1,...,k$ such that $\inter(C_i\cap V)$ is nonempty.  Thus, $V\cap W\neq\emptyset$.
\end{proof}

Let $G$ be a compact Lie group and let $M$ be a manifold on which $G$ acts, with $\pi:M\to M/G$ the orbit map. The purpose of this section is to show that $\pi^*$ is an isomorphism of complexes from differential forms on $M/G$ to basic differential forms on $M$.  We begin by proving the result for the case of finite groups.  Note that for finite groups, basic $k$-forms are simply $G$-invariant $k$-forms, as the tangent space to a $G$-orbit at any point is trivial.

\begin{proposition}[Finite Group Case]\labell{p:finite}
Let $G$ be a finite group acting on a manifold $M$.  If $\alpha$ is an invariant $k$-form, then there is some $k$-form $\beta$ on $M/G$ such that $\pi^*\beta=\alpha$.
\end{proposition}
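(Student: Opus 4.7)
The plan is to use Proposition \ref{p:pullbackimgchar} as the main reduction: it suffices to show that whenever $p_1,p_2:U\to M$ are two plots satisfying $\pi\circ p_1=\pi\circ p_2$, we have $p_1^*\alpha=p_2^*\alpha$. Once this is established, the proposition gives us a diffeological $k$-form $\beta$ on $M/G$ with $\pi^*\beta=\alpha$.

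Fix such plots $p_1,p_2:U\to M$. For each $g\in G$, define
\[
C_g:=\{u\in U\mid p_2(u)=g\cdot p_1(u)\}.
\]
Since $g$ acts as a diffeomorphism of $M$, the map $u\mapsto (p_2(u),\,g\cdot p_1(u))$ is continuous into $M\times M$, so $C_g$ is the preimage of the diagonal and hence closed in $U$. Because $\pi\circ p_1=\pi\circ p_2$ and $G$ is finite, for every $u\in U$ there exists (at least one) $g\in G$ with $p_2(u)=g\cdot p_1(u)$; thus $U=\bigcup_{g\in G}C_g$, a \emph{finite} closed cover.

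Now I invoke Lemma \ref{l:finiteunion} to conclude that $W:=\bigcup_{g\in G}\inter(C_g)$ is open and dense in $U$. On each open piece $\inter(C_g)$ we have the genuine equality of plots $p_2|_{\inter(C_g)}=g\cdot p_1|_{\inter(C_g)}$, so using $G$-invariance of $\alpha$,
\[
p_2^*\alpha\big|_{\inter(C_g)}=(g\cdot p_1|_{\inter(C_g)})^*\alpha=(p_1|_{\inter(C_g)})^*g^*\alpha=p_1^*\alpha\big|_{\inter(C_g)}.
\]
Therefore $p_1^*\alpha$ and $p_2^*\alpha$ agree on the dense open subset $W\subseteq U$. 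But $p_1^*\alpha$ and $p_2^*\alpha$ are ordinary smooth $k$-forms on the open set $U\subseteq\RR^n$ (this is the content of $\alpha$ being a diffeological form), and two smooth forms that coincide on a dense subset coincide everywhere by continuity of their coefficient functions. Hence $p_1^*\alpha=p_2^*\alpha$ on all of $U$, finishing the verification of the hypothesis of Proposition \ref{p:pullbackimgchar}.

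The only subtle point is the passage from agreement on $W$ to agreement on $U$: it relies on the fact that being a smooth form on $U$ is a pointwise condition on smooth coefficient functions, and smooth functions are determined by their values on a dense subset. The finiteness of $G$ is essential, both to ensure $\{C_g\}$ is a \emph{finite} closed cover (so Lemma \ref{l:finiteunion} applies) and because horizontality plays no role here --- for finite groups orbits are discrete, so invariance alone equals basicness.
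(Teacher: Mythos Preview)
Your proof is correct and follows essentially the same argument as the paper's: both reduce via Proposition~\ref{p:pullbackimgchar}, define the closed sets $C_g$, apply Lemma~\ref{l:finiteunion} to get a dense open set on which the pullbacks agree by $G$-invariance, and then pass to all of $U$ by continuity. The only cosmetic difference is that the paper handles the case $k=0$ separately by citing Proposition~\ref{p:functions}, whereas your argument covers all $k$ uniformly.
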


\begin{proof}
If $k=0$, then by \hypref{p:functions}{Proposition} we are done.  Assume $k>0$.  Let $\alpha$ be an invariant $k$-form on $M$.  By \hypref{p:pullbackimgchar}{Proposition}, it is enough to show the following: if $p_1:U\to M$ and $p_2:U\to M$ are plots such that $\pi\circ p_1=\pi\circ p_2$, then $p_1^*\alpha=p_2^*\alpha$.  Fix two such plots $p_1:U\to M$ and $p_2:U\to M$.  For each $g\in G$ let $$C_g:=\{u\in U~|~g\cdot p_1(u)=p_2(u)\}.$$  By continuity, $C_g$ is closed for each $g$.  By our assumption on $p_1$ and $p_2$, $$U=\bigcup_{g\in G}C_g.$$  By \hypref{l:finiteunion}{Lemma}, the set $\bigcup_{i=1}^k\inter(C_g)$ is open and dense in $U$.  Thus, it is enough to show $p_1^*\alpha=p_2^*\alpha$ on $\inter(C_g)$ for each $g\in G$ such that the interior of $C_g$ is nonempty.\\

Fix such a $g\in G$.  Since $g\circ p_1|_{\inter(C_g)}=p_2|_{\inter(C_g)}$, we have that $$(p_1^*g^*\alpha)|_{\inter(C_g)}=(p_2^*\alpha)|_{\inter(C_g)}.$$
But since $\alpha$ is invariant, we have $g^*\alpha=\alpha$.  Thus, $p_1^*\alpha=p_2^*\alpha$ on the open subset $\inter(C_g)$.  This completes the proof.
\end{proof}

Next, we prove a special case of when we have a group acting linearly and orthogonally on a vector space $V$.  This requires some technical lemmas, which we prove first.  \hypref{l:techn1}{Lemma}, \hypref{l:techn2}{Lemma}, and \hypref{c:techn2}{Corollary} are due to Yael Karshon (private communication).

\begin{lemma}\labell{l:techn1}
Let $G$ be a compact connected Lie group acting linearly and orthogonally on an inner product space $V$.  Let $g\in G$, $\eta\in\g$ such that $\exp(\eta)=g$, and let $v\in V$.  Then there exists $v'\in V$ such that $|v'|\leq|v|$ and $g\cdot v-v=\eta\cdot v'.$
\end{lemma}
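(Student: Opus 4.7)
The plan is to interpolate between $v$ and $g\cdot v$ along the one-parameter subgroup $t\mapsto\exp(t\eta)$, integrate the velocity, and then exploit linearity to pull the infinitesimal generator outside the integral. Concretely, I would set
\[
v' := \int_0^1 \exp(t\eta)\cdot v\,dt,
\]
viewing the integrand as a $V$-valued smooth curve so that the integral is a well-defined vector in $V$.

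The key computation is to verify $\eta\cdot v' = g\cdot v - v$. Because the action is linear, I can identify $\exp(t\eta)$ with a one-parameter group $e^{tA}$ of linear operators on $V$, where $A\in\End(V)$ is the derivative at the identity; by definition of the fundamental vector field, $A w = \eta\cdot w$ for every $w\in V$. Differentiating $\gamma(t):=\exp(t\eta)\cdot v = e^{tA}v$ gives $\gamma'(t)=Ae^{tA}v = \eta\cdot(\exp(t\eta)\cdot v)$, so by the fundamental theorem of calculus
\[
g\cdot v - v \;=\; \gamma(1)-\gamma(0) \;=\; \int_0^1 \eta\cdot(\exp(t\eta)\cdot v)\,dt \;=\; \eta\cdot\!\int_0^1 \exp(t\eta)\cdot v\,dt \;=\; \eta\cdot v',
\]
where the third equality uses that $\eta$ acts by the linear (hence bounded, hence integral-commuting) operator $A$.

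For the norm bound, orthogonality of the $G$-action means $|\exp(t\eta)\cdot v| = |v|$ for every $t\in[0,1]$. Then the standard triangle inequality for Bochner/Riemann $V$-valued integrals yields
\[
|v'| \;=\; \Bigl|\int_0^1 \exp(t\eta)\cdot v\,dt\Bigr| \;\leq\; \int_0^1 |\exp(t\eta)\cdot v|\,dt \;=\; \int_0^1 |v|\,dt \;=\; |v|,
\]
completing the proof. There is essentially no obstacle here: compactness and connectedness of $G$ only enter through the hypothesis that $\eta$ with $\exp(\eta)=g$ exists, and the mild subtlety is merely the identification of $\gamma'(t)$ with $\eta\cdot\gamma(t)$, which is just the compatibility between the flow $e^{tA}$ and its infinitesimal generator $A$.
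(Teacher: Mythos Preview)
Your proof is correct and essentially identical to the paper's: both define $v' = \int_0^1 \exp(t\eta)\cdot v\,dt$, use the fundamental theorem of calculus together with linearity of the action to pull $\eta$ outside the integral, and bound $|v'|$ via the triangle inequality for vector-valued integrals combined with orthogonality of the action.
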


\begin{proof}
We identify tangent spaces at points of $V$ with $V$ itself, as it is a vector space.
\begin{align*}
g\cdot v-v=& \exp(t\eta)\cdot v\big|_0^1\\
=&\int_0^1\left(\frac{d}{dt}\exp(t\eta)\cdot v\right)dt\\
=&\int_0^1\left(\eta\cdot\exp(t\eta)\cdot v\right)dt\\
=&\eta\cdot\int_0^1\left(\exp(t\eta)\cdot v\right)dt.
\end{align*}
So define $v':=\int_0^1\left(\exp(t\eta)\cdot v\right)dt.$ Finally,
\begin{align*}
|v'|=&\left|\int_0^1\left(\exp(t\eta)\cdot v\right)dt\right|\\
\leq& \int_0^1\left|\exp(t\eta)\cdot v\right|dt\\
=&\int_0^1|v|dt\\
=&|v|.
\end{align*}
The second-last line comes from the fact that the action is orthogonal.  This completes the proof.
\end{proof}

In the setting of the lemma above, let $\gamma_1$ and $\gamma_2$ be curves from $\RR$ into $V$ such that $0=\gamma_1(0)=\gamma_2(0)$, and for every $t\in\RR$ there exists $g_t\in G$ satisfying $\gamma_2(t)=g_t\cdot\gamma_1(t)$. Note here that $t\mapsto g_t$ is not necessarily continuous.  Let $\xi_1=\dot{\gamma}_1(0)$ and $\xi_2=\dot{\gamma}_2(0)$.

\begin{lemma}\labell{l:techn2}
There exists a sequence of real numbers $t_n$ converging to $0$, a sequence of vectors $v_n$ converging to $0$ in $V$, and a sequence $\mu_n$ in $\g$ such that $$\xi_2-\xi_1=\underset{n\to\infty}{\lim}\mu_n\cdot v_n.$$
\end{lemma}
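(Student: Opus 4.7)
The plan is to apply Lemma 2.7.5 along a sequence $t_n \to 0$ and then rescale the resulting Lie algebra elements by $t_n$. The key leverage comes from the norm control $|v'| \leq |v|$ provided by Lemma 2.7.5, which lets me force the $v_n$ to converge to $0$ while allowing the $\mu_n$ to grow freely.

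Concretely, I would take $t_n = 1/n$. Since $G$ is compact and connected, $\exp : \g \to G$ is surjective, so I can select $\eta_n \in \g$ with $\exp(\eta_n) = g_{t_n}$. Applying Lemma 2.7.5 with $g = g_{t_n}$, $\eta = \eta_n$, and $v = \gamma_1(t_n)$ yields a vector $v_n \in V$ with $|v_n| \leq |\gamma_1(t_n)|$ and
$$g_{t_n} \cdot \gamma_1(t_n) - \gamma_1(t_n) = \eta_n \cdot v_n.$$
Using the defining relation $\gamma_2(t_n) = g_{t_n} \cdot \gamma_1(t_n)$, this reads $\gamma_2(t_n) - \gamma_1(t_n) = \eta_n \cdot v_n$.

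Now set $\mu_n := \eta_n / t_n$. Because the $\g$-action on $V$ is $\RR$-linear,
$$\mu_n \cdot v_n = \frac{\gamma_2(t_n) - \gamma_1(t_n)}{t_n},$$
which tends to $\dot\gamma_2(0) - \dot\gamma_1(0) = \xi_2 - \xi_1$ since $\gamma_1$ and $\gamma_2$ are differentiable at $0$ with $\gamma_1(0) = \gamma_2(0) = 0$. Meanwhile $v_n \to 0$ follows from $|v_n| \leq |\gamma_1(t_n)| \to 0$ by continuity of $\gamma_1$ at $0$.

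I do not anticipate a significant obstacle. The linearity of the $G$-representation (and hence the $\g$-action) on $V$ is what legitimises the rescaling $\mu_n = \eta_n/t_n$, and the norm estimate in Lemma 2.7.5 is precisely tuned to make $v_n \to 0$ compatible with potentially unbounded $\mu_n$. The subtle point worth noting is that the map $t \mapsto g_t$ is only pointwise defined and may be discontinuous, but this never enters the argument because all limits are controlled through the smooth curves $\gamma_1, \gamma_2$ and the bound on $|v_n|$, not through any regularity of $t \mapsto g_t$.
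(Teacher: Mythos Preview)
Your proof is correct and essentially identical to the paper's: both pick a sequence $t_n\to 0$ with $t_n\neq 0$, choose $\eta_n\in\g$ with $\exp(\eta_n)=g_{t_n}$, apply the preceding lemma to write $\gamma_2(t_n)-\gamma_1(t_n)=\eta_n\cdot v_n$ with $|v_n|\leq|\gamma_1(t_n)|$, and then set $\mu_n=\eta_n/t_n$ to conclude. Your explicit remark that the possible discontinuity of $t\mapsto g_t$ is irrelevant is a nice clarification the paper leaves implicit.
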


\begin{proof}
Let $(t_n)$ be any sequence in $\RR$ that converges to $0$, where for each $n$, we have $t_n\neq0$.  For each $n$ let $\eta_n\in\g$ be an element of $\g$ satisfying $\exp(\eta_n)=g_{t_n}$. Since we are working on a vector space, it makes sense to subtract the curves: $\gamma_2(t)-\gamma_1(t)$.  We have
\begin{align*}
\xi_2-\xi_1=&\frac{d}{dt}\Big|_{t=0}(\gamma_2(t)-\gamma_1(t))\\
=&\underset{t\to0}{\lim}\left(\frac{\gamma_2(t)-\gamma_1(t)}{t}\right)\\
=&\underset{n\to\infty}{\lim}\left(\frac{\gamma_2(t_n)-\gamma_1(t_n)}{t_n}\right)\\
=&\underset{n\to\infty}{\lim}\left(\frac{g_{t_n}\cdot\gamma_1(t_n)-\gamma_1(t_n)}{t_n}\right)\\
=&\underset{n\to\infty}{\lim}\left(\frac{\eta_n\cdot v'_n}{t_n}\right)\\
\end{align*}
where $v_n'\in V$ satisfies $|\gamma_1(t_n)|\geq|v'_n|$ for each $n$.  The last line is a result of \hypref{l:techn1}{Lemma}.  Set $v_n=v'_n$.  From the inequality $|\gamma_1(t_n)|\geq|v_n|$ we have that $v_n\to0$ as $n\to\infty$.  Finally, set $\mu_n:=\eta_n/t_n$.  This completes the proof.
\end{proof}

\begin{corollary}\labell{c:techn2}
Let $\alpha$ be a basic form on $V$.  Then, $(\xi_2-\xi_1)\hook\alpha=0$.
\end{corollary}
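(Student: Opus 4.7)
The plan is to reduce the corollary to a pointwise statement at the origin and then pass to the limit using the sequences supplied by \hypref{l:techn2}{Lemma}, invoking only the horizontality part of the basic-form condition.

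First, I would interpret the claim: since $\gamma_1(0)=\gamma_2(0)=0$, both $\xi_1$ and $\xi_2$ live in $T_0 V$, which I identify with $V$ itself. Under this identification, $(\xi_2-\xi_1)\hook\alpha$ is a $(k-1)$-covector at the origin, and it suffices to evaluate it on an arbitrary $(k-1)$-tuple of constant vectors $w_2,\ldots,w_k\in V$. So the target becomes
\[
\alpha_0(\xi_2-\xi_1, w_2,\ldots, w_k)=0.
\]

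Next, I would apply \hypref{l:techn2}{Lemma} to produce sequences $t_n\to 0$, $v_n\to 0$ in $V$, and $\mu_n\in\g$ with $\mu_n\cdot v_n\to\xi_2-\xi_1$. For each $n$, the vector $\mu_n\cdot v_n$ is the value at $v_n$ of the fundamental vector field $(\mu_n)_V$, hence lies in $T_{v_n}(G\cdot v_n)$. Horizontality of the basic form $\alpha$ (as defined in the text) then gives, for each $n$,
\[
\alpha_{v_n}\bigl(\mu_n\cdot v_n, w_2,\ldots, w_k\bigr)=0.
\]

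Finally, I would take $n\to\infty$. Since $\alpha$ is a smooth form on the manifold $V$, its coefficient functions are continuous; because $v_n\to 0$, $\mu_n\cdot v_n\to\xi_2-\xi_1$, and the remaining arguments are the fixed constant vectors $w_2,\ldots,w_k$, continuity yields
\[
\alpha_0(\xi_2-\xi_1,w_2,\ldots,w_k)=\lim_{n\to\infty}\alpha_{v_n}(\mu_n\cdot v_n,w_2,\ldots,w_k)=0.
\]
Since the $w_i$ were arbitrary, this gives $(\xi_2-\xi_1)\hook\alpha=0$ at the origin, which is the desired conclusion. There is no real obstacle; the entire content has been packaged into \hypref{l:techn1}{Lemma} and \hypref{l:techn2}{Lemma}, and it is worth noting that only horizontality of $\alpha$ (not invariance) is used in this final step.
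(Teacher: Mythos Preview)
Your proof is correct and follows essentially the same approach as the paper: apply \hypref{l:techn2}{Lemma} to write $\xi_2-\xi_1$ as a limit of $\mu_n\cdot v_n=(\mu_n)_V|_{v_n}$, use horizontality of $\alpha$ to kill each term, and pass to the limit by continuity. Your version is slightly more explicit in testing against fixed vectors $w_2,\ldots,w_k$ to justify the limit, and your observation that only horizontality is needed here is accurate.
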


\begin{proof}
Again, we identify tangent spaces at points of $V$ with $V$ itself.
\begin{align*}
(\xi_2-\xi_1)\hook\alpha|_0=&\underset{n\to\infty}{\lim}\left((\mu_n\cdot v_n)\hook(\alpha|_{v_n})\right)\\
=&\underset{n\to\infty}{\lim}\left((\mu_n)_V\hook\alpha|_{v_n}\right)
\end{align*}
where $(\mu_n)_V$ is the vector field on $V$ induced by $\mu_n\in\g$.  But recall that $\alpha$ is basic, and hence the last line above vanishes.
\end{proof}

Next, we reduce the case of a compact Lie group to a compact connected Lie group.

\begin{lemma}\labell{l:conntodisc}
Fix a positive integer $n$.  Assume that for every compact connected Lie group $K$ of dimension $n$ acting on $M$ we have that any $K$-basic form on $M$ is a pullback of a form on $M/K$.  Then for any compact (possibly disconnected) Lie group $G$ of dimension $n$, every $G$-basic form on $M$ is the pullback of a form on $M/G$.
\end{lemma}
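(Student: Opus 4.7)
The plan is to reduce from a compact Lie group $G$ to its identity component $G_0$ plus the finite quotient $\Gamma := G/G_0$, and then factor the descent in two stages. Since $G_0$ is open in $G$, it is a compact connected Lie subgroup of the same dimension $n$, and $\Gamma$ is a finite group acting on $M/G_0$ with $M/G = (M/G_0)/\Gamma$. Correspondingly, the quotient map $\pi:M\to M/G$ factors as $\pi=\pi_\Gamma\circ\pi_0$, where $\pi_0:M\to M/G_0$ and $\pi_\Gamma:M/G_0\to M/G$ are the quotient maps.

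Let $\alpha$ be a $G$-basic form on $M$. Since $G_0\subseteq G$, $\alpha$ is automatically $G_0$-basic. By the hypothesis applied to the compact connected group $G_0$, there exists $\beta\in\Omega^k(M/G_0)$ with $\pi_0^*\beta=\alpha$. The next step is to show that $\beta$ is $\Gamma$-invariant. For any $g\in G$, the action on $M$ descends to an action $[g]$ on $M/G_0$ making the square $\pi_0\circ g=[g]\circ\pi_0$ commute. Therefore
\begin{equation*}
\pi_0^*[g]^*\beta=g^*\pi_0^*\beta=g^*\alpha=\alpha=\pi_0^*\beta,
\end{equation*}
and by the injectivity of $\pi_0^*$ (\hypref{p:injectivity}{Proposition}), $[g]^*\beta=\beta$. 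Hence $\beta$ is invariant under the finite group $\Gamma$.

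It remains to show that the $\Gamma$-invariant form $\beta$ on $M/G_0$ descends to a form $\gamma$ on $M/G$ with $\pi_\Gamma^*\gamma=\beta$; then $\alpha=\pi_0^*\pi_\Gamma^*\gamma=\pi^*\gamma$, finishing the proof. I will verify this descent using the characterisation in \hypref{p:pullbackimgchar}{Proposition}: given plots $q_1,q_2:U\to M/G_0$ with $\pi_\Gamma\circ q_1=\pi_\Gamma\circ q_2$, I need $q_1^*\beta=q_2^*\beta$. The argument parallels \hypref{p:finite}{Proposition}: for each $[g]\in\Gamma$ set
\begin{equation*}
C_{[g]}:=\{u\in U\mid q_2(u)=[g]\cdot q_1(u)\}.
\end{equation*}
Because $M/G_0$ is a subcartesian space (\hypref{t:quotsubc}{Theorem}), it is Hausdorff, the $\Gamma$-action on it is continuous, and both $q_1$ and $q_2$ are continuous; hence each $C_{[g]}$ is closed in $U$, and $U=\bigcup_{[g]\in\Gamma}C_{[g]}$ because $\pi_\Gamma\circ q_1=\pi_\Gamma\circ q_2$. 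By \hypref{l:finiteunion}{Lemma}, $W:=\bigcup_{[g]\in\Gamma}\inter(C_{[g]})$ is open and dense in $U$. On $\inter(C_{[g]})$ we have $q_2=[g]\circ q_1$, so $q_2^*\beta=q_1^*[g]^*\beta=q_1^*\beta$ by $\Gamma$-invariance of $\beta$. Thus $q_1^*\beta=q_2^*\beta$ on the dense open set $W$, and by continuity of smooth forms this equality extends to all of $U$.

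The main obstacle is the last paragraph: the finite-group step \hypref{p:finite}{Proposition} is stated for a finite group acting on a manifold, while here $\Gamma$ acts on the non-manifold subcartesian space $M/G_0$. The argument goes through, however, because only two properties of the target are actually used: Hausdorffness (to ensure the closedness of the sets $C_{[g]}$) and continuity of the group action, both of which hold by \hypref{t:quotsubc}{Theorem}. Once this adaptation is in hand, the two-stage descent $\alpha\mapsto\beta\mapsto\gamma$ assembles into the desired form on $M/G$.
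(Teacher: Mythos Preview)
Your proof is correct, and the underlying idea --- decompose the parameter domain into closed pieces indexed by the finite group $\Gamma=G/G_0$, apply the connected-case hypothesis on each piece, and conclude by density via \hypref{l:finiteunion}{Lemma} --- is exactly what the paper does. The organization differs, however.

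You factor the quotient as $M\to M/G_0\to M/G$ and descend in two explicit stages: first use the hypothesis to get $\beta\in\Omega^k(M/G_0)$, prove $\Gamma$-invariance of $\beta$, and then run the finite-group argument with plots into the diffeological (non-manifold) space $M/G_0$. This forces you to justify that \hypref{p:finite}{Proposition} extends to that setting, which you do cleanly using only Hausdorffness and continuity of the $\Gamma$-action. The paper instead stays on the manifold $M$ throughout: given plots $p_1,p_2:U\to M$ with $\pi\circ p_1=\pi\circ p_2$, it sets $C_\gamma=\{u\in U\mid \exists g\in\gamma,\ p_2(u)=g\cdot p_1(u)\}$ for $\gamma\in G/G_0$, and on each $\inter(C_\gamma)$ replaces $p_1$ by $g\circ p_1$ to reduce to two plots with the same $\pi_0$-image, then invokes the connected hypothesis via \hypref{p:pullbackimgchar}{Proposition} directly on $M$. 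The paper's route avoids ever leaving the manifold category, so it never needs to adapt \hypref{p:finite}{Proposition}; your route is conceptually cleaner (the two-stage factorization makes the structure transparent) at the mild cost of that adaptation. Both arrive at the same place with the same essential ingredients.
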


\begin{proof}
Let $G_0$ be the identity component of $G$.  $G_0$ is a subgroup of $G$, and so has an induced action on $M$.  Let $\pi_0:M\to M/G_0$ be the orbit map.  We want to show that for any $G$-basic form $\alpha$ on $M$, if $p_1:U\to M$ and $p_2:U\to M$ are two plots satisfying $\pi\circ p_1=\pi\circ p_2$ (where here $\pi:M\to M/G$ is the orbit map), then $p_1^*\alpha=p_2^*\alpha$.  Fix such $\alpha$, $p_1$, and $p_2$.  For any $\gamma\in G/G_0$, let $$C_\gamma:=\{u\in U~|~\exists g\in\gamma\text{ such that }p_2(u)=g\cdot p_1(u)\}.$$  Since $\gamma$ is a finite set, by continuity we have that $C_\gamma$ is the finite union of closed sets, and hence closed.  By \hypref{l:finiteunion}{Lemma}, we know that the set $\bigcup_{\gamma\in G/G_0}\inter(C_\gamma)$ is open and dense in $U$.  It is thus enough to show that $p_1^*\alpha=p_2^*\alpha$ on each nonempty open set $\inter(C_\gamma)$.\\

Let $\gamma\in G/G_0$ such that $\gamma\neq G_0$ and $\inter(C_\gamma)$ is not empty.  Fix $g\in\gamma$ and define $\tilde{p}_1:U\to M$ as the composition $g\circ p_1$.  This is a plot of $M$, and for any $u\in\inter(C_\gamma)$, $p_2(u)=h\cdot\tilde{p}_1(u)$ for some $h\in G_0$.  Hence, $\pi_0\circ\tilde{p}_1|_{\inter(C_\gamma)}=\pi_0\circ p_2|_{\inter(C_\gamma)}$.  Note that the restrictions $\tilde{p}_1|_{\inter(C_\gamma)}$ and $p_2|_{\inter(C_\gamma)}$ are plots of $M$.  By hypothesis, since $G_0$ is connected and $\alpha$ is $G_0$-basic (because it is $G$-basic), we have that $\tilde{p}_1^*\alpha=p_2^*\alpha$ on $\inter(C_\gamma)$ by \hypref{p:pullbackimgchar}{Proposition}.  But $\tilde{p}_1^*\alpha=p_1^*g^*\alpha=p_1^*\alpha$ since $\alpha$ is $G$-invariant, and so $p_1^*\alpha=p_2^*\alpha$ on $\inter(C_\gamma)$.\\

Now we only need to check the case when $\gamma=G_0$.  But in this case, for each $u\in C_\gamma$, there is some $h\in G_0$ such that $h\cdot p_1(u)=p_2(u)$.  We apply the same argument as above, and we get that $p_1^*\alpha=p_2^*\alpha$ on $\inter(C_\gamma)$.  Hence, $p_1^*\alpha=p_2^*\alpha$ on the open dense subset $\bigcup_{\gamma\in G/G_0}\inter(C_\gamma)$, and hence everywhere by continuity.
\end{proof}

Next, we reduce the case of a compact connected Lie group of dimension $n$ to a lower dimensional Lie group, or to the case of a linear and orthogonal action on a vector space.  First we need some lemmas.\\

Let $G$ be a compact connected Lie group acting on a manifold $M$.  Fix $x\in M$ and let $H$ be the stabiliser of $x$.  The slice theorem (see \cite{koszul53}) states that there is a $G$-invariant open neighbourhood $U$ of $x$ and an equivariant diffeomorphism $F:U\to G\times_H V$ where $V$ is the normal space to $G\cdot x$ at $x$, that is, $V=T_xM/T_x(G\cdot x)$.  Let $i:V\to G\times_H V$ be the smooth injection $v\mapsto[e,v]$ where $e\in G$ is the identity.  Then this map is equivariant with respect to $H$, and descends to a bijection $\psi:V/H\to(G\times_H V)/G$. We have the following commutative diagram, where $\pr_2$ is the projection onto $V$, and $\pi_H$, $\pi_V$, $\pi$ are quotient maps.

$$\xymatrix{
G\times V \ar[d]_{\pr_2} \ar[dr]^{\pi_H} & \\
V \ar[r]_i \ar[d]_{\pi_V} & G\times_H V \ar[d]^{\pi} \\
V/H \ar[r]_{\psi~~~} & (G\times_H V)/G \\
}$$

\begin{lemma}\labell{l:samequot}
$\psi:V/H\to(G\times_H V)/G$ is a diffeological diffeomorphism.
\end{lemma}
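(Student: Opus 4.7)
The plan is to verify the three defining properties of a diffeological diffeomorphism for $\psi$: it is a bijection, it is diffeologically smooth, and its inverse is diffeologically smooth. Bijectivity is the easy part and only uses the definition of the $G$-action on $G \times_H V$. For surjectivity, given any $[g,v] \in G \times_H V$, applying $g^{-1}$ gives $g^{-1}\cdot[g,v] = [e,v] = i(v)$, so every $G$-orbit meets $i(V)$. For injectivity of $\psi$, if $[e,v_1]$ and $[e,v_2]$ lie in the same $G$-orbit, then $[e,v_1] = [g,v_2]$ for some $g \in G$; by the definition of $\times_H$ this forces $g \in H$ and $v_1 = g\cdot v_2$, so $v_1$ and $v_2$ are in the same $H$-orbit and $\pi_V(v_1) = \pi_V(v_2)$.

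For smoothness of $\psi$, I would start with an arbitrary plot $p: U \to V/H$ and work locally. By definition of the quotient diffeology on $V/H$, around each $u \in U$ there is an open neighbourhood $U' \subseteq U$ and a smooth map $q: U' \to V$ with $p|_{U'} = \pi_V \circ q$. Chasing the commutative diagram gives $\psi \circ p|_{U'} = \psi \circ \pi_V \circ q = \pi \circ i \circ q$, and since $i \circ q$ is a smooth map into $G \times_H V$, its postcomposition with $\pi$ is a plot of $(G \times_H V)/G$ in the quotient diffeology. Locality then gives that $\psi \circ p$ is a plot globally.

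The main obstacle is the smoothness of $\psi^{-1}$, because here one must perform two successive liftings of plots. Given a plot $p: U \to (G \times_H V)/G$, by the quotient diffeology I can locally write $p = \pi \circ r$ for a plot $r$ of $G \times_H V$. To proceed, I need to further lift $r$ through the smooth surjection $\pi_H: G \times V \to G \times_H V$. Here I would use the fact that $\pi_H$ is a principal $H$-bundle (hence admits local smooth sections), so after possibly shrinking the domain I may write $r|_{U''} = \pi_H \circ (a,b)$ for smooth maps $a: U'' \to G$ and $b: U'' \to V$. Then
\begin{align*}
(\pi \circ r)(u) = \pi([a(u), b(u)]) = \pi\bigl(a(u)\cdot[e, b(u)]\bigr) = \pi([e,b(u)]) = \psi(\pi_V(b(u))),
\end{align*}
so $\psi^{-1} \circ p|_{U''} = \pi_V \circ b$, which is a plot of $V/H$ by definition of its quotient diffeology. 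A locality argument then assembles these local plots into a plot on all of $U$, completing the proof. The delicate point is ensuring the intermediate lift $r = \pi_H \circ (a,b)$ exists, which is why it is essential that $G \times V \to G \times_H V$ be a principal $H$-bundle (equivalently, that its manifold diffeology coincides with the quotient diffeology induced by $\pi_H$).
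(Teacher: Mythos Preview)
Your proof is correct and follows essentially the same approach as the paper's: lift plots locally through $\pi_V$ for smoothness of $\psi$, and for $\psi^{-1}$ lift twice (first through $\pi$, then through $\pi_H$) and observe that $\pi([a(u),b(u)])=\pi([e,b(u)])=\psi(\pi_V(b(u)))$, which is exactly the paper's identity $\pi\circ i\circ\pr_2=\pi\circ\pi_H$ in coordinates. The only differences are cosmetic: you verify bijectivity explicitly (the paper states it before the lemma), and you justify the second lift by invoking the principal $H$-bundle structure of $\pi_H$, whereas the paper simply asserts that a plot of $G\times_H V$ lifts locally to $G\times V$ via the quotient diffeology.
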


\begin{proof}
Let $p:U\to V/H$ be a plot in the quotient diffeology on $V/H$.  Then, for every $u\in U$, there is an open neighbourhood $W\subseteq U$ of $u$ and a plot $q:W\to V$ such that $$p|_W=\pi_V\circ q.$$  Then, $i\circ q$ is a plot of $G\times_H V$ (with image in $i(V)$), and so $$\pi\circ i\circ q=\psi\circ\pi_V\circ q=\psi\circ p|_W$$ is a plot of $(G\times_H V)/G$.  Thus, $\psi\circ p$ is a plot of $(G\times_H V)/G$.  Hence $\psi$ is diffeologically smooth.  Since $\psi$ is a bijection, it induces an injection of plots in the quotient diffeology of $V/H$ into the quotient diffeology on $(G\times_H V)/G$.  We next show that $\psi^{-1}$ is diffeologically smooth.\\

Let $\tilde{p}:U\to (G\times_H V)/G$ be a plot in the quotient diffeology on $(G\times_H V)/G$.  Then, for every $u\in U$ there is an open neighbourhood $W\subseteq U$ of $u$ and a plot $q:W\to G\times_H V$ such that $$\tilde{p}|_W=\pi\circ q.$$ Shrinking $W$ if necessary, we may assume that there is a plot $r:W\to G\times V$ in the product diffeology on $G\times V$ such that $$q=\pi_H\circ r.$$ Consider the plot $\pr_2\circ r$ on $V$.  We claim that $\pi\circ i\circ\pr_2=\pi\circ\pi_H$.  If this is true, then $$\pi\circ i\circ\pr_2\circ r=\pi\circ\pi_H\circ r=\tilde{p}|_W.$$  But since $\pi\circ i=\psi\circ\pi_V$, we would have $$\tilde{p}|_W=\psi\circ\pi_V\circ\pr_2\circ r.$$ Since $\pi_V\circ\pr_2\circ r$ is a plot on $V/H$, we have that $\tilde{p}$ locally is the pushforward of a plot on $V/H$ by $\psi$; that is, $\psi^{-1}\circ p$ locally is a plot of $V/H$.  But by the axioms of diffeology, these glue together into one plot, $\psi^{-1}\circ\tilde{p}$, and hence $\psi^{-1}$ is diffeologically smooth.\\

We now prove our claim: $\pi\circ i\circ\pr_2=\pi\circ\pi_H$.  Fix a point $(g,v)\in G\times V$.  Then $$\pi\circ i\circ\pr_2(g,v)=\pi\circ i(v)=\pi([e,v])$$ and $\pi([e,v])$ is the whole $G$-orbit containing $[e,v]$.  On the other hand, $\pi\circ\pi_H(g,v)=\pi[g,v]$ which is the $G$-orbit containing $[g,v]$.  But this is the same orbit as that containing $[e,v]$.  And so we are done.
\end{proof}

The above lemma in conjunction with the slice theorem shows that $U/G$ is diffeologically diffeomorphic to $V/H$.

\begin{lemma}\labell{l:basicformslice}
$i^*$ is an injection from $G$-basic forms on $G\times_H V$ to $H$-basic forms on $V$.
\end{lemma}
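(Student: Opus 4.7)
The plan is to prove two things: that $i^*$ carries $G$-basic forms to $H$-basic forms, and that $i^*$ is injective on $G$-basic forms.

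First I would verify that $i$ is $H$-equivariant, where $H$ acts on $G\times_H V$ as the subgroup $H\subseteq G$. For $h\in H$ and $v\in V$, the defining relation $[gh^{-1},hv]=[g,v]$ on $G\times_H V$ gives $h\cdot i(v)=[h,v]=[e,hv]=i(h\cdot v)$. Hence $i^*\alpha$ is $H$-invariant whenever $\alpha$ is $G$-invariant. For horizontality, a generator of the tangent space to an $H$-orbit in $V$ at $v$ has the form $\eta\cdot v$ with $\eta\in\mathfrak{h}$, and the equivariance computation
\[
i_*(\eta\cdot v)=\frac{d}{dt}\Big|_{t=0}i(\exp(t\eta)\cdot v)=\frac{d}{dt}\Big|_{t=0}[\exp(t\eta),v]=\eta_{G\times_H V}\big|_{[e,v]}
\]
shows $i_*(\eta\cdot v)$ is tangent to a $G$-orbit. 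Since $\alpha$ is $G$-horizontal, contracting with $i_*(\eta\cdot v)$ gives zero, so $i^*\alpha$ is $H$-horizontal.

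Next I would establish injectivity. Suppose $\alpha$ is a $G$-basic $k$-form on $G\times_H V$ with $i^*\alpha=0$. Because $G\cdot i(V)=G\times_H V$ and $\alpha$ is $G$-invariant, it suffices to show $\alpha|_{[e,v]}=0$ for every $v\in V$. At such a point, the quotient map $G\times V\to G\times_H V$ induces a surjection $\mathfrak{g}\oplus T_vV \twoheadrightarrow T_{[e,v]}(G\times_H V)$, which gives the decomposition
\[
T_{[e,v]}(G\times_H V)=T_{[e,v]}(G\cdot[e,v])+i_*(T_vV).
\]
Writing arbitrary tangent vectors as $Y_j=\xi_j\cdot[e,v]+i_*X_j$ and expanding by multilinearity, every term containing at least one orbit direction vanishes by horizontality of $\alpha$, and the remaining term equals $(i^*\alpha)|_v(X_1,\ldots,X_k)=0$. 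Thus $\alpha|_{[e,v]}=0$, and $G$-invariance propagates this to all of $G\times_H V$.

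The main obstacle I foresee is justifying the tangent space decomposition at $[e,v]$ cleanly; everything else is a routine equivariance and horizontality computation. This obstacle, however, is mild and reduces to describing $G\times_H V$ locally as a quotient of $G\times V$ by the proper free $H$-action $h\cdot(g,v)=(gh^{-1},hv)$ and observing that the kernel of the induced map $\mathfrak{g}\oplus T_vV\to T_{[e,v]}(G\times_H V)$ is the anti-diagonal copy of $\mathfrak{h}$, which lies inside $T_{[e,v]}(G\cdot[e,v])+i_*(T_vV)$ anyway.
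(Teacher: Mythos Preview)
Your proposal is correct and follows essentially the same approach as the paper: both use $H$-equivariance of $i$ to get that $i^*\alpha$ is $H$-basic, and both prove injectivity by translating an arbitrary point into $i(V)$ via the $G$-action, decomposing the tangent space there as the sum of the $G$-orbit direction and $i_*(T_vV)$, and using horizontality to kill the orbit contributions. Your justification of the tangent space decomposition via the quotient $G\times V\to G\times_H V$ is more explicit than the paper's, which simply invokes transversality of $i(V)$ to the $G$-orbits.
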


\begin{proof}
Fix a $G$-basic form $\alpha$ on $G\times_H V$.  Then $i^*\alpha$ is well-defined.  Since $\alpha$ is $G$-basic, it is also $H$-basic.  Since $i$ is $H$-equivariant, $i^*\alpha$ is also $H$-basic.  We now only need to show injectivity.\\

Assume $i^*\alpha=0$.  Fix $y\in G\times_H V$ and $w\in T_y(G\times_H V)$.  There is some $g\in G$ such that $g\cdot y\in i(V)$ and so $g_*w\in T_{g\cdot y}(G\times_H V)$.  Since $i(V)$ is transverse to the orbit $G\cdot y$ at $g\cdot y$, we have $$T_{g\cdot y}(G\times_H V)=T_{g\cdot y}(G\cdot y)+T_{g\cdot y}(i(V)),$$ and so $g_*w=w_{orbit}+w_V$ where $w_{orbit}\in T_{g\cdot y}(G\cdot y)$ and $w_V\in T_{g\cdot y}(i(V))$.  Thus,

\begin{align*}
w\hook\alpha=&~g_*w\hook\alpha\\
=&~w_{orbit}\hook\alpha+w_V\hook\alpha\\
=&~w_V\hook\alpha
\end{align*}
since $\alpha$ is $G$-basic.  There is some $w'\in T_{i^{-1}(g\cdot y)}V$ such that $i_*w'=w_V$.  So,
\begin{align*}
w\hook\alpha=&w_V\hook\alpha\\
=&~i_*w'\hook\alpha\\
=&~w'\hook i^*\alpha\\
=&~0,
\end{align*}
and therefore we conclude that $\alpha=0$, completing the proof.
\end{proof}

\begin{lemma}\labell{l:indstep}
Let $G$ be a compact connected Lie group of dimension $n$ acting on a manifold $M$.  Assume that for every compact Lie group $H$ of dimension less than $n$, and for every Lie group action of $H$ on every connected manifold $N$, that the pullback of forms on $N/H$ via the orbit map is an isomorphism onto the $H$-basic forms on $N$.  Then the pullback of forms on $M/G$ via the orbit map is an isomorphism onto the $G$-basic forms on $M$.
\end{lemma}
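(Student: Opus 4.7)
Injectivity of $\pi^*$ is \hypref{p:injectivity}{Proposition}, and the image lies in $G$-basic forms by \hypref{p:pullbacksarebasic}{Proposition}, so the content of the lemma is surjectivity. By \hypref{p:pullbackimgchar}{Proposition}, it suffices to show that whenever $\alpha$ is $G$-basic on $M$ and $p_1,p_2:U\to M$ are plots with $\pi\circ p_1=\pi\circ p_2$, we have $p_1^*\alpha=p_2^*\alpha$. Since this is a pointwise statement, I would fix $u_0\in U$ and, after replacing $p_1$ by $g\circ p_1$ for a $g\in G$ with $g\cdot p_1(u_0)=p_2(u_0)$ (which changes neither $\pi\circ p_1$ nor $p_1^*\alpha$, by $G$-invariance of $\pi$ and of $\alpha$), arrange that $p_1(u_0)=p_2(u_0)=:x$.

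The next step is to apply the slice theorem at $x$, identifying a $G$-invariant open neighbourhood of $x$ with $G\times_H V$, where $H=\Stab(x)$ and $V$ is the slice representation carrying a linear orthogonal $H$-action. On a small neighbourhood of $u_0$ I would write $p_i(u)=g_i(u)\cdot i(v_i(u))$ with $g_i(u_0)=e$ and $v_i(u_0)=0$, where $i:V\hookrightarrow G\times_H V$ sends $v$ to $[e,v]$. A Leibniz-rule computation for the $G$-action shows that the difference $d(g_i^{-1}\cdot p_i)|_{u_0}(w)-dp_i|_{u_0}(w)$ lies in the orbit tangent space $T_x(G\cdot x)$; combined with horizontality of $\alpha$, this yields
\[
p_i^*\alpha|_{u_0}=(i\circ v_i)^*\alpha|_{u_0}=v_i^*(i^*\alpha)|_{u_0}.
\]
By \hypref{l:basicformslice}{Lemma}, $i^*\alpha$ is $H$-basic on $V$, and by \hypref{l:samequot}{Lemma} the equality $\pi\circ p_1=\pi\circ p_2$ descends to $\pi_V\circ v_1=\pi_V\circ v_2$, where $\pi_V:V\to V/H$ is the quotient map.

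I would then split into two cases according to $\dim H$. If $\dim H<n$, the inductive hypothesis applied to the $H$-action on the (connected) vector space $V$ gives some $\beta\in\Omega^k(V/H)$ with $i^*\alpha=\pi_V^*\beta$, so that $v_i^*(i^*\alpha)=(\pi_V\circ v_i)^*\beta$ depends only on $\pi_V\circ v_i$ and the two pullbacks agree. If instead $\dim H=n$, then since $G$ is connected of dimension $n$ the identity component of $H$ must be all of $G$, forcing $H=G$; the point $x$ is then a fixed point and $G$ acts linearly and orthogonally on $V$. For each $w\in T_{u_0}U$ I would apply \hypref{c:techn2}{Corollary} to the curves $t\mapsto v_i(u_0+tw)$, which start at $0$ and lie in common $G$-orbits for each $t$; this gives $(dv_2|_{u_0}(w)-dv_1|_{u_0}(w))\hook(i^*\alpha)|_0=0$, and multilinear expansion of the pullback of a $k$-form then forces $v_1^*(i^*\alpha)|_{u_0}=v_2^*(i^*\alpha)|_{u_0}$.

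The main technical point, in my view, is the reduction from $p_i$ to $i\circ v_i$: the twist by $g_i^{-1}$ introduces an orbit-direction correction in the derivative at $u_0$, and it is precisely \emph{horizontality} of $\alpha$ (not merely $G$-invariance) that absorbs it. Once that reduction is in place, the case $\dim H<n$ is purely formal, the case $H=G$ is exactly what \hypref{c:techn2}{Corollary} was designed for, and varying $u_0$ over $U$ yields the desired equality $p_1^*\alpha=p_2^*\alpha$ globally.
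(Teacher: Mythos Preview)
Your argument is correct and follows the same strategy as the paper: slice theorem at $x$, split into the cases $\dim H<n$ and $H=G$, apply the inductive hypothesis to $i^*\alpha$ on $V$ in the first case, and invoke \hypref{c:techn2}{Corollary} with multilinear expansion in the second. The only organisational difference is that in the case $\dim H<n$ the paper does not lift $p_i$ to $(g_i,v_i)$ and reduce pointwise via horizontality; instead it combines the injectivity in \hypref{l:basicformslice}{Lemma} with the diffeomorphism $\psi$ of \hypref{l:samequot}{Lemma} to show directly that $\alpha|_U=(\pi|_U)^*(\psi^{-1})^*\beta$, giving $p_1^*\alpha=p_2^*\alpha$ on the whole preimage of $U$ at once rather than one point at a time.
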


\begin{proof}
Fix a $G$-basic form $\alpha$ on $M$.  We would like to show that for any two plots $p_1:W\to M$ and $p_2:W\to M$ such that $\pi\circ p_1=\pi\circ p_2$, we have $p_1^*\alpha=p_2^*\alpha$.  Let $p_1$ and $p_2$ be two such plots.  Fix $u\in W$, and let $x=p_2(u)$.  Note that there exists $g\in G$ such that $g\cdot p_1(u)=p_2(u)$.  Let $H$ be the stabiliser of $x$.  By the slice theorem there exists a $G$-invariant open neighbourhood $U$ of $x$ and an equivariant diffeomorphism $F:U\to G\times_H V$ where $V=T_xM/T_x(G\cdot x)$.  Let $\pi_H:V\to V/H$ be the orbit map, and let $i:V\to G\times_H V$ be the map $v\mapsto[e,v]$.  By \hypref{l:basicformslice}{Lemma}, $(F^{-1}\circ i)^*$ is an injection from $G$-basic forms on $U$ to $H$-basic forms on $V$.\\

Assume that $H$ is not equal to $G$; in particular, that it is a subgroup of $G$ of dimension less than $n$.  By our hypothesis, there is a form $\beta$ on $V/H$ such that $\pi_H^*\beta=(F^{-1}\circ i)^*(\alpha|_U)$.  By \hypref{l:samequot}{Lemma}, there is a diffeological diffeomorphism $\psi:V/H\to U/G$.  We have that $$i^*(F^{-1})^*(\pi|_U)^*(\psi^{-1})^*\beta=\pi_H^*\beta=i^*(F^{-1})^*(\alpha|_U).$$  Since $(F^{-1}\circ i)^*$ is injective, we conclude that $$(\pi|_U)^*(\psi^{-1})^*\beta=\alpha|_U.$$  Thus
\begin{align*}
(p_1|_{p_1^{-1}(U)})^*g^*(\alpha|_U)=&(p_1|_{p_1^{-1}(U)})^*(\alpha|_U)\\
=&(p_1|_{p_1^{-1}(U)})^*(\pi|_U)^*(\psi^{-1})^*\beta\\
=&(p_2|_{p_1^{-1}(U)})^*(\pi|_U)^*(\psi^{-1})^*\beta\\
=&(p_2|_{p_1^{-1}(U)})^*(\alpha|_U).
\end{align*}

We had assumed that $H$ is not equal to $G$ in the argument above; that is, $x$ was assumed not to be a fixed point.  In the case that it is a fixed point, $F$ identifies $U$ with $V=T_xM$, sending $x$ to $0\in V$.  Fixing a $G$-invariant Riemannian metric, we have that $G$ acts linearly and orthogonally on $V$.  Let $v\in T_uW$.  It will suffice to show that $v\hook p_1^*\alpha=v\hook p_2^*\alpha$.  Let $\xi_1=g_*(p_1)_*v$ and $\xi_2=(p_2)_*v$.  Then it suffices to show that $(\xi_2-\xi_1)\hook\alpha=0$.  But using the identification $F:U\to V$, it is enough to solve the problem on $V$, where $\xi_1$ and $\xi_2$ are tangent vectors at $0\in V$.  But $(\xi_2-\xi_1)\hook\alpha=0$ has been shown already in \hypref{c:techn2}{Corollary}.  This completes the proof.
\end{proof}

We now prove the main result of this section.

\begin{theorem}[Basic Forms Equal Pullbacks]\labell{t:geomquot}
Let $G$ be a compact Lie group acting on a manifold $M$.  Let $\pi:M\to M/G$ be the orbit map.  Then $\pi^*$ is an isomorphism from differential forms on $M/G$ onto basic differential forms on $M$.
\end{theorem}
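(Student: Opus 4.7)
The injectivity of $\pi^*:\Omega^k(M/G)\to\Omega^k(M)$ is already given by \hypref{p:injectivity}{Proposition}, and \hypref{p:pullbacksarebasic}{Proposition} shows that its image is contained in the basic subcomplex $\Omega^k_{basic}(M)$. What remains, then, is to show that every basic form on $M$ lies in the image of $\pi^*$; compatibility with $d$ and $\wedge$ is then automatic, since $\pi^*$ commutes with both operations, so the resulting bijection is an isomorphism of complexes.

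The plan is induction on $n=\dim G$ (handling disconnected $M$ componentwise, or $G$-orbit of components at a time, so the lemmas phrased for connected manifolds apply). For the base case $n=0$, the group $G$ is finite; \hypref{p:finite}{Proposition} gives surjectivity directly, observing that when $G$ is discrete the orbit tangent spaces vanish, so ``basic'' collapses to ``invariant.'' For the inductive step, suppose the theorem holds for every compact Lie group of dimension strictly less than $n$ acting on any connected manifold. Then \hypref{l:indstep}{Lemma} delivers the surjection for compact \emph{connected} Lie groups of dimension $n$: fix a basic form $\alpha$ and verify the criterion of \hypref{p:pullbackimgchar}{Proposition} by working locally around a point via the slice theorem, reducing either to the inductive hypothesis on a proper stabilizer (through the identifications of \hypref{l:samequot}{Lemma} and the injection of \hypref{l:basicformslice}{Lemma}) or, at fixed points, to the linear-orthogonal computation of \hypref{c:techn2}{Corollary}. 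Finally, \hypref{l:conntodisc}{Lemma} extends this from compact connected Lie groups of dimension $n$ to arbitrary compact Lie groups of dimension $n$, closing the induction.

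The main obstacle is not present at this stage of the paper, since the two heavy steps, namely the slice-theorem reduction and the passage from connected to possibly disconnected groups, have already been packaged into \hypref{l:indstep}{Lemma} and \hypref{l:conntodisc}{Lemma} respectively; the theorem is essentially their joint corollary plus the induction scaffold. The only small check is that the connected-manifold hypothesis of \hypref{l:indstep}{Lemma} is genuinely available at each inductive application: this is automatic because the slices $V$ on which that hypothesis is invoked are vector spaces, hence connected.
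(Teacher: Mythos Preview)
Your proposal is correct and follows essentially the same route as the paper: reduce to surjectivity via \hypref{p:injectivity}{Proposition} and \hypref{p:pullbacksarebasic}{Proposition}, then induct on $\dim G$ with base case \hypref{p:finite}{Proposition} and inductive step assembled from \hypref{l:indstep}{Lemma} and \hypref{l:conntodisc}{Lemma}. The paper phrases the inductive step as ``reduce to connected $G$ via \hypref{l:conntodisc}{Lemma}, then apply \hypref{l:indstep}{Lemma},'' while you phrase it as ``apply \hypref{l:indstep}{Lemma} for connected $G$, then extend via \hypref{l:conntodisc}{Lemma},'' but these are the same argument.
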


\begin{proof}
By \hypref{p:pullbacksarebasic}{Proposition} and \hypref{p:injectivity}{Proposition}, we have that $\pi^*$ is an injective map into basic forms on $M$.  So it is enough to show that it is surjective onto the basic forms.\\

We shall prove this by induction on the dimension of $G$.  If $n=0$, then by \hypref{p:finite}{Proposition} we are done.  This is the base case.\\

Next, assume that the theorem is proved for all Lie group actions of compact Lie groups of dimensions $0$ up to $n-1$.  Assume that $G$ is of dimension $n$.  By \hypref{l:conntodisc}{Lemma} it is enough if we assume that $G$ is connected.  Then by \hypref{l:indstep}{Lemma} we are done.
\end{proof}

\section{Short Introduction to Stratified Spaces}\labell{s:stratified}

For the purposes of the following section, as well as the following chapter, we need to introduce stratified spaces.  Unfortunately, in the literature, there are many definitions, not all of which are equivalent (see \cite{sniatycki03}, \cite{pflaum01}, \cite{goresky-macpherson88}).  For our purposes, we start with a topological definition, following closely the terminology used in \cite{lerman-sjamaar91}.  We then transport these concepts into the differential space category, following closely concepts introduced in \cite{sniatycki03} and \cite{lusala-sniatycki11}.\\

Let $X$ be a Hausdorff, paracompact topological space, and $(A,\leq)$ a partially ordered set.

\begin{definition}[Decomposed Space]
A \emph{decomposition} of $X$ with respect to $(A,\leq)$ is a locally finite partition of $X$, denoted by $\mathcal{P}$, into disjoint, connected, locally closed (topological) manifolds $S_i$, called \emph{pieces} such that the set $\mathcal{P}$ is indexed by $A$, and $i\leq j$ if and only if $S_i\subseteq\overline{S}_j$ if and only if $S_i\cap\overline{S}_j\neq\emptyset$.  The \emph{dimension} of $X$, denoted $\dim(X)$, is the supremum over $A$ of the dimensions of the pieces.  $X$ equipped with a decomposition $\mathcal{P}$ will be referred to as a \emph{decomposed space}, denoted $(X,\mathcal{P})$.  Often we will drop the notation $\mathcal{P}$ when the decomposition has been made clear.
\end{definition}

\begin{remark}
We will only consider decomposed spaces in which the pieces are finite-dimensional.
\end{remark}

\begin{definition}[Depth]
Let $(X,\mathcal{P})$ be a decomposed space, and fix a piece $S\in\mathcal{P}$.  The \emph{depth} of $S$, denoted $\operatorname{depth}_X(S)$ is defined as $$\operatorname{depth}_X(S):=\sup\{n\in\NN~|~S=S_{a_0}\subsetneq S_{a_1}\subsetneq...\subsetneq S_{a_n}\}$$ where each $S_{a_i}\in\mathcal{P}$.  Note the \emph{strict} inclusions in the definition. The \emph{depth} of $X$ is given by $$\operatorname{depth}(X):=\sup\{\operatorname{depth}_X(S_a)~|~a\in A\}.$$
\end{definition}

\mute{

\begin{example}[Cone of a Decomposed Space]
Let $X$ be a decomposed space with pieces $S_a$ ($a\in A$).  The \emph{cone} of $X$, denoted $\operatorname{cone}(X)$, is defined by the topological quotient $$\operatorname{cone}(X):=X\times[0,\infty)/X\times\{0\}.$$  This is also a decomposed space, with pieces given by $S_a\times(0,1)$ for each $a\in A$, and the vertex given by the equivalence class $X\times\{0\}$.  Note that the depth of $\operatorname{cone}(X)$ is equal to $\operatorname{depth}(X)+1$.
\end{example}

}

A stratified space is a decomposed space in which the pieces  fit together in a specific way.  Note that the following definition is recursive (in particular, $F$ will have a smaller depth than $S$).

\begin{definition}[Stratified Space]
A decomposed space $X$ is a \emph{stratified space} if the pieces of $X$, called \emph{strata}, satisfy the following condition.

\begin{itemize}\labell{cond:localtriv}
\item (Local Triviality) For every $x\in X$, there is an open neighbourhood $U\subseteq X$ of $x$, a stratified space $F$ with a distinguished point $o\in F$ such that $\{o\}$ is a stratum of $F$, and a homeomorphism $\varphi:U\to(S\cap U)\times F$ where $S$ is the stratum of $X$ containing $x$.  $\varphi$ is required to satisfy $$\varphi(s)=(s,o)$$ for each $s\in S\cap U$, and to map strata into strata.
\end{itemize}
\end{definition}

\begin{remark}
The above local triviality condition is often written in the literature using cones over stratified spaces instead of $F$.  However, it will be easier to transport the definition we use above into the smooth category.
\end{remark}

\begin{example}\labell{x:square}
Consider the square $[0,1]\times[0,1]$.  The partition given by $$\mathcal{P}=\Big\{\{(0,0)\},\;\{(0,1)\},\;\{(1,0)\},\;\{(1,1)\},\;(0,1)\times\{0\},$$$$
(0,1)\times\{1\},\;\{0\}\times(0,1),\;\{1\}\times(0,1),\;(0,1)^2\Big\}$$ makes the square into a stratified space.
\end{example}

\begin{definition}[Smooth Decomposed Space]
A \emph{smooth decomposed space} is a triple $(X,\mathcal{F},\mathcal{P})$ where $(X,\mathcal{F})$ is a differential space, and $(X,\mathcal{P})$ is a decomposed space with respect to the topology induced by $\mathcal{F}$.  We require that for each piece $S\in\mathcal{P}$, the inclusion map $i_S:S\to X$ induces a smooth manifold structure on $S$.
\end{definition}

\begin{definition}[Smooth Stratified Space]
A smooth decomposed space $X$ is a \emph{smooth stratified space} if the pieces of the decomposition satisfy the following condition.
\begin{itemize}
\item (Smooth Local Triviality) For every $x\in X$, there is an open neighbourhood $U\subseteq X$ of $x$, a smooth stratified space $F$ with a distinguished point $o\in F$ such that $\{o\}$ is a stratum of $F$, and a diffeomorphism $\varphi:U\to(S\cap U)\times F$ where $S$ is the stratum of $X$ containing $x$.  $\varphi$ is required to satisfy $$\varphi(s)=(s,o)$$ for each $s\in S\cap U$, and to map strata into strata.
\end{itemize}
Again, the pieces in the decomposition in this case are called \emph{strata}.
\end{definition}

For our purposes, we will always assume that the differential structure on a smooth stratified space is subcartesian.

\begin{definition}[Smooth Stratified Map]
Let $X$ and $Y$ be smooth stratified spaces, and let $F:X\to Y$ be a smooth map.  Then $F$ is \emph{stratified} if for each stratum $S$ of $X$, $F(S)\subset T$ for some stratum $T$ of $Y$.
\end{definition}

\begin{remark}
Smooth stratified spaces, along with stratified maps, form a category.
\end{remark}

Let $G$ be a compact Lie group acting on a (smooth) manifold $M$.  Let $H$ be a closed subgroup of $G$, and let $M_{(H)}$ be the set of all points in $M$ whose stabiliser is a conjugate of $H$.  Then, $M$ is the disjoint union of the sets $M_{(H)}$ as $H$ runs over closed subgroups of $G$.  The quotient map $\pi:M\to M/G$ partitions $M/G$ into sets $(M/G)_{(H)}:=\pi(M_{(H)})$ as $H$ runs over closed subgroups of $G$.

\begin{theorem}[Orbit-Type Stratifications]\labell{t:pot}
\noindent
\begin{enumerate}
\item The partitions on $M$ and $M/G$ defined above yield smooth stratifications with respect to the smooth structures $\CIN(M)$ and $\CIN(M/G)$, respectively, whose strata are given by connected components of the sets $M_{(H)}$ and $(M/G)_{(H)}$.
\item Each subset $M_{(H)}$ is a $G$-invariant submanifold of $M$.  If $M$ and $G$ are connected, then each stratum in the stratification on $M$ is $G$-invariant.
\item If $M$ is connected, then there exists a closed subgroup $K$ of $G$ such that the strata contained in $M_{(K)}$ form an open dense subset of $M$, and hence $(M/G)_{(K)}$ is an open dense subset of $M/G$.
\item The orbit map $\pi:M\to M/G$ is stratified with respect to the stratifications described above.
\end{enumerate}
\end{theorem}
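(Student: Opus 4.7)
The plan is to derive all four parts from the slice theorem of Koszul cited later in the thesis: for any $x\in M$ with stabilizer $H=G_x$, there is a $G$-invariant open neighborhood $U$ of $G\cdot x$ and a $G$-equivariant diffeomorphism $F:U\to G\times_H V$ sending $x$ to $[e,0]$, where $V=T_xM/T_x(G\cdot x)$ is the slice representation. Using a $G$-invariant Riemannian metric on $M$ we may treat $V$ as an orthogonal $H$-representation with $H$-invariant decomposition $V=V^H\oplus W$.

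For part (2), I would first compute that the stabilizer of $[g,v]\in G\times_H V$ equals $gH_vg^{-1}$, where $H_v\subseteq H$ is the $H$-stabilizer of $v$. Compactness of $H$ forces any subgroup of $H$ that is conjugate to $H$ in $G$ to equal $H$, so $F(M_{(H)}\cap U)=\{[g,v]\mid v\in V^H\}\cong (G/H)\times V^H$, which is a submanifold; this exhibits $M_{(H)}$ locally (hence globally) as a submanifold. $G$-invariance of $M_{(H)}$ is definitional. When $G$ is connected, its action on $M$ is by diffeomorphisms isotopic to $\id_M$, so it preserves each connected component of any invariant subset; if $M$ is also connected this implies each connected component of $M_{(H)}$ (i.e.\ each stratum) is $G$-invariant.

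For parts (1) and (4), smooth local triviality at $x$ follows by shrinking $U$ so that the bundle $G\times_H W\to G/H$ is trivialized over a small open set $\mathcal{O}\subseteq G/H$ about $[e]$; under the resulting diffeomorphism $U\cong\mathcal{O}\times V^H\times W$, the stratum $S\cap U$ through $x$ corresponds to $\mathcal{O}\times V^H\times\{0\}$, and one takes the transverse factor to be $W$ equipped with the orbit-type decomposition of the linear $H$-action (with $\{0\}$ a stratum since $W^H=0$). An induction on $\dim M$ shows $W$ itself is a smooth stratified space, closing the loop. For $M/G$, \hypref{l:samequot}{Lemma} identifies $U/G$ diffeologically (and as a differential space) with $V/H$, so the same inductive argument applied to the linear $H$-action on $V$ gives smooth local triviality on $M/G$, with $F=W/H$. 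Part (4) is then immediate because $\pi(M_{(H)})=(M/G)_{(H)}$ by definition, so connected components map into connected components.

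For part (3), this is the classical principal orbit type theorem. By induction on $\dim M$, using the slice description, any open set in $M$ contains a point whose stabilizer class is minimal among those occurring in its slice: on $V$, the union $\bigcup_{(H')\neq (K)}V^{H'}$ of proper fixed subspaces for strictly larger stabilizer classes is a finite union of proper subspaces (hence nowhere dense), so the complement $V_{(K)}$ is open and dense in $V$; transferring this through $F$ and invoking connectedness of $M$ to rule out multiple candidate principal classes yields a unique $(K)$ with $M_{(K)}$ open and dense, and consequently $(M/G)_{(K)}$ open and dense in $M/G$. The main obstacle will be organizing the induction cleanly so that the smooth local triviality conclusion at dimension $n$ uses only the statement for the linear $H$-action on the strictly lower-dimensional slice representation, and verifying that the identifications $U\cong\mathcal{O}\times V^H\times W$ and $U/G\cong V^H\times (W/H)$ (locally) actually send strata to products of strata in both the $M$ and $M/G$ stratifications simultaneously.
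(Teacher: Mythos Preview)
The paper does not actually prove this theorem: its proof consists of the sentence ``The last statement above is clear by definition of the stratifications'' together with citations to \cite{duistermaat-kolk04} and \cite{cushman-sniatycki01} for parts (1)--(3). Your proposal, by contrast, sketches the standard slice-theorem argument that those references carry out, so in content you are reproducing what the cited sources do rather than what the thesis itself does. That is entirely appropriate here, and your outline of parts (2), (3), and (4) is correct.

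There is one gap to be aware of in your induction for part (1). You induct on $\dim M$ and declare that $W$ (respectively $W/H$) is a smooth stratified space by induction. But $\dim W = \dim M - \dim(G/H) - \dim V^H$, and this can equal $\dim M$: take $x$ a fixed point with $(T_xM)^G=0$. In that case the stratum through $x$ is a point, $W=T_xM$, and the induction does not fire. The usual fix is to observe that the linear $H$-action on $W$ is radial, so $W\smallsetminus\{0\}\cong (0,\infty)\times S(W)$ equivariantly; since $\dim S(W)<\dim M$, induction applies to the sphere, and $W$ with its orbit-type decomposition is then the open cone on the stratified space $S(W)$. You should build this cone step explicitly into your induction, or alternatively induct on the depth of the stratification rather than on $\dim M$.
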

\begin{proof}
The last statement above is clear by definition of the stratifications. See \cite{duistermaat-kolk04} and \cite{cushman-sniatycki01} for the first three statements.
\end{proof}

\begin{definition}\labell{d:symplquotstrat}
We call the above stratifications \emph{orbit-type stratifications} of each respective space.
\end{definition}

\section{The Symplectic Quotient}

We now turn our attention to the symplectic quotient, and the induced stratification on it.  Using the stratification, we show how to define a differential form on the symplectic quotient.\\

Assume that $G$ is a compact Lie group acting in a Hamiltonian fashion on a symplectic manifold $(M,\omega)$ with momentum map $\Phi$ and $Z:=\Phi^{-1}(0)$.  Let $i:Z\to M$ be the inclusion.  For each closed subgroup $H$ of $G$, let $Z_{(H)}:=M_{(H)}\cap Z$.  Note that this is a $G$-invariant subset of $Z$ since both $Z$ and $M_{(H)}$ are invariant.  Let $(Z/G)_{(H)}:=\pi(Z_{(H)})$.  For each nonempty such subset, let $\pi_{(H)}:=\pi|_{Z_{(H)}}$ and $i_{(H)}:=i|_{Z_{(H)}}$.  Finally, let $\pi_Z:=\pi|_Z$ and let $j:Z/G\to M/G$ be the inclusion so that the following diagram commutes.

$$\xymatrix{
Z \ar[d]_{\pi_Z} \ar[r]^i & M \ar[d]^{\pi} \\
Z/G \ar[r]_j & M/G \\
}$$

\begin{theorem}[Orbit-Type Stratification (Hamiltonian Version)]\labell{t:poth}
\noindent
\begin{enumerate}
\item The partitions on $Z$ and $Z/G$ defined above yield smooth stratifications with respect to $\CIN(Z)$ and $\CIN(Z/G)$, respectively, whose strata are given by connected components of the sets $Z_{(H)}$ and $(Z/G)_{(H)}$.
\item Each subset $Z_{(H)}$ is a $G$-invariant submanifold of $M$.
\item If $M$ is connected and $\Phi$ is a proper map, then there exists a closed subgroup $K$ of $G$ such that the strata contained in $Z_{(K)}$ form an open dense subset of $Z$, and hence $(Z/G)_{(K)}$ is an open dense subset of $Z/G$.
\item The orbit map $\pi_Z:Z\to Z/G$ along with the inclusions $i$ and $j$ are stratified with respect to the stratifications described above.
\end{enumerate}
\end{theorem}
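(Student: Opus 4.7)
The plan is to reduce all four parts to the Marle--Guillemin--Sternberg (MGS) symplectic local normal form, combined with the non-Hamiltonian orbit-type result \hypref{t:pot}{Theorem} already in hand. Fix $x \in Z$ with stabiliser $H$. The MGS model produces a $G$-equivariant symplectomorphism from a $G$-invariant open neighbourhood of $G \cdot x$ in $M$ onto an open neighbourhood of the zero section in $Y := G \times_H (\mathfrak{h}^\circ \oplus V)$, where $\mathfrak{h}^\circ \subseteq \g^*$ is an $H$-invariant complement to $\mathfrak{h}^*$ and $V$ is the symplectic slice, a symplectic $H$-representation with its own momentum map $\Phi_V \colon V \to \mathfrak{h}^*$. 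Under this identification $\Phi$ has an explicit formula, so that $Z$ corresponds to $G \times_H (\{0\} \oplus \Phi_V^{-1}(0))$, the orbit-type subset $M_{(H)}$ corresponds to $G \times_H ((\mathfrak{h}^\circ)^H \oplus V^H)$, and hence $Z_{(H)}$ corresponds to $G \times_H (\{0\} \oplus V^H)$.

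With this model in hand, Part (ii) is immediate: $Z_{(H)}$ is a $G$-invariant submanifold of $M$ because $G \times_H V^H$ is. For Part (i), the pieces of the candidate decomposition of $Z$ are the connected components of $Z_{(H)} = Z \cap M_{(H)}$, so local finiteness, local closedness, and the partial order condition $S_i \le S_j \iff S_i \subseteq \overline{S_j}$ are all inherited from the orbit-type stratification of $M$ via \hypref{t:pot}{Theorem}. To verify smooth local triviality at $x \in Z_{(H)}$, I would use the MGS model to identify a neighbourhood of $x$ in $Z$ with $G \times_H (\{0\} \oplus \Phi_V^{-1}(0))$, split off the stratum direction via a slice through $x$ in $Z_{(H)}$, and leave a transverse factor $F := \Phi_V^{-1}(0)$ equipped with its orbit-type stratification under the linear $H$-action. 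For $Z/G$, I would then invoke \hypref{l:samequot}{Lemma} to identify the corresponding neighbourhood in $Z/G$ with $\Phi_V^{-1}(0)/H$ and conclude that $Z/G$ is locally a product of a slice in $Z_{(H)}/G$ with the reduced space $\Phi_V^{-1}(0)/H$, viewed as a smooth stratified space of strictly smaller depth.

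Part (iii) uses properness: if $M$ is connected and $\Phi$ is proper, then $Z$ is compact and $G$-invariant, so the standard argument producing a principal orbit type (compact case of \hypref{t:pot}{Theorem}) supplies a closed subgroup $K \le G$ with $Z_{(K)}$ open and dense in $Z$; applying $\pi_Z$ yields density of $(Z/G)_{(K)}$ in $Z/G$. Part (iv) is immediate by construction: $i$, $\pi_Z$, and $j$ each map the piece labelled by $(H)$ into the piece labelled by $(H)$ in the target, and hence send each connected component into a single stratum.

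The main obstacle is the smooth, as opposed to merely topological, local triviality in Part (i), especially at strata of positive depth and for the quotient $Z/G$. One must show that the MGS identifications are diffeomorphisms of differential spaces with respect to $\CIN(Z)$ and $\CIN(Z/G)$ as characterised in \hypref{t:CINZ}{Theorem}, not just homeomorphisms. The natural route is an induction on $\dim G + \dim V$, equivalently on the depth of the stratification: the transverse fibre $F = \Phi_V^{-1}(0)$ and its quotient $\Phi_V^{-1}(0)/H$ are themselves instances of the theorem at strictly smaller depth, so the inductive hypothesis supplies the stratified smooth structure on $F$ needed to complete the local triviality check.
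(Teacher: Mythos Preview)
The paper does not actually prove this theorem: its entire proof is the single line ``See \cite{lerman-sjamaar91}.'' Your sketch, by contrast, outlines the substantive argument, and it is essentially the route taken in that reference: the Marle--Guillemin--Sternberg local model identifies a neighbourhood of an orbit in $Z$ with $G\times_H\Phi_V^{-1}(0)$, from which the submanifold property of $Z_{(H)}$, the decomposition conditions, and smooth local triviality (by induction on depth) all follow. So your proposal is correct in approach and goes well beyond what the paper itself supplies.

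One caution on Part~(iii): you argue that $Z$ is compact and then invoke ``the standard argument producing a principal orbit type (compact case of \hypref{t:pot}{Theorem}).'' But \hypref{t:pot}{Theorem} is stated for $G$ acting on a \emph{manifold}, and $Z$ is generally singular, so you cannot apply it directly. The actual argument in \cite{lerman-sjamaar91} again uses the local normal form: near any point of $Z$ one sees $\Phi_V^{-1}(0)$, and the linear-algebraic structure of the symplectic slice forces the orbit types occurring in $Z$ to be totally ordered locally, which together with connectedness of $Z$ (this is where properness of $\Phi$ and connectedness of $M$ are used) yields a unique minimal, hence principal, orbit type. Your outline would be strengthened by replacing the appeal to \hypref{t:pot}{Theorem} with this slice-based reasoning.
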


\begin{proof}
See \cite{lerman-sjamaar91}.
\end{proof}

\begin{definition}[Orbit-Type Stratifications]
The stratifications defined on $Z$ and $Z/G$ above are called \emph{orbit-type stratifications} of each space.
\end{definition}

\begin{remark}
Note that $$(Z/G)_{(H)}=Z_{(H)}/G=(Z/G)\cap(M/G)_{(H)}.$$
\end{remark}

Here and afterward, we let $K$ be a closed subgroup of $G$ such that $Z_{(K)}$ and $(Z/G)_{(K)}$ are open and dense strata.  We have the following commutative diagram of diffeological spaces, where $I$ and $J$ are inclusions.

$$\xymatrix{
Z_{(K)} \ar[r]^I \ar[d]_{\pi_{(K)}} \ar@/^2pc/[rr]^{i_{(K)}} & Z \ar[r]^i \ar[d]_{\pi_Z} & M \ar[d]^{\pi} \\
(Z/G)_{(K)} \ar[r]_J & Z/G \ar[r]_j & M/G
}$$

\begin{definition}
A \emph{Sjamaar differential $l$-form} $\sigma$ on $Z/G$ is a differential $l$-form on $(Z/G)_{(K)}$ (in the ordinary sense) such that there exists $\tilde{\sigma}\in\Omega^l(M)$ satisfying $i_{(K)}^*\tilde{\sigma}=\pi_{(K)}^*\sigma$.  Denote the set of Sjamaar $l$-forms by $\Omega_{Sjamaar}^l(Z/G)$.
\end{definition}

We can define an exterior derivative $d$ of a Sjamaar form $\alpha$ as simply the usual exterior derivative of $\alpha$ as a differential form on $(Z/G)_{(K)}$.  In his paper \cite{sjamaar05} Sjamaar shows that $\Omega^*_{Sjamaar}(Z/G)$ equipped with the exterior derivative yields a subcomplex of $(\Omega^*((Z/G)_{(K)}),d)$, which satisfies a Poincar\'e Lemma, Stokes' Theorem, and a de Rham theorem.

\begin{example}[$\ZZ_2\circlearrowright\RR^2$]
Consider $\ZZ_2=\{\pm1\}$ acting on $(\RR^2,dx\wedge dy)$ by scalar multiplication.  This action preserves the symplectic form, and is Hamiltonian with constant momentum map.  The zero level is thus all of $\RR^2$, and the symplectic quotient is equal to the geometric quotient.\\

We will use the induced differential structure to describe the quotient as a subset of $\RR^3$.  Three polynomials that generate all invariant polynomials are $x^2-y^2$, $2xy$, and $x^2+y^2$.  The quotient map $\pi$ can be expressed as $$\pi(x,y)=(x^2-y^2,2xy,x^2+y^2).$$  The image is the cone $$C:=\{(s,t,u)\in\RR^3~|~s^2+t^2=u^2,~u\geq0\}.$$  The open dense stratum of $C$ is the cone minus its apex.  Call this $C'$.  Consider the differential form $\sigma=\frac{1}{4u}ds\wedge dt$ on $\RR^3$ minus the plane $u=0$.  This pulls back to a form on $C'$.  Moreover,
\begin{align*}
(\pi|_{\pi^{-1}(C')})^*\sigma=&\frac{1}{4(x^2+y^2)}d(x^2-y^2)\wedge d(2xy)\\
=&\frac{1}{4(x^2+y^2)}(2xdx-2ydy)\wedge2(ydx+xdy)\\
=&\frac{1}{x^2+y^2}(x^2+y^2)(dx\wedge dy)\\
=&dx\wedge dy.
\end{align*}
This extends to the symplectic form $dx\wedge dy$ on all of $\RR^2$.  Hence, $\sigma$ is a Sjamaar form.
\end{example}

\begin{lemma}\labell{l:extendform}
Let $i:N\to M$ be a closed submanifold, and let $\alpha\in\Omega^l(N)$.  Then there exists $\tilde{\alpha}\in\Omega^l(M)$ such that $i^*\tilde{\alpha}=\alpha$.
\end{lemma}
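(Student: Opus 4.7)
The plan is to use a tubular neighbourhood of $N$ in $M$ together with a smoothly truncating bump function to extend $\alpha$ outward to all of $M$.

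First, since $N$ is a closed submanifold of $M$, the tubular neighbourhood theorem provides an open neighbourhood $U\subseteq M$ of $N$ together with a smooth retraction $r:U\to N$ satisfying $r\circ i=\id_N$ (one may take $U$ to be diffeomorphic to an open neighbourhood of the zero section in the normal bundle of $N$, with $r$ the bundle projection). On $U$, the form $r^*\alpha$ is a smooth $l$-form, and it restricts correctly, since $i^*(r^*\alpha)=(r\circ i)^*\alpha=\alpha$.

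Next, to pass from a form defined only on $U$ to a form on all of $M$, I would use a bump function argument. Since $N$ is closed and $U$ is open, standard partition-of-unity arguments on the manifold $M$ produce a smooth function $\rho:M\to[0,1]$ with $\rho\equiv 1$ on some open neighbourhood $V\subseteq U$ of $N$ and $\supp(\rho)\subseteq U$. Define
\[
\tilde\alpha(x):=\begin{cases}\rho(x)\cdot(r^*\alpha)(x) & \text{if } x\in U,\\ 0 & \text{if } x\in M\setminus\supp(\rho).\end{cases}
\]
The two definitions agree (both equal zero) on the overlap $U\setminus\supp(\rho)$, so $\tilde\alpha$ is a well-defined smooth $l$-form on $M$. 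Since $\rho\equiv 1$ on $V\supseteq N$, we obtain $i^*\tilde\alpha=(\rho\circ i)\cdot i^*(r^*\alpha)=1\cdot\alpha=\alpha$, as required.

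There is no serious obstacle here beyond invoking the tubular neighbourhood theorem (which requires $N$ closed in $M$, exactly our hypothesis) and the existence of smooth partitions of unity. The one point to check carefully is that the cutoff does not disturb the pullback to $N$, which is guaranteed by arranging $\rho\equiv 1$ on a neighbourhood of $N$ rather than merely on $N$ itself; this is standard and available on any paracompact manifold.
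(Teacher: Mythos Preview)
Your proof is correct and follows essentially the same approach as the paper: take a tubular neighbourhood with retraction $r:U\to N$, multiply $r^*\alpha$ by a bump function supported in $U$ and equal to $1$ along $N$, and extend by zero. The paper's version only requires the bump function to equal $1$ on $N$ itself (which already suffices for the pullback computation), but your slightly stronger requirement that $\rho\equiv 1$ on a neighbourhood of $N$ does no harm.
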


\begin{proof}
Let $U$ be a tubular neighbourhood of $N$, and let $r:U\to N$ be a retraction of $U$ onto $N$.  Let $b:M\to\RR$ be a bump function such that $b|_N=1$ and $\supp(b)\subset U$.  Then, define $\tilde{\alpha}$ to be an $l$-form on $U$ defined by $b(r^*\alpha)$.  Then, $i^*\tilde{\alpha}=\alpha$.  Since $\tilde{\alpha}$ has support in $U$, we can extend it as $0$ to the rest of $M$.
\end{proof}

\begin{theorem}\labell{t:sjamaar}
Assume that $0\in\g^*$ is a regular value for the momentum map $\Phi$.  Then the pullback map $J^*:\Omega^l(Z/G)\to\Omega^l((Z/G)_{(K)})$ is a bijection onto the Sjamaar $l$-forms.  That is, $J$ induces an isomorphism of complexes from diffeological forms on $Z/G$ onto the Sjamaar forms.
\end{theorem}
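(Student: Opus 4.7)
The plan is to reduce the statement to \hypref{t:geomquot}{Theorem} applied to the compact Lie group $G$ acting on the manifold $Z$. Because $0$ is a regular value of $\Phi$, the zero level set $Z$ is a closed $G$-invariant submanifold of $M$, so \hypref{t:geomquot}{Theorem} gives an isomorphism $\pi_Z^*\colon\Omega^l(Z/G)\to\Omega^l_{\text{basic}}(Z)$, and \hypref{l:extendform}{Lemma} lets us lift forms from $Z$ to $M$. The whole argument amounts to shuffling these two facts through the commutative square relating $I$, $J$, $i$, $\pi_Z$, $\pi_{(K)}$, and $i_{(K)}$.

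First I would show that $J^*$ lands in the Sjamaar forms. Given $\beta\in\Omega^l(Z/G)$, set $\alpha:=\pi_Z^*\beta\in\Omega^l(Z)$, which is basic by \hypref{t:geomquot}{Theorem}. By \hypref{l:extendform}{Lemma} there exists $\tilde\sigma\in\Omega^l(M)$ with $i^*\tilde\sigma=\alpha$. Commutativity of the big diagram gives
\[
i_{(K)}^*\tilde\sigma=I^*i^*\tilde\sigma=I^*\pi_Z^*\beta=\pi_{(K)}^*J^*\beta,
\]
so $J^*\beta$ is a Sjamaar form with extension $\tilde\sigma$.

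For surjectivity, let $\sigma\in\Omega^l_{Sjamaar}(Z/G)$ with extension $\tilde\sigma\in\Omega^l(M)$. After averaging $\tilde\sigma$ over the compact group $G$ (which leaves the defining identity $i_{(K)}^*\tilde\sigma=\pi_{(K)}^*\sigma$ intact, since the right-hand side is already $G$-invariant on the $G$-invariant set $Z_{(K)}$), I may assume $\tilde\sigma$ is $G$-invariant, so $i^*\tilde\sigma$ is a $G$-invariant form on $Z$. The key step is to show that $i^*\tilde\sigma$ is in fact basic on all of $Z$. On the principal stratum $Z_{(K)}$ we have $I^*i^*\tilde\sigma=i_{(K)}^*\tilde\sigma=\pi_{(K)}^*\sigma$, which is horizontal because it factors through the quotient map $\pi_{(K)}$. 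Horizontality is the pointwise vanishing condition $\xi_Z\hook i^*\tilde\sigma=0$ for every $\xi\in\g$, which is a closed condition; since $Z_{(K)}$ is open and dense in $Z$ by \hypref{t:poth}{Theorem}, horizontality extends to all of $Z$. Thus $i^*\tilde\sigma\in\Omega^l_{\text{basic}}(Z)$, and \hypref{t:geomquot}{Theorem} produces a unique $\beta\in\Omega^l(Z/G)$ with $\pi_Z^*\beta=i^*\tilde\sigma$. Pulling back via $I$ and using that $\pi_{(K)}^*$ is injective (the action on $Z_{(K)}$ is locally free because $0$ is regular, so $\pi_{(K)}$ is a submersion onto the manifold $(Z/G)_{(K)}$), I would conclude $J^*\beta=\sigma$.

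Injectivity of $J^*$ is the easiest step: if $J^*\beta=0$, then $\pi_Z^*\beta$ vanishes on the open dense subset $Z_{(K)}$ and hence on all of $Z$ by continuity, so $\beta=0$ by the injectivity half of \hypref{t:geomquot}{Theorem}. The main obstacle is the basicity argument for $i^*\tilde\sigma$; the subtlety is that averaging does not automatically produce horizontality on the singular strata, so one really does need the density of $Z_{(K)}$ in $Z$ together with the pointwise, closed nature of the horizontality condition. Everything else is a diagram chase.
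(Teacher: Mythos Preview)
Your proof is correct and follows essentially the same strategy as the paper: reduce to \hypref{t:geomquot}{Theorem} on the manifold $Z$, use \hypref{l:extendform}{Lemma} to pass between $Z$ and $M$, and exploit the density of $Z_{(K)}$ in $Z$ to push the basicity and vanishing arguments through. Two minor differences worth noting: you average $\tilde\sigma$ over $G$ to secure invariance before arguing horizontality, whereas the paper does not average and leaves invariance implicit (it follows from the same density argument); and your injectivity step---observing that $\pi_Z^*\beta$ is an ordinary smooth form on the manifold $Z$ vanishing on the open dense $Z_{(K)}$, hence everywhere---is more direct than the paper's explicit plot-by-plot verification.
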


\begin{remark}
Recall that $0\in\g^*$ being a regular value for $\Phi$ is equivalent to $\dim(\Stab(z))=0$ for all $z\in Z$.
\end{remark}

\begin{proof}
We first prove that the image of $J^*$ is in Sjamaar forms; that is, for any $\beta\in\Omega^l(Z/G)$, there exists $\tilde{\alpha}\in\Omega^l(M)$ such that $i_{(K)}^*\tilde{\alpha}=\pi_{(K)}^*(J^*\beta)$.

From \hypref{t:geomquot}{Theorem}, we know that basic forms on a manifold are in bijection with the diffeological forms on the geometric quotient.  But this is exactly the set up we have here: $Z/G$ is the geometric quotient of the manifold $Z$.  Thus, there is a (basic) form $\alpha\in\Omega^l(Z)$ such that $\pi_Z^*\beta=\alpha$.  But since $Z$ is a closed submanifold of $M$, by \hypref{l:extendform}{Lemma} there is a form $\tilde{\alpha}\in\Omega^l(M)$ such that $i^*\tilde{\alpha}=\alpha$.  So $$\pi_{(K)}^*J^*\beta=I^*\pi_Z^*\beta=i_{(K)}^*\tilde{\alpha},$$ and so $J^*\beta$ is a Sjamaar form.\\

We next prove that $J^*$ is surjective.  Let $\sigma$ be a Sjamaar $l$-form.  Then there exists $\tilde{\alpha}\in\Omega^l(M)$ such that $i_{(K)}^*\tilde{\alpha}=\pi_{(K)}^*\sigma$.  We claim that $i^*\tilde{\alpha}$ is basic on $Z$.  If this claim is true, then by \hypref{t:geomquot}{Theorem}, there exists a diffeological form $\beta\in\Omega^l(Z/G)$ such that $\pi_Z^*\beta=i^*\tilde{\alpha}$.  Then, $I^*\pi_Z^*\beta=i_{(K)}^*\tilde{\alpha}$, and so $$\pi_{(K)}^*J^*\beta=\pi_{(K)}^*\sigma.$$  But by \hypref{p:injectivity}{Proposition}, $\pi_{(K)}^*$ is one-to-one, and so we could conclude that $J^*\beta^*=\sigma$.  So we strive to prove the claim.

Let $x\in Z$ and let $v\in T_xZ$ be tangent to $G\cdot x$.  If $x\in Z_{(K)}$, then
\begin{align*}
v\hook i^*\tilde{\alpha}=&v\hook  i_{(K)}^*\tilde{\alpha}\\
=&v\hook\pi_{(K)}^*\sigma\\
=&0.
\end{align*}
If $x\notin Z_{(K)}$, then since $Z_{(K)}$ is open and dense in $Z$, and $T(G\cdot x)$ is in the closure of $\bigcup_{z\in Z_{(K)}}T_z(G\cdot z)$ in $TZ$, we have that there exist a sequence $(z_\nu)$ of points in $Z_{(K)}$ and vectors $(v_\nu)$ such that for each $\nu$, $v_\nu\in T_{z_\nu}Z_{(K)}$ is tangent to $G\cdot z_\nu$, $z_\nu\to x$, and $v_\nu\to v$.  Then,
\begin{align*}
v\hook i^*\tilde{\alpha}=&\underset{\nu\to\infty}{\lim}\left(v_\nu\hook i_{(K)}^*\tilde{\alpha}\right)\\
=&0.
\end{align*}
Thus, $i^*\tilde{\alpha}$ is basic on $Z$.  We now have that $J^*$ is surjective.\\

We finally wish to show that $J^*$ is one-to-one.  To this end, let $\beta\in\Omega^l(Z/G)$ such that $J^*\beta=0$.  We want to show that $\beta$ itself is equal to $0$.  That is, for any plot $p:U\to Z/G$, we want $p^*\beta=0$.  For any plot $p:U\to Z/G$ and for any $u\in U$ there exist an open neighbourhood $V\subseteq U$ of $u$, a chart $q:W\to Z$ of $Z$, and a smooth map $F:V\to W$ such that $p|_V=\pi_Z\circ q\circ F$.  So it is sufficient to show that $(\pi_Z\circ q)^*\beta=0$ for any chart $q:W\to Z$.  Moreover, in this case, it is sufficient to show that $(\pi_Z\circ q)^*\beta$ is $0$ on an open dense subset of $W$.

To this end, fix a chart $q:W\to Z$.  Note that $B:=q^{-1}(\pi_Z^{-1}((Z/G)_{(K)}))$ is an open dense subset of $W$.  Also, $q|_B:B\to(Z/G)_{(K)}$ is a plot of $(Z/G)_{(K)}$. Thus, by assumption, $$(\pi\circ q|_B)^*\beta=(\pi\circ q|_B)^*J^*\beta=0.$$  That is to say, $(\pi\circ q|_B)^*\beta$ is equal to $0$ on the open dense subset $B$ of $W$.  So we have shown that $\beta=0$, and we conclude that $J^*$ is one-to-one.  This completes the proof.
\end{proof}

The next theorem is a partial result when we do not assume that $0\in\g^*$ is a regular value of $\Phi$.

\begin{theorem}
Let $\sigma$ be a Sjamaar $l$-form on $Z/G$.  Then there exists $\beta\in\Omega^l(Z/G)$ such that $J^*\beta=\sigma$.
\end{theorem}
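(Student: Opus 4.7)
The plan is to mimic the surjectivity argument in the proof of \hypref{t:sjamaar}{Theorem}, but to route the descent through $M$ (which is a manifold) rather than $Z$ (which is not a manifold when $0$ is not a regular value). I would start, as in that proof, with some $\tilde{\alpha}\in\Omega^l(M)$ such that $i_{(K)}^*\tilde{\alpha}=\pi_{(K)}^*\sigma$, which is guaranteed by the definition of a Sjamaar form, and then average $\tilde{\alpha}$ over $G$ against normalised Haar measure to obtain a $G$-invariant form. The identity $i_{(K)}^*\tilde{\alpha}=\pi_{(K)}^*\sigma$ survives averaging because its right-hand side is already $G$-invariant, so each $(g\circ i_{(K)})^*\tilde{\alpha}=(i_{(K)}\circ g|_{Z_{(K)}})^*\tilde{\alpha}=\pi_{(K)}^*\sigma$ by the $G$-invariance of $\pi_{(K)}$.

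A key observation is that at every $x\in Z_{(K)}$ the orbit tangent space $T_x(G\cdot x)$ lies inside $T_xZ_{(K)}$, because orbit-type strata are $G$-invariant. Since $i_{(K)}^*\tilde{\alpha}=\pi_{(K)}^*\sigma$ is basic on the manifold $Z_{(K)}$, this already forces $\tilde{\alpha}$ to annihilate $T_x(G\cdot x)$ at every point of $Z_{(K)}$; that is, $\tilde{\alpha}$ is horizontal along the top stratum. The next aim is to modify $\tilde{\alpha}$ to be horizontal, and hence basic, on all of $M$ without altering it on $Z_{(K)}$. Once such a basic form $\tilde{\alpha}'$ is obtained, \hypref{t:geomquot}{Theorem} produces a diffeological form $\tilde{\beta}\in\Omega^l(M/G)$ with $\pi^*\tilde{\beta}=\tilde{\alpha}'$, and I would set $\beta:=j^*\tilde{\beta}$. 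The calculation
\begin{equation*}
\pi_{(K)}^*J^*\beta=(j\circ J\circ\pi_{(K)})^*\tilde{\beta}=(\pi\circ i_{(K)})^*\tilde{\beta}=i_{(K)}^*\tilde{\alpha}'=\pi_{(K)}^*\sigma,
\end{equation*}
combined with the injectivity of $\pi_{(K)}^*$ from \hypref{p:injectivity}{Proposition}, then forces $J^*\beta=\sigma$.

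The principal obstacle is the horizontalisation of $\tilde{\alpha}$. Orthogonal projection onto orbit-tangent directions using a $G$-invariant Riemannian metric is the natural candidate, but the orbit-tangent distribution drops dimension across orbit-type strata, so smoothness of the resulting form is not automatic. I would attack this locally through Koszul's slice theorem: near any $x\in M$ with stabiliser $H$, the slice model identifies $G$-basic forms on the slice neighbourhood $U\cong G\times_H V$ with $H$-basic forms on the slice representation $V$ (the injectivity of which is essentially \hypref{l:basicformslice}{Lemma}), and the hypothesis on $\tilde{\alpha}$ makes its slice restriction $H$-invariant and already $H$-horizontal along the preimage of $Z_{(K)}$. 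Choosing a horizontal $H$-invariant extension on each slice, then patching via a $G$-invariant smooth partition of unity on $M$, should yield a global basic $\tilde{\alpha}'\in\Omega^l(M)$ with $i_{(K)}^*\tilde{\alpha}'=\pi_{(K)}^*\sigma$, after which the descent above completes the proof.
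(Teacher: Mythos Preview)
Your route through $M/G$ is genuinely different from the paper's, and the horizontalisation step you flag as ``the principal obstacle'' is a real gap that your sketch does not close. At a point $x\in Z$ with non-principal stabiliser $H$, the slice restriction of $\tilde{\alpha}$ is an $H$-invariant form on $V$ that is $H$-horizontal only along the locus corresponding to $Z_{(K)}$; you then assert the existence of an $H$-basic form on the whole slice with the same restriction to that locus, but give no argument for this. Since $H$-basic forms on $V$ correspond to diffeological forms on $V/H$, you are in effect asking to extend a form from an open subset of $V/H$ to all of $V/H$, and such extensions need not exist a priori. Put differently, your strategy attempts to prove the stronger statement that every Sjamaar form on $Z/G$ is the restriction under $j$ of a diffeological form on $M/G$, which is more than the theorem claims and is not obviously available.

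The paper's approach is more direct and sidesteps this extension problem entirely. Rather than manufacturing a basic form on $M$, it sets $\alpha:=i^*\tilde{\alpha}$ as a diffeological form on $Z$ and uses the criterion of \hypref{p:pullbackimgchar}{Proposition} to show directly that $\alpha=\pi_Z^*\beta$ for some $\beta\in\Omega^l(Z/G)$. Given plots $p_1,p_2:U\to Z$ with $\pi_Z\circ p_1=\pi_Z\circ p_2$, one partitions $U$ by the sets $C_{(H)}:=p_1^{-1}(Z_{(H)})$; by local finiteness of the stratification and \hypref{l:finiteunion}{Lemma}, it suffices to verify $p_1^*\alpha=p_2^*\alpha$ on each $\inter(\overline{C_{(H)}})$. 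There the plots restrict (on the dense open $C_{(H)}\cap\inter(\overline{C_{(H)}})$) to plots of the $G$-\emph{manifold} $Z_{(H)}$, and the key external input is Lemma~3.3 of \cite{sjamaar05}: the pullback $i_{(H)}^*\tilde{\alpha}$ is basic on $Z_{(H)}$ for \emph{every} orbit type, not just the principal one. Then \hypref{t:geomquot}{Theorem} together with \hypref{p:pullbackimgchar}{Proposition} gives the desired equality stratum by stratum. The whole argument stays on $Z$ and never needs to confront the horizontalisation problem on $M$.
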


\begin{proof}
Since $\sigma$ is a Sjamaar $l$-form, there exists $\tilde{\alpha}\in\Omega^k(M)$ such that $i_{(K)}^*\tilde{\alpha}=\pi_{(K)}^*\sigma$.  If there exists $\beta\in\Omega^l(Z/G)$ such that $\pi_Z^*\beta=i^*\tilde{\alpha}$, then, $$\pi_{(K)}^*J^*\beta=i_{(K)}^*\tilde{\alpha}=\pi_{(K)}^*\sigma.$$  By \hypref{p:injectivity}{Proposition}, $\pi_{(K)}^*$ is injective, and so $J^*\beta=\sigma$.  We now wish to show the existence of $\beta$.\\

Let $\alpha=i^*\tilde{\alpha}$.  By \hypref{p:pullbackimgchar}{Proposition}, it is enough to show that for any two plots $p_1:U\to Z$ and $p_2:U\to Z$ such that $\pi_Z\circ p_1=\pi_Z\circ p_2$, we have $p_1^*\alpha=p_2^*\alpha$.  Fix two such plots.  Define $$C_{(H)}:=\{u\in U~|~p_1(u)\in Z_{(H)}\}.$$  Note that due to the invariance of each stratum, $p_1(u)$ and $p_2(u)$ will always lie within the same stratum, for each $u\in U$.  Without loss of generality, assume that the images of $p_1$ and $p_2$ intersect only a finite number of orbit-type strata (which is possible since $G$ is compact).  Then there are only finitely many nonempty $C_{(H)}$.  By \hypref{l:finiteunion}{Lemma} the set $\bigcup_{(H)}\inter(\overline{C_{(H)}})$ is an open dense subset of $U$. Thus we only need to show that $$(p_1|_{\inter(\overline{C_{(H)}})})^*\alpha=(p_2|_{\inter(\overline{C_{(H)}})})^*\alpha$$ for each $(H)$ such that the interior of $\overline{C_{(H)}}$ is nonempty. Fix such a conjugacy class.  Now, $p_1|_{\inter(\overline{C_{(H)}})}$ and $p_2|_{\inter(\overline{C_{(H)}})}$ are plots on $\overline{Z_{(H)}}$. $C_{(H)}=(p_1)^{-1}(Z_{(H)})$, and since $Z_{(H)}$ is open in $\overline{Z_{(H)}}$, we have that $C_{(H)}$ is open (and dense) in $\overline{C_{(H)}}$.  Thus, $B_{(H)}:=C_{(H)}\cap\inter(\overline{C_{(H)}})$ is open and dense in $\inter(\overline{C_{(H)}})$.  Denote $p'_1:=p_1|_{B_{(H)}}$ and $p'_2:=p_2|_{B_{(H)}}$.  These are two plots on $Z_{(H)}$ with the same domain, satisfying $\pi_{(H)}\circ p'_1=\pi_{(H)}\circ p'_2$.  But $Z_{(H)}$ is a $G$-manifold, and the pullback of $\alpha$ to $Z_{(H)}$ is equal to $i_{(H)}^*\tilde{\alpha}$, which we know is basic (see Lemma 3.3 of \cite{sjamaar05}).  Thus by a combination of \hypref{t:geomquot}{Theorem} and \hypref{p:pullbackimgchar}{Proposition}, we have that $(p'_1)^*\alpha=(p'_2)^*\alpha$.  Hence, $p_1^*\alpha=p_2^*\alpha$ on an open dense subset of $\inter(\overline{C_{(H)}})$.  This completes the proof.
\end{proof}

\section{Reduction in Stages}

One application of \hypref{t:sjamaar}{Theorem} is to reduction in stages.  More details are given in \cite{lerman-sjamaar91}, but we review the important details here.\\

Let $G_1$ and $G_2$ be compact Lie groups acting on a smooth connected symplectic manifold $(M,\omega)$ such that their actions are Hamiltonian, and the actions commute.  Let $\Phi_1$ and $\Phi_2$ be the corresponding momentum maps.  Average $\Phi_1$ over $G_2$ and $\Phi_2$ over $G_1$.  Then $G=G_1\times G_2$ acts on $(M,\omega)$ in a Hamiltonian fashion with momentum map $\Phi=\Phi_1\times\Phi_2$.\\

Let $Z_1:=\Phi_1^{-1}(0)$.  Then $Z_1$ is $G$-invariant (or equivalently, it is both $G_1$ and $G_2$-invariant).  Thus, $Z_1/G_1$ is a $G_2$-space.  Indeed, let $\pi_1:Z_1\to Z_1/G_1$ be the quotient map.  Then, for any $g_2\in G_2$ and $z\in Z_1$, we have $g_2\cdot\pi_1(z):=\pi_1(g_2\cdot z)$.  This is well-defined since the actions of $G_1$ and $G_2$ on $Z_1$ commute.  We claim that $\Phi_2|_{Z_1}$ descends to a $G_2$-equivariant map $\Phi'_2:Z_1/G_1\to\g^*_2$ defined by the relation $$\Phi_2|_{Z_1}=\Phi'_2\circ\pi_1.$$  $\Phi'_2$ is well-defined since $\pi_1$ is surjective, and $\Phi_2$ is $G_1$-equivariant.  Also, for any $g_2\in G_2$ and $z\in Z_1$, we have
\begin{align*}
\Phi'_2(g_2\cdot\pi_1(z))=&\Phi'_2(\pi_1(g_2\cdot z))\\
=&\Phi_2(g_2\cdot z)\\
=&g_2\cdot \Phi_2(z)\\
=&g_2\cdot\Phi'_2(\pi_1(z)).
\end{align*}
Thus $\Phi'_2$ is $G_2$-equivariant.  Also, for any $G_1$-orbit-type stratum $(Z_1/G_1)_{(H)}$ with symplectic structure $\omega_{(H)}$, and any $\xi\in\g_2$, we have
\begin{align*}
(\pi_{(H)})^*(\xi_{(Z_1/G_1)_{(H)}}\hook\omega_{(H)})=&(\pi_{(H)})^*((\pi_{(H)})_*\xi_{(Z_1)_{(H)}}\hook\omega_{(H)})\\
=&\xi_{(Z_1)_{(H)}}\hook i_{(H)}^*\omega\\
=&i_{(H)}^*(\xi_M\hook\omega)\\
=&-d(i_{(H)}^*\Phi_2^{\xi})\\
=&-\pi_{(H)}^*d(\langle\xi,\Phi'_2|_{(Z_1/G_1)_{(H)}}\rangle)
\end{align*}
where $\langle,\rangle$ is the pairing on $\g\times\g^*$. Since $\pi_{(H)}^*$ is injective on forms, we have that $$\xi_{(Z_1/G_1)_{(H)}}\hook\omega_{(H)}=-d(\langle\xi,\Phi'_2|_{(Z_1/G_1)_{(H)}}\rangle).$$
Thus, $\Phi'_2$ restricts to a momentum map on each stratum.  Since $\Phi'_2$ is $G_2$-invariant, $Z_{12}:=(\Phi'_2)^{-1}(0)$ is a $G_2$-space, and so the quotient $Z_{12}/G_2$ is well-defined.  Let $\pi_{12}:Z_{12}\to Z_{12}/G_2$ be the quotient map.\\

Now, define $Z:=\Phi^{-1}(0)$.  This is $G$-invariant, and $Z/G$ is the symplectic quotient coming from the $G$-action on $(M,\omega)$.  Let $\pi_Z:Z\to Z/G$ be the quotient map.  Note that $Z=\Phi_1^{-1}(0)\cap\Phi_2^{-1}(0)$.  We have the following commutative diagram.

$$\xymatrix{
Z \ar[rr] \ar[dd]_{\pi_Z} & & Z_1 \ar[d]^{\pi_1} \\
 & Z_{12} \ar[r] \ar[d]^{\pi_{12}} & Z_1/G_1 \\
Z/G & Z_{12}/G_2 \ar[l]^{\Psi} & \\
}$$

\begin{lemma}
There is a bijection $\Psi:Z_{12}/G_2\to Z/G$.
\end{lemma}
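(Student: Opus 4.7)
The plan is to construct the inverse bijection $\tilde{\Psi} := \Psi^{-1} : Z/G \to Z_{12}/G_2$ as the descent of the composition $\pi_{12} \circ (\pi_1|_Z)$. First I would observe that $\pi_1|_Z$ actually lands in $Z_{12}$: if $z \in Z = Z_1 \cap \Phi_2^{-1}(0)$, then $\Phi_2'(\pi_1(z)) = \Phi_2(z) = 0$ by definition of $\Phi_2'$, so $\pi_1(z) \in Z_{12}$.

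Next I would check that $\pi_{12}\circ\pi_1|_Z$ is $G$-invariant, so that it descends to a map $\tilde{\Psi}: Z/G \to Z_{12}/G_2$. Given $z \in Z$ and $(g_1, g_2) \in G = G_1 \times G_2$, since the two actions commute we have
\[ \pi_1((g_1,g_2)\cdot z) = \pi_1(g_2 \cdot g_1 \cdot z) = g_2 \cdot \pi_1(g_1 \cdot z) = g_2 \cdot \pi_1(z), \]
using $G_1$-invariance of $\pi_1$ in the last step. Applying $\pi_{12}$ kills the $g_2$, giving the desired invariance.

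For surjectivity, given $[w]_{G_2} \in Z_{12}/G_2$, choose any $z \in \pi_1^{-1}(w) \subseteq Z_1$; then $\Phi_2(z) = \Phi_2'(\pi_1(z)) = \Phi_2'(w) = 0$, so $z \in Z$ and $\tilde{\Psi}([z]_G) = [w]_{G_2}$. For injectivity, suppose $z, z' \in Z$ satisfy $\pi_{12}(\pi_1(z)) = \pi_{12}(\pi_1(z'))$. Then there exists $g_2 \in G_2$ with $\pi_1(z') = g_2 \cdot \pi_1(z) = \pi_1(g_2 \cdot z)$, and hence some $g_1 \in G_1$ with $z' = g_1 \cdot g_2 \cdot z = (g_1, g_2)\cdot z$, so $[z']_G = [z]_G$. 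Defining $\Psi := \tilde{\Psi}^{-1}$ completes the proof.

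The argument is largely bookkeeping; the only subtle point is making sure the ``commuting actions'' hypothesis is used correctly when moving $G_1$ and $G_2$ factors past each other in the invariance and injectivity computations. No analysis is required, since the lemma is purely a statement about underlying sets (the diffeological/smooth comparison will presumably be handled in a subsequent result).
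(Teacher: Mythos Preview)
Your proof is correct and follows essentially the same approach as the paper: the paper constructs $\Psi$ directly by choosing lifts and then checks injectivity and surjectivity, while you construct the inverse $\tilde{\Psi}=\pi_{12}\circ\pi_1|_Z$ as a descent and check the same properties, but the underlying computations (invariance under the two commuting actions, and the injectivity/surjectivity arguments) are identical. Your version is arguably a bit cleaner in that well-definedness is explicit rather than left to ``by construction,'' but there is no substantive difference.
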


\begin{proof}
This appears in \cite{lerman-sjamaar91}, but we repeat it here. Let $x\in Z_{12}/G_2$.  Then there is some $y\in Z_{12}\subseteq Z_1/G_1$ such that $x=G_2\cdot y$ as an equivalence class.  Since $y\in Z_1/G_1$, there is some $z\in Z_1$ such that $\pi_1(z)=y$.  Define $\Psi(x)=\pi_Z(z)$.  Due to its construction $\Psi$ is well-defined.  We next show that it is injective.\\

Let $x_1,x_2\in Z_{12}/G_2$ such that $\Psi(x_1)=\Psi(x_2)$.  Then there exist $z_1,z_2\in Z$ such that $\pi_Z(z_1)=\pi_Z(z_2)$ and $\pi_{12}\circ\pi_1(z_i)=x_i$ (for $i=1,2$).  But then there is a pair $(g_1,g_2)\in G$ such that $z_2=g_2\cdot g_1\cdot z_1$.  We have
\begin{align*}
\pi_{12}\circ\pi_1(z_2)=&\pi_{12}\circ\pi_1(g_2\cdot g_1\cdot z_1)\\
=&\pi_{12}(g_2\cdot\pi_1(g_1\cdot z_1))\\
=&\pi_{12}(g_2\cdot\pi_1(z_1))\\
=&\pi_{12}\circ\pi_1(z_1).
\end{align*}
Thus, $x_1=x_2$.  Surjectivity of $\Psi$ is clear: for any $w\in Z/G$, there is some $z\in Z$ such that $w=\pi_Z(z)$.  But then $w=\Psi(\pi_{12}\circ\pi_1(z))$.
\end{proof}

\begin{proposition}\labell{p:redinstages}
$\Psi$ is a diffeological diffeomorphism.
\end{proposition}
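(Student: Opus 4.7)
The plan is to verify diffeological smoothness in both directions by chasing plots around the commutative diagram, exploiting the key identification $\pi_1^{-1}(Z_{12})=Z$.

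To see this identification, note that by construction $\Phi_2|_{Z_1}=\Phi'_2\circ\pi_1$, so $\pi_1^{-1}(Z_{12})=\pi_1^{-1}((\Phi'_2)^{-1}(0))=(\Phi_2|_{Z_1})^{-1}(0)=Z_1\cap\Phi_2^{-1}(0)=Z$. This single fact is what makes the lifts land in the correct places; once it is in hand the rest is routine plot-chasing.

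First I will show $\Psi$ is diffeologically smooth. Fix a plot $p:U\to Z_{12}/G_2$. By the quotient diffeology on $Z_{12}/G_2$, every $u\in U$ has an open neighbourhood $V\subseteq U$ on which $p|_V=\pi_{12}\circ q$ for some plot $q:V\to Z_{12}$. Since $Z_{12}$ carries the subset diffeology from $Z_1/G_1$, the map $q$ is a plot of $Z_1/G_1$ with image in $Z_{12}$; by the quotient diffeology on $Z_1/G_1$, each $v\in V$ has an open neighbourhood $W\subseteq V$ on which $q|_W=\pi_1\circ r$ for some plot $r:W\to Z_1$. Because $\pi_1\circ r$ lands in $Z_{12}$, the identification above gives $r(W)\subseteq Z$, so $r$ is a plot of $Z$ (equivalently, by Proposition \ref{p:commsquare}, $\pi_Z\circ r$ is a plot of $Z/G$ whether we view $Z/G$ as a subset of $M/G$ or as a quotient of $Z$). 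Unpacking the definition of $\Psi$ yields $\Psi\circ p|_W=\pi_Z\circ r$, which is a plot of $Z/G$. Diffeological smoothness then follows from the locality axiom.

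For $\Psi^{-1}$, fix a plot $p:U\to Z/G$. Using the identification between the subset diffeology from $M/G$ and the quotient diffeology from $Z$ provided by Proposition \ref{p:commsquare}, each $u\in U$ has an open neighbourhood $V\subseteq U$ on which $p|_V=\pi_Z\circ q$ for some plot $q:V\to Z$. Composing with the inclusion $Z\hookrightarrow Z_1$ (which is smooth, as $Z\subseteq Z_1$ are both closed subspaces of $M$), $q$ becomes a plot of $Z_1$, so $\pi_1\circ q$ is a plot of $Z_1/G_1$. Since $q(V)\subseteq Z=\pi_1^{-1}(Z_{12})$, the image of $\pi_1\circ q$ lies in $Z_{12}$, so $\pi_1\circ q$ is a plot of $Z_{12}$ in the subset diffeology. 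Finally $\pi_{12}\circ\pi_1\circ q$ is a plot of $Z_{12}/G_2$, and the definition of $\Psi$ gives $\Psi^{-1}\circ p|_V=\pi_{12}\circ\pi_1\circ q$. Locality completes the proof.

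There is no substantial obstacle: the only non-formal ingredient is the observation $\pi_1^{-1}(Z_{12})=Z$, which ensures that each lift obtained from the quotient diffeology automatically has its image in the correct subset; everything else is a diagram chase using the definitions of quotient and subset diffeologies together with Proposition \ref{p:commsquare}.
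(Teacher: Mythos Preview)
Your proof is correct and follows essentially the same approach as the paper's: lift plots through the successive quotient maps using the quotient diffeology, and use the identification $\pi_1^{-1}(Z_{12})=Z$ (which the paper phrases as ``since $q(V)\subseteq(\Phi'_2)^{-1}(0)$, we have $r(V)\subseteq\Phi_2^{-1}(0)$'') to ensure the lifts land in $Z$. The only differences are cosmetic---you make the key identification explicit upfront and invoke Proposition~\ref{p:commsquare} by name, whereas the paper absorbs these into the flow of the argument.
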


\begin{proof}
Fix a plot $p:U\to Z_{12}/G_2$.  Then for any $u\in U$ there is an open neighbourhood $V\subseteq U$ of $u$ and a plot $q:V\to Z_{12}\subseteq Z_1/G_1$ such that $p|_V=\pi_{12}\circ q$.  Shrinking $V$ if necessary, there is a plot $r:V\to Z_1$ such that $q=\pi_1\circ r$.  But since $q(V)\subseteq(\Phi'_2)^{-1}(0)$, we have $r(V)\subseteq\Phi_2^{-1}(0)$.  Hence, $r$ is a plot of $Z$, and so $\pi_Z\circ r$ is a plot of $Z/G$.  But $\pi_Z\circ r$ is precisely $\Psi\circ p|_V$.  Thus, $\Psi\circ p$ is a plot of $Z/G$, and $\Psi$ is diffeologically smooth.  Since $\Psi$ is a bijection, it induces an injection on plots of $Z_{12}/G_2$ to plots of $Z/G$.\\

Now, fix a plot $\tilde{p}:U\to Z/G$.  Then, for any $u\in U$ there is some open neighbourhood $V\subseteq U$ of $u$ and some plot $q:V\to Z$ such that $p|_V=\pi_Z\circ q$.  But then $\pi_{12}\circ\pi_1\circ q$ is a plot of $Z_{12}/G_2$.  But this is equal to $\Psi^{-1}\circ p|_V$.  Hence, $\Psi^{-1}$ is diffeologically smooth.
\end{proof}

Thus, up to diffeomorphism, the symplectic quotient $Z/G$ can be obtained in stages by first taking the symplectic quotient with respect to $G_1$, and then with respect to $G_2$.  All of the above arguments also work when the roles of $G_1$ and $G_2$ are switched, result in another quotient space $Z_{21}/G_1$.

\begin{remark}
In \cite{lerman-sjamaar91}, Sjamaar-Lerman show that the three symplectic quotients $Z_{12}/G_2$, $Z_{21}/G_1$ and $Z/G$ are all functionally diffeomorphic with respect to the quotient differential structures on the spaces (that is, they are all isomorphic to $\CIN(Z/G)$).  Moreover, these diffeomorphisms preserve the Poisson structures on these differential structures (see \hypref{d:poissympquot}{Definition}).
\end{remark}

Looking carefully at the definition, Sjamaar differential forms are not defined on the quotients $Z_{12}/G_1$ and $Z_{21}/G_2$. However, due to \hypref{t:sjamaar}{Theorem} and \hypref{p:redinstages}{Proposition}, in the case that $0\in\g_1^*\times\g_2^*$ is a regular value for $\Phi$, diffeological forms defined on these two quotients are naturally isomorphic to Sjamaar forms defined on $Z/G$.

\section{Orbifolds}\labell{s:orbifolds}

In this section we explore how differential forms on orbifolds compare to forms defined diffeologically.  The original definition of an orbifold is given by Satake \cite{satake56}, \cite{satake57}, although he calls them ``$V$-manifolds''.  A more general definition is used by Haefliger in \cite{haefliger84}.  A $V$-manifold, or an orbifold in the sense of Haefliger, are both ``diffeological orbifolds'' as defined by Iglesias-Zemmour, Karshon, and Zadka in \cite{ikz10}.  We give this definition here.

\begin{definition}[Diffeological Orbifold]
A \emph{diffeological orbifold} of dimension $n$ is a diffeological space which is locally diffeologically diffeomorphic to a quotient $\RR^n/\Gamma$ for some finite group $\Gamma$ acting linearly on $\RR^n$.  Maps between diffeological orbifolds are given by diffeologically smooth maps.
\end{definition}

In their paper, the Iglesias-Zemmour, Karshon, and Zadka show that $V$-manifolds and orbifolds in the sense of Haefliger are naturally diffeological orbifolds.  The reason we use Haefliger's definition for an orbifold below instead of that of a diffeological orbifold is because the common definition of differential forms on an orbifold is defined in the literature in terms of Satake's and Haefliger's definitions, and not in terms of diffeological orbifolds.  The goal of this section is to show that orbifold differential forms are naturally isomorphic to diffeological forms on the space.\\

Fix a topological space $X$ and a non-negative integer $n$.

\begin{definition}[Orbifold (Haefliger)]
\noindent
\begin{enumerate}
\item An $n$-dimension \emph{orbifold chart} on $X$ is a triplet $(\tilde{U},\Gamma,\phi)$ where $\tilde{U}\subseteq\RR^n$ is an open ball, $\Gamma$ is a finite group of smooth automorphisms of $\tilde{U}$, and a $\Gamma$-invariant map $\phi:\tilde{U}\to X$ that induces a homeomorphism $\tilde{U}/\Gamma\to\varphi(\tilde{U})$.
\item An \emph{embedding} $\lambda:(\tilde{U},\Gamma,\phi)\to(\tilde{V},\Gamma',\psi)$ between two charts is a smooth embedding $\lambda:\tilde{U}\to\tilde{V}$ such that $\psi\circ\lambda=\phi$.
\item An $n$-dimensional orbifold atlas on $X$ is a family $\mathcal{U}$ of $n$-dimensional orbifold charts that (a) cover $X$; (b) are locally compatible, meaning that for any two charts $(\tilde{U},\Gamma,\phi)$ and $(\tilde{V},\Gamma',\psi)$ in $\mathcal{U}$ we have for any $x\in\phi(\tilde{U})\cap\psi(\tilde{V})$ an open neighbourhood $W\subseteq\phi(\tilde{U})\cap\psi(\tilde{V})$ of $x$ and a chart $(\tilde{W},\Gamma'',\mu)$ for $W$ such that there exist embeddings $(\tilde{W},\Gamma'',\mu)\to(\tilde{U},\Gamma,\phi)$ and $\tilde{W},\Gamma'',\mu)\to(\tilde{V},\Gamma',\psi)$.
\item An orbifold atlas $\mathcal{U}$ \emph{refines} another orbifold atlas $\mathcal{V}$ if for any chart in $\mathcal{U}$, there is an embedding of the chart into a chart of $\mathcal{V}$.  If there exists a common refinement of $\mathcal{U}$ and $\mathcal{V}$, then we say that the two atlases are \emph{equivalent}. This forms an equivalence relation on all atlases of $X$.
\item An \emph{(effective) orbifold} $(X,[\mathcal{U}])$ of dimension $n$ is a paracompact, Hausdorff space $X$ equipped with an equivalence class of $n$-dimensional atlases $[\mathcal{U}]$.  Here, for each chart $(\tilde{U},\Gamma,\phi)$, $\Gamma$ acts smoothly and effectively on $\tilde{U}$.  Note that there exists a unique maximal atlas in each equivalence class, and we will now assume that we are working with this atlas, instead of the corresponding equivalence class.
\item Let $(X,\mathcal{U})$ and $(Y,\mathcal{V})$ be orbifolds.  Then a map $F:X\to Y$ is \emph{orbifold smooth} if for any $x\in X$, there exist charts $(\tilde{U},\Gamma,\phi)$ about $x$ and $(\tilde{V},\Gamma',\psi)$ about $F(x)$ such that $F(\phi(\tilde{U}))\subseteq\psi(\tilde{V})$ and there exists a map $\tilde{F}:\tilde{U}\to\tilde{V}$ such that $\psi\circ\tilde{F}=F\circ\phi$.  If $F$ is orbifold smooth and invertible with orbifold smooth inverse, then $F$ is an \emph{orbifold diffeomorphism}.
\end{enumerate}
\end{definition}

\begin{definition}[Tangent and Cotangent Bundles of Orbifolds]
Fix a chart $(\tilde{U},\Gamma,\phi)\in\mathcal{U}$ of an orbifold $(X,\mathcal{U})$.  Note that $\Gamma$ acts on $T\tilde{U}$.  We thus get a map $p:T\tilde{U}/G\to\phi(\tilde{U})$ that factors through the homeomorphism induced by $\phi$.  The fibres are given by $$p^{-1}(x)=\{\Gamma\cdot(y,v)~|~(y,v)\in T_y\tilde{U},~\phi(y)=x\}\cong T_y\tilde{U}/\Gamma_y$$ where $\Gamma_y$ is the stabiliser of $y$. The fibre over $x$ is called the \emph{tangent space} at $x$, denoted $T_xX$, and the collection of tangent spaces over all $x\in X$ is called the \emph{tangent bundle}, denoted $TX$.  This is a $2n$-dimensional orbifold, and the map $p:TX\to X$ sending a point in a tangent space to its base point, is orbifold smooth.  Similarly, the \emph{cotangent bundle} $T^*X$ is defined by taking the dual approach: the fibre over $x$ is isomorphic to $T^*_y\tilde{U}/\Gamma_y$ where $\phi(y)=x$.  We can also take wedge powers: $\bigwedge^kT^*X$ has fibres given by $$\bigwedge^kT_x^*X\cong\left(\bigwedge^kT_y^*\tilde{U}\right)\Big/\Gamma_y.$$
\end{definition}

\begin{definition}[Orbifold Differential Forms]
A \emph{$k$-form} $s$ is a section of $\bigwedge^kT^*X$.  That is, a map $s:X\to\bigwedge^kT^*X$ satisfying the following two conditions.
\begin{enumerate}
\item For each chart $(\tilde{U},\Gamma,\phi)$ there is an equivariant smooth section $\tilde{s}:\tilde{U}\to\bigwedge^kT^*\tilde{U}$ which descends to $s$ on $\phi(\tilde{U})$.
\item For any two charts $(\tilde{U},\Gamma,\phi)$ and $(\tilde{V},\Gamma',\psi)$ with $x\in\phi(\tilde{U})\cap\psi(\tilde{V})$, the two equivariant sections $\tilde{s}$ and $\tilde{s}'$ of $\bigwedge^kT^*\tilde{U}$ and $\bigwedge^kT^*\tilde{V}$, respectively, are compatible.  That is, for a chart $(\tilde{W},\Gamma'',\mu)$ about $x$ with embeddings $\lambda:\tilde{W}\to\tilde{U}$ and $\lambda':\tilde{W}\to\tilde{V}$, and an equivariant section $\tilde{s}'':\tilde{W}\to\bigwedge^kT^*\tilde{W}$, we have $\lambda^*\tilde{s}=\tilde{s}''$ and $(\lambda')^*\tilde{s}'=\tilde{s}''$.
\end{enumerate}
\end{definition}

Let $\Gamma$ be a finite group acting smoothly on a manifold $M$.  Then, $M/\Gamma$ is an orbifold, and by the definition above, differential forms on $M/\Gamma$ correspond to invariant (basic) forms on $M$.  But then, by \hypref{t:geomquot}{Theorem}, these correspond to diffeological forms on $M/\Gamma$ equipped with the quotient diffeology; that is, the orbifold viewed as a diffeological orbifold.\\

Also, note that if $M$ is a symplectic manifold and $\Gamma$ acts symplectically on $M$, then the action is automatically Hamiltonian (trivially, since the Lie algebra to $\Gamma$ is the one point vector space $\{0\}$, and so the momentum map is constant).  The symplectic quotient is equal to the geometric quotient.  By \hypref{t:sjamaar}{Theorem}, in this case the Sjamaar forms are naturally isomorphic to the diffeological forms on the geometric quotient, which by the above are naturally isomorphic to the orbifold differential forms. 
\chapter{Vector Fields on Subcartesian Spaces}\labell{ch:vectorfields}

In this chapter we focus on vector fields on subcartesian spaces.  Recall that a subcartesian space is a differential space that is locally diffeomorphic to subsets of Euclidean spaces (see \hypref{d:subcart}{Definition}).  Now, for manifolds, the Lie algebra of derivations of the ring of smooth functions is canonically isomorphic to the Lie algebra of smooth sections of the tangent bundle.  Moreover, each of these derivations induces a local flow on the manifold. For subcartesian spaces, this is no longer the case.  While it is still true that derivations of the ring of smooth functions match smooth sections of the Zariski tangent bundle, not all of these derivations will induce local flows.  We reserve the term ``vector field'' for a derivation that does.\\

\'Sniatycki shows in \cite{sniatycki03} that the set of all vector fields on a subcartesian space $S$ induces a partition of $S$ into manifolds.  This generalises a well-known theorem in control theory, called ``the orbit theorem'' (see \cite{jurdjevic97}).  \'Sniatycki applies this to smooth stratified spaces, and in particular, to geometric quotients coming from compact Lie group actions.\\

The main idea behind this chapter is to apply this theory of vector fields to geometric and symplectic quotients, and to show that smooth stratified spaces form a full subcategory of subcartesian spaces equipped with locally complete families of vector fields (see \hypref{c:orbitalcat}{Corollary}).  I construct locally complete Lie algebras of vector fields whose orbits induce the orbit-type stratifications on $M$ and $M/G$ in the case of a compact Lie group action $G\circlearrowright M$.  The former of these is new (see \hypref{d:A}{Definition} and \hypref{t:singfolA}{Theorem}) whereas the latter is due to the work of \'Sniatycki \cite{sniatycki03}.  I also give similar families of vector fields that induce the orbit-type stratifications on the symplectic quotient and the zero-level of the momentum map for a compact Hamiltonian action.  The family on the zero-level is new (see \hypref{d:AZ}{Definition} and \hypref{p:vectZorbits}{Proposition}) whereas the family on the symplectic quotient is the work of Sjamaar-Lerman \cite{lerman-sjamaar91}.\\

If $0$ is a regular value of the momentum map, then it is not hard to show that the orbit-type stratification of the symplectic quotient is intrinsic to the differential structure on the symplectic quotient (see \hypref{t:regvalue}{Theorem}).  But it is unknown if this is still the case for $0$ a critical value of the momentum map.  We state this as an open question later in the chapter (see \hypref{q:critvalue}{Question}).\\

For the purposes of this chapter, by ``smooth'' map between subcartesian spaces, I mean ``functionally smooth''.

\section{The Zariski Tangent Bundle}\labell{s:zariski}

In this section, we review the basics of subcartesian space theory.  Let $S$ be a subcartesian space.

\begin{definition}[Zariski Tangent Bundle]
Given a point $x\in S$, a \emph{derivation} of $\CIN(S)$ at $x$ is a linear map $v:\CIN(S)\to\RR$ that satisfies Leibniz' rule: for all $f,g\in\CIN(S)$, $$v(fg)=f(x)v(g)+g(x)v(f).$$  The set of all derivations of $\CIN(S)$ at $x$ forms a vector space, called the \emph{(Zariski) tangent space} of $x$, and is denoted $T_xS$. Define the \emph{(Zariski) tangent bundle} $TS$ to be the (disjoint) union $$TS:=\bigcup_{x\in S}T_xS.$$  Denote the canonical projection $TS\to S$ by $\tau$.
\end{definition}

$TS$ is a subcartesian space with its differential structure generated by functions $f\circ\tau$ and $df$ where $f\in\CIN(S)$ and $d$ is the differential operator $df(v):=v(f)$.  The projection $\tau$ is smooth with respect to this differential structure.  Given a chart $\varphi:U\to\tilde{U}\subseteq\RR^n$ on $S$, $(\varphi\circ\tau,\varphi_*|_{\varphi\circ\tau}):TS\to T\RR^n\cong\RR^{2n}$ is a fibrewise linear chart on $TS$.  See my paper with Lusala and \'Sniatycki \cite{lsw10} for more details.  We will denote this chart by $\varphi_*$ henceforth.

\begin{definition}[Pushforward]
Let $R$ and $S$ be subcartesian spaces, and let $F:R\to S$ be a smooth map.  Then there is an induced fibrewise linear smooth map $F_*:TR\to TS$ defined by $$(F_*v)f=v(F^*f)$$ for all $v\in TR$ and $f\in\CIN(S)$.  $F_*$ satisfies the following commutative diagram.

$$\xymatrix{
TR \ar[r]^{F_*} \ar[d]_{\tau} & TS \ar[d]^{\tau} \\
R \ar[r]_{F} & S \\
}$$

$F_*$ is called the \emph{pushforward} of $F$, and is sometimes denoted as $dF$ or $TF$.
\end{definition}

We recall some notation.  For a subset $A\subseteq\RR^n$, let $\mathfrak{n}(A)$ be the ideal of the ring of smooth functions on $\RR^n$ consisting of functions that vanish on $A$.

\begin{proposition}[Local Representatives of Vectors]\labell{p:charTS}
Let $x\in S$ and let $\varphi:U\to\tilde{U}\subseteq\RR^n$ be a chart about $x$.  Then, $\tilde{v}\in T_{\varphi(x)}\RR^n$ is equal to $\varphi_*v$ for some $v\in T_xS$ if and only if $\tilde{v}(\mathfrak{n}(\varphi(U)))=\{0\}$.
\end{proposition}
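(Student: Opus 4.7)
The plan is to handle the two directions separately, with the forward direction an immediate consequence of the definitions and the backward direction requiring a careful extension/bump-function construction.

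For the forward direction, suppose $\tilde{v}=\varphi_*v$ for some $v\in T_xS$, and let $f\in\mathfrak{n}(\varphi(U))$. By definition of pushforward, $\tilde{v}(f)=v(\varphi^*f)$, where $\varphi^*f=f\circ\varphi$ is a smooth function on $U$ extended by a bump function to an element of $\CIN(S)$ (tangent vectors depending only on germs). But $f$ vanishes identically on $\tilde{U}=\varphi(U)$, so $f\circ\varphi\equiv0$ on $U$, and we may take the zero extension; hence $v(\varphi^*f)=v(0)=0$. This establishes $\tilde{v}(\mathfrak{n}(\varphi(U)))=\{0\}$.

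For the backward direction, I would construct $v\in T_xS$ from $\tilde{v}$ as follows. Given $g\in\CIN(S)$, form $g\circ\varphi^{-1}\in\CIN(\tilde{U})$ locally near $\varphi(x)$; by the definition of the subcartesian differential structure on $\tilde{U}\subseteq\RR^n$, this function extends to some $G\in\CIN(\RR^n)$ on an open neighbourhood $W\subseteq\RR^n$ of $\varphi(x)$. Multiplying by a bump function if necessary, I may assume $G\in\CIN(\RR^n)$ globally. Define $v(g):=\tilde{v}(G)$. The main step is showing well-definedness: if $G_1,G_2$ are two such extensions, then $G_1-G_2$ vanishes on $W\cap\tilde{U}$, and multiplying by a bump function $\rho\in\CIN(\RR^n)$ with $\rho(\varphi(x))=1$ and $\supp(\rho)\subset W$ yields $\rho(G_1-G_2)\in\mathfrak{n}(\varphi(U))$. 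By hypothesis $\tilde{v}$ annihilates this, and since $(G_1-G_2)(\varphi(x))=0$, Leibniz gives
\[
0=\tilde{v}(\rho(G_1-G_2))=\rho(\varphi(x))\,\tilde{v}(G_1-G_2)=\tilde{v}(G_1)-\tilde{v}(G_2).
\]

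It then remains to verify that $v$ is a derivation at $x$ and that $\varphi_*v=\tilde{v}$; both are essentially formal. Linearity is clear; for Leibniz, if $G_i$ extends $g_i\circ\varphi^{-1}$ for $i=1,2$, then $G_1G_2$ extends $(g_1g_2)\circ\varphi^{-1}$, and applying Leibniz for $\tilde{v}$ at $\varphi(x)$ (using $G_i(\varphi(x))=g_i(x)$) gives $v(g_1g_2)=g_1(x)v(g_2)+g_2(x)v(g_1)$. Finally, for $f\in\CIN(\RR^n)$, $f$ itself is a valid extension of $(\varphi^*f)\circ\varphi^{-1}=f|_{\tilde{U}}$ near $\varphi(x)$, so $(\varphi_*v)(f)=v(\varphi^*f)=\tilde{v}(f)$. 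The only real subtlety in the whole argument is the bump-function trick used to reduce a local vanishing statement on $W\cap\tilde{U}$ to membership in the global ideal $\mathfrak{n}(\varphi(U))$, which is precisely what lets the hypothesis be applied.
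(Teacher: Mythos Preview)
Your argument is correct. The forward direction is immediate, and your backward construction---defining $v(g):=\tilde v(G)$ for any local extension $G$ of $g\circ\varphi^{-1}$ and using the bump-function trick to show independence of the extension---is exactly the standard approach. The only point worth tightening is the last step: when you write $(\varphi_*v)(f)=v(\varphi^*f)$, the function $\varphi^*f$ lives in $\CIN(U)$ rather than $\CIN(S)$, so strictly speaking you should first pass to a global extension $\tilde g\in\CIN(S)$ agreeing with $\varphi^*f$ near $x$ (using that derivations at a point depend only on germs, which you already invoked) and then observe that $f$ itself extends $\tilde g\circ\varphi^{-1}$ near $\varphi(x)$. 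You essentially say this, but making it explicit removes any ambiguity.

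As for comparison with the paper: the paper does not prove this proposition at all---it simply cites \cite{lsw10}. So there is nothing to compare against; your write-up supplies precisely the elementary argument one would expect to find in that reference.
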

\begin{proof}
See \cite{lsw10}
\end{proof}

\begin{definition}[Derivations of $\CIN(S)$]
A \emph{(global) derivation} of $\CIN(S)$ is a linear map $X:\CIN(S)\to\CIN(S)$ that satisfies Leibniz' rule: for any $f,g\in\CIN(S)$, $$X(fg)=fX(g)+gX(f).$$  Denote the $\CIN(S)$-module of all derivations by $\Der\CIN(S)$.
\end{definition}

\begin{proposition}[$\Der\CIN(S)$ is a Lie Algebra]
The set of derivations of $\CIN(S)$ is a Lie algebra under the commutator bracket, and can be identified with the smooth sections of $\tau:TS\to S$.
\end{proposition}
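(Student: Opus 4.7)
The statement naturally splits into two parts, which I would handle in the order given.

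For the Lie algebra structure, the approach is the standard direct calculation. Given $X,Y \in \Der\CIN(S)$, I would expand $[X,Y](fg) = (XY - YX)(fg)$ by applying Leibniz for $X$ and $Y$ in turn; the four ``second-order'' terms $fX(Y(g))$, $gX(Y(f))$, $fY(X(g))$, $gY(X(f))$ cancel pairwise, leaving $f[X,Y](g) + g[X,Y](f)$. $\RR$-linearity is immediate, so $[X,Y]\in\Der\CIN(S)$. The Jacobi identity then follows formally: for any associative composition of linear endomorphisms of a vector space, the sum $[X,[Y,Z]] + [Y,[Z,X]] + [Z,[X,Y]]$ vanishes, and no further properties of $\CIN(S)$ are used.

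For the identification with smooth sections, I would exhibit mutually inverse assignments. Given $X \in \Der\CIN(S)$, define $\sigma_X : S \to TS$ by $\sigma_X(x)(f) := X(f)(x)$. Leibniz at the point $x$ shows $\sigma_X(x) \in T_xS$, so $\tau \circ \sigma_X = \id_S$. To check that $\sigma_X$ is smooth into the subcartesian space $TS$, it suffices to pull back the functions generating $\CIN(TS)$, namely those of the form $f\circ\tau$ and $df$ for $f\in\CIN(S)$ (as recalled earlier in the section): indeed $\sigma_X^*(f\circ\tau) = f \in \CIN(S)$ and $\sigma_X^*(df) = X(f) \in \CIN(S)$ by construction. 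Conversely, given a smooth section $\sigma : S \to TS$, set $X_\sigma(f) := df \circ \sigma$; this lies in $\CIN(S)$ because $df \in \CIN(TS)$ and $\sigma$ is smooth, and $X_\sigma$ satisfies Leibniz pointwise because each $\sigma(x)$ does. The two assignments $X \mapsto \sigma_X$ and $\sigma \mapsto X_\sigma$ are inverses by inspection.

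The only non-routine step is invoking the fact that $\CIN(TS)$ is generated, as a differential structure, by the functions $f\circ\tau$ and $df$; this reduces smoothness of $\sigma_X$ to a check on a generating family. Since this description of $\CIN(TS)$ has already been established earlier in the section, I do not expect a real obstacle. It is worth emphasising that this proposition does \emph{not} require the derivations to admit local flows: the correspondence with smooth sections of $\tau$ is automatic. The subtlety of flows, which singles out a distinguished subclass of derivations called vector fields, will enter only in the subsequent sections of the chapter.
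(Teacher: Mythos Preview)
Your argument is correct. The paper does not actually prove this proposition in-text; it simply cites \cite{lsw10}, so there is no proof here to compare against. Your approach---the standard Leibniz computation for closure under the commutator, together with the mutually inverse assignments $X\mapsto\sigma_X$ and $\sigma\mapsto X_\sigma$, with smoothness of $\sigma_X$ verified on the generating family $\{f\circ\tau,\,df\}$ for $\CIN(TS)$---is exactly the expected argument and goes through without difficulty.
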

\begin{proof}
See \cite{lsw10}.
\end{proof}

\begin{proposition}[Local Representatives of Derivations]\labell{p:charDer}
Let $x\in S$, and let $\varphi:U\to\tilde{U}\subseteq\RR^n$ be a chart about $x$.  Let $\tilde{X}\in\Der\CIN(\RR^n)$.  Then $\tilde{X}$ satisfies $$\varphi_*(X|_U)=\tilde{X}|_{\tilde{U}}$$ for some derivation $X\in\Der\CIN(S)$ if and only if $$\tilde{X}(\mathfrak{n}(\tilde{U}))\subseteq\mathfrak{n}(\tilde{U}).$$ Moreover, for any $X\in\Der\CIN(S)$, there exist an open neighbourhood $V\subseteq U$ of $x$ and $\tilde{X}\in\Der\CIN(\RR^n)$ satisfying $\varphi_*(X|_V)=\tilde{X}|_{\varphi(V)}$. We call $\tilde{X}$ a \emph{local extension} or a \emph{local representative} of $X$ with respect to $\varphi$.
\end{proposition}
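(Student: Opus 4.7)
The proof splits into the forward direction of the equivalence, the backward direction, and the separate ``moreover'' existence statement; I would organise it that way.

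The forward direction is essentially a computation. Suppose $X \in \Der\CIN(S)$ satisfies $\varphi_*(X|_U) = \tilde{X}|_{\tilde{U}}$. Fix $f \in \mathfrak{n}(\tilde{U})$ and an arbitrary point $\tilde{y} = \varphi(y) \in \tilde{U}$. By definition of the pushforward, $\tilde{X}(f)(\tilde{y}) = \varphi_*(X|_y)(f) = X|_y(f \circ \varphi)$. But $f \circ \varphi$ is identically zero on $U$ because $f$ vanishes on $\varphi(U) = \tilde{U}$, so $X|_y(f \circ \varphi) = 0$. Hence $\tilde{X}(f) \in \mathfrak{n}(\tilde{U})$, as required.

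The backward direction is the core of the proof. Assume $\tilde{X}(\mathfrak{n}(\tilde{U})) \subseteq \mathfrak{n}(\tilde{U})$. At every $y \in U$, evaluating this inclusion at $\varphi(y) \in \tilde{U}$ shows that $\tilde{X}|_{\varphi(y)}$ annihilates $\mathfrak{n}(\tilde{U})$, so by \hypref{p:charTS}{Proposition} there is a unique $X_y \in T_yS$ with $\varphi_*(X_y) = \tilde{X}|_{\varphi(y)}$. To verify that $y \mapsto X_y$ is a smooth section of $TU$, fix $f \in \CIN(S)$; near any $y \in U$ the function $f \circ \varphi^{-1} \in \CIN(\tilde{U})$ agrees with the restriction of some $h \in \CIN(\RR^n)$, so $X_{y'}(f) = \tilde{X}(h)(\varphi(y'))$ on a neighbourhood, which is smooth in $y'$. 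To promote $X|_U$ to a global derivation on $S$, I would use a smooth bump function $b \in \CIN(S)$ (available since $S$ admits smooth partitions of unity) supported in $U$ and equal to $1$ on a prescribed neighbourhood of interest, and define $X$ to be $bX|_U$ extended by $0$ outside $\supp(b)$; the equality $\varphi_*(X) = \tilde{X}$ then holds on the portion of $\tilde{U}$ where $b \circ \varphi^{-1} = 1$, which is enough for the subsequent applications.

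For the ``moreover'' statement, given $X \in \Der\CIN(S)$ and a chart $\varphi: U \to \tilde{U}$ about $x$, I would first extend each component $\varphi^i \in \CIN(U)$ to a function $\tilde{\varphi}^i \in \CIN(S)$ via a bump function centred at $x$, then set $a^i(\tilde{y}) := X|_y(\tilde{\varphi}^i)$ for $\tilde{y} = \varphi(y)$ in a small neighbourhood of $\varphi(x)$. Each $a^i$ lies in $\CIN(\tilde{U})$ near $\varphi(x)$, hence by definition of the differential subspace structure locally extends to some $\tilde{a}^i \in \CIN(\RR^n)$; shrinking $U$ to a smaller $V$ about $x$ and applying one more cutoff, I may assume all $\tilde{a}^i$ are globally defined on $\RR^n$. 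Setting $\tilde{X} := \sum_i \tilde{a}^i \, \partial/\partial x^i \in \Der\CIN(\RR^n)$, a direct check against the coordinate generators $x^j \in \CIN(\RR^n)$ gives $\varphi_*(X|_V) = \tilde{X}|_{\varphi(V)}$. The main obstacle in all three steps is that $\tilde{U}$ is typically not open in $\RR^n$, so smooth functions on $\tilde{U}$ are only \emph{locally} restrictions of functions on $\RR^n$; consequently every passage between local and global data requires a bump-function argument, and the identity $\varphi_*(X|_U) = \tilde{X}|_{\tilde{U}}$ must be read pointwise, with \hypref{p:charTS}{Proposition} supplying precisely the algebraic condition needed to identify tangent vectors on $S$ with those tangent vectors on $\RR^n$ that preserve $\mathfrak{n}(\tilde{U})$.
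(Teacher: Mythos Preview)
The paper does not prove this proposition; the proof reads ``See \cite{lsw10}.'' So there is no in-paper argument to compare against, and your outline is the natural one: the forward direction is a direct germ computation, the backward direction applies \hypref{p:charTS}{Proposition} pointwise and then checks that $y\mapsto X_y$ is a smooth section, and the ``moreover'' clause writes $\tilde X$ in coordinates via the functions $X\tilde\varphi^i$. All three pieces are carried out correctly.

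Your hesitation in the backward direction is not a defect of your argument but a genuine issue with the statement as written. Asking for $X\in\Der\CIN(S)$ with $\varphi_*(X|_U)=\tilde X|_{\tilde U}$ on \emph{all} of $U$ can fail: take $S=\{(x,y)\in\RR^2:xy=0\}$, let $U$ be the positive $x$-axis with chart $\varphi(x,0)=x$ onto $\tilde U=(0,\infty)\subset\RR$, and let $\tilde X=\partial_t$. The ideal condition is trivially satisfied, but any global $X\in\Der\CIN(S)$ restricting to $\partial_x$ on $U$ would have $X(x|_S)$ equal to $1$ along the positive $x$-axis and identically $0$ along the punctured $y$-axis (since $x|_S$ vanishes there and $T_{(0,y)}S$ is spanned by $\partial_y$), hence discontinuous at the origin. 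So the honest conclusion of the backward direction is that one obtains $X\in\Der\CIN(U)$, or equivalently---after your bump-function cutoff---an $X\in\Der\CIN(S)$ with the equality holding on a smaller neighbourhood of $x$. That local version is exactly what the paper uses downstream (for instance in \hypref{p:intcurve}{Proposition} and the ``moreover'' clause itself), so your remark that this ``is enough for the subsequent applications'' is on the mark.
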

\begin{proof}
See \cite{lsw10}.
\end{proof}

\begin{definition}[Locally Trivial Surjections]
Let $R$ and $S$ be subcartesian spaces, and let $f$ be a surjective smooth map between them.  Then $f:R\to S$ is \emph{locally trivial} if for every $x\in S$ there exist an open neighbourhood $U\subseteq S$ of $x$, a subcartesian space $F$, and a diffeomorphism $\psi:f^{-1}(U)\to U\times F$ such that the following diagram commutes ($\operatorname{pr}_1$ being the projection of the first component.)

$$\xymatrix{
f^{-1}(U) \ar[rr]^{\psi} \ar[dr]_f & & U\times F \ar[dl]^{\operatorname{pr}_1}\\
& U & \\
}$$
\end{definition}

\begin{theorem}[Local Triviality of $TS$]\labell{t:TSloctriv}
Let $S$ be a subcartesian space.  There exists an open dense subset $U\subseteq S$ such that $\tau|_U:TS|_U\to U$ is locally trivial.
\end{theorem}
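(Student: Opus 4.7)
The approach is to define $U$ as the open locus where the Zariski tangent space dimension function $d\colon S\to\NN$, $d(x):=\dim T_xS$, is locally constant. First I would establish that $d$ is upper semi-continuous. This is a local calculation: in a chart $\varphi\colon W\to\tilde W\subseteq\RR^n$, \hypref{p:charTS}{Proposition} identifies $\varphi_*(T_xS)$ with $L_{\varphi(x)}^\perp\subseteq T_{\varphi(x)}\RR^n$, where $L_p:=\Span\{df|_p:f\in\mathfrak{n}(\tilde W)\}\subseteq T_p^*\RR^n$, so $d(x)=n-\dim L_{\varphi(x)}$. Because $\dim L_p$ is the supremum over finite subsets $F\subseteq\mathfrak{n}(\tilde W)$ of the rank of the covector family $\{df|_p:f\in F\}$, and each such rank is lower semi-continuous in $p$, the function $p\mapsto\dim L_p$ is lower semi-continuous; equivalently $d$ is upper semi-continuous.

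With this in hand, $U:=\{x\in S:d\text{ is locally constant at }x\}$ is open by definition. For density I would fix any nonempty open $V\subseteq S$, let $m:=\min_{x\in V}d(x)$ (well-defined since $d$ is $\NN$-valued), and observe that by upper semi-continuity $\{x\in V:d(x)\leq m\}$ is open. Since $m$ is the minimum on $V$, this set equals $\{x\in V:d(x)=m\}$; it is therefore a nonempty open subset of $V$ on which $d$ is constant, hence contained in $U$.

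For local triviality of $\tau|_U$, fix $x_0\in U$ and a chart $\varphi\colon W\to\tilde W\subseteq\RR^n$ with $W\subseteq U$ and $d\equiv m$ on $W$. Via $\varphi_*$ the space $TS|_W$ is diffeomorphic (as a subcartesian space) to $E:=\{(p,\tilde v)\in\tilde W\times\RR^n:\tilde v\in L_p^\perp\}$. Using that $\dim L_p\equiv n-m$ is constant, pick $f_1,\dots,f_{n-m}\in\mathfrak{n}(\tilde W)$ whose differentials at $\varphi(x_0)$ form a basis of $L_{\varphi(x_0)}$; by the open condition that a maximal minor is nonzero, these differentials continue to span $L_p$ on some smaller neighbourhood $\tilde W'$. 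Completing to a basis $df_1,\dots,df_{n-m},dx_{j_1},\dots,dx_{j_m}$ of $T^*\RR^n$ by standard coordinate 1-forms and taking the dual basis in $T\RR^n$ produces smooth maps $X_1,\dots,X_m\colon\tilde W'\to\RR^n$ pointwise spanning $L^\perp$. The fibrewise linear map $\Psi\colon\tilde W'\times\RR^m\to E|_{\tilde W'}$, $(p,a)\mapsto(p,\sum_i a_iX_i(p))$, is a diffeomorphism of subcartesian spaces commuting with projection to $\tilde W'$; composing with $\varphi_*^{-1}$ yields the required trivialization $TS|_{W'}\cong W'\times\RR^m$.

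The chief obstacle lies in this last step: showing that the set-theoretic bijection $\Psi$ is genuinely a diffeomorphism of subcartesian spaces, not merely a smooth bijection of the underlying point sets. The subtlety is that the frame vectors $X_i(p)$ are elements of $T_p\RR^n$ and are not, a priori, restrictions of derivations of $\CIN(S)$; one must check that pulling back the differential structure on $E|_{\tilde W'}$ (the one it inherits from $T\RR^n\cong\RR^{2n}$) through $\Psi$ produces exactly the product differential structure on $\tilde W'\times\RR^m$. This reduces to the fact that the smooth $\GL(m,\RR)$-valued transition from the frame $(X_1(p),\dots,X_m(p))$ to the standard basis of $\RR^m$ depends smoothly on $p\in\tilde W'$, which in turn follows from the constant-rank hypothesis already used to locate $\tilde W'$.
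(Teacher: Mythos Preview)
The paper does not prove this theorem; it cites \cite{lsw10} (Lusala--\'Sniatycki--Watts). So there is no in-paper proof to compare against directly, and your task is to supply one. Your argument is correct and is essentially the standard one: upper semi-continuity of $x\mapsto\dim T_xS$ via the description $\varphi_*T_xS=L_{\varphi(x)}^\perp$ from \hypref{p:charTS}{Proposition}, density of the locally-constant locus by the minimum-on-an-open-set trick, and trivialisation by a smooth frame dual to $df_1,\dots,df_{n-m},dx_{j_1},\dots,dx_{j_m}$. Your concern about $\Psi$ being a genuine diffeomorphism of subcartesian spaces is resolved exactly as you indicate: the frame $X_1,\dots,X_m$ extends smoothly to an open set $O\subseteq\RR^n$ (namely where those $n$ covectors remain a basis), so both $\Psi$ and $\Psi^{-1}=(p,\tilde v)\mapsto(p,(A(p)^TA(p))^{-1}A(p)^T\tilde v)$ extend to smooth maps between open subsets of Euclidean spaces; this is precisely what smoothness of maps between differential subspaces demands.

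It is worth noting the structural parallel with \hypref{p:checkTSloctriv}{Proposition}, which \emph{is} proved in the paper for the orbital tangent bundle $\widehat{T}^{\mathcal{F}}S$. There the fibre dimension $\delta_{\mathcal{F}}$ is \emph{lower} semi-continuous (since $\widehat{T}^{\mathcal{F}}_xS$ is spanned by values of vector fields), and density is obtained by taking the local \emph{maximum} rather than minimum. Your argument for $TS$ is the mirror image: $T_xS$ is cut out by linear conditions, so its dimension is \emph{upper} semi-continuous, and you take local minima. Both arguments are instances of the same principle that an integer-valued semi-continuous function is locally constant on an open dense set.
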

\begin{proof}
See \cite{lsw10}.
\end{proof}

\begin{corollary}
The $k$th exterior product of fibres of $TS$ over $S$ are also locally trivial over an open dense subset of $S$.
\end{corollary}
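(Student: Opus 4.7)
The plan is to push the local trivialization provided by \hypref{t:TSloctriv}{Theorem} through the $k$th exterior power functor, applied fibrewise. Let $U \subseteq S$ be the open dense subset on which $\tau:TS\to S$ is locally trivial, fix $x\in U$, and choose a neighbourhood $V\subseteq U$ together with a diffeomorphism $\psi:\tau^{-1}(V)\to V\times F$ commuting with projection to $V$. Since each fibre $\tau^{-1}(y)=T_yS$ is a finite-dimensional vector space of derivations at $y$, and $\psi$ restricts to a diffeomorphism $\psi_y:T_yS\to\{y\}\times F$, the subcartesian space $F$ inherits a common vector space structure, and all fibres $T_yS$ for $y\in V$ have the same dimension $n=\dim F$. (Shrinking $V$ if needed, we may further arrange that the fibrewise linear chart described after the definition of $TS$ identifies $F$ with a linear subspace of $\RR^n$, so that $\psi_y$ is a linear isomorphism.)

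Next I would give the $k$th exterior product bundle $\bigwedge^k TS$ its natural differential structure: in a chart $\varphi:W\to\tilde{W}\subseteq\RR^m$ of $S$, the fibrewise linear chart $\varphi_*$ embeds $TS|_W$ into $T\RR^m\cong\RR^{2m}$ as a differential subspace (by \hypref{p:charTS}{Proposition}), and the induced map $\bigwedge^k\varphi_*$ embeds $\bigwedge^k TS|_W$ into the smooth vector bundle $\bigwedge^k T\RR^m$ as a differential subspace. Now define
\[
\Psi:\bigwedge\nolimits^k TS\Big|_V\longrightarrow V\times\bigwedge\nolimits^k F,\qquad (y,\,v_1\wedge\cdots\wedge v_k)\longmapsto\bigl(y,\,\psi_y(v_1)\wedge\cdots\wedge\psi_y(v_k)\bigr),
\]
extended linearly to the span. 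Fibrewise, $\Psi$ is a linear isomorphism because $\psi_y$ is, and it commutes with projection to $V$ by construction.

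The main step is to verify that $\Psi$ and $\Psi^{-1}$ are smooth. In fibrewise linear coordinates coming from the chart $\varphi_*$ on $TS$, the components of $\psi_y$ are smooth functions of $y$, and the map $v_1\wedge\cdots\wedge v_k \mapsto \psi_y(v_1)\wedge\cdots\wedge\psi_y(v_k)$ is polynomial in the entries of $\psi_y$ and of the $v_i$. Hence $\Psi$ extends to a smooth map in ambient Euclidean coordinates on $\bigwedge^k T\RR^m$, and therefore descends to a smooth map on the differential subspace $\bigwedge^k TS|_V$. The same argument applied to $\psi^{-1}$ gives smoothness of $\Psi^{-1}$. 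Since $V$ was an arbitrary trivializing neighbourhood in $U$, local triviality of the $k$th exterior product bundle holds over the open dense subset $U$.

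The anticipated obstacle is the bookkeeping in the previous paragraph: making precise that the subcartesian structure on $\bigwedge^k TS$ is functorial with respect to the local trivialization of $TS$, so that a fibrewise-linear diffeomorphism of $TS$ really does induce one on $\bigwedge^k TS$. Once the embedding into $\bigwedge^k T\RR^m$ is in place, this reduces to the fact that exterior multiplication is a polynomial, hence smooth, operation on the ambient Euclidean bundle, and so transports cleanly through the local trivialization.
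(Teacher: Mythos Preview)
The paper gives no proof for this corollary at all; it is stated immediately after \hypref{t:TSloctriv}{Theorem} and left as a self-evident consequence. Your argument is essentially correct and supplies the details the paper omits: once $TS$ is locally trivial over $U$, applying the $k$th exterior power fibrewise yields a local trivialization of $\bigwedge^k TS$ over the same $U$, with smoothness following from the polynomial nature of exterior multiplication in the ambient chart $\bigwedge^k T\RR^m$.

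One point worth tightening: the definition of locally trivial surjection in the paper does not require the trivialization $\psi$ to be fibrewise linear, so your parenthetical remark about arranging $\psi_y$ to be linear is doing real work. You can justify it directly from the fibrewise linear chart $\varphi_*:TS|_W\to T\RR^m$ that the paper records right after the definition of the Zariski tangent bundle. Over a small enough $V\subseteq U$ contained in the domain of such a chart, the fibres $T_yS$ embed as linear subspaces of $T_{\varphi(y)}\RR^m$ of constant dimension (constancy following from local triviality), and a smooth choice of complementary projection gives a fibrewise linear trivialization. With that in hand, the rest of your argument goes through as written.
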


\section{Vector Fields on Subcartesian Spaces}

In this section we review the theory of vector fields on subcartesian spaces, developed by \'Sniatycki in \cite{sniatycki03}.

\begin{definition}[Integral Curves]
Fix $X\in\Der\CIN(S)$ and $x\in S$.  A \emph{maximal integral curve} $\exp(\cdot X)(x)$ of $X$ through $x$ is a smooth map from a connected subset $I^X_x\subseteq\RR$ containing 0 to $S$ such that $\exp(0X)(x)=x$, the following diagram commutes,
$$\xymatrix{
TI^X_x \ar[rr]_{\exp(\cdot X)(x)_*} && TS\\
I^X_x \ar[u]^{\frac{d}{dt}} \ar[rr]^{\exp(\cdot X)(x)} && S \ar[u]_{X}
}$$
and such that $I_x^X$ is maximal among the domains of all such curves.  In particular, for all $f\in\CIN(S)$ and $t\in I^X_x$, $$\frac{d}{dt}(f\circ\exp(tX)(x))=(Xf)(\exp(tX)(x)).$$ We adopt the convention that the map $c:\{0\}\to S:0\mapsto c(0)$ is an integral curve of every global derivation of $\CIN(S)$.
\end{definition}

\begin{theorem}[ODE Theorem for Subcartesian Spaces -- \'Sniatycki]\labell{t:ode}
Let $S$ be a subcartesian space, and let $X\in\Der\CIN(S)$.  Then, for any $x\in S$, there exists a unique maximal integral curve $\exp(\cdot X)(x)$ through $x$.
\end{theorem}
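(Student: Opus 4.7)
The plan is to reduce to classical ODE theory on $\RR^n$ via a chart, and then use the ideal-preserving property of local extensions to show that the resulting Euclidean integral curve remains inside the chart's image. The four main steps are: (i) construct a local candidate for the integral curve using charts; (ii) show invariance of the chart image under the Euclidean flow; (iii) establish local uniqueness; and (iv) patch local pieces together into a maximal curve.

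First I would work locally. Pick a chart $\varphi:U\to\tilde{U}\subseteq\RR^n$ about $x$, and by \hypref{p:charDer}{Proposition} choose, after possibly shrinking $U$, a local representative $\tilde{X}\in\Der\CIN(\RR^n)$ with $\varphi_*(X|_U)=\tilde{X}|_{\tilde{U}}$ and $\tilde{X}(\mathfrak{n}(\tilde{U}))\subseteq\mathfrak{n}(\tilde{U})$. Classical ODE theory provides a maximal smooth integral curve $\tilde{c}:(-\epsilon,\epsilon)\to\RR^n$ of $\tilde{X}$ with $\tilde{c}(0)=\varphi(x)$.

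The hard part, which is really the only nonroutine step, is to verify that $\tilde{c}(t)\in\tilde{U}$ for $t$ in some neighborhood of $0$. Since $\mathfrak{n}(\tilde{U})=\mathfrak{n}(\overline{\tilde{U}})$ by continuity, it suffices to show that $\tilde{U}$, viewed as a locally closed subset of $\RR^n$ (which we may arrange by shrinking the chart so that $\tilde{U}$ is closed in an open set $W\subseteq\RR^n$), is invariant under the classical flow of $\tilde{X}$. I would prove this by the standard Nagumo-type trick: for any $f\in\mathfrak{n}(\tilde{U})$, the function $g(t)=f(\tilde{c}(t))$ is smooth with $g(0)=0$ and $g'(t)=(\tilde{X}f)(\tilde{c}(t))$, where $\tilde{X}f$ is again in $\mathfrak{n}(\tilde{U})$. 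Iterating, all derivatives of $g$ at $0$ vanish. To upgrade this infinite-order vanishing to actual invariance, one shows that if $\tilde{c}(t_0)\notin\overline{\tilde{U}}$ for some small $t_0>0$, then by compactness of $\tilde{c}([0,t_0])$ and a standard Gronwall-type estimate using finitely many generators of $\mathfrak{n}(\tilde{U})$ in a neighborhood, one derives a contradiction. Then $\tilde{c}$ descends via $\varphi^{-1}$ to a smooth curve $c:(-\epsilon,\epsilon)\to U\subseteq S$ satisfying $c_*(d/dt)=X\circ c$, by \hypref{p:charTS}{Proposition} and the definition of pushforward.

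For uniqueness, suppose $c_1,c_2$ are two integral curves of $X$ through $x$ defined on a common open interval $I\ni 0$. The set $\{t\in I:c_1(t)=c_2(t)\}$ contains $0$ and is closed by continuity. On the other hand, given any $t_0$ where they agree, take a chart about $c_1(t_0)=c_2(t_0)$ and observe that both $\varphi\circ c_1$ and $\varphi\circ c_2$ satisfy the same classical ODE $\dot{y}=\tilde{X}(y)$ with the same initial condition, hence agree near $t_0$ by classical uniqueness; so the agreement set is also open, and therefore equals $I$. Finally, to produce a maximal integral curve, take the union of all integral curves through $x$ defined on connected open intervals containing $0$: by the uniqueness just established, any two such curves agree on the intersection of their domains, so they glue to a single integral curve on the union of their domains, which is the desired maximal $\exp(\cdot X)(x)$ with domain $I_x^X$.
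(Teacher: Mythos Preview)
Your argument has a genuine gap at step (ii). The claim that $\tilde{c}(t)\in\tilde{U}$ for all $t$ in an open neighbourhood of $0$ is false in general, and no Gronwall or finite-generation argument can rescue it. The paper itself supplies the counterexample in the remark just before the definition of vector fields: take $S=[0,\infty)$ with the subspace structure from $\RR$ and $X=\partial_x|_S$. Here $\tilde{U}=[0,\infty)$, $\tilde{X}=\partial_x$, and $\tilde{c}(t)=t$ leaves $\tilde{U}$ for every $t<0$. Your infinite-order-vanishing step is correct (any $f\in\mathfrak{n}([0,\infty))$ is flat at $0$), but the upgrade fails: such an $f$ can be nonzero on $(-\infty,0)$, for instance $f(x)=e^{1/x}$ for $x<0$ and $f(x)=0$ for $x\geq0$. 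The maximal integral curve of $X$ through $0$ has domain $[0,\infty)$, not an open interval. (For a more extreme failure, take $S$ to be the Cantor set in $\RR$: since it is perfect, $\partial_x$ preserves $\mathfrak{n}(S)$ and so restricts to a derivation, yet every maximal integral curve has domain $\{0\}$.)

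This is the heart of the matter, not a technicality. The definition in the paper allows $I_x^X$ to be an arbitrary \emph{connected subset} of $\RR$ containing $0$, possibly half-closed or even $\{0\}$ by convention, and the entire purpose of the subsequent definition of \emph{vector field} is to single out those derivations for which $A^X$ is open, equivalently all $I_x^X$ are open. Your argument tacitly assumes $X$ is already a vector field. The correct local construction does not attempt to prove invariance: one takes as the local domain the connected component of $\{t:\tilde{c}(t)\in\tilde{U}\}$ containing $0$, whatever shape it has, and then your uniqueness and patching arguments (adapted to non-open domains) go through. Note also that your assumption that the chart can be shrunk so that $\tilde{U}$ is locally closed presupposes local compactness of $S$, which is not part of the hypotheses here; compare \hypref{p:opendomain}{Proposition}. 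The paper itself does not reprove the theorem and simply cites \'Sniatycki's original articles.
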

\begin{proof}
See \cite{sniatycki02} and \S4 Theorem 1 of \cite{sniatycki03}.
\end{proof}

\begin{proposition}[Local Representatives of Integral Curves]\labell{p:intcurve}
Let $\varphi:U\to\tilde{U}\subseteq\RR^n$ be a chart on $S$, $X\in\Der\CIN(S)$ and $\tilde{X}\in\Der\CIN(\RR^n)$ such that $$\varphi_*(X|_U)=\tilde{X}|_{\tilde{U}}.$$  Then for all $x\in S$ and $t\in I^{X|_U}_x$,  $$\varphi(\exp(tX)(x))=\exp(t\tilde{X})(\varphi(x)).$$
\end{proposition}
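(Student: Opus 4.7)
The plan is to reduce the statement to uniqueness of integral curves on $\RR^n$. Fix $x \in U$ (if $x \notin U$ there is nothing to prove since $I^{X|_U}_x = \{0\}$). Define the smooth curve
\[
c : I^{X|_U}_x \to \RR^n, \qquad c(t) := \varphi(\exp(tX)(x)),
\]
which satisfies $c(0) = \varphi(x)$ and has image in $\tilde{U}$ by construction. Standard ODE theory on $\RR^n$ gives existence and uniqueness of the maximal integral curve $\exp(\cdot\tilde{X})(\varphi(x))$ of $\tilde{X} \in \Der\CIN(\RR^n)$ through $\varphi(x)$, so it will suffice to show that $c$ is itself an integral curve of $\tilde{X}$; uniqueness then forces $c(t) = \exp(t\tilde{X})(\varphi(x))$ on $I^{X|_U}_x$.

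The core computation is the intertwining identity $X \circ \varphi^* = \varphi^* \circ \tilde{X}$ as maps $\CIN(\RR^n) \to \CIN(U)$. For any $\tilde{f} \in \CIN(\RR^n)$ and any $y \in U$, the definition of pushforward gives
\[
X(\varphi^*\tilde{f})(y) = (X(y))(\varphi^*\tilde{f}) = (\varphi_* X(y))(\tilde{f}),
\]
which by the hypothesis $\varphi_*(X|_U) = \tilde{X}|_{\tilde{U}}$ equals $\tilde{X}(\tilde{f})(\varphi(y)) = \varphi^*(\tilde{X}\tilde{f})(y)$. Feeding this into the defining property of $\exp(\cdot X)(x)$ yields, for every $\tilde{f} \in \CIN(\RR^n)$ and every $t \in I^{X|_U}_x$,
\[
\frac{d}{dt}(\tilde{f} \circ c)(t) = \frac{d}{dt}\bigl((\varphi^*\tilde{f}) \circ \exp(tX)(x)\bigr) = X(\varphi^*\tilde{f})(\exp(tX)(x)) = (\tilde{X}\tilde{f})(c(t)).
\]
Since this holds for all smooth test functions on $\RR^n$, the curve $c$ is an integral curve of $\tilde{X}$ in the classical sense.

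Now uniqueness of integral curves for $\tilde{X}$ (applied via \hypref{t:ode}{Theorem} on $\RR^n$, or equivalently standard ODE theory) gives $c(t) = \exp(t\tilde{X})(\varphi(x))$ for every $t$ in the common domain; the whole of $I^{X|_U}_x$ is contained in this common domain because $c$ is defined on all of $I^{X|_U}_x$ and is an integral curve of $\tilde{X}$ there, so the maximal interval of $\exp(\cdot\tilde{X})(\varphi(x))$ contains $I^{X|_U}_x$. The main subtlety — and thus the only real step to verify carefully — is the intertwining identity $X\circ\varphi^* = \varphi^*\circ\tilde{X}$ on $U$: everything else is either the definition of an integral curve or uniqueness of ODEs on Euclidean space.
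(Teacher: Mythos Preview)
Your proof is correct and follows essentially the same approach as the paper: define the curve $\varphi(\exp(tX)(x))$, show it is an integral curve of $\tilde{X}$, and invoke uniqueness of ODEs on $\RR^n$. If anything, your version is more careful than the paper's, which only writes out the derivative computation at $t=0$ rather than at a general $t$ and leaves the intertwining identity $X\circ\varphi^*=\varphi^*\circ\tilde{X}$ implicit.
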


\begin{proof}
Denote by $J$ the open subset of $I^X_x$ such that for every $t\in J$, $\exp(tX)(x)\in U$. Define $\gamma:J\to\tilde{U}:t\mapsto\varphi(\exp(tX)(x))$.  Then,
\begin{align*}
\frac{d}{dt}\Big|_{t=0}(\gamma(t))=&\varphi_*(X|_x)\\
=&\tilde{X}|_{\varphi(x)}.
\end{align*}
Applying the ODE theorem, $\gamma(t)=\exp(t\tilde{X})(\varphi(x))$.
\end{proof}

Fix a derivation $X\in\Der\CIN(S)$.  Let $A^X\subseteq\RR\times S$ be defined as $$A^X:=\coprod_{x\in S}I_x^X.$$  Then there is an induced smooth map $A^X\to S$ whose restriction to each fibre $A^X\cap(\RR\times\{x\})$ is the domain $I^X_x$ of the maximal integral curve $\exp(\cdot X)(x)$.  

\begin{definition}[Local Flows]
Let $D$ be a subset of $\RR\times S$ containing $\{0\}\times S$ such that $D\cap(\RR\times\{x\})$ is connected for each $x\in S$.  A map $\phi:D\to S$ is a \emph{local flow} if $D$ is open, $\phi(0,x)=x$ for each $x\in S$, and $\phi(t,\phi(s,x))=\phi(t+s,x)$ for all $x\in S$ and $s,t\in\RR$ for which both sides are defined.
\end{definition}

\begin{remark}
If $S$ is a smooth manifold, then every derivation $X$ admits a local flow $\exp(\cdot X)(\cdot)$ sending $(t,x)$ to $\exp(tX)(x)$.  This is not the case with subcartesian spaces.  Indeed, consider the closed ray $[0,\infty)$, and the global derivation $X=\partial_x$.  Then the domain $D$ of $\exp(\cdot X)(\cdot)$ is not an open subset of $\RR\times[0,\infty)$.  Indeed, $D\cap(\RR\times\{x\})=[-x,\infty)\times\{x\}$ for each $x\in\RR$.  Thus, $D=\{(t,x)\in\RR^2~|~t\geq-x,~x\geq0\}$.  This motivates the following definition.
\end{remark}

\begin{definition}[Vector Fields]
A \emph{vector field} on $S$ is a derivation $X$ of $\CIN(S)$ such that $A^X$ is open in $\RR\times S$.  Equivalently, the map $(t,x)\mapsto\exp(tX)(x)$ defined on $A^X$ is a local flow.  Here let us emphasise that $\exp(tX)(x)$ is the maximal integral curve through $x$.  Denote the set of all vector fields on $S$ by $\Vect(S)$.
\end{definition}

\begin{remark}
Given a vector field $X$ on $S$, since $A^X$ is open, the domain of each of its maximal integral curves is open.  Note, however, that the converse is not true: if $X$ is a global derivation and each of its maximal integral curves has an open domain, it is not necessarily true that $X$ is a vector field. For a counterexample, see \hypref{x:loccmpctneeded}{Example}.
\end{remark}

For the important proposition to come, we recall the concepts of ``locally closed'' and ``locally compact''.  In the literature (for example, \cite{sniatycki03}), the notion of locally closed is used for subsets of $\RR^n$ (in particular, for differential subspaces of $\RR^n$).  ``Locally compact'', however, can be used for subcartesian spaces (or any topological space), not just differential subspaces of $\RR^n$.  It also tends to be more widely used in the literature.  We show in the following lemma that, for differential subspaces of $\RR^n$, these two concepts coincide. Before stating and proving the lemma, we recall the definitions of locally compact and locally closed subsets.\\

\begin{itemize}
\item Let $S\subseteq\RR^n$.  $S$ is \emph{locally compact} if for every $x\in S$ there exist a relatively open neighbourhood $U\subseteq S$ of $x$ and a compact set $K\subseteq S$ such that $U\subseteq K$.

\item Let $S\subseteq\RR^n$.  $S$ is \emph{locally closed} if for every $x\in S$ there exist an open neighbourhood $V\subseteq\RR^n$ of $x$ and a closed set $C\subseteq\RR^n$ such that $V\cap C$ is a relatively open neighbourhood of $x$ in $S$.
\end{itemize}

\begin{lemma}
Let $S\subseteq\RR^n$.  Then $S$ is locally closed if and only if $S$ is locally compact.
\end{lemma}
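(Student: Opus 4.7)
The plan is to prove the equivalence by producing in each direction the data required by the other definition, using that $\mathbb{R}^n$ is Hausdorff and that closed bounded sets in $\mathbb{R}^n$ are compact.

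First I would handle the implication locally closed $\Rightarrow$ locally compact. Fix $x \in S$ and pick $V$ open in $\mathbb{R}^n$ and $C$ closed in $\mathbb{R}^n$ with $V \cap C$ a relatively open neighbourhood of $x$ in $S$. Choose $r > 0$ so small that the closed ball $\overline{B}(x,r)$ is contained in $V$. Then $K := \overline{B}(x,r) \cap C$ is closed and bounded in $\mathbb{R}^n$, hence compact, and $K \subseteq V \cap C \subseteq S$. The set $U := B(x,r) \cap C = B(x,r) \cap (V \cap C)$ is relatively open in $S$, contains $x$, and lies inside the compact set $K \subseteq S$, giving local compactness at $x$.

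For the converse, suppose $S$ is locally compact and fix $x \in S$. Pick $U$ relatively open in $S$ and $K \subseteq S$ compact with $x \in U \subseteq K$. Write $U = V \cap S$ for some open $V \subseteq \mathbb{R}^n$. Since $K$ is a compact subset of the Hausdorff space $\mathbb{R}^n$, it is closed in $\mathbb{R}^n$. From $K \subseteq S$ we get $V \cap K \subseteq V \cap S = U$, and from $U \subseteq K$ together with $U \subseteq V$ we get $U \subseteq V \cap K$; hence $U = V \cap K$. This exhibits the relatively open neighbourhood $U$ of $x$ as the intersection of an open set of $\mathbb{R}^n$ with a closed set of $\mathbb{R}^n$, verifying local closedness at $x$.

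Neither direction presents a real obstacle; the only small point to be careful about is ensuring the set one produces actually lies inside $S$ (in the forward direction this follows from $V \cap C \subseteq S$; in the backward direction it is automatic since $K \subseteq S$). Both arguments are local and pointwise, so no patching or partition-of-unity issue arises.
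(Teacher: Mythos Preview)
Your proof is correct and follows essentially the same route as the paper: in both directions you and the paper use the identical constructions (intersecting with a small closed ball for one direction, and writing $U=V\cap K$ with $K$ closed in $\RR^n$ for the other). The only difference is that the paper adds an explicit verification that a set compact in $\RR^n$ and contained in $S$ is compact in the subspace topology of $S$, which you correctly take for granted since compactness is intrinsic.
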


\begin{proof}
Assume that $S$ is locally compact, and fix $x\in S$.  Then, there exist an open neighbourhood $U\subseteq S$ of $x$ and a compact $K\subseteq S$ such that $U\subseteq K$.  There exists an open neighbourhood $V\subseteq\RR^n$ of $x$ such that $U=V\cap S$, and $K$ is a compact subset of $\RR^n$ and hence closed.  $$V\cap K\subseteq V\cap S=U\subseteq V\cap K$$
and so $V\cap K=U$.  Hence, $S$ is locally closed.\\

Conversely, assume $S$ is locally closed, and fix $x\in S$.  There exist an open neighbourhood $V\subseteq\RR^n$ of $x$ and a closed subset $C\subseteq\RR^n$ such that $V\cap C$ is an open neighbourhood of $x$ contained in $S$.  Let $B\subseteq\RR^n$ be the open ball of radius $\epsilon>0$ centred at $x$.  Then, $B\cap V\cap C$ is an open neighbourhood of $x$ in $S$.  Choosing $\epsilon$ to be sufficiently small so that $\overline{B}\subseteq V$, we have $B\cap V\cap C=B\cap C$ and $\overline{B}\cap C\subseteq S$.  Since $\overline{B}$ and $C$ are closed subsets of $\RR^n$, their intersection is closed.  Since this intersection is contained in $S$, $\overline{B}\cap C$ is a closed subset of $S$.  Moreover, since $\overline{B}$ is compact in $\RR^n$, $\overline{B}\cap C$ is compact in $\RR^n$ as well.\\

Now, let $\{W_\alpha\}_{\alpha\in A}$ be an open cover of $\overline{B}\cap C$ in $S$. Then, for each $\alpha$, there exists an open set $\tilde{W}_\alpha$ such that $W_\alpha=\tilde{W}_\alpha\cap S$. Thus, the collection of open sets $\{\tilde{W}_\alpha\}_{\alpha\in A}$ forms an open cover of $\overline{B}\cap C$ in $\RR^n$.  Since $\overline{B}\cap C$ is compact in $\RR^n$, there is a finite subcover $\{\tilde{W}_{\alpha_i}\}$ of $\{\tilde{W}_\alpha\}$ that covers $\overline{B}\cap C$.  But then for each $\alpha_i$, $W_{\alpha_i}:=\tilde{W}_{\alpha_i}\cap S$ is an open subset of $S$, contained in $\{W_\alpha\}$.  We conclude that the collection $\{W_{\alpha_i}\}$ is a finite subcover of $\{W_\alpha\}$ covering $\overline{B}\cap C$, and hence $\overline{B}\cap C$ is compact as a subset of $S$.
\end{proof}

Note that a subcartesian space can be locally compact, which extends the notion of local closedness beyond differential subspaces of $\RR^n$.

\begin{proposition}[Integral Curve Domains -- \'Sniatycki]\labell{p:opendomain}
Let $S$ be a locally compact subcartesian space.  A derivation $X$ of $\CIN(S)$ is a vector field if and only if the domain of each of its maximal integral curves is open.
\end{proposition}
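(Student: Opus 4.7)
The forward direction is immediate: if $A^X$ is open in $\RR\times S$, then each fibre $I^X_x = A^X \cap (\RR\times\{x\})$ is open in $\RR$. So the plan is focused on the converse. Assume each $I^X_x$ is open and fix $(t_0,x_0)\in A^X$, say with $t_0\geq 0$ (the case $t_0<0$ is symmetric, or handled by passing to $-X$). The goal is to produce open sets $V\ni x_0$ and $(\alpha,\beta)\ni t_0$ with $(\alpha,\beta)\times V \subseteq A^X$.

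First I would enlarge a little: since $I^X_{x_0}$ is open and contains $[0,t_0]$, pick $T>t_0$ with $[0,T]\subseteq I^X_{x_0}$, so the trajectory $\Gamma:=\exp([0,T]X)(x_0)$ is a compact subset of $S$. Using local compactness together with the definition of subcartesian space, I would cover $\Gamma$ by finitely many relatively compact chart domains $\varphi_i:U_i\to\tilde U_i\subseteq\RR^{n_i}$, and on each chart invoke \hypref{p:charDer}{Proposition} to choose a local extension $\tilde X_i\in\Der\CIN(\RR^{n_i})$ of $X|_{U_i}$. A Lebesgue-number argument applied to the pullback cover of $[0,T]$ under $s\mapsto\exp(sX)(x_0)$ then yields a partition $0=r_0<r_1<\dots<r_N=T$ and indices $i_j$ with $\exp([r_j,r_{j+1}]X)(x_0)\subseteq U_{i_j}$.

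Next I would use continuous dependence on initial conditions for the classical (smooth) vector fields $\tilde X_{i_j}$ on Euclidean space, together with \hypref{p:intcurve}{Proposition}, to propagate inductively: choose shrinking open neighbourhoods $V_j\ni\exp(r_j X)(x_0)$ and $\delta_j>0$ so that for every $y\in V_j$, the $\tilde X_{i_j}$-trajectory of $\varphi_{i_j}(y)$ stays inside $\tilde U_{i_j}$ for time in $(-\delta_j,r_{j+1}-r_j+\delta_j)$ and lands inside $\varphi_{i_j}(V_{j+1})$ at time $r_{j+1}-r_j$. By \hypref{p:intcurve}{Proposition}, the resulting Euclidean trajectories pull back to honest integral curves of $X$ on $S$. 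Composing these local flows across the partition, I obtain, for $x$ in a small open neighbourhood $V\ni x_0$, a well-defined curve $c_x:(-\delta,T')\to S$ (where $T' = T+ \text{small}$) with $c_x(0)=x$ that satisfies the defining relation of an integral curve of $X$ at every point.

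Finally, the openness hypothesis enters decisively: the constructed curve $c_x$ is, by uniqueness of integral curves (\hypref{t:ode}{Theorem}), a restriction of the maximal integral curve $\exp(\cdot X)(x)$, so $(-\delta,T')\subseteq I^X_x$, and in particular $t_0\in I^X_x$ with $(t_0-\delta,t_0+\delta)\times V\subseteq A^X$. The main technical obstacle I anticipate is Step 4: controlling trajectories uniformly in the initial point across the finite chain of charts, since each chart change requires transporting both the point and a small neighbourhood and re-identifying $X$ locally with a different Euclidean extension $\tilde X_{i_{j+1}}$. Relative compactness of the chart images $\overline{U_i}$ (provided by local compactness of $S$) is what prevents the trajectories from escaping the covered region, and is where the hypothesis on $S$ is genuinely used; without it one can produce derivations all of whose maximal integral curves have open domains but whose total domain $A^X$ fails to be open, as the example following the proposition is designed to show.
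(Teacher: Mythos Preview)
The paper itself does not supply an argument here; it simply refers the reader to \S4 Proposition 3 of \cite{sniatycki03}. Your sketch is therefore not competing with a proof in the paper but rather filling in what the citation stands for, and the overall strategy you outline---cover the compact trajectory segment by finitely many charts with local Euclidean extensions of $X$, then propagate nearby initial data through the chain using continuous dependence on initial conditions---is the correct one and is essentially what \'Sniatycki does.

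One step deserves to be made sharper. In your inductive step you assert that for $y$ near $\exp(r_jX)(x_0)$ the Euclidean $\tilde X_{i_j}$-trajectory of $\varphi_{i_j}(y)$ \emph{stays inside} $\tilde U_{i_j}$ for the required time. Continuous dependence on initial data only guarantees that this Euclidean trajectory stays \emph{close} to that of $\varphi_{i_j}(\exp(r_jX)(x_0))$; since $\tilde U_{i_j}$ is in general not open in $\RR^{n_{i_j}}$, closeness does not immediately give containment. This is exactly where local compactness enters, but not quite in the form you describe. By the lemma preceding the proposition, $\tilde U_{i_j}$ is locally closed, say $\tilde U_{i_j}=V_{i_j}\cap C_{i_j}$ with $V_{i_j}$ open and $C_{i_j}$ closed in $\RR^{n_{i_j}}$. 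Continuous dependence keeps the nearby Euclidean trajectory inside $V_{i_j}$. Membership in $C_{i_j}$ is then obtained by a short open--closed argument using the two hypotheses together: while the $S$-integral curve $\exp(\cdot X)(y)$ exists and lies in $U_{i_j}$, its $\varphi_{i_j}$-image coincides with the Euclidean trajectory and hence lies in $\tilde U_{i_j}\subseteq C_{i_j}$; if it were to exit $U_{i_j}$ or to terminate before time $r_{j+1}-r_j$, the limit point of the Euclidean trajectory would lie in $\overline{\tilde U_{i_j}}\cap V_{i_j}=\tilde U_{i_j}$, and pulling back via $\varphi_{i_j}^{-1}$ contradicts either openness of $U_{i_j}$ in $S$ or, invoking the hypothesis that $I^X_y$ is open, maximality of the integral curve through $y$. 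So the mechanism is the interplay between local closedness of the chart image and openness of each $I^X_y$, rather than relative compactness of $\overline{U_i}$ as such.
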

\begin{proof}
See \S4 Proposition 3 of \cite{sniatycki03}.
\end{proof}


\begin{example}[\'Sniatycki \cite{sniatycki03}]\labell{x:loccmpctneeded}
Let $S$ be the differential subspace of $\RR^2$ given by $$S=\{(x,y)~|~x^2+(y-1)^2<1\}\cup\{(x,y)~|~y=0\}.$$  Consider the global derivation given by the restriction of $\partial_x$ to $S$.  Then the domain of each maximal integral curve of $\partial_x$ is open; however, at the origin, the integral curve does not induce a local diffeomorphism.
\end{example}

\section{Locally Complete Families of Vector Fields and Smooth Stratified Spaces}

We are interested in using families of vector fields in order to obtain a ``nice'' partition of a subcartesian space.  The condition needed to achieve this on these families is defined next.  We then give examples.

\begin{definition}[Locally Complete Families of Vector Fields]
A family of vector fields $\mathcal{F}\subseteq\Vect(S)$ is \emph{locally complete} if for every $X,Y\in\mathcal{F}$, every $x\in S$ and every $t\in\RR$ such that $(\exp(tX)_*Y)|_x$ is well-defined, there exist an open neighbourhood $U$ of $x$ and a vector field $Z\in\mathcal{F}$ such that $\exp(tX)_*Y|_U=Z|_U$.
\end{definition}

\begin{remark}
Note that for $f\in\CIN(S)$ and $X,Y\in\mathcal{F}$ where $\mathcal{F}$ is a locally complete family of vector fields, $x\in S$ and $s,t\in\RR$, we have (where it is defined)
\begin{align*}
\frac{d}{ds}f(\exp(tX)(\exp(sY)(x)))=&(\exp(tX)_*(Y|_{\exp(sY)(x)}))f\\
=&((\exp(tX)_*Y)f)(\exp(tX)(\exp(sY)(x))).
\end{align*}
For fixed $t$, there exists an open neighbourhood $U$ of $x$ on which the local flow of $(\exp(tX)_*Y)|_U$ is equal to $s\mapsto\exp(tX)(\exp(sY)(y))$ for $y\in U$.
\end{remark}

\begin{example}[Not Locally Complete]\labell{x:lcexample}
Consider $S=\RR^2$, and let $\mathcal{F}$ be the family of all $\RR$-linear combinations of the two vector fields $\partial_x$ and $x\partial_y$.  This family is not locally complete, as one can check that $\exp(tx\partial_y)_*\partial_x=\partial_x+t\partial_y$ is not contained in $\mathcal{F}$ for any $t\neq 0$.
\end{example}

\begin{proposition}[\'Sniatycki]\labell{p:vectloccompl}
$\Vect(S)$ is locally complete.
\end{proposition}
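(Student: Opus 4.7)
The natural candidate for $Z$ is the pushforward $\exp(tX)_*Y$ itself, cut off by a bump function so as to become a well-defined element of $\Vect(S)$. My plan is as follows. Since $X\in\Vect(S)$, the set $A^X$ is open, so there exist open neighbourhoods $U_0$ of $x$ and $V_0$ of $x_0:=\exp(-tX)(x)$ on which $\exp(tX)$ and $\exp(-tX)$ are mutually inverse smooth maps. Consequently, $Z_0:=\exp(tX)_*(Y|_{V_0})$ is a well-defined derivation of $\CIN(U_0)$, and one checks directly that its maximal integral curves satisfy
$$\exp(sZ_0)(y)=\exp(tX)\bigl(\exp(sY)(\exp(-tX)(y))\bigr),$$
so that the domain of this local flow is open in $\RR\times U_0$ since $A^X$ and $A^Y$ are open.

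Next, I would globalise $Z_0$ to a vector field on all of $S$. Shrinking $U_0$ if necessary, pick a chart $\varphi:U_0\to\tilde U_0\subseteq\RR^n$ about $x$. By \hypref{p:charDer}{Proposition}, the derivation $Z_0$ admits a local representative $\tilde Z_0\in\Der\CIN(\RR^n)$, and we may further shrink so that $\varphi(x)$ lies in a relatively compact open neighbourhood of $\tilde U_0$. Choose a bump function $\tilde\rho\in\CIN(\RR^n)$ with $\supp\tilde\rho\subset\tilde U_0$ and $\tilde\rho\equiv 1$ on an open neighbourhood $\tilde U$ of $\varphi(x)$. Then $\tilde Z:=\tilde\rho\,\tilde Z_0$ is a compactly supported smooth vector field on $\RR^n$; since $\tilde Z_0$ already preserves $\mathfrak{n}(\tilde U_0)$ and $\tilde\rho$ is a smooth function, so does $\tilde Z$. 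Applying \hypref{p:charDer}{Proposition} in reverse, $\tilde Z|_{\tilde U_0}$ corresponds to a derivation of $\CIN(U_0)$, and extending this derivation by zero outside $U_0$ (which is well-defined on $\CIN(S)$ because $\tilde\rho$ vanishes near the boundary of $\tilde U_0$) yields a global derivation $Z\in\Der\CIN(S)$ which agrees with $Z_0$ on $U:=\varphi^{-1}(\tilde U)$.

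It remains to verify $Z\in\Vect(S)$, i.e.\ that $A^Z$ is open. Off $\supp Z$ the maximal integral curve of $Z$ through any point is constant, with domain $\RR$; on the interior of $\varphi^{-1}(\supp\tilde\rho)$, the curves are carried by $\varphi$ to integral curves of $\tilde Z$ on $\RR^n$, whose flow is defined globally on $\RR\times\RR^n$ since $\tilde Z$ is compactly supported. A curve starting at a point $y\in U_0$ with $Z|_y\neq 0$ either remains in $U_0$ (where it coincides, through $\varphi$, with the smooth flow of $\tilde Z$), or exits to a point where $\tilde\rho=0$ and becomes constant thereafter. In each case $I^Z_y$ is open, and openness of $A^Z$ follows from openness of the flow domain of $\tilde Z$ together with the fact that $S\setminus\supp Z$ is open. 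Together with $Z|_U=Z_0|_U=\exp(tX)_*Y|_U$, this completes the construction.

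The main obstacle is the global step: the pushforward $\exp(tX)_*Y$ is only locally defined, whereas the definition of locally complete family requires a \emph{global} vector field $Z\in\Vect(S)$ agreeing with it on $U$. Ensuring that the bump-function extension $Z$ remains a vector field in the sense of \'Sniatycki (i.e.\ has open $A^Z$) is the delicate point, and hinges on the gluing argument above whereby integral curves of $Z$ inside $\supp Z$ either persist smoothly or cross transversally into the constant region where $\tilde\rho=0$.
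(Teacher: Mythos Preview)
The paper does not prove this statement; it cites \'Sniatycki's original article. Your strategy---form $Z_0=\exp(tX)_*Y$ on an open neighbourhood $U_0$, observe its local flow is the conjugation $\exp(tX)\circ\exp(sY)\circ\exp(-tX)$, then cut off by a bump function to globalise---is the natural one and is essentially what \'Sniatycki does. However, there is a genuine gap in your execution.

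The requirement $\supp\tilde\rho\subset\tilde U_0$ is typically impossible to satisfy. The set $\tilde U_0=\varphi(U_0)$ is an \emph{arbitrary} subset of $\RR^n$, and for a subcartesian space that is not locally Euclidean it will generally have empty interior in $\RR^n$ (think of $S=[0,\infty)$ near the endpoint, or a curve embedded in $\RR^2$). In that case any $\tilde\rho\in\CIN(\RR^n)$ with $\supp\tilde\rho\subset\tilde U_0$ is forced to vanish identically. Unfortunately your subsequent verification that the flow of $\tilde Z$ preserves $\tilde U_0$---and hence that $A^Z$ is open---rests precisely on the containment $\supp\tilde\rho\subset\tilde U_0$, so the construction and the flow argument fail together.

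The repair is to choose the bump function on $S$ rather than on $\RR^n$: take $b\in\CIN(S)$ with $b\equiv1$ near $x$ and $\supp(b)$ compact in $U_0$, and set $Z:=bZ_0$ extended by zero. One then argues directly in $S$: a maximal integral curve of $Z$ either meets $\{b=0\}$ and becomes constant (domain $\RR$), or remains in the compact set $\supp(b)\subset U_0$, where compactness together with a local representative forces the domain to be all of $\RR$. Finally one must check that $A^Z$ itself is open, not merely that each $I^Z_y$ is open---these are not equivalent without local compactness (cf.\ \hypref{x:loccmpctneeded}{Example})---so one should exhibit the flow explicitly rather than appeal to \hypref{p:opendomain}{Proposition}. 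This last point is where the genuine content of the argument lies, and your sketch glosses over it.
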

\begin{proof}
See \S4 Theorem 2 of \cite{sniatycki03}.
\end{proof}

We will later give examples of subcartesian spaces equipped with smooth stratifications on which we apply the theory of vector fields, but in order to do this we will need some more terminology coming from the theory of stratified and decomposed spaces.  An introduction to decomposed and stratified spaces is given in \hypref{s:stratified}{Section}.

\begin{definition}[Refinements of Decomposed Spaces]
Fix a differential space $X$ with smooth decompositions $\mathcal{D}_1$ and $\mathcal{D}_2$.  $\mathcal{D}_1$ is a \emph{refinement} of $\mathcal{D}_2$, denoted $\mathcal{D}_1\geq\mathcal{D}_2$ if for every piece $P_1\in\mathcal{D}_1$, there exists $P_2\in\mathcal{D}_2$ such that $P_1\subseteq P_2$.  This induces a partial ordering on the set of decompositions on $X$. We say that $\mathcal{D}$ is \emph{minimal} if for any $\mathcal{D}'$ such that $\mathcal{D}\geq\mathcal{D}'$, we have $\mathcal{D}=\mathcal{D}'$.
\end{definition}

\begin{example}
The square $[0,1]^2$ with the decomposition given in \hypref{x:square}{Example} is minimal.
\end{example}

\begin{theorem}[Bierstone]\labell{t:bierstone}
If $G$ is a compact Lie group acting on a manifold $M$, then the orbit-type stratification of $M/G$ is minimal.
\end{theorem}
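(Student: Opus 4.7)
The strategy is to reduce to the case of a linear action via the slice theorem and then exploit the rigidity of the singularity at the vertex of a nontrivial linear quotient. By the Koszul slice theorem, for any $[x] \in M/G$ with stabilizer $H$, a neighbourhood of $[x]$ in $M/G$ is functionally diffeomorphic to $V/H$, where $V$ is the slice representation at $x$, and this diffeomorphism takes orbit-type strata to orbit-type strata. Since minimality is a local property (a refinement $\mathcal{D} \geq \mathcal{D}'$ is detected piece-by-piece on arbitrarily small neighbourhoods), it suffices to prove the theorem for each linear action of a compact Lie group on a vector space.

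The central technical ingredient is the following local fact. Let $K$ be a compact Lie group acting linearly and nontrivially on a vector space $W$ with $W^K = 0$; then no positive-dimensional smooth submanifold passes through the origin $[0] \in W/K$. To prove this I would invoke Schwarz's theorem together with a Hilbert embedding $W/K \hookrightarrow \RR^N$ via homogeneous generators $p_1, \ldots, p_N$ of $\RR[W]^K$. The assumption $W^K = 0$ forces $(W^*)^K = 0$, so all Hilbert generators have degree $\geq 2$, giving $dp_i|_0 = 0$. Using the polynomial relations cutting out $W/K$ in $\RR^N$, together with the positivity of a $K$-invariant quadratic norm (whose associated coordinate on $W/K$ attains a strict minimum at $[0]$ along any smooth curve through $[0]$), one deduces that every smooth curve $\gamma: (-\epsilon, \epsilon) \to W/K$ with $\gamma(0) = [0]$ has $\gamma'(0) = 0$ in $\RR^N$. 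Since a positive-dimensional smooth submanifold through $[0]$ would support a smooth curve with nonzero velocity at $[0]$, no such submanifold can exist.

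With the local fact in hand, let $\mathcal{D}'$ be a smooth decomposition of $M/G$ refined by $\mathcal{D}_{OT}$, let $P' \in \mathcal{D}'$, and pick $[y] \in P'$ lying in an orbit-type stratum $S$ of dimension $k$. The slice theorem at $[y]$ produces a diffeomorphism of a neighbourhood of $[y]$ with $V_1 \times W/K$, where $V_1$ is the tangent direction to $S$ (of dimension $k$), $K$ is the isotropy, and $W^K = 0$. Applying the local fact in the $W/K$-factor, no smooth submanifold of dimension exceeding $k$ passes through $[y]$, so $\dim P' \leq k$ near $[y]$. Combined with $S \subseteq P'$ (from the refinement hypothesis) and connectedness of $P'$, this forces $P' = S$, yielding $\mathcal{D}' = \mathcal{D}_{OT}$.

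The principal obstacle is the curve-velocity claim in the second paragraph. When $\gamma$ admits a smooth lift $\tilde\gamma : (-\epsilon, \epsilon) \to W$ the claim is immediate, since $\gamma'(0)(p_i) = dp_i|_0(\tilde\gamma'(0)) = 0$. In general, however, smooth curves into a quotient by a compact group action need not lift smoothly (for instance, $\gamma(t) = t^2 + t^4$ viewed as a curve into $[0, \infty) = \RR/\ZZ_2$ has would-be lift $\pm|t|\sqrt{1+t^2}$, which is not smooth at $0$). Circumventing this requires working directly with the defining ideal of $W/K$ in $\RR^N$: differentiating the polynomial relations twice at $t=0$, together with the non-negativity of invariant-norm coordinates, yields enough equations to force $\gamma'(0) = 0$, as one can verify explicitly in small examples such as the cone $W/K \subseteq \RR^3$ arising from $\ZZ_2$ acting on $\RR^2$ by negation. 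Carrying out this analysis uniformly across all compact linear actions is the main calculation underlying Bierstone's original argument.
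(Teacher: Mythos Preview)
The paper does not prove this theorem; its proof consists entirely of the citation ``See \cite{bierstone75} and \cite{bierstone80}.'' So there is nothing in the text to compare your proposal against.

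Your outline is sound, and the obstacle you flag in the last paragraph is in fact much milder than you suggest: no \emph{smooth} lift of $\gamma$ is needed, only a set-theoretic one. Include $|x|^2$ among the homogeneous Hilbert generators $p_1,\dots,p_N$ of $\RR[W]^K$, say $p_1(x)=|x|^2$; since $W^K=0$ every $d_i:=\deg p_i$ is at least $2$. Given a smooth curve $\gamma:(-\epsilon,\epsilon)\to W/K\subset\RR^N$ with $\gamma(0)=0$, pick for each $t$ \emph{any} $x(t)\in W$ with $p(x(t))=\gamma(t)$, with no continuity assumed. Then $\gamma_1(t)=|x(t)|^2$ is smooth, nonnegative, and vanishes at $t=0$, so $\gamma_1(t)=O(t^2)$ and hence $|x(t)|\le C|t|$. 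Homogeneity of each $p_i$ then gives
\[
|\gamma_i(t)| = |p_i(x(t))| \le C_i\,|x(t)|^{d_i} = O\bigl(|t|^{d_i}\bigr) = O(t^2),
\]
so $\gamma_i'(0)=0$ for every $i$. This establishes the curve-velocity claim uniformly, with no case-by-case analysis of the defining relations. Your closing step then goes through as written: applying the local fact at every point of $P'$ shows that each orbit-type stratum meeting $P'$ has the same dimension as $P'$ and is therefore open in it, and connectedness of $P'$ forces it to be a single stratum.
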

\begin{proof}
See \cite{bierstone75} and \cite{bierstone80}.
\end{proof}

\begin{definition}[Stratified Vector Fields]
Let $S$ be a smooth stratified space.  Let $X\in\Vect(S)$.  If for each stratum $P$ of $S$, $X|_P$ is a smooth vector field on $P$ as a smooth manifold, then we call $X$ \emph{stratified}. Denote the set of all stratified vector fields on $S$ by $\Vect_{strat}(S)$.
\end{definition}

\begin{remark}
Different terminology and definitions appear in the literature.  For example, in \cite{sniatycki03}, \'Sniatycki defines a smooth stratified space as a (topological) decomposed space equipped with a special atlas of charts such that the pieces obtain their smooth structures from the atlas.  As a theorem, he proves that these are subcartesian spaces.  He does not require the smooth local triviality condition in the definition of a smooth stratified space, although he requires it in many of the lemmas and theorems.  Also, in the same article, a stratified vector field on a smooth stratified space $S$ is not necessarily a smooth section of the Zariski tangent bundle, but a continuous section that is smooth on the strata.  Instead, a strongly stratified vector field is an element of $\Vect(S)$ that restricts to a smooth section of each strata.
\end{remark}

\begin{proposition}[\'Sniatycki]\labell{p:stratloccompl}
Let $S$ be a smooth stratified space.  Then $\Vect_{strat}(S)$ forms a locally complete family.
\end{proposition}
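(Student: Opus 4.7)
The plan is to combine local completeness of $\Vect(S)$ (\hypref{p:vectloccompl}{Proposition}) with a strata-preservation lemma for flows of stratified vector fields, and then to produce a global stratified representative by means of a smooth bump function.

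First I would prove the key lemma: \emph{if $W \in \Vect_{strat}(S)$ and $p$ lies in a stratum $P$, then the maximal integral curve $\exp(\cdot W)(p)$ stays in $P$ throughout its domain.} The argument is that $W|_P$ is a smooth vector field on the manifold $P$, hence admits a maximal integral curve $\gamma\colon J \to P$ through $p$. Composition with the smooth inclusion $P \hookrightarrow S$ produces a smooth curve into $S$ for which, given any $f \in \CIN(S)$, $\tfrac{d}{dt}(f \circ \gamma)(t) = (W|_P)(f|_P)(\gamma(t)) = (Wf)(\gamma(t))$. Thus $\gamma$ is an integral curve of $W$ in $S$ through $p$, and uniqueness (\hypref{t:ode}{Theorem}) identifies it with a restriction of $\exp(\cdot W)(p)$.

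Applying this to $X$, its local flow preserves strata and, on each stratum $P$ where it is defined, coincides with the flow of the manifold vector field $X|_P$. Because $Y$ is stratified, $Y|_P$ is a smooth vector field on $P$, and consequently the classical manifold pushforward $(\exp(tX)|_P)_*(Y|_P)$ is a smooth vector field on its domain in $P$. Using the local-representative characterisations \hypref{p:charDer}{Proposition} and \hypref{p:intcurve}{Proposition}, one verifies that this manifold pushforward agrees pointwise with $\exp(tX)_*Y$ on $U \cap P$. Local completeness of $\Vect(S)$ then supplies $Z \in \Vect(S)$ and an open neighbourhood $U$ of $x$ with $Z|_U = \exp(tX)_*Y|_U$; hence $Z|_{U \cap P}$ is smooth as a section of $TP \to P$ for every stratum $P$ meeting $U$.

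To upgrade this local datum to a genuine element of $\Vect_{strat}(S)$, I would use the partitions of unity available on $S$ to pick $\phi \in \CIN(S)$ equal to $1$ on some smaller open $V \ni x$ and compactly supported in $U$, and then consider the derivation $\phi Z$ (extended by zero off $\supp(\phi)$). This is again a vector field: its maximal integral curves through points outside $\supp(\phi)$ are constant, while through points inside they are reparametrisations of integral curves of $Z$, so in either case they have open domain. It is also stratified, since on each stratum $P$ it equals $\phi|_P \cdot Z|_P$, which is smooth on $U \cap P$ and vanishes off $\supp(\phi|_P)$. On $V$ it coincides with $\exp(tX)_*Y$, as required. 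The principal obstacle in the whole argument is the strata-preservation lemma of the second paragraph; once that is secured, the remainder reduces either to classical manifold calculus on individual strata or to a routine partition-of-unity patching.
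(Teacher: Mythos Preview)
The paper does not prove this proposition; it simply cites \cite{sniatycki03}, \S6 Lemma~11. Your strategy is the right one and matches the cited argument, but two steps as written are incomplete.

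First, your key-lemma proof stops one step short. You show that the manifold integral curve $\gamma\colon J \to P$ of $W|_P$ is an $S$-integral curve of $W$, hence a restriction of $\exp(\cdot W)(p)$, but you do not rule out the maximal $S$-curve being strictly longer and leaving $P$. Close this with a connectedness argument: for every stratum $Q$ the set $\{t \in I^W_p : \exp(tW)(p) \in Q\}$ is open in $I^W_p$ (by your own argument applied at a point of $Q$), these open sets partition the connected interval $I^W_p$, and the one containing $0$ is therefore all of it. Note too that the lemma only needs $W$ to be a derivation that restricts to a smooth vector field on each stratum; it does not require $W \in \Vect(S)$ in advance, and you will want this stronger form below.

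Second, the sentence ``through points inside they are reparametrisations of integral curves of $Z$, so in either case they have open domain'' glosses over two issues. The reparametrisation picture is only valid where $\phi>0$ along the curve; what actually gives the open domain is that a $\phi Z$-curve starting where $\phi>0$ cannot reach the zero set of $\phi$ in finite time (else uniqueness at the endpoint would force the whole curve to be constant). More seriously, ``every maximal integral curve has open domain'' does not by itself yield $\phi Z \in \Vect(S)$: that implication is \hypref{p:opendomain}{Proposition}, which requires $S$ to be locally compact (cf.\ \hypref{x:loccmpctneeded}{Example}). You should either invoke local compactness explicitly---it follows from smooth local triviality by induction on depth---or bypass the issue by applying your key lemma directly to $\phi Z$ (whose restriction to each stratum is the smooth vector field $\phi|_P\cdot Z|_P$) and then deducing openness of $A^{\phi Z}$ from openness of each $A^{(\phi Z)|_P}$ in $\RR\times P$ together with the local product structure of $S$.
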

\begin{proof}
See \S6 Lemma 11 of \cite{sniatycki03}
\end{proof}

\mute{

Let $G$ be a compact Lie group acting on a (smooth) manifold $M$.  Let $H$ be a closed subgroup of $G$, and let $M_{(H)}$ be the set of all points in $M$ whose stabiliser is a conjugate of $H$.  Then, $M$ is the disjoint union of the sets $M_{(H)}$ as $H$ runs over closed subgroups of $G$.  The quotient map $\pi:M\to M/G$ partitions $M/G$ into sets $(M/G)_{(H)}:=\pi(M_{(H)})$ as $H$ runs over closed subgroups of $G$.

\begin{theorem}[Orbit-Type Stratification]\labell{t:pot}
\noindent
\begin{enumerate}
\item The partitions on $M$ and $M/G$ defined yield decompositions, whose induced stratifications have strata given by connected components of the sets $M_{(H)}$ and $(M/G)_{(H)}$.  The stratifications are locally trivial.
\item Each subset $M_{(H)}$ is a $G$-invariant submanifold of $M$.  If $M$ and $G$ are connected, then each stratum in the stratification on $M$ is $G$-invariant.
\item If $M$ is connected, then there exists a closed subgroup $K$ of $G$ such that the strata contained in $M_{(K)}$ form an open dense subset of $M$, and hence $(M/G)_{(K)}$ is an open dense subset of $M/G$.
\item The orbit map $\pi:M\to M/G$ is stratified with respect to the stratifications described above.
\end{enumerate}
\end{theorem}
\begin{proof}
The last statement above is clear by definition of the decompositions. See \cite{duistermaat-kolk04} for the first three statements.
\end{proof}

\begin{definition}
We call the above stratifications \emph{orbit-type stratifications} of each respective space.
\end{definition}

\begin{theorem}[Bierstone]\labell{t:bierstone}
The smooth decomposition inducing the orbit-type stratification of $M/G$ is minimal.
\end{theorem}
\begin{proof}
See \cite{bierstone75} and \cite{bierstone80}.
\end{proof}

}

Now, let $G$ be a compact Lie group acting on a manifold $M$. Denote by $\Vect(M)^G$ the invariant vector fields on $M$. Note that each invariant vector field $X$ induces a $G$-invariant local flow: $$g\cdot\exp(tX)(x)=\exp(tX)(g\cdot x).$$

\begin{proposition}\labell{p:vectGloccompl}
$\Vect(M)^G$ is a locally complete Lie subalgebra of $\Vect(M)$.
\end{proposition}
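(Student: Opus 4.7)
The plan has two parts: showing $\Vect(M)^G$ is closed under the Lie bracket, and verifying the local completeness condition. For the Lie subalgebra property, the key observation is that the pushforward by any diffeomorphism is a Lie algebra homomorphism on vector fields, so if $X,Y\in\Vect(M)^G$, then for every $g\in G$ we have $g_*[X,Y]=[g_*X,g_*Y]=[X,Y]$, so $[X,Y]\in\Vect(M)^G$.

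For local completeness, fix $X,Y\in\Vect(M)^G$, $x\in M$, and $t\in\RR$ such that $(\exp(tX)_*Y)|_x$ is well-defined. First I would record two easy facts. Since $X$ is $G$-invariant, the flow $\exp(tX)$ is $G$-equivariant on its domain (this follows by applying the ODE theorem to both curves $s\mapsto g\cdot\exp(sX)(y)$ and $s\mapsto\exp(sX)(g\cdot y)$, which have the same initial value and the same generator $X$). Consequently the open set $D_t\subseteq M$ on which $\exp(tX)$ is defined is $G$-invariant, and $\exp(tX):D_t\to D_{-t}$ is an equivariant diffeomorphism. A direct computation then gives
\[
g_*(\exp(tX)_*Y)=(g\circ\exp(tX))_*Y=(\exp(tX)\circ g)_*Y=\exp(tX)_*(g_*Y)=\exp(tX)_*Y
\]
on $D_{-t}$, so $W:=\exp(tX)_*Y$ is a $G$-invariant smooth section of $TM$ over the $G$-invariant open set $D_{-t}$ containing $x$.

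Finally I would produce a global $G$-invariant extension that agrees with $W$ near $x$. Since $G$ is compact and acts smoothly, I can choose a $G$-invariant open neighborhood $U$ of $x$ with $\overline{U}\subset D_{-t}$ (for instance, start with a small relatively compact neighborhood and saturate, using compactness of $G$). Then I take any bump function $\chi\in\CIN(M)$ with $\chi\equiv 1$ on a neighborhood $U'$ of $x$ and $\supp(\chi)\subset U$, and replace it by the average $\tilde\chi(y)=\int_G\chi(g\cdot y)\,dg$ against normalized Haar measure; this average is $G$-invariant, still identically $1$ on a (smaller) $G$-invariant neighborhood of $x$ and supported in $U$. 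Defining $Z$ as $\tilde\chi W$ on $D_{-t}$ and zero elsewhere produces a global smooth $G$-invariant vector field on $M$, and $Z=W=\exp(tX)_*Y$ on the open neighborhood where $\tilde\chi\equiv 1$. Since every smooth section of $TM$ admits a local flow on a manifold, $Z\in\Vect(M)^G$, which is the desired witness.

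The only step requiring care is the construction of the $G$-invariant bump $\tilde\chi$ and the $G$-invariant neighborhood with compact closure in $D_{-t}$; everything else is formal. This is genuinely routine because $G$ is compact, so there is no real obstacle — averaging Haar-integrably preserves smoothness and support containment, and compactness of $G$-orbits guarantees that saturating a small enough neighborhood of $x$ keeps us inside $D_{-t}$.
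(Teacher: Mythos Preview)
Your proof is correct and follows essentially the same approach as the paper: establish $G$-invariance of $\exp(tX)_*Y$ on its ($G$-invariant) domain via equivariance of the flow, then cut off with a $G$-invariant bump function to produce a global element of $\Vect(M)^G$. The only cosmetic difference is that the paper routes the extension step through local completeness of $\Vect(M)$ (\hypref{p:vectloccompl}{Proposition}) before multiplying by the bump function, whereas you bypass this since on a manifold any smooth section of $TM$ is already a vector field; your route is slightly more direct. One small nit: for $\tilde\chi\equiv 1$ on a neighborhood of $x$ after averaging you need $G\cdot x\subset U'$, so choose $U'$ $G$-invariant from the start (as you did for $U$).
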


\begin{proof}
For any two invariant vector fields $X$ and $Y$, we have for all $g\in G$, $$g_*[X,Y]=[g_*X,g_*Y]=[X,Y],$$ and for $x\in M$,  $$g\cdot\exp(tX)(\exp(sY)(x))=\exp(tX)(\exp(sY)(g\cdot x))$$ for $s,t$ such that the composition of the curves is defined. Thus $\exp(tX)_*Y$ is locally defined about $G$-orbits.  Since $\Vect(M)$ is locally complete, for any $x\in M$ there exist a vector field $Z$ on $M$ and an open neighbourhood $U$ of $x$ such that $\exp(tX)_*Y$ is defined on $U$ and $(\exp(tX)_*Y)|_U=Z|_U$.  Since $\exp(tX)_*Y$ is invariant about $x$, we can choose $U$ to be a $G$-invariant open neighbourhood.  Let $V\subset U$ be a $G$-invariant open neighbourhood of $x$ such that $\overline{V}\subset U$.  Let $b:M\to\RR$ be a $G$-invariant smooth bump function with support in $U$ and $b|_V=1$.  Then, $bZ\in\Vect(M)^G$ extends $(\exp(tX)_*Y)|_V$ to a invariant vector field on $M$.
\end{proof}

\begin{definition}
Identify $\g$ with the invariant (under left multiplication) vector fields on $G$. Let $\rho:\g\to\Vect(M)$ be the $\g$-action induced by the $G$-action.
\end{definition}

\begin{proposition}\labell{p:rhogloccompl}
$\rho(\g)$ is a locally complete Lie subalgebra of $\Vect(M)$.
\end{proposition}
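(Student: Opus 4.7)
The plan is to prove both claims by giving an explicit formula for $\exp(t\rho(\xi))_*\rho(\eta)$ in terms of the adjoint action of $G$ on $\g$, from which both closure under brackets and (a particularly strong form of) local completeness follow immediately.

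First I would verify that $\rho(\g)$ is a Lie subalgebra. Since $\rho$ comes from a $G$-action, the standard computation gives $[\rho(\xi),\rho(\eta)] = \pm\rho([\xi,\eta])$ for all $\xi,\eta\in\g$ (the sign depending on the convention identifying $\g$ with left- versus right-invariant vector fields, as set up just before the proposition); either way, the bracket of two fundamental vector fields lies in $\rho(\g)$, so $\rho(\g)$ is closed under the bracket and is $\RR$-linear by construction.

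Next I would identify the flow of a fundamental vector field. By definition of $\rho(\xi)$, the curve $t\mapsto \exp(t\xi)\cdot x$ is an integral curve of $\rho(\xi)$ through $x$, and since $G$ is compact it is defined for all $t\in\RR$. By the ODE uniqueness statement (\hypref{t:ode}{Theorem}), $\exp(t\rho(\xi))(x) = \exp(t\xi)\cdot x$ globally.

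The key step is then to compute $\exp(t\rho(\xi))_*\rho(\eta)$ at an arbitrary point $x\in M$. Writing $\phi_t(x) = \exp(t\xi)\cdot x$ and using the standard identity $\exp(t\xi)\exp(s\eta)\exp(-t\xi) = \exp(s\,\Ad_{\exp(t\xi)}\eta)$, I would compute
\begin{align*}
(\phi_t)_*\rho(\eta)\big|_x
&= (\phi_t)_*\left(\frac{d}{ds}\Big|_{s=0}\exp(s\eta)\cdot\exp(-t\xi)\cdot x\right)\\
&= \frac{d}{ds}\Big|_{s=0}\exp(t\xi)\exp(s\eta)\exp(-t\xi)\cdot x\\
&= \frac{d}{ds}\Big|_{s=0}\exp\!\left(s\,\Ad_{\exp(t\xi)}\eta\right)\cdot x\\
&= \rho\!\left(\Ad_{\exp(t\xi)}\eta\right)\big|_x.
\end{align*}
Hence $\exp(t\rho(\xi))_*\rho(\eta) = \rho(\Ad_{\exp(t\xi)}\eta)$ is itself a fundamental vector field, defined globally on $M$ and lying in $\rho(\g)$. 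In particular the local completeness condition holds with $U = M$ and $Z = \rho(\Ad_{\exp(t\xi)}\eta)$, which finishes the proof.

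There is no serious obstacle here: the only subtlety is bookkeeping about the sign conventions in $\rho$ and in the definition of the pushforward by a flow, but neither affects the conclusion because $\Ad_{\exp(t\xi)}\eta\in\g$ regardless.
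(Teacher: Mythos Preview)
Your proof is correct and takes exactly the same approach as the paper: the paper's proof is the single-line assertion $\exp(t\xi_M)_*\zeta_M=(\Ad_{\exp(t\xi)}\zeta)_M$, which is precisely the identity you derive with full details. Your version simply unpacks the computation and the Lie-subalgebra claim that the paper leaves implicit.
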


\begin{proof}
Let $\xi,\zeta\in\g$, and let $\xi_M=\rho(\xi)$ and $\zeta_M=\rho(\zeta)$.  Then, $\exp(t\xi_M)_*\zeta_M=(\Ad_{\exp(t\xi)}\zeta)_M$.
\end{proof}

Recall that for a compact Lie group $G$, its Lie algebra decomposes as a direct sum of the derived Lie subalgebra and the centre of $\g$: $$\g=[\g,\g]\oplus\mathfrak{z}(\g).$$

\begin{corollary}\labell{c:rhogloccompl}
$\rho([\g,\g])$ and $\rho(\mathfrak{z}(\g))$ are locally complete Lie subalgebras of $\Vect(M)$.
\end{corollary}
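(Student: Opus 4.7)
The plan is to derive this corollary directly from Proposition \ref{p:rhogloccompl} by exploiting that $[\g,\g]$ and $\mathfrak{z}(\g)$ are $\operatorname{Ad}$-invariant ideals of $\g$. Since $\rho:\g\to\Vect(M)$ is a Lie algebra homomorphism, the images of these two subalgebras are automatically Lie subalgebras of $\Vect(M)$, so the only content of the statement is \emph{local completeness}, and that is precisely where $\operatorname{Ad}$-invariance intervenes.

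First, I would observe the standard fact that both $[\g,\g]$ and $\mathfrak{z}(\g)$ are ideals of $\g$, hence in particular $\operatorname{Ad}$-invariant: $\operatorname{Ad}_g([\g,\g])\subseteq[\g,\g]$ and $\operatorname{Ad}_g(\mathfrak{z}(\g))\subseteq\mathfrak{z}(\g)$ for every $g\in G$ (for $\mathfrak{z}(\g)$ this is in fact a trivial action, since $\operatorname{Ad}_{\exp(t\xi)}\zeta=\zeta$ whenever $\zeta$ is central). Second, I would recall the key identity already used in the proof of Proposition \ref{p:rhogloccompl}, namely
\[
\exp(t\xi_M)_*\zeta_M=(\operatorname{Ad}_{\exp(t\xi)}\zeta)_M=\rho(\operatorname{Ad}_{\exp(t\xi)}\zeta).
\]

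Now fix $\xi,\zeta\in[\g,\g]$. By $\operatorname{Ad}$-invariance, $\operatorname{Ad}_{\exp(t\xi)}\zeta\in[\g,\g]$, so $\exp(t\xi_M)_*\zeta_M\in\rho([\g,\g])$. This vector field is globally defined on $M$ (since $\exp(t\xi_M)$ is a diffeomorphism of $M$ for all $t\in\RR$, as $G$ is compact and $\xi_M$ is the fundamental vector field of a compact group action), so the locality condition in the definition of local completeness is satisfied by taking $U=M$ and $Z=\rho(\operatorname{Ad}_{\exp(t\xi)}\zeta)$. The argument for $\mathfrak{z}(\g)$ is even simpler: $\exp(t\xi_M)_*\zeta_M=\zeta_M\in\rho(\mathfrak{z}(\g))$ whenever $\xi,\zeta\in\mathfrak{z}(\g)$.

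There is no real obstacle here; the content of the corollary is essentially a bookkeeping consequence of Proposition \ref{p:rhogloccompl} together with the Lie-theoretic fact that $[\g,\g]$ and $\mathfrak{z}(\g)$ are $\operatorname{Ad}$-invariant ideals. The only subtlety worth mentioning is that the definition of local completeness refers to vector fields in the family, not merely to derivations that happen to agree locally with something in the family, but this is exactly guaranteed by the identity $\exp(t\xi_M)_*\zeta_M=\rho(\operatorname{Ad}_{\exp(t\xi)}\zeta)$ being a \emph{global} equality of vector fields on $M$.
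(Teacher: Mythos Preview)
Your proof is correct and close in spirit to the paper's, which dispatches the corollary in one sentence: since $[\g,\g]$ and $\mathfrak{z}(\g)$ are themselves Lie algebras of compact Lie groups (the derived subgroup and the identity component of the center of $G$), one simply reapplies Proposition~\ref{p:rhogloccompl} to those subgroups acting on $M$. Your argument unpacks this by working inside $\g$ and invoking $\Ad$-invariance of the two ideals directly via the identity $\exp(t\xi_M)_*\zeta_M=(\Ad_{\exp(t\xi)}\zeta)_M$; this is exactly the content of that proposition's proof, so the two routes coincide once unwound.
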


\begin{proof}
Since $[\g,\g]$ and $\mathfrak{z}(\g)$ are themselves Lie algebras corresponding to compact Lie groups, this corollary is immediate from the above lemma.
\end{proof}

\begin{definition}\labell{d:A}
Define $\mathcal{A}$ to be the smallest Lie subalgebra of $\Vect(M)$ containing $\Vect(M)^G$ and $\rho(\g)$.
\end{definition}

\begin{remark}
Note that $\mathcal{A}$, $\Vect(M)^G$ and $\rho(\g)$ are not necessarily closed under multiplication by functions in $\CIN(M)$, but $\Vect(M)^G$ is closed under multiplication by $G$-invariant smooth functions.
\end{remark}

\begin{proposition}\labell{p:A}
$\mathcal{A}$ is locally complete, and it is a direct sum of Lie algebras: $$\mathcal{A}=\rho([\g,\g])\oplus\Vect(M)^G.$$
\end{proposition}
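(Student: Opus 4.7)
The plan is to first establish that the two summands $\rho([\g,\g])$ and $\Vect(M)^G$ commute as Lie subalgebras of $\Vect(M)$, then use this to deduce both the Lie algebra decomposition and the local completeness of $\mathcal{A}$. The crucial computation is the vanishing of all cross-brackets: for any $\xi\in\g$ and any $Y\in\Vect(M)^G$, I claim $[\rho(\xi),Y]=0$. Since $Y$ is $G$-invariant, its flow $\phi_t^Y$ is $G$-equivariant, and a direct computation of the pushforward $\phi^Y_{t,*}\rho(\xi)$ via the chain rule shows $\phi^Y_{t,*}\rho(\xi)=\rho(\xi)$; differentiating at $t=0$ gives $[\rho(\xi),Y]=0$.

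For the decomposition, I would use $\g=[\g,\g]\oplus\mathfrak{z}(\g)$ (valid for compact $\g$) to write any $\xi\in\g$ as $\xi_1+\xi_2$ with $\xi_1\in[\g,\g]$ and $\xi_2\in\mathfrak{z}(\g)$. Since $\ad_{\xi_2}=0$, the central element $\xi_2$ is $\Ad$-invariant under the identity component $G_0$; for connected $G$ this yields $\rho(\xi_2)\in\Vect(M)^G$ directly, while for disconnected $G$ one averages $\Ad_g\xi_2$ over $G/G_0$ to produce a replacement whose fundamental vector field is $G$-invariant and differs from $\rho(\xi_2)$ by an element of $\rho([\g,\g])$. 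Hence $\rho(\g)\subseteq\rho([\g,\g])+\Vect(M)^G$. Combining this with the vanishing cross-brackets, the set $\rho([\g,\g])+\Vect(M)^G$ is a Lie subalgebra of $\Vect(M)$ containing $\Vect(M)^G$ and $\rho(\g)$, so it contains $\mathcal{A}$; the reverse inclusion is immediate from the definition of $\mathcal{A}$. That the sum is direct in the sense of Lie algebras follows because cross-brackets vanish, making each summand an ideal of $\mathcal{A}$.

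For local completeness, fix $X,Y\in\mathcal{A}$ and write $X=\rho(\xi)+W$ and $Y=\rho(\zeta)+V$ with $\xi,\zeta\in[\g,\g]$ and $W,V\in\Vect(M)^G$. Because $[\rho(\xi),W]=0$, the flows of $\rho(\xi)$ and $W$ commute wherever defined, giving the local factorisation $\exp(tX)=\exp(t\rho(\xi))\circ\exp(tW)$. Computing the pushforward yields
\[
\exp(tX)_*Y = \rho\bigl(\Ad_{\exp(t\xi)}\zeta\bigr)+\exp(tW)_*V,
\]
using the identities $\exp(t\rho(\xi))_*\rho(\zeta)=\rho(\Ad_{\exp(t\xi)}\zeta)$, $\exp(tW)_*\rho(\zeta)=\rho(\zeta)$ (which follows from $G$-equivariance of the flow of $W$, exactly as in the cross-bracket computation), and $\exp(t\rho(\xi))_*V=V$ (since $\exp(t\rho(\xi))$ is multiplication by $\exp(t\xi)\in G$ and $V$ is $G$-invariant). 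Since $[\g,\g]$ is $\Ad$-invariant, the first term lies in $\rho([\g,\g])$; by \hypref{p:vectGloccompl}{Proposition}, the second term coincides on some open neighbourhood with a vector field $Z\in\Vect(M)^G$. Hence $\exp(tX)_*Y$ locally agrees with an element of $\mathcal{A}$.

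The main obstacle is ensuring that all the pushforward identities are correctly applied and that the flow-factorisation is justified on a common domain; in particular, verifying carefully that the $G$-equivariance of the flow of an invariant vector field implies pointwise invariance of every fundamental vector field under pushforward. A secondary subtlety, worth flagging explicitly, is the disconnected-$G$ case, where the inclusion $\rho(\mathfrak{z}(\g))\subseteq\Vect(M)^G$ fails and must be corrected by an averaging argument over $G/G_0$ before the decomposition $\mathcal{A}=\rho([\g,\g])\oplus\Vect(M)^G$ can be stated cleanly.
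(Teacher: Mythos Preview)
Your overall strategy matches the paper's: establish $[\rho(\g),\Vect(M)^G]=0$, deduce the sum decomposition, and verify local completeness by tracking pushforwards. The paper checks the cross-pushforwards $\exp(t\xi_M)_*X$ and $\exp(tX)_*\xi_M$ separately, citing \hypref{p:vectGloccompl}{Proposition} and \hypref{p:rhogloccompl}{Proposition} for the within-summand cases; your flow-factorisation $\exp(tX)=\exp(t\rho(\xi))\circ\exp(tW)$ is an equivalent repackaging of the same identities.

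There is, however, a genuine gap in your directness argument. The vanishing of cross-brackets shows that each summand is an ideal of $\mathcal{A}$, but it does \emph{not} show that $\rho([\g,\g])\cap\Vect(M)^G=\{0\}$; two commuting ideals may intersect nontrivially. The paper handles this by proving the converse implication you omit: if $\xi_M\in\Vect(M)^G$, then $g_*(\xi_M|_x)=\xi_M|_{g\cdot x}$ for all $g$, which unwinds to $(g\exp(t\xi))\cdot x=(\exp(t\xi)g)\cdot x$ for all $g,x$, forcing $\exp(t\xi)$ to be central and hence $\xi\in\mathfrak{z}(\g)$. This gives $\rho(\g)\cap\Vect(M)^G=\rho(\mathfrak{z}(\g))$, and since $[\g,\g]\cap\mathfrak{z}(\g)=0$ in a compact Lie algebra, the intersection $\rho([\g,\g])\cap\Vect(M)^G$ is trivial. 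You need this computation (or an equivalent one) to justify the $\oplus$.

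Your proposed disconnected-$G$ fix is also problematic as written: since $\Ad_g$ is a Lie algebra automorphism, it preserves both $[\g,\g]$ and $\mathfrak{z}(\g)$ separately, so the average $\bar{\xi}_2$ remains in $\mathfrak{z}(\g)$ and the difference $\xi_2-\bar{\xi}_2$ lies in $\mathfrak{z}(\g)$, not in $[\g,\g]$. The paper's route via the intersection computation sidesteps this, since for directness only the inclusion $\rho(\g)\cap\Vect(M)^G\subseteq\rho(\mathfrak{z}(\g))$ is needed, and that argument does not depend on connectedness of $G$.
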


\begin{proof}
Let $\xi\in\g$ and $X\in\Vect(M)^G$.  Then, $$[\xi_M,X]=\underset{t\to0}{\lim}\frac{\exp(t\xi_M)_*(X|_{\exp(-t\xi_M)(x)})-X|_x}{t}=0$$ since $\exp(t\xi_M)_*(X|_{\exp(-t\xi_M)(x)})=X|_x$ by left-invariance.  Thus,
\begin{equation}\labell{e:flowscommute}
\exp(t\xi_M)\circ\exp(sX)=\exp(sX)\circ\exp(t\xi_M).
\end{equation}
Now, let $\xi\in\g$ and assume for all $g\in G$ and $x\in M$, we have $$g_*(\xi_M|_x)=\xi_M|_{g\cdot x};$$ that is, $\xi_M$ is invariant.  Then, $$\frac{d}{dt}\Big|_{t=0}(g\cdot\exp(t\xi_M)(x))=\frac{d}{dt}\Big|_{t=0}\exp(t\xi_M)(g\cdot x).$$  The uniqueness property of $\exp$ implies that $$g\cdot\exp(t\xi_M)(x)=\exp(t\xi_M)(g\cdot x).$$  Hence  $(g\exp(t\xi))\cdot x=(\exp(t\xi)g)\cdot x$.  Since this is true for all $g\in G$, $\exp(t\xi)$ must be in the centre of $G$, and hence $\xi\in\mathfrak{z}(\g)$.  Thus, $$\rho(\g)\cap\Vect(M)^G=\rho(\mathfrak{z}(\g)).$$  Since $\rho$ is a Lie algebra homomorphism, from \hypref{e:flowscommute}{Equation}: $\rho(\g)=\rho([\g,\g])\oplus\rho(\mathfrak{z}(\g))$, and we obtain the direct sum structure of $\mathcal{A}$.\\
To show local completeness, by \hypref{p:vectGloccompl}{Proposition} and \hypref{p:rhogloccompl}{Proposition} it suffices to show that for any $\xi\in\g$ and $X\in\Vect(M)^G$, $\exp(t\xi_M)_*X\in\mathcal{A}$ and $\exp(tX)_*\xi_M\in\mathcal{A}$.  The former is immediate since $X$ is invariant.  The latter follows from \hypref{e:flowscommute}{Equation}:
\begin{align*}
\exp(tX)_*(\xi_M|_x)=&\frac{d}{ds}\Big|_{s=0}\exp(tX)(\exp(s\xi_M)(x))\\
=&\frac{d}{ds}\Big|_{s=0}\exp(s\xi_M)(\exp(tX)(x))\\
=&\xi_M|_{\exp(tX)(x)}.
\end{align*}
\end{proof}

\mute{

Now assume that $G$ is a compact Lie group acting in a Hamiltonian fashion on $(M,\omega)$ with momentum map $\Phi$ and $Z:=\Phi^{-1}(0)$.  Let $i:Z\to M$ be the inclusion.  For each closed subgroup $H$ of $G$, let $Z_{(H)}:=M_{(H)}\cap Z$.  Note that this is a $G$-invariant subset of $Z$ since both $Z$ and $M_{(H)}$ are invariant.  Let $(Z/G)_{(H)}:=\pi(Z_{(H)})$.  For each nonempty such subset, let $\pi_{(H)}:=\pi|_{Z_{(H)}}$ and $i_{(H)}:=i|_{Z_{(H)}}$.  Finally, let $\pi_Z:=\pi|_Z$ and let $j:Z/G\to M/G$ be the inclusion so that the following diagram commutes.

$$\xymatrix{
Z \ar[d]_{\pi_Z} \ar[r]^i & M \ar[d]^{\pi} \\
Z/G \ar[r]_j & M/G \\
}$$

\begin{theorem}[Principal Orbit Theorem (Hamiltonian Version)]\labell{t:poth}
\noindent
\begin{enumerate}
\item The partitions on $Z$ and $Z/G$ defined above yield decompositions, whose induced stratifications have strata given by connected components of the sets $Z_{(H)}$ and $(Z/G)_{(H)}$.  The stratifications are locally trivial.
\item Each subset $Z_{(H)}$ is a $G$-invariant submanifold of $M$.
\item If $M$ is connected and $\Phi$ is a proper map, then there exists a closed subgroup $K$ of $G$ such that the strata contained in $Z_{(K)}$ form an open dense subset of $Z$, and hence $(Z/G)_{(K)}$ is an open dense subset of $Z/G$.
\item The orbit map $\pi_Z:Z\to Z/G$ along with the inclusions $i$ and $j$ are stratified with respect to the stratifications described above.
\end{enumerate}
\end{theorem}

\begin{proof}
See \cite{lerman-sjamaar91}.
\end{proof}

\begin{definition}[Orbit-Type Stratifications]
The stratifications defined on $Z$ and $Z/G$ above are also called \emph{orbit-type stratifications}.
\end{definition}

}

\begin{definition}
A \emph{Poisson bracket} on a differential structure $\mathcal{F}$ on a differential space $X$ is a Lie bracket $\pois{}{}$ satisfying for any $f,g,h\in\mathcal{F}$: $$\pois{f}{gh}=h\pois{f}{g}+g\pois{f}{h}.$$
\end{definition}

We return to Hamiltonian group actions in order to give examples of Poisson structures.  Let $G$ be a compact Lie group acting in a Hamiltonian fashion on a connected symplectic manifold $(M,\omega)$ with momentum map $\Phi$ and $Z:=\Phi^{-1}(0)$.  We utilise again the following commutative diagram.

$$\xymatrix{
Z \ar[d]_{\pi_Z} \ar[r]^i & M \ar[d]^{\pi} \\
Z/G \ar[r]_j & M/G \\
}$$

\begin{definition}[Hamiltonian Vector Fields on $M$]
A vector field $X\in\Vect(M)$ is \emph{Hamiltonian} if there exists a function $f\in\CIN(M)$ such that $$X\hook\omega=-df.$$ In this case, we usually denote $X$ by $X_f$. Note that $f$ is unique up to a constant.
\end{definition}

\begin{example}\labell{x:stdpois}
Define $\pois{}{}$ on $(M,\omega)$ by $$\pois{f}{g}:=\omega(X_f,X_g).$$  This is the standard Poisson structure on a symplectic manifold.
\end{example}

\begin{example}
Since the $G$-action on $M$ is symplectic, for any $f,g\in\CIN(M)^G$, we have $\pois{f}{g}\in\CIN(M)^G$.  In particular, this descends to a Poisson structure $\pois{\cdot}{\cdot}_{M/G}$ on $\CIN(M/G)$.
\end{example}

Recall the orbit-type stratifications on $Z$ and $Z/G$ (refer to \hypref{d:symplquotstrat}{Definition}).

\begin{theorem}[Lerman-Sjamaar]
For each closed subgroup $H\leq G$ such that $Z_{(H)}$ is nonempty, the manifold $(Z/G)_{(H)}$ admits a symplectic form $\omega_{(H)}\in\Omega^2((Z/G)_{(H)})$ satisfying $$(\pi_{(H)})^*\omega_{(H)}=(i_{(H)})^*\omega.$$
\end{theorem}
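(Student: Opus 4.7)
The plan is to define a closed $2$-form upstairs on $Z_{(H)}$ by pullback, show it is $G$-basic, descend it to $(Z/G)_{(H)}$ using \hypref{t:geomquot}{Theorem}, and then verify nondegeneracy. More precisely, let $\omega'_{(H)} := i_{(H)}^*\omega \in \Omega^2(Z_{(H)})$. Since $\omega$ is closed and $G$-invariant on $M$, and $Z_{(H)}$ is a $G$-invariant submanifold, $\omega'_{(H)}$ is closed and $G$-invariant.

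The first key step is horizontality of $\omega'_{(H)}$ with respect to the $G$-action on $Z_{(H)}$. For $\xi \in \g$, the fundamental vector field $\xi_M$ is tangent to $Z_{(H)}$ (because the $G$-action preserves $Z_{(H)}$), so it restricts to $\xi_{Z_{(H)}}$. For any $v \in T_z Z_{(H)} \subseteq T_zZ$, one has
\[
(\xi_{Z_{(H)}} \hook \omega'_{(H)})|_z(v) = \omega|_z(\xi_M, v) = -d\Phi^\xi|_z(v).
\]
Since $\Phi^\xi$ vanishes identically on $Z \supseteq Z_{(H)}$, its differential annihilates $T_zZ_{(H)}$, giving horizontality. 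Combined with invariance, $\omega'_{(H)}$ is $G$-basic. By \hypref{t:geomquot}{Theorem} applied to the compact group $G$ acting on the manifold $Z_{(H)}$ with orbit space $(Z/G)_{(H)}$, there exists a unique $\omega_{(H)} \in \Omega^2((Z/G)_{(H)})$ with $\pi_{(H)}^*\omega_{(H)} = \omega'_{(H)}$. Closedness follows from $\pi_{(H)}^* d\omega_{(H)} = d\pi_{(H)}^*\omega_{(H)} = d\omega'_{(H)} = i_{(H)}^* d\omega = 0$ together with the injectivity of $\pi_{(H)}^*$ established in \hypref{p:injectivity}{Proposition}.

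The main obstacle is nondegeneracy of $\omega_{(H)}$. The plan here is to fix $z \in Z_{(H)}$, and show that the kernel of $\omega'_{(H)}|_z$ as a bilinear form on $T_zZ_{(H)}$ coincides with $T_z(G\cdot z)$; then descending to the quotient eliminates precisely this kernel, yielding nondegeneracy. Concretely, $T_z(G\cdot z)$ lies in the kernel by horizontality. For the reverse inclusion, I would use the slice theorem to reduce to the linear model: near $z$, $M$ is equivariantly diffeomorphic to $G \times_H V$ where $V = T_zM/T_z(G\cdot z)$ carries a linear symplectic $H$-action with a quadratic momentum map. In this local model, $Z_{(H)}$ corresponds to $G \times_H V^H$ (the fixed point set of $H$ in the zero-level of the sliced momentum map), and a direct linear-algebra computation using the $H$-invariant decomposition of $V$ shows that the symplectic orthogonal of $T_zZ_{(H)}$ inside $T_zZ_{(H)}$ is exactly $T_z(G\cdot z)$.

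This linear reduction is where the genuine work lies; the rest is a straightforward exercise in diagram-chasing and using the injectivity of pullback by the surjective submersion $\pi_{(H)}$. The existence and uniqueness assertions of the theorem then follow immediately from the descent provided by \hypref{t:geomquot}{Theorem}, and the defining relation $\pi_{(H)}^*\omega_{(H)} = i_{(H)}^*\omega$ is built into the construction.
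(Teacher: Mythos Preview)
The paper does not actually prove this statement: its proof consists solely of the citation ``See \cite{lerman-sjamaar91}.'' So there is no in-paper argument to compare against. Your outline is essentially the standard proof found in Lerman--Sjamaar: pull back $\omega$ to $Z_{(H)}$, verify that the result is $G$-basic using the momentum map equation, descend, and establish nondegeneracy via the symplectic slice.

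A few remarks on your sketch. First, invoking \hypref{t:geomquot}{Theorem} to descend the basic form is valid but heavier than necessary: since $Z_{(H)}$ is a manifold with all stabilisers conjugate to $H$, the quotient map $\pi_{(H)}:Z_{(H)}\to (Z/G)_{(H)}$ is a smooth submersion between manifolds (indeed a fibre bundle with fibre $G/H$), and the classical correspondence between basic forms and forms on the base already suffices. Second, your local-model description of $Z_{(H)}$ as $G\times_H V^H$ implicitly uses that the $H$-momentum map on the slice $V$ vanishes on $V^H$; this is true because the quadratic momentum map for a linear action is identically zero on the fixed-point subspace, but it is worth making explicit. Third, the nondegeneracy step is indeed where the content lies, and your reduction to the linear model is the right strategy; the precise linear-algebra fact needed is that $V^H$ is a symplectic subspace of $V$, which follows because $H$ acts symplectically and $V^H$ is the image of the $H$-equivariant projector obtained by averaging. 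With these points filled in, your argument is complete and matches the cited reference.
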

\begin{proof}
See \cite{lerman-sjamaar91}.
\end{proof}

Since these manifolds $(Z/G)_{(H)}$ are symplectic, their rings of functions admit Poisson structures $\pois{\cdot}{\cdot}_{(H)}$ as in \hypref{x:stdpois}{Example}.  In fact, we can define a Poisson bracket on all of $Z/G$ as follows.

\begin{definition}\labell{d:poissympquot}
Let $f,g\in\CIN(Z/G)$, and let $x\in(Z/G)_{(H)}$ for some $H\leq G$.  Then define $$\pois{f}{g}_{Z/G}(x):=\pois{f|_{(Z/G)_{(H)}}}{g|_{(Z/G)_{(H)}}}_{(H)}(x).$$
\end{definition}

\begin{proposition}[Lerman-Sjamaar]
The above bracket defines a Poisson bracket on $\CIN(Z/G)$.
\end{proposition}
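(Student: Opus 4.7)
The plan is to give a single global description of the bracket via invariant extensions to $M$, verify that description is well-defined, and then check that on each stratum it reduces to $\pois{\cdot}{\cdot}_{(H)}$, at which point bilinearity, antisymmetry, Leibniz, and Jacobi are inherited from the ambient Poisson bracket $\pois{\cdot}{\cdot}_M$ on the symplectic manifold.

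For any $f,g\in\CIN(Z/G)$, use $\pi_Z^*\colon\CIN(Z/G)\to\CIN(Z)^G$ from \hypref{t:CINZ}{Theorem} together with the identity $\CIN(Z)^G=\CIN(M)^G|_Z$ (established in the proof of that theorem by restriction followed by $G$-averaging) to choose $G$-invariant extensions $\tilde f,\tilde g\in\CIN(M)^G$. Since the $G$-action is symplectic, $\pois{\tilde f}{\tilde g}_M\in\CIN(M)^G$; its restriction to $Z$ is $G$-invariant and hence descends, via $\pi_Z^*$, to a function $h\in\CIN(Z/G)$. I would propose $\pois{f}{g}_{Z/G}:=h$ and then show that this definition coincides, stratum by stratum, with the one given in \hypref{d:poissympquot}{Definition}.

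The main obstacle is well-definedness: different invariant extensions could in principle give different $h$. I would reduce this to showing that if $k\in\CIN(M)^G$ satisfies $k|_Z=0$, then $\pois{k}{\tilde g}_M$ vanishes on $Z$ for every $\tilde g\in\CIN(M)^G$. The key input is the local description of the vanishing ideal of $Z=\Phi^{-1}(0)$: in a neighbourhood of each point of $Z$, any smooth function vanishing on $Z$ can be written as $\sum_i\Phi^{\xi_i}a_i$ for some $\xi_i\in\g$ and $a_i\in\CIN(M)$. Granting this, Leibniz gives $\pois{k}{\tilde g}_M=\sum_i\bigl(\pois{\Phi^{\xi_i}}{\tilde g}_M a_i+\Phi^{\xi_i}\pois{a_i}{\tilde g}_M\bigr)$, and the first summand vanishes because $\pois{\Phi^\xi}{\tilde g}_M=-\xi_M(\tilde g)=0$ by $G$-invariance of $\tilde g$, while the second vanishes on $Z$ since $\Phi^{\xi_i}|_Z=0$. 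This local representation of the vanishing ideal is the technical heart of the result (in the singular case it requires a slice-theorem or Arms--Cushman--Gotay style argument, rather than being an immediate consequence of $0$ being a regular value).

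With well-definedness in hand, the remaining step is to identify $h$ with the stratum-wise bracket. Fix $x\in(Z/G)_{(H)}$ and $z\in Z_{(H)}$ with $\pi_{(H)}(z)=x$. Because $\tilde f,\tilde g$ are $G$-invariant, their Hamiltonian vector fields $X_{\tilde f},X_{\tilde g}$ on $M$ are $G$-invariant and, at $z$, lie tangent to $Z_{(H)}$ (being symplectically orthogonal to the $G$-orbit direction killed by $d\Phi$). The relation $(\pi_{(H)})^*\omega_{(H)}=i_{(H)}^*\omega$ from the Lerman--Sjamaar reduction theorem then identifies $(\pi_{(H)})_*X_{\tilde f}|_z$ with the Hamiltonian vector field of $f|_{(Z/G)_{(H)}}$ at $x$ (and similarly for $g$), whence
\[
\pois{\tilde f}{\tilde g}_M(z)=\omega(X_{\tilde f},X_{\tilde g})(z)=\omega_{(H)}\bigl(X_{f|_{(H)}},X_{g|_{(H)}}\bigr)(x)=\pois{f|_{(Z/G)_{(H)}}}{g|_{(Z/G)_{(H)}}}_{(H)}(x).
\]
Thus $h$ agrees on each stratum with the bracket of \hypref{d:poissympquot}{Definition}, so in particular $\pois{f}{g}_{Z/G}\in\CIN(Z/G)$. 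Bilinearity, antisymmetry, the Leibniz rule, and the Jacobi identity then transfer directly from $\pois{\cdot}{\cdot}_M$ via the extension picture: each axiom on $Z/G$ follows by applying the corresponding axiom on $M$ to invariant extensions and then descending through $\pi_Z^*$, which is injective by \hypref{t:CINZ}{Theorem}.
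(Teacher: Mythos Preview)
The paper does not give a proof here; it simply cites \cite{lerman-sjamaar91}. Your overall strategy---lift to $G$-invariant functions on $M$, compute the ambient Poisson bracket, and descend---is indeed the Arms--Cushman--Gotay/Lerman--Sjamaar approach, so structurally you are on the right track.

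There is, however, a genuine gap in your well-definedness step. The claim that any smooth function vanishing on $Z$ can locally be written as $\sum_i\Phi^{\xi_i}a_i$ is false when $0$ is a singular value. Take $G=\SS^1$ acting by rotation on $(\RR^2,dx\wedge dy)$, so that $\Phi=\tfrac12(x^2+y^2)$ and $Z=\{0\}$; the function $x$ vanishes on $Z$ but is not in the ideal generated by $\Phi$. You only need the statement for $G$-invariant $k$, but even then the ideal-theoretic assertion is delicate, and your parenthetical gesture toward ``a slice-theorem or Arms--Cushman--Gotay style argument'' does not substitute for it.

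A cleaner route, using material already in this paper, avoids the vanishing ideal entirely. For $\tilde g\in\CIN(M)^G$ the Hamiltonian vector field $X_{\tilde g}$ is tangent to the level sets of $\Phi$ (this is \hypref{l:GhamtangZ}{Lemma}, proved independently later by a one-line computation: $X_{\tilde g}(\Phi^\xi)=\pois{\tilde g}{\Phi^\xi}_M=-\xi_M(\tilde g)=0$). Hence the flow of $X_{\tilde g}$ preserves $Z$, so if $k|_Z=0$ then $k(\exp(tX_{\tilde g})(z))=0$ for all $z\in Z$ and all $t$; differentiating at $t=0$ gives $\pois{k}{\tilde g}_M(z)=-X_{\tilde g}(k)(z)=0$. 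This establishes independence of the extension with no appeal to the structure of the vanishing ideal, and the rest of your argument (the stratum-wise identification and the transfer of the Poisson axioms from $\pois{\cdot}{\cdot}_M$) then goes through.
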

\begin{proof}
See \cite{lerman-sjamaar91}.
\end{proof}

\begin{definition}[Hamiltonian Vector Fields on $Z/G$]
A vector field $X\in\Vect(Z/G)$ is called \emph{Hamiltonian} if there exists $h\in\CIN(Z/G)$ such that $X=\pois{h}{\cdot}_{Z/G}$.  We will usually denote $X$ by $X_h$, and the set of all Hamiltonian vector fields by $\ham(Z/G)$.
\end{definition}

\begin{lemma}
For any $h\in\CIN(Z/G)$, the derivation $\{h,\cdot\}_{Z/G}$ is a Hamiltonian vector field.
\end{lemma}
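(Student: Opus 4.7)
The plan is to produce a Hamiltonian flow for $h$ on $Z/G$ by lifting to a $G$-invariant Hamiltonian flow upstairs on $M$ and then pushing it back down.

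First I would lift $h$ to a $G$-invariant smooth function $\tilde{h}\in\CIN(M)^G$ by the following two-step procedure. By the isomorphism $\pi_Z^*\colon\CIN(Z/G)\to\CIN(Z)^G$ established in \hypref{t:CINZ}{Theorem}, set $f:=\pi_Z^*h\in\CIN(Z)^G$. Since $Z$ is a closed differential subspace of $M$, \hypref{p:closedsubset}{Proposition} provides an extension $\tilde{f}\in\CIN(M)$ with $i^*\tilde{f}=f$. Averaging $\tilde{f}$ over $G$ (which is compact) yields a $G$-invariant extension $\tilde{h}\in\CIN(M)^G$ still satisfying $i^*\tilde{h}=\pi_Z^*h$.

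Next I would produce a $G$-invariant vector field on $Z$ out of $\tilde{h}$. Let $X_{\tilde{h}}\in\ham(M)$ be its Hamiltonian vector field, so $X_{\tilde{h}}\hook\omega=-d\tilde{h}$. The invariance of $\tilde{h}$ and of $\omega$ under $G$ forces $X_{\tilde{h}}$ to be $G$-invariant. Moreover, for each $\xi\in\g$,
\begin{equation*}
X_{\tilde{h}}(\Phi^\xi)=\omega(X_{\tilde{h}},X_{\Phi^\xi})=-d\Phi^\xi(X_{\tilde{h}})=\xi_M(\tilde{h})=0,
\end{equation*}
so $X_{\tilde{h}}$ is tangent to every level set of $\Phi$ and in particular to $Z$. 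Hence $Y:=X_{\tilde{h}}|_Z$ is a well-defined $G$-invariant vector field on the closed (and therefore locally compact) subcartesian space $Z$, whose flow $\psi_t$ is the restriction of the manifold flow of $X_{\tilde{h}}$ and hence has open domain by \hypref{p:opendomain}{Proposition}.

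Now I would descend $Y$ through $\pi_Z$. Since $Y$ is $G$-equivariant, $\psi_t$ is $G$-equivariant and therefore descends to a map $\bar{\psi}_t\colon Z/G\to Z/G$ on its maximal domain. For any $f\in\CIN(Z/G)$, the identity $\pi_Z^*(\bar{\psi}_t^*f)=\psi_t^*(\pi_Z^*f)$ holds, and the right-hand side is a $G$-invariant smooth function on $Z$; by the isomorphism $\pi_Z^*\colon\CIN(Z/G)\to\CIN(Z)^G$ from \hypref{t:CINZ}{Theorem}, $\bar{\psi}_t^*f$ is smooth. Thus $\bar{\psi}_t$ is a local flow on the subcartesian space $Z/G$ with open domain, and it defines a vector field $\bar{Y}\in\Vect(Z/G)$ by $\bar{Y}g:=\tfrac{d}{dt}\big|_{t=0}\bar{\psi}_t^*g$.

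Finally I would identify $\bar{Y}$ with $\{h,\cdot\}_{Z/G}$. Both derivations preserve the stratification by orbit types, and on each stratum $(Z/G)_{(H)}$ the restriction of $\bar{Y}$ is, by construction, the Hamiltonian vector field of $h|_{(Z/G)_{(H)}}$ with respect to the symplectic form $\omega_{(H)}$ of Lerman--Sjamaar: indeed, the Marsden--Weinstein style computation $(\pi_{(H)})^*(\bar{Y}\hook\omega_{(H)})=Y\hook (i_{(H)})^*\omega=-d(i^*\tilde{h})|_{Z_{(H)}}=-(\pi_{(H)})^*(dh|_{(Z/G)_{(H)}})$ combined with the injectivity of $\pi_{(H)}^*$ on forms gives $\bar{Y}\hook\omega_{(H)}=-dh|_{(Z/G)_{(H)}}$. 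By \hypref{d:poissympquot}{Definition} of $\{\cdot,\cdot\}_{Z/G}$, this means $\bar{Y}=\{h,\cdot\}_{Z/G}$ on each stratum, and since strata cover $Z/G$ this equality of derivations is global. The main delicate point, and the step I expect to require the most care, is verifying that the descended flow is genuinely smooth on $Z/G$ as a subcartesian space (rather than merely stratum-wise smooth) and that its domain is open; this is precisely where the isomorphism $\pi_Z^*\colon\CIN(Z/G)\to\CIN(Z)^G$ and the properness of $\pi_Z$ (from compactness of $G$) do the essential work.
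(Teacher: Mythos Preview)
Your argument is correct and takes a genuinely different route from the paper's. The paper's proof is a three-line appeal to \cite{lerman-sjamaar91}: Sjamaar--Lerman show that maximal integral curves of $\{h,\cdot\}_{Z/G}$ exist, are unique, and remain in a single orbit-type stratum; since each stratum is a manifold, these curves have open domains, and \hypref{p:opendomain}{Proposition} finishes. Your approach instead constructs the flow explicitly by lifting $h$ to $\tilde{h}\in\CIN(M)^G$, taking the Hamiltonian flow of $X_{\tilde{h}}$ on $M$, restricting to $Z$ (via tangency), and descending by $G$-equivariance. This is essentially the content of the paper's later \hypref{l:hamsurj}{Lemma}, so you have front-loaded that argument. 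What your approach buys is that it is self-contained and simultaneously proves the surjection $\ham(M)^G\twoheadrightarrow\ham(Z/G)$; what the paper's approach buys is brevity at this point, deferring the explicit construction.

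Two small points. First, the displayed chain for $X_{\tilde{h}}(\Phi^\xi)$ has sign slips in the middle (it should read $\omega(X_{\tilde{h}},\xi_M)=-\xi_M(\tilde{h})=0$), though the conclusion is unaffected; compare the paper's \hypref{l:GhamtangZ}{Lemma}. Second, the delicate point you correctly flag---that the descended flow $\bar\psi$ really is the \emph{maximal} flow of $\bar Y$, so that each maximal integral curve has open domain---is settled by properness of $\pi_Z$: if the maximal integral curve of $\bar Y$ through $\pi_Z(z)$ extended past $T:=\sup I^{X_{\tilde{h}}}_z$, then as $t\to T^-$ the lift $\exp(tX_{\tilde{h}})(z)$ would remain in the compact set $\pi_Z^{-1}(K)$ for a compact neighbourhood $K$ of the limit point downstairs, contradicting maximality of the manifold flow on $M$. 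With that filled in, \hypref{p:opendomain}{Proposition} applies on the locally compact space $Z/G$ and you are done.
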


\begin{proof}
Sjamaar-Lerman prove the existence and uniqueness of maximal integral curves of these derivations, and that they remain in the orbit-type strata (see \cite{lerman-sjamaar91}).  Since these strata are manifolds, the maximal integral curves have open domains.  Hence, by \hypref{p:opendomain}{Proposition}, they are vector fields.
\end{proof}

\begin{proposition}
$\ham(Z/G)$ is a locally complete family in $\Vect(Z/G)$.
\end{proposition}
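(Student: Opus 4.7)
The plan is to adapt to the stratified symplectic setting the classical symplectic identity that, on a symplectic manifold, $\phi_*X_g = X_{g\circ\phi^{-1}}$ for any symplectomorphism $\phi$. Applied here with $\phi = \exp(tX_f)$, this suggests that the local Hamiltonian witnessing local completeness should be a bump-function extension of $h_0 := g\circ\exp(-tX_f)$.

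Fix $X_f, X_g \in \ham(Z/G)$, a point $x \in Z/G$, and $t \in \RR$ such that $(\exp(tX_f)_*X_g)|_x$ is defined. Since $X_f \in \Vect(Z/G)$ admits an open local flow, choose open neighbourhoods $U$ and $V$ of $x$ with $\overline{U} \subset V$ such that $\exp(tX_f)$ is defined and restricts to a diffeomorphism from a neighbourhood of $\exp(-tX_f)(V)$ onto $V$, so that $h_0 := g\circ\exp(-tX_f) \in \CIN(V)$. Using the existence of smooth partitions of unity on the subcartesian space $Z/G$, produce a bump function $b \in \CIN(Z/G)$ with $b|_U \equiv 1$ and $\supp b \subset V$, and let $h := b\cdot h_0$, extended by zero outside $V$. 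Then $h \in \CIN(Z/G)$ and $X_h \in \ham(Z/G)$; it remains to verify that $X_h|_U = (\exp(tX_f)_*X_g)|_U$.

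To verify this identity, I would exploit the stratum-wise nature of the Poisson bracket on $\CIN(Z/G)$ from \hypref{d:poissympquot}{Definition}. For any $\tilde{f} \in \CIN(Z/G)$ and any $y \in U$ lying in the stratum $(Z/G)_{(H)}$, $(X_h \tilde{f})(y) = \pois{h}{\tilde{f}}_{Z/G}(y)$ is by definition the value at $y$ of the classical Poisson bracket of the restrictions $h|_{(Z/G)_{(H)}}$ and $\tilde f|_{(Z/G)_{(H)}}$ computed with respect to the symplectic form $\omega_{(H)}$. By Lerman-Sjamaar, the integral curves of $X_f$ and $X_g$ preserve the orbit-type strata and restrict on each stratum to the classical Hamiltonian flows of $f|_{(Z/G)_{(H)}}$ and $g|_{(Z/G)_{(H)}}$. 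Hence on the symplectic manifold $(Z/G)_{(H)}$ the classical identity $\exp(tX_f)_*X_g = X_{g\circ\exp(-tX_f)}$ applies, and on $U\cap(Z/G)_{(H)}$ this coincides with $X_h$ because $h \equiv h_0$ there. Thus $X_h\tilde{f}$ and $(\exp(tX_f)_*X_g)\tilde{f}$ agree at every point of $U$ in every stratum, hence agree as functions on $U$; since $\tilde f$ was arbitrary, the two derivations agree on $U$.

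The main obstacle is not conceptual but bookkeeping: keeping straight the stratum-wise definition of $\pois{\cdot}{\cdot}_{Z/G}$ while simultaneously treating $X_h$, $\exp(tX_f)_*X_g$ as global derivations on the stratified space $Z/G$. A secondary check is that the flow of $X_f$ on $Z/G$ truly restricts on each stratum to the classical Hamiltonian flow of $f|_{(Z/G)_{(H)}}$, but this is essentially built into the Sjamaar-Lerman existence-and-uniqueness argument for maximal integral curves of $\pois{h}{\cdot}_{Z/G}$ that was invoked in the preceding lemma to show that each such derivation is in fact a vector field.
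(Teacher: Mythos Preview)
Your argument is correct and rests on the same key identity as the paper: $\exp(tX_f)_*X_g$ is the Hamiltonian vector field of $g\circ\exp(-tX_f)$. The only difference is packaging. You localize with a bump function and then verify the identity stratum-by-stratum using the classical symplectic fact $\phi_*X_g=X_{g\circ\phi^{-1}}$ on each $((Z/G)_{(H)},\omega_{(H)})$. The paper instead observes that the flow of $X_f$ is a Poisson automorphism of $(\CIN(Z/G),\pois{\cdot}{\cdot}_{Z/G})$ (this is exactly what your stratum-wise symplectomorphism statement amounts to), and then computes in one line:
\[
X_{\exp(-tX_f)^*g}\,h=\pois{\exp(-tX_f)^*g}{h}_{Z/G}=\exp(-tX_f)^*\pois{g}{\exp(tX_f)^*h}_{Z/G}=(\exp(tX_f)_*X_g)\,h.
\]
This avoids both the bump-function localisation and the explicit stratum-by-stratum check; conversely, your version makes the domain bookkeeping for fixed $t$ more explicit. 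Either route is fine.
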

\begin{proof}
See \S7 Proposition 4 in \cite{sniatycki03}.
\end{proof}

\section{The Orbital Tangent Bundle}

In this section, for a fixed family of vector fields we introduce a ``subbundle'' of the Zariski tangent bundle consisting of vectors that are fibrewise linear combinations of vectors in the images of vector fields in the family.  We show that the family of \emph{all} vector fields yields such a tangent bundle that is locally trivial on an open dense subset, as well as equal to the Zariski tangent bundle over an open dense subset.

\begin{definition}[Orbital Tangent Bundle]
Let $\mathcal{F}$ be a family of vector fields on $S$.  For each $x\in S$, denote by $\widehat{T}^\mathcal{F}_xS$ the linear subspace of $T_xS$ spanned by all vectors $v\in T_xS$ such that there exists a vector field $X\in\mathcal{F}$ with $v=X|_x$.  If $\mathcal{F}=\Vect(S)$, then we will denote this space by $\widehat{T}_xS$.  We will call $\widehat{T}^\mathcal{F}_xS$ the \emph{orbital tangent space} of $S$ at $x$ with respect to $\mathcal{F}$.  Let $\widehat{T}^\mathcal{F}S$ be the (disjoint) union $$\widehat{T}^\mathcal{F}S:=\bigcup_{x\in S}\widehat{T}^\mathcal{F}_xS.$$  We will call $\widehat{T}^\mathcal{F}S$ the \emph{orbital tangent bundle} with respect to $\mathcal{F}$.  It is a differential subspace of $TS$.  Denote by $\widehat{\tau}_\mathcal{F}$ the restriction of $\tau:TS\to S$ to $\widehat{T}^\mathcal{F}S$ and by $\delta_\mathcal{F}(x)$ the dimension $\dim(\widehat{T}^\mathcal{F}_xS)$.
\end{definition}

\begin{remark}
Since $\widehat{T}^\mathcal{F}S$ is a differential subspace of $TS$, a chart $\varphi:U\to\tilde{U}\subseteq\RR^n$ on $S$ induces a chart $(\varphi\circ\widehat{\tau}_{\mathcal{F}},\varphi_*|_{\varphi\circ\widehat{\tau}_{\mathcal{F}}})$ on $\widehat{T}^\mathcal{F}S$, which we shall denote simply as $\varphi_*$.  This is just a restriction of the corresponding chart on $TS$.  It makes the following diagram commute.

$$\xymatrix{
\widehat{T}^\mathcal{F}S|_U \ar[d]_{\widehat{\tau}_{\mathcal{F}}} \ar[r]^{\varphi_*} & T\RR^n \ar[d]^{\tau} \\
U \ar[r]_{\varphi} & \RR^n
}$$

This extends to (fibred) exterior powers of $\widehat{T}^\mathcal{F}S$ in the natural way; \emph{i.e.} to $$\bigwedge_S^k\widehat{T}^\mathcal{F}S:=\bigcup_{x\in S}\bigwedge^k\widehat{T}^\mathcal{F}_xS.$$
\end{remark}

\begin{lemma}\labell{l:lwrsemicont}
The map $\delta_\mathcal{F}:S\to\ZZ$ is lower semicontinuous.
\end{lemma}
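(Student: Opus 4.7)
The plan is to show that for each $x_0 \in S$, one has $\delta_{\mathcal{F}}(x) \geq \delta_{\mathcal{F}}(x_0)$ on some open neighbourhood of $x_0$. This gives lower semicontinuity directly.

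First I would work locally in a chart. Set $k := \delta_{\mathcal{F}}(x_0)$. By definition of $\widehat{T}^{\mathcal{F}}_{x_0}S$, there exist vector fields $X_1,\dots,X_k \in \mathcal{F}$ whose values $X_1|_{x_0},\dots,X_k|_{x_0}$ span a $k$-dimensional subspace of $T_{x_0}S$, i.e.\ they are linearly independent. Pick a chart $\varphi: U \to \tilde{U} \subseteq \RR^n$ around $x_0$. By \hypref{p:charDer}{Proposition}, after shrinking $U$, each $X_i$ has a local representative $\tilde{X}_i \in \Der\CIN(\RR^n)$ with $\varphi_*(X_i|_U) = \tilde{X}_i|_{\tilde{U}}$. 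Since $\varphi_*$ is fibrewise injective on $TS|_U$ (the chart on $TS$ induced by $\varphi$), the vectors $\tilde{X}_1|_{\varphi(x_0)},\dots,\tilde{X}_k|_{\varphi(x_0)}$ are linearly independent in $T_{\varphi(x_0)}\RR^n$.

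Next I would use the classical open-condition argument for linear independence in $\RR^n$. Linear independence of $\tilde{X}_1|_y,\dots,\tilde{X}_k|_y$ at a point $y \in \RR^n$ is equivalent to some $k \times k$ minor of the matrix formed by the coefficients of the $\tilde{X}_i$ being nonzero at $y$. These minors are smooth functions of $y$, so nonvanishing of a chosen minor at $\varphi(x_0)$ persists on an open neighbourhood $\tilde{V} \subseteq \tilde{U}$ of $\varphi(x_0)$. Set $V := \varphi^{-1}(\tilde{V})$, an open neighbourhood of $x_0$ in $S$.

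Finally I would pull the independence statement back to $S$. For each $x \in V$, the vectors $\tilde{X}_1|_{\varphi(x)},\dots,\tilde{X}_k|_{\varphi(x)}$ are linearly independent in $T_{\varphi(x)}\RR^n$, hence their preimages $X_1|_x,\dots,X_k|_x$ under the fibrewise injection $\varphi_*$ are linearly independent in $T_xS$. Since these vectors lie in $\widehat{T}^{\mathcal{F}}_xS$ by definition, we conclude $\delta_{\mathcal{F}}(x) \geq k = \delta_{\mathcal{F}}(x_0)$ for all $x \in V$, which is exactly lower semicontinuity. There is no real obstacle here: the content reduces entirely to the standard fact that linear independence is an open condition in $\RR^n$, combined with the characterisation of local representatives of derivations from \hypref{p:charDer}{Proposition}.
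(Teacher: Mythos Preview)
Your proof is correct and follows essentially the same approach as the paper: pick vector fields from $\mathcal{F}$ whose values at the point are linearly independent and use that linear independence is an open condition. The only difference is that you spell out the ``open condition'' step explicitly via a chart, local representatives from \hypref{p:charDer}{Proposition}, and a nonvanishing-minor argument in $\RR^n$, whereas the paper simply asserts that linear independence is open; your extra detail is sound and perhaps even clarifying in the subcartesian setting.
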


\begin{proof}
Define $S_i:=\{x\in S~|~\delta_{\mathcal{F}}(x)\geq i\}$.  The goal is to show that $S_i$ is open for each $i$.  Let $y\in S_i$.  Then there exist $Y_1,...,Y_k\in\mathcal{F}$, where $k\geq i$, such that $\{Y_1|_y,...,Y_k|_y\}$ is a basis for $\widehat{T}^\mathcal{F}_yS$.  Linear independence is an open condition, and so there exists an open neighbourhood $U$ of $y$ such that $\{Y_1|_z,...,Y_k|_z\}$ is linear independent for all $z\in U$.  Hence, $\widehat{T}^\mathcal{F}_zS$ contains the span of $\{Y_1|_z,...,Y_k|_z\}$ as a linear subspace for each $z\in U$.  Thus, $\delta_\mathcal{F}(z)\geq k\geq i$.  Thus, $U\subseteq S_i$.
\end{proof}

\begin{proposition}[Local Triviality of $\widehat{T}^\mathcal{F}S$]\labell{p:checkTSloctriv}
There exists an open dense subset $U\subseteq S$ such that $\widehat{\tau}_\mathcal{F}|_{U}:\widehat{T}^\mathcal{F}S|_U\to U$ is locally trivial.
\end{proposition}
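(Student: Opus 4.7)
The plan is to exploit the lower semicontinuity of $\delta_\mathcal{F}$ (\hypref{l:lwrsemicont}{Lemma}) to identify the set where $\delta_\mathcal{F}$ is locally constant, and then build local trivialisations out of vector fields in $\mathcal{F}$ that provide a frame for $\widehat{T}^\mathcal{F}S$ on suitable open sets.

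First I would show that the set $U := \{x \in S \mid \delta_\mathcal{F} \text{ is constant on some open neighbourhood of } x\}$ is open and dense. Openness is immediate from the definition. For density, fix any nonempty open $V \subseteq S$. Using a chart $\varphi : W \to \tilde{W} \subseteq \RR^n$ with $W \subseteq V$, one has $\delta_\mathcal{F}(y) \leq n$ for all $y \in W$ (because $\varphi_*$ injects $\widehat{T}^\mathcal{F}_yS$ into $T_{\varphi(y)}\RR^n$), so $\delta_\mathcal{F}|_W$ is bounded. Let $k$ be the supremum of $\delta_\mathcal{F}$ on $W$, which is attained at some $y_0 \in W$. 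By \hypref{l:lwrsemicont}{Lemma} the set $\{y \in W \mid \delta_\mathcal{F}(y) \geq k\}$ is open, and it coincides with $\{y \in W \mid \delta_\mathcal{F}(y) = k\}$ by maximality, so $y_0$ lies in the locally constant set. Hence $U \cap V \neq \emptyset$.

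Next, fix $x \in U$ and set $k := \delta_\mathcal{F}(x)$. Choose vector fields $X_1,\ldots,X_k \in \mathcal{F}$ whose values at $x$ form a basis of $\widehat{T}^\mathcal{F}_xS$. Since linear independence in $T_xS$ is open and $\delta_\mathcal{F}$ is locally constant equal to $k$ near $x$, there is an open neighbourhood $V \subseteq U$ of $x$ on which $X_1|_y,\ldots,X_k|_y$ are linearly independent in $T_yS$ and therefore form a basis of $\widehat{T}^\mathcal{F}_yS$ for every $y \in V$. Define $\psi : V \times \RR^k \to \widehat{T}^\mathcal{F}S|_V$ by
\[
\psi(y,(a_1,\ldots,a_k)) := a_1 X_1|_y + \cdots + a_k X_k|_y.
\]
This map is smooth (the vector fields $X_i$ are smooth sections of $\tau$) and fibrewise bijective with $\widehat{\tau}_\mathcal{F} \circ \psi = \operatorname{pr}_1$.

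The remaining (and main) obstacle is producing a smooth inverse for $\psi$. For this I would shrink $V$ so that it is the domain of a chart $\varphi : V \to \tilde{V} \subseteq \RR^m$, pick local extensions $\tilde{X}_1,\ldots,\tilde{X}_k \in \Der\CIN(\RR^m)$ of $X_1,\ldots,X_k$ via \hypref{p:charDer}{Proposition}, and then complete $\tilde{X}_1|_{\varphi(x)},\ldots,\tilde{X}_k|_{\varphi(x)}$ to a basis of $T_{\varphi(x)}\RR^m$ by choosing smooth vector fields $\tilde{Y}_{k+1},\ldots,\tilde{Y}_m$ on $\RR^m$. By continuity this remains a basis on an open neighbourhood of $\varphi(x)$, so after further shrinking $V$ we get smooth coordinate functions $a_1,\ldots,a_k, b_{k+1},\ldots,b_m$ on $T\RR^m$ above $\varphi(V)$ dual to this frame. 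For any $v \in \widehat{T}^\mathcal{F}_yS$ with $y \in V$, write $v = \sum_i a_i(v) X_i|_y$; pushing forward via $\varphi_*$ shows that the coordinates $a_i(v)$ coincide with the pullbacks via $\varphi_*$ of the first $k$ coordinate functions above, and that the remaining coordinates $b_{k+1},\ldots,b_m$ vanish on $\varphi_*(\widehat{T}^\mathcal{F}S|_V)$. Since $\varphi_*$ is a diffeomorphism onto its image and the dual coordinate functions on $T\RR^m$ are smooth, the $a_i$ pull back to smooth functions on $\widehat{T}^\mathcal{F}S|_V$. Thus $\psi^{-1} : v \mapsto (\widehat{\tau}_\mathcal{F}(v), a_1(v),\ldots,a_k(v))$ is smooth, and $\psi$ is the desired local trivialisation.
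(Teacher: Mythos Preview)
Your proposal is correct and follows essentially the same approach as the paper: identify the open dense set where $\delta_\mathcal{F}$ is locally constant (equivalently, locally maximal), and use a frame of vector fields from $\mathcal{F}$ to trivialise $\widehat{T}^\mathcal{F}S$ there. Your argument is in fact more complete than the paper's: you make explicit the local bound $\delta_\mathcal{F}\leq n$ coming from a chart (needed to guarantee the local supremum is attained), and you actually construct the smooth inverse of the trivialisation via dual coordinates on $T\RR^m$, whereas the paper simply asserts local triviality once a spanning frame is found.
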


\begin{proof}
We will show that for any point $x\in S$ and any open set $U$ containing $x$, there is a point $z\in U$ and an open neighbourhood $V\subseteq U$ of $z$ so that $\widehat{\tau}_\mathcal{F}^{-1}(V)\cong V\times F$ for some vector space $F$.\\

Fix $x\in S$.  Define $S_i$ as in the proof of \hypref{l:lwrsemicont}{Lemma}.  Define $$m:=\underset{V\backepsilon x}{\inf}\{\sup\{k~|~S_k\cap V\neq\emptyset\}\}$$ where $V$ runs through all open neighbourhoods of $x$.  There exists an open neighbourhood $W$ of $x$ such that $\sup_{z\in W}\{\delta_\mathcal{F}(z)\}=m$.  Now fix $z\in W$ such that $\delta_\mathcal{F}(z)=m$.  Then, there are vector fields $Y_1,...,Y_m\in\mathcal{F}$ such that $\{Y_1|_z,...,Y_m|_z\}$ spans $\widehat{T}^\mathcal{F}_zS$.  Since linear independence is an open condition and $m$ is maximal, there is an open neighbourhood $V\subseteq W$ of $z$ such that $\{Y_1|_y,...,Y_m|_y\}$ spans $\widehat{T}^\mathcal{F}_yS$ for all $y\in V$.  Hence, $\widehat{T}^\mathcal{F}S$ is locally trivial over $V$.\\

Now, let $U$ be any open subset containing $x$.  We claim that there exists some $z\in W\cap U$ such that $\delta_\mathcal{F}(z)=m$.  Assume otherwise.  If $\sup_{z\in W\cap U}(\delta_\mathcal{F}(z))>m$, then this contradicts the definition of $W$.  If $\sup_{z\in W\cap U}\{\delta_\mathcal{F}(z)\}<m$, then this contradicts the definition of $m$.  Now, choose an open neighbourhood $V\subseteq W\cap U$ of $z$ as above, and the result follows.
\end{proof}

\begin{corollary}\labell{c:checkTopendense}
Let $\mathcal{F}$ be a locally complete family of vector fields, and let $U\subseteq S$ be an open dense subset on which $\widehat{T}^\mathcal{F}S$ is locally trivial.  Then, $\widehat{\tau}_\mathcal{F}^{-1}(U)$ is open and dense in $\widehat{T}^\mathcal{F}S$.
\end{corollary}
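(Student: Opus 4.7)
The plan is to treat openness and density separately. Openness is essentially immediate: the projection $\widehat{\tau}_\mathcal{F}$ is the restriction of the smooth (hence continuous) projection $\tau\colon TS\to S$, so $\widehat{\tau}_\mathcal{F}^{-1}(U)$ is automatically open in $\widehat{T}^\mathcal{F}S$.

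For density, I will show that every nonempty open subset $W\subseteq\widehat{T}^\mathcal{F}S$ meets $\widehat{\tau}_\mathcal{F}^{-1}(U)$. Pick $v\in W$ and set $x=\widehat{\tau}_\mathcal{F}(v)$. By the definition of the orbital tangent space, $v$ is a finite linear combination $v=\sum_{i=1}^{k}a_i X_i|_x$ for some $X_1,\ldots,X_k\in\mathcal{F}$ and $a_1,\ldots,a_k\in\RR$. This suggests the key idea: turn $v$ into the value of a globally defined section that stays inside $\widehat{T}^\mathcal{F}S$.

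Concretely, I would define $\sigma\colon S\to\widehat{T}^\mathcal{F}S$ by
\[
\sigma(y):=\sum_{i=1}^{k}a_i X_i|_y.
\]
Each $X_i$ is a smooth section of $\tau$, so $\sigma$ is a smooth section of $\tau$; by construction $\sigma(y)\in\widehat{T}^\mathcal{F}_y S$ for every $y\in S$, so $\sigma$ really takes values in $\widehat{T}^\mathcal{F}S$ and is continuous with respect to its subspace topology from $TS$. Since $\sigma(x)=v\in W$, the preimage $\sigma^{-1}(W)$ is an open neighbourhood of $x$ in $S$. Density of $U$ then produces a point $y\in U\cap\sigma^{-1}(W)$, and $\sigma(y)$ lies in $W\cap\widehat{\tau}_\mathcal{F}^{-1}(U)$.

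I do not anticipate a serious obstacle here; the argument uses almost nothing beyond continuity of smooth sections and the definition of $\widehat{T}^\mathcal{F}S$. The only mild subtlety I would double-check is that $\sigma$ is genuinely continuous as a map into the subspace $\widehat{T}^\mathcal{F}S\subseteq TS$, which follows from the fact that subspace topology is compatible with continuous maps whose image lies in the subspace. Local completeness of $\mathcal{F}$ and the local triviality of $\widehat{T}^\mathcal{F}S|_U$ assumed in the hypothesis are not needed for this particular statement, but they are what makes the conclusion useful in subsequent arguments.
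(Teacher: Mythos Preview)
Your argument is correct and follows essentially the same idea as the paper's proof: extend the given vector $v\in\widehat{T}^\mathcal{F}_xS$ to a continuous section of $\widehat{T}^\mathcal{F}S$ built out of vector fields in $\mathcal{F}$, then use density of $U$ in $S$. The paper phrases this by first choosing $Y_1,\dots,Y_k\in\mathcal{F}$ whose values at $x$ form a basis of $\widehat{T}^\mathcal{F}_xS$ and noting that linear independence persists on a neighbourhood, whereas you go directly to the section $\sigma=\sum a_iX_i$; the content is the same, and your observation that local completeness is not actually used here is accurate.
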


\begin{proof}
By continuity, $\widehat{\tau}_\mathcal{F}(U)$ is open.  Let $x\in S\smallsetminus U$, and let $Y_1,...,Y_k\in\mathcal{F}$ such that $\{Y_1|_x,...,Y_k|_x\}$ forms a basis of $\widehat{T}^\mathcal{F}_xS$.  Since linear independence is an open condition, there is an open neighbourhood $V$ of $x$ on which $\{Y_1|_y,...,Y_k|_y\}$ is linear independent for all $y\in V$, and their span is a subset of $\widehat{T}^\mathcal{F}_yS$.  Hence, $\widehat{T}^\mathcal{F}_xS\subseteq\overline{\widehat{\tau}_\mathcal{F}^{-1}(U)}$.
\end{proof}

\begin{remark}
The above corollary extends to exterior powers of the fibres of  $\widehat{T}^{\mathcal{F}}S$; that is, there exists an open dense subset $U\subseteq S$ on which $\bigwedge_S^k\widehat{T}^\mathcal{F}S\Big|_U\to U$ is locally trivial.
\end{remark}

\begin{proposition}[Zariski Versus Orbital Tangent Bundles]\labell{p:checkZar}
Let $S$ be a locally compact subcartesian space. Then there exists an open dense subset $U\subseteq S$ such that for each $x\in U$, $$\widehat{T}_xS=T_xS.$$
\end{proposition}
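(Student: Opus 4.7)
The plan is to find an open dense subset of $S$ on which $S$ is locally a smooth manifold, and then exploit the fact that derivations of $\CIN$ on a manifold are automatically vector fields. First I will invoke \hypref{t:TSloctriv}{Theorem} to obtain an open dense $U \subseteq S$ over which $\tau \colon TS|_U \to U$ is locally trivial. The crucial structural input, proved in \cite{lsw10}, is that this local triviality forces a local manifold structure on $U$: every $x \in U$ admits an open neighbourhood $V \subseteq U$ that is diffeomorphic to an open subset of $\RR^m$, where $m$ is the local rank of $TS$ near $x$.

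Given this, fix $x \in U$ and an arbitrary $v \in T_xS$. Using the trivialisation $TS|_V \cong V \times \RR^m$, construct a smooth section $X_0$ of $TV$ with $X_0|_x = v$. Since $V$ is a smooth manifold, $X_0$ is automatically a vector field on $V$ by the classical ODE theorem; in particular its local flow has open domain. To globalise, choose $V' \subseteq V$ with $x \in V'$ and $\overline{V'} \subseteq V$, together with a smooth bump function $b \in \CIN(S)$ satisfying $b|_{V'} \equiv 1$ and $\supp b \subseteq V$; such functions exist since subcartesian spaces admit smooth partitions of unity (Marshall, \cite{marshall75}). Define $X := bX_0$ on $V$ and $X := 0$ on $S \setminus V$; this is a global derivation of $\CIN(S)$.

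To verify that $X$ is in fact a vector field, local compactness of $S$ enters via \hypref{p:opendomain}{Proposition}: it suffices to check that each maximal integral curve of $X$ has open domain. An integral curve starting outside $\supp b$ is constant with domain $\RR$, which is open. An integral curve starting inside $\supp b$ follows the flow of the manifold vector field $bX_0$ on $V$ as long as it remains there, and becomes constant once it exits $\supp b$ (where the derivation vanishes); in both phases the domain is open. Hence $X \in \Vect(S)$ with $X|_x = v$, so $v \in \widehat{T}_xS$. Since $v \in T_xS$ was arbitrary and the reverse inclusion is immediate, $T_xS = \widehat{T}_xS$ for every $x \in U$.

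The main obstacle is the structural claim in the first step, that local triviality of $TS$ forces $S$ to be locally a manifold. Without it, local triviality only yields smooth sections of $TS$ (i.e.\ derivations), and these need not admit open-domain flows in the subcartesian setting, so the bump-function argument would fail to produce genuine vector fields on $S$. This local manifold result is precisely what \cite{lsw10} supplies, and once it is cited the rest of the proof is essentially routine.
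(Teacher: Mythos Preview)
Your proof rests on a claim that is not what \cite{lsw10} proves and is in fact false: local triviality of $\tau\colon TS\to S$ does \emph{not} force $S$ to be locally diffeomorphic to an open subset of $\RR^m$. Take $S=[0,1]\subseteq\RR$. Here $TS$ is globally trivial of rank $1$, yet $[0,\epsilon)$ is not diffeomorphic (as a differential space) to any open subset of $\RR$; indeed $\widehat{T}_0S=\{0\}$ because any derivation nonzero at $0$ has a maximal integral curve through $0$ with half-closed domain. So your ``local manifold'' step fails precisely at the points where the question is delicate, and with it your argument that $bX_0$ has open-domain integral curves collapses: that deduction used the classical ODE theorem on the putative manifold $V$.

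The paper's proof avoids this by taking $U$ to be the locus where \emph{both} $TS$ and $\widehat{T}S$ are locally trivial (invoking \hypref{p:checkTSloctriv}{Proposition} as well as \hypref{t:TSloctriv}{Theorem}), and then citing \cite{lsw10} only for the weaker facts that near $x\in U$ one has a chart $\varphi\colon V\to\tilde{V}\subseteq\RR^n$ with $n=\dim T_xS$ and that the coordinate derivations $\partial_1,\dots,\partial_n$ descend to $V$ and trivialise $TV$. It never asserts that $\tilde{V}$ is open in $\RR^n$. Openness of the integral-curve domains for $b\partial_i$ is then argued by passing to a local extension $\tilde{X}_i$ on $\RR^n$ and comparing flows via \hypref{p:intcurve}{Proposition}, using that $b$ is supported in $V$ so the $S$-curve never leaves $V$. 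In your write-up you should either follow this route or, if you want to keep the ``local manifold'' idea, you must supply an independent proof that the \emph{intersection} of the $TS$- and $\widehat{T}S$-locally-trivial loci is locally Euclidean; merely citing \cite{lsw10} does not give you this.
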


\begin{proof}
By \hypref{t:TSloctriv}{Theorem} and \hypref{p:checkTSloctriv}{Proposition}, there exists an open dense subset $U\subseteq S$ on which $TS$ and $\widehat{T}S$ are locally trivial.  Let $x\in U$, and let $\varphi:V\to\tilde{V}\subseteq\RR^n$ be a chart about $x$ where $V\subseteq U$ and $n=\dim(T_xS)$ (see \cite{lsw10}).  Then the derivations $\partial_1,...,\partial_n$ on $V$ arising from coordinates on $\RR^n$ give a local trivialisation of $TV$ (again, see \cite{lsw10}).  Let $W_1$ and $W_2$ be open neighbourhoods of $x$ satisfying $\overline{W_1}\subset W_2\subset\overline{W_2}\subset V$.  Let $b:S\to\RR$ be a smooth bump function that is equal to 1 on $W_1$ and 0 outside of $W_2$.  Then $b\partial_1,...,b\partial_n$ extend to derivations on all of $S$, and we claim that they are vector fields.\\

Now, for $i=1,...,n$, shrinking $V$ if necessary, there exist $\tilde{X}_1,...,\tilde{X}_n\in\Der\CIN(\RR^n)$ satisfying $\varphi_*(b\partial_i)=\tilde{X}_i|_{\tilde{V}}$.  Each $\tilde{X}_i$ gives rise to a local flow $\exp(\cdot \tilde{X}_i)(\cdot)$, such that for each $y\in \tilde{V}$, $\exp(\cdot \tilde{X}_i)(\varphi(y))$ has an open domain.  By \hypref{p:intcurve}{Proposition}, $\exp(\cdot \tilde{X}_i)(\varphi(y))=\varphi(\exp(t b\partial_i)(y))$ for all $t\in I^{b\partial_i}_y$ for which the integral curve lies in $V$.  But since $b$ is supported in $V$, the entire curve $\exp(\cdot b\partial_i)(y)$ is in $V$.  Hence, $\exp(t \tilde{X}_i)(\varphi(y))\in\tilde{V}$ for all $t\in I^{\tilde{X}_i}_{\varphi(y)}$.  Since $\tilde{X}_i$ is a vector field on $\RR^n$, $I^{\tilde{X}_i}_{\varphi(y)}$ is open, and consequently so is $I^{b\partial_i}_y$.  Thus, by \hypref{p:opendomain}{Proposition} $b\partial_i$ is a vector field on $V$, and since it has been extended as 0 to the rest of $S$, it is a vector field on $S$.  Finally, since $(b\partial_i)|_{W_1}=\partial_i|_{W_1}$ for each $i$, we see that $\widehat{T}_yS=T_yS$ for all $y\in W_1$, since $T_yS$ is the span over $\RR$ of $\{\partial_1|_y,...,\partial_n|_y\}.$
\end{proof}

\section{Orbits of Families of Vector Fields}

I review the theory of orbits of families of vector fields, including the Orbit Theorem for subcartesian spaces, proven by \'Sniatycki in \cite{sniatycki03}.  I show that the natural topology on the orbits discussed below comes from a diffeology induced by the local flows.

\begin{definition}[Orbits]
Let $S$ be a subcartesian space, and let $\mathcal{F}$ be a family of vector fields.  The \emph{orbit} of $\mathcal{F}$ through a point $x$, denoted $O^{\mathcal{F}}_x$ or just $O_x$ if $\mathcal{F}=\Vect(S)$, is the set of all points $y\in S$ such that there exist vector fields $X_1,...,X_k\in\mathcal{F}$ and real numbers $t_1,...,t_k\in\RR$ satisfying $$y=\exp(t_1X_1)\circ...\circ\exp(t_kX_k)(x).$$
Denote by $\mathcal{O}_{\mathcal{F}}$, or just $\mathcal{O}$ if $\mathcal{F}=\Vect(S)$, the set of all orbits $\{O^\mathcal{F}_x~|~x\in S\}$.  Note that $\mathcal{O}_{\mathcal{F}}$ induces a partition of $S$ into connected differential subspaces.
\end{definition}

Given a family of vector fields $\mathcal{F}$ on $S$, there exists a natural topology on the orbits that in general is finer than the subspace topology.  We define this topology here using similar notation as found in \cite{sniatycki03} and \cite{sussmann73}.  Let $X_1,...,X_k\in\mathcal{F}$.  Let $\xi:=(X_1,...,X_k)$ and $T=(t_1,...,t_k)$, and define $\xi_T(x):=\exp(t_kX_k)\circ...\circ\exp(t_1X_1)(x).$  $\xi_T(x)$ is well-defined for all $(T,x)$ in an open neighbourhood $U(\xi)$ of $(0,x)\in\RR^k\times S$.  Define $U_x(\xi)$ to be the set of all $T\in\RR^k$ such that $\xi_T(x)$ is well-defined; that is, $U_x(\xi)=U(\xi)\cap(\RR^k\times\{x\})$.  Let $i:O^\mathcal{F}_x\hookrightarrow S$ be the inclusion map.  Fix $y\in i(O^\mathcal{F}_x)$ and let $\varphi:V\to\tilde{V}\subseteq\RR^n$ be a chart of $S$ about $y$.  We give $W:=i^{-1}(V\cap i(O^\mathcal{F}_x))$ the strongest topology such that for each $\xi$ and $y\in i(W)$ the map $$\rho_{\xi,y}:U_y(\xi)\to\RR^n:T\mapsto \varphi\circ\xi_T(y)$$ is continuous.  This extends to a topology $\mathcal{T}$ on all of $O^\mathcal{F}_x$, which matches on overlaps (see \cite{sniatycki03}).

\begin{remark}\labell{r:vectdiffeol}
The \emph{$D$-topology} on a diffeological space $(X,\mathcal{D})$ is the strongest topology on $X$ such that all plots are continuous.  Let $\mathcal{D}_\mathcal{F}$ be the diffeology on $S$ generated by the maps $T\mapsto\xi_T(x)$ for all $X_1,...,X_k\in\mathcal{F}$, $\xi=(X_1,...,X_k)$, and $x\in S$.  Then the $D$-topology generated by $\mathcal{D}_\mathcal{F}$ on $S$ induces the topology described above on each orbit.
\end{remark}

\begin{lemma}\labell{l:orbitaltop}
With respect to the $D$-topology on $S$ induced by $\mathcal{D}_\mathcal{F}$, the orbits are connected and pairwise disjoint.
\end{lemma}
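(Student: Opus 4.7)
The plan is to handle pairwise disjointness and connectedness separately, as the former is a purely set-theoretic statement about orbits while the latter genuinely uses the diffeology $\mathcal{D}_\mathcal{F}$.

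For pairwise disjointness, I would show that the relation ``$y \in O^\mathcal{F}_x$'' is an equivalence relation on $S$. Reflexivity is the convention that $c \colon \{0\} \to S$ (or the empty composition of flows) is allowed. Transitivity is immediate from concatenating compositions of flows. The nontrivial point is symmetry: if $y = \exp(t_k X_k) \circ \cdots \circ \exp(t_1 X_1)(x)$, then applying $\exp(-t_1 X_1) \circ \cdots \circ \exp(-t_k X_k)$ to $y$ returns $x$, using that each $\exp(tX_i)$ is locally invertible with inverse $\exp(-tX_i)$ on the relevant open set (which is part of the local flow property of vector fields, guaranteed since elements of $\mathcal{F}$ are genuine vector fields in the sense defined in this chapter). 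Once the relation is an equivalence relation, the orbits are its equivalence classes and hence pairwise disjoint.

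For connectedness in the $D$-topology, it suffices to show that each orbit is path-connected by paths that are continuous in the $D$-topology, since continuous images of connected sets are connected. Fix $x \in S$ and $y \in O^\mathcal{F}_x$, and write $y = \xi_T(x)$ for some $\xi = (X_1,\dots,X_k)$ and some $T = (t_1,\dots,t_k) \in U_x(\xi)$. Consider the map $\gamma \colon [0,1] \to S$ built by concatenation: on $[\tfrac{i-1}{k}, \tfrac{i}{k}]$, send $s$ to $\exp((ks-(i-1))t_i X_i) \circ \exp(t_{i-1}X_{i-1}) \circ \cdots \circ \exp(t_1 X_1)(x)$. Each piece is the composition of a smooth reparametrisation with a plot of the form $t \mapsto \xi'_t(x')$ lying in $\mathcal{D}_\mathcal{F}$, and by the locality and smooth compatibility axioms of a diffeology, these pieces can be assembled into a single plot $\gamma$ (or at least into a map continuous in the $D$-topology, which is all I need). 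By definition of the $D$-topology every plot is continuous, so $\gamma$ is a continuous path in $S$ from $x$ to $y$. Moreover its image lies in $O^\mathcal{F}_x$ by construction, so $O^\mathcal{F}_x$ is path-connected as a subspace of $S$ with the subspace topology coming from the $D$-topology, and therefore connected.

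The main subtlety I anticipate is the concatenation step: strictly speaking, plots are defined on open subsets of Euclidean spaces, so one must either describe the path piecewise and invoke locality plus the gluing of continuous maps in the $D$-topology, or else absorb the concatenation into one plot $\RR \to S$ via a smooth partition-of-unity-style reparametrisation that slides each coordinate $t_i$ on or off in turn. Either route works; the first is cleaner, since we only need continuity of $\gamma$ in the $D$-topology rather than $\gamma \in \mathcal{D}_\mathcal{F}$. Everything else is straightforward from the definition of the orbit and the definition of a vector field's local flow.
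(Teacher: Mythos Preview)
Your proof is correct, but the paper's route is a bit different and slightly slicker on both halves.

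For connectedness, the paper avoids your concatenation entirely: the map $T\mapsto\xi_T(x)$ with $\xi=(X_1,\dots,X_k)$ is itself one of the generating plots of $\mathcal{D}_\mathcal{F}$, defined on the connected open set $U_x(\xi)\subseteq\RR^k$; since plots are continuous in the $D$-topology and $0,T\in U_x(\xi)$, the image of this single plot is a connected set containing both $x$ and $y$. This sidesteps the piecewise-gluing subtlety you flagged. Your argument is fine, but it reproves by hand what the multi-parameter plot gives for free.

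For pairwise disjointness, the paper does not argue via the equivalence relation at all. Instead it shows something strictly stronger: each orbit is \emph{clopen} in the $D$-topology (the preimage of an orbit under any generating plot is the whole domain, so orbits are $D$-open; complements are unions of orbits, hence also open). Combined with connectedness this yields that the orbits are precisely the connected components of $S$ in the $D$-topology, which is the real content one wants downstream. Your set-theoretic argument establishes the lemma as literally stated, but misses this topological upgrade.
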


\begin{proof}
Fix $x\in S$, and choose $y\in O^\mathcal{F}_x$.  Then there exist $X_1,...,X_k\in\mathcal{F}$ and $t_1,...,t_k\in\RR$ such that $$y=\exp(t_kX_k)\circ...\circ\exp(t_1X_1)(x).$$  Let $T=(t_1,...,t_k)$ and $\xi=(X_1,...,X_k)$.  Then $$y=\xi_T(x).$$  Since $T\mapsto\xi_T(x)$ is continuous with respect to the $D$-topology, as it is a plot, and $U_x(\xi)$ is connected, its image is connected.  Hence $x$ and $y$ are in the same connected component of $S$ with respect to the $D$-topology.\\

We now show that each orbit is open and closed in the $D$-topology.  Since the preimage of any orbit is open in the domain of any plot, each orbit is open in the strongest topology such that each map $\rho_{\xi,x}$ is continuous.  Moreover, since the complement of any orbit is the union of orbits, and hence open, each orbit is closed.
\end{proof}

\begin{example}[Irrational Flow on Torus]
Let $S$ be the torus $\RR^2/\ZZ^2$ and let $\pi:\RR^2\to S$ be the quotient map. Consider the one-element family $\{X\}$ where $X=\pi_*(\partial_1+\sqrt{2}\partial_2)$.  Then for any $x\in S$, $\exp(tX)(x)$ has domain $\RR$, and the orbit is dense in $S$.  $\mathcal{T}$ in this case is such that $O^{\{X\}}_x$ is diffeomorphic to $\RR$.  This is strictly stronger than the subspace topology on the orbit.
\end{example}

\begin{theorem}[Orbit Theorem]\labell{t:singfol}
Let $S$ be a subcartesian space.  Then for any locally complete family of vector fields $\mathcal{F}$, $\mathcal{O}_{\mathcal{F}}$ induces a partition of $S$ into orbits $O^\mathcal{F}_x$, each of which when equipped with the topology $\mathcal{T}$ described above has a smooth manifold structure.  The inclusion $i:O^\mathcal{F}_x\hookrightarrow S$ is smooth, and $i_*:TO^\mathcal{F}_x\to TS$ is a fibrewise linear isomorphism onto $\widehat{T}^\mathcal{F}S|_{O^\mathcal{F}_x}$.
\end{theorem}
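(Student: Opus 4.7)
The plan is to adapt Sussmann's classical orbit theorem (from \cite{sussmann73}) to the subcartesian setting, using local completeness as the central tool. The partition statement is already essentially in \hypref{l:orbitaltop}{Lemma}; what remains is to endow each orbit with a smooth manifold structure compatible with $\mathcal{T}$, and to identify $i_*(TO^\mathcal{F}_x)$ with $\widehat{T}^\mathcal{F}S|_{O^\mathcal{F}_x}$. The strategy is to build charts on the orbit from compositions of local flows $\xi_T = \exp(t_k X_k)\circ\cdots\circ\exp(t_1 X_1)$, and to use local completeness to show these compositions are open maps onto open subsets of the orbit whose transition maps are smooth.

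First I would show that the pushforward $\exp(tX)_*$ carries $\widehat{T}^\mathcal{F}_y S$ isomorphically onto $\widehat{T}^\mathcal{F}_{\exp(tX)(y)}S$ whenever $X\in\mathcal{F}$ and the flow is defined. This is immediate from local completeness: for each $Y\in\mathcal{F}$, there is a $Z\in\mathcal{F}$ defined near $\exp(tX)(y)$ with $\exp(tX)_*Y=Z$ locally, and the inverse map is $\exp(-tX)_*$. Iterating, for any admissible $\xi,T$ one gets a linear isomorphism $(\xi_T)_{*,x}:\widehat{T}^\mathcal{F}_xS\to\widehat{T}^\mathcal{F}_{\xi_T(x)}S$. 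Consequently $\delta_\mathcal{F}$ is constant on each orbit; call this common value $n(O^\mathcal{F}_x)=:n$.

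Next, the chart construction. Fix $x\in S$ and choose $X_1,\dots,X_n\in\mathcal{F}$ with $X_1|_x,\dots,X_n|_x$ a basis of $\widehat{T}^\mathcal{F}_xS$. Let $\xi=(X_1,\dots,X_n)$ and consider $\xi_T(x):U_x(\xi)\to S$. Via local extensions to Euclidean space (\hypref{p:charDer}{Proposition}) together with \hypref{p:intcurve}{Proposition}, one checks that its pushforward at $T=0$ sends $\partial_i$ to $X_i|_x$, hence is injective from $T_0\RR^n$ onto $\widehat{T}^\mathcal{F}_xS$. Shrinking $U_x(\xi)$ I would then show $\xi_{(\cdot)}(x)$ is injective and that its image is an open neighbourhood of $x$ in $O^\mathcal{F}_x$ with respect to $\mathcal{T}$; injectivity follows because two nearby pre-images yield points with tangent frames related by the rank-$n$ isomorphism above, so the map is immersive and locally injective. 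This produces a chart $\xi_{(\cdot)}(x)^{-1}:W\to U_x(\xi)\subseteq\RR^n$ on a $\mathcal{T}$-open subset $W$ of $O^\mathcal{F}_x$. For other basepoints $y\in O^\mathcal{F}_x$, the same construction with frames pushed forward along some $\eta_S$ connecting $x$ to $y$ gives a compatible chart.

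The main obstacle — and the step that genuinely requires local completeness — is showing that two such charts $\xi_{(\cdot)}(x)$ and $\eta_{(\cdot)}(y)$ have smooth transition maps. For this I would iterate the following: given $\exp(tX)\circ\exp(sY)$ with $X,Y\in\mathcal{F}$, local completeness supplies $Z\in\mathcal{F}$ with $\exp(sY)(z)=\exp(sZ)(\exp(tX)(z))$ for $z$ near a fixed point (since $\exp(tX)_*Y$ locally agrees with a vector field in $\mathcal{F}$), which permits one to ``commute'' flows at the cost of repeating vector fields. A finite sequence of such moves expresses the transition in the form $(t_1,\dots,t_n)\mapsto\Psi(t_1,\dots,t_n,s_1,\dots,s_k)$ for fixed $s_j$ and smooth $\Psi$ obtained from a chain of flows; the implicit function theorem on $\RR^n$, applied using the rank-$n$ nondegeneracy of the frame at the basepoint, then yields smoothness of the inverse. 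This furnishes the smooth atlas, and in particular the topology induced by the atlas coincides with $\mathcal{T}$ by construction of $\mathcal{T}$ via the maps $\rho_{\xi,y}$.

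Finally, for smoothness of $i:O^\mathcal{F}_x\hookrightarrow S$ and the claim about $i_*$: in a chart $\xi_{(\cdot)}(y)$, the map $i$ is literally $T\mapsto\xi_T(y)$, a composition of smooth local flows on $S$, hence smooth as a map of differential spaces. Differentiating at $T=0$, $i_*(\partial_j)=X_j|_y\in\widehat{T}^\mathcal{F}_yS$, so $i_*(T_yO^\mathcal{F}_x)\subseteq\widehat{T}^\mathcal{F}_yS$; by the equality of dimensions and linear independence of the frame, this inclusion is a linear isomorphism at every point, completing the proof.
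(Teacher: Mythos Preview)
The paper does not actually prove this theorem: its entire proof reads ``See \S5 Theorem 3 of \cite{sniatycki03}.'' Your proposal is essentially a sketch of \'Sniatycki's argument, which in turn adapts Sussmann's classical orbit theorem \cite{sussmann73} to the subcartesian setting, so there is nothing to compare at the level of this paper.

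A couple of places in your sketch would need tightening before it stands on its own. First, the commutation identity you wrote is not quite right: local completeness gives $Z$ with $\exp(tX)_*Y=Z$ locally, whence $\exp(tX)\circ\exp(sY)=\exp(sZ)\circ\exp(tX)$, i.e.\ $\exp(tX)(\exp(sY)(z))=\exp(sZ)(\exp(tX)(z))$, not $\exp(sY)(z)=\exp(sZ)(\exp(tX)(z))$ as written. Second, the injectivity of $T\mapsto\xi_T(x)$ is the delicate step in the subcartesian setting: ``immersive implies locally injective'' is a statement about manifolds, but here the target is only a subcartesian space. The standard fix (and what \'Sniatycki does) is to pass to a chart $\varphi:U\to\RR^N$, extend the $X_j$ locally via \hypref{p:charDer}{Proposition}, and observe that the corresponding map $T\mapsto\varphi(\xi_T(x))$ into $\RR^N$ is a genuine immersion of manifolds, hence locally an embedding; the image then lies in $\varphi(U)$ by \hypref{p:intcurve}{Proposition}. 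With those adjustments your outline matches the argument in \cite{sniatycki03}.
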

\begin{proof}
See \S5 Theorem 3 of \cite{sniatycki03}.
\end{proof}

\begin{remark}
This theorem generalises the corresponding ``orbit theorem'' in control theory (see, for example, \cite{jurdjevic97}).
\end{remark}

\begin{example}
In \hypref{x:lcexample}{Example} the orbital tangent space has dimension $\dim(\widehat{T}^\mathcal{F}_{(0,y)}\RR^2)=1$ for all $y$, whereas $\widehat{T}^\mathcal{F}_{(x,y)}\RR^2=T_{(x,y)}\RR^2$ for $x\neq0$. But there is only one orbit: all of $\RR^2$.  So the family of vector fields given by the $\RR$-span of $\{\partial_x,x\partial_y\}$ does not satisfy the conclusion of \hypref{t:singfol}{Theorem}.  (Recall that this family is not locally complete.)
\end{example}

\begin{theorem}[Ordering on Orbit Partitions]\labell{t:singfolorder}
Orbits of any family of vector fields $\mathcal{F}$ are contained within orbits of $\Vect(S)$.
\end{theorem}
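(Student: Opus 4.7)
The plan is to observe that this is essentially a direct consequence of the definition of orbits, combined with the trivial inclusion $\mathcal{F}\subseteq\Vect(S)$. Since orbits are defined purely in terms of compositions of local flows $\exp(tX)$ with $X$ drawn from the family, enlarging the family can only enlarge the set of reachable points.

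First I would fix a point $x\in S$ and pick an arbitrary $y\in O^{\mathcal{F}}_x$. By the definition of the orbit through $x$ with respect to $\mathcal{F}$, there exist vector fields $X_1,\ldots,X_k\in\mathcal{F}$ and real numbers $t_1,\ldots,t_k\in\RR$ such that
\[
y=\exp(t_1X_1)\circ\cdots\circ\exp(t_kX_k)(x),
\]
where each composition is defined (the maximal integral curves pass through the relevant points at the relevant times, which is guaranteed because $y$ lies in the orbit in the first place).

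Next, since $\mathcal{F}\subseteq\Vect(S)$, each $X_i$ is itself an element of $\Vect(S)$, and the maximal integral curve $\exp(\cdot X_i)$ used above is exactly the one provided by the ODE theorem for $X_i$ regarded as an element of $\Vect(S)$ (the maximal integral curve is intrinsic to $X_i$ and does not depend on which family one considers it as belonging to, by \hypref{t:ode}{Theorem}). Therefore the same composition
\[
y=\exp(t_1X_1)\circ\cdots\circ\exp(t_kX_k)(x)
\]
certifies, by the definition of $O_x$, that $y\in O_x$.

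Since $y\in O^{\mathcal{F}}_x$ was arbitrary, this gives $O^{\mathcal{F}}_x\subseteq O_x$, which is exactly the claimed containment. There is no real obstacle here; the statement is a tautological monotonicity of the orbit construction in the family, and in particular does not require local completeness of $\mathcal{F}$ or any appeal to the Orbit Theorem \hypref{t:singfol}{Theorem}. The only thing to be a little careful about is the remark that the maximal integral curves used to build $O^{\mathcal{F}}_x$ are unambiguously defined by each individual $X_i$, which is exactly what \hypref{t:ode}{Theorem} supplies.
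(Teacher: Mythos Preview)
Your argument is correct: the containment $O^{\mathcal{F}}_x\subseteq O_x$ is an immediate consequence of the inclusion $\mathcal{F}\subseteq\Vect(S)$ and the definition of orbits, exactly as you wrote. The paper does not supply its own proof here but instead cites \'Sniatycki \cite{sniatycki03}, so there is nothing to compare against beyond noting that your direct, self-contained argument is entirely adequate and that your observation that neither local completeness nor the Orbit Theorem is needed is well taken.
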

\begin{proof}
See \S5 Theorem 4 of \cite{sniatycki03}.
\end{proof}

\begin{theorem}[Stratification Induced by $\Vect(S)$]\labell{t:stratorb1}
Let $S$ be a smooth stratified space.  Then the orbits on $S$ induced by $\Vect(S)$ form a smooth decomposition of $S$.
\end{theorem}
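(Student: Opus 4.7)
The goal is to prove that the partition of $S$ into orbits of $\Vect(S)$ is a smooth decomposition. Note that by \hypref{p:vectloccompl}{Proposition} the family $\Vect(S)$ is locally complete, so by the Orbit Theorem (\hypref{t:singfol}{Theorem}) each orbit is a connected smooth manifold with smooth inclusion into $S$. The remaining task is to verify that the partition is locally finite, that its pieces are locally closed, and that closure containments induce a partial order. My plan is to reduce all of these to the corresponding properties of the given stratification by showing that the orbit partition coincides, at the level of connected components, with the stratification.

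First I would show the orbits of the smaller family $\Vect_{strat}(S)$ are exactly the connected strata of $S$. One containment is immediate, since a stratified vector field preserves strata by definition and so its integral curve from $x \in P$ stays in $P$. For the reverse, given two points $x, y$ in a connected stratum $P$, I would use smooth local triviality to extend smooth vector fields on $P \cap U$ to stratified vector fields on $S$: in a trivialisation $U \cong (P \cap U) \times F$ take the horizontal extension $(Y, 0)$, then multiply by a bump function. Linking $x$ to $y$ by a smooth path in $P$ and covering it by finitely many such neighbourhoods gives that $x$ and $y$ lie in the same $\Vect_{strat}(S)$-orbit. Since $\Vect_{strat}(S) \subseteq \Vect(S)$, the Ordering Theorem (\hypref{t:singfolorder}{Theorem}) then gives that each orbit of $\Vect(S)$ is a union of connected strata.

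The key step is the reverse containment: every $X \in \Vect(S)$ is stratified, so each orbit of $\Vect(S)$ sits in a single stratum. I would argue by induction on $\operatorname{depth}(S)$; the base case of depth zero is a manifold, where there is nothing to prove. For the inductive step, fix $X \in \Vect(S)$ and $x \in S_\alpha$, and pick a smooth local trivialisation $\varphi : U \to (S_\alpha \cap U) \times F$ where $F$ has strictly smaller depth and distinguished stratum $\{o\}$. In this trivialisation the integral curve $\exp(tX)(x)$ has the form $(\alpha(t), \beta(t))$ with $\beta(0) = o$. Since the distinguished stratum of $F$ is the single point $\{o\}$ and, by the inductive hypothesis, every vector field on $F$ preserves its strata, the curve $\beta$ must remain at $o$ for small $t$; propagating along the maximal integral curve via an open-closed argument (using local finiteness of the stratification and the openness of $I^X_x$) extends this to $\exp(tX)(x) \in S_\alpha$ for all $t \in I^X_x$.

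Combining these steps, the orbit partition of $\Vect(S)$ coincides with the partition of $S$ into connected components of its strata, which is already a smooth decomposition. The main technical obstacle is the inductive step showing that vector fields preserve strata; in particular, the ``$F$-component'' of $X$ in the trivialised neighbourhood is not a priori a bona fide vector field on $F$, since it depends on the $S_\alpha$-coordinate. The careful route is to argue directly using the derivation $X$ acting on extended functions $\tilde f(y,z) := f(z)$ for $f \in \CIN(F)$, invoking the inductive conclusion fibrewise and combining it with smoothness of the flow and the openness of $I^X_x$ guaranteed by $X$ being a vector field.
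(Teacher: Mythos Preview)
The paper does not give a proof here (it cites \'Sniatycki \cite{sniatycki03}), so there is no detailed argument to compare against. However, your approach aims at a strictly stronger conclusion than the theorem claims. You try to show that the $\Vect(S)$-orbits \emph{coincide} with the given strata, whereas the theorem only asserts that they form \emph{some} smooth decomposition of $S$. The paper's own context signals that your stronger statement is not what is intended and is at least not known: the remark immediately after the theorem says it is open whether the $\Vect(S)$-orbit decomposition satisfies local triviality (which would be automatic if it equalled the given stratification), and the proof of \hypref{t:stratvect}{Theorem} invokes Bierstone's minimality theorem precisely to force the $\Vect(M/G)$-orbits to equal the orbit-type strata --- a step that would be redundant under your claim. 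The underlying issue is that $\Vect(S)$ depends only on $\CIN(S)$, not on the chosen stratification; a non-minimal smooth stratification on $S$ therefore cannot coincide with the $\Vect(S)$-orbit partition.

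The gap in your argument sits exactly where you flag it. In the trivialisation $U\cong(S_\alpha\cap U)\times F$, the $F$-component $\beta(t)$ of the integral curve of $X$ is not an integral curve of any vector field on $F$, because the vertical part of $X$ varies with the base coordinate. Your proposed remedy via functions $\tilde f(y,z)=f(z)$ does not close the gap: to apply the inductive hypothesis fibrewise you would need, for each fixed $y$, that the derivation $f\mapsto (X\tilde f)(y,\cdot)$ on $\CIN(F)$ is a \emph{vector field} on $F$, i.e.\ admits a local flow on $F$. But the flow of $X$ moves the base coordinate $y$, so it does not furnish such a flow, and nothing else in your outline does. Without this the claim ``every $X\in\Vect(S)$ is stratified'' remains unestablished, and with it the whole strategy.
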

\begin{proof}
See \S6 Theorem 8 of \cite{sniatycki03}.
\end{proof}

\begin{remark}
Note that it is not known whether the induced decomposition satisfies the ``local triviality'' condition of a stratified space.  However, it is known that this decomposition satisfies the Whitney A condition (see \cite{lusala-sniatycki11}).  We recall what this conditions is.  Let $N_1$ and $N_2$ be two submanifolds of $\RR^k$ such that $N_1$ is contained in the closure of $N_2$ in $\RR^k$, and let $(x_i)$ be a sequence of points in $N_2$ with limit $x\in N_1$.  Then the sequence of tangent spaces $T_{x_i}N_2$ converges to a linear subspace $L$ of $T_x\RR^k$.  We say that the pair $(N_1,N_2)$ satisfies the Whitney A condition if $L$ contains $T_xN_1$.
\end{remark}

\begin{theorem}[Orbits of Stratified Vector Fields]\labell{t:stratorb2}
Let $S$ be a smooth stratified space.  Then the orbits of $\Vect_{strat}(S)$ are exactly the strata of $S$.
\end{theorem}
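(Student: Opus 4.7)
The plan is to prove two containments. First, I will show that every orbit of $\Vect_{strat}(S)$ is contained in a stratum; second, I will show that every stratum is contained in a single orbit. Combined with local completeness of $\Vect_{strat}(S)$ (\hypref{p:stratloccompl}{Proposition}) and the Orbit Theorem (\hypref{t:singfol}{Theorem}), which guarantees that the partition into orbits is well-defined, this will give the equality.

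For the first containment, fix $X\in\Vect_{strat}(S)$ and a point $x$ lying in a stratum $P$. Since $X$ is stratified, $X|_P$ is a smooth vector field on the manifold $P$, and so it has an ordinary flow $\phi_t^P$ on $P$ defined near $t=0$. I would then argue that $t\mapsto\phi_t^P(x)$, viewed as a smooth curve into $S$ via the inclusion $i_P\colon P\hookrightarrow S$, is an integral curve of $X$ on $S$: for $f\in\CIN(S)$, $\tfrac{d}{dt}(f\circ i_P\circ\phi_t^P)(x)=(X|_P)(f\circ i_P)(\phi_t^P(x))=(Xf)(i_P(\phi_t^P(x)))$, using that restriction is compatible with the action of $X$. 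Uniqueness of maximal integral curves (\hypref{t:ode}{Theorem}) then forces $\exp(tX)(x)=i_P(\phi_t^P(x))\in P$ for all admissible $t$. Since orbits are generated by finite compositions of such flows, the orbit through $x$ stays inside $P$.

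For the second containment, fix a stratum $P$ and two points $x,y\in P$. Since $P$ is a connected manifold, there is a smooth path in $P$ joining $x$ to $y$. The key step is to show that any smooth vector field $Y$ on $P$ can, locally near a point $p\in P$, be lifted to an element of $\Vect_{strat}(S)$ whose restriction to $P$ agrees with $Y$ near $p$. For this I would use smooth local triviality: pick a chart-like diffeomorphism $\varphi\colon U\to (P\cap U)\times F$ with $\varphi(s)=(s,o)$ for $s\in P\cap U$ and $\varphi$ mapping strata to strata. On $(P\cap U)\times F$, define $\tilde Y$ by $\tilde Y|_{(p,f)}:=(Y|_p,0)$, i.e., the pullback of $Y$ under the first projection. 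Its flow $(t,p,f)\mapsto(\phi_t^Y(p),f)$ preserves each slab $(P\cap U)\times T$ (for $T$ a stratum of $F$) and is smooth there, so $\varphi^*\tilde Y$ is a stratified vector field on $U$; multiplying by a bump function supported in $U$ and equal to $1$ on a smaller neighborhood of $p$ produces an element $Z\in\Vect_{strat}(S)$ whose restriction to $P$ coincides with $Y$ near $p$. Then, using local chart vector fields on $P$ and lifting as above, one covers a path from $x$ to $y$ by finitely many such neighborhoods and chains the corresponding flows to move from $x$ to $y$, so $y\in O^{\Vect_{strat}(S)}_x$.

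The main obstacle is the lifting step: one has to verify both that the pullback $\varphi^*\tilde Y$ is indeed a vector field on $S$ (not merely a derivation), and that its flow genuinely preserves the stratification on $U$ in a way compatible with smoothness on each stratum. Both verifications reduce to the explicit product-form flow on $(P\cap U)\times F$, so the argument depends crucially on smooth local triviality. Once this is established, combining the two containments and invoking the Orbit Theorem shows that each orbit of $\Vect_{strat}(S)$ is open in its stratum and that strata are unions of such orbits; since strata are connected and orbits are connected (\hypref{l:orbitaltop}{Lemma}), each stratum coincides with a single orbit.
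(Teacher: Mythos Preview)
The paper provides no argument of its own here; it simply cites \cite{sniatycki03}, \S6, Theorem~12. Your two-step strategy --- integral curves of stratified vector fields remain in their stratum by uniqueness, and smooth local triviality lets one lift any vector field on a stratum to a stratified vector field on $S$ via the product form $(Y,0)$ and a cutoff --- is precisely the argument in that reference, so there is nothing in the present paper to compare against.

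Two points deserve a little more care than your sketch gives them. In the first containment, uniqueness only shows $\exp(tX)(x)\in P$ for $t$ in the domain of the \emph{manifold} flow $\phi^P$, which could a priori be strictly smaller than $I_x^X$; to get the full statement, argue that $\{t\in I_x^X:\exp(tX)(x)\in P\}$ is open (your argument) and closed in $I_x^X$ (a boundary point landing in a different stratum $Q$ would, by the same reasoning applied at $Q$, force the curve into $Q$ for nearby times, contradicting membership in $P$). In the second containment, the issue you already flag --- that the cutoff $b\cdot\varphi^*\tilde Y$ extended by zero lies in $\Vect(S)$ rather than merely $\Der\CIN(S)$ --- is genuine: the cleanest way through is to arrange the cutoff so that the resulting flow remains in the explicit product form $(t,p,f)\mapsto(\phi_t(p),f)$, for which smoothness on $(P\cap U)\times F$ (and hence on $U$ via $\varphi$) is immediate.
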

\begin{proof}
See \S6 Theorem 12 of \cite{sniatycki03}.
\end{proof}

\begin{theorem}[$\Vect(M/G)$ and the Orbit-Type Stratification]\labell{t:stratvect}
Given a compact Lie group $G$ acting on a connected manifold $M$, the strata of the orbit-type stratification on $M/G$ are precisely the orbits in $\mathcal{O}$ induced by $\Vect(M/G)$.
\end{theorem}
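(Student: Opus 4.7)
The plan is to deduce the theorem by combining the Orbit Theorem for stratified vector fields with Bierstone's minimality result (Theorem \ref{t:bierstone}). Since $M/G$ is subcartesian (Theorem \ref{t:quotsubc}) and a smooth stratified space under its orbit-type stratification (Theorem \ref{t:pot}), both the stratification and the $\Vect(M/G)$-orbit decomposition are valid smooth decompositions on the same underlying subcartesian space, and I want to compare them in the partial order of decompositions.

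First, I would show that the orbit-type stratification refines the decomposition of $M/G$ into $\Vect(M/G)$-orbits. For this, recall that by Theorem \ref{t:stratorb2}, the orbits of the locally complete family $\Vect_{strat}(M/G)$ are precisely the orbit-type strata. Since every stratified vector field is in particular a vector field, we have $\Vect_{strat}(M/G) \subseteq \Vect(M/G)$, and Theorem \ref{t:singfolorder} then implies that each orbit of $\Vect_{strat}(M/G)$ lies inside a single orbit of $\Vect(M/G)$. Translating this statement: each orbit-type stratum of $M/G$ is contained in a single $\Vect(M/G)$-orbit. This is exactly the refinement condition $\mathcal{D}_{\text{strat}} \geq \mathcal{D}_{\text{orb}}$.

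Second, I would verify that the $\Vect(M/G)$-orbit decomposition is in fact a smooth decomposition to which Bierstone's minimality theorem can be applied. This is precisely the content of Theorem \ref{t:stratorb1}: on any smooth stratified space, the orbits of $\Vect(M/G)$ form a smooth decomposition, with pieces that are connected (via concatenations of flows, cf.\ Lemma \ref{l:orbitaltop}) and carrying manifold structures compatible with the inclusion into $M/G$ (by the Orbit Theorem \ref{t:singfol}). Having checked this, I can then apply Theorem \ref{t:bierstone}: the orbit-type stratification on $M/G$ is minimal, so no proper coarsening is a decomposition. Combined with the refinement established in the previous step, this forces the orbit-type stratification and the $\Vect(M/G)$-orbit decomposition to coincide, giving the theorem.

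The main obstacle I anticipate is bookkeeping rather than deep structure: one must carefully match the two notions of ``decomposition'' (the piece-by-piece partial order used in Bierstone's theorem and the orbit partition produced by Theorem \ref{t:stratorb1}), in particular checking that the pieces of the orbit decomposition are indexed by a partially ordered set and are locally finite and locally closed, so that minimality as stated in Theorem \ref{t:bierstone} genuinely applies. Once that framework is in place, the deduction is purely formal.
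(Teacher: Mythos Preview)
Your proposal is correct and follows essentially the same argument as the paper: use Theorem~\ref{t:stratorb2} and Theorem~\ref{t:singfolorder} to see that each orbit-type stratum lies in a $\Vect(M/G)$-orbit, invoke Theorem~\ref{t:stratorb1} to see that the $\Vect(M/G)$-orbits form a decomposition, and then apply Bierstone's minimality (Theorem~\ref{t:bierstone}) to force equality. The bookkeeping you flag is indeed the only thing to check, and the paper likewise defers those details to the cited references.
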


\begin{proof}
The proof can be found in \cite{sniatycki03} and \cite{lusala-sniatycki11}.  The idea is the following.  By \hypref{t:bierstone}{Theorem} the orbit-type stratification on $M/G$ is minimal.  The family of stratified vector fields of this stratification is locally complete by \hypref{p:stratloccompl}{Proposition} and its orbits are the strata by \hypref{t:stratorb2}{Theorem}. By \hypref{t:singfolorder}{Theorem}, these strata lie in orbits of $\Vect(M/G)$.  But, the set of orbits $\mathcal{O}$ induced by $\Vect(M/G)$ themselves form a stratification of $M/G$ by \hypref{t:stratorb1}{Theorem}.  So by minimality, we must have that these two stratifications are equal.
\end{proof}

\begin{proposition}\labell{p:jorbital}
Given a Hamiltonian action of a compact Lie group $G$ on a connected symplectic manifold $(M,\omega)$ with momentum map $\Phi$, let $Z$ be the zero set of $\Phi$.  The orbits of $\ham(Z/G)$ are the orbit-type strata of $Z/G$.
\end{proposition}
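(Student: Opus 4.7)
The plan is to mimic the strategy behind \hypref{t:stratvect}{Theorem}, but instead of relying on Bierstone's minimality result, I will argue directly that each $\ham(Z/G)$-orbit equals a connected orbit-type stratum by a dimension count. Let $\mathcal{F}:=\ham(Z/G)$; by hypothesis $\mathcal{F}$ is a locally complete family of vector fields on $Z/G$, so the Orbit Theorem (\hypref{t:singfol}{Theorem}) applies.

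First I will show that every orbit of $\mathcal{F}$ lies inside a single orbit-type stratum of $Z/G$. For any $h\in\CIN(Z/G)$, the Hamiltonian vector field $X_h=\pois{h}{\cdot}_{Z/G}$ admits maximal integral curves whose images remain in the orbit-type strata; this is precisely the content of the Sjamaar--Lerman construction recalled in the previous section (the flow of $X_h$ is generated by the Hamiltonian flow of $h|_{(Z/G)_{(K')}}$ on the symplectic manifold $((Z/G)_{(K')},\omega_{(K')})$ containing the starting point). Therefore, every composition $\exp(t_1X_{h_1})\circ\cdots\circ\exp(t_kX_{h_k})(x)$ stays in the stratum containing $x$, and each orbit $O^{\mathcal{F}}_x$ is contained in the stratum $(Z/G)_{(H)}$ (or more precisely, in its connected component) through $x$.

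Next I will show the reverse inclusion by comparing dimensions. Fix a stratum $N=(Z/G)_{(H)}$ (a connected symplectic manifold with form $\omega_{(H)}$) and a point $x\in N$. By \hypref{d:poissympquot}{Definition}, the restriction to $N$ of the Poisson bracket on $\CIN(Z/G)$ coincides with the standard Poisson bracket of $(N,\omega_{(H)})$, so that the restriction $X_h|_N$ of any $X_h\in\mathcal{F}$ is the Hamiltonian vector field on $N$ associated with $h|_N$. Since $\CIN(M/G)|_N=\CIN(N)$ (every smooth function on $N$ extends; this is a standard consequence of the local triviality of the stratification together with smooth partitions of unity on the subcartesian space $Z/G$), the collection $\{X_h|_N\mid h\in\CIN(Z/G)\}$ contains all Hamiltonian vector fields of $N$. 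The non-degeneracy of $\omega_{(H)}$ then gives $\widehat{T}^{\mathcal{F}}_xN=T_xN$. Applying the Orbit Theorem to $\mathcal{F}$ yields a smooth immersed manifold structure on $O^{\mathcal{F}}_x$ of dimension $\dim T_xN$, and the inclusion into $N$ is a smooth injective map between connected manifolds of equal dimension, hence an open embedding by invariance of domain.

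Finally I close the argument by connectedness: since the orbits of $\mathcal{F}$ partition $N$ and each is open in $N$, each orbit is also closed in $N$, so $N$ being connected forces $O^{\mathcal{F}}_x=N$. Combined with the first step, this proves that the orbits of $\ham(Z/G)$ are precisely the orbit-type strata. The main obstacle I anticipate is the third paragraph: verifying that $\mathcal{F}|_N$ contains \emph{all} Hamiltonian vector fields of the stratum $N$, not merely those coming from globally defined Hamiltonians, which requires either an extension argument for smooth functions on the subcartesian space $Z/G$ (using partitions of unity and the local compatibility of $\CIN(Z/G)$ with the stratification) or the observation that the orbital tangent space $\widehat{T}^{\mathcal{F}}_xN$ is already spanned by $X_h|_x$ for any $h$ whose differential at $x$ realizes the required covector, which can be produced locally and then multiplied by a bump function in $\CIN(Z/G)$.
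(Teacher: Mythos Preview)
Your argument is correct and considerably more detailed than the paper's. The paper's proof is essentially a two-line citation to Sjamaar--Lerman: integral curves of Hamiltonian vector fields stay in the symplectic strata, and one can construct enough such vector fields that their orbits fill each stratum. Your first step matches this exactly. For the reverse inclusion you take a genuinely different route: rather than appealing to the explicit construction in \cite{lerman-sjamaar91}, you use the Orbit Theorem (\hypref{t:singfol}{Theorem}) together with a dimension comparison, showing $\widehat{T}^{\mathcal{F}}_x(Z/G)=i_*T_xN$ and then invoking invariance of domain and connectedness. This is more self-contained within the machinery the paper develops, and it illustrates nicely why the Orbit Theorem is the right tool here.

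Two small points. First, a typo: you write $\CIN(M/G)|_N$ where you mean $\CIN(Z/G)|_N$. Second, the claim $\CIN(Z/G)|_N=\CIN(N)$ is stronger than you need and not obviously true, since $N$ is only locally closed in $Z/G$ and \hypref{p:closedsubset}{Proposition} does not apply directly. However, as you correctly anticipate in your final paragraph, the weaker statement that $\{d(h|_N)|_x : h\in\CIN(Z/G)\}$ spans $T^*_xN$ suffices and follows immediately from the subcartesian structure: take a chart of $Z/G$ near $x$ into some $\RR^k$; the restrictions of the ambient coordinate functions lie in $\CIN(Z/G)$ and their differentials on $N$ span $T^*_xN$. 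Combined with the non-degeneracy of $\omega_{(H)}$, this gives $\widehat{T}^{\mathcal{F}}_x(Z/G)=i_*T_xN$, and the rest of your argument goes through.
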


\begin{proof}
Lerman and Sjamaar showed in \cite{lerman-sjamaar91} that the maximal integral curves of any Hamiltonian vector field on $Z/G$ is confined to a symplectic stratum.  Moreover, we can construct these vector fields so that their orbits are exactly the connected components of the orbit-type strata of $Z/G$.
\end{proof}

\begin{theorem}\labell{t:regvalue}
If $0\in\g^*$ is a regular value of the momentum map $\Phi$, then the orbits induced by $\ham(Z/G)$ are exactly the orbits induced by $\Vect(Z/G)$, which gives a minimal stratification.
\end{theorem}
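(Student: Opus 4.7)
The plan is to deduce the equality of partitions and minimality by reducing everything to results already established for compact Lie group actions on manifolds. The key observation is that regularity of $0$ makes $Z$ itself a manifold, so that $Z/G$ is simultaneously a symplectic quotient and an ordinary geometric quotient of a $G$-manifold, and both descriptions can be used at once. First I would record that since $0 \in \g^*$ is a regular value of $\Phi$, the zero set $Z = \Phi^{-1}(0)$ is a smooth $G$-invariant submanifold of $M$, so that $(Z,G)$ is a bona fide smooth action of a compact Lie group on a manifold. By Theorem \ref{t:CINZ} the subcartesian structure on $Z/G$ as a subspace of $M/G$ coincides with the quotient differential structure induced from $Z$, so $\Vect(Z/G)$ and the orbit-type stratification are unambiguous.

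Next I would apply Theorem \ref{t:stratvect} directly to the $G$-action on the manifold $Z$ (passing, if necessary, to each connected component of $Z$ and then taking unions of $G$-orbits to recover the full decomposition): this gives that the orbits in $Z/G$ induced by $\Vect(Z/G)$ are precisely the strata of the orbit-type stratification of $Z/G$, that is, the connected components of the sets $(Z/G)_{(H)}$. On the other hand, Proposition \ref{p:jorbital} states that the orbits of $\ham(Z/G)$ are also precisely these orbit-type strata. Hence the two families $\ham(Z/G)$ and $\Vect(Z/G)$ induce exactly the same partition of $Z/G$; this is consistent with the inclusion $\ham(Z/G) \subseteq \Vect(Z/G)$ and Theorem \ref{t:singfolorder}, but the nontrivial content is that no genuine refinement is produced by passing to all of $\Vect(Z/G)$.

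Finally, for minimality, I would invoke Bierstone's theorem (Theorem \ref{t:bierstone}) applied to the compact action of $G$ on the manifold $Z$: the orbit-type stratification of $Z/G$ is a minimal smooth decomposition. Therefore the common partition induced by $\ham(Z/G)$ and $\Vect(Z/G)$ is minimal, which is the final claim of the theorem.

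The main obstacle is bookkeeping rather than substance: Theorem \ref{t:stratvect} and Bierstone's theorem are formulated for connected manifolds, so if $Z$ has several connected components one should argue that orbit-type strata, $\Vect$-orbits and $\ham$-orbits all respect the component decomposition, and then apply each theorem componentwise before reassembling. The regular-value hypothesis is used in one essential place—turning $Z$ into a smooth manifold so that Theorem \ref{t:stratvect} and Bierstone's theorem apply; without it one only has a stratified subcartesian space and this reduction is unavailable, which is precisely the subject of the open question raised later in the chapter.
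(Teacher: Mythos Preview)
Your proposal is correct and follows essentially the same route as the paper: use the regular-value hypothesis to make $Z$ a $G$-manifold, apply Theorem~\ref{t:stratvect} to identify the $\Vect(Z/G)$-orbits with the orbit-type strata, and apply Proposition~\ref{p:jorbital} to identify the $\ham(Z/G)$-orbits with the same strata. The paper's proof is terser---it folds minimality into the invocation of Theorem~\ref{t:stratvect} (whose proof already uses Bierstone) rather than citing Theorem~\ref{t:bierstone} separately, and it does not pause on the connectedness bookkeeping---but the substance is identical.
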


\begin{proof}
Assume that $0\in\g^*$ is a regular value of $\Phi$. Then $Z$ is a $G$-manifold, and by \hypref{t:stratvect}{Theorem} the orbit-type stratification is minimal, and the strata are exactly the orbits induced by $\Vect(Z/G)$.  By \hypref{p:jorbital}{Proposition} the orbits of $\Vect(Z/G)$ and $\ham(Z/G)$ coincide.
\end{proof}

\begin{question}\labell{q:critvalue}
Does the above theorem hold in general?  That is, even if $0\in\g^*$ is a critical value of $\Phi$?
\end{question}

\section{Lie Algebras of Vector Fields}

Our goal for this section is to establish that for a locally compact subcartesian space $S$, $\Vect(S)$ is a Lie algebra under the commutator bracket.  For a subset $A\subseteq S$ we shall denote by $\mathfrak{n}(A)$ the set of functions $\{f\in\CIN(S)~|~f|_{A}=0\}.$  Recall that for a family $\mathcal{F}$ of vector fields on $S$ and $x\in S$, $\widehat{T}^\mathcal{F}_xS$ is the linear subspace of $T_xS$ spanned by all vectors $X|_x$ for $X\in\mathcal{F}$.

\begin{proposition}[Characterisation of Orbital Vectors]\labell{p:charTcheck}
Let $S$ be a subcartesian space and $\mathcal{F}$ a locally complete family of vector fields. Let $x\in S$ and $v\in T_xS$.  Then, $v\in\widehat{T}^\mathcal{F}_xS$ if and only if for every open neighbourhood $U\subseteq O^\mathcal{F}_x$ of $i^{-1}(x)$, where $i$ is the inclusion of $O^\mathcal{F}_x$ into $S$, we have $v(\mathfrak{n}(i(U)))=\{0\}$.
\end{proposition}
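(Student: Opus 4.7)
\emph{Forward direction.} Suppose $v\in\widehat{T}^\mathcal{F}_xS$, so that $v=\sum_{i=1}^k c_iX_i|_x$ with $X_i\in\mathcal{F}$. Fix an open neighbourhood $U$ of $i^{-1}(x)$ in $O^\mathcal{F}_x$ and any $f\in\mathfrak{n}(i(U))$. The idea is that each integral curve $t\mapsto\exp(tX_i)(x)$ not only lives in $S$, but by the Orbit Theorem (\hypref{t:singfol}{Theorem}) is actually a smooth curve in the manifold $O^\mathcal{F}_x$ with its (possibly finer) topology $\mathcal{T}$. Hence for all sufficiently small $|t|$ the curve lies in $U$, so $f(\exp(tX_i)(x))=0$. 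Differentiating at $t=0$ gives $X_i|_x(f)=0$ for each $i$, and linearity yields $v(f)=0$.

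\emph{Reverse direction (contrapositive).} Assume $v\notin\widehat{T}^\mathcal{F}_xS$; we produce $U$ and $f\in\mathfrak{n}(i(U))$ with $v(f)\neq0$. Pick a chart $\varphi:V\to\tilde V\subseteq\RR^n$ about $x$. Since $\varphi_*$ is a fibrewise linear injection on $TS|_V$, the pushforward $\tilde v:=\varphi_*v$ does not lie in the subspace $\varphi_*\widehat{T}^\mathcal{F}_xS$ of $T_{\varphi(x)}\RR^n$. By the Orbit Theorem, $i$ is smooth and $i_*$ is a fibrewise isomorphism onto $\widehat{T}^\mathcal{F}S|_{O^\mathcal{F}_x}$, so $\varphi\circ i$ is a smooth immersion at $i^{-1}(x)$. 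Shrinking $U$ enough that $i(U)\subseteq V$ and that $\varphi\circ i|_U$ is an embedding (using the local form of an immersion), the image $N:=\varphi(i(U))$ is an embedded submanifold of some open $W\subseteq\tilde V$, with $T_{\varphi(x)}N=\varphi_*\widehat{T}^\mathcal{F}_xS$.

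Because $\tilde v\notin T_{\varphi(x)}N$, a local defining function for $N$ in slice coordinates on $W$ produces a smooth function on $W$ that vanishes on $N$ and pairs nontrivially with $\tilde v$ under the differential at $\varphi(x)$. Using a smooth bump on $\RR^n$ supported in $W$ and equal to $1$ near $\varphi(x)$, we extend this to a smooth $\tilde f\in\CIN(\RR^n)$ that still vanishes on $N$ and satisfies $d\tilde f|_{\varphi(x)}(\tilde v)\neq0$.

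Finally, pick a bump $b\in\CIN(S)$ supported in $V$ with $b=1$ near $x$, and set $f:=b\cdot(\tilde f\circ\varphi)$, extended by zero outside $V$. Then $f\in\CIN(S)$; since $\tilde f$ vanishes on $\varphi(i(U))$ we have $f|_{i(U)}=0$, so $f\in\mathfrak{n}(i(U))$. By the Leibniz rule, $v(f)=b(x)v(\tilde f\circ\varphi)+(\tilde f\circ\varphi)(x)v(b)=\tilde v(\tilde f)\neq0$. This contradicts the hypothesis and completes the reverse direction. The main obstacle is the reverse direction, specifically ensuring that $N=\varphi(i(U))$ is genuinely an embedded submanifold of $\RR^n$ (so that one can construct a defining function vanishing along all of $N$), which is where the distinction between the immersion $i:O^\mathcal{F}_x\to S$ and an embedding becomes subtle and forces us to shrink $U$ using the local form of an immersion.
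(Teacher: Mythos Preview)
Your proposal is correct and follows essentially the same route as the paper: for the reverse direction, both arguments use a chart and the rank theorem to realise a small neighbourhood $U$ of $i^{-1}(x)$ as an embedded submanifold of $\RR^n$ via $\varphi\circ i$, and then detect a non-tangent vector with a defining function (you phrase this contrapositively, while the paper argues directly using \hypref{p:charTS}{Proposition}). For the forward direction the paper is slightly slicker, writing $v=i_*w$ from the Orbit Theorem and observing $v(f)=w(i^*f)=0$ since $i^*f$ vanishes on $U$, rather than tracking integral curves into $U$; both are valid.
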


\begin{proof}
Let $v\in\widehat{T}^\mathcal{F}_xS$.  Then by \hypref{t:singfol}{Theorem} $v=i_*w$ for some $w\in TO^\mathcal{F}_x$.  For any open neighbourhood $U$ of $i^{-1}(x)$ and for any $f\in\mathfrak{n}(i(U))$, $$vf=w(i^*f)=0.$$
Conversely, let $v\in T_xS$ and let $\varphi:V\to\tilde{V}\subseteq\RR^n$ be a chart about $x$. Then, $\varphi(V\cap i(O^\mathcal{F}_x))$ is a differential subspace of $\RR^n$, and in fact since $\varphi\circ i|_{i^{-1}(V)}$ is smooth with $d(\varphi\circ i|_{i^{-1}(V)})$ one-to-one (by \hypref{t:singfol}{Theorem}), we have that $\varphi\circ i|_{i^{-1}(V)}$ is an immersion.  Hence by the rank theorem there exists an open neighbourhood $U\subseteq i^{-1}(V)$ of $i^{-1}(x)$ such that $\tilde{U}:=\varphi\circ i(U)$ is an embedded submanifold of $\RR^n$.\\

Now, $v$ has a unique extension to a vector $\tilde{v}=\varphi_*v\in T_x\RR^n$.  Suppose $vf=0$ for all $f\in \mathfrak{n}(i(U))$.  Then for each such $f$, by \hypref{p:charTS}{Proposition}, $\tilde{v}\tilde{f}=0$ for any local representative $\tilde{f}$ of $f$. But then, also by \hypref{p:charTS}{Proposition}, we have that $\tilde{v}$ is the unique local extension of a vector $\tilde{w}\in T_{\varphi(x)}\tilde{U}$ since $\tilde{f}|_{\tilde{U}}=0$. Since $\tilde{U}$ is an embedded submanifold, there exists a unique $w\in T_{i^{-1}(x)}U$ such that $(\varphi\circ i)_*w=\tilde{w}$.  Identify $\tilde{w}$ with $\tilde{v}$.  By \hypref{t:singfol}{Theorem} and uniqueness, $i_*w=v$.  Thus, $v\in\widehat{T}^\mathcal{F}_xS$.\\

Since any open neighbourhood $W$ of $i^{-1}(x)$ contains a smaller open neighbourhood $U\subseteq i^{-1}(V)\cap W$ in which $\varphi\circ i(U)$ is an embedded submanifold of $\RR^n$, and also $\mathfrak{n}(i(W))\subseteq\mathfrak{n}(i(U))$, we can apply the above argument, obtaining our result.
\end{proof}

\begin{proposition}[Characterisation of Vector Fields]\labell{p:charvect}
Let $S$ be a locally compact subcartesian space.  A derivation $X\in\Der\CIN(S)$ is a vector field if and only if for every $x\in S$ and every open neighbourhood $U$ of $i^{-1}(x)$, $$X(\mathfrak{n}(i(U)))\subseteq\mathfrak{n}(i(U)).$$
\end{proposition}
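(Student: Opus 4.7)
The plan is to prove both implications separately, with \hypref{p:charTcheck}{Proposition} serving as the bridge between the ideal-theoretic condition and orbital tangency. For the forward direction I will exploit the fact that flows of vector fields remain in orbits; for the converse I will combine \hypref{p:charTcheck}{Proposition} with the rank theorem to embed a neighborhood of a point within its orbit as an honest submanifold of a Euclidean chart, and then transport ordinary ODE theory on that submanifold back to $S$.

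For $(\Rightarrow)$, assume $X$ is a vector field. Fix $x\in S$, an open neighborhood $U$ of $i^{-1}(x)$ in $O_x$, and $f\in\mathfrak{n}(i(U))$; I must show $(Xf)(y)=0$ for every $y\in i(U)$. Since $X\in\Vect(S)$, \hypref{t:singfol}{Theorem} applied with $\mathcal{F}=\Vect(S)$ ensures $\exp(tX)(y)\in O_y$ for all admissible $t$, so $X|_y\in\widehat{T}_yS$. As $y\in i(U)$ and $O_y=O_x$, the set $U$ is an open neighborhood of $i^{-1}(y)$ in $O_y$; then \hypref{p:charTcheck}{Proposition} yields $X|_y(f)=0$, giving $Xf\in\mathfrak{n}(i(U))$.

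For $(\Leftarrow)$, assume the ideal condition holds. Specializing to $x=y$ and letting $U$ range over all open neighborhoods of $y$ in $O_y$, the condition together with \hypref{p:charTcheck}{Proposition} immediately yields $X|_y\in\widehat{T}_yS$ for every $y\in S$. By \hypref{p:opendomain}{Proposition} and local compactness of $S$, it suffices to show that $I^X_z$ is open for each $z\in S$. Fix $z$, choose a chart $\varphi:V\to\tilde V\subseteq\RR^n$ about $z$, and let $\tilde X\in\Der\CIN(\RR^n)$ be a local representative of $X$ supplied by \hypref{p:charDer}{Proposition}; in particular $\tilde X$ is a smooth vector field on $\RR^n$ with its own open-domain flow $\tilde\phi_t$. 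By \hypref{t:singfol}{Theorem} applied to $\mathcal{F}=\Vect(S)$, the inclusion $i:O_z\to S$ is a smooth immersion; the rank theorem therefore furnishes, after shrinking $V$, an open neighborhood $U$ of $i^{-1}(z)$ in $O_z$ such that $\tilde U:=\varphi\circ i(U)$ is an embedded submanifold of $\tilde V$. The orbital tangency $X|_y\in\widehat{T}_yS$ for $y\in U$ pushes forward through $\varphi$ to tangency of $\tilde X$ to $\tilde U$ at each point of $\tilde U$, so $\tilde X|_{\tilde U}$ is a smooth vector field on the manifold $\tilde U$. Its manifold flow through $\varphi(z)$, included back into $\RR^n$, is also an integral curve of $\tilde X$; by uniqueness in $\RR^n$ it coincides with $\tilde\phi_t(\varphi(z))$, so this curve stays in $\tilde U\subseteq\tilde V$ on an open interval about $0$. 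Pulling back through $\varphi$ gives an integral curve of $X$ through $z$ defined on an open interval, so $I^X_z$ contains an open neighborhood of $0$; the semigroup property of $\exp(\cdot X)$ then propagates openness to every point of $I^X_z$.

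The main obstacle is this last step of the converse: controlling the Euclidean flow $\tilde\phi_t$ near $\varphi(z)$. Without the orbit condition, the tangency $\tilde X(\mathfrak{n}(\tilde V))\subseteq\mathfrak{n}(\tilde V)$ alone is insufficient to keep $\tilde\phi_t(\varphi(z))$ inside $\tilde V$ (as illustrated by $\tilde V=[0,\infty)$ and $\tilde X=\partial_x$), so a finer geometric trap is required. The remedy is precisely to pass through the orbit manifold: the rank theorem turns $O_z$ locally into an embedded submanifold $\tilde U$ of $\RR^n$, and the orbital tangency combined with smooth-manifold ODE uniqueness on $\tilde U$ cages $\tilde\phi_t(\varphi(z))$ inside $\tilde U$, hence inside $\tilde V$.
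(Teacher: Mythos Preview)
Your proof is correct and follows essentially the same route as the paper: both directions hinge on \hypref{p:charTcheck}{Proposition}, and for the converse both use the rank theorem to realise a neighbourhood in the orbit as an embedded submanifold on which ordinary ODE theory forces the integral curve to have open domain. The only cosmetic difference is that the paper argues by contradiction at a hypothetical endpoint of $I^X_x$, whereas you argue directly at every $z$ and invoke the semigroup law to propagate openness; these are two phrasings of the same idea.
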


\begin{proof}
Let $X$ be a vector field.  Then for any $x\in S$ and any open neighbourhood $U$ of $i^{-1}(x)$, $X|_{i(U)}$ is a vector field on $i(U)$.  By \hypref{p:charTcheck}{Proposition} for any $f\in\mathfrak{n}(i(U))$, $$(Xf)|_{i(U)}=0.$$
Conversely, let $X$ be a derivation of $\CIN(S)$ satisfying the property that for any open neighbourhood $U$ of $i^{-1}(x)$, $X(\mathfrak{n}(i(U)))\subseteq\mathfrak{n}(i(U))$ for all orbits $O_x$ with inclusion $i:O_x\to S$.  By \hypref{p:opendomain}{Proposition}, it is enough to show that each maximal integral curve of $X$ has an open domain.\\

Assume otherwise: there exists a maximal integral curve $\exp(tX)(x)$ through a point $x\in S$ with a closed or half-closed domain $I^X_x$.  If $X|_x=0$, then $\exp(tX)(x)$ is a constant map, and its maximal integral curve has $\RR$ as its domain, which is open. So assume $X|_x\neq0$.  Let $a\in I^X_x$ be an endpoint of $I^X_x$ and let $y:=\exp(aX)(x)$.  Then for any open neighbourhood $U\subseteq O_y$ of $i^{-1}(y)$, $$(Xf)|_{i(U)}=0$$ for all $f\in \mathfrak{n}(i(U))$.  In particular, $X|_zf=0$ for all $f\in \mathfrak{n}(i(U))$ and all $z\in i(U)$.  By \hypref{p:charTcheck}{Proposition}, $X|_z\in\widehat{T}_zS$ for all $z\in i(U)$.  Note that since $X|_x\neq0$, we have that $X|_y\neq0$, and so there exists an open neighbourhood $V\subseteq i(U)$  of $y$ such that $X|_z\neq 0$ for all $z\in V$.\\

Since $X|_V$ is a smooth section of $TV\subseteq TS$, by \hypref{t:singfol}{Theorem} we have constructed a vector field $Y\in\Vect(V)$ such that $Y|_z=X|_z$.  But note that by \hypref{p:intcurve}{Proposition} these integral curves locally are restrictions of integral curves in $\RR^n$, and so we can apply the ODE theorem, and obtain that since $X|_V=Y$, we have $\exp(tX)(y)=\exp(tY)(y)$ for $t$ in some domain $I_y$.  But, shrinking $V$ if necessary so that it is an embedded submanifold of $S$ (which exists by the rank theorem), since $Y$ is a vector field on the manifold $V$, $I_y$ is open and contains $0$, whereas since $\exp(tX)(y)=\exp((t+a)(X))(x)$, by assumption $I_y$ has $0\in I_y$ as an endpoint.  This is a contradiction.  Thus, $I^X_x$ does not contain any endpoints, and hence is open.
\end{proof}

\begin{corollary}[$\Vect(S)$ is a Lie Algebra]\labell{c:charvect}
Let $S$ be a locally compact subcartesian space.  Then $\Vect(S)$ is a Lie subalgebra of $\Der\CIN(S)$ and is a $\CIN(S)$-module.
\end{corollary}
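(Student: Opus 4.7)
The plan is to deduce the corollary directly from the characterisation of vector fields given by Proposition \ref{p:charvect}. For each $x \in S$, let $i \colon O_x \hookrightarrow S$ be the inclusion of the orbit of $\Vect(S)$ through $x$, and let $\mathcal{I}_U := \mathfrak{n}(i(U))$ for each open neighbourhood $U \subseteq O_x$ of $i^{-1}(x)$. By Proposition \ref{p:charvect}, a derivation $X \in \Der\CIN(S)$ lies in $\Vect(S)$ if and only if $X(\mathcal{I}_U) \subseteq \mathcal{I}_U$ for every such $x$ and $U$. So the strategy is to show that each of the operations in question preserves this stability condition.

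First, I would check that $\Vect(S)$ is an $\RR$-linear subspace: if $X,Y \in \Vect(S)$ and $a,b \in \RR$, then $(aX + bY)(\mathcal{I}_U) \subseteq a \cdot X(\mathcal{I}_U) + b \cdot Y(\mathcal{I}_U) \subseteq \mathcal{I}_U$, so $aX + bY \in \Vect(S)$ by Proposition \ref{p:charvect}. Next, for the $\CIN(S)$-module structure, given $g \in \CIN(S)$ and $X \in \Vect(S)$, note that $\mathcal{I}_U$ is an ideal of $\CIN(S)$, so for $f \in \mathcal{I}_U$ we have $(gX)f = g \cdot (Xf) \in g \cdot \mathcal{I}_U \subseteq \mathcal{I}_U$; thus $gX \in \Vect(S)$.

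For the Lie bracket, given $X,Y \in \Vect(S)$ and $f \in \mathcal{I}_U$, the vector-field property of $Y$ gives $Yf \in \mathcal{I}_U$, and then the vector-field property of $X$ gives $X(Yf) \in \mathcal{I}_U$; symmetrically $Y(Xf) \in \mathcal{I}_U$. Hence $[X,Y]f = X(Yf) - Y(Xf) \in \mathcal{I}_U$, and Proposition \ref{p:charvect} again yields $[X,Y] \in \Vect(S)$. Since $[\cdot,\cdot]$ on $\Der\CIN(S)$ already satisfies the Jacobi identity and antisymmetry, this makes $\Vect(S)$ into a Lie subalgebra.

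There is essentially no serious obstacle here once Proposition \ref{p:charvect} is in hand; the entire argument reduces to the observation that the set of derivations preserving a fixed collection of ideals is automatically an $\RR$-linear subspace, stable under multiplication by any ring element, and closed under the commutator bracket. The only point to emphasise in the write-up is that the orbits $O_x$ referenced in Proposition \ref{p:charvect} are those of the full family $\Vect(S)$, so the same collection of ideals $\{\mathcal{I}_U\}$ works uniformly for $X$, $Y$, $aX+bY$, $gX$, and $[X,Y]$ alike, which is exactly what makes the argument go through without any further analysis of flows.
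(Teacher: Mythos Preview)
Your proposal is correct and follows essentially the same approach as the paper: both apply \hypref{p:charvect}{Proposition} directly and verify that sums, $\CIN(S)$-multiples, and commutators of vector fields send each ideal $\mathfrak{n}(i(U))$ into itself. Your write-up is in fact slightly more careful than the paper's, in that you explicitly note that $\mathfrak{n}(i(U))$ is an ideal (needed for the $\CIN(S)$-module claim) and that the same collection of orbits is used throughout.
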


\begin{proof}
Let $x\in S$, $X,Y\in\Vect(S)$, $U\subseteq O_x$ any open neighbourhood of $i^{-1}(x)$ and $f\in\mathfrak{n}(i(U))$ and $g\in\CIN(S)$.  Applying \hypref{p:charvect}{Proposition}, we have $(X+Y)f|_{i(U)}=Xf|_{i(U)}+Yf|_{i(U)}=0$, $(gX)f|_{i(U)}=0$ and $[X,Y](f)|_{i(U)}=X(Yf)|_{i(U)}-Y(Xf)|_{i(U)}=0$.  Thus, $X+Y$, $gX$ and $[X,Y]$ are vector fields.
\end{proof}

\begin{remark}
By the above corollary, for any $x\in S$ and any $v\in\widehat{T}_xS$, there is a vector field $X$ such that $X|_x=v$.  In other words, we did not need to take the linear span in the definition of $\widehat{T}_xS$.
\end{remark}

We again return to the situation of a Hamiltonian $G$-action on $(M,\omega)$.  We have shown that $\Vect(Z)$ is a Lie algebra.  Denote by $\Vect(Z)^G$ the Lie subalgebra of $G$-invariant vector fields on $Z$.

\begin{proposition}[Invariant Local Extensions for $\Vect(Z)^G$]\labell{p:vectZext}
Let $X\in\Vect(Z)^G$ and let $x\in Z\subseteq M$.  Then there exist a $G$-invariant open neighbourhood $U\subseteq M$ of $x$ and $\tilde{X}\in\Vect(M)^G$ such that $$X|_{U\cap Z}=\tilde{X}|_{U\cap Z}.$$
\end{proposition}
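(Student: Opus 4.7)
The plan is to first produce a non-invariant local extension of $X$ to $M$, then average over $G$ using invariant bump functions and a normalisation to enforce $G$-invariance without destroying agreement with $X$ on $Z$. For the local extension, choose a chart $\psi : W \to \tilde{W} \subseteq \RR^m$ of $M$ about $x$; its restriction $\psi|_{W \cap Z}$ is a chart of $Z$ as a subcartesian space. By \hypref{p:charDer}{Proposition}, after shrinking $W$ to some open $V \ni x$ there is a derivation $Y' \in \Der\CIN(\RR^m)$ with $\psi_*(X|_{V \cap Z}) = Y'|_{\psi(V \cap Z)}$. Since $Y'$ is automatically a smooth vector field on $\RR^m$, pulling $Y'|_{\psi(V)}$ back via $\psi$ gives an honest $Y \in \Vect(V)$ agreeing with $X$ on $V \cap Z$; the key point is that $V$ is open in the manifold $M$, where Zariski derivations coincide with smooth vector fields and integrate to flows with open domains.

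To average, pick an open $V_1 \ni x$ with $\overline{V_1} \subseteq V$ and a bump function $b \in \CIN(M)$ with $b|_{V_1} \equiv 1$ and $\supp(b) \subseteq V$, so that $bY$ extends by zero to a smooth vector field on $M$. Set $c(p) := \int_G b(g \cdot p)\, dg$; this is smooth and $G$-invariant, and $c(x) > 0$. On the $G$-invariant open set $U := \{p \in M : c(p) > 0\}$, which contains $x$, define
\[
  \widehat{X}|_p \;:=\; \frac{1}{c(p)} \int_G g_*^{-1}\bigl(b(g \cdot p)\, Y|_{g \cdot p}\bigr)\, dg,
\]
a smooth $G$-invariant vector field on $U$. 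For $p \in U \cap Z$, the $G$-invariance of $Z$ and of $X$ force $g_*^{-1}\bigl(b(g\cdot p)\, Y|_{g\cdot p}\bigr) = b(g\cdot p)\, X|_p$ whenever the integrand is nonzero (since then $g\cdot p \in V \cap Z$, so $Y|_{g\cdot p} = X|_{g\cdot p} = g_* X|_p$), and the integral collapses to $\widehat{X}|_p = X|_p$.

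To land in $\Vect(M)^G$ rather than just $\Vect(U)^G$, pick a smaller $G$-invariant open $U' \ni x$ with $\overline{U'} \subseteq U$ together with a $G$-invariant bump function $b' \in \CIN(M)^G$ equal to $1$ on $U'$ and supported in $U$ (both obtained by $G$-saturation and averaging over the compact group); then $\tilde{X} := b'\widehat{X}$, extended by zero outside $U$, lies in $\Vect(M)^G$ and satisfies $\tilde{X}|_{U' \cap Z} = X|_{U' \cap Z}$, giving the conclusion with $U$ replaced by $U'$. The main obstacle is the normalisation: because $Y$ is only locally defined, the naive average $\int_G g^*(bY)\, dg$ evaluates to $c(p)\, X|_p$ on $Z$ rather than $X|_p$, so the division by $c(p)$ is essential, and one must check carefully that it preserves smoothness as well as the agreement of $\widehat{X}$ with $X$ throughout $U \cap Z$.
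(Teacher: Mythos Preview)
Your proof is correct and achieves the same conclusion as the paper, but via a slightly different averaging scheme. The paper first chooses $\tilde{X}_0\in\Vect(M)$ agreeing with $X$ on $V\cap Z$, then covers the compact orbit $G\cdot x$ by finitely many translates $g_i\cdot V$, takes a partition of unity $\{\zeta_i\}$ subordinate to this cover, and sets $\tilde{X}=\sum_i\zeta_i\,g_{i*}\tilde{X}_0$; because the $\zeta_i$ sum to $1$ near $G\cdot x$ and each $g_{i*}\tilde{X}_0$ agrees with $X$ on $(g_i\cdot V)\cap Z$, this $\tilde{X}$ equals $X$ on $W\cap Z$, and a final Haar average over $G$ makes it invariant on a $G$-saturated $U\subseteq W$.

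Your route bypasses the discrete covering by translates and the partition of unity: you bump $Y$ off once, average the single vector field $bY$ continuously over $G$, and absorb the failure of $\int_G b(g\cdot p)\,dg$ to equal $1$ into the normalising denominator $c(p)$. This is arguably more direct, since the weighting and the averaging happen in one integral rather than a two-step discrete-then-continuous procedure; the price is that you must pass to the invariant open set $\{c>0\}$ and then cut off a second time with an invariant bump to globalise to $\Vect(M)^G$. Both arguments rely on the same two ingredients --- the local extension from \hypref{p:charDer}{Proposition} and the $G$-invariance of $X$ on $Z$ to make the averaged object collapse to $X$ there --- so the difference is in bookkeeping rather than in the underlying idea.
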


\begin{proof}
There exist an open neighbourhood $V\subseteq M$ of $x$ and $\tilde{X}_0\in\Vect(M)$ such that $\tilde{X}_0|_{V\cap Z}=X|_{V\cap Z}$.  Let $g_0=e\in G$ and let $g_i$ be elements of $G$ for $i=1,...,k$ such that $G\cdot x\subseteq M$ is covered by open sets $g_i\cdot V$.  Let $\{\zeta_i\}$ be a partition of unity subordinate to this cover, and define $$\tilde{X}:=\sum_{i=0}^k\zeta_i g_{i*}\tilde{X}_0.$$  Then, letting $W:=\bigcup_{i=0}^kg_i\cdot V$, we have that for any $y\in W\cap Z$
\begin{align*}
\tilde{X}|_y=&\sum_{i=0}^k\zeta_i(y)g_{i*}(\tilde{X}_0|_{g_i^{-1}\cdot y})\\
=&\sum_{i=0}^k\zeta_i(y)g_{i*}(X|_{g_i^{-1}\cdot y})\\
=&\sum_{i=0}^k\zeta_i(y)X|_y\\
=&X|_y.
\end{align*}
Thus, $\tilde{X}\in\Vect(M)$ is a local extension of $X$ on $W\cap Z$.  Averaging $\tilde{X}$ and letting $U$ be a $G$-invariant open neighbourhood of $G\cdot x$ contained in $W$, we are done.
\end{proof}

\begin{proposition}[$\Vect(Z)^G$ is a Lie Algebra]\labell{p:vectZloccompl}
$\Vect(Z)^G$ is a locally complete Lie subalgebra of $\Vect(Z)$.
\end{proposition}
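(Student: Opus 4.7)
The plan is to first verify that $\Vect(Z)^G$ is a Lie subalgebra of $\Vect(Z)$, and then to establish local completeness by exhibiting, for given $X,Y\in\Vect(Z)^G$, an invariant vector field on $Z$ that agrees with $\exp(tX)_*Y$ near the relevant point.

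For the Lie subalgebra structure, $\RR$-linear combinations of $G$-invariant vector fields are manifestly $G$-invariant. For $X,Y\in\Vect(Z)^G$, the commutator $[X,Y]$ lies in $\Vect(Z)$ by \hypref{c:charvect}{Corollary}, and its invariance follows from $g_*[X,Y]=[g_*X,g_*Y]=[X,Y]$ for every $g\in G$. This makes $\Vect(Z)^G$ a Lie subalgebra.

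For local completeness, fix $X,Y\in\Vect(Z)^G$, $x\in Z$, and $t\in\RR$ such that $(\exp(tX)_*Y)|_x$ is defined. By continuity of the flow, the set $U\subseteq Z$ on which $\exp(tX)_*Y$ is well-defined is an open neighbourhood of $x$; since $X$ is $G$-invariant, its flow is $G$-equivariant, and so $U$ is $G$-invariant. The restriction $\exp(tX)_*Y|_U$ is a vector field on the subcartesian space $U$, being the pushforward of the vector field $Y$ by the local diffeomorphism $\exp(tX)$ between open subsets of $Z$. A short calculation, using the commutation $g\circ\exp(tX)=\exp(tX)\circ g$ (from $G$-invariance of $X$) together with the $G$-invariance of $Y$, shows that $\exp(tX)_*Y$ is $G$-invariant on $U$.

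The key step is to globalize this locally defined invariant vector field. I would choose a $G$-invariant bump function $b\in\CIN(Z)^G$ with support contained in $U$ and equal to $1$ on a $G$-invariant open neighbourhood $V$ of $x$; such a $b$ exists because subcartesian spaces admit smooth partitions of unity and averaging over the compact group $G$ preserves the required support and value conditions. Defining $W:=b\cdot(\exp(tX)_*Y)$ on $U$ and extending by zero on $Z\smallsetminus\supp(b)$ yields a $G$-invariant derivation of $\CIN(Z)$. On $U$ it is a vector field by \hypref{c:charvect}{Corollary} (since $\Vect(U)$ is a $\CIN(U)$-module), while at points outside $\supp(b)$ the local flow is constant. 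Since $Z$ is locally compact as a closed subset of $M$, \hypref{p:opendomain}{Proposition} then certifies $W\in\Vect(Z)^G$, and by construction $W|_V=\exp(tX)_*Y|_V$, giving local completeness.

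The main technical obstacle is precisely the extension-by-zero step: verifying that $W$ has open integral-curve domains at points on the boundary of $\supp(b)$, where the cutoff transitions from the pushforward to zero. The containment $\supp(b)\subseteq U$, combined with the fact that $bW$ genuinely defines a vector field on the whole open set $U$, should confine every integral curve either to $U$ (where standard ODE results apply) or to the open set $Z\smallsetminus\supp(b)$ (where the curve is constant), so that \hypref{p:opendomain}{Proposition} can be invoked cleanly.
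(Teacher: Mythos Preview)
Your proof is correct and follows the same overall strategy as the paper: establish $G$-invariance of $\exp(tX)_*Y$ via the $G$-equivariance of the flow of $X$, then cut off with a $G$-invariant bump function supported in the domain where the pushforward is defined.

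The one difference is how the ``technical obstacle'' you flag is handled. Rather than defining $W$ as $b\cdot(\exp(tX)_*Y)$ on $U$, extending by zero, and then arguing via \hypref{p:opendomain}{Proposition} and the open cover $\{U,\,Z\smallsetminus\supp(b)\}$ that the result is a vector field, the paper first invokes the local completeness of $\Vect(Z)$ itself (\hypref{p:vectloccompl}{Proposition}) to obtain a \emph{global} vector field $\Xi\in\Vect(Z)$ with $\Xi|_U=(\exp(tX)_*Y)|_U$. Then $b\Xi\in\Vect(Z)$ is immediate from the $\CIN(Z)$-module structure (\hypref{c:charvect}{Corollary}), and since $b$ has support in the $G$-invariant set $U$ where $\Xi$ coincides with the $G$-invariant pushforward, $b\Xi$ is $G$-invariant. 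No boundary analysis is needed. Your direct route works too---an integral curve starting in $U$ cannot reach $Z\smallsetminus U$ without first passing through $U\smallsetminus\supp(b)$, where $W$ vanishes and the curve becomes constant---but the paper's use of the ambient local completeness of $\Vect(Z)$ is the more economical device, and worth noting as a general trick.
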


\begin{proof}
Since diffeomorphisms commute with the commutator bracket, we have that $\Vect(Z)^G$ is a Lie subalgebra of $\Vect(Z)$.  For any two invariant vector fields $X$ and $Y$, we have for all $g\in G$ and $x\in Z$ $$g\cdot\exp(tX)(\exp(sY)(x))=\exp(tX)(\exp(sY)(g\cdot x))$$ for $s,t$ such that the composition of the curves is defined. Thus $\exp(tX)_*Y$ is locally defined about $G$-orbits.  Since $\Vect(Z)$ is locally complete, for any $x\in Z$ there exist a vector field $\Xi$ on $Z$ and an open neighbourhood $U$ of $x$ such that $\exp(tX)_*Y$ is defined on $U$ and $(\exp(tX)_*Y)|_U=\Xi|_U$.  Since $\exp(tX)_*Y$ is invariant about $x$, we can choose $U$ to be a $G$-invariant open neighbourhood.  Let $V\subset U$ be a $G$-invariant open neighbourhood of $x$ such that $\overline{V}\subset U$.  Let $b:M\to\RR$ be a $G$-invariant smooth bump function with support in $U$ and $b|_V=1$.  Then, $b\Xi\in\Vect(Z)^G$ extends $(\exp(tX)_*Y)|_V$ to a invariant vector field on $Z$.
\end{proof}

\begin{definition}
Let $\rho_Z:\g\to\Der\CIN(Z)$ be the $\g$-action induced by the action of $G$ on $Z$.  Note that by \hypref{p:opendomain}{Proposition}, $\rho_Z(\g)\subseteq\Vect(Z)$.  In fact, for any $\xi\in\g$, $\xi_Z:=\rho_Z(\xi)$ is just the restriction of $\xi_M$ to $Z$.
\end{definition}

\begin{proposition}[$\rho_Z(\g)$ is a Lie Algebra]\labell{p:rhoZloccompl}
$\rho_Z(\g)$ is a locally complete Lie subalgebra of $\Vect(Z)$.
\end{proposition}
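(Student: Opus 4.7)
The plan is to reduce this statement to the analogous result for $M$, namely \hypref{p:rhogloccompl}{Proposition}, by exploiting the fact that everything in sight is the restriction to the $G$-invariant submanifold $Z$ of the corresponding object on $M$. First, I would record that $\rho_Z$ is a Lie algebra homomorphism: since $\rho:\g\to\Vect(M)$ is a Lie algebra homomorphism and $\xi_Z=\xi_M|_Z$ for every $\xi\in\g$, the bracket on $\g$ is carried to the commutator bracket on $\Vect(Z)$ (which is a Lie bracket by \hypref{c:charvect}{Corollary}, since $Z$ is closed in $M$ and hence locally compact). In particular $\rho_Z(\g)$ is a Lie subalgebra of $\Vect(Z)$.

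Next I would identify the flow of $\xi_Z$. Because $Z$ is $G$-invariant, the map $t\mapsto\exp(t\xi)\cdot x$ stays inside $Z$ for all $x\in Z$ and all $t\in\RR$; since this curve is the maximal integral curve of $\xi_M$ through $x$ on $M$, it is also the maximal integral curve of $\xi_Z$ through $x$ on $Z$. Thus $\exp(t\xi_Z)(x)=\exp(t\xi)\cdot x$ for every $x\in Z$ and $t\in\RR$, and the local flow of $\xi_Z$ is simply the restriction of the local flow of $\xi_M$.

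For local completeness I would then compute $\exp(t\xi_Z)_{*}\zeta_Z$ for $\xi,\zeta\in\g$. Fix $x\in Z$ and $t\in\RR$. Applying the pushforward of the inclusion $i:Z\hookrightarrow M$ and using the identity $\exp(t\xi_M)_{*}\zeta_M=(\Ad_{\exp(t\xi)}\zeta)_M$ from the proof of \hypref{p:rhogloccompl}{Proposition}, we get
\begin{align*}
i_*\bigl(\exp(t\xi_Z)_{*}\zeta_Z|_x\bigr)&=\exp(t\xi_M)_{*}\bigl(i_*\zeta_Z|_x\bigr)\\
&=\exp(t\xi_M)_{*}\zeta_M|_x\\
&=(\Ad_{\exp(t\xi)}\zeta)_M|_x\\
&=i_*\bigl((\Ad_{\exp(t\xi)}\zeta)_Z|_x\bigr).
\end{align*}
Since $i_*$ is injective on tangent vectors to $Z$, this gives $\exp(t\xi_Z)_{*}\zeta_Z=(\Ad_{\exp(t\xi)}\zeta)_Z$ globally on $Z$, which lies in $\rho_Z(\g)$. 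This is stronger than local completeness, so we are done.

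There is essentially no obstacle here: the only small thing to be careful about is to check that the flow of $\xi_Z$ on the subcartesian space $Z$ really is the restriction of the flow of $\xi_M$, which follows from the uniqueness of maximal integral curves (\hypref{t:ode}{Theorem}) together with the invariance of $Z$; once this is in hand, everything else is transported from \hypref{p:rhogloccompl}{Proposition} verbatim.
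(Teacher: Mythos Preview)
Your proposal is correct and follows essentially the same approach as the paper: both arguments rest on the identity $\exp(t\xi_Z)_*\zeta_Z=(\Ad_{\exp(t\xi)}\zeta)_Z$, which lies in $\rho_Z(\g)$, together with the observation that $\rho_Z$ is a Lie algebra homomorphism. The paper states this identity in one line without further justification, whereas you spell out the reduction to $M$ via the inclusion $i$ and the identification of the flow of $\xi_Z$ with the restricted $G$-action; this extra care is reasonable since $Z$ is only a subcartesian space, but the underlying idea is identical.
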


\begin{proof}
Let $\xi,\zeta\in\g$, and let $\xi_Z=\rho_Z(\xi)$ and $\zeta_Z=\rho_Z(\zeta)$.  Then, $\exp(t\xi_Z)_*\zeta_Z=(\Ad_{\exp(t\xi)}\zeta)_Z$.  Thus $\rho_Z(\g)$ is locally complete, and since $\rho_Z$ is a Lie algebra homomorphism, its image is a Lie algebra.
\end{proof}

\begin{corollary}
$\rho_Z([\g,\g])$ and $\rho_Z(\mathfrak{z}(\g))$ are both locally complete Lie subalgebras of $\Vect(Z)$.
\end{corollary}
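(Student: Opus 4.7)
The plan is to reduce the corollary directly to Proposition \ref{p:rhoZloccompl} by exhibiting $[\g,\g]$ and $\mathfrak{z}(\g)$ as Lie algebras of compact Lie groups acting in a Hamiltonian fashion on $(M,\omega)$, so that the construction of $\rho_Z$ applies verbatim to each.

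First, I would recall that since $G$ is compact, its Lie algebra splits as a direct sum of Lie ideals $\g = [\g,\g] \oplus \mathfrak{z}(\g)$, and each summand is itself the Lie algebra of a compact Lie group: the derived algebra corresponds to a compact semisimple Lie subgroup $G_1 \leq G$, and the centre $\mathfrak{z}(\g)$ corresponds to a compact torus $G_2 \leq G$ (the identity component of the centre of $G$). Both $G_1$ and $G_2$ inherit Hamiltonian actions on $(M,\omega)$ by restriction of the $G$-action, with momentum maps obtained by composing $\Phi$ with the projections $\g^\ast \to \g_1^\ast$ and $\g^\ast \to \mathfrak{z}(\g)^\ast$. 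Because $Z = \Phi^{-1}(0)$ is $G$-invariant, it is in particular $G_1$- and $G_2$-invariant, so the restrictions of the actions to $Z$ make sense and $\rho_Z$ restricts to the corresponding infinitesimal actions $\rho_Z|_{[\g,\g]}$ and $\rho_Z|_{\mathfrak{z}(\g)}$.

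Next, applying Proposition \ref{p:rhoZloccompl} with $G$ replaced by $G_1$ (respectively $G_2$) yields that the image of the infinitesimal action of $[\g,\g]$ (respectively $\mathfrak{z}(\g)$) in $\Vect(Z)$ is a locally complete Lie subalgebra of $\Vect(Z)$. Since these images are precisely $\rho_Z([\g,\g])$ and $\rho_Z(\mathfrak{z}(\g))$, the conclusion follows.

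There is essentially no obstacle here beyond verifying that the hypotheses of Proposition \ref{p:rhoZloccompl} genuinely transfer to the subgroups $G_1$ and $G_2$; the main (very mild) point is noting compactness of these subgroups and the fact that the zero set of the original momentum map $\Phi$ is invariant under all of $G$, hence under any Lie subgroup. This is strictly parallel to the argument given for Corollary \ref{c:rhogloccompl}, so the proof can simply read: ``Since $[\g,\g]$ and $\mathfrak{z}(\g)$ are themselves Lie algebras corresponding to compact Lie groups acting on $M$ in a Hamiltonian fashion (with $Z$ invariant under these subgroups), the result follows from \hypref{p:rhoZloccompl}{Proposition}.''
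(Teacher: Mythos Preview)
Your proposal is correct and follows essentially the same approach as the paper: the paper's proof simply says ``This is immediate from the above lemma,'' which, as in the parallel \hypref{c:rhogloccompl}{Corollary}, amounts to observing that $[\g,\g]$ and $\mathfrak{z}(\g)$ are Lie algebras of compact Lie subgroups whose actions restrict to $Z$, so \hypref{p:rhoZloccompl}{Proposition} applies to each. Your elaboration about the Hamiltonian structure and the momentum maps for the subgroups is more than is strictly needed (the proof of \hypref{p:rhoZloccompl}{Proposition} only uses invariance of $Z$ and the adjoint formula), but it does no harm.
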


\begin{proof}
This is immediate from the above lemma.
\end{proof}

\begin{definition}\labell{d:AZ}
Define $\mathcal{A}_Z$ to be the smallest Lie subalgebra of $\Vect(Z)$ that contains both $\rho_Z(\g)$ and $\Vect(Z)^G$.
\end{definition}

\begin{proposition}\labell{p:AZ}
$\mathcal{A}_Z$ is locally complete and is equal to the direct sum of Lie subalgebras $$\mathcal{A}_Z=\rho_Z([\g,\g])\oplus\Vect(Z)^G.$$
\end{proposition}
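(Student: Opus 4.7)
The plan is to mimic the proof of \hypref{p:A}{Proposition} line by line, since the algebraic structure of the argument does not depend on whether we are working on $M$ or on its closed invariant subset $Z$. First, I would establish the commutation identity $[\xi_Z, X] = 0$ for every $\xi \in \g$ and $X \in \Vect(Z)^G$: since $\exp(t\xi_Z)$ on $Z$ coincides with the restriction of the diffeomorphism $\exp(t\xi) \in G$, the $G$-invariance of $X$ gives $\exp(t\xi_Z)_*(X|_{\exp(-t\xi_Z)(x)}) = X|_x$, and differentiating at $t=0$ yields the bracket vanishes. This in turn implies the flow identity $\exp(t\xi_Z) \circ \exp(sX) = \exp(sX) \circ \exp(t\xi_Z)$ wherever both sides are defined.

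Next, to obtain the direct sum decomposition, I would identify the intersection $\rho_Z(\g) \cap \Vect(Z)^G$. Suppose $\xi_Z$ is $G$-invariant, i.e. $g \cdot \exp(t\xi_Z)(x) = \exp(t\xi_Z)(g \cdot x)$ for every $g \in G$ and $x \in Z$. By uniqueness of integral curves this forces $(g \exp(t\xi)) \cdot x = (\exp(t\xi) g) \cdot x$ for all $x \in Z$. Provided the $G$-action on $Z$ has the property that this pointwise equality implies $g\exp(t\xi) = \exp(t\xi) g$ in $G$ (which holds because $Z$ meets enough orbits, in particular because $M$ is connected and the action on $M$ is effective on a dense open subset; one can reduce to the ambient $M$ by extending and noting that $\xi_M$ is also invariant), we conclude $\xi \in \mathfrak{z}(\g)$. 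Hence $\rho_Z(\g) \cap \Vect(Z)^G = \rho_Z(\mathfrak{z}(\g))$, and the decomposition $\mathcal{A}_Z = \rho_Z([\g,\g]) \oplus \Vect(Z)^G$ follows from $\rho_Z(\g) = \rho_Z([\g,\g]) \oplus \rho_Z(\mathfrak{z}(\g))$ and the fact that $\rho_Z$ is a Lie algebra homomorphism.

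For local completeness, I would appeal to \hypref{p:vectZloccompl}{Proposition} and \hypref{p:rhoZloccompl}{Proposition}, which already handle the two summands, so it suffices to verify that mixed pushforwards remain in $\mathcal{A}_Z$. For $\xi \in \g$ and $X \in \Vect(Z)^G$, the identity $\exp(t\xi_Z)_* X = X$ is immediate from $G$-invariance of $X$. For the reverse direction, using the commuting flow identity established in the first step,
\begin{align*}
\exp(tX)_*(\xi_Z|_x) &= \tfrac{d}{ds}\big|_{s=0} \exp(tX)(\exp(s\xi_Z)(x)) \\
&= \tfrac{d}{ds}\big|_{s=0} \exp(s\xi_Z)(\exp(tX)(x)) = \xi_Z|_{\exp(tX)(x)},
\end{align*}
so $\exp(tX)_*\xi_Z = \xi_Z \in \mathcal{A}_Z$.

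The main obstacle I anticipate is the step asserting $\rho_Z(\g) \cap \Vect(Z)^G = \rho_Z(\mathfrak{z}(\g))$: on the ambient manifold $M$ the argument from \hypref{p:A}{Proposition} used that pointwise equality of group actions forces equality in $G$, which is clean because $M$ is a manifold. On $Z$ this step needs justification, since $Z$ may be a singular variety and one needs to rule out that a noncentral $\xi$ accidentally acts centrally on $Z$. The cleanest resolution is to observe that $\xi_Z$ is the restriction of $\xi_M$ to the $G$-invariant subset $Z$, extend any hypothetical invariance upstairs (via \hypref{p:vectZext}{Proposition} applied in reverse, or simply via the ambient formula $\xi_Z = \xi_M|_Z$), and invoke the corresponding fact on $M$ proved in \hypref{p:A}{Proposition}.
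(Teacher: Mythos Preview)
Your proposal is correct and follows essentially the same route as the paper's proof. One minor difference worth noting: for the vanishing of $[\xi_Z,X]$, the paper takes a slightly more roundabout path---it invokes \hypref{p:vectZext}{Proposition} to locally extend $X\in\Vect(Z)^G$ to some $\tilde{X}\in\Vect(M)^G$ on a $G$-invariant neighbourhood $U\subseteq M$, and then appeals to \hypref{p:A}{Proposition} on $M$ to conclude $[\xi_Z,X]|_{U\cap Z}=[\xi_M,\tilde{X}]|_{U\cap Z}=0$---whereas your direct argument from $G$-invariance of $X$ under the flow $\exp(t\xi_Z)$ is cleaner and avoids the extension lemma entirely. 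Regarding the obstacle you flag in the intersection step, the paper's proof makes exactly the same leap you worry about: it asserts without further comment that $(g\exp(t\xi))\cdot x=(\exp(t\xi)g)\cdot x$ for all $x\in Z$ and all $g\in G$ forces $\exp(t\xi)$ to lie in the centre of $G$. So your concern is legitimate, but the paper does not address it any more carefully than you do; your suggested fix via $\xi_Z=\xi_M|_Z$ and reduction to \hypref{p:A}{Proposition} is a reasonable patch.
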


\begin{proof}
By \hypref{p:vectZext}{Proposition}, for any $X\in\Vect(Z)^G$ and for any $x\in Z$, there exist a $G$-invariant open neighbourhood $U\subseteq M$ of $x$ and $\tilde{X}\in\Vect(M)^G$ such that $$X|_{U\cap Z}=\tilde{X}|_{U\cap Z}.$$  Hence, $$[\xi_Z,X]|_{U\cap Z}=[\xi_M,\tilde{X}]|_{U\cap Z}=0$$ by \hypref{p:A}{Proposition}.  Thus, applying \hypref{p:intcurve}{Proposition} and \hypref{e:flowscommute}{Equation} in the proof of \hypref{p:A}{Proposition}, we have that
\begin{equation}\labell{e:flowscommute2}
\exp(t\xi_Z)\circ\exp(sX)=\exp(sX)\circ\exp(t\xi_Z).
\end{equation}

Now, let $\xi\in\g$ and assume for all $g\in G$ and $x\in Z$, we have $$g_*(\xi_Z|_x)=\xi_Z|_{g\cdot x}.$$ Then, $$\frac{d}{dt}\Big|_{t=0}(g\cdot\exp(t\xi_Z)(x))=\frac{d}{dt}\Big|_{t=0}\exp(t\xi_Z)(g\cdot x).$$  The uniqueness property of $\exp$ implies that $$g\cdot\exp(t\xi_Z)(x)=\exp(t\xi_Z)(g\cdot x).$$  Hence  $(g\exp(t\xi))\cdot x=(\exp(t\xi)g)\cdot x$.  Since this is true for all $g\in G$, $\exp(t\xi)$ must be in the centre of $G$, and hence $\xi\in\mathfrak{z}(\g)$.  Thus, $$\rho_Z(\g)\cap\Vect(Z)^G=\rho_Z(\mathfrak{z}(\g)).$$  Since $\rho_Z$ is a Lie algebra homomorphism, from \hypref{e:flowscommute2}{Equation}: $\rho_Z(\g)=\rho_Z([\g,\g])\oplus\rho_Z(\mathfrak{z}(\g))$, and we obtain the direct sum structure of $\mathcal{A}_Z$.\\

To show local completeness, by \hypref{p:vectZloccompl}{Proposition} and \hypref{p:rhoZloccompl}{Proposition} it suffices to show that for any $\xi\in\g$ and $X\in\Vect(Z)^G$, $\exp(t\xi_Z)_*X\in\mathcal{A}_Z$ and $\exp(tX)_*\xi_Z\in\mathcal{A}_Z$.  The former is immediate since $X$ is invariant.  The latter follows from \hypref{e:flowscommute2}{Equation}:
\begin{align*}
\exp(tX)_*(\xi_Z|_x)=&\frac{d}{ds}\Big|_{s=0}\exp(tX)(\exp(s\xi_Z)(x))\\
=&\frac{d}{ds}\Big|_{s=0}\exp(s\xi_Z)(\exp(tX)(x))\\
=&\xi_Z|_{\exp(tX)(x)}.
\end{align*}
\end{proof}

\begin{proposition}[$\ham(Z/G)$ is a Lie Algebra]
$\ham(Z/G)$ is a locally complete Lie subalgebra of $\Vect(Z/G)$.
\end{proposition}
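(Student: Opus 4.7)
The plan is to verify the three required properties in turn: that $\ham(Z/G)$ is closed under $\RR$-linear combinations, that it is closed under the commutator bracket, and finally that it is locally complete inside $\Vect(Z/G)$.

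First I would handle the vector space structure: bilinearity of the Poisson bracket yields immediately that $aX_f + bX_g = X_{af+bg}$ as derivations of $\CIN(Z/G)$, and the lemma preceding the proposition tells us this derivation is in fact a vector field, so $\ham(Z/G)$ is a linear subspace of $\Vect(Z/G)$. Next, to show closure under the commutator, I would use the Jacobi identity for $\{\cdot,\cdot\}_{Z/G}$: for any $h\in\CIN(Z/G)$,
\begin{align*}
[X_f,X_g]h &= X_f\{g,h\}_{Z/G}-X_g\{f,h\}_{Z/G}\\
&= \{f,\{g,h\}_{Z/G}\}_{Z/G}-\{g,\{f,h\}_{Z/G}\}_{Z/G}\\
&= \{\{f,g\}_{Z/G},h\}_{Z/G} = X_{\{f,g\}_{Z/G}}h.
\end{align*}
Since both $[X_f,X_g]$ and $X_{\{f,g\}_{Z/G}}$ are vector fields on $Z/G$ and agree as derivations, they agree as vector fields, giving closure under the bracket.

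For local completeness, fix $X_f,X_g\in\ham(Z/G)$, $x\in Z/G$ and $t\in\RR$ such that $(\exp(tX_f)_*X_g)|_x$ is defined. Set $\phi_t=\exp(tX_f)$; by the definition of a vector field, $\phi_t$ restricts to a diffeomorphism from an open neighbourhood $V$ of $\phi_{-t}(x)$ onto an open neighbourhood $U$ of $x$. Define the local function $\tilde g := g\circ\phi_{-t}\in\CIN(U)$. I would then claim that
\[
\phi_{t*}X_g\big|_U \;=\; X_{\tilde g}\big|_U,
\]
where the right side makes sense because $\{\cdot,\cdot\}_{Z/G}$ is a local operation. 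To prove this, I would reduce to each orbit-type stratum separately: \hypref{t:poth}{Theorem} (point 2 of the stratum version applied on $Z/G$) tells us $\phi_t$ restricted to any stratum is a symplectomorphism of $((Z/G)_{(H)}\cap V,\omega_{(H)})$ with $((Z/G)_{(H)}\cap U,\omega_{(H)})$, so on this stratum the standard manifold identity $\phi_{t*}X_g = X_{g\circ\phi_{-t}}$ holds. Since $\ham(Z/G)$-vector fields preserve the stratification and both sides of the claim agree pointwise on each stratum of $U$, they agree as vector fields on $U$. Finally, to globalise, I would choose an open neighbourhood $W$ of $x$ with $\overline W\subset U$ and a bump function $b\in\CIN(Z/G)$ with $b\equiv 1$ on $W$ and $\supp(b)\subset U$; setting $h:=b\tilde g$ extended by zero gives $h\in\CIN(Z/G)$, and on $W$ we have $h=\tilde g$, hence $X_h|_W = X_{\tilde g}|_W = (\phi_{t*}X_g)|_W$, which exhibits the required locally defined element of $\ham(Z/G)$.

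The main obstacle I anticipate is the identity $\phi_{t*}X_g=X_{\tilde g}$ on $U$, because the flow $\phi_t$ is only well-behaved stratum-by-stratum. The reduction to each stratum via \hypref{t:poth}{Theorem}, combined with the fact that two vector fields on a subcartesian space coincide iff they coincide pointwise and that orbit-type strata partition $Z/G$, is what resolves this obstacle; the bump-function step at the end is routine once the local identity is established.
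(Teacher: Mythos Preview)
Your proof is correct and follows the same overall architecture as the paper's: linearity via bilinearity of the bracket, closure under $[\cdot,\cdot]$ via Jacobi, and local completeness via the identity $\exp(tX_f)_*X_g = X_{\exp(-tX_f)^*g}$. The only real difference lies in how that last identity is established. You reduce to each orbit-type stratum, invoke that $\phi_t$ restricts to a symplectomorphism there, and then patch the strata back together; the paper instead carries out a single four-line Poisson-algebraic computation, using that $\exp(-tX_f)^*$ intertwines the Poisson bracket on $\CIN(Z/G)$, namely
\[
\{\exp(-tX_f)^*g,\,h\}_{Z/G}=\exp(-tX_f)^*\{g,\,\exp(tX_f)^*h\}_{Z/G},
\]
from which $X_{\exp(-tX_f)^*g}=\exp(tX_f)_*X_g$ drops out directly. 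Your route makes the underlying symplectic geometry explicit and is arguably more transparent about why the identity holds; the paper's route is shorter but tacitly relies on the same stratum-wise fact (or on the Jacobi identity plus differentiation) to justify the Poisson invariance. You also include the bump-function globalisation to produce a genuine element of $\ham(Z/G)$ agreeing with $\exp(tX_f)_*X_g$ near $x$; the paper omits this step, so on that point your argument is actually more complete.
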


\begin{proof}
For any $f,g,h\in\CIN(Z/G)$ and $a,b\in\RR$, $\{af+bg,h\}_{Z/G}=a\{f,h\}_{Z/G}+b\{g,h\}_{Z/G}$, and so $aX_f+bX_g=X_{af+bg}$.  Thus $\ham(Z/G)$ is a real vector space.  Next, the Jacobi identity for the Poisson bracket gives $$\{\{f,g\}_{Z/G},h\}_{Z/G}=-\{g,\{f,h\}_{Z/G}\}_{Z/G}+\{f,\{g,h\}_{Z/G}\}_{Z/G}.$$ This translates to $$X_{\{f,g\}_{Z/G}}h=X_fX_gh-X_gX_fh=[X_f,X_g]h.$$

To show local completeness, fix $f,g\in\CIN(Z/G)$ and let $X_f$ and $X_g$ be their corresponding Hamiltonian vector fields.  For sufficiently small $t$, we want to show that $\exp(tX_f)_*X_g$ is a Hamiltonian vector field.  Consider $X_{\exp(-tX_f)^*g}$.  For any $h\in\CIN(Z/G)$, we have
\begin{align*}
X_{\exp(-tX_f)^*g}h=&\pois{\exp(-tX_f)^*g}{h}_{Z/G}\\
=&\exp(-tX_f)^*\pois{g}{\exp(tX_f)^*h}\\
=&\exp(-tX_f)^*(X_g(\exp(tX_f)^*h))\\
=&(\exp(tX_f)_*X_g)(h).
\end{align*}
This completes the proof.
\end{proof}

\section{Orbital Maps}

In general, a smooth map between subcartesian spaces does not lift to a map between the corresponding orbital tangent bundles.  This is illustrated in the following example.

\begin{example}
Let $S=\{(x,y)\in\RR^2~|~xy=0\}$, and let $\gamma:\RR\to S$ be a curve passing through $(0,0)\in S$ at time $t=0$ such that $$u:=\frac{d}{dt}\Big|_{t=0}\gamma(t)\neq0.$$  Then $u\notin\widehat{T}_{(0,0)}S$ since $\widehat{T}_{(0,0)}S=\{0\}$, but $\frac{d}{dt}\Big|_{t=0}\in\widehat{T}_0\RR=T_0\RR$.
\end{example}

To remedy this lack of the functoriality of $\widehat{T}$, we introduce a special kind of smooth map.

\begin{definition}[Orbital Maps]
Let $R$ and $S$ be subcartesian spaces and let $F:R\to S$ be a smooth map between them.  Let $\mathcal{F}$ and $\mathcal{G}$ be families of vector fields on $R$ and $S$, respectively.  $F$ is \emph{orbital} with respect to $\mathcal{F}$ and $\mathcal{G}$ if for any $x\in R$, $F(O^\mathcal{F}_x)\subseteq O^\mathcal{G}_{F(x)}$.  That is, for any $X\in\mathcal{F}$, $x\in R$, and $t\in I_x^X$, there exist $Y_1,...,Y_k\in\mathcal{G}$ and $t_1,...,t_k\in\RR$ such that
\begin{equation*}
F(\exp(tX)(x))=\exp(t_kY_k)\circ...\circ\exp(t_1Y_1)(F(x)).
\end{equation*}
If $\mathcal{F}=\Vect(R)$ and $\mathcal{G}=\Vect(S)$, then we simply call $F$ \emph{orbital}.
\end{definition}

\begin{proposition}[Charts, Smooth Functions, Diffeomorphisms]\labell{p:orb}
Charts, real-valued smooth functions, and diffeomorphisms between subcartesian spaces are orbital.
\end{proposition}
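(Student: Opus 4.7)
The plan is to handle the diffeomorphism case first and then deduce the chart and real-valued smooth function cases from it. For a diffeomorphism $F: R \to S$, the pullback $F^*: \CIN(S) \to \CIN(R)$ is an $\RR$-algebra isomorphism, so the formula $(F_*X)(g) := X(F^*g) \circ F^{-1}$ defines an $\RR$-linear, bracket-preserving bijection $\Der\CIN(R) \to \Der\CIN(S)$.

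The central claim to establish is the flow intertwining identity
\[
F(\exp(tX)(x)) = \exp(tF_*X)(F(x))
\]
for every $X \in \Vect(R)$ and $x \in R$, valid on the common domain of definition. I will check it by differentiating the left-hand side: for any $g \in \CIN(S)$,
\[
\frac{d}{dt} g(F(\exp(tX)(x))) = X(F^*g)(\exp(tX)(x)) = (F_*X)(g)(F(\exp(tX)(x))),
\]
so $t \mapsto F(\exp(tX)(x))$ is an integral curve of $F_*X$ starting at $F(x)$, and by the uniqueness statement of \hypref{t:ode}{Theorem} it agrees with $\exp(\cdot F_*X)(F(x))$ on $I_x^X$. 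Applying the same argument to $F^{-1}$ shows that the maximal domains $I_x^X$ and $I_{F(x)}^{F_*X}$ coincide as subsets of $\RR$, so $A^{F_*X} = (\id_\RR \times F)(A^X)$; since $\id_\RR \times F$ is a homeomorphism, openness of $A^X$ transfers, and $F_*X$ is a vector field. The intertwining identity then immediately gives $F(O^{\Vect(R)}_x) \subseteq O^{\Vect(S)}_{F(x)}$ by iterating: an arbitrary point $\exp(t_kX_k)\circ\cdots\circ\exp(t_1X_1)(x)$ of $O^{\Vect(R)}_x$ is sent by $F$ to $\exp(t_kF_*X_k)\circ\cdots\circ\exp(t_1F_*X_1)(F(x))$, which lies in $O^{\Vect(S)}_{F(x)}$. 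Hence $F$ is orbital.

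A chart $\varphi: U \to \tilde{U} \subseteq \RR^n$ is by definition a diffeomorphism of subcartesian spaces between the open subset $U$ of $S$ and the differential subspace $\tilde{U}$ of $\RR^n$, so its orbitality is immediate from the diffeomorphism case. For a real-valued smooth function $f: S \to \RR$, the observation is that $\partial_x \in \Vect(\RR)$ has flow $\exp(t\partial_x)(y) = y + t$, which acts transitively on $\RR$; hence $\RR$ is a single orbit of $\Vect(\RR)$, and the inclusion $f(O^{\Vect(S)}_x) \subseteq \RR = O^{\Vect(\RR)}_{f(x)}$ holds trivially. The only subtle point in the whole argument is verifying that $F_*X$ is genuinely a vector field rather than merely a derivation; this is precisely where the homeomorphism property of the diffeomorphism $F$ is used through the identification $A^{F_*X} = (\id_\RR \times F)(A^X)$, circumventing any need to appeal to local compactness or to \hypref{p:opendomain}{Proposition}.
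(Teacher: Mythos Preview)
Your proof is correct and follows essentially the same approach as the paper: the flow intertwining identity $F(\exp(tX)(x)) = \exp(tF_*X)(F(x))$ for diffeomorphisms, and the single-orbit structure of $\RR$ for real-valued smooth functions. You are more thorough than the paper in verifying that $F_*X$ is actually a vector field (via $A^{F_*X} = (\id_\RR \times F)(A^X)$), and you deduce the chart case from the diffeomorphism case rather than from the single-orbit property of $\RR^k$ as the paper does, but these are minor variations on the same argument.
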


\begin{proof}
Since $\RR^k$ only has one orbit for each $k\geq0$, charts and smooth functions are trivially orbital.  Since a diffeomorphism $F:R\to S$ induces an isomorphism of Lie algebras $F_*:\Der\CIN(R)\to\Der\CIN(S)$, and hence $F(\exp(tX)(x))=\exp(tF_*X)(F(x))$ for all $X\in\Vect(R)$ and $x\in R$, we are done.
\end{proof}

\begin{proposition}[Orbital Pushforwards]\labell{p:Torbital}
Let $R$ and $S$ be subcartesian spaces, and let $F$ be an orbital map between them with respect to locally complete families of vector fields $\mathcal{F}$ on $R$ and $\mathcal{G}$ on $S$.  Then the restriction of the pushforward $F_*$ to $\widehat{T}^\mathcal{F}R$ has image in $\widehat{T}^\mathcal{G}S$.
\end{proposition}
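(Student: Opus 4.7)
The plan is to reduce, via the characterisation of $\widehat{T}^\mathcal{G}S$ furnished by \hypref{p:charTcheck}{Proposition}, to a question about the orbit topologies, and then exploit the smooth manifold structure on orbits from \hypref{t:singfol}{Theorem}. Since $\widehat{T}^\mathcal{G}_{F(x)}S$ is a linear subspace and $\widehat{T}^\mathcal{F}_xR$ is spanned by vectors of the form $X|_x$ with $X\in\mathcal{F}$, it suffices by linearity to treat the case $v=X|_x$ for a single $X\in\mathcal{F}$. Write $w:=F_*(X|_x)\in T_{F(x)}S$. By \hypref{p:charTcheck}{Proposition} applied on $S$, to show $w\in\widehat{T}^\mathcal{G}_{F(x)}S$ it suffices to verify that $w$ annihilates $\mathfrak{n}(j(V))$ for every open neighbourhood $V$ of $j^{-1}(F(x))$ in the orbit $O^\mathcal{G}_{F(x)}$, where $j:O^\mathcal{G}_{F(x)}\to S$ denotes the inclusion.

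Fix such a $V$ and $f\in\mathfrak{n}(j(V))$. Unwinding the pushforward gives $w(f)=X|_x(f\circ F)$. Orbitality of $F$ ensures that $F(i(O^\mathcal{F}_x))\subseteq j(O^\mathcal{G}_{F(x)})$, where $i:O^\mathcal{F}_x\to R$ is the inclusion, so that the set-theoretic map
\[
\tilde{F}:=j^{-1}\circ F\circ i:O^\mathcal{F}_x\longrightarrow O^\mathcal{G}_{F(x)}
\]
is well defined. If we can exhibit an open neighbourhood $U$ of $i^{-1}(x)$ in $O^\mathcal{F}_x$ (in the orbit topology) with $\tilde{F}(U)\subseteq V$, then for every $y\in i(U)$ we have $F(y)=j(\tilde{F}(i^{-1}(y)))\in j(V)$, and hence $(f\circ F)|_{i(U)}=0$; i.e.\ $f\circ F\in\mathfrak{n}(i(U))$. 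A second application of \hypref{p:charTcheck}{Proposition}, now on $R$, yields $X|_x(f\circ F)=0$, finishing the argument.

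The remaining task — producing the neighbourhood $U$ — is equivalent to showing that $\tilde{F}$ is continuous at $i^{-1}(x)$ with respect to the orbit manifold topologies of \hypref{t:singfol}{Theorem}. This is the main obstacle, because the orbit topology on $O^\mathcal{G}_{F(x)}$ is in general strictly finer than the subspace topology it inherits from $S$, so smoothness of $F\circ i$ as a map into $S$ does not suffice. My strategy is to use the local flow parametrisations afforded by \hypref{t:singfol}{Theorem}: pick $Z_1,\ldots,Z_n\in\mathcal{F}$ so that $\{Z_k|_x\}$ is a basis of $\widehat{T}^\mathcal{F}_xR\cong T_{i^{-1}(x)}O^\mathcal{F}_x$, so that the map
\[
\sigma_R(t_1,\ldots,t_n):=\exp(t_1Z_1)\circ\cdots\circ\exp(t_nZ_n)(x)
\]
is a local diffeomorphism from a neighbourhood of $0\in\RR^n$ onto a neighbourhood of $i^{-1}(x)$ in $O^\mathcal{F}_x$. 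Iterating the orbital hypothesis on each factor $\exp(t_kZ_k)$ shows that $F\circ i\circ\sigma_R$ lands in $j(O^\mathcal{G}_{F(x)})$, and local completeness of $\mathcal{G}$ is what makes the smooth structure on the target orbit compatible with such compositions of flows; together these should yield a factorisation of $\tilde{F}\circ\sigma_R$ through a continuous (in fact smooth) map into $O^\mathcal{G}_{F(x)}$, establishing the required continuity of $\tilde{F}$ at $i^{-1}(x)$ and closing the proof.
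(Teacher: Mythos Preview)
The paper's proof is one sentence: ``This is immediate from \hypref{t:singfol}{Theorem} and the definition of an orbital map.''  The intended reading is that since $F$ carries $O^{\mathcal F}_x$ into $O^{\mathcal G}_{F(x)}$, and \hypref{t:singfol}{Theorem} identifies $\widehat{T}^{\mathcal F}_xR$ and $\widehat{T}^{\mathcal G}_{F(x)}S$ with the tangent spaces of these orbit manifolds, the restriction $\tilde F$ between orbits is smooth and its derivative is the desired map.  The step ``$\tilde F$ is smooth between the orbit manifolds'' is exactly the initial-submanifold (weakly-embedded) property of orbits, which is part of \'Sniatycki's theorem and of the classical Sussmann--Stefan theorem, even though the paper's summary of \hypref{t:singfol}{Theorem} does not spell it out.

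Your route through \hypref{p:charTcheck}{Proposition} is a genuinely different reduction, and you have correctly isolated the same hidden step: you need $\tilde F$ to be continuous at $i^{-1}(x)$ in the orbit topology on $O^{\mathcal G}_{F(x)}$.  The gap is in your last paragraph.  The orbital hypothesis only guarantees, for each \emph{fixed} $t$, the existence of \emph{some} $Y_1,\dots,Y_k\in\mathcal G$ and $t_1,\dots,t_k\in\RR$ with $F(\exp(tX)(x))=\exp(t_kY_k)\circ\cdots\circ\exp(t_1Y_1)(F(x))$; it gives no control whatsoever on how $k$, the $Y_j$, or the $t_j$ depend on $t$ (or on the parameters $(t_1,\dots,t_n)$ of your $\sigma_R$).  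So ``iterating the orbital hypothesis'' does not produce a plot of $\mathcal D_{\mathcal G}$, and hence does not give continuity into the orbit.  Local completeness of $\mathcal G$ does not help either: it governs how flows of vector fields in $\mathcal G$ push forward vector fields in $\mathcal G$, and says nothing about how the extrinsic map $F$ interacts with those flows.

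What actually closes the gap---both yours and the one tacit in the paper's one-liner---is the initial-submanifold property of orbits of locally complete families: any smooth map from a manifold into $S$ whose image lies in $j(O^{\mathcal G}_{F(x)})$ factors as a smooth map into the orbit manifold $O^{\mathcal G}_{F(x)}$.  Apply this to $F\circ i:O^{\mathcal F}_x\to S$ (smooth because $i$ and $F$ are) to get smoothness of $\tilde F$; then either take $(\tilde F)_*$ directly, as the paper intends, or feed the resulting continuity into your \hypref{p:charTcheck}{Proposition} argument to finish.
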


\begin{proof}
This is immediate from \hypref{t:singfol}{Theorem} and the definition of an orbital map.
\end{proof}

\begin{remark}
Subcartesian spaces equipped with locally complete families of vector fields, along with orbital maps with respect to these families, form a category.  We will call the objects of this category \emph{orbital subcartesian spaces}.
\end{remark}

\mute{
Recall the $D$-topology on a subcartesian space induced by a family of vector fields, as described in \hypref{r:vectdiffeol}{Remark}.

\begin{proposition}
Let $R$ and $S$ be subcartesian spaces, and let $\mathcal{F}$ and $\mathcal{G}$ be locally complete families of vector fields on each, respectively.  Then a smooth map $F:R\to S$ is orbital if and only if it is diffeologically smooth with respect to the diffeologies $\mathcal{D}_\mathcal{F}$ and $\mathcal{D}_\mathcal{G}$.
\end{proposition}

\begin{proof}
Assume that $F$ is orbital with respect to $\mathcal{F}$ and $\mathcal{G}$ and fix a plot $p\in\mathcal{D}_\mathcal{F}$.  We wish to show that $F\circ p\in\mathcal{D}_\mathcal{G}$.  Let $U$ be the domain of $p$, and fix $u\in U$.  By definition of $\mathcal{D}_\mathcal{F}$ there exist an open neighbourhood $V\subseteq U$ of $u$, vector fields $X_1,...,X_k\in\mathcal{F}$, $x\in R$, and a smooth map $f:V\to U_x(\xi)$ where $\xi=(X_1,...,X_k)$ such that $$p|_V(v)=\xi_{f(v)}(x).$$ It is enough to show that $v\mapsto F(\xi_{f(v)}(x))$ is smooth.  Let $T=(t_1,...,t_k)=f(v)$.  Since $F$ is orbital, there exist $Y_1,...,Y_l\in\mathcal{G}$ and $s_1,...,s_l\in\RR$ such that
\begin{align*}
F(\xi_T(x))=&F(\exp(t_kX_k)\circ...\circ\exp(t_1X_1)(x))\\
=&\exp(s_lY_l)\circ...\circ\exp(s_1Y_1)\circ{F(x)}\\
=&\zeta_{T'}(F(x))
\end{align*}
where $\zeta=(Y_1,...,Y_l)$ and $T'=(s_1,...,s_l)$. Thus, locally about $u$, $F\circ p$ is equal to $\zeta_{f(v)}(F(x))$, and so is a plot in $\mathcal{D}_\mathcal{G}$.\\

Now assume that $F$ is diffeologically smooth with respect to the diffeologies $\mathcal{D}_\mathcal{F}$ and $\mathcal{D}_\mathcal{G}$.  Fix $x\in R$.  We want to show that $F(O_x^{\mathcal{F}})\subseteq O^{\mathcal{G}}_{F(x)}$.  Fixing $X\in\mathcal{F}$, if $I^X_x$ is the domain of $\exp(\cdot X)(x)$, then it is enough to show that for any $t\in I^X_x$, $F(\exp(tX)(x))\in O^\mathcal{G}_{F(x)}$.  Fix $t$.  If we let $\xi=X$ and $T=t\in I^X_x$, then $\exp(tX)(x)=\xi_T(x)$, and $T\mapsto\xi_T(x)$ is a plot in $\mathcal{D}_\mathcal{F}$.  Thus, $t\mapsto F(\exp(tX)(x))$ is a plot in $\mathcal{D}_\mathcal{G}$.  Thus, for $\tau$ near $t$, we have that $\tau\mapsto F(\exp(\tau X)(x))$ is equal to $\tau\mapsto\zeta_\tau(F(x))$ for some $\zeta=(Y_1,...,Y_k)$ where $Y_1,...,Y_k\in\mathcal{G}$.  But the image of this plot is thus contained in the same orbit as $F(x)$, and we are done.
\end{proof}

}

\begin{proposition}
Let $R$ and $S$ be smooth stratified spaces.  Then a smooth map $F:R\to S$ is stratified if and only if it is orbital with respect to $\Vect_{strat}(R)$ and $\Vect_{strat}(S)$.
\end{proposition}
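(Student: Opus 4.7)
The plan is to deduce this immediately from \hypref{t:stratorb2}{Theorem}, which identifies the orbits of $\Vect_{strat}$ on a smooth stratified space with the strata themselves. Because strata of a stratified space are disjoint, if $F$ sends a stratum $P$ into some stratum $Q$, that $Q$ is uniquely determined: it is the stratum containing $F(x)$ for any (equivalently, every) $x \in P$.

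For the forward direction, suppose $F:R \to S$ is stratified. Fix $x \in R$, let $P$ be the stratum of $R$ containing $x$, and let $Q$ be the stratum of $S$ containing $F(x)$. By hypothesis $F(P) \subseteq Q'$ for some stratum $Q'$ of $S$, and since $F(x) \in F(P) \cap Q \cap Q'$ and the strata are pairwise disjoint, we have $Q = Q'$. By \hypref{t:stratorb2}{Theorem}, $P = O^{\Vect_{strat}(R)}_x$ and $Q = O^{\Vect_{strat}(S)}_{F(x)}$, so
\[
F\bigl(O^{\Vect_{strat}(R)}_x\bigr) = F(P) \subseteq Q = O^{\Vect_{strat}(S)}_{F(x)},
\]
which is the orbital condition.

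For the converse, suppose $F$ is orbital with respect to $\Vect_{strat}(R)$ and $\Vect_{strat}(S)$. Let $P$ be any stratum of $R$ and fix $x \in P$. Again by \hypref{t:stratorb2}{Theorem}, $P = O^{\Vect_{strat}(R)}_x$ and $O^{\Vect_{strat}(S)}_{F(x)}$ is the stratum $Q$ of $S$ containing $F(x)$. The orbital condition then gives $F(P) \subseteq Q$, showing $F$ is stratified.

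There is no real obstacle here; the entire content has been packaged into \hypref{t:stratorb2}{Theorem}. The only thing to verify carefully is the uniqueness of the target stratum in the definition of a stratified map, which follows from the disjointness of the pieces of a decomposition.
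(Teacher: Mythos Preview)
Your proof is correct and follows exactly the paper's approach: the paper's proof consists of the single sentence ``This is immediate from \hypref{t:stratorb2}{Theorem},'' and you have simply unpacked that immediacy carefully. The only addition is your explicit remark about the uniqueness of the target stratum, which is indeed trivial from disjointness of strata.
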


\begin{proof}
This is immediate from \hypref{t:stratorb2}{Theorem}.
\end{proof}

\begin{corollary}\labell{c:orbitalcat}
The category of smooth stratified spaces, along with smooth stratified maps, forms a full subcategory of orbital subcartesian spaces.
\end{corollary}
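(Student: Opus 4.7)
The plan is to exhibit an explicit faithful functor $\mathbf{F}$ from the category of smooth stratified spaces to the category of orbital subcartesian spaces, and then check that it is injective on objects and bijective on hom-sets, which is exactly what ``full subcategory'' demands.

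On objects, given a smooth stratified space $S$, I would set $\mathbf{F}(S) := (S, \Vect_{strat}(S))$. For this to land in the target category I need two things: that $S$ is already a subcartesian space (which is built into our definition of a smooth stratified space) and that $\Vect_{strat}(S)$ is a locally complete family of vector fields on $S$. The latter is precisely \hypref{p:stratloccompl}{Proposition}, so nothing new is required here. On morphisms, given a smooth stratified map $F : R \to S$, I would set $\mathbf{F}(F) := F$, viewed now as a map between the associated orbital subcartesian spaces. The preceding proposition asserts that such an $F$ is orbital with respect to $\Vect_{strat}(R)$ and $\Vect_{strat}(S)$, so $\mathbf{F}(F)$ really is a morphism in the target category. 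Functoriality (preservation of identities and composition) is automatic since $\mathbf{F}$ is the identity on underlying maps.

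Next I would check that $\mathbf{F}$ is injective on objects in the sense needed for a subcategory: two distinct smooth stratified spaces with the same underlying subcartesian space are sent to orbital subcartesian spaces with distinct families of vector fields. By \hypref{t:stratorb2}{Theorem} the orbits of $\Vect_{strat}(S)$ are exactly the strata of $S$, so the stratification is recovered from $\Vect_{strat}(S)$ together with the subcartesian structure; hence two different stratifications of the same subcartesian space give rise to different locally complete families.

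Finally, fullness: I need to show that every orbital map $F : (R, \Vect_{strat}(R)) \to (S, \Vect_{strat}(S))$ is a smooth stratified map. But again this is the content of the preceding proposition, read in the other direction. The only (very mild) point to verify is that orbital with respect to these specific families implies stratified, and that is immediate from \hypref{t:stratorb2}{Theorem}: an orbital $F$ sends each orbit of $\Vect_{strat}(R)$ into an orbit of $\Vect_{strat}(S)$, and these orbits are precisely the strata of $R$ and $S$. The main (and really the only) ingredient doing the work is \hypref{t:stratorb2}{Theorem}; everything else is bookkeeping, so there is no genuine obstacle and the proof is essentially a one-line invocation of the proposition that precedes the corollary.
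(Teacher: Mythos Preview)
Your proposal is correct and aligns with the paper's approach: the paper gives no explicit proof for this corollary, treating it as immediate from the preceding proposition (which characterises stratified maps as exactly the orbital maps with respect to the stratified vector field families), and you have simply spelled out the bookkeeping that makes this precise. The only ingredient beyond that proposition is \hypref{p:stratloccompl}{Proposition} to ensure the families are locally complete, which you also invoke.
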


The following theorem is a result of Schwarz; see \cite{schwarz77} and \cite{schwarz80} (\cite{schwarz80} Chapter 1 Theorem 4.3 for full details).  Let $D$ be the Lie subgroup of $\Diff(M)^G$ consisting of $G$-equivariant diffeomorphisms of $M$ that act trivially on $\CIN(M)^G$ (that is, they send each $G$-orbit to itself), and let $\mathfrak{d}$ denote the Lie algebra of $D$.

\begin{notation}
For brevity, we will often use the notation $\mathcal{V}:=\Vect(M)^G$ in the future.
\end{notation}

\begin{theorem}[Schwarz]\labell{t:vectquot}
The following is a split short exact sequence.
\begin{equation}\labell{e:ses}
\xymatrix{
0 \ar[r] & \mathfrak{d} \ar[r] & \Vect(M)^G \ar[r]^{\pi_*} & \Vect(M/G) \ar[r] & 0
}
\end{equation}
\end{theorem}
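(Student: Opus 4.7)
The plan is to address the four assertions packaged into the sequence \eqref{e:ses} in turn: well-definedness of $\pi_*$, identification of the kernel with $\mathfrak{d}$, surjectivity onto $\Vect(M/G)$, and existence of a linear section. For well-definedness, given $X \in \Vect(M)^G$, I would define $\pi_* X$ on $f \in \CIN(M/G)$ via the formula $\pi^*((\pi_*X)f) = X(\pi^*f)$. This makes sense because $\pi^* f$ is $G$-invariant, so $X(\pi^*f)$ is $G$-invariant by invariance of $X$, and hence lies in the image of the isomorphism $\pi^*\colon\CIN(M/G)\xrightarrow{\sim}\CIN(M)^G$ of \hypref{p:functions}{Proposition}. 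To verify that $\pi_*X$ is actually a vector field on $M/G$, not just a derivation, I would observe that the local flow of $X$ is $G$-equivariant by invariance, hence descends to a continuous map on an open neighbourhood of $\{0\}\times M/G$ which one checks is the local flow of $\pi_*X$; local compactness and subcartesianness of $M/G$ (\hypref{t:quotsubc}{Theorem}) combined with \hypref{p:opendomain}{Proposition} then identify it as a vector field.

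For the kernel computation, $X \in \Vect(M)^G$ lies in $\ker\pi_*$ exactly when $X$ annihilates $\pi^*\CIN(M/G)=\CIN(M)^G$, equivalently when $X|_x$ lies in $T_x(G\cdot x)$ for every $x$ (this uses the slice theorem to see that $\CIN(M)^G$ separates normal directions to orbits). The flow of such an $X$ therefore preserves each orbit setwise, and being $G$-equivariant it gives a one-parameter subgroup of $D$; conversely, differentiating a one-parameter subgroup of $D$ at the identity produces an invariant vector field annihilating $\CIN(M)^G$. This exhibits $\ker\pi_*=\mathfrak{d}$.

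The main obstacle is surjectivity of $\pi_*$. Given $Y\in\Vect(M/G)$, the goal is to produce an invariant lift $\widetilde Y$ on $M$ with $\pi_*\widetilde Y=Y$. The strategy is local-to-global: fix a $G$-invariant Riemannian metric (available since $G$ is compact), which determines a $G$-invariant horizontal distribution on the principal stratum $M_{(K)}$, and thereby a canonical lift of $Y|_{(M/G)_{(K)}}$. Extending over the singular strata is the delicate point; here I would use Schwarz's theorem that $\CIN(M)^G$ is finitely generated over $\CIN(\RR^k)$ via the Hilbert map, together with the slice theorem to reduce locally to a linear orthogonal $H$-representation $V$ on a slice and lift $Y$ piece-by-piece. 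One then assembles the local lifts using a $G$-invariant partition of unity subordinate to slice neighbourhoods (averaging a non-invariant partition over $G$). The verification that the resulting invariant derivation is genuinely a vector field, i.e.\ admits an open domain for each maximal integral curve, can be done via \hypref{p:charvect}{Proposition} applied to $\Vect(M)^G$.

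Finally, for the splitting, once surjectivity is in hand the sequence is a short exact sequence of $\RR$-vector spaces, so a linear section exists abstractly; more naturally, the horizontal-lift-plus-extension construction above can be arranged to depend linearly on $Y$, giving a $\CIN(M/G)$-linear (via $\pi^*$) section $\sigma\colon\Vect(M/G)\to\Vect(M)^G$. I would remark that $\sigma$ need not be a Lie algebra homomorphism, since the bracket of horizontal lifts differs from the horizontal lift of the bracket by the curvature of the chosen distribution; the statement of the theorem only claims a split of vector spaces (or $\CIN(M/G)$-modules), which is all that the subsequent development in the chapter will require.
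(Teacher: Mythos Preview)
Your proposal goes well beyond what the paper actually does. The paper does not prove this theorem: its entire proof is a citation to Schwarz's papers \cite{schwarz77} and \cite{schwarz80} (specifically Chapter~1, Theorem~4.3 of the latter), together with the remark that Schwarz's original statement has $\pi_*$ surjecting onto the \emph{stratified} vector fields for the orbit-type stratification, which by \hypref{t:stratvect}{Theorem} coincides with all of $\Vect(M/G)$. So the paper treats this as a black box due to Schwarz, while you are sketching an argument.

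Your sketch is structurally sound for well-definedness, the kernel identification, and the abstract existence of a splitting. The surjectivity step, however, is precisely the nontrivial content of Schwarz's work, and your outline (``reduce locally to a linear orthogonal $H$-representation $V$ on a slice and lift $Y$ piece-by-piece'') hides the real difficulty: producing a smooth invariant lift across the singular strata is delicate and requires Schwarz's results on smooth invariant theory (lifting of smooth curves, the structure of $\CIN(V)^H$-modules of invariant vector fields). Your gesture toward the Hilbert map and partitions of unity is the right direction, but it is not a proof without those ingredients; you should either cite Schwarz as the paper does or be explicit that you are only indicating the shape of the argument.

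One minor point: your appeal to \hypref{p:charvect}{Proposition} to check that the invariant lift is a vector field is unnecessary, since on a manifold $M$ every derivation of $\CIN(M)$ is automatically a vector field. The subtlety about derivations versus vector fields only arises on the quotient $M/G$, and you already handled that via the descended flow and \hypref{p:opendomain}{Proposition}.
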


\begin{remark}
Actually, Schwarz showed that $\pi_*$ mapped $\Vect(M)^G$ onto stratified vector fields of $M/G$ with its orbit-type stratification. But by \hypref{t:stratvect}{Theorem}, this family of vector fields is exactly $\Vect(M/G)$.
\end{remark}

\begin{remark}
Since diffeomorphisms in $D$ keep $G$-orbits invariant, we have $\widehat{T}^{\mathfrak{d}}M\subseteq\widehat{T}^{\rho(\g)}M$. In fact, if $G$ is abelian then we have $\rho(\g)\subset\mathfrak{d}$, and so we obtain $$\widehat{T}^{\mathfrak{d}}M=\widehat{T}^{\rho(\g)}M.$$  However, in the non-abelian case, $\widehat{T}^{\mathfrak{d}}M$ may be a strict subset of $\widehat{T}^{\rho(\g)}M$.  For example, consider $\SO(3)$ acting by rotations on $\RR^3$.  For any nonzero $x\in\RR^3$ and any nonzero $\xi\in\mathfrak{so}(3)$, $\xi_{\RR^3}|_x$ is tangent to the $\SO(3)$-orbit through $x$, but this vector is not in the image of any invariant vector field.  For if it was, then the stabiliser at $x$ would fix the vector, and this is not the case.
\end{remark}

\begin{corollary}\labell{c:vectquot}
The image of $\pi_*$ restricted to $\widehat{T}^\mathcal{A}M$ is $\widehat{T}(M/G)$.
\end{corollary}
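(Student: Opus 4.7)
The plan is to verify the two inclusions separately, using the direct sum decomposition $\mathcal{A}=\rho([\g,\g])\oplus\Vect(M)^G$ established in \hypref{p:A}{Proposition} and the short exact sequence from Schwarz's theorem \hypref{t:vectquot}{Theorem}.

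First I would show $\pi_*(\widehat{T}^{\mathcal{A}}M)\subseteq\widehat{T}(M/G)$. By the Lie algebra property of vector fields on locally compact subcartesian spaces \hypref{c:charvect}{Corollary}, any element of $\widehat{T}^{\mathcal{A}}_xM$ is realised as $Z|_x$ for some $Z\in\mathcal{A}$. By \hypref{p:A}{Proposition} we may write $Z=\xi_M+X$ with $\xi\in[\g,\g]$ and $X\in\Vect(M)^G$. For the first summand, the elementary identity $\pi\circ\exp(t\xi_M)=\pi$ (since $\pi$ collapses $G$-orbits) gives, for every $f\in\CIN(M/G)$,
\[
\pi_*(\xi_M|_x)f=\xi_M|_x(\pi^*f)=\frac{d}{dt}\Big|_{t=0}f(\pi(\exp(t\xi)\cdot x))=0,
\]
so $\pi_*(\xi_M|_x)=0$. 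For the second summand, Schwarz's theorem \hypref{t:vectquot}{Theorem} asserts $\pi_*X\in\Vect(M/G)$, and by the naturality of the pushforward $\pi_*(X|_x)=(\pi_*X)|_{\pi(x)}\in\widehat{T}_{\pi(x)}(M/G)$. Summing gives $\pi_*(Z|_x)=(\pi_*X)|_{\pi(x)}\in\widehat{T}(M/G)$.

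Next I would establish the reverse inclusion $\widehat{T}(M/G)\subseteq\pi_*(\widehat{T}^{\mathcal{A}}M)$. Fix $y\in M/G$ and $v\in\widehat{T}_y(M/G)$. By \hypref{c:charvect}{Corollary}, $v=Y|_y$ for some $Y\in\Vect(M/G)$. By the surjectivity in \hypref{t:vectquot}{Theorem}, there exists $X\in\Vect(M)^G\subseteq\mathcal{A}$ with $\pi_*X=Y$. Choose any $x\in\pi^{-1}(y)$; then $X|_x\in\widehat{T}^{\mathcal{A}}_xM$, and by naturality of the pushforward
\[
\pi_*(X|_x)=(\pi_*X)|_{\pi(x)}=Y|_y=v.
\]
Combining the two inclusions yields the corollary.

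I do not anticipate a genuine obstacle: once the direct sum decomposition of $\mathcal{A}$ and Schwarz's theorem are in hand, both directions reduce to a routine application of the naturality of $\pi_*$ together with the observation that vectors tangent to $G$-orbits are annihilated by $\pi_*$. The only point that requires any care is the (implicit) verification that $(\pi_*X)|_{\pi(x)}$ really agrees with $\pi_*(X|_x)$ as derivations on $\CIN(M/G)$, which is immediate from the intertwining $\pi^*\circ(\pi_*X)=X\circ\pi^*$ provided by Schwarz's theorem.
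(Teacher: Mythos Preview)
Your argument is correct and follows essentially the same route as the paper: reduce to $\Vect(M)^G$ by noting that $\pi_*$ kills the $\rho(\g)$-part, then invoke Schwarz's theorem (\hypref{t:vectquot}{Theorem}) in both directions. One small misattribution: the fact that every element of $\widehat{T}^{\mathcal{A}}_xM$ is of the form $Z|_x$ for some $Z\in\mathcal{A}$ follows directly from $\mathcal{A}$ being an $\RR$-vector space (\hypref{p:A}{Proposition}), not from \hypref{c:charvect}{Corollary}, which concerns $\Vect(S)$ rather than the subfamily $\mathcal{A}$.
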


\begin{proof}
$\pi_*$ will map any vector in $\widehat{T}^{\rho(\g)}M$ to 0, and so it is enough to consider vectors in $\widehat{T}^\mathcal{V}M$ (where we set $\mathcal{V}:=\Vect(M)^G$ for brevity).  Let $x\in M$ and $v\in\widehat{T}_x^\mathcal{V}M$.  Then, there exists a invariant vector field $X\in\mathcal{V}$ such that $X|_x=v$.  By \hypref{t:vectquot}{Theorem} there exists $Y\in\Vect(M/G)$ such that $Y|_{\pi(x)}=\pi_*(X|_x)$.\\

Now, let $w\in\widehat{T}_{\pi(x)}(M/G)$.  There exists a vector field $Y\in\Vect(M/G)$ such that $Y|_{\pi(x)}=w$.   Again by \hypref{t:vectquot}{Theorem} there is a vector field $X\in\mathcal{V}$ such that $\pi_*X=Y$, and so $\pi_*(X|_x)=w$.
\end{proof}

\begin{corollary}\labell{c:piorbital}
$\pi$ is orbital with respect to $\mathcal{A}$ and $\Vect(M/G)$.
\end{corollary}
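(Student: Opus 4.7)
The goal is to prove that for every $X\in\mathcal{A}$ and every $x\in M$, the curve $t\mapsto\pi(\exp(tX)(x))$ stays in the $\Vect(M/G)$-orbit through $\pi(x)$. The plan is to exhibit a single vector field $\tilde{X}\in\Vect(M/G)$ whose integral curve through $\pi(x)$ coincides with $\pi\circ\exp(\cdot X)(x)$; this will place every $\pi(\exp(tX)(x))$ in $O^{\Vect(M/G)}_{\pi(x)}$ by definition of an orbit.

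By the direct-sum decomposition of \hypref{p:A}{Proposition}, I would write $X=\xi_M+Y$ with $\xi\in[\g,\g]$ and $Y\in\mathcal{V}$. The key observation is that for any $f\in\mathcal{D}(M/G,\RR)=\CIN(M/G)$, the pullback $\pi^*f$ is $G$-invariant and hence constant along every $G$-orbit, so $\zeta_M(\pi^*f)=0$ for every $\zeta\in\g$. In particular $X(\pi^*f)=Y(\pi^*f)$. Since $Y\in\mathcal{V}$, Schwarz's theorem (\hypref{t:vectquot}{Theorem}) produces a unique $\tilde{X}:=\pi_*Y\in\Vect(M/G)$ satisfying $\pi^*(\tilde{X}f)=Y(\pi^*f)$ for all $f\in\CIN(M/G)$, and therefore
$$X(\pi^*f)=\pi^*(\tilde{X}f) \qquad\text{for all } f\in\CIN(M/G).$$

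With $\tilde{X}$ in hand, I would verify by the chain rule that $c(t):=\pi(\exp(tX)(x))$ is a maximal integral curve of $\tilde{X}$ issuing from $\pi(x)$: for any $f\in\CIN(M/G)$ and any $t\in I^X_x$,
\begin{align*}
\frac{d}{dt}f(c(t))
&=\frac{d}{dt}(\pi^*f)(\exp(tX)(x))
=X(\pi^*f)(\exp(tX)(x))\\
&=\pi^*(\tilde{X}f)(\exp(tX)(x))
=(\tilde{X}f)(c(t)).
\end{align*}
Maximality of $c$ follows from maximality of $\exp(\cdot X)(x)$. The uniqueness of maximal integral curves in the subcartesian category (\hypref{t:ode}{Theorem}) then forces $c(t)=\exp(t\tilde{X})(\pi(x))$ on the common domain, so $\pi(\exp(tX)(x))$ lies in the same $\Vect(M/G)$-orbit as $\pi(x)$, establishing that $\pi$ is orbital with respect to $\mathcal{A}$ and $\Vect(M/G)$.

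The argument is essentially a bookkeeping exercise once \hypref{t:vectquot}{Theorem} and \hypref{p:A}{Proposition} are in hand; the only genuinely load-bearing step is the observation that the $\rho([\g,\g])$-summand of $\mathcal{A}$ is annihilated by $\pi^*\CIN(M/G)$, which is precisely what allows us to replace $X$ by its invariant summand $Y$ when computing the derivative along the quotient and thereby to invoke Schwarz's theorem, even though $X$ itself need not be $G$-invariant.
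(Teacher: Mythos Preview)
Your proof is correct and shares the same backbone as the paper's: use \hypref{t:vectquot}{Theorem} to produce $\tilde X\in\Vect(M/G)$, then invoke the ODE uniqueness theorem to identify $\pi\circ\exp(\cdot X)(x)$ with $\exp(\cdot\tilde X)(\pi(x))$. The one genuine difference is in how you dispose of the $\rho(\g)$-summand. The paper appeals to the commutativity of flows established in \hypref{p:A}{Proposition} (specifically \hypref{e:flowscommute}{Equation}) to factor $\exp(tX)$ and thereby reduce to the case $X\in\Vect(M)^G$ before pushing forward. You instead keep $X=\xi_M+Y$ intact and observe that $\xi_M$ annihilates $\pi^*\CIN(M/G)$, so $X(\pi^*f)=Y(\pi^*f)=\pi^*(\tilde Xf)$ directly. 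Your route is slightly more economical: it never needs the flow-commutation identity, only the derivation-level fact that $\zeta_M(\pi^*f)=0$, which is immediate from $G$-invariance of $\pi^*f$.

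One minor remark: the sentence ``Maximality of $c$ follows from maximality of $\exp(\cdot X)(x)$'' is unnecessary and not quite what you want. You do not need $c$ to be maximal; you only need that $c$ is \emph{an} integral curve of $\tilde X$ through $\pi(x)$, so by \hypref{t:ode}{Theorem} it agrees with the maximal one $\exp(\cdot\tilde X)(\pi(x))$ on its domain $I^X_x$. That is already enough to place $\pi(\exp(tX)(x))$ in the $\Vect(M/G)$-orbit of $\pi(x)$.
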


\begin{proof}
Since $\pi_*$ will map any vector field in $\rho(\g)$ to the zero vector field on $M/G$, and local flows of $\Vect(M)^G$ and $\rho(\g)$ commute, it is enough to check that $\pi$ is orbital with respect to $\Vect(M)^G$ and $\Vect(M/G)$.  Let $X\in\Vect(M)^G$.  Then by \hypref{t:vectquot}{Theorem}, there is a vector field $Y\in\Vect(M/G)$ such that $\pi_*X=Y$.  Fix $x\in M$.  Then $$\frac{d}{dt}\pi(\exp(tX)(x))=\pi_*(X|_x)=Y|_{\pi(x)}=\frac{d}{dt}\exp(tY)(\pi(x)).$$
By the ODE theorem, we have that $$\pi(\exp(tX)(x))=\exp(tY)(\pi(x))$$
for all $t$ where it is defined.  Hence orbits in $\mathcal{O}_\mathcal{A}$ are mapped via $\pi$ to orbits of $M/G$ induced by $\Vect(M/G)$.
\end{proof}

\begin{corollary}\labell{c:flowlift}
A local flow of $M/G$ lifts to a $G$-equivariant local flow of $M$.
\end{corollary}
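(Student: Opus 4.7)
The plan is to use Schwarz's splitting of the short exact sequence in Theorem \ref{t:vectquot} to lift an infinitesimal generator on $M/G$ to an invariant one on $M$, and then invoke Corollary \ref{c:piorbital} to see that the corresponding flows project to one another. Concretely, given a local flow on $M/G$, I first identify it with $\exp(\cdot Y)$ for some $Y\in\Vect(M/G)$. Using the splitting of the sequence in \hypref{t:vectquot}{Theorem}, I choose a lift $X\in\Vect(M)^G$ with $\pi_*X=Y$. Then the local flow of interest on $M$ is simply $\exp(\cdot X)$.

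Next I would verify the two required properties. For $G$-equivariance: fix $g\in G$ and $x\in M$, and compare the curves $t\mapsto g\cdot\exp(tX)(x)$ and $t\mapsto\exp(tX)(g\cdot x)$. Both pass through $g\cdot x$ at $t=0$, and their tangent vectors at any time $t$ agree because, by invariance of $X$, we have $g_*(X|_y)=X|_{g\cdot y}$ for every $y$. The ODE theorem (\hypref{t:ode}{Theorem}) then forces the two curves to coincide on a common domain, giving $g\cdot\exp(tX)(x)=\exp(tX)(g\cdot x)$. For the compatibility with the original flow on $M/G$: since $X\in\Vect(M)^G\subseteq\mathcal{A}$ and $\pi$ is orbital with respect to $\mathcal{A}$ and $\Vect(M/G)$ by \hypref{c:piorbital}{Corollary}, the argument given in that proof (matching time derivatives at $t=0$ and applying the ODE theorem) yields
\[
\pi(\exp(tX)(x))=\exp(tY)(\pi(x))
\]
for every $x\in M$ and every $t$ for which the left-hand side is defined.

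The only subtlety I anticipate is bookkeeping about domains: the maximal integral curve of $X$ through $x$ need not have domain equal to that of the maximal integral curve of $Y$ through $\pi(x)$, so strictly speaking one obtains a $G$-equivariant local flow on $M$ that covers the given local flow on $M/G$ only where both are defined. This is a genuinely local statement, however, which is precisely what a ``local flow'' means, so no additional work is needed beyond noting that $A^X\subseteq\RR\times M$ is open because $X$ is a vector field. Everything else is an immediate consequence of the constructions already in place.
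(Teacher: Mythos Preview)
Your proposal is correct and follows essentially the same route as the paper: lift $Y$ to $X\in\Vect(M)^G$ via \hypref{t:vectquot}{Theorem}, then use the ODE theorem to conclude $\pi(\exp(tX)(x))=\exp(tY)(\pi(x))$. The paper's proof is terser because $G$-equivariance of the flow of an invariant vector field was already recorded just before \hypref{p:vectGloccompl}{Proposition}, so it does not repeat that verification; your explicit check and the domain remarks are fine but not strictly needed.
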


\begin{proof}
Fix a vector field $Y\in\Vect(M/G)$.  By \hypref{t:vectquot}{Theorem} there is a vector field $X\in\Vect(M)^G$ such that $\pi_*X=Y$. From the ODE theorem we have that $$\pi(\exp(tX)(x))=\exp(tY)(\pi(x))$$ for all $x\in M$ and $t\in I^X_x$.
\end{proof}

\begin{theorem}\labell{t:singfolA}
The orbits in $\mathcal{O}_\mathcal{A}$ are exactly the orbit-type strata on $M$.
\end{theorem}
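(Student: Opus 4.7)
The plan is to prove the two inclusions: every orbit of $\mathcal{A}$ lies inside an orbit-type stratum, and conversely every orbit-type stratum is a single orbit of $\mathcal{A}$.

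For the first inclusion, I would check that every flow generator in $\mathcal{A}$ preserves stabilizer type exactly (not just up to conjugacy). For $\xi \in \g$, the flow of $\xi_M$ is the one-parameter subgroup action $t \mapsto \exp(t\xi)\cdot$, which certainly preserves each $M_{(H)}$. For $X \in \Vect(M)^G$, the $G$-invariance gives $g \cdot \exp(tX)(x) = \exp(tX)(g\cdot x)$, and together with uniqueness of integral curves this yields $\Stab(\exp(tX)(x)) = \Stab(x)$. Any composition of such flows, followed by an element of $G$, thus conjugates the stabilizer, so the orbit $O^\mathcal{A}_x$ lies in $M_{(\Stab(x))}$; by connectedness it lies in a single orbit-type stratum.

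For the reverse inclusion, the key step is to show that the orbital tangent space $\widehat{T}^\mathcal{A}_x M$ coincides with $T_x S$, where $S$ is the stratum through $x$. The inclusion $\widehat{T}^\mathcal{A}_x M \subseteq T_x S$ is immediate from the first part. For the opposite inclusion, observe that $T_x(G\cdot x) \subseteq \widehat{T}^{\rho(\g)}_x M \subseteq \widehat{T}^\mathcal{A}_x M$, and $T_x(G\cdot x) \subseteq T_x S$ since $G\cdot x \subseteq S$; moreover $T_x(G\cdot x)$ is exactly $\ker(\pi_*|_{T_xM})$. On the quotient side, the restricted quotient map $\pi_{(H)} : M_{(H)} \to (M/G)_{(H)}$ is a surjective submersion with $G$-orbit fibres, so $\pi_*(T_x S) = T_{\pi(x)}(M/G)_{(H)}$, which by \hypref{t:stratvect}{Theorem} equals $\widehat{T}_{\pi(x)}(M/G)$. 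By \hypref{c:vectquot}{Corollary} this is also $\pi_*(\widehat{T}^\mathcal{A}_x M)$. Two subspaces of $T_x M$ that contain the kernel of $\pi_*|_{T_x M}$ and share the same image under $\pi_*$ are equal, giving $\widehat{T}^\mathcal{A}_x M = T_x S$.

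Finally, I would conclude as follows. By \hypref{t:singfol}{Theorem} (applied to the locally complete family $\mathcal{A}$, cf.\ \hypref{p:A}{Proposition}), $O^\mathcal{A}_x$ is a connected manifold, and the inclusion $i : O^\mathcal{A}_x \to M$ is smooth with $i_*$ a linear isomorphism onto $\widehat{T}^\mathcal{A}_{\cdot} M = T_{\cdot} S$ at every point. Viewing this as a map into $S$, the inverse function theorem makes $i$ a local diffeomorphism, so $O^\mathcal{A}_x$ is open in $S$. Since the orbits of $\mathcal{A}$ partition $S$ into open subsets and $S$ is connected, $O^\mathcal{A}_x = S$.

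The main obstacle is the tangent-space equality $\widehat{T}^\mathcal{A}_x M = T_x S$; everything else reduces to routine bookkeeping using the orbit theorem and Schwarz's surjectivity result \hypref{t:vectquot}{Theorem}. One could alternatively prove this equality directly via the Koszul slice theorem, constructing invariant vector fields along the fixed-point set $V^H$ of the slice representation, but the quotient-based argument above is more economical and reuses machinery already developed in the chapter.
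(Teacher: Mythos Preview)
Your approach is sound and genuinely different from the paper's, but one claim needs correction. You assert that $T_x(G\cdot x)=\ker(\pi_*|_{T_xM})$, and then invoke the linear-algebra fact about two subspaces containing the kernel and sharing the image. In general this equality fails: take $\SO(2)$ rotating $\RR^2$ and $x=0$; every $G$-invariant function has vanishing differential at $0$, so $\ker(\pi_*|_{T_0\RR^2})=T_0\RR^2$, while $T_0(G\cdot 0)=0$. Consequently $T_xS$ need not contain $\ker(\pi_*|_{T_xM})$, and your lemma does not apply as stated. The fix is painless: since you already have $\widehat{T}^{\mathcal{A}}_xM\subseteq T_xS$ from the first inclusion, work inside $T_xS$ and use the differential of the manifold submersion $\pi_{(H)}\colon M_{(H)}\to (M/G)_{(H)}$. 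Its kernel is exactly $T_x(G\cdot x)$, this kernel lies in $\widehat{T}^{\mathcal{A}}_xM$, and \hypref{c:vectquot}{Corollary} together with \hypref{t:stratvect}{Theorem} and \hypref{t:singfol}{Theorem} give equal images; the conclusion $\widehat{T}^{\mathcal{A}}_xM=T_xS$ then follows.

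For comparison, the paper argues the reverse inclusion constructively: given $x$ and $z$ in the same stratum, it pushes down to $M/G$, connects $\pi(x)$ to $\pi(z)$ by flows of $\Vect(M/G)$ via \hypref{t:stratvect}{Theorem}, and lifts these flows back to $M$ using \hypref{c:flowlift}{Corollary} (implicitly finishing with a $\rho(\g)$-flow to land exactly at $z$ within its $G$-orbit). Your route instead establishes the pointwise tangent-space equality and appeals to the Orbit Theorem plus connectedness to conclude that each $\mathcal{A}$-orbit is open, hence all, of its stratum. The paper's argument is more explicit about how points are joined by flows; yours is cleaner in that it avoids the bookkeeping of landing in the correct fibre and makes no implicit use of how $\rho(\g)$ acts.
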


\begin{proof}
Fix $x\in M$, and let $H\leq G$ be a closed subgroup of $G$ such that $x\in M_{(H)}$.  Choose $y\in O^\mathcal{A}_x$.  Then, there exist vector fields $X_1,...,X_k\in\mathcal{A}$ and $t_1,...,t_k\in\RR$ such that $$y=\exp(t_1X_1)\circ...\circ\exp(t_kX_k)(x).$$  But then, by \hypref{c:piorbital}{Corollary} and \hypref{t:vectquot}{Theorem}, there exist $Y_1,...,Y_k\in\Vect(M/G)$ such that $$\pi(y)=\exp(t_1Y_1)\circ...\circ\exp(t_kY_k)(\pi(x)).$$  Hence, $\pi(x)$ and $\pi(y)$ are in the same orbit $O_{\pi(x)}$.  But this is a stratum of the orbit-type stratification of $M/G$ by \hypref{t:stratvect}{Theorem}, and so $y\in M_{(H)}$.  Thus $O^\mathcal{A}_x\subseteq M_{(H)}$.\\

Now, let $z$ be a point in the same connected component of $M_{(H)}$ as $x$.  Then again by \hypref{t:stratvect}{Theorem}, $\pi(y)$ and $\pi(x)$ are in the same orbit $O_{\pi(x)}$, and hence there exist vector fields $Y_1,...,Y_k$ and $t_1,...,t_k\in\RR$ such that $\pi(y)=\exp(t_1X_1)\circ...\circ\exp(t_kX_k)(\pi(x))$.  By \hypref{c:flowlift}{Corollary}, there are vector fields $X_1,...,X_k\in\mathcal{A}$ such that $y=\exp(t_1X_1)\circ...\circ\exp(t_kX_k)(x)$.
\end{proof}

We again return to the case where $G$ is a compact Lie group now acting on a connected symplectic manifold $(M,\omega)$ in a Hamiltonian fashion, with $Z$ the zero set of the momentum map $\Phi$.

$$\xymatrix{
Z \ar[r]^i \ar[d]_{\pi_Z} & M \ar[d]^{\pi} \\
Z/G \ar[r]_j & M/G \\
}$$

Recall that $\mathcal{A}=\rho(\g)+\Vect(M)^G$ and $\mathcal{A}_Z=\rho_Z(\g)+\Vect(Z)^G$ (see \hypref{d:A}{Definition} and \hypref{d:AZ}{Definition}).

\begin{proposition}\labell{p:iorbit}
$i$ is orbital with respect to $\mathcal{A}_Z$ and $\mathcal{A}$.
\end{proposition}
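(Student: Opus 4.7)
The plan is to show that for every $x\in Z$ and every vector field $X\in\mathcal{A}_Z$, the image under $i$ of the entire integral curve $t\mapsto\exp(tX)(x)$ lies in the single $\mathcal{A}$-orbit $O^{\mathcal{A}}_{i(x)}$ inside $M$. Once this is established for single flows, it follows for arbitrary compositions of flows, and hence $i(O^{\mathcal{A}_Z}_x)\subseteq O^{\mathcal{A}}_{i(x)}$, which is the definition of orbitality.

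First I would use the direct sum decomposition $\mathcal{A}_Z=\rho_Z([\g,\g])\oplus\Vect(Z)^G$ from \hypref{p:AZ}{Proposition} to write an arbitrary $X\in\mathcal{A}_Z$ as $X=\xi_Z+Y$ with $\xi\in[\g,\g]$ and $Y\in\Vect(Z)^G$. The $\rho_Z$-summand is trivially the restriction of a global element of $\mathcal{A}$, since $\xi_Z=\xi_M|_Z$ with $\xi_M\in\rho([\g,\g])\subseteq\mathcal{A}$. For the invariant summand $Y$, I would invoke \hypref{p:vectZext}{Proposition} to produce, for each point $y\in Z$, a $G$-invariant open neighbourhood $U_y\subseteq M$ and a vector field $\tilde Y_y\in\Vect(M)^G$ with $Y|_{U_y\cap Z}=\tilde Y_y|_{U_y\cap Z}$. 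Setting $\tilde X_y:=\xi_M+\tilde Y_y\in\mathcal{A}$ then produces a \emph{local} lift of $X$ to a vector field in $\mathcal{A}$ which is tangent to $Z$ on $U_y\cap Z$ and agrees with $X$ there.

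The key step is a connectedness argument on the (connected) domain $I^X_x$ of the maximal integral curve. Let
\[
J:=\{t\in I^X_x\mid i(\exp(tX)(x))\in O^{\mathcal{A}}_{i(x)}\},
\]
which contains $0$. I would show that both $J$ and $I^X_x\setminus J$ are open, forcing $J=I^X_x$. For openness of $J$, fix $t_0\in J$ and set $y_0:=\exp(t_0X)(x)$. Choose a chart $\varphi:V\to\tilde V\subseteq\RR^n$ of $M$ about $y_0$ lying inside $U_{y_0}$, and take a common local extension $\hat X\in\Der\CIN(\RR^n)$ of $\tilde X_{y_0}$ via $\varphi$. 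Since $\tilde X_{y_0}$ agrees with $X$ on $V\cap Z$, the same $\hat X$ serves as a local extension of $X$ via the chart $\varphi\circ i$ on $Z$, so \hypref{p:intcurve}{Proposition} gives, for all sufficiently small $s$,
\[
i(\exp(sX)(y_0))=\varphi^{-1}(\exp(s\hat X)(\varphi(i(y_0))))=\exp(s\tilde X_{y_0})(i(y_0)).
\]
Since $\tilde X_{y_0}\in\mathcal{A}$ and $i(y_0)\in O^{\mathcal{A}}_{i(x)}$, the right-hand side lies in $O^{\mathcal{A}}_{i(x)}$, so a neighbourhood of $t_0$ lies in $J$. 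The identical argument, applied to a point $t_0\notin J$ (where $i(y_0)$ lies in the distinct orbit $O^{\mathcal{A}}_{i(y_0)}$, which is disjoint from $O^{\mathcal{A}}_{i(x)}$), shows that $I^X_x\setminus J$ is open as well.

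The main obstacle is the step that reconciles the two exponentials: one must know that the integral curve of $X$ in the subcartesian space $Z$ and the integral curve of its local $\mathcal{A}$-lift $\tilde X_{y_0}$ in $M$ actually coincide (via $i$) for small time, even though $Z$ need not be smooth when $0$ is a critical value of $\Phi$. This is exactly what \hypref{p:intcurve}{Proposition} provides once a single local representative $\hat X$ in a chart of $M$ is seen to serve both as a representative of $\tilde X_{y_0}$ on $M$ and (by restriction of the chart to $Z$) as a representative of $X$ on $Z$. Care is needed to verify the ``representative'' condition $\hat X(\mathfrak n(\varphi(V\cap Z)))\subseteq\mathfrak n(\varphi(V\cap Z))$ via \hypref{p:charDer}{Proposition}; this follows because $\tilde X_{y_0}$ restricts on $V\cap Z$ to $X\in\Vect(Z)$, hence is tangent to $Z$ there, and \hypref{p:charvect}{Proposition} converts that tangency into the required preservation of the vanishing ideal.
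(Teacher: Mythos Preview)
Your approach is essentially the same as the paper's: locally extend elements of $\mathcal{A}_Z$ to elements of $\mathcal{A}$ via \hypref{p:vectZext}{Proposition} (together with the trivial observation that $\xi_Z=\xi_M|_Z$), and then use uniqueness of integral curves to match $i\circ\exp(tX)$ with $\exp(t\tilde X)\circ i$. The paper compresses this into two sentences (``by \hypref{p:vectZext}{Proposition} \dots\ applying the ODE theorem, we are done''), whereas you spell out the open--closed argument on $I^X_x$ and the chart-level matching through \hypref{p:intcurve}{Proposition}.

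One small correction to your last paragraph: the appeal to \hypref{p:charvect}{Proposition} is misplaced. That result concerns preservation of vanishing ideals of \emph{orbits inside a single subcartesian space}, not of $\varphi(V\cap Z)$ inside $\RR^n$. You do not need it. Once $\hat X$ is chosen as a local representative of $\tilde X_{y_0}$ with respect to the manifold chart $\varphi$ on $M$, the hypothesis $i_*\big(X|_{V\cap Z}\big)=\tilde X_{y_0}|_{V\cap Z}$ gives directly
\[
(\varphi\circ i)_*\big(X|_{V\cap Z}\big)=\varphi_*\big(\tilde X_{y_0}|_{V\cap Z}\big)=\hat X|_{\varphi(V\cap Z)},
\]
which is exactly the hypothesis of \hypref{p:intcurve}{Proposition} for the chart $\varphi\circ i$ on $Z$. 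No ideal-preservation check is needed.
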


\begin{proof}
Let $X\in\mathcal{A}_Z$ and fix $z\in Z\subseteq M$.  Then by \hypref{p:vectZext}{Proposition} there exist a $G$-invariant open neighbourhood $U\subseteq M$ of $z$ and $\tilde{X}\in\mathcal{A}$ such that $X|_{U\cap Z}=\tilde{U\cap Z}$.  Applying the ODE theorem, we are done.
\end{proof}

\begin{proposition}\labell{p:piZorbit}
$\pi_Z$ is orbital with respect to $\mathcal{A}_Z$ and $\Vect(Z/G)$.
\end{proposition}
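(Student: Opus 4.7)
The plan is to reduce the orbital condition to two types of generators of $\mathcal{A}_Z$, using the decomposition $\mathcal{A}_Z = \rho_Z([\g,\g]) \oplus \Vect(Z)^G$ from \hypref{p:AZ}{Proposition}. Since the orbit of a point under a family of vector fields is generated by iteratively composing flows of its members, it suffices to check, for each generator $W \in \rho_Z(\g) \cup \Vect(Z)^G$ and each $z \in Z$, $t \in I^W_z$, that $\pi_Z(\exp(tW)(z))$ lies in $O^{\Vect(Z/G)}_{\pi_Z(z)}$.

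The case $W = \xi_Z$ with $\xi \in \g$ is immediate: $\exp(t\xi_Z)(z) = \exp(t\xi)\cdot z \in G\cdot z$, so $\pi_Z$ collapses the flow to the constant curve at $\pi_Z(z)$, which trivially lies in the required orbit.

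For $W = X \in \Vect(Z)^G$, I will construct $Y \in \Vect(Z/G)$ satisfying $\pi_Z(\exp(tX)(z)) = \exp(tY)(\pi_Z(z))$. Because $X$ is $G$-invariant, it preserves $\CIN(Z)^G$ as a derivation; using the isomorphism $\pi_Z^{\ast} \colon \CIN(Z/G) \to \CIN(Z)^G$ of \hypref{t:CINZ}{Theorem}, this yields a derivation $Y$ of $\CIN(Z/G)$ characterized by $\pi_Z^{\ast}(Yf) = X(\pi_Z^{\ast}f)$. A direct computation using the chain rule shows that for any $z_0 \in Z$ with $y_0 := \pi_Z(z_0)$ and any $f \in \CIN(Z/G)$,
$$\frac{d}{dt}f\bigl(\pi_Z(\exp(tX)(z_0))\bigr) = (X\pi_Z^{\ast}f)(\exp(tX)(z_0)) = (Yf)\bigl(\pi_Z(\exp(tX)(z_0))\bigr),$$
so $t \mapsto \pi_Z(\exp(tX)(z_0))$ is an integral curve of $Y$ through $y_0$, defined on the open interval $I^X_{z_0}$.

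The main obstacle is to upgrade $Y$ from a derivation to a vector field. Since $Z/G$ is subcartesian by \hypref{t:CINZ}{Theorem}, hence locally compact, \hypref{p:opendomain}{Proposition} reduces this to showing that every maximal integral curve $I^Y_{y_0}$ is an open subset of $\RR$. Suppose for contradiction that $a \in I^Y_{y_0}$ is an endpoint; set $y_1 := \exp(aY)(y_0)$ and pick any lift $z_1 \in \pi_Z^{-1}(y_1)$. By the computation above applied to $z_1$, the map $t \mapsto \pi_Z(\exp(tX)(z_1))$ is an integral curve of $Y$ through $y_1$ on the open interval $I^X_{z_1}$; concatenating at time $a$ with $\exp(\cdot Y)(y_0)$ produces an integral curve of $Y$ through $y_0$ whose domain strictly contains $I^Y_{y_0}$, contradicting maximality. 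Thus $Y \in \Vect(Z/G)$, and by uniqueness of maximal integral curves $\pi_Z(\exp(tX)(z)) = \exp(tY)(\pi_Z(z))$, completing the proof.
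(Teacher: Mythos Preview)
Your argument is correct and in fact cleaner than the paper's, with one slip to fix: the implication ``subcartesian, hence locally compact'' is false (cf.\ \hypref{x:loccmpctneeded}{Example}), so you cannot invoke \hypref{p:opendomain}{Proposition} on that basis alone. The conclusion you need is nevertheless true: $Z/G$ is a closed differential subspace of $M/G$ (see the discussion preceding \hypref{t:CINZ}{Theorem}), and $M/G$ carries the quotient topology from the manifold $M$ by the compact group $G$ (\hypref{t:quotsubc}{Theorem}), hence is locally compact Hausdorff; closed subspaces of such spaces are locally compact. With this correction your appeal to \hypref{p:opendomain}{Proposition} and the endpoint/concatenation argument are valid.

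By way of comparison, the paper builds the derivation $Y$ by a longer detour through the ambient manifold: it covers $Z/G$ locally, extends $X$ on each piece to an invariant vector field $\tilde{X}^\alpha$ on $M$ via \hypref{p:vectZext}{Proposition}, pushes each $\tilde{X}^\alpha$ forward to $M/G$, verifies tangency to $j(Z/G)$ by checking it annihilates $\mathfrak{n}(j(Z/G))$, and then patches with a partition of unity. Your route---observing that a $G$-invariant derivation of $\CIN(Z)$ preserves $\CIN(Z)^G$ and transporting it through the algebra isomorphism $\pi_Z^{\ast}\colon \CIN(Z/G)\to\CIN(Z)^G$ of \hypref{t:CINZ}{Theorem}---produces the same $Y$ directly and avoids $M$ and $M/G$ entirely. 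Both proofs then finish identically, showing $Y$ is a vector field by exhibiting integral curves with open domains; your concatenation argument at a hypothetical endpoint is a careful unpacking of the paper's terser assertion that ``$\gamma$ has an open domain and $\pi_Z$ is surjective, and so $\gamma$ is maximal.''
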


\begin{proof}
By \hypref{p:AZ}{Proposition}, it is enough to show this separately for $\rho_Z(\g)$ and $\Vect(Z)^G$.  For the first subalgebra, $$\pi(\exp(t\xi_Z)(z))=\pi(z)=\exp(0)(\pi(z))$$ for all $z\in Z$ and $t$ for which the integral curve is defined.\\

Now fix $X\in\Vect(Z)^G$.  Using \hypref{p:vectZext}{Proposition} cover $Z/G$ with a locally finite open cover $\{V_\alpha\}_{\alpha\in A}$ such that for every $\alpha\in A$, there exist $\tilde{X}^\alpha\in\Vect(M)^G$ satisfying $i_*(X|_{\pi_Z^{-1}(V_\alpha)})=\tilde{X}^\alpha|_{\pi^{-1}(j(V_\alpha))}.$ Note that for any $\alpha\in A$, $x\in V_\alpha$, $z\in\pi_Z^{-1}(x)$ and $f\in\mathfrak{n}(j(Z/G))$,
\begin{align*}
(\pi_*\tilde{X}^\alpha)|_{j(x)}f=&\tilde{X}^\alpha|_{i(z)}\pi^*f\\
=&X|_zi^*\pi^*f\\
=&X|_z\pi_Z^*j^*f\\
=&0.
\end{align*}
Let $\{\zeta_\alpha\}_{\alpha\in A}$ be a partition of unity subordinate to $\{V_\alpha\}$, and for each $\alpha\in A$, let $\tilde{\zeta}_\alpha$ be an extension of $\zeta_{\alpha}$ to $M/G$.  Define $$\tilde{Y}:=\sum_{\alpha}(\tilde{\zeta}_{\alpha}(\pi_*\tilde{X}^\alpha))|_{j(Z/G)}.$$ From the above, we have that $\tilde{Y}(f)=0$ for all $f\in\mathfrak{n}(j(Z/G))$, and so in particular, $\tilde{Y}$ restricts to a global derivation $Y\in\Der\CIN(S)$.  Also, for any $z\in Z$,
\begin{align*}
j_*\pi_{Z*}(X|_z)=&\sum_{\alpha}\tilde{\zeta}_\alpha j_*\pi_{Z*}(X|_z)\\
=&\sum_{\alpha}\tilde{\zeta}_\alpha\pi_*(\tilde{X}^\alpha|_{i(z)})\\
=&\tilde{Y}|_{\pi(i(z))}\\
=&j_*Y|_{\pi_Z(z)}.
\end{align*}
Thus, $\pi_{Z*}(X|_z)=Y|_{\pi_Z(z)}.$ Finally, we need to show that $Y$ is a vector field, and we shall do so by appealing to \hypref{p:opendomain}{Proposition}.  Fix $z\in Z$, and define $\gamma(t):=\pi_Z(\exp(tX)(z))$.  Differentiating, we see that $\gamma$ is an integral curve of $Y$ through $\pi_Z(z)$.  But $\gamma$ has an open domain and $\pi_Z$ is surjective, and so $\gamma$ is maximal.  Thus $Y$ is a vector field.
\end{proof}

\begin{proposition}\labell{p:jorbital2}
$j$ is orbital with respect to $\ham(Z/G)$ and $\Vect(M/G)$.
\end{proposition}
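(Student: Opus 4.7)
The plan is to reduce the orbital condition on $j$ to a statement already in hand about how the orbit-type decompositions of $Z/G$ and $M/G$ fit together, together with the identifications of the relevant orbits as orbit-type strata. Concretely, I would first record the two ``orbit = stratum'' identifications. By \hypref{p:jorbital}{Proposition}, the orbits of $\ham(Z/G)$ on $Z/G$ are exactly the (connected components of the) orbit-type strata $(Z/G)_{(H)}$, and by \hypref{t:stratvect}{Theorem} the orbits of $\Vect(M/G)$ on $M/G$ are exactly the connected components of the orbit-type strata $(M/G)_{(H)}$. Since by definition $(Z/G)_{(H)} = (Z/G)\cap(M/G)_{(H)}$, the inclusion $j$ sends $(Z/G)_{(H)}$ into $(M/G)_{(H)}$ for every closed subgroup $H\leq G$.

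Next, fix $x\in Z/G$ and $X\in\ham(Z/G)$ with maximal integral curve $t\mapsto\exp(tX)(x)$ on $I^X_x$. Let $H$ be such that $x\in(Z/G)_{(H)}$. Since the orbit of $\ham(Z/G)$ through $x$ is contained in the connected component of $(Z/G)_{(H)}$ containing $x$, the entire curve $t\mapsto\exp(tX)(x)$ is confined to this component. Applying $j$, the curve $t\mapsto j(\exp(tX)(x))$ is a continuous path in $(M/G)_{(H)}$ starting at $j(x)$, and therefore lies in the connected component of $(M/G)_{(H)}$ containing $j(x)$. That connected component is precisely $O^{\Vect(M/G)}_{j(x)}$, so $j(\exp(tX)(x))\in O^{\Vect(M/G)}_{j(x)}$ for all $t\in I^X_x$. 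This establishes the orbital condition for a single Hamiltonian vector field.

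To finish, I would extend this to arbitrary finite compositions of Hamiltonian flows by a straightforward induction: if a point $y$ lies in $O^{\ham(Z/G)}_x$, obtained from $x$ by successive Hamiltonian flows, then at each step the image under $j$ stays in the same connected component of $(M/G)_{(H)}$ by the same continuity argument, and hence $j(y)\in O^{\Vect(M/G)}_{j(x)}$. This is precisely the statement that $j$ is orbital with respect to $\ham(Z/G)$ and $\Vect(M/G)$.

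I do not anticipate a serious obstacle here: everything is already packaged in the earlier results. The only mild subtlety is the insistence on \emph{connected components} rather than the full orbit-type pieces, which is handled by the continuity of $j$ and the fact that an integral curve of a vector field is itself continuous; once these are flagged explicitly, the argument is essentially a one-line consequence of \hypref{p:jorbital}{Proposition} and \hypref{t:stratvect}{Theorem}.
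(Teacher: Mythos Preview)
Your proposal is correct and follows essentially the same approach as the paper: invoke \hypref{p:jorbital}{Proposition} to identify $\ham(Z/G)$-orbits with orbit-type strata of $Z/G$, observe these sit inside orbit-type strata of $M/G$ via $(Z/G)_{(H)}=(Z/G)\cap(M/G)_{(H)}$, and then use \hypref{t:stratvect}{Theorem} to identify the latter (or rather their connected components) with $\Vect(M/G)$-orbits. You are simply more explicit than the paper about the connected-component issue and the continuity argument needed to ensure the image lands in a single component, which the paper's two-line proof leaves implicit.
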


\begin{proof}
By \hypref{p:jorbital}{Proposition} orbits of $\ham(Z/G)$ are exactly the orbit-type strata of $Z/G$, which in turn are contained in the orbit-type strata of $M/G$. By \hypref{t:stratvect}{Theorem}, connected components of the orbit-type strata of $M/G$ are the orbits induced by $\Vect(M/G)$.
\end{proof}

\begin{lemma}\labell{l:GhamtangZ}
Vector fields in $\ham(M)^G$ are tangent to level sets of $\Phi$.
\end{lemma}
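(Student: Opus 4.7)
The plan is to unpack the definitions and reduce the statement to a one-line computation using $G$-invariance. Let $X \in \ham(M)^G$, so by definition there exists $f \in \CIN(M)$ with $X = X_f$, meaning $X_f \hook \omega = -df$. First I would observe that although $X_f$ is $G$-invariant, the function $f$ need not be; however, since the action is symplectic, for each $g \in G$ one has $g_* X_f = X_{(g^{-1})^* f}$, so $X_f = X_{(g^{-1})^*f}$ forces $f - (g^{-1})^* f$ to be locally constant (and in fact constant on connected components of $M$). Averaging $f$ over $G$ (and noting that adding a constant to $f$ does not change $X_f$) we may assume without loss of generality that $f \in \CIN(M)^G$.

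Next, tangency of $X_f$ to the level sets of $\Phi$ is equivalent to the statement that $d\Phi^\xi(X_f) = 0$ for every $\xi \in \g$, since the components $\Phi^\xi$ generate the ideal defining level sets of $\Phi$ (pointwise, a tangent vector lies in the kernel of $d\Phi$ iff it is annihilated by every $d\Phi^\xi$). Now I would compute, using the two defining identities $\xi_M \hook \omega = -d\Phi^\xi$ and $X_f \hook \omega = -df$:
\begin{equation*}
X_f(\Phi^\xi) \;=\; d\Phi^\xi(X_f) \;=\; -\omega(\xi_M, X_f) \;=\; \omega(X_f, \xi_M) \;=\; -df(\xi_M) \;=\; -\xi_M(f).
\end{equation*}

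Finally, since $f$ is $G$-invariant, $\xi_M(f) = \frac{d}{dt}\big|_{t=0} f(\exp(t\xi) \cdot x) = 0$ at every point $x \in M$. Hence $X_f(\Phi^\xi) = 0$ identically for every $\xi \in \g$, which is precisely the assertion that $X_f$ is tangent to every level set of $\Phi$. There is no real obstacle here; the only subtle point is the averaging argument producing a $G$-invariant potential $f$, which is why the hypothesis $X \in \ham(M)^G$ (rather than just $X \in \ham(M)$) enters essentially.
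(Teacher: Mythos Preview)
Your proof is correct and follows essentially the same route as the paper: reduce tangency to $X_f(\Phi^\xi)=0$ for all $\xi\in\g$, then compute $X_f(\Phi^\xi)=\omega(X_f,\xi_M)=-df(\xi_M)$, which vanishes by $G$-invariance of $f$. The one difference is that the paper simply asserts the existence of an invariant potential $f\in\CIN(M)^G$ with $X=X_f$, while you supply the averaging argument justifying this; that extra care is legitimate and fills in a step the paper leaves implicit.
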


\begin{proof}
Fix $X\in\ham(M)^G$.  There exists $f\in\CIN(M)^G$ such that $X=X_f$.  It is enough to show that for any $\xi\in\g$, we have $X(\Phi^\xi)=0$.  Fix $\xi\in\g$.  Then
\begin{align*}
X(\Phi^\xi)=&d\Phi^\xi(X)\\
=&\omega(X,\xi_M)\\
=&-df(\xi_M)=0.
\end{align*}
\end{proof}

\begin{lemma}\labell{l:hamsurj}
There is a surjective Lie algebra homomorphism $H:\ham(M)^G\to\ham(Z/G)$ sending $X\in\ham(M)^G$ to $(\pi_Z)_*(X|_Z)$.
\end{lemma}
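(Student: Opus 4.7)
The plan is to verify the three properties in sequence: well-definedness of $H$, that $H$ lands in $\ham(Z/G)$, and the Lie algebra structure (with surjectivity as the final step). Throughout, I will use that $\CIN(M)^G$, $\CIN(Z)^G$, and $\CIN(Z/G)$ are all canonically identified via $\pi^*$, $i^*$ and $\pi_Z^*$ (\hypref{p:functions}{Proposition}, \hypref{t:CINZ}{Theorem}, and \hypref{p:closedsubset}{Proposition}).

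First, for well-definedness of $H$: fix $X=X_f\in\ham(M)^G$ with $f\in\CIN(M)^G$. By \hypref{l:GhamtangZ}{Lemma}, $X$ is tangent to $Z$, so the restriction $X|_Z$ defines a derivation of $\CIN(Z)$; its $G$-invariance is inherited from that of $f$. To see that $X|_Z$ is actually a vector field on $Z$, observe that $Z$ is a closed differential subspace of $M$ and $X$ is a local derivation near $Z$ that preserves $\mathfrak{n}(Z)$; hence the local flow of $X$ restricts to a local flow on $Z$, so $X|_Z\in\Vect(Z)^G$. The argument in the second half of the proof of \hypref{p:piZorbit}{Proposition} — constructing $Y=(\pi_Z)_*(X|_Z)$ locally, gluing with a $G$-invariant partition of unity, and checking openness of the flow domain via the surjection $\pi_Z$ — shows that $H(X_f):=(\pi_Z)_*(X|_Z)$ is a well-defined element of $\Vect(Z/G)$.

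Next, I will show $H(X_f)=X_{\bar f}$, where $\bar f\in\CIN(Z/G)$ corresponds to $f|_Z\in\CIN(Z)^G$ under $\pi_Z^*$. It is enough to check the equality on each orbit-type stratum $(Z/G)_{(K)}$, since these strata cover $Z/G$ and Hamiltonian vector fields are determined by their restrictions to the open dense stratum (in fact they preserve each stratum by the Lerman--Sjamaar theory). On $(Z/G)_{(K)}$ we have $(\pi_{(K)})^*\omega_{(K)}=(i_{(K)})^*\omega$, and for any $\bar g\in\CIN(Z/G)$ with $\bar g|_{(Z/G)_{(K)}}$-lift $g\in\CIN(M)^G$,
\begin{align*}
\pi_{(K)}^*\bigl(H(X_f)\bar g\bigr)
&=X_f|_{Z_{(K)}}\bigl(\pi_{(K)}^*\bar g\bigr)
=i_{(K)}^*\bigl(X_f(g)\bigr)\\
&=i_{(K)}^*\omega(X_f,X_g)
=\omega_{(K)}(H(X_f),X_{\bar g})
=\pi_{(K)}^*\{\bar f,\bar g\}_{Z/G}.
\end{align*}
Since $\pi_{(K)}^*$ is injective, $H(X_f)\bar g=\{\bar f,\bar g\}_{Z/G}$, so $H(X_f)=X_{\bar f}\in\ham(Z/G)$.

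For the Lie algebra homomorphism property, linearity is immediate and the bracket identity $H([X_f,X_g])=[H(X_f),H(X_g)]$ reduces to $X_{\overline{\{f,g\}}}=[X_{\bar f},X_{\bar g}]=X_{\{\bar f,\bar g\}_{Z/G}}$, which holds because $\overline{\{f,g\}}=\{\bar f,\bar g\}_{Z/G}$ by the computation above. Finally, for surjectivity, let $X_h\in\ham(Z/G)$. By \hypref{p:closedsubset}{Proposition} and \hypref{t:CINZ}{Theorem}, $h$ lifts to some $\tilde h_0\in\CIN(M)$; averaging over the compact group $G$ produces $\tilde h\in\CIN(M)^G$ with $\pi_Z^*h=i^*\tilde h$, so $\overline{\tilde h}=h$. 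Then $X_{\tilde h}\in\ham(M)^G$ and $H(X_{\tilde h})=X_h$, proving surjectivity.

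The main obstacle is the Hamiltonian identification $H(X_f)=X_{\bar f}$: one must reconcile the pushforward of $X_f|_Z$ (defined via derivation theory on the singular space $Z/G$) with the Poisson-bracket definition of $X_{\bar f}$, which is a priori only defined stratum by stratum. The stratumwise argument using $\pi_{(K)}^*\omega_{(K)}=i_{(K)}^*\omega$ and the injectivity of $\pi_{(K)}^*$ is the key technical step; everything else is bookkeeping with invariant extensions.
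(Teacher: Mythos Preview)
Your approach is essentially the same as the paper's: both use \hypref{l:GhamtangZ}{Lemma} for tangency to $Z$, verify $X|_Z\in\Vect(Z)^G$ via the flow, push forward by $\pi_Z$, and then identify $(\pi_Z)_*(X_f|_Z)$ with $X_{\bar f}$ stratum-by-stratum using $\pi_{(K)}^*\omega_{(K)}=i_{(K)}^*\omega$ together with injectivity of $\pi_{(K)}^*$. Surjectivity and the bracket check are handled identically.

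There is, however, a gap in your displayed chain at the step
\[
i_{(K)}^*\omega(X_f,X_g)=\omega_{(K)}\bigl(H(X_f),X_{\bar g}\bigr).
\]
Using $i_{(K)}^*\omega=\pi_{(K)}^*\omega_{(K)}$ gives $\pi_{(K)}^*\omega_{(K)}\bigl(H(X_f),(\pi_{(K)})_*X_g\bigr)$; replacing $(\pi_{(K)})_*X_g$ by $X_{\bar g}$ is exactly the claim $H(X_g)=X_{\bar g}$, i.e.\ what you are proving. If instead you read the last equality via $X_{\bar g}\hook\omega_{(K)}=-d\bar g$, the whole chain collapses to the tautology $H(X_f)\bar g=H(X_f)\bar g$ and proves nothing.

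The paper sidesteps this by contracting $H(X_f)$ into $\omega_{(K)}$ as a $1$-form rather than pairing against a second Hamiltonian vector field:
\[
\pi_{(K)}^*\bigl(H(X_f)\hook\omega_{(K)}\bigr)=X_f|_{Z_{(K)}}\hook i_{(K)}^*\omega=i_{(K)}^*(X_f\hook\omega)=i_{(K)}^*(-df)=\pi_{(K)}^*(-d\bar f),
\]
and then uses injectivity of $\pi_{(K)}^*$ on forms to conclude $H(X_f)\hook\omega_{(K)}=-d\bar f$, hence $H(X_f)=X_{\bar f}$; no $X_{\bar g}$ enters. Your function-based computation can be repaired either by inserting this $1$-form argument in place of the last two equalities, or by stopping at $i_{(K)}^*\{f,g\}$ and invoking directly that $j^*$ is a Poisson morphism (as the paper does for the bracket check), which gives $\overline{\{f,g\}}=\{\bar f,\bar g\}_{Z/G}$ without circularity.
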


\begin{proof}
Fix $X\in\ham(M)^G$, and let $f\in\CIN(M)^G$ such that $X=X_f$.  By \hypref{l:GhamtangZ}{Lemma}, we have that $X|_Z$ is tangent to $Z$.  By \hypref{p:opendomain}{Proposition}, since the integral curves of $X$ through points of $Z$ are contained in $Z$, these integral curves when restricted to $Z$ have open domains, and hence $X|_Z\in\Vect(Z)^G$.\\

We now need to show that $(\pi_Z)_*(X|_Z)$ is a smooth vector field on $Z/G$.  Define $$h:=j^*((\pi^*)^{-1}(f)).$$  We claim that $X_h\in\ham(Z/G)$ is exactly $(\pi_Z)_*(X|_Z)$.  By \hypref{p:jorbital}{Proposition} it is enough to show this on each stratum $(Z/G)_{(H)}$ of $Z/G$.  Since $X$ is $G$-invariant, it is in fact tangent to $Z_{(H)}$ by \hypref{t:singfolA}{Theorem} and \hypref{l:GhamtangZ}{Lemma} for each $H\leq G$.  Fix a nonempty $Z_{(H)}$.  Then  $Y:=(\pi_{(H)})_*(X|_{Z_{(H)}})$ is a smooth vector field on $(Z/G)_{(H)}$.  Let $g\in\CIN(M/G)$ such that $\pi^*g=f$.  We have
\begin{align*}
\pi_{(H)}^*(Y\hook\omega_{(H)})=&X_f|_{Z_{(H)}}\hook i_{(H)}^*\omega\\
=&i_{(H)}^*(X_f\hook\omega)\\
=&i_{(H)}^*(-df)\\
=&(\pi\circ i_{(H)})^*(-dg)\\
=&(j\circ\pi_{(H)})^*(-dg)\\
=&\pi_{(H)}^*(-dj^*g|_{(Z/G)_{(H)}})\\
=&\pi_{(H)}^*(-d(h|_{(Z/G)_{(H)}})).
\end{align*}

Now, since $Z_{(H)}$ is a $G$-manifold with quotient manifold $(Z/G)_{(H)}$, it is known that $\pi_{(H)}^*$ is an isomorphism of complexes between differential forms on $(Z/G)_{(H)}$ and basic differential forms on $Z_{(H)}$.  Hence, $$Y\hook\omega_{(H)}=-d(h|_{(Z/G)_{(H)}}).$$  Thus, $Y=X_h|_{(Z/G)_{(H)}}$.  Thus the map $H$ is well-defined.\\

To show that this map is surjective, it is enough to show that there is a surjective map sending $f\in\CIN(M)^G$ to $j^*((\pi^*)^{-1}(f))$.  But $\pi^*$ is an isomorphism between $\CIN(M/G)$ and $\CIN(M)^G$, and since $Z/G$ is closed in $M/G$, we have that $j^*$ is a surjection from $\CIN(M/G)$ onto $\CIN(Z/G)$ by \hypref{p:closedsubset}{Proposition}.\\

We now check that this is a Lie algebra homomorphism.  It is clearly $\RR$-linear.  Let $f,g\in\CIN(M)^G$.  Then $$(\pi^*)^{-1}(\pois{f}{g})=\pois{(\pi^*)^{-1}f}{(\pi^*)^{-1}g}_{M/G},$$ and $j^*$ is a Poisson morphism.  Thus, $$H(X_{\pois{f}{g}})=\pois{H(X_f)}{H(X_g)}_{Z/G}.$$
\end{proof}

\begin{proposition}\labell{p:vectZorbits}
The orbits of $\mathcal{A}_Z$ are contained in the orbit-type strata of $Z$.  Moreover, if $G$ is connected, the orbits are exactly the orbit-type strata.
\end{proposition}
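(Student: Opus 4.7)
The plan is to prove the two statements separately, relying on the orbital properties of $i$ and $\pi_Z$ established earlier in the chapter.

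For the first statement, I would observe that \hypref{p:iorbit}{Proposition} says $i:Z\to M$ is orbital with respect to $\mathcal{A}_Z$ and $\mathcal{A}$, so for any $z\in Z$ we have $i(O^{\mathcal{A}_Z}_z)\subseteq O^{\mathcal{A}}_{i(z)}$. By \hypref{t:singfolA}{Theorem}, $O^{\mathcal{A}}_{i(z)}$ is the connected component of $M_{(H)}$ containing $z$, where $(H)$ is the orbit-type of $z$. Thus $i(O^{\mathcal{A}_Z}_z)\subseteq M_{(H)}\cap Z=Z_{(H)}$; since $O^{\mathcal{A}_Z}_z$ is a connected manifold by \hypref{t:singfol}{Theorem} and $i$ is smooth, its image is connected in the subspace topology on $Z$ and therefore lies in a single connected component of $Z_{(H)}$, i.e., an orbit-type stratum of $Z$.

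For the converse, suppose $G$ is connected and let $z_1,z_2$ belong to the same connected component $C$ of $Z_{(H)}$. Since $G$ is connected, $G\cdot C$ is a connected subset of $Z_{(H)}$ which is also a union of components of $Z_{(H)}$, forcing $G\cdot C=C$; hence $\pi_Z(C)$ is a connected component of $(Z/G)_{(H)}$, and $\pi_Z(z_1),\pi_Z(z_2)$ lie in the same orbit-type stratum of $Z/G$. By \hypref{p:jorbital}{Proposition}, this stratum is an orbit of $\ham(Z/G)$, so there exist $f_1,\ldots,f_k\in\CIN(Z/G)$ and $t_1,\ldots,t_k\in\RR$ with
\[\pi_Z(z_2)=\exp(t_kX_{f_k})\circ\cdots\circ\exp(t_1X_{f_1})(\pi_Z(z_1)).\]
By \hypref{l:hamsurj}{Lemma}, each $X_{f_i}$ lifts to $\tilde{X}_i\in\ham(M)^G$ tangent to $Z$, and the restriction $\tilde{X}_i|_Z$ lies in $\Vect(Z)^G\subseteq\mathcal{A}_Z$ and satisfies $(\pi_Z)_*(\tilde{X}_i|_Z)=X_{f_i}$. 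By uniqueness of maximal integral curves (\hypref{t:ode}{Theorem}) together with the orbital property of $\pi_Z$ from \hypref{p:piZorbit}{Proposition}, applying the flows of $\tilde{X}_i|_Z$ successively to $z_1$ produces a point $z_1'$ with $\pi_Z(z_1')=\pi_Z(z_2)$. Hence $z_2=g\cdot z_1'$ for some $g\in G$, and since $G$ is a compact connected Lie group, $g=\exp(\eta_m)\cdots\exp(\eta_1)$ for some $\eta_1,\ldots,\eta_m\in\g$. Each $(\eta_j)_Z$ lies in $\rho_Z(\g)\subseteq\mathcal{A}_Z$, so composing these flows with the previous ones exhibits $z_2\in O^{\mathcal{A}_Z}_{z_1}$.

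The main obstacle I anticipate is verifying cleanly that the restriction $\tilde{X}_i|_Z$ of a $G$-invariant Hamiltonian vector field on $M$ really is a vector field on $Z$ whose flow projects under $\pi_Z$ to that of $X_{f_i}$: this combines the tangency of $\tilde{X}_i$ to $Z$ from \hypref{l:GhamtangZ}{Lemma}, openness of integral curve domains via \hypref{p:opendomain}{Proposition}, and the identification of $(\pi_Z)_*(\tilde{X}_i|_Z)$ as a Hamiltonian vector field on $Z/G$ proven inside \hypref{l:hamsurj}{Lemma}. A secondary point is the bijective correspondence between connected components of $Z_{(H)}$ and $(Z/G)_{(H)}$ when $G$ is connected, which follows from the argument above that $G\cdot C=C$ for any component $C$ of $Z_{(H)}$, plus the observation that $\pi_Z$ is open and continuous onto $(Z/G)_{(H)}$.
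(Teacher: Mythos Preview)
Your proposal is correct and follows essentially the same route as the paper: use \hypref{p:iorbit}{Proposition} and \hypref{t:singfolA}{Theorem} for the containment, then for the converse project to $Z/G$, connect via Hamiltonian flows using \hypref{p:jorbital}{Proposition}, lift via \hypref{l:hamsurj}{Lemma}, and close the remaining $G$-orbit gap using connectedness of $G$. The only cosmetic difference is that the paper writes the final group element as a single exponential $\exp(\tau\xi)$ (using surjectivity of $\exp$ for compact connected $G$) rather than a product of exponentials, and your anticipated obstacle about $\tilde{X}_i|_Z$ being a genuine vector field with the correct projected flow is indeed handled inside the proof of \hypref{l:hamsurj}{Lemma} exactly as you suggest.
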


\begin{proof}
By \hypref{p:iorbit}{Proposition}, $i$ is orbital with respect to $\mathcal{A}_Z$ and $\mathcal{A}$.  Thus, orbits of $\mathcal{A}_Z$ are mapped into orbits of $\mathcal{A}$, which by \hypref{t:singfolA}{Theorem} are exactly the orbit-type strata on $M$.  Thus, the orbits of $\mathcal{A}_Z$ are contained in the orbit-type strata on $M$ intersected with $Z$.  But these are precisely the orbit-type strata of $Z$.\\

For the opposite inclusion, assume that $G$ is connected.  Let $x,y$ be in the same orbit-type stratum in $Z$.  Then $\pi_Z(x)$ and $\pi_Z(y)$ are in the same orbit-type stratum in $Z/G$.  Thus by \hypref{p:jorbital}{Proposition}, there exist $f_1,...,f_k\in\CIN(Z/G)$ and $t_1,...,t_k\in\RR$ such that the Hamiltonian vector fields $X_{f_1},...,X_{f_k}$ satisfy $$\pi_Z(y)=\exp(t_1X_{f_1})\circ...\circ\exp(t_kX_{f_k})(\pi_Z(x)).$$
By \hypref{l:hamsurj}{Lemma}, there exist $Y_1,...,Y_k\in\ham(M)^G$ such that $(\pi_Z)_*(Y_i|_Z)=X_{f_i}$ for each $i=1,...,k$.  So, we have $$\pi_Z(y)=\pi_Z(\exp(t_1Y_1|_Z)\circ...\circ\exp(t_kY_k|_Z)(\pi_Z(x)).$$ In particular, $$z:=\exp(t_1Y_1|_Z)\circ...\circ\exp(t_kY_k)(x)$$ is contained in the same $G$-orbit as $y$.  Thus there is some $g\in G$ such that $g\cdot z=y$.  Since $G$ is compact and connected, there is some $\tau\in\RR$ and $\xi\in\g$ such that $y=g\cdot z=\exp(\tau\xi_Z)(z)$.  Thus, $x$ and $y$ are in the same orbit of $\mathcal{A}_Z$.
\end{proof}

\begin{proposition}
If $0\in\g^*$ is a regular value of $\Phi$, then $j$ is orbital (with respect to $\Vect(Z/G)$ and $\Vect(M/G)$).
\end{proposition}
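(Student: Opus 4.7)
The plan is to combine two results already established in the chapter. By the definition of orbital, it suffices to show that for each $x\in Z/G$, the $\Vect(Z/G)$-orbit through $x$ is mapped by $j$ into the $\Vect(M/G)$-orbit through $j(x)$.

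First I would invoke Theorem~\ref{t:regvalue}: under the hypothesis that $0\in\g^*$ is a regular value of $\Phi$, the orbits of $\Vect(Z/G)$ coincide with the orbits of $\ham(Z/G)$ (they are both equal to the orbit-type strata of $Z/G$, which in this setting is a genuine $G$-manifold quotient). So the problem reduces to showing that $j$ maps each $\ham(Z/G)$-orbit into a $\Vect(M/G)$-orbit.

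But that is precisely the content of Proposition~\ref{p:jorbital2}, which states that $j$ is orbital with respect to $\ham(Z/G)$ and $\Vect(M/G)$. Concretely, for any $x\in Z/G$,
\[
j\bigl(O^{\Vect(Z/G)}_x\bigr) \;=\; j\bigl(O^{\ham(Z/G)}_x\bigr) \;\subseteq\; O^{\Vect(M/G)}_{j(x)},
\]
the equality being Theorem~\ref{t:regvalue} and the inclusion being Proposition~\ref{p:jorbital2}. This is exactly the orbital condition for $j$ with respect to the ambient Lie algebras of vector fields, so the proof is immediate.

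There is no real obstacle here; the content of the statement lies entirely in Theorem~\ref{t:regvalue}, whose proof relied on the minimality of the orbit-type stratification of $Z/G$ (Theorem~\ref{t:bierstone}) together with the identification of $\Vect(Z/G)$-orbits with the orbit-type strata via Theorem~\ref{t:stratvect} and Proposition~\ref{p:jorbital}. The only thing worth flagging is that the argument breaks down when $0$ is a critical value, as foreshadowed by Question~\ref{q:critvalue}: in that case we do not know whether $\Vect(Z/G)$-orbits still coincide with the symplectic strata, and so we cannot replace $\ham(Z/G)$-orbits by $\Vect(Z/G)$-orbits in the chain above.
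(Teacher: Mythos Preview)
Your proof is correct and follows essentially the same approach as the paper: invoke Theorem~\ref{t:regvalue} to identify the $\Vect(Z/G)$-orbits with the $\ham(Z/G)$-orbits, then apply Proposition~\ref{p:jorbital2}. The paper's proof is the same two-line argument, just stated more tersely.
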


\begin{proof}
By \hypref{t:regvalue}{Theorem} we know that in this case, orbits of $\ham(Z/G)$ and $\Vect(Z/G)$ coincide.  Thus, applying \hypref{p:jorbital2}{Proposition} we are done.
\end{proof}

\begin{remark}
Let $G$ be a compact Lie group acting on a connected manifold $M$, and let $Z$ be an invariant closed subset of $M$.  Then it is not true that the inclusion $j:Z/G\to M/G$ is orbital with respect to $\Vect(Z/G)$ and $\Vect(M/G)$.  Indeed, let $G=\SS^1$ and $M=\RR\times\RR^2$.  $G$ acts on $M$ diagonally, trivially on $\RR$ and by rotations on $\RR^2$.  The geometric quotient $M/G$ is diffeomorphic to the closed half-plane $\RR\times[0,\infty)$.  Set coordinates $(x,y,z)$ on $M$, where $x$ describes $\RR$ and $(y,z)$ describes $\RR^2$ in the product $\RR\times\RR^2$.  Let $Z$ be the cone in $M$ given by $\{x^2=y^2+z^2\}.$  This is $G$-invariant, and $Z/G$ is diffeomorphic to $\RR$.  $\partial_x$ is a vector field on $Z/G$ whose orbit is all of $Z/G$, yet $j(Z/G)$ is not contained in one orbit of $\Vect(M/G)$.
\end{remark}

Given the above remark, zero sets of momentum maps are still special invariant closed subsets.  We are left with the following question.

\begin{question}
Is $j$ orbital in the general case when $0\in\g^*$ is a critical value of $\Phi$?
\end{question} 
\chapter{Orientation-Preserving Diffeomorphisms of $\SS^2$}\labell{ch:sphere}

This chapter is a report on the paper "The orientation-preserving diffeomorphism group of $\SS^2$ deforms to $\SO(3)$ smoothly" \cite{li-watts10}.  In this paper, Jiayong Li and I use techniques introduced by Smale in his article "Diffeomorphisms of the 2-sphere" \cite{smale59} to show that the identity component of the diffeomorphism group of $\SS^2$ has a diffeologically smooth strong deformation retraction onto the subgroup of rotations $\SO(3)$, that is equivariant with respect to rotations.  More precisely, let $I=[0,1]$.  We construct a diffeologically smooth map $P:I\times\Diff_0(\SS^2)\to\Diff_0(\SS^2)$ with respect to the standard functional diffeology on $\Diff_0(\SS^2)$ such that for each $(t,f)\in I\times\Diff_0(\SS^2)$ and $A\in\SO(3)$,
\begin{enumerate}
\item $P_0(f)=f$,
\item $P_1(f)\in\SO(3)$,
\item $P_t(A)=A$,
\item $P_t(A\circ f)=A\circ P_t(f)$.
\end{enumerate}
Smale constructed a continuous strong deformation retraction with respect to the $C^k$-topology on the diffeomorphism group ($1<k\leq\infty$).  I will go through the main ideas of our paper, emphasising which parts of the joint work were done more exclusively by Jiayong, or myself, where possible.\\

We begin with the following theorem.
\begin{theorem}[3.1 of \cite{li-watts10}]\labell{t:smale1}
The space of diffeomorphisms of the square $[0,1]^2$ that are equal to the identity in a neighbourhood of the boundary has a diffeologically smooth contraction to a point.
\end{theorem}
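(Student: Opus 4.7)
My plan is to construct the contraction via a variant of the Alexander trick. After affinely identifying $[0,1]^2$ with the centred square $[-1,1]^2$, each diffeomorphism $f$ in the space $X$ of the theorem extends uniquely by the identity to a compactly supported diffeomorphism of $\RR^2$. The Alexander scaling $P_t(f)(x) := (1-t)\,f(x/(1-t))$ for $t\in[0,1)$, together with $P_1(f) := \id$, realises the set-theoretic contraction: $P_0(f)=f$, $P_1(f)=\id$, and each $P_t(f) \in X$ because the support of $P_t(f) - \id$ is obtained from $\supp(f-\id)$ by scaling by $(1-t)$, and so stays strictly inside $[-1,1]^2$.

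For any plot $q\colon U \to X$ in the functional diffeology, the associated map $\Psi(t, u, x) := P_t(q(u))(x)$ is smooth on $[0, 1) \times U \times [-1, 1]^2$ by composition, since the functional diffeology makes $(u, y) \mapsto q(u)(y)$ smooth and the scalar operations $y = x/(1-t)$ and $\Psi = (1-t)\cdot(\,\cdot\,)$ are smooth for $t < 1$. At any point $(1, u_0, x_0)$ with $x_0 \neq 0$, smoothness is also immediate: by continuity of $q$ and compactness of the supports, in a neighbourhood the vector $x/(1-t)$ lies outside the support of $q(u) - \id$, so $\Psi$ is locally equal to $x$.

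The sole obstruction is smoothness at the points $(1, u_0, 0)$. Here the naive scaling fails to be $C^\infty$: for instance $\partial_x \Psi(t, u, x) = Dq(u)(x/(1-t))$ does not approach the identity matrix as $(t, u, x) \to (1, u_0, 0)$ in general. To repair this I would modify the construction in two stages: on $t \in [0, \tfrac{1}{2}]$, smoothly isotope $f$ to a diffeomorphism $\tilde f$ that equals the identity on a fixed small disk around the origin (by interpolating the germ of $f$ at $0$ with the identity germ via a bump function, keeping the result in $X$ and staying a diffeomorphism throughout); on $t \in [\tfrac{1}{2}, 1]$, apply the Alexander scaling to $\tilde f$ with a smooth reparametrisation $\lambda\colon [\tfrac{1}{2}, 1] \to [0, 1]$ satisfying $\lambda(\tfrac{1}{2}) = 1$, $\lambda(1) = 0$, and all derivatives flat at $t = 1$.

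The main obstacle is verifying that this two-stage map is genuinely $C^\infty$ in all three variables $(t, u, x)$ at the exceptional points $(1, u_0, 0)$. This reduces to careful analytic estimates on the mixed partial derivatives $\partial_t^j \partial_u^\beta \partial_x^\alpha \Psi$ as $(t, x) \to (1, 0)$, combining the flatness of $\lambda$, the fact that $\tilde f - \id$ is supported in an annulus bounded away from $0$, and uniform Sobolev-type bounds on the plot $q$ in the spirit of the modifications of Smale's argument alluded to in this chapter's introduction. Once this smoothness is established, the axioms of a diffeologically smooth strong deformation retraction are immediate from the construction.
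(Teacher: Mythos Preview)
Your approach has a genuine gap that cannot be repaired by the modifications you propose. Even after stage~1 produces $\tilde f$ equal to the identity on a disk about the origin, the Alexander scaling in stage~2 fails to be $C^1$ at the apex. Compute the pure spatial derivative: with $\Psi(t,u,x)=\lambda(t)\,\tilde f_u\bigl(x/\lambda(t)\bigr)$ one has $D_x\Psi(t,u,x)=D\tilde f_u\bigl(x/\lambda(t)\bigr)$, which involves only the ratio $x/\lambda(t)$ and not the derivatives of $\lambda$. Choose a sequence $(t_n,x_n)\to(1,0)$ with $x_n/\lambda(t_n)\to y$ lying in the annulus where $\tilde f_{u_0}$ is genuinely nontrivial; then $D_x\Psi(t_n,u_0,x_n)\to D\tilde f_{u_0}(y)\neq I$, whereas along $x_n\equiv 0$ the limit is $I$. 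Thus $D_x\Psi$ is not even continuous at $(1,u_0,0)$, and flatness of $\lambda$ at $t=1$ is irrelevant since it only tames $t$-derivatives. This is exactly the classical obstruction: the Alexander trick contracts $\operatorname{Homeo}(D^2,\partial)$ but not $\Diff(D^2,\partial)$ smoothly. (Your stage~1 is also nontrivial: a bump-function interpolation between $f$ and $\id$ need not remain a diffeomorphism when $f$ is far from $\id$ near the origin.)

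The paper, following Smale, does something fundamentally different: rather than scaling from a point, one straightens the images under $f$ of a family of arcs (say, vertical segments in the square). This sets up a fibration of the diffeomorphism group over a space of arc embeddings with prescribed boundary behaviour, and the latter space is shown to be smoothly contractible. The fibre splits into diffeomorphism groups of smaller rectangles and one iterates. Translating this arc-straightening argument into the diffeological language is largely routine, but one step of Smale's original proof only yields continuity; the paper replaces that step with a different argument to obtain diffeological smoothness.
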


Proving this theorem was my job.  The proof follows Smale's analogous theorem (a continuous contraction to a point)  closely.  Much of the work was translating his arguments into the diffeological language; however, some modifications are required in order to obtain diffeological smoothness.  Indeed, one argument in particular (in the proof of Theorem 3.3 of \cite{li-watts10}) did not necessarily achieve smoothness, only continuity, so I used a different argument.\\
Let $x_0$ be the south pole of $\SS^2$, and define $\Omega_1$ to be the space of all diffeomorphisms $\varphi$ of $\SS^2$ that fix $x_0$ and such that $d\varphi|_{x_0}$ is the identity map on $T_{x_0}\SS^2$.  The next step in the paper is the following theorem.
\begin{theorem}[1.8 of \cite{li-watts10}]\labell{t:smale2}
$\Omega_1$ has a diffeologically smooth contraction to a point.
\end{theorem}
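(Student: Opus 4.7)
The plan is to follow Smale's three-stage strategy, adapted throughout to ensure diffeological smoothness. The central idea is a rescaling at $x_0$ that exploits the condition $d\varphi|_{x_0}=\id$, followed by a reduction to \hypref{t:smale1}{Theorem}.

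Choose a chart $\psi\colon U\to\RR^2$ centred at $x_0$ with $d\psi|_{x_0}=\id$. For $\varphi\in\Omega_1$, set $\tilde\varphi=\psi\circ\varphi\circ\psi^{-1}$, so $\tilde\varphi(0)=0$ and $d\tilde\varphi|_0=\id$. By Taylor's theorem write $\tilde\varphi(y)=y+Q(y)y$ with $Q$ smooth matrix-valued and $Q(0)=0$. The rescaling family
$$\tilde\varphi_s(x):=s^{-1}\tilde\varphi(sx)=x+Q(sx)\,x,\qquad s\in[0,1],$$
interpolates smoothly from $\tilde\varphi_1=\tilde\varphi$ to $\tilde\varphi_0=\id$. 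Patching $\tilde\varphi_s$ with $\tilde\varphi$ itself via a bump function applied to the displacement $y\mapsto\tilde\varphi(y)-y$ yields, when transported back to $\SS^2$, a diffeologically smooth isotopy in $\Omega_1$ from $\varphi$ to a diffeomorphism $\varphi^\flat$ that equals the identity on a preassigned neighbourhood $V$ of $x_0$.

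The complement $D:=\SS^2\setminus V$ is a smooth closed disc, and $\varphi^\flat$ restricts to a diffeomorphism of $D$ equal to the identity near $\partial D$. Embedding $D$ smoothly in $[0,1]^2$ so that $\partial D\subset\partial[0,1]^2$ converts $\varphi^\flat$ (extended by the identity outside $D$) into an element of the space in \hypref{t:smale1}{Theorem}. That theorem supplies a diffeologically smooth contraction to the identity, which transports back to a contraction of $\varphi^\flat$ in $\Omega_1$. Concatenating this with the isotopy produced in the first stage (after a suitable smooth reparametrisation at the junction) yields the desired contraction $P\colon[0,1]\times\Omega_1\to\Omega_1$.

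The main obstacle is diffeological smoothness at every step, verified against plots $\lambda\mapsto\varphi^{(\lambda)}$ of $\Omega_1$ in the standard functional diffeology. The limit $s\to 0$ of the rescaling is handled by the Taylor-remainder formulation, since $(\lambda,s,x)\mapsto Q^{(\lambda)}(sx)\,x$ is jointly smooth and $Q^{(\lambda)}$ depends smoothly on $\varphi^{(\lambda)}$. The bump-function patching is a smooth operation, but one must check that the patched map remains a diffeomorphism throughout the isotopy; this reduces to an open condition on the Jacobian determinant that holds for sufficiently small displacement, and its uniformity in the plot parameter $\lambda$ requires the sort of quantitative $C^1$- or Sobolev-type estimates flagged in the introduction as the main technical input beyond Smale's purely topological arguments.
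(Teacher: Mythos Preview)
Your two-stage outline matches the paper's: rescale in a chart near $x_0$ (the paper uses stereographic projection specifically) to make $\varphi$ the identity there, then apply \hypref{t:smale1}{Theorem} on the complement. One correction worth flagging: the neighbourhood $V$ cannot be \emph{preassigned} independent of $\varphi$---the radius on which the rescaled maps remain diffeomorphisms depends on $\varphi$, and obtaining this radius as a diffeologically smooth function $\epsilon(\varphi)$ is precisely the technical crux the paper isolates (Lemma~2.4 of \cite{li-watts10}, via the mean-value theorem and a Sobolev inequality); you correctly identify this at the end as the main input beyond Smale's continuous argument, but it should feed into the construction of $V$ rather than appear only as an afterthought about uniformity.
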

The proof of the analogous theorem in Smale's paper is not entirely clear as written. If one looks closely, $\SS^2$ is embedded into $\RR^3$, and Smale uses a linear interpolation between points of a neighbourhood of $x_0$ to build a contraction, but this is not well-defined, as a linear interpolation may move points off the embedded sphere.  While it may be understood that such a neighbourhood is diffeomorphic to an open convex subset of the plane, where such an interpolation is okay, Jiayong and I both use a stereographic projection near $x_0$ instead, avoiding the issue altogether.\\

Also, in the same proof, Smale needs a positive continuous function $\epsilon:\Diff_0(\SS^2)\to\RR$ which, for a given diffeomorphism $\varphi$, returns a radius $\epsilon{\varphi}$ of a neighbourhood of $x_0$ (using a stereographic projection) on which $|d\varphi(v)-v|<1$ for all $v\in T_{x_0}\SS^2$.  This is easy to construct: choose the supremum over all possible such radii.  Then this yields a positive lower semicontinuous function on $\Diff_0(\SS^2)$.  Now one can use a partition of unity to construct a positive continuous such function.  However, this argument does not work when trying to achieve diffeological smoothness.  Jiayong used a combination of the mean-value theorem and a Sobolev inequality to obtain a smooth function $\epsilon:\Diff_0(\SS^2)\to\RR$, which completed the proof of the theorem (see Lemma 2.4 of \cite{li-watts10}).\\

What is important about \hypref{t:smale2}{Theorem} is that the contraction has two stages: given a diffeomorphism $\varphi\in\Diff_0(\SS^2)$, the first stage modifies $\varphi$ in a small neighbourhood about $x_0$ so that it becomes equal to the identity near $x_0$.  The second stage uses a stereographic projection and \hypref{t:smale1}{Theorem} to deform $\varphi$ away from $x_0$ into the identity there.  I proved the second stage, which consisted of translating what Smale did into the diffeological language, much like the proof of \hypref{t:smale1}{Theorem}.  Jiayong proved the first stage.\\

To take advantage of \hypref{t:smale2}{Theorem}, we have the following theorem.  Embed $\SS^2$ into $\RR^3$, which induces an orthonormal basis $\{e_1,e_2\}$ on $T_{x_0}\SS^2$.  Let $\Omega_2$ be the space of diffeomorphisms $\varphi$ of $\SS^2$ such that $d\varphi(e_1)$ and $d\varphi(e_2)$ form an orthonormal basis of $T_{\varphi(x_0)}\SS^2$.
\begin{lemma}[1.6 of \cite{li-watts10}]\labell{l:smale3}
There is a diffeologically smooth homotopy, equivariant under the action of $\SO(3)$, from $\Diff_0(\SS^2)$ onto $\Omega_2$. \end{lemma}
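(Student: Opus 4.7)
The plan is to construct $P_t(\varphi)$ by altering $\varphi$ only on a small disc around $x_0$, so as to drive its differential there to an isometry while leaving it unchanged outside the disc. Given $\varphi \in \Diff_0(\SS^2)$, set $y_\varphi := \varphi(x_0)$ and $L_\varphi := d\varphi|_{x_0}\colon T_{x_0}\SS^2 \to T_{y_\varphi}\SS^2$. Using the round inner products on source and target, $L_\varphi$ has a unique polar decomposition $L_\varphi = O_\varphi \circ S_\varphi$ with $O_\varphi$ an orientation-preserving linear isometry and $S_\varphi$ positive-definite self-adjoint; both factors depend smoothly on $L_\varphi$ on the open set of invertible maps. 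Interpolating linearly then yields a smooth path $L_\varphi^{\,s} := O_\varphi \circ \bigl((1-s)S_\varphi + s\cdot\id\bigr)$, $s\in[0,1]$, of isomorphisms joining $L_\varphi$ to $O_\varphi$.

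Next I would realise this linear deformation geometrically using the Riemannian exponential maps, which on $\SS^2$ are diffeomorphisms on open discs of radius less than $\pi$. Write $\tilde\varphi := \exp_{y_\varphi}^{-1} \circ \varphi \circ \exp_{x_0}$ on a disc $B_r \subset T_{x_0}\SS^2$ of radius $r = r(\varphi)$ to be chosen below; then $\tilde\varphi(v) = L_\varphi(v) + g_\varphi(v)$ with $g_\varphi(0) = 0$ and $dg_\varphi|_0 = 0$. Fix a bump $\chi\colon [0,\infty) \to [0,1]$ identically $1$ on $[0,1/2]$ and supported in $[0,1)$, and define
\begin{equation*}
P_t(\varphi)(x) := \begin{cases} \exp_{y_\varphi}\!\bigl(L_\varphi^{\,t\chi(|v|/r)}(v) + g_\varphi(v)\bigr), & x = \exp_{x_0}(v),\ |v| < r, \\ \varphi(x), & \text{otherwise.} \end{cases}
\end{equation*}
Where $\chi(|v|/r)=0$ — in particular in a collar of $|v|=r$ — the first formula collapses to $\exp_{y_\varphi}(\tilde\varphi(v)) = \varphi(x)$, so the two cases agree and glue smoothly. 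For $r$ small, $dg_\varphi$ is small on $B_r$ while $\{L_\varphi^{\,s}\}_{s\in[0,1]}$ is a compact family of isomorphisms, so $dP_t(\varphi)$ is everywhere nonsingular and $P_t(\varphi) \in \Diff_0(\SS^2)$. At $t=1$ the derivative at $x_0$ becomes $O_\varphi$, so $P_1(\varphi)\in\Omega_2$; if $\varphi\in\Omega_2$ already then $S_\varphi = \id$, the path is constant, and $P_t(\varphi)=\varphi$ for all $t$, making $P$ a strong deformation retraction.

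The $\SO(3)$-equivariance is automatic from the construction: for $A\in\SO(3)$, equivariance of $\exp$ under isometries gives $y_{A\varphi} = A y_\varphi$ and $L_{A\varphi} = dA|_{y_\varphi}\circ L_\varphi$; uniqueness of polar decomposition yields $O_{A\varphi} = dA|_{y_\varphi}\circ O_\varphi$ and $S_{A\varphi}=S_\varphi$, so $\widetilde{A\varphi} = dA|_{y_\varphi}\circ \tilde\varphi$ and a direct substitution gives $P_t(A\circ\varphi) = A\circ P_t(\varphi)$, provided the scalar radius satisfies $r(A\circ\varphi)=r(\varphi)$.

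The main obstacle will be choosing $r\colon\Diff_0(\SS^2)\to(0,\infty)$ strictly positive, diffeologically smooth, $\SO(3)$-invariant under left composition, and small enough to guarantee injectivity of $\exp_{y_\varphi}$ on the image and non-degeneracy of $dP_t(\varphi)$ uniformly in $t\in[0,1]$. The naive supremum of admissible radii is only lower semi-continuous — this is exactly the obstruction Smale handled by a partition-of-unity smoothing in the continuous category, which fails to produce a smooth function. I expect to resolve it the same way Jiayong treated the function $\epsilon$ in Lemma 2.4 of \cite{li-watts10}: bound $\sup_{B_r}\|dg_\varphi\|$ via the mean value theorem and a Sobolev embedding applied to $d\varphi$ in a fixed stereographic chart at $x_0$, giving an explicit formula for $r(\varphi)$ in terms of Sobolev norms of $\varphi$; the $\SO(3)$-invariance of the round metric, of $\exp$, and of the Sobolev norms used then forces $r(\varphi)$ to be invariant. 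Once $r$ is constructed, the diffeological smoothness of $P$ reduces to smoothness of a composition of smooth ingredients — polar decomposition on $\mathrm{GL}^+$, $\exp$, the bump $\chi$, and evaluation of $\varphi$ — and the lemma follows.
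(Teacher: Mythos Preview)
Your construction has a real gap where you assert that $P_t(\varphi)$ is a diffeomorphism for $r$ small. The differential of $v \mapsto L_\varphi^{\,t\chi(|v|/r)}(v)$ is not merely $L_\varphi^{s}$ evaluated at $s = t\chi(|v|/r)$: by the chain rule there is an additional rank-one term coming from the $v$-dependence of $s$, of size roughly $\|\chi'\|_\infty\,\|S_\varphi - \id\|$, and this term does \emph{not} shrink as $r \to 0$. Concretely, take $S_\varphi = \lambda\,\id$ with $0 < \lambda < \tfrac12$ (realised, for instance, by the M\"obius dilation $z \mapsto \lambda z$ in a stereographic chart at $x_0$). Ignoring $g_\varphi$, your map becomes $v \mapsto a(|v|)\,O_\varphi v$ with $a(\rho) = \lambda + t\chi(\rho/r)(1-\lambda)$. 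At $t = 1$ one has $a(r/2)\cdot\tfrac{r}{2} = \tfrac{r}{2}$ while $a(r)\cdot r = \lambda r < \tfrac{r}{2}$, so $\rho \mapsto a(\rho)\rho$ is not monotone; hence the Jacobian determinant $a\cdot(a\rho)'$ vanishes somewhere and the map is not injective on $B_r$ --- no matter how small $r$ is. Adding back the higher-order remainder $g_\varphi$, which is $O(r)$ in $C^1$, cannot repair this.

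The paper avoids precisely this trap: rather than writing down the deformed map explicitly and then having to verify invertibility across the transition annulus, it realises the homotopy by integrating a compactly supported time-dependent vector field (in a stereographic chart near $\varphi(x_0)$) whose flow has the prescribed linearisation at the base point. Flows of compactly supported vector fields are automatically global diffeomorphisms, so no estimate in the annulus is needed. Your polar-decomposition path $L_\varphi^s$ is in fact well suited to generating such a vector field via $X_s := \dot L_\varphi^s\,(L_\varphi^s)^{-1}$, cut off by a bump; once you recast $P_t(\varphi)$ as the time-$t$ flow of $X$ post-composed with $\varphi$, the remainder of your argument --- the $\SO(3)$-equivariance and the Sobolev construction of a smooth radius --- goes through essentially as written.
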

The proof of the analogous statement in Smale's paper is missing some details.  Jiayong and I both went through the proof carefully and filled in these details.  In particular, for a diffeomorphism $\varphi$, a linear interpolation deforms $d\varphi(e_1)$ and $d\varphi(e_2)$ into an orthonormal pair.  But one needs to control this deformation so that it occurs only locally near $\varphi(x_0)$.  We used a compactly supported vector field and a stereographic projection to achieve this.  Otherwise, translating Smale's proof into the diffeological language was required, and we both did this for this part of the paper.\\

At this point for $\varphi\in\Omega_2$, there is a unique rotation $A(\varphi)$ in $\SO(3)$ that sends $(\varphi(x_0),d\varphi(e_1),d\varphi(e_2))$ to $(x_0,e_1,e_2)$.  Composing $\varphi$ with such a rotation yields a diffeomorphism in $\Omega_1$.  But we know that $\Omega_1$ contracts to a point (the identity map on $\SS^2$), and so our result follows. 
\chapter{Appendix: Subcartesian Spaces in the Literature}\labell{ch:appendix}

In the literature, one typically finds two different definitions of a subcartesian space: one which makes use of an atlas, and one which makes use of a differential structure as we used in \hypref{s:differentialspaces}{Section}.  Maps between subcartesian spaces are also defined differently.  The purpose of this section is to show that the two different definitions lead to isomorphic categories.  To the author's knowledge, this isomorphism does not appear in the literature, and while it may not be surprising, it should still be written down.\\

We first begin by introducing the classical definition of a subcartesian space as an object of a category we shall refer to as $\mathcal{S}_0$.  We then proceed to define its morphisms, and then show its equivalence to the category of subcartesian spaces as defined in \hypref{s:differentialspaces}{Section}.

\begin{definition}[Objects of $\mathcal{S}_0$]
Let $S$ be a Hausdorff, paracompact, second-countable topological space locally homeomorphic to subspaces of $\RR^n$ ($n=0,1,...$).  That is, for any $x\in S$ there exist an open neighbourhood $U$ of $x$, a positive integer $n$, a subset (not necessarily open) $\tilde{U}\subseteq\RR^n$, and a homeomorphism $\varphi:U\to\tilde{U}$.  We refer to such homeomorphisms as \emph{charts}.  Let $\varphi:U\to\tilde{U}\subseteq\RR^m$ and $\psi:V\to\tilde{V}\subseteq\RR^n$ be two charts on $S$.  Then, $\varphi$ and $\psi$ are \emph{compatible} if for every $x\in U\cap V$, there is an open neighbourhood $W^\varphi\subseteq\RR^m$ of $\varphi(x)$ and an open neighbourhood $W^\psi\subseteq\RR^n$ of $\psi(x)$ satisfying:

\begin{enumerate}
\item there exists a smooth map $\zeta\in\CIN(\RR^m,\RR^n)$ such that $\zeta|_{\varphi(U\cap V)\cap W^\varphi}=\psi\circ\varphi^{-1}|_{\varphi(U\cap V)\cap W^\varphi}$,
\item there exists a smooth map $\xi\in\CIN(\RR^n,\RR^m)$ such that $\xi|_{\psi(U\cap V)\cap W^\psi}=\varphi\circ\psi^{-1}|_{\psi(U\cap V)\cap W^\psi}$.
\end{enumerate}
This is easily summarised by saying that $\psi\circ\varphi^{-1}$ and its inverse locally extend to smooth functions between Cartesian spaces.
A \emph{(maximal) atlas} $\mathcal{A}$ on $S$ is a maximal collection of charts on $S$ such that any two charts in the collection are compatible.  An object $(S,\mathcal{A})$ of $\mathcal{S}_0$ is a Hausdorff, paracompact, second-countable topological space $S$ which is locally homeomorphic to subspaces of Cartesian space, equipped with an atlas $\mathcal{A}$ of charts.
\end{definition}

When the atlas of an object in $\mathcal{S}_0$ is understood, we will often drop it from the notation.

\begin{definition}[Morphisms of $\mathcal{S}_0$]
Let $R$ and $S$ be two objects of $\mathcal{S}_0$.  A map $F:R\to S$ is a \emph{morphism} in $\mathcal{S}_0$ if it is continuous and it satisfies the following condition.  For every $x\in R$, there is a chart $\varphi:U\to\tilde{U}\subseteq\RR^m$ about $x$ and a chart $\psi:V\to\tilde{V}\subseteq\RR^n$ about $F(x)$ such that $\psi\circ F\circ\varphi^{-1}:\tilde{U}\to\tilde{V}$ extends to a smooth map $\tilde{F}:\RR^m\to\RR^n$. An \emph{isomorphism} in the category $\mathcal{S}_0$ is a homeomorphism such that it and its inverse are morphisms.
\end{definition}

\begin{theorem}[$\mathcal{S}_0$ is Isomorphic to the Category of Subcartesian Spaces]\labell{t:subcart}
The category $\mathcal{S}_0$ is isomorphic to the category of subcartesian spaces.
\end{theorem}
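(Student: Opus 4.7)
The plan is to construct a pair of functors between $\mathcal{S}_0$ and the category of subcartesian spaces and verify that they are mutually inverse. In one direction, given $(S,\mathcal{A})\in\mathcal{S}_0$, I would define $\CIN(S)$ to consist of all $f:S\to\RR$ such that for every $x\in S$ there exist a chart $\varphi:U\to\tilde{U}\subseteq\RR^n$ in $\mathcal{A}$ about $x$ and a function $\tilde{f}\in\CIN(\RR^n)$ with $f|_U=\tilde{f}\circ\varphi$. The chart compatibility built into $\mathcal{A}$ makes this independent of the chart chosen, and locality plus smooth compatibility of differential structures follow directly from the definition. Morphisms $F:R\to S$ in $\mathcal{S}_0$ are sent to themselves; their defining property (local extendability of $\psi\circ F\circ\varphi^{-1}$) is exactly what is needed to push generating functions back to generating functions, so $F^*\CIN(S)\subseteq\CIN(R)$.

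In the reverse direction, given a subcartesian space $(S,\CIN(S))$, I would take as atlas $\mathcal{A}$ the collection of \emph{all} diffeomorphisms $\varphi:U\to\tilde{U}\subseteq\RR^n$ from open sets of $S$ onto differential subspaces of Euclidean spaces (in the sense of \hypref{d:subcart}{Definition}). Compatibility of two such charts $\varphi:U\to\tilde{U}\subseteq\RR^m$ and $\psi:V\to\tilde{V}\subseteq\RR^n$ is the key point: the map $\psi\circ\varphi^{-1}:\varphi(U\cap V)\to\psi(U\cap V)$ is a diffeomorphism of differential subspaces, so pulling back each coordinate function on $\RR^n$ yields a function on $\varphi(U\cap V)$ which, by the definition of differential subspace, locally extends to a smooth function on $\RR^m$. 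Assembling these coordinate extensions gives the required smooth extension $\zeta$ of $\psi\circ\varphi^{-1}$, and symmetrically for its inverse. Functionally smooth maps are sent to themselves; they are continuous and, again by the definition of differential subspace applied to the target chart, satisfy the local smooth extendability condition defining a morphism in $\mathcal{S}_0$.

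The two constructions are essentially inverse on objects: starting from $(S,\mathcal{A})$, the new atlas produced after passing through the subcartesian picture recovers every chart of $\mathcal{A}$ (and conversely adds no new charts once $\mathcal{A}$ is maximal, since any chart produced in the subcartesian picture is automatically compatible with $\mathcal{A}$). Starting from $(S,\CIN(S))$, the differential structure generated from the atlas-of-all-charts recovers $\CIN(S)$, because subcartesian differential structures are locally generated by pullbacks of coordinates via charts, and conversely every such pullback belongs to $\CIN(S)$. On morphisms, both directions fix the underlying set-theoretic map, so functoriality and the inverse relationship are immediate once the object-level bijection is in place.

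The main obstacle I anticipate is the bookkeeping around the object-level correspondence, specifically verifying that the atlas produced from $\CIN(S)$ is genuinely \emph{maximal} and that no spurious charts appear in the round trip $\mathcal{A}\mapsto\CIN(S)\mapsto\mathcal{A}'$. This reduces to the following check: any homeomorphism $\varphi:U\to\tilde{U}\subseteq\RR^n$ that is a diffeomorphism of differential subspaces is compatible with every chart in $\mathcal{A}$, and conversely any chart compatible with $\mathcal{A}$ is a diffeomorphism onto its image as a differential subspace. Both of these are local statements that follow from the definition of differential subspace combined with the extension condition in the definition of compatible charts; once this equivalence is established, the remaining verifications are routine.
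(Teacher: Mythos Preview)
Your proposal is correct and follows essentially the same route as the paper: the paper likewise constructs functors $\mathbf{\Phi}$ and $\mathbf{\Psi}$ exactly as you describe (defining $\CIN(S)$ via local pullbacks through charts, and defining the atlas as all diffeomorphisms onto differential subspaces of Cartesian spaces), verifies chart compatibility by the coordinate-extension argument you outline, and checks the round trips on objects and morphisms. The only step you leave implicit that the paper spells out in detail is the verification that the topology induced by $\CIN(S)$ coincides with the original topology on $S$; this is routine (bump functions in charts give the needed subbasic opens), so your plan is complete.
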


We will break up the proof of this theorem into parts, each part a separate lemma.  Denote the category of subcartesian spaces by $\mathbf{Subcart}$.

\begin{lemma}
There is a functor $\mathbf{\Phi}$ from $\mathcal{S}_0$ to $\mathbf{Subcart}$ such that the underlying topological space of an object $(S,\mathcal{A})$ of $\mathcal{S}_0$ is the same as that of $\mathbf{\Phi}((S,\mathcal{A}))$, and $\mathbf{\Phi}{F}=F$ for any morphism $F$ of $\mathcal{S}_0$.
\end{lemma}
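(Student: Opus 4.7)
The plan is to define $\mathbf{\Phi}((S,\mathcal{A})) := (S,\CIN(S))$, where $\CIN(S)$ consists of all functions $f\colon S\to\RR$ with the property that for every $x\in S$ there exist a chart $\varphi\colon U\to\tilde{U}\subseteq\RR^n$ in $\mathcal{A}$ about $x$ and a smooth function $\tilde{f}\in\CIN(\RR^n)$ such that $f|_U=\tilde{f}\circ\varphi$. I would leave the underlying set and topology unchanged, and define $\mathbf{\Phi}(F):=F$ on morphisms.

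The first step is to verify that $\CIN(S)$ is a differential structure. Smooth compatibility follows because, given $f_1,\ldots,f_k\in\CIN(S)$, $F\in\CIN(\RR^k)$, and $x\in S$, one picks a \emph{common} chart $\varphi$ about $x$ (shrinking the domains if necessary and using the compatibility of $\mathcal{A}$ to transfer each local extension to one coordinate system) and composes local extensions with $F$. Locality is immediate from the local nature of the definition. The next step is to check that the topology $\mathcal{T}_{\CIN(S)}$ induced by $\CIN(S)$ agrees with the original topology on $S$. On the one hand, every $f\in\CIN(S)$ is continuous in the original topology (since it locally factors as $\tilde{f}\circ\varphi$, a composition of continuous maps), so $\mathcal{T}_{\CIN(S)}\subseteq$ original. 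For the reverse containment, given any open $W\subseteq S$ and $x\in W$, I would choose a chart $\varphi\colon U\to\tilde{U}\subseteq\RR^n$ about $x$ with $U\subseteq W$ (possible by shrinking), and use coordinate functions $q^1,\ldots,q^n$ on $\RR^n$ to produce basic open sets of the form $\bigcap_i(q^i\circ\varphi)^{-1}((a_i,b_i))$ inside $W$, where each $q^i\circ\varphi\in\CIN(S)$ by construction.

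Next, I would show $(S,\CIN(S))$ is subcartesian, i.e., each chart $\varphi\colon U\to\tilde{U}\subseteq\RR^n$ in $\mathcal{A}$ is a diffeomorphism onto the differential subspace $\tilde{U}$. Continuity of $\varphi$ and $\varphi^{-1}$ is given. For functional smoothness, given $g\in\CIN(\RR^n)|_{\tilde{U}}$, the function $g\circ\varphi$ is in $\CIN(U)$ directly by the definition. Conversely, given $f\in\CIN(U)$, at each $\tilde x=\varphi(x)$ there is some other chart $\psi$ giving a local extension of $f$; compatibility of $\psi$ with $\varphi$ in $\mathcal{A}$ then supplies a smooth extension of $f\circ\varphi^{-1}$ to $\RR^n$ near $\tilde{x}$. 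This uses the two-sided smoothness of transition functions between charts in the atlas, which is exactly the content of the compatibility condition in the definition of $\mathcal{A}$. Hausdorffness, paracompactness, and second-countability are inherited from $S$.

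Finally, I would verify that morphisms of $\mathcal{S}_0$ become functionally smooth under $\mathbf{\Phi}$. If $F\colon R\to S$ is a morphism in $\mathcal{S}_0$ and $g\in\CIN(S)$, then for $x\in R$ one chooses charts $\varphi$ about $x$ and $\psi$ about $F(x)$ such that $\psi\circ F\circ\varphi^{-1}$ extends to a smooth map $\tilde F\colon\RR^m\to\RR^n$, and a local smooth extension $\tilde g$ of $g$ near $\psi(F(x))$; then $\tilde g\circ\tilde F$ locally represents $g\circ F$, showing $g\circ F\in\CIN(R)$. Functoriality ($\mathbf{\Phi}(\id)=\id$ and $\mathbf{\Phi}(F\circ G)=\mathbf{\Phi}(F)\circ\mathbf{\Phi}(G)$) is immediate since $\mathbf{\Phi}$ acts as the identity on underlying maps. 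The main obstacle I anticipate is the careful bookkeeping in verifying that $\varphi^*\CIN(\RR^n)=\CIN(U)$ as sets of functions on $U$; this rests squarely on the two-way extension property encoded in chart compatibility, and is what makes the classical atlas definition line up with the modern differential structure definition.
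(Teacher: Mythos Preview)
Your approach is essentially identical to the paper's: same definition of $\CIN(S)$, same verification that it is a differential structure, same check that charts become diffeomorphisms, and same argument that morphisms become functionally smooth.

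There is one small but genuine gap in your topology argument. To show that the original topology is contained in $\mathcal{T}_{\CIN(S)}$, you propose using the functions $q^i\circ\varphi$ and claim they lie in $\CIN(S)$ ``by construction.'' But $q^i\circ\varphi$ is only defined on the chart domain $U$, not on all of $S$, so it is not an element of $\CIN(S)$ as you have defined it. You need to extend it to a global function. The paper handles this by multiplying a smooth bump function $\tilde f$ on $\RR^n$ (equal to $1$ near $\varphi(x)$ and supported in a set whose $\varphi$-preimage has closure inside $U$) and then extending $\tilde f\circ\varphi$ by zero outside $U$; this extension is in $\CIN(S)$ and its preimage of $(\tfrac14,2)$ gives a $\mathcal{T}_{\CIN(S)}$-open neighbourhood of $x$ inside $U\subseteq W$. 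Once you insert this bump-function step, your argument goes through exactly as the paper's does.
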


\begin{proof}
Let $(S,\mathcal{A})$ be a smooth subcartesian space in $\mathcal{S}_0$.  Define $\CIN(S)$ to be the set of all functions $f:S\to\RR$ such that for every $x\in S$ there is a chart $\varphi:U\to\tilde{U}\subseteq\RR^m$ about $x$ contained in $\mathcal{A}$ and a function $\tilde{f}\in\CIN(\RR^m)$ satisfying $f|_U=\varphi^*\tilde{f}$.  We claim that $(S,\CIN(S))$ is an object in $\mathbf{Subcart}$.  To prove this, we need to show that $\CIN(S)$ is a differential structure on $S$, and that $S$ is locally diffeomorphic to differential subspaces of Cartesian spaces.  We begin with the differential structure conditions.\\

We first need to show that the topology generated by sets of the form $$\{f^{-1}((a,b))~|~(a,b)\subseteq\RR,~f\in\CIN(S)\}$$ is contained in the topology on $S$.  Fix $(a,b)\subseteq\RR$, $f\in\CIN(S)$, let $U=f^{-1}((a,b))$ and fix $x\in U$.  Let $\varphi:V\to\tilde{V}\subseteq\RR^m$ be a chart about $x$ in $\mathcal{A}$ and $\tilde{f}\in\CIN(\RR^m)$ such that $\varphi^*\tilde{f}=f|_V$.  Let $W=\tilde{f}^{-1}((a,b))$.  Then $\varphi^{-1}(W)$ is open in $S$.  But $\varphi^{-1}(W)=(f|_V)^{-1}((a,b))\subseteq U\cap V$.  Since $x\in U$ is arbitrary, we conclude that $U$ is open in $S$.\\

To show that the topology on $S$ is contained in the topology induced by $\CIN(S)$, it is sufficient to show that the domain of any chart is contained in this latter topology. Let $\varphi:V\to\tilde{V}\subseteq\RR^m$ be a chart in $\mathcal{A}$, and fix $x\in V$.  Define $\tilde{f}:\RR^m\to[0,1]$ to be a smooth bump function on $\RR^m$ that is equal to 1 on an open neighbourhood $W_1$ of $\varphi(x)$, and with support in an open neighbourhood $W_0$ of $\varphi(x)$ small enough such that $\overline{\varphi^{-1}(W_0)}\subset V$. Define $U=\varphi^{-1}\circ\tilde{f}^{-1}((\frac{1}{4},2))$. Then $U$ is open in $V$.  Now $\tilde{f}\circ\varphi$ extends to all of $S$ by setting it equal to zero on the complement of $V$.  This extension is in $\CIN(S)$.  Since $x$ is arbitrary, every point of $V$ is contained in a set of the form $f^{-1}((a,b))\subseteq V$ for some $f\in\CIN(S)$ and some interval $(a,b)$.  Hence, $V$ is open in the topology generated by $\CIN(S)$, and we see that the two topologies are equal.\\

Next, we show that $(S,\CIN(S))$ satisfies the smooth compatibility condition of a differential space.  Let $f_1,...,f_k\in\CIN(S)$, and let $F\in\CIN(\RR^k)$.  Fix $x\in S$.  For $i=1,...,k$ there exist compatible charts $\varphi_i:U_i\to\tilde{U}_i\subseteq\RR^{m_i}$ and functions $\tilde{f}_i\in\CIN(\RR^{m_i})$ such that $f_i|_{U_i}=\varphi_i^*\tilde{f}_i$.  Define $U=\bigcap_{i=1}^kU_i$.  Then
\begin{align*}
F(f_1,...,f_k)|_U=&F(f_1|_U,...,f_k|_U)\\
=&F(\tilde{f}_1\circ\varphi_1|_U,...,\tilde{f}_k\circ\varphi_k|_U)\\
=&F(\tilde{f}_1\circ\varphi_1|_U,\tilde{f}_2\circ\varphi_2\circ\varphi^{-1}_1\circ\varphi_1|_U,...,
\tilde{f}_k\circ\varphi_k\circ\varphi_1^{-1}\circ\varphi_1|_U).
\end{align*}
Now, since the charts $\varphi_i$ are pairwise compatible, for $i=2,...,k$ there exist open neighbourhoods $W_i\subseteq\RR^{m_1}$ of $\varphi_1(x)$ and smooth functions $\zeta_i:\RR^{m_1}\to\RR^{m_i}$ satisfying $\zeta_i|_{W_i\cap\tilde{U}_1}=\varphi_i\circ\varphi^{-1}_1|_{W_i\cap\tilde{U}_1}$. Let $W=\bigcap_{i=2}^k\varphi^{-1}_1(W_i)$.  Then, $$F(f_1,...,f_k)|_W=F(\tilde{f}_1,\tilde{f}_2\circ\zeta_2,...,\tilde{f}_k\circ\zeta_k)\circ\varphi_1|_W.$$ But, the right hand side is the pullback of a smooth function defined on $\RR^{m_1}$ by the chart $\varphi_1|_W$.  We conclude that $F(f_1,...,f_k)\in\CIN(S)$.\\

We next show that $(S,\CIN(S))$ satisfies the locality condition of a differential structure.  Let $f:S\to\RR$ be a function having the property that for any $x\in S$ there is an open neighbourhood $U\subseteq S$ of $x$ and a function $g\in\CIN(S)$ satisfying $g|_U=f|_U$.  Fix $x\in S$, and let $U$ and $g$ satisfy the above condition.  Since $g\in\CIN(S)$, by definition, for any $y\in U$, there is a smooth chart $\varphi:V\to\tilde{V}\subseteq\RR^m$ about $y$ and a function $\tilde{g}\in\CIN(\RR^m)$ such that $g|_V=\varphi^*\tilde{g}$.  Hence, $f|_{U\cap V}=\varphi|_{U\cap V}^*\tilde{g}$.  Since $\varphi|_{U\cap V}$ is a smooth chart, $f\in\CIN(S)$.  Thus, $\CIN(S)$ is a differential structure.\\

We now show that $(S,\CIN(S))$ is a subcartesian space; that is, the charts are diffeomorphisms onto differential subspaces of Cartesian spaces.  Let $\varphi:U\to\tilde{U}\subseteq\RR^m$ be a chart and let $f\in\CIN(\tilde{U})$.  Then, for every $x\in U$, there is a neighbourhood $\tilde{V}\subseteq\RR^m$ of $\varphi(x)$ and a function $\tilde{f}\in\CIN(\RR^m)$ such that $\tilde{f}|_{\tilde{U}\cap\tilde{V}}=f|_{\tilde{U}\cap\tilde{V}}.$ Let $V:=\varphi^{-1}(\tilde{U}\cap\tilde{V})$. Thus, $(\varphi^*f)|_V=(\varphi|_V)^*\tilde{f}$. So $\varphi^*f\in\CIN(U)$.  Conversely, let $g\in\CIN(U)$.  Then for each $x\in U$ there is a smooth chart $\psi:V\to\tilde{V}\subseteq\RR^n$ about $x$ (where $V\subseteq U$) and a function $\tilde{g}\in\CIN(\RR^n)$ such that $g|_V=\psi^*\tilde{g}$.  Now, $\varphi$ and $\psi$ are compatible, and so there is an open neighbourhood $W\subseteq\RR^m$ of $\varphi(x)$ and a smooth function $\zeta\in\CIN(\RR^m,\RR^n)$ satisfying $$\psi\circ\varphi^{-1}|_{W\cap\tilde{U}}=\zeta|_{W\cap\tilde{U}}.$$ Thus, $$g\circ\varphi^{-1}|_{W\cap\tilde{U}\cap\tilde{V}}=\tilde{g}\circ\zeta|_{W\cap\tilde{U}\cap\tilde{V}}.$$ Since $\tilde{g}\circ\zeta\in\CIN(\RR^m)$, we conclude that $(\varphi^{-1})^*g\in\CIN(\tilde{U})$. Thus, $\varphi$ is a diffeomorphism, and $(S,\CIN(S))$ is an object in $\mathbf{Subcart}$.  Define the map $\mathbf{\Phi}$ between objects of $\mathcal{S}_0$ and objects of $\mathbf{Subcart}$ by $\mathbf{\Phi}((S,\mathcal{A}))=(S,\CIN(S))$.\\

To show that $\mathbf{\Phi}$ is a functor we need to say what it does to morphisms in $\mathcal{S}_0$.  Let $(R,\mathcal{A})$ and $(S,\mathcal{B})$ be objects in $\mathcal{S}_0$, and let $F$ be a morphism between them.  Let $f\in\CIN(S)$.  Then, for every $x\in R$, there exist a chart $\psi:V\to\tilde{V}\subseteq\RR^n$ about $F(x)$, a chart $\varphi:U\to\tilde{U}\subseteq\RR^m$ about $x$ with $F(U)\subseteq V$ and a smooth map $\tilde{F}:\RR^m\to\RR^n$ and a smooth function $\tilde{f}\in\CIN(\RR^n)$ such that $f|_V=\psi^*\tilde{f}$ and
\begin{align*}
(F^*f)|_U=&f\circ\psi^{-1}\circ\tilde{F}\circ\varphi\\
=&\tilde{f}\circ\tilde{F}\circ\varphi.
\end{align*}
But, $\tilde{f}\circ\tilde{F}\in\CIN(\RR^m)$ and so $\tilde{f}\circ\tilde{F}\circ\varphi=(F^*f)|_U\in\varphi^*\CIN(\RR^m)$.  Hence, $F^*f\in\CIN(R)$.  Thus, $F$ is also functionally smooth. Defining $\mathbf{\Phi}(F)=F$, we thus have a well-defined functor $\mathbf{\Phi}$.\\
\end{proof}

\begin{lemma}
There is a functor $\mathbf{\Psi}$ from $\mathbf{Subcart}$ to $\mathcal{S}_0$ such that the topological space of an object $(S,\CIN(S))$ of $\mathbf{Subcart}$ is the same as that of $\mathbf{\Psi}((S,\CIN(S)))$, and $\mathbf{\Psi}{F}=F$ for any smooth map $F$ of $\mathbf{Subcart}$.
\end{lemma}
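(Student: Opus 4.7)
The plan is to take $\mathcal{A}$ to consist of every diffeomorphism (in the subcartesian sense) from an open subset of $S$ onto a differential subspace of a Cartesian space, and to set $\mathbf{\Psi}(F)=F$ on morphisms. The verification then reduces to two essentially routine points: pairwise compatibility of these charts, and the fact that a functionally smooth map locally looks like the restriction of a smooth map between Cartesian spaces. Both reduce to the definition of the subspace differential structure: a function on $\tilde{U}\subseteq\RR^m$ lies in $\CIN(\tilde{U})$ if and only if it locally extends to a smooth function on $\RR^m$.

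First I would fix $(S,\CIN(S))$ in $\mathbf{Subcart}$ and let $\mathcal{A}$ consist of all diffeomorphisms $\varphi:U\to\tilde{U}\subseteq\RR^n$ with $U$ open in $S$ and $\tilde{U}$ a differential subspace. By the definition of a subcartesian space these cover $S$, and the topological prerequisites (Hausdorff, paracompact, second-countable) for an object of $\mathcal{S}_0$ are already built in. To verify compatibility, fix $\varphi:U\to\tilde{U}\subseteq\RR^m$ and $\psi:V\to\tilde{V}\subseteq\RR^n$ in $\mathcal{A}$, and $x\in U\cap V$. The transition $\psi\circ\varphi^{-1}$ is a diffeomorphism of differential subspaces, so for each coordinate projection $y_i$ of $\RR^n$ the composite $y_i\circ\psi\circ\varphi^{-1}$ belongs to $\CIN(\tilde{U}\cap\varphi(U\cap V))$. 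By definition of the subspace differential structure on $\tilde{U}$, each such function agrees near $\varphi(x)$ with some $\zeta_i\in\CIN(\RR^m)$ on a neighborhood of $\varphi(x)$; intersecting the $n$ neighborhoods gives a common $W^\varphi$, and $\zeta=(\zeta_1,\dots,\zeta_n)$ is the required smooth extension. The symmetric argument handles $\varphi\circ\psi^{-1}$. Maximality is automatic, since any chart compatible in $\mathcal{S}_0$ with every element of $\mathcal{A}$ must itself be a diffeomorphism onto a differential subspace, and hence already lies in $\mathcal{A}$.

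For morphisms, given a functionally smooth $F:R\to S$ and $x\in R$, I would choose charts $\varphi:U\to\tilde{U}\subseteq\RR^m$ about $x$ and $\psi:V\to\tilde{V}\subseteq\RR^n$ about $F(x)$ with $F(U)\subseteq V$ (possible by continuity of $F$, after shrinking $U$). Each coordinate $y_i\circ\psi\circ F\circ\varphi^{-1}$ lies in $\CIN(\tilde{U})$, because the successive pullbacks $\psi^*$, $F^*$, and $(\varphi^{-1})^*$ all preserve the respective smooth structures. Hence each coordinate locally extends to a smooth function on $\RR^m$, and bundling the components yields the required $\tilde{F}$ on a neighborhood of $\varphi(x)$. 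Defining $\mathbf{\Psi}(F):=F$ then preserves identities and compositions trivially. Continuity of $F$ in the underlying topology is automatic from functional smoothness.

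The main ``obstacle'' is purely bookkeeping: keeping track of the open neighborhoods on which the coordinate extensions simultaneously exist, and confirming that the $C^\infty$-structures induced by $\mathbf{\Phi}\circ\mathbf{\Psi}$ and $\mathbf{\Psi}\circ\mathbf{\Phi}$ on the respective categories return the original data. The former is just a finite intersection of opens; the latter is forced, since the differential structure generated by the atlas $\mathcal{A}$ is locally described by pullbacks of smooth functions on Cartesian spaces via the charts, which is exactly the original $\CIN(S)$. Nothing deeper intervenes because the heart of the argument is the tautological observation that smoothness on a differential subspace of $\RR^n$ is, by definition, detected by local extensions to $\RR^n$.
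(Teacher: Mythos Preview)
Your proposal is correct and follows essentially the same approach as the paper: define $\mathcal{A}$ as all diffeomorphisms from open subsets of $S$ onto differential subspaces of Cartesian spaces, verify compatibility by pulling back coordinate functions and using that functions in $\CIN(\tilde{U})$ locally extend to smooth functions on the ambient $\RR^m$, and handle morphisms the same way coordinate-by-coordinate. Your discussion of maximality and of the composite functors $\mathbf{\Phi}\circ\mathbf{\Psi}$, $\mathbf{\Psi}\circ\mathbf{\Phi}$ goes slightly beyond what this particular lemma requires (the paper defers the latter to a separate lemma), but the argument is sound.
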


\begin{proof}
Let $(S,\CIN(S))$ be a subcartesian space in $\mathbf{Subcart}$.  Define $\mathcal{A}$ to be the collection of all diffeomorphisms of open differential subspaces of $S$ to differential subspaces of Cartesian spaces.  We claim that this is a maximal atlas.  It is clear that these are homeomorphisms onto subsets of Cartesian spaces, so we only need to check compatibility.  Let $\varphi:U\to\tilde{U}\subseteq\RR^m$ be a diffeomorphism onto $\tilde{U}$.  Let $\psi:U\to\tilde{V}\subseteq\RR^n$ be another such diffeomorphism onto a differential subspace $\tilde{V}$ of $\RR^n$.  Denote by $x^1,...,x^m$ the standard coordinates on $\RR^m$ and set $q^i:=\varphi^*x^i$.  Then $\varphi=(q^1,...,q^m)$.  Set $p^i:=q^i\circ\psi^{-1}$.  Since $\psi$ is a diffeomorphism, we have that $p^i\in\CIN(\tilde{V})$.  In other words, for each $i=1,...,m$ and each $y\in U$, there exists an open neighbourhood $W_i\subseteq\RR^n$ of $\psi(y)$ and a function $\tilde{p}^i\in\CIN(\RR^n)$ such that $\tilde{p}^i|_{W_i\cap\tilde{V}}=p^i|_{W_i\cap\tilde{V}}$.  Define $W:=\bigcap_{i=1}^mW_i$.  Then $\xi:=(\tilde{p}^1,...,\tilde{p}^m)$ is a smooth map from $\RR^n$ to $\RR^m$ and $$\xi|_{W\cap\tilde{V}}=(p^1|_{W\cap\tilde{V}},...,p^m|_{W\cap\tilde{V}})=\varphi\circ\psi^{-1}|_{W\cap\tilde{V}}.$$ Hence, $\varphi\circ\psi^{-1}$ locally extends to smooth maps between Cartesian spaces.  A similar argument shows that $\psi\circ\varphi^{-1}$ also locally extends in this manner.  Hence, $\varphi$ and $\psi$ are compatible charts on $S$. So $(S,\mathcal{A})$ is an object in $\mathcal{S}_0$.  Define $\mathbf{\Psi}((S,\CIN(S)))$ to be $(S,\mathcal{A})$.\\

Now, let $(R,\CIN(R))$ and $(S,\CIN(S))$ be two subcartesian spaces.  Let $F:R\to S$ be a functionally smooth map: $F^*\CIN(S)\subseteq\CIN(R)$.  Fix $x\in R$, and let $\psi:V\to\tilde{V}\subseteq\RR^n$ be a diffeomorphism about $F(x)$.  Let $q^1,...,q^n$ be coordinate functions on $V$; that is, $q^i:=\psi^*x^i$ for $i=1,...,n$.  Then, $q^i\in\CIN(V)$, and so $F|_{F^{-1}(V)}^*q^i\in\CIN(F^{-1}(V))$.  Let $\varphi:U\to\tilde{U}\subseteq\RR^m$ be a diffeomorphism about $x$ such that $F(U)\subseteq V$, and let $p^1,...,p^n\in\CIN(\RR^m)$ such that $F|_U^*q^i=\varphi^*p^i$ (shrinking $U$ if necessary).  Define $\tilde{F}:=(p^1,...,p^n)\in\CIN(\RR^m,\RR^n)$.  Then, $$\psi^{-1}\circ\tilde{F}\circ\varphi=\psi^{-1}\circ((F|_U)^*q^1,...,(F|_U)^*q^n)=F|_U.$$ Thus $\tilde{F}|_{\tilde{U}}=\psi\circ F\circ\varphi^{-1}.$  We conclude that $F$ is a morphism between $\mathbf{\Psi}((R,\CIN(R)))$ and $\mathbf{\Psi}((S,\CIN(S)))$.  Defining $\mathbf{\Psi}(F)$ to be $F$, we have a well-defined functor.
\end{proof}

\begin{lemma}
The functors $\mathbf{\Phi}$ and $\mathbf{\Psi}$ are inverses of one another.
\end{lemma}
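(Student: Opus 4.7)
Since both functors act as the identity on underlying topological spaces and on morphisms, the content of the lemma reduces to two verifications: (a) for every subcartesian space $(S,\CIN(S))$, the differential structure produced by $\mathbf{\Phi}\circ\mathbf{\Psi}$ coincides with $\CIN(S)$; and (b) for every object $(S,\mathcal{A})$ of $\mathcal{S}_0$, the atlas produced by $\mathbf{\Psi}\circ\mathbf{\Phi}$ coincides with $\mathcal{A}$. The plan is to prove each equality by a two-sided inclusion.

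For (a), write $\CIN'(S)$ for the differential structure produced by $\mathbf{\Phi}\circ\mathbf{\Psi}$. The easy direction is $\CIN'(S)\subseteq\CIN(S)$: if $f\in\CIN'(S)$, then locally about each point there exists a differential-space diffeomorphism $\varphi:U\to\tilde{U}\subseteq\RR^m$ and a function $\tilde f\in\CIN(\RR^m)$ with $f|_U=\varphi^*\tilde f$; since $\varphi$ is a diffeomorphism, $\tilde f|_{\tilde U}\in\CIN(\tilde U)$ pulls back to $\CIN(U)$, and the locality axiom for $\CIN(S)$ yields $f\in\CIN(S)$. For the reverse inclusion, I would take $f\in\CIN(S)$, fix $x\in S$ and pick any subcartesian chart $\varphi:U\to\tilde U\subseteq\RR^m$ about $x$. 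Then $f|_U\circ\varphi^{-1}\in\CIN(\tilde U)$, so by the definition of the subspace differential structure on $\tilde U$, this function agrees on a neighbourhood $W\cap\tilde U$ of $\varphi(x)$ with some $\tilde f\in\CIN(\RR^m)$. Restricting $\varphi$ to $V:=\varphi^{-1}(W\cap\tilde U)$ gives a chart belonging to the atlas produced by $\mathbf{\Psi}$, and $f|_V=(\varphi|_V)^*\tilde f$, placing $f$ in $\CIN'(S)$.

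For (b), write $\mathcal{B}$ for the atlas produced by $\mathbf{\Psi}\circ\mathbf{\Phi}$, which is the maximal family of differential-space diffeomorphisms from open subsets of $S$ onto differential subspaces of Euclidean spaces, with respect to the differential structure $\CIN(S)$ induced by $\mathcal{A}$. The inclusion $\mathcal{A}\subseteq\mathcal{B}$ was already verified in the proof that $\mathbf{\Phi}$ is well-defined: there it is shown directly that every chart $\varphi\in\mathcal{A}$ is a diffeomorphism onto its image in the differential-space sense. For the reverse inclusion $\mathcal{B}\subseteq\mathcal{A}$, I would invoke maximality of $\mathcal{A}$: it suffices to show that every $\varphi\in\mathcal{B}$ is compatible (in the sense of $\mathcal{S}_0$) with every chart $\psi\in\mathcal{A}$. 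Since $\mathcal{A}\subseteq\mathcal{B}$, both $\varphi$ and $\psi$ are differential-space diffeomorphisms onto their images, so $\psi\circ\varphi^{-1}$ is a diffeomorphism between differential subspaces of Euclidean spaces. The key elementary fact is that such a diffeomorphism, together with its inverse, locally extends to a smooth map between the ambient Euclidean spaces. This is exactly the argument used in the second lemma to verify compatibility within $\mathcal{B}$ itself: expand $\psi\circ\varphi^{-1}$ in coordinates, observe that each component is a function in the subspace differential structure on the image of $\varphi$, and hence locally extends to a smooth function on the ambient Euclidean space; assembling the components gives the desired local smooth extension, and the symmetric argument handles the inverse.

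I do not expect a genuine obstacle: all the nontrivial technical content has already been developed in the two preceding lemmas, and the present lemma essentially repackages it. The only step deserving care is the verification in (b) that differential-space diffeomorphisms between differential subspaces of Cartesian spaces satisfy the chart-compatibility condition of $\mathcal{S}_0$; this is precisely the coordinate-expansion argument already carried out when proving that $\mathcal{B}$ is an atlas, so it transplants verbatim. On morphisms there is nothing to check, since both functors are the identity. This completes the proof plan.
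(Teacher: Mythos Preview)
Your proposal is correct and follows essentially the same approach as the paper: both directions are handled by a two-sided inclusion on objects, with morphisms trivial. The only notable difference is that for part (b) the paper is quite terse (it simply asserts that ``charts and local diffeomorphisms to differential subspaces are identified by these functors, so the two maximal atlases are the same''), whereas you actually spell out the argument via maximality of $\mathcal{A}$ and the coordinate-expansion compatibility check --- this is exactly the right way to make the paper's one-line claim rigorous, and it reuses precisely the argument from the second lemma as you note.
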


\begin{proof}
This is clear for maps, and so we need only show it for objects in the two categories $\mathcal{S}_0$ and $\mathbf{Subcart}$. Let $(S,\mathcal{A})$ be an object in $\mathcal{S}_0$.  Let $\mathcal{A}'$ be the maximal atlas of $\mathbf{\Psi}\circ\mathbf{\Phi}((S,\mathcal{A}))$, as described in the proof of the lemma above.  Since charts and local diffeomorphisms to differential subspaces are identified by these functors, these two maximal atlases are the same, and $\mathbf{\Psi}\circ\mathbf{\Phi}$ is the identity on $\mathcal{S}_0$.\\

We next want to show that the reversed composition is the identity functor on objects in $\mathbf{Subcart}$.  To this end let $(S,\CIN(S))$ be a subcartesian space.  Let $(S,\mathcal{F}):=\mathbf{\Phi}\circ\mathbf{\Psi}((S,\CIN(S)))$. Then we want to show that $\mathcal{F}=\CIN(S)$.  Let $f\in\CIN(S)$.  Then for every $x\in S$ there exists an open neighbourhood $U\subseteq S$ of $x$, a diffeomorphism $\varphi:U\to\tilde{U}\subseteq\RR^m$ and a function $\tilde{f}\in\CIN(\RR^m)$ such that $f|_U=\varphi^*\tilde{f}$. Hence, $f|_U\in\varphi^*\CIN(\RR^n)$.  By definition of $\mathcal{F}$, $f\in\mathcal{F}$.\\

Conversely, let $f\in\mathcal{F}$.  By definition, for every $x\in S$ there exists an open neighbourhood $U\subseteq S$ of $x$, a diffeomorphism $\varphi:U\to\tilde{U}\subseteq\RR^m$ and a function $\tilde{f}\in\CIN(\RR^m)$ such that $f|_U=\varphi^*\tilde{f}$.  Now, $\varphi$ is a diffeomorphism onto a differential subspace $\tilde{U}$ for $\RR^m$.  Since $\tilde{f}\in\CIN(\RR^m)$, we know that $\tilde{f}|_{\tilde{U}}\in\CIN(\tilde{U})$, and hence $\varphi^*(\tilde{f}|_{\tilde{U}})\in\CIN(U)$.  But $\varphi^*(\tilde{f}|_{\tilde{U}})=f|_U$. Thus, by definition of $\CIN(U)$, for every $y\in U$ there exists a function $g_y\in\CIN(S)$ and an open neighbourhood $W\subseteq U$ of $y$ such that $g_y|_W=f|_W$.  Since $x$ and $U$ are arbitrary, by definition of a differential structure, $f\in\CIN(S)$.  This completes the proof.
\end{proof}

\begin{proof}[Proof of Theorem \ref{t:subcart}]
The proof of the theorem follows from the three lemmas above.
\end{proof} 

\addcontentsline{toc}{chapter}{Bibliography}

\bibliographystyle{amsalpha}
\bibliography{references}

\end{document}